\definecolor{winered}{rgb}{0.7,0,0}
\definecolor{lessblue}{rgb}{0,0,0.7}
\def\@tocline#1#2#3#4#5#6#7{
\begingroup
  \par
    \parindent\z@ \leftskip#3 \relax \advance\leftskip\@tempdima\relax
                  \rightskip\@pnumwidth plus 4em \parfillskip-\@pnumwidth
    \ifcase #1 
       \vskip 0.6em \hskip 0em 
       \or
       \or \hskip 0em 
       \or \hskip 1em 
    \fi%
    %
    #6
    %
    \nobreak\relax{\leavevmode\leaders\hbox{\,.}\hfill}
    \hbox to\@pnumwidth {\@tocpagenum{#7}}
  \par
\endgroup
}
 \def\l@section{\@tocline{0}{0pt}{0pc}{}{}}
\renewcommand{\tocsection}[3]{%
  \indentlabel{\@ifnotempty{#2}{ 
    \ignorespaces\bfseries{#2. #3}}}
  \indentlabel{\@ifempty{#2}{\ignorespaces\bfseries{#3}}{}} 
    \vspace{1.5pt}}
\renewcommand{\tocsubsection}[3]{%
  \indentlabel{\@ifnotempty{#2}{
    \ignorespaces#2. #3}}
  \indentlabel{\@ifempty{#2}{\ignorespaces #3}{}}
    \vspace{1.5pt}}
\renewcommand{\tocsubsubsection}[3]{%
  \indentlabel{\@ifnotempty{#2}{
    \ignorespaces#2. #3}}
  \indentlabel{\@ifempty{#2}{\ignorespaces #3}{}}
    \vspace{1.5pt}}
\def\@nomenstarted{0}
\newlength{\@nomenoldtabcolsep}
\newcommand{\nomenstart}
  {%
    \def\@nomenstarted{1}%
    \setlength{\@nomenoldtabcolsep}{\tabcolsep}%
    \setlength{\tabcolsep}{3.5pt}%
    \begin{longtable}{p{0.15\textwidth} p{0.81\textwidth}}
  }
\newcommand{\nomenitem}[2]{%
    \ifcase\@nomenstarted%
      \or 
      \or \\ 
    \fi%
    #1\,{\leavevmode\leaders\hbox{\,.}\hfill} & #2%
    \def\@nomenstarted{2}%
  }%
\newcommand{\nomenend}
  {\\%
      \end{longtable}%
      \setlength{\tabcolsep}{\@nomenoldtabcolsep}%
      \def\@nomenstarted{0}%
  }
\numberwithin{equation}{section}
\newtheorem{thm}{Theorem}[section]
\newtheorem{prop}[thm]{Proposition}
\newtheorem{lemma}[thm]{Lemma}
\newtheorem{cor}[thm]{Corollary}
\newtheorem*{thm*}{Theorem}
\newtheorem*{prop*}{Proposition}
\newtheorem*{cor*}{Corollary}
\newtheorem*{conj*}{Conjecture}
\theoremstyle{definition}
\newtheorem{definition}[thm]{Definition}
\theoremstyle{remark}
\newtheorem{rmk}[thm]{Remark}
\newcommand{\mc}{\mathcal}
\newcommand{\cC}{\mc C}
\newcommand{\cH}{\mc H}
\newcommand{\cI}{\mc I}
\newcommand{\cL}{\mc L}
\newcommand{\cM}{\mc M}
\newcommand{\cO}{\mc O}
\newcommand{\cP}{\mc P}
\newcommand{\cQ}{\mc Q}
\newcommand{\cR}{\mc R}
\newcommand{\cS}{\mc S}
\newcommand{\cU}{\mc U}
\newcommand{\cV}{\mc V}
\newcommand{\cX}{\mc X}
\newcommand{\cY}{\mc Y}
\newcommand{\cZ}{\mc Z}
\newcommand{\ms}{\mathscr}
\newcommand{\sC}{\ms C}
\newcommand{\sD}{\ms D}
\newcommand{\sDsupp}{\dot{\ms D}}
\newcommand{\sR}{\ms R}
\newcommand{\C}{\mathbb{C}}
\newcommand{\N}{\mathbb{N}}
\newcommand{\R}{\mathbb{R}}
\newcommand{\Z}{\mathbb{Z}}
\newcommand{\Sph}{\mathbb{S}}
\newcommand{\scal}{\mathbf{S}}
\newcommand{\vect}{\mathbf{V}}
\newcommand{\TT}{\mathbb{T}}
\newcommand{\sfm}{\mathsf{m}}
\newcommand{\sfG}{\mathsf{G}}
\newcommand{\sfM}{\mathsf{M}}
\newcommand{\sfX}{\mathsf{X}}
\newcommand{\bfzero}{\mathbf{0}}
\newcommand{\bfa}{\mathbf{a}}
\newcommand{\bfc}{\mathbf{c}}
\newcommand{\bfA}{\mathbf{A}}
\newcommand{\bfB}{\mathbf{B}}
\newcommand{\bfX}{\mathbf{X}}
\newcommand{\frakt}{\mathfrak{t}}
\newcommand{\sld}{\slashed{d}{}}
\newcommand{\slg}{\slashed{g}{}}
\newcommand{\sln}{\slashed{n}{}}
\newcommand{\slG}{\slashed{G}{}}
\newcommand{\sldelta}{\slashed{\delta}{}}
\newcommand{\slDelta}{\slashed{\Delta}{}}
\newcommand{\slnabla}{\slashed{\nabla}{}}
\newcommand{\sltr}{\slashed{\tr}}
\newcommand{\ran}{\operatorname{ran}}
\newcommand{\End}{\operatorname{End}}
\newcommand{\Hom}{\operatorname{Hom}}
\renewcommand{\Re}{\operatorname{Re}}
\renewcommand{\Im}{\operatorname{Im}}
\newcommand{\Id}{\operatorname{Id}}
\newcommand{\mathspan}{\operatorname{span}}
\newcommand{\supp}{\operatorname{supp}}
\newcommand{\eucl}{\mathrm{eucl}}
\newcommand{\tr}{\operatorname{tr}}
\newcommand{\weak}{\mathrm{weak}}
\newcommand{\op}{\mathrm{op}}
\newcommand{\ord}{\operatorname{ord}}
\newcommand{\rank}{\operatorname{rank}}
\newcommand{\diag}{\operatorname{diag}}
\newcommand{\Res}{\operatorname{Res}}
\newcommand{\res}{\operatorname{res}}
\newcommand{\la}{\langle}
\newcommand{\ra}{\rangle}
\newcommand{\pa}{\partial}
\newcommand{\tn}{\textnormal}
\newcommand{\ff}{\tn{ff}}
\newcommand{\eps}{\epsilon}
\newcommand{\Ups}{\Upsilon}
\newcommand{\xra}{\xrightarrow}
\newcommand{\wt}{\widetilde}
\newcommand{\wh}{\widehat}
\newcommand{\ol}{\overline}
\newcommand{\ul}{\underline}
\newcommand{\ftrans}{\;\!\wh{\ }\;\!}
\newcommand{\IVP}{\mathrm{IVP}}
\newcommand{\AS}{\mathrm{AS}}
\newcommand{\hra}{\hookrightarrow}
\newcommand{\spec}{\operatorname{spec}}
\newcommand{\bop}{{\mathrm{b}}}
\newcommand{\bl}{{\mathrm{b}}}
\newcommand{\cp}{{\mathrm{c}}}
\newcommand{\cl}{{\mathrm{cl}}}
\newcommand{\semi}{\hbar}
\newcommand{\Diff}{\mathrm{Diff}}
\newcommand{\Vf}{\mathcal V}
\DeclareMathOperator{\Op}{Op}
\newcommand{\Oph}{\Op_\semi}
\newcommand{\Vb}{\Vf_\bop}
\newcommand{\Diffb}{\Diff_\bop}
\newcommand{\Psib}{\Psi_\bop}
\newcommand{\Diffbh}{\Diff_{\bop,\semi}}
\newcommand{\Psibh}{\Psi_{\bop,\semi}}
\newcommand{\Psih}{\Psi_\semi}
\newcommand{\Psihcl}{\Psi_{\semi,\mathrm{cl}}}
\newcommand{\Diffh}{\Diff_\semi}
\newcommand{\WF}{\mathrm{WF}}
\newcommand{\Ell}{\mathrm{Ell}}
\newcommand{\Char}{\mathrm{Char}}
\newcommand{\Shom}{S_{\mathrm{hom}}}
\newcommand{\WFb}{\WF_{\bop}}
\newcommand{\Omegab}{{}^{\bop}\Omega}
\newcommand{\Lambdab}{{}^{\bop}\Lambda}
\newcommand{\Tb}{{}^{\bop}T}
\newcommand{\Tzero}{{}^0T}
\newcommand{\Tzerodual}{{}^0T^*}
\newcommand{\Sb}{{}^{\bop}S}
\newcommand{\Nb}{{}^{\bop}N}
\newcommand{\bdiff}{{}^{\bop}d}
\newcommand{\WFbh}{\WF_{\bop,\semi}}
\newcommand{\Ellbh}{\Ell_{\bop,\semi}}
\newcommand{\half}{\frac{1}{2}}
\newcommand{\sigmabh}{\sigma_{\bop,\semi}}
\newcommand{\sigmabhfull}{\sigma_{\bop,\semi}^{\mathrm{full}}}
\newcommand{\ham}{H}
\newcommand{\rham}{{\mathsf{H}}}
\newcommand{\sub}{{\mathrm{sub}}}
\newcommand{\numin}{\nu_{\mathrm{min}}}
\newcommand{\bhm}[1][]{\ensuremath{M_{\bullet\ifthenelse{\isempty{#1}}{}{,#1}}}}
\newcommand{\loc}{{\mathrm{loc}}}
\newcommand{\CI}{\cC^\infty}
\newcommand{\CIdot}{\dot\cC^\infty}
\newcommand{\CIdotc}{\CIdot_\cp}
\newcommand{\CIc}{\cC^\infty_\cp}
\newcommand{\CmI}{\cC^{-\infty}}
\newcommand{\Lb}{L_{\bop}}
\newcommand{\Hb}{H_{\bop}}
\newcommand{\Hbext}{\bar H_{\bop}}
\newcommand{\Hbsupp}{\dot H_{\bop}}
\newcommand{\Hext}{\bar H}
\newcommand{\Hextloc}{\bar H_\loc}
\newcommand{\Hsupp}{\dot H}
\newcommand{\Hsupploc}{\dot H_\loc}
\newcommand{\Hbh}{H_{\bop,\semi}}
\newcommand{\DT}{{\mathrm{DT}}}
\newcommand{\Rhalf}[1]{\R^{#1}_+}
\newcommand{\Rhalfc}[1]{\overline{\R^{#1}_+}}
\newcommand{\fw}{{\mathrm{fw}}}
\newcommand{\bw}{{\mathrm{bw}}}
\newcommand{\tdel}{\wt{\delta}}
\newcommand{\Ric}{\mathrm{Ric}}
\newcommand{\Ein}{\mathrm{Ein}}
\newcommand{\openbigpmatrix}[1]
  {%
    \def\@bigpmatrixsize{#1}%
    \addtolength{\arraycolsep}{-#1}%
    \begin{pmatrix}%
  }
\newcommand{\closebigpmatrix}
  {%
    \end{pmatrix}%
    \addtolength{\arraycolsep}{\@bigpmatrixsize}%
  }
\newcommand{\BoxCP}{\Box^{\mathrm{CP}}}
\newcommand{\wtBoxCP}{\wt\Box^{\mathrm{CP}}}
\newcommand{\BoxGauge}{\Box^\Ups}
\newcommand{\inclfig}[1]{\includegraphics{#1}}
\begin{document}

\title[Non-linear stability of Kerr--de~Sitter]{The global non-linear stability of the Kerr--de~Sitter family of black holes}

\author{Peter Hintz}
\address{Department of Mathematics, University of California, Berkeley, CA 94720-3840, USA}
\email{phintz@berkeley.edu}

\author{Andr\'as Vasy}
\address{Department of Mathematics, Stanford University, CA 94305-2125, USA}
\email{andras@math.stanford.edu}

\date{June 14, 2016. Final revision: February 11, 2018.}

\subjclass[2010]{Primary 83C57, Secondary 83C05, 35B40, 58J47, 83C35}
\keywords{Einstein's equation, black hole stability, constraint damping, global iteration, gauge modification, Nash--Moser iteration, microlocal analysis}

\begin{abstract}
  We establish the full global non-linear stability of the Kerr--de~Sitter family of black holes, as solutions of the initial value problem for the Einstein vacuum equations with positive cosmological constant, for small angular momenta, and without any symmetry assumptions on the initial data. We achieve this by extending the linear and non-linear analysis on black hole spacetimes described in a sequence of earlier papers by the authors: we develop a general framework which enables us to deal systematically with the diffeomorphism invariance of Einstein's equations. In particular, the iteration scheme used to solve Einstein's equations automatically finds the parameters of the Kerr--de~Sitter black hole that the solution is asymptotic to, the exponentially decaying tail of the solution, and the gauge in which we are able to find the solution; the gauge here is a wave map/DeTurck type gauge, modified by source terms which are treated as unknowns, lying in a suitable finite-dimensional space.
\end{abstract}

\maketitle

\tableofcontents

\section{Introduction}
\label{SecIntro}

According to Einstein's theory of General Relativity, a vacuum spacetime with cosmological constant $\Lambda\in\R$ is a $(3+1)$-dimensional manifold $M$ equipped with a Lorentzian metric $g$ satisfying the Einstein vacuum equation $\Ein(g)=\Lambda g$, where $\Ein(g)=\Ric(g)-\half R_g g$ is the Einstein tensor. An equivalent formulation of this equation is
\begin{equation}
\label{EqIntroEinstein}
  \Ric(g) + \Lambda g = 0.
\end{equation}
A Kerr--de~Sitter spacetime, discovered by Kerr \cite{KerrKerr} and Carter \cite{CarterHamiltonJacobiEinstein}, models a stationary, rotating black hole within a universe with $\Lambda>0$: far from the black hole, the spacetime behaves like de~Sitter space with cosmological constant $\Lambda$, and close to the event horizon of the black hole like a Kerr black hole. Fixing $\Lambda>0$, a $(3+1)$-dimensional Kerr--de~Sitter spacetime $(M^\circ,g_b)$ depends, up to diffeomorphism equivalence, on two real parameters, namely the mass $\bhm>0$ of the black hole and its angular momentum $a$. For our purposes it is in fact better to consider the angular momentum as a vector $\bfa\in\R^3$. The Kerr--de~Sitter family of black holes is then a smooth family $g_b$ of stationary Lorentzian metrics, parameterized by $b=(\bhm,\bfa)$, on a fixed 4-dimensional manifold $M^\circ\cong\R_{t_*}\times(0,\infty)_r\times\Sph^2$ solving the equation~\eqref{EqIntroEinstein}. The Schwarzschild--de~Sitter family is the subfamily $(M^\circ,g_b)$, $b=(\bhm,\bfzero)$, of the Kerr--de~Sitter family; a Schwarzschild--de~Sitter black hole describes a static, non-rotating black hole. We point out that according to the currently accepted $\Lambda$CDM model, the cosmological constant is indeed positive in our universe \cite{RiessEtAlLambda,PerlmutterEtAlLambda}.

The equation \eqref{EqIntroEinstein} is a non-linear second order partial differential equation (PDE) for the metric tensor $g$. Due to the diffeomorphism invariance of this equation, the formulation of a well-posed initial value problem is more subtle than for (non-linear) wave equations. This was first accomplished by Choquet-Bruhat \cite{ChoquetBruhatLocalEinstein}, who with Geroch \cite{ChoquetBruhatGerochMGHD} proved the existence of maximal globally hyperbolic developments for sufficiently smooth initial data. We will discuss such formulations in detail later in this introduction as well as in \S\ref{SecHyp}. The correct notion of initial data is a triple $(\Sigma_0,h,k)$, consisting of
\begin{itemize}
\item a $3$-manifold $\Sigma_0$,
\item a Riemannian metric $h$ on $\Sigma_0$,
\item a symmetric 2-tensor $k$ on $\Sigma_0$,
\end{itemize}
subject to the \emph{constraint equations}, which are the Gauss--Codazzi equations on $\Sigma_0$ implied by \eqref{EqIntroEinstein}. Fixing $\Sigma_0$ as a submanifold of $M^\circ$, a metric $g$ satisfying \eqref{EqIntroEinstein} is then said to solve the initial value problem with data $(\Sigma_0,h,k)$ if
\begin{itemize}
\item $\Sigma_0$ is spacelike with respect to $g$;
\item $h$ is the Riemannian metric on $\Sigma_0$ induced by $g$;
\item $k$ is the second fundamental form of $\Sigma_0$ within $M^\circ$.
\end{itemize}
Our main result concerns the \emph{global non-linear asymptotic stability of the Kerr--de~Sitter family as solutions of the initial value problem for \eqref{EqIntroEinstein}}; we prove this for slowly rotating black holes, i.e.\ near $a=0$. To state the result in the simplest form, let us fix a Schwarzschild--de~Sitter spacetime $(M^\circ,g_{b_0})$, and within it a compact spacelike hypersurface $\Sigma_0\subset\{t_*=0\}\subset M^\circ$ extending slightly beyond the event horizon $r=r_-$ and the cosmological horizon $r=r_+$; let $(h_{b_0},k_{b_0})$ be the initial data on $\Sigma_0$ induced by $g_{b_0}$. Denote by $\Sigma_{t_*}$ the translates of $\Sigma_0$ along the flow of $\pa_{t_*}$, and let $\Omega^\circ=\bigcup_{t_*\geq 0}\Sigma_{t_*} \subset M^\circ$ be the spacetime region swept out by these; see Figure~\ref{FigIntroBaby}. Note that since we only consider slow rotation speeds, it suffices to consider perturbations of Schwarzschild--de~Sitter initial data, which in particular includes slowly rotating Kerr--de~Sitter black holes initial data (and their perturbations).

\begin{thm}[Stability of the Kerr--de~Sitter family for small $a$; informal version]
\label{ThmIntroBaby}
  Suppose $(h,k)$ are smooth initial data on $\Sigma_0$, satisfying the constraint equations, which are close to the data $(h_{b_0},k_{b_0})$ of a Schwarzschild--de~Sitter spacetime in a high regularity norm. Then there exist a solution $g$ of \eqref{EqIntroEinstein} attaining these initial data at $\Sigma_0$, and black hole parameters $b$ which are close to $b_0$, so that
  \[
    g - g_b = \cO(e^{-\alpha t_*})
  \]
  for a constant $\alpha>0$ independent of the initial data; that is, $g$ decays exponentially fast to the Kerr--de~Sitter metric $g_b$. Moreover, $g$ and $b$ are quantitatively controlled by $(h,k)$.
\end{thm}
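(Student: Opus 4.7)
\medskip

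\textbf{Overall strategy.} The plan is to formulate \eqref{EqIntroEinstein} as a quasilinear hyperbolic problem via a DeTurck/wave-map gauge relative to a reference Kerr--de~Sitter metric, and solve it by an iteration in which both the final parameters $b$ and a finite-dimensional gauge correction are unknowns determined \emph{together with} the perturbation $\tilde g = g - g_{b_0}$. Concretely, I would replace \eqref{EqIntroEinstein} by the gauge-fixed Einstein equation
\[
  P(g,b,\theta) := \Ric(g) + \Lambda g - \delta_g^* \bigl(\Upsilon(g;g_b) - \theta\bigr) = 0,
\]
where $\Upsilon(g;g_b) = g g_b^{-1}\delta_g G_g g_b$ is the DeTurck one-form measuring the failure of the identity map to be a wave map from $(M^\circ,g)$ to $(M^\circ,g_b)$, and $\theta$ is a source term living in a fixed finite-dimensional space $\Theta\subset\CIc$ chosen to absorb the obstructions of the linearized operator. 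Because $\delta_g G_g P = 0$ along any solution, a standard constraint-propagation argument (the Bianchi second identity applied to a wave equation for $\Upsilon - \theta$) will show that a solution of $P=0$ attaining the correct Cauchy data actually satisfies \eqref{EqIntroEinstein} in the gauge $\Upsilon(g;g_b) = \theta$.

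\medskip

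\textbf{Linear analysis on Kerr--de~Sitter.} The heart of the argument is a sharp linear stability statement for the linearization $L_b = D_g P(g_b,b,0)$ at a Kerr--de~Sitter background. Here I would use the microlocal/b-calculus framework of the authors' earlier work on quasinormal modes and resolvent estimates on asymptotically Kerr--de~Sitter spacetimes: radial point estimates at the (generalized) horizons, semiclassical estimates at the trapped set (which is normally hyperbolic), and estimates at the conormal/cosmological region control $L_b$ modulo a finite-dimensional kernel consisting of \emph{physical} deformations $\partial_{b'} g_{b'}|_{b'=b}$ together with pure gauge modes $\delta_{g_b}^*\omega$. Combined with high-frequency mode stability one obtains, for data orthogonal to this kernel, an exponentially decaying solution $\tilde g = \cO(e^{-\alpha t_*})$ with loss of only finitely many derivatives. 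The role of $\theta\in\Theta$ is precisely to compensate the pure gauge kernel, while variation of $b$ compensates the parameter kernel; thus the map $(\tilde g, b-b_0, \theta)\mapsto P$ becomes linearly surjective onto the space of exponentially decaying tensors at the background.

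\medskip

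\textbf{Nonlinear iteration.} With the linear theorem in hand, I would set up a Newton-type (Nash--Moser if necessary, to deal with the finite derivative loss at trapping) iteration on weighted b-Sobolev spaces encoding exponential decay. At each step the linear solver returns $(\tilde g_{n+1}, b_{n+1}, \theta_{n+1})$ from the residual $P(g_n,b_n,\theta_n)$; smoothing operators are inserted in the standard way to close the tame estimates. The initial value problem feeds in through the construction of an approximate solution matching $(h,k)$ on $\Sigma_0$ to infinite order at $t_*=0$, using the constraint equations (a standard solvability result for the linearized constraints at nearby data, e.g.\ via the Corvino--Schoen/Bartnik projection, ensures existence of such a matching Cauchy datum within the gauge). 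Quantitative control of $g,b,\theta$ by $(h,k)$ then follows from the tame contraction estimates.

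\medskip

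\textbf{Expected main obstacle.} The principal difficulty is the linear stability step for the \emph{tensorial} wave-type operator $L_{g_b}$ on Kerr--de~Sitter: one must rule out all non-decaying modes of $L_{g_b}$ other than those coming from the Kerr--de~Sitter parameter family and from pure gauge, and identify a gauge modification space $\Theta$ that is both finite-dimensional and effective at suppressing the gauge modes while preserving the hyperbolic structure so that energy/radial/trapping estimates still go through. Controlling the trapped set uniformly as $b$ varies (so that the iteration stays inside a neighborhood of $b_0$) and propagating constraints under the modified gauge condition $\Upsilon(g;g_b)=\theta$ are the other delicate points; once these are settled, the Nash--Moser loop and the passage from gauge-fixed solution to \eqref{EqIntroEinstein} are relatively standard.
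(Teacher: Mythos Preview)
Your outline captures the overall architecture correctly (gauge-fixed quasilinear equation, finite-dimensional modification space for parameters and gauge, Nash--Moser iteration), and you correctly identify the linear mode analysis of $L_b$ as the crux. However, there is a genuine gap at exactly that point, and your proposed operator $P$ would not work as written.

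The issue is your use of the unmodified symmetric gradient $\delta_g^*$ in $P(g,b,\theta)=\Ric(g)+\Lambda g-\delta_g^*(\Upsilon(g;g_b)-\theta)$. For this operator, the linearization $L_b$ has non-decaying resonances which are \emph{neither} linearized Kerr--de~Sitter metrics \emph{nor} pure gauge modes $\delta_{g_b}^*\omega$; this already happens on de~Sitter space, where the DeTurck-gauged linearized Einstein operator has exponentially growing modes. Your modification space $\Theta$ of gauge source terms can only cancel asymptotics of the form $\delta_{g_b}^*\omega$, and variation of $b$ only cancels the Kerr--de~Sitter family; the remaining non-gauge, non-physical growing modes are untouched, so the linear solvability statement you need (surjectivity onto decaying tensors after modification by $(b',\theta)$) is simply false. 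You flag ``ruling out non-decaying modes other than Kerr--de~Sitter and pure gauge'' as the main obstacle, but you do not propose a mechanism, and with $\delta_g^*$ there is none.

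The paper's resolution is to replace $\delta_g^*$ by a zeroth-order modification $\tdel^*=\delta_{g_{b_0}}^*+\gamma_1\,dt_*\otimes_s(\cdot)-\gamma_2\,g_{b_0}\tr_{g_{b_0}}(dt_*\otimes_s(\cdot))$ with $\gamma_1,\gamma_2$ large, chosen so that the \emph{constraint propagation operator} $\wtBoxCP_g=2\delta_g G_g\tdel^*$ has no resonances in $\Im\sigma\geq 0$ (``stable constraint propagation''). This forces every non-decaying resonance of the modified linearized operator to satisfy the linearized ungauged Einstein equation, whence UEMS guarantees it is Kerr--de~Sitter plus pure gauge. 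Only then does the finite-dimensional modification $(b',\theta)$ suffice, and this is moreover robust under perturbation to $b\neq b_0$ without redoing mode stability for rotating backgrounds. A secondary point: your gauge $\Upsilon(g;g_b)=\theta$ with $b$ varying makes the Cauchy-data map $b$-dependent, which is awkward in the iteration; the paper instead fixes $\Upsilon$ relative to $g_{b_0}$ near $\Sigma_0$ and interpolates to $g_b$ at infinity via $g_{b_0,b}=(1-\chi)g_{b_0}+\chi g_b$.
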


\begin{figure}[!ht]
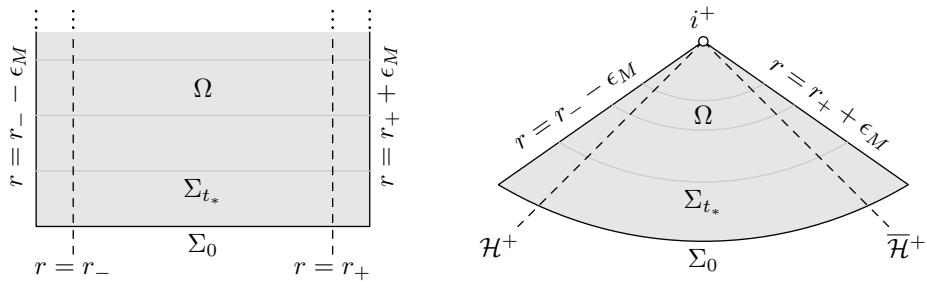

  \centering
  \inclfig{IntroBaby}
  \caption{Setup for the initial value problem for perturbations of a Schwarzschild--de~Sitter spacetime $(M^\circ,g_{b_0})$, showing the Cauchy surface $\Sigma_0$ of $\Omega$ and a few translates $\Sigma_{t_*}$; here $\eps_M>0$ is small. \emph{Left:} Product-type picture, illustrating the stationary nature of $g_{b_0}$. \emph{Right:} Penrose diagram of the same setup. The event horizon is $\cH^+=\{r=r_-\}$, the cosmological horizon is $\ol\cH{}^+=\{r=r_+\}$, and the (idealized) future timelike infinity is $i^+$.}
  \label{FigIntroBaby}
\end{figure}

In particular, \emph{we do not require any symmetry assumptions on the initial data}. We refer to Theorem~\ref{ThmIntroPrecise} for a more precise version of the theorem. Above, we measure the pointwise size of tensors on $\Sigma_0$ by means of the Riemannian metric $h_{b_0}$, and the pointwise size of tensors on the spacetime $M^\circ$ by means of a fixed smooth stationary Riemannian metric $g_R$ on $M^\circ$. The norms we use for $(h-h_{b_0},k-k_{b_0})$ on $\Sigma_0$ and of $g-g_b$ on $\Sigma_{t_*}$ are then high regularity Sobolev norms; any two choices of $g_R$ yield equivalent norms. If $(h,k)$ are smooth and sufficiently close to $(h_{b_0},k_{b_0})$ in a fixed high regularity norm, the solution $g$ we obtain is smooth as well, and in a suitable Fr\'echet space of smooth symmetric 2-tensors on $M^\circ$ depends smoothly on $(h,k)$, as does $b$.

In terms of the maximal globally hyperbolic development (MGHD) of the initial data $(h,k)$, Theorem~\ref{ThmIntroBaby} states that the MGHD contains a subset isometric to $\Omega^\circ$ on which the metric decays at an exponential rate to $g_b$.

We stress that a \emph{single} member of the Kerr--de~Sitter family is \emph{not} stable: small perturbations of the initial data of, say, a Schwarzschild--de~Sitter black hole, will in general result in a solution which decays to a Kerr--de~Sitter metric with slightly different mass and non-zero angular momentum. We are not aware of any way by which one can determine the final black hole parameters $b$, but which does not require finding the global solution of the initial value problem. Investigating any such method, potentially via making a connection to different notions of mass on asymptotically hyperbolic manifolds \cite{WangAHMMass,ZhangAHMMass,ChruscielHerzlichAHMMass,ChenWangYauConserved}, would be a very interesting problem.

Earlier global non-linear stability results for the Einstein equation include Fried\-rich's work \cite{FriedrichStability} on the stability of $(3+1)$-dimensional de~Sitter space, the monumental proof by Christodoulou and Klainerman \cite{ChristodoulouKlainermanStability} of the stability of $(3+1)$-dimensional Minkowski space. Partial simplifications and extensions of these results were proved by Anderson \cite{AndersonStabilityEvenDS} on higher-dimensional de~Sitter spacetimes, Lindblad and Rodnianski \cite{LindbladRodnianskiGlobalExistence,LindbladRodnianskiGlobalStability} and Bieri--Zipser \cite{BieriZipserStability} on Minkowski space, further Ringstr\"om \cite{RingstromEinsteinScalarStability} for a general Einstein--scalar field system, as well as by Rodnianski and Speck \cite{RodnianskiSpeckEulerEinsteinDS} (and the related \cite{SpeckFluidExpanding}) on Friedmann--Lema\^{i}tre--Robertson--Walker spacetimes; see \S\ref{SubsecIntroPrev} for further references. Theorem~\ref{ThmIntroBaby} is the first result for the Einstein equation proving an \emph{orbital stability} statement (i.e.\ decay to a member of a family of spacetimes, rather than decay to the spacetime one is perturbing), and the flexibility of the techniques we use should allow for investigations of many further orbital stability questions. Natural examples are the non-linear stability of the Kerr--Newman--de~Sitter family of rotating and charged black holes as solutions of the coupled Einstein--Maxwell system,\footnote{Since the first version of this paper, this has been accomplished by the first author \cite{HintzKNdSStability}.} and the stability of higher-dimensional black holes.

The proof of Theorem~\ref{ThmIntroBaby} will be given in \S\ref{SecKdSStab}. It uses a generalized wave coordinate gauge adjusted `dynamically' (from infinity) by finite-dimensional gauge modifications. The key tool is the precise analysis of the linearized problem around a Schwarzschild--de~Sitter metric. We develop a robust framework that has powerful stability properties with respect to perturbations; we will describe its main ingredients, in particular the manner in which we adapt our choice of gauge, in \S\ref{SubsecIntroIdeas}. (As a by-product of our analysis, we obtain a very general finite-codimension solvability result for quasilinear wave equations on Kerr--de~Sitter spaces, see Appendix~\ref{SecQ}.) The restriction to small angular momenta in Theorem~\ref{ThmIntroBaby} is then due to the fact that the required algebra is straightforward for linear equations on a Schwarzschild--de~Sitter background, but gets rather complicated for non-zero angular momenta; we explain the main calculations one would have to check to extend our result to large angular momenta in Remark~\ref{RmkIntroNonObvious}. Our framework builds on a number of recent advances in the global geometric microlocal analysis of black hole spacetimes which we recall in \S\ref{SubsecIntroIdeas}; `traditional' energy estimates play a very minor role, and are essentially only used to deal with the Cauchy surface $\Sigma_0$ and the artificial boundaries at $r=r_\pm\pm\eps_M$ in Figure~\ref{FigIntroBaby}. For solving the non-linear problem, we use a Nash--Moser iteration scheme, which proceeds by solving a \emph{linear} equation \emph{globally} at each step and is thus rather different in character from bootstrap arguments. (See also the introduction of \cite{HintzVasySemilinear}.)

Our main theorem and the arguments involved in its proof allow for further conclusions regarding the phenomenon of \emph{ringdown}, the problem of \emph{black hole uniqueness}, and suggest a future path to a definitive resolution of Penrose's \emph{Strong Cosmic Censorship} conjecture for cosmological spacetimes; see \S\ref{SubsecIntroCsq} for more on this.

Using our methods, we give a direct proof of the linear stability of slowly rotating Kerr--de~Sitter spacetimes in \S\ref{SecKdSLStab} as a `warm-up,' illustrating the techniques developed in the preceding sections.

\begin{thm}
\label{ThmIntroBabyL}
  Fix a slowly rotating Kerr--de~Sitter spacetime $(M^\circ,g_b)$, and $\Sigma_0$ as above. Suppose $(h',k')$ is a pair of symmetric 2-tensors, with high regularity, solving the linearized constraint equations around $(h_b,k_b)$. Then there exist a solution $r$ of the linearized Einstein vacuum equation
  \[
    D_{g_b}(\Ric+\Lambda)(r)=0,
  \]
  attaining these initial data at $\Sigma_0$, and $b'\in\R^4$ such that
  \[
    r - \frac{d}{ds}g_{b+s b'}|_{s=0} = \cO(e^{-\alpha t_*})
  \]
  for a constant $\alpha>0$ independent of the initial data; that is, the gravitational perturbation $r$ decays exponentially fast to a linearized Kerr--de~Sitter metric.
\end{thm}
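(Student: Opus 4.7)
The plan is to reduce the linearized Einstein equation to a hyperbolic linear wave equation by imposing a linearized wave map / DeTurck gauge, to solve that wave equation globally using the Fredholm and resonance machinery developed in the preceding sections, and then to show that every non-decaying mode that can appear is either a linearized Kerr--de~Sitter parameter change or a pure gauge contribution, which are exactly the two types of terms allowed in the statement.

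The first step is to introduce the gauge-fixed operator
\[
L_b r := D_{g_b}(\mathrm{Ric}+\Lambda)(r) - \delta_{g_b}^*\bigl(D_{g_b}\Upsilon(r)\bigr),
\]
where $\Upsilon$ is a wave-map one-form normalized so that $\Upsilon(g_b)=0$, and $\delta_{g_b}^*$ is the symmetric gradient of $g_b$. The operator $L_b$ is principally a scalar wave operator on symmetric 2-tensors, hence hyperbolic on $(M^\circ,g_b)$. Given data $(h',k')$ satisfying the linearized constraints on $\Sigma_0$, a linearized Choquet-Bruhat style construction produces Cauchy data for the wave equation $L_b r = 0$ which realize $(h',k')$ as the induced linearized geometric data, and which simultaneously make the linearized gauge one-form $\upsilon := D_{g_b}\Upsilon(r)$ and its normal derivative vanish on $\Sigma_0$. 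By the linearized second Bianchi identity, $\upsilon$ itself then satisfies a homogeneous wave equation with trivial Cauchy data, so $\upsilon\equiv 0$ on $\Omega^\circ$, and any solution of $L_b r = 0$ produced below also solves the ungauged equation $D_{g_b}(\mathrm{Ric}+\Lambda)(r)=0$.

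The second step is the global solution of $L_b r = 0$. Since $L_b$ is stationary on $\Omega^\circ$, its Mellin conjugate $\widehat{L_b}(\sigma)$ is a meromorphic Fredholm family on suitable function spaces, with a discrete set of resonances; the contour-shifting / resonance expansion machinery of the earlier sections (powered by radial point estimates at the horizons, normally hyperbolic trapping estimates, and semiclassical high-energy bounds, all available for slowly rotating $b$) yields a splitting
\[
r = \sum_{\Im\sigma_j\geq 0} r_j + \tilde r,\qquad \tilde r = \cO(e^{-\alpha t_*}),
\]
for some $\alpha>0$ determined by the size of the first spectral gap below the real axis, where each $r_j$ is a (polynomially weighted) oscillatory resonant mode associated to a resonance $\sigma_j$ in the closed upper half plane.

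The main obstacle, and the real content of the theorem, is to identify this resonant sum. I would argue that the only resonance of $L_b$ in $\Im\sigma\geq 0$ is at $\sigma=0$, and that its generalized kernel decomposes as the sum of the four-dimensional space of infinitesimal parameter changes $\{\tfrac{d}{ds}g_{b+sb'}|_{s=0}:b'\in\R^4\}$ and the space of pure gauge modes $\{\cL_V g_b: V\text{ stationary}\}$. I would establish this perturbatively from the Schwarzschild--de~Sitter case $a=0$, where one can decompose into scalar, vector, and tensor spherical harmonics and carry out a mode-by-mode ODE analysis, using the linearized constraints and the linearized gauge equation to eliminate any would-be growing or stationary modes not already accounted for. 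Smooth dependence of $\widehat{L_b}(\sigma)$ on $b$ together with Fredholm perturbation theory (the total dimension of the generalized eigenspaces in a fixed small disk about $\sigma=0$ is upper-semicontinuous in $b$, while it is bounded below by the dimension of the parameter-plus-gauge subspace, hence constant) then transports this identification to slowly rotating $b$. Subtracting $\tfrac{d}{ds}g_{b+sb'}|_{s=0}$ (which determines $b'$) and $\cL_X g_b$ (which determines a stationary vector field $X$) from $r$ leaves precisely the remainder $\tilde r = \cO(e^{-\alpha t_*})$, proving the claim.
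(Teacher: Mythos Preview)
Your overall outline---gauge-fix to a hyperbolic operator, solve globally, extract a resonance expansion via ESG, then identify each non-decaying mode---matches the paper, and for $b=b_0$ (Schwarzschild--de~Sitter) your argument is essentially Theorem~\ref{ThmKdSLStabNaive}: since the initial data satisfy the linearized constraints, the solution of $L_{b_0}r=0$ also solves the ungauged equation, each term in the resonance expansion inherits this by stationarity, and UEMS classifies every such mode. So far so good.

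The gap is in the perturbation to $b\neq b_0$. Two issues. First, your claim that ``the only resonance of $L_b$ in $\Im\sigma\geq 0$ is at $\sigma=0$'' is not what the paper establishes and is almost certainly false: the gauged operator $L_{b_0}$ has non-decaying resonances at various frequencies, all of which happen to be pure gauge modes $\delta_{g_{b_0}}^*\omega$ (see Proposition~\ref{PropKdSLStabComplement} and the space $\Theta_{\neq 0}$ built from resonances $\sigma_j\neq 0$). Second, and more fundamentally, your dimension-counting perturbation argument does not close. The total rank of resonances in a fixed open set is indeed constant in $b$ (Proposition~\ref{PropAsySmPert}), but constancy of rank tells you nothing about the \emph{structure} of the resonant states for $b\neq b_0$: you need to know that each such state is still pure gauge or a linearized Kerr--de~Sitter metric. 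For that you would need UEMS at $b$, which is exactly what you are trying to prove. The paper flags this explicitly in the paragraph after Theorem~\ref{ThmKdSLStabNaive} (``changing the metric $g_{b_0}$ \ldots\ to any nearby Kerr--de~Sitter metric $g_b$ \ldots\ makes the argument collapse immediately'') and in Remark~\ref{RmkKdSLStabWhyStable}.

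The paper's remedy is to replace $\delta_{g_b}^*$ by a zeroth-order modification $\tdel^*$ chosen so that the constraint propagation operator $\wtBoxCP_g=2\delta_g G_g\tdel^*$ has no resonances in the closed upper half plane (SCP, Theorem~\ref{ThmKeySCP}). This forces every non-decaying resonant state of $L_b$ to satisfy $D_{g_b}\Upsilon(r)=0$ automatically---without using the constraint equations---and hence to solve the ungauged equation. For $b_0$ one then combines this with UEMS to build a finite-dimensional modification space $\Theta$ (Proposition~\ref{PropKdSLStabComplement}); the robust perturbation result Corollary~\ref{CorAsySmPertNoAsy} then shows the \emph{same} $\Theta$ works for nearby $b$, yielding Theorem~\ref{ThmKdSLStabKdS}. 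The point is that SCP turns a statement about resonant states (which does not perturb well) into a statement about the surjectivity of a pairing map (which does). Your proposal is missing this mechanism.
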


We stress that the non-linear stability is only slightly more complicated to prove than the linear stability, given the robust framework we set up in this paper; we explain this in the discussion leading up to the statement of Theorem~\ref{ThmIntroPrecise}.

Theorem~\ref{ThmIntroBabyL} is the analogue of the recent result of Dafermos, Holzegel and Rodnianski \cite{DafermosHolzegelRodnianskiSchwarzschildStability} on the linear stability of the Schwarzschild spacetime (i.e.\ with $\Lambda=0$ and $a=0$). We will discuss the differences and similarities of their paper (and related works) with the present paper in some detail below.

We point out that the Ricci-flat analogues of Theorems~\ref{ThmIntroBaby} and \ref{ThmIntroBabyL}, i.e.\ with cosmological constant $\Lambda=0$, remain very interesting and challenging problems to study: the limit $\Lambda\to 0+$ is rather degenerate in that it replaces an asymptotically hyperbolic problem (far away from the black hole) by an asymptotically Euclidean one, which in particular drastically affects the low frequency behavior of the problem and thus the expected decay rates (polynomial rather than exponential). Furthermore, for $\Lambda=0$, one needs to use an additional `null-structure' of the non-linearity to analyze non-linear interactions near the light cone at infinity (`null infinity'), while this is not needed for $\Lambda>0$. See \S\ref{SubsecIntroPrev} for references and further discussion.

We take this opportunity to comment on the role of the \emph{small} positive cosmological constant and black holes in an astronomical context. While on the spatial scales relevant for the study of isolated gravitational systems the cosmological constant is negligible, no matter what its value is (or even what sign it has), the positivity of $\Lambda$ is important on the \emph{infinite} time scale on which one must work when studying stability questions. Put differently, $\Lambda$ is negligible for the large, but \emph{finite} time scale corresponding to the spatial scale on which it is negligible; thus, for large times (which in particular covers computations in numerical general relativity), $\Lambda$ can be ignored. The idealized case $\Lambda=0$ is nonetheless very interesting not only from a mathematical but also from a practical point of view, as it allows for the clean definition of quantities of physical (and experimental) interest such as null infinity, gravitational wave energy, the Christodoulou memory effect, etc.

\subsection{Main ideas of the proof}
\label{SubsecIntroIdeas}

For the reader unfamiliar with Einstein's equations, we begin by describing some of the fundamental difficulties one faces when studying the equation \eqref{EqIntroEinstein}. Typically, PDEs have many solutions; one can specify additional data. For instance, for hyperbolic second order PDEs such as the wave equation, say on a closed manifold cross time, one can specify Cauchy data, i.e.\ a pair of data corresponding to the initial amplitude and momentum of the wave. The question of stability for solutions of such a PDE is then whether for small perturbations of the additional data the solution still exists and is close to the original solution. Of course, this depends on the region on which we intend to solve the PDE, and more precisely on the function spaces we solve the PDE in. Typically, for an evolution equation like the wave equation, one has short time solvability and stability, sometimes global in time existence, and then stability is understood as a statement that globally the solution is close to the unperturbed solution, and indeed sometimes in the stronger sense of being asymptotic to it as time tends to infinity.

The Einstein equation is closest in nature to hyperbolic equations. Thus, the stability question for the Einstein equation (with fixed $\Lambda$) is whether when one perturbs `Cauchy data,' the solutions are globally `close,' and possibly even asymptotic to the unperturbed solution. However, since the equations are not hyperbolic, one needs to be careful by what one means by `the' solution, `closeness' and `Cauchy data.' Concretely, the root of the lack of hyperbolicity of \eqref{EqIntroEinstein} is the diffeomorphism invariance: if $\phi$ is a diffeomorphism that is the identity map near an initial hypersurface, and if $g$ solves Einstein's equations, then so does $\phi^*g$, with the same initial data. Thus, one cannot expect uniqueness of solutions without fixing this diffeomorphism invariance; by duality one cannot expect solvability for arbitrary Cauchy data either.

It turns out that there are hyperbolic formulations of Einstein's equations; these formulations break the diffeomorphism invariance by requiring more than merely solving Einstein's equations. A way to achieve this is to require that one works in coordinates which themselves solve wave equations, as in the pioneering work \cite{ChoquetBruhatLocalEinstein} and also used in \cite{LindbladRodnianskiGlobalExistence,LindbladRodnianskiGlobalStability}; very general hyperbolic formulations of Einstein's equations, where the wave equations may in particular have (fixed) source terms, were worked out in \cite{FriedrichHyperbolicityEinstein}. More sophisticated than the source-free wave coordinate gauge, and more geometric in nature, is DeTurck's method \cite{DeTurckPrescribedRicci}, see also the paper by Graham--Lee \cite{GrahamLeeConformalEinstein}, which fixes a background metric $g^0$ and requires that the identity map be a harmonic map from $(M^\circ,g)$ to $(M^\circ,g^0)$, where $g$ is the solution we are seeking. This can be achieved by considering a PDE that differs from the Einstein equation due to the presence of an extra \emph{gauge-fixing} term:
\begin{equation}
\label{EqIntroDeTurck}
  \Ric(g)+\Lambda g-\Phi(g,g^0)=0.
\end{equation}
We call this the \emph{gauged Einstein equation} (sometimes also called \emph{reduced Einstein equation}). For suitable $\Phi$, discussed below, this equation is actually hyperbolic. One then shows that one can construct Cauchy data for this equation from the geometric initial data $(\Sigma_0,h,k)$ (with $\Sigma_0\hra M^\circ$) so that $\Phi$ vanishes at first on $\Sigma_0$, and then identically on the domain of dependence of $\Sigma_0$. Thus, one has a solution of Einstein's equations as well, in the gauge $\Phi(g,g^0)=0$.

Concretely, fixing a metric $g^0$ on $M^\circ$, the DeTurck gauge (or wave map gauge) takes the form
\begin{equation}
\label{EqIntroDeTurckExtra}
  \Phi(g,g^0)=\delta_g^*\Ups(g),\quad \Ups(g)=g(g^0)^{-1}\delta_g \sfG_g g^0,
\end{equation}
where $\delta_g^*$ is the symmetric gradient relative to $g$, $\delta_g$ is its adjoint (divergence), and $\sfG_g r:=r-\frac{1}{2}(\tr_g r)g$ is the trace reversal operator. Here $\Ups(g)$ is the \emph{gauge one form}. One typically chooses $g^0$ to be a metric near which one wishes to show stability, and in the setting of Theorem~\ref{ThmIntroBaby} we will in fact take $g^0=g_{b_0}$; thus $\Ups(g_{b_0})$ vanishes. Given initial data satisfying the constraint equations, one then constructs Cauchy data for $g$ in \eqref{EqIntroDeTurck} giving rise to the given initial data and moreover solving $\Ups(g)=0$ (note that $\Ups(g)$ is a \emph{first order} non-linear differential operator) at $\Sigma_0$. Solving the gauged Einstein equation \eqref{EqIntroDeTurck} and then using the constraint equations, the normal derivative of $\Ups(g)$ at $\Sigma_0$ also vanishes. Then, applying $\delta_g \sfG_g$ to \eqref{EqIntroDeTurck} shows in view of the second Bianchi identity that $\delta_g \sfG_g\delta_g^*\Ups(g)=0$. Since
\[
  \BoxCP_g:=2\delta_g \sfG_g\delta_g^*
\]
is a wave operator, this shows that $\Ups(g)$ vanishes identically. (See \S\ref{SecHyp} for more details.)

The specific choice of gauge, i.e.\ in this case the choice of background metric $g^0$, is irrelevant for the purpose of establishing the short-time existence of solutions of the Einstein equation. Indeed, if one fixes an initial data set but chooses two different background metrics $g^0$, then solving the resulting two versions of \eqref{EqIntroDeTurck} will produce two generally different symmetric 2-tensors $g$ on $M^\circ$ which attain the given data and solve the Einstein equation; however, they are always related by a diffeomorphism, i.e.\ one is the pullback of the other by a diffeomorphism. On the other hand, \emph{global} (or even just long time) \emph{existence} of solutions of the Einstein equation by means of hyperbolic formulations like \eqref{EqIntroDeTurck} depends very sensitively on the choice of gauge, as does the asymptotic behavior of global solutions. (Global existence and large time asymptotics are strongly coupled, as one can usually only obtain the former when one has a precise understanding of the latter.) Thus, finding a suitable gauge is the fundamental problem in the study of \eqref{EqIntroEinstein}, which we overcome in the present paper in the setting of Theorem~\ref{ThmIntroBaby}.

Let us now proceed to discuss \eqref{EqIntroDeTurck} from the perspective most useful for the present paper. (For a general overview of the large body of work on Kerr and Kerr--de~Sitter spacetimes in the last decades, we refer the reader to~\S\ref{SubsecIntroPrev}.) The key advances in understanding hyperbolic equations globally on a background like Kerr--de~Sitter space were the paper \cite{VasyMicroKerrdS} by the second author, where microlocal tools were introduced and used to provide a Fredholm framework for global non-elliptic analysis; for waves on Kerr--de~Sitter spacetimes, this uses the microlocal analysis at the trapped set of Wunsch--Zworski \cite{WunschZworskiNormHypResolvent} and Dyatlov \cite{DyatlovSpectralGaps} as an external input. In the paper \cite{HintzVasySemilinear}, the techniques of \cite{VasyMicroKerrdS} were extended to non-stationary settings, using the framework of Melrose's b-pseudodifferential operators, and shown to apply to semi-linear equations; in \cite{HintzQuasilinearDS}, the techniques were extended to quasilinear equations on de~Sitter-like spaces by introducing operators with non-smooth (high regularity b-Sobolev) coefficients. In \cite{HintzVasyQuasilinearKdS}, the additional difficulty of trapping in Kerr--de~Sitter space was overcome by Nash--Moser iteration based techniques, using the simple formulation of the Nash--Moser iteration scheme given by Saint-Raymond \cite{SaintRaymondNashMoser}.

The papers \cite{VasyMicroKerrdS,HintzVasySemilinear,HintzQuasilinearDS,HintzVasyQuasilinearKdS} show global existence and asymptotic stability of solutions to (systems of) second order quasilinear PDEs under the following conditions:
\begin{enumerate}
\item the second order terms in the PDE are given by the wave equation  with respect to a metric $g$ which depends on the unknown (vector-valued) function $u$ as well as on the gradient $\nabla u$;
\item for $u\equiv 0$, the metric $g$ is a Kerr--de~Sitter metric (one can also handle small stationary perturbations of the Kerr--de~Sitter metric, though this is not used in the present paper);
\item\label{ItIntroCondTrapped} the linearized operator at $u=0$, which we denote by $L$, satisfies a subprincipal sign condition on the trapped set;
\item\label{ItIntroCondRes} the equation $L v=0$ has no \emph{resonances} $\sigma\in\C$ in the closed upper half plane, that is, there do not exist \emph{mode solutions} of the form $v(t_*,x)=e^{-i\sigma t_*}w(x)$ with $\Im\sigma\geq 0$.
\end{enumerate}

Conditions \eqref{ItIntroCondTrapped} and \eqref{ItIntroCondRes} rule out ways in which the \emph{linearized} equation could fail to be asymptotically stable. More precisely, \eqref{ItIntroCondTrapped} rules out growing high frequency solutions localized near the trapped set, and \eqref{ItIntroCondRes} rules out a bounded frequency solution which is either oscillating or exponentially growing in time.  The result of the above series of papers is to show that these are the only two ways in which stability of the \emph{nonlinear} equation could fail. The absence of resonances in the closed upper half plane is clearly a desirable condition when solving non-linear equations. That it is sufficient, given the other assumptions, relies on the fact that solutions of $L v=0$ (with $L$ not necessarily satisfying assumption~\eqref{ItIntroCondRes}) with smooth initial data and (compactly supported) forcing admit partial asymptotic expansions into a finite sum of mode solutions with frequencies $\sigma_j\in\C$, plus a remainder term with decay $e^{-\alpha t_*}$, $\alpha>-\Im\sigma_j$, \emph{provided} the operator $L$ satisfies suitable estimates. These are so-called \emph{high energy estimates} for the Mellin-transformed normal operator family $\wh L(\sigma)w(x):=e^{i\sigma t_*}L(e^{-i\sigma t_*}w)|_{t_*=0}$, namely, polynomial bounds on the operator norm of $\wh L(\sigma)^{-1}$ as $|\Re\sigma|\to\infty$, $\Im\sigma\geq-\alpha$, acting between suitable Sobolev spaces. The main result of \cite{VasyMicroKerrdS} is that $\wh L(\sigma)^{-1}$ is a meromorphic family of operators satisfying such estimates; the poles of this family are precisely the resonances. (This result strongly uses the asymptotically \emph{hyperbolic} nature of the de~Sitter end of Kerr--de~Sitter spacetimes.) Provided that high energy estimates hold for $\alpha>0$ (which requires assumption~\eqref{ItIntroCondTrapped}), this implies that solutions of $L v=0$ are a finite sum of mode solutions and an \emph{exponentially decaying} remainder term; the asymptotic expansion here is proved by means of a contour shifting argument on the Fourier-transform side; see \S\ref{SubsubsecAsyExpReg}.

Now, for the DeTurck-gauged Einstein equation \eqref{EqIntroDeTurck}, where the unknown function is the metric $g$ itself, the condition~\eqref{ItIntroCondTrapped} at the trapped set holds, but the condition~\eqref{ItIntroCondRes} on resonances \emph{does not}. That is, non-decaying ($\Im\sigma\geq 0$) and even exponentially growing modes ($\Im\sigma>0$) exist. In fact, even on de~Sitter space, the linearized DeTurck-gauged Einstein equation, with $g^0$ equal to the de~Sitter metric, has \emph{exponentially growing} modes, cf.\ the indicial root computation of \cite{GrahamLeeConformalEinstein} on hyperbolic space---the same computation also works under the metric signature change in de~Sitter space, see Appendix~\ref{SecdS}. Thus, the key achievement of this paper is to provide a precise understanding of the nature of the resonances in the upper half plane, and how to overcome their presence. We remark that the first condition, at the trapping, ensures that there is at most a \emph{finite-dimensional} space of non-decaying mode solutions.

We point out that all conceptual difficulties in the study of \eqref{EqIntroDeTurck} (beyond the difficulties overcome in the papers mentioned above) are already present in the simpler case of the static model of de~Sitter space, with the exception of the presence of a non-trivial \emph{family} of stationary solutions in the black hole case; in fact, what happens on de~Sitter space served as a very useful guide to understanding the equations on Kerr--de~Sitter space. Thus, in Appendix~\ref{SecdS}, we illustrate our approach to the resolution of the black hole stability problem by re-proving the non-linear stability of the static model of de~Sitter space.

In general, if one has a given non-linear hyperbolic equation whose linearization around a fixed solution has growing modes, there is not much one can do: at the linearized level, one then gets growing solutions; substituting such solutions into the non-linearity gives even more growth, resulting in the breakdown of the local non-linear solution. A typical example is the ODE $u'=u^2$, with initial condition at $0$: the function $u\equiv 0$ solves this, and given any interval $[0,T]$ one has stability (changing the initial data slightly), but for any non-zero positive initial condition, regardless how small, the solution blows up at finite time, thus there is no stability on $[0,\infty)$. Here the linearized operator is just the derivative $v\mapsto v'$, which has a non-decaying mode $1$ (with frequency $\sigma=0$). This illustrates that even non-decaying modes, not only growing ones, are dangerous for stability; thus they should be considered borderline unstable for non-linear analysis, rather than borderline stable, unless the operator has a special structure. Now, the linearization of the Kerr--de~Sitter family around a fixed member of this family gives rise (up to infinitesimal diffeomorphisms, i.e.\ Lie derivatives) to zero resonant states of the linearization of \eqref{EqIntroDeTurck} around this member; but these will of course correspond to the (non-linear) Kerr--de~Sitter solution when solving the quasilinear equation \eqref{EqIntroDeTurck}, and we describe this further below.

The primary reason one can overcome the presence of growing modes for the gauged Einstein equation \eqref{EqIntroDeTurck} is of course that the equation is not fixed: one can choose from a family of potential gauges; any gauge satisfying the above principally wave, asymptotically Kerr--de~Sitter, condition is a candidate, and one needs to check whether the two conditions stated above are satisfied. However, even with this gauge freedom we are unable to eliminate the resonances in the closed upper half plane, even ignoring the $0$ resonance which is unavoidable as discussed above. Even for Einstein's equations near de~Sitter space---where one knows that stability holds by \cite{FriedrichStability}---the best we can arrange in the context of modifications of DeTurck gauges is the absence of all non-decaying modes apart from a resonance at $0$, but it is quite delicate to see that this can in fact be arranged. Indeed, the arguments rely on the special asymptotic structure of de~Sitter space, which reduces the computation of resonances to finding indicial roots of regular singular ODEs, much like in the Riemannian work of Graham--Lee \cite{GrahamLeeConformalEinstein}; see Remark~\ref{RmkdSBAlwaysZero}.

While we do not have a modified DeTurck gauge in the Kerr--de~Sitter setting which satisfies all of our requirements, we can come part of the way in a crucial manner. Namely, if $L$ is the linearization of \eqref{EqIntroDeTurck}, with $g^0=g_{b_0}$, around $g=g_{b_0}$, then
\begin{equation}
\label{EqIntroEinsteinLin}
  L r = D_g(\Ric+\Lambda)(r) + \delta_g^*(\delta_g \sfG_g r);
\end{equation}
the second term here breaks the infinitesimal diffeomorphism invariance---if $r$ solves the linearized Einstein equation $D_g(\Ric+\Lambda)(r)=0$, then so does $r+\delta_g^*\omega$ for any 1-form $\omega$. Now suppose $\phi$ is a growing mode solution of $L\phi=0$. Without the gauge term present, $\phi$ would be a mode solution of the linearized Einstein equation, i.e.\ a growing gravitational wave; if non-linear stability is to have a chance of being true, such $\phi$ must be `unphysical,' that is, equal to $0$ up to gauge changes, i.e.\ $\phi=\delta_g^*\omega$. This statement is commonly called \emph{mode stability}; we introduce and prove the slightly stronger notion of `ungauged Einstein mode stability' (UEMS), including a precise description of the zero mode, in \S\ref{SecUEMS}. One may hope that even with the gauge term present, all growing mode solutions such as $\phi$ are pure gauge modes in this sense. To see what this affords us, consider a cutoff $\chi(t_*)$, identically $1$ for large times but $0$ near $\Sigma_0$. Then
\[
  L\bigl(\delta_g^*(\chi\omega)\bigr) = \delta_g^*\theta,\quad \theta=\delta_g \sfG_g\delta_g^*(\chi\omega);
\]
that is, we can generate the asymptotic behavior of $\phi$ by adding a source term $\delta_g^*\theta$---\emph{which is a pure gauge term}---to the right hand side; looking at this the other way around, we can eliminate the asymptotic behavior $\phi$ from any solution of $L r=0$ by adding a suitable multiple of $\delta_g^*\theta$ to the right hand side. In the non-linear equation \eqref{EqIntroDeTurck} then, using the form \eqref{EqIntroDeTurckExtra} of the gauge-fixing term, this suggests that we try to solve
\[
  \Ric(g)+\Lambda g - \delta_g^*(\Ups(g)-\theta) = 0,
\]
where $\theta$ lies in a fixed finite-dimensional space of compactly supported 1-forms corresponding to the growing pure gauge modes; that is, we solve the initial value problem for this equation, regarding the pair $(\theta,g)$ as our unknown. Solving this equation for fixed $\theta$, which one can do at least for short times, produces a solution of Einstein's equations in the gauge $\Ups(g)=\theta$, which in the language of \cite{FriedrichHyperbolicityEinstein} amounts to using non-trivial gauge source functions induced by 1-forms $\theta$ as above; in contrast to \cite{FriedrichHyperbolicityEinstein} however, \emph{we regard the gauge source functions as unknowns} (albeit in a merely finite-dimensional space) which we need to solve for.

Going even one step further, one can hope (or try to arrange) for all Kerr--de~Sitter metrics $g_b$ to satisfy the gauge condition $\Ups(g_b)=0$ (which of course depends on the concrete presentation of the metrics): then, we could incorporate the Kerr--de~Sitter metric that our solution $g$ decays to by adding another parameter $b$; that is, we could solve
\begin{equation}
\label{EqIntroIdeal}
  (\Ric+\Lambda)(g_b+\wt g) - \delta_{g_b+\wt g}^*(\Ups(g_b+\wt g)-\theta) = 0
\end{equation}
for the triple $(b,\theta,\wt g)$, with $\wt g$ now in a \emph{decaying} function space; a key fact here is that even though we constructed the 1-forms $\theta$ from studying the linearized equation around $g_{b_0}$, adding $\theta$ to the equation as done here also ensures that \emph{for nearby linearizations}, we can eliminate the growing asymptotic behavior corresponding to pure gauge resonances; a general version of this perturbation-type statement is the main result of \S\ref{SecAsy}. If both our hopes (regarding growing modes and the interaction of the Kerr--de~Sitter family with the gauge condition) proved to be well-founded, we could indeed solve \eqref{EqIntroIdeal} by appealing to a general quasilinear existence result, based on a Nash--Moser iteration scheme; this is an extension of the main result of \cite{HintzVasyQuasilinearKdS}, accommodating for the presence of the finite-dimensional variables $b$ and $\theta$. (We shall only state a simple version, ignoring the presence of the non-trivial stationary family encoded by the parameter $b$, of such a general result in Appendix~\ref{SecB}.)

This illustrates a central feature of our non-linear framework: \emph{The non-linear iteration scheme finds not only the suitable Kerr--de~Sitter metric the solution of \eqref{EqIntroEinstein} should converge to, but also the correct gauge modification $\theta$!}\footnote{For the initial value problem in the ODE example $u'=u^2$, one can eliminate the constant asymptotic behavior of the linearized equation by adding a suitable forcing term, or equivalently by modifying the initial data; thus, the non-linear framework would show the solvability of $u'=u^2$, $u(0)=u_0$, with $u_0$ small, up to modifying $u_0$ by a small quantity---and this is of course trivial, if one modifies it by $-u_0$! A more interesting example would be an ODE of the form $(\pa_x+1)\pa_x u=u^2$, solving near $u=0$; the decaying mode $e^{-x}$ of the linearized equation causes no problems, and the zero mode $1$ can be eliminated by modifying the initial data---which now lie in a $2$-dimensional space---by elements in a fixed $1$-dimensional space.}

Unfortunately, neither of these two hopes proves to be true for the stated hyperbolic version of the Einstein equation.

\emph{First}, consider the mode stability statement: we expect the presence of the gauge term in \eqref{EqIntroEinsteinLin} to cause growing modes which are not pure gauge modes (as can again easily be seen for the DeTurck gauge on de~Sitter space); in view of UEMS, they cannot be solutions of the linearized Einstein equation. When studying the problem of linear stability, such growing modes therefore cannot appear as the asymptotic behavior of a gravitational wave; in fact, one can argue, as we shall do in \S\ref{SecKdSLStab}, that the linearized constraint equations restrict the space of allowed asymptotics, ruling out growing modes which are not pure gauge. While such an argument is adequate for the linear stability problem, it is not clear how to extend it to the non-linear problem, since it is not at all robust; for example, it breaks down immediately if the initial data satisfy the \emph{non-linear} constraint equations, as is of course the case for the non-linear stability problem.

It turns out that the properties of $\BoxCP_g=2\delta_g \sfG_g\delta_g^*$, or rather a suitable replacement $\wtBoxCP_g$, are crucial for constructing an appropriate modification of the hyperbolic equation \eqref{EqIntroDeTurck}. Recall that $\BoxCP_g$ is the operator governing the propagation of the gauge condition $\Ups(g)-\theta=0$, or equivalently the propagation of the constraints. The key insight, which has been exploited before in the numerics literature \cite{GundlachCalabreseHinderMartinConstraintDamping,PretoriusBinaryBlackHole}, is that one can modify the gauged Einstein equation by additional terms, preserving its hyperbolic nature, to arrange for \emph{constraint damping}, which says that solutions of the correspondingly modified constraint propagation operator $\wtBoxCP_g$ \emph{decay exponentially}. Concretely, note that in $\BoxCP_g$, the part $\delta_g \sfG_g$ is firmly fixed since we need to use the Bianchi identity for this to play any role. However, we have flexibility regarding $\delta_g^*$ as long as we change it in a way that does not destroy at least the properties of our gauged Einstein equation that we already have, in particular the principal symbol. Now, the principal symbol of the linearization of $\Phi(g,g^0)$ depends on $\delta_g^*$ only via its principal symbol, which is independent of $g$, so we can replace $\delta_g^*$ by any, even $g$-independent, differential operator with the same principal symbol, for instance by considering
\[
  \tdel^*\omega=\delta_{g_0}^*\omega+\gamma_1\,dt_*\otimes_s\omega - \gamma_2 g_0 \tr_{g_0}(dt_*\otimes_s\omega),
\]
where $\gamma_1,\gamma_2$ are fixed real numbers. What we show in \S\ref{SecSCP} is that for $g_0$ being a Schwarzschild-de~Sitter metric ($a=0$), we can {\em choose} $\gamma_1,\gamma_2\gg 0$ so that for $g=g_0$, the operator $\wtBoxCP_g=2\delta_g \sfG_g \tdel^*$ has {\em no resonances} in the closed upper half plane, i.e.\ only has decaying modes. We call this property {\em stable constraint propagation} or SCP. Note that by a general feature of our analysis, this implies the analogous stability statement when $g$ is merely suitably close to $g_0$, in particular when it is asymptotic to a Kerr--de~Sitter metric with small $a$. Dropping the modifications by $\theta$ and $b$ considered above for brevity, the hyperbolic operator we will study is then $\Ric(g)+\Lambda g - \tdel^*\Ups(g)$.

The role of SCP is that it ensures that the resonances of the linearized gauged Einstein equation in the closed upper half plane (corresponding to non-decaying modes) are {\em either} resonances (modes) of the linearized ungauged Einstein operator $D(\Ric+\Lambda)$ {\em or} pure gauge modes, i.e.\ of the form $\delta_g^*\theta$ for some one-form $\theta$; indeed, granted UEMS, this is a simple consequence of the linearized second Bianchi identity applied to \eqref{EqIntroEinsteinLin} (with $\delta_g^*$ there replaced by $\tdel$).

\emph{Second}, we discuss the (in)compatibility issue of the Kerr--de~Sitter family with the wave map gauge when the background metric is fixed, say $g_{b_0}$. Putting the Kerr--de~Sitter metric $g_b$ into this gauge would require solving the wave map equation $\Box_{g_b,g_{b_0}}\phi=0$ globally, and then replacing $g_b$ by $\phi_*(g_b)$ (see Remark~\ref{RmkHypDTPutIntoGauge} for details); this can be rewritten as a semi-linear wave equation with stationary, non-decaying forcing term (essentially $\Ups(g_b)$), whose linearization around the identity map for $b=b_0$ has resonances at $0$ and, at least on de~Sitter space where this is easy to check, also in the upper half plane. While the growing modes can be eliminated by modifying the initial data of the wave map within a finite-dimensional space, the zero mode, corresponding to Killing vector fields of $g_{b_0}$, cannot be eliminated; in the above ODE example, one cannot solve $u'=u^2+1$, with $1$ being the stationary forcing term, globally if the only freedom one has is perturbing the initial data.

We remark that this difficulty does not appear in the double null gauge used e.g.\ in \cite{DafermosHolzegelRodnianskiSchwarzschildStability}; however, the double null gauge formulation of Einstein's equations does not fit into our general non-linear framework.

The simple way out is that one relaxes the gauge condition further: rather than demanding that $\Ups(g)-\theta=0$, we demand that $\Ups(g)-\Ups(g_b)-\theta=0$ near infinity if $g_b$ is the Kerr--de~Sitter metric that $g$ is decaying towards; recall here again that our non-linear iteration scheme finds $b$ (and $\theta$) automatically. Near $\Sigma_0$, one would like to use a fixed gauge condition, since otherwise one would need to use different Cauchy data, constructed from the same geometric initial data, at each step of the iteration, depending on the gauge at $\Sigma_0$. With a cutoff $\chi$ as above, we thus consider grafted metrics
\[
  g_{b_0,b} := (1-\chi)g_{b_0} + \chi g_b
\]
which interpolate between $g_{b_0}$ near $\Sigma_0$ and $g_b$ near future infinity.

\begin{rmk}
  We again stress that the two issues discussed above, SCP and the change of the asymptotic gauge condition, only arise in the non-linear problem. However, by the perturbative statement following \eqref{EqIntroIdeal}, SCP also allows us to deduce the linear stability of slowly rotating Kerr--de~Sitter spacetimes \emph{directly, by a simple perturbation argument}, from the linear stability of Schwarzschild--de~Sitter space; this is in contrast to the techniques used in \cite{DafermosHolzegelRodnianskiSchwarzschildStability} in the setting of $\Lambda=0$, which do not allow for such perturbation arguments off $a=0$.

  The linear stability of \emph{Schwarzschild}--de~Sitter spacetimes in turn is a direct consequence of the results of \cite{VasyMicroKerrdS} together with UEMS, proved in \S\ref{SecUEMS}, and the symbolic analysis at the trapped set of \S\ref{SubsecESGTrap} (which relies on \cite{HintzPsdoInner}); see Theorem~\ref{ThmKdSLStabNaive}. The rest of the bulk of the paper, including SCP, is needed to build the robust perturbation framework required for the proof of non-linear stability (and the linear stability of slowly rotating Kerr--de~Sitter spacetimes).
\end{rmk}

We can now state the precise version of Theorem~\ref{ThmIntroBaby} which we will prove in this paper:

\begin{thm}[Stability of the Kerr--de~Sitter family for small $a$; precise version]
\label{ThmIntroPrecise}
  Let $h,k\in\CI(\Sigma_0;S^2T^*\Sigma_0)$ be initial data satisfying the constraint equations, and suppose $h$ and $k$ are close to the Schwarzschild--de~Sitter initial data $(h_{b_0},k_{b_0})$ in the topology of $H^{21}(\Sigma_0;S^2T^*\Sigma_0)\oplus H^{20}(\Sigma_0;S^2T^*\Sigma_0)$. Then there exist Kerr--de~Sitter black hole parameters $b$ close to $b_0$, a compactly supported gauge modification $\theta\in\CIc(\Omega^\circ;T^*\Omega^\circ)$ (lying in a fixed finite-dimensional space $\ol\Theta$) and a symmetric 2-tensor $\wt g\in\CI(\Omega^\circ;S^2T^*\Omega^\circ)$, with $\wt g=\cO(e^{-\alpha t_*})$ together with all its stationary derivatives (here $\alpha>0$ independent of the initial data), such that the metric
  \[
    g = g_b + \wt g
  \]
  solves the Einstein equation
  \begin{equation}
  \label{EqIntroPreciseEinstein}
    \Ric(g)+\Lambda g=0
  \end{equation}
  in the gauge
  \begin{equation}
  \label{EqIntroPreciseGauge}
    \Ups(g) - \Ups(g_{b_0,b}) - \theta = 0,
  \end{equation}
  where we define $\Ups(g):=g g_{b_0}^{-1}\delta_g \sfG_g g_{b_0}$ (which is \eqref{EqIntroDeTurckExtra} with $g^0=g_{b_0}$), and with $g$ attaining the data $(h,k)$ at $\Sigma_0$.
\end{thm}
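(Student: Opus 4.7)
The plan is to recast the problem as solving a single quasilinear wave equation for the triple $(b,\theta,\tilde g)$, with $(b,\theta)$ in a fixed finite-dimensional parameter space and $\tilde g$ in an exponentially decaying b-Sobolev space. Concretely I would study the hyperbolic reduced equation
\[
  P(b,\theta,\tilde g) := (\Ric+\Lambda)(g_b+\tilde g) - \tdel^*\bigl(\Ups(g_b+\tilde g) - \Ups(g_{b_0,b}) - \theta\bigr) = 0,
\]
where $\tdel^*$ is the modification of the symmetric gradient used in the SCP construction of \S\ref{SecSCP}. The preliminary step is to produce Cauchy data for $\tilde g$ from the geometric data $(h,k)$ so that the bracketed gauge one-form and its normal derivative vanish along $\Sigma_0$; since $g_{b_0,b}=g_{b_0}$ and $\theta = 0$ near $\Sigma_0$ (by the cutoff $\chi$ defining the grafted metric and the support properties of elements of $\ol\Theta$), this is the standard wave-coordinate reconstruction of Cauchy data from $(h,k)$ using the constraint equations.

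The main work is to invert the linearization $L$ of $P$ at $(b_0,0,0)$ on suitable decaying function spaces, enlarged by the finite-dimensional parameters $(b',\theta')$. The $\tilde g$-part of $L$ is a principally scalar wave operator, and by the microlocal Fredholm framework of \cite{VasyMicroKerrdS} combined with the subprincipal trapping analysis of \S\ref{SubsecESGTrap}, it has only finitely many resonances in the closed upper half plane. To classify the obstructing non-decaying resonant states $r$ I would apply $\delta_g G_g$ to the equation $L r = 0$, use the second Bianchi identity, and invoke the SCP property of $\wtBoxCP_g$: the resulting gauge one-form $\delta_g G_g r$ is forced to decay and hence vanish, so $r$ itself solves the linearized ungauged Einstein equation. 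By UEMS (\S\ref{SecUEMS}) every such $r$ is the sum of a linearized Kerr--de~Sitter perturbation $\pa_b g_b|_{b_0}\cdot b'$ and a pure gauge term $\delta_{g_{b_0}}^*\omega$. The parameters $(b',\theta')$ are precisely designed to absorb these obstructions: $b'$ matches the Kerr--de~Sitter direction, while $\ol\Theta$ is chosen so that the asymptotic tail of $-\tdel^*\theta'$ reproduces the pure gauge resonant behavior $\delta_{g_{b_0}}^*\omega$. Combined with the perturbation-stability statement of \S\ref{SecAsy}, this yields linearized solvability with tame estimates uniformly under small perturbations, which is the hypothesis feeding the quasilinear Nash--Moser scheme of Appendix~\ref{SecB}.

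Running Nash--Moser then produces a solution $(b,\theta,\tilde g)$ of $P = 0$ with the prescribed Cauchy data and with $\tilde g$ decaying as $\cO(e^{-\alpha t_*})$. To recover Einstein's equation one applies the constraint propagation argument to the modified gauge: the contracted Bianchi identity applied to $P = 0$ shows that $V := \Ups(g) - \Ups(g_{b_0,b}) - \theta$ satisfies $\wtBoxCP_g V = 0$, and by the construction of the Cauchy data both $V$ and $\pa_{t_*}V$ vanish on $\Sigma_0$; the SCP property of $\wtBoxCP_g$ (absence of non-decaying modes) combined with energy uniqueness on $\Omega^\circ$ then forces $V \equiv 0$, yielding simultaneously \eqref{EqIntroPreciseEinstein} and the gauge condition \eqref{EqIntroPreciseGauge}. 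The principal obstacle is the linearized matching step: one must verify that \emph{every} non-decaying resonance of $L$ descends under the SCP reduction to either a Kerr--de~Sitter variation in the range of $\pa_b g_b$ or a pure gauge term whose leading asymptotics are hit by some $-\tdel^*\theta'$ with $\theta'\in\ol\Theta$, with no residual modes unaccounted for. This is where the SCP construction, the mode stability of UEMS, and the explicit choice of $\ol\Theta$ must lock together, and is the heart of the argument.
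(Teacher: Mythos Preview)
Your proposal is correct and follows essentially the same route as the paper's proof (Theorem~\ref{ThmKdSStab}): solve the $\tdel^*$-modified gauged equation for $(b,\theta,\tilde g)$ via Nash--Moser, with linearized solvability on decaying spaces coming from ESG together with the SCP/UEMS classification of non-decaying resonances, and recover Einstein's equations from constraint propagation. The one refinement you gloss over is that linearizing $P$ in $b$ does not simply produce $L(\chi g'_{b_0}(b'))$ but picks up an extra $\tdel^*\bigl(D_{g_{b_0,b}}\Ups(\chi g'_b(b'))\bigr)$ term from the $b$-dependence of the asymptotic gauge (see \eqref{EqKdSStabDPb}); the paper handles this by introducing the gauged linearized metrics $g^{\prime\Ups}_{b_0}(b')$ and an additional compactly supported piece $\theta_\chi(b')$ in the modification space (equations \eqref{EqKdSStabGaugeRewrite}--\eqref{EqKdSStabGaugeCp}), which is exactly the ``locking together'' you flag at the end.
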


See Theorem~\ref{ThmKdSStab} for a slightly more natural description (in terms of function spaces) of $\wt g$. In order to minimize the necessary bookkeeping, we are very crude in describing the regularity of the coefficients, as well as the mapping properties, of various operators; thus, the number of derivatives used in this theorem is far from optimal. (With a bit more care, as in \cite{HintzVasyQuasilinearKdS}, it should be possible to show that $12$ derivatives are enough, and even this is still rather crude.)

As explained above, the finite-dimensional space $\ol\Theta$ of compactly supported gauge modifications appearing in the statement of Theorem~\ref{ThmIntroPrecise}, as well as its dimension, can be computed in principle: it would suffice to compute the non-decaying resonant states of the linearized, modified Einstein operator $D_{g_{b_0}}(\Ric+\Lambda)-\tdel^*D_{g_{b_0}}\Ups$. While we do not do this here, this can easily be done for the static de~Sitter metric, see Appendix~\ref{SecdS}.

The reader will have noticed the absence of $\tdel^*$ (or $\delta_g^*$) in the formulation of Theorem~\ref{ThmIntroPrecise}, and in fact at first sight $\tdel^*$ may seem to play no role: indeed, while the non-linear equation we solve takes the form
\begin{equation}
\label{EqIntroWhyTdel}
  \Ric(g)+\Lambda g - \tdel^*(\Ups(g)-\Ups(g_{b_0,b})-\theta)=0,
\end{equation}
the non-linear solution $g$ satisfies both the Einstein equation \eqref{EqIntroPreciseEinstein} and the gauge condition \eqref{EqIntroPreciseGauge}; therefore, the \emph{same} $g$ (not merely up to a diffeomorphism) also solves the same equation with $\delta_g^*$ in place of $\tdel^*$! But note that this is only true provided the initial data satisfy the constraint equations. We will show however that one can solve \eqref{EqIntroWhyTdel}, given \emph{any} Cauchy data, for $(b,\theta,g)$; and we only use the constraint equations for the initial data \emph{at the very end}, after having solved \eqref{EqIntroWhyTdel}, to conclude that we do have a solution of \eqref{EqIntroPreciseEinstein}. On the other hand, it is \emph{not} possible to solve \eqref{EqIntroWhyTdel} globally for arbitrary Cauchy data if one used $\delta_g^*$ instead of $\tdel^*$, since modifying the parameters $b$ and $\theta$ is then no longer sufficient to eliminate all non-decaying resonant states, the problematic ones of course being those which are not pure gauge modes. From this perspective, the introduction of $\tdel^*$ has the effect of making it unnecessary to worry about the constraint equations---which are rather delicate---being satisfied when solving the equation \eqref{EqIntroWhyTdel}, and thus paves the way for the application of the robust perturbative techniques developed in \S\ref{SecAsy}.

\begin{rmk}
\label{RmkIntroNonObvious}
  There are only three places where the result of the paper depends on a computation whose result is not a priori `obvious.' The first is UEMS itself in \S\ref{SecUEMS}; this is, on the one hand, well established in the physics literature, and on the other hand its failure would certainly doom the stability of the Kerr--de~Sitter family for small $a$. The second is the subprincipal symbol computation at the trapped set, in the settings of SCP in \S\ref{SubsecSCPHi} and for the linearized gauged Einstein equation in \S\ref{SubsecESGTrap}, which involves large (but finite!) dimensional linear algebra; its failure would break our analysis in the DeTurck-type gauge we are using, but would not exclude the possibility of proving Kerr--de~Sitter stability in another gauge. (By contrast, the failure of the radial point subprincipal symbol computation would at worst affect the threshold regularity $1/2$ in Theorem~\ref{ThmKeyESG}, and thus merely necessitate using slightly higher regularity than we currently use.) The third significant computation finally is that of the semiclassical subprincipal symbol at the zero section for SCP, in the form of Lemma~\ref{LemmaSCPLoRadialSubpr}, whose effect is similar to the subprincipal computation at the trapped set.

  These are also exactly the `non-obvious' computations to check if one wanted to extend Theorem~\ref{ThmIntroPrecise} to a larger range of angular momenta, i.e.\ allowing the initial data $h$ and $k$ to be close to the initial data of a Kerr--de~Sitter spacetime with angular momentum in a larger range (rather than merely in a neighborhood of $0$). The rest of our analysis does not change for large angular momenta, provided the Kerr--de~Sitter black hole one is perturbing is non-degenerate (in particular subextremal) in a suitable sense; see specifically the discussions in \cite[\S6.1]{VasyMicroKerrdS} and around \cite[Equation~(6.13)]{VasyMicroKerrdS}.

  Likewise, these are the computations to check for the stability analysis of higher-dimensional black holes with $\Lambda>0$; in this case, one in addition needs to extend the construction of the smooth family of metrics in \S\ref{SecKdS} to the higher-dimensional case.
\end{rmk}

\subsection{Further consequences}
\label{SubsecIntroCsq}

As an immediate consequence of our main theorem, we find that Kerr--de~Sitter spacetimes are the \emph{unique} stationary solutions of Einstein's field equations with positive cosmological constant in a neighborhood of Schwarzschild--de~Sitter spacetimes, as measured by the Sobolev norms on their initial data in Theorem~\ref{ThmIntroPrecise}. This gives a dynamical proof of a corresponding theorem for $\Lambda=0$ by Alexakis--Ionescu--Klainerman \cite{AlexakisIonescuKlainermanUniqueness} who prove the uniqueness of Kerr black holes in the vicinity of a member of the Kerr family.

Moreover, our black hole stability result is a crucial step towards a definitive resolution of Penrose's Strong Cosmic Censorship Conjecture for positive cosmological constants. (We refer the reader to the introduction of \cite{LukOhReissnerNordstrom} for an overview of this conjecture.) In fact, we expect that ongoing work by Dafermos--Luk \cite{DafermosLukKerrCauchyHorI} on the $\cC^0$ stability of the Cauchy horizon of Kerr spacetimes should combine with our main theorem to give, \emph{unconditionally}, the $\cC^0$ stability of the Cauchy horizon of Kerr--de~Sitter spacetimes. The decay assumptions along the black hole event horizon which are the starting point of the analysis of \cite{DafermosLukKerrCauchyHorI} are merely polynomial, corresponding to the expected decay of solutions to Einstein's equations in the asymptotically flat setting; however, as shown in \cite{HintzVasyCauchyHorizon} for linear wave equations, the exponential decay rate exhibited for $\Lambda>0$ should allow for a stronger conclusion; a natural conjecture, following \cite[Theorem~1.1]{HintzVasyCauchyHorizon}, would be that the metric has $H^{1/2+\alpha/\kappa}$ regularity at the Cauchy horizon, where $\kappa>0$ is the surface gravity of the Cauchy horizon. (Indeed, the linear analysis in the present paper can be shown to imply this for solutions of \emph{linearized} gravity.) We refer to the work of Costa, Gir\~ao, Nat\'ario and Silva \cite{CostaGiraoNatarioSilvaCauchy1,CostaGiraoNatarioSilvaCauchy2,CostaGiraoNatarioSilvaCauchy3} on the non-linear Einstein--Maxwell--scalar field system under the assumption of spherical symmetry for results of a similar flavor.

Lastly, we can make the asymptotic analysis of solutions to the (linearized) Einstein equation more precise and thus study the phenomenon of \emph{ringdown}. Concretely, for the linear problem, one can in principle obtain a (partial) asymptotic expansion of the gravitational wave beyond the leading order, linearized Kerr--de~Sitter, term; one may even hope for a \emph{complete asymptotic expansion} akin to the one established in \cite{BonyHaefnerDecay,DyatlovAsymptoticDistribution} for the scalar wave equation. For the non-linear problem, this implies that one can `see' shallow quasinormal modes for timescales which are logarithmic in the size of the initial data. See Remark~\ref{RmkKdSStabRingdown} for further details. Very recently, the ringdown from a binary black hole merger has been measured for the first time \cite{LIGOBlackHoleMerger}.

\subsection{Previous and related work}
\label{SubsecIntroPrev}

The aforementioned papers \cite{VasyMicroKerrdS,HintzVasySemilinear,HintzVasyQuasilinearKdS}---on which the analysis of the present paper directly builds---and our general philosophy to the study of waves on black hole spacetimes, mostly with $\Lambda>0$, build on a host of previous works.

On Schwarzschild--de~Sitter space, Bachelot \cite{BachelotSchwarzschildScattering} set up the functional analytic scattering theory, and S\'a Barreto--Zworski \cite{SaBarretoZworskiResonances} and Bony--H\"afner \cite{BonyHaefnerDecay} studied resonances and exponential wave decay away from the event horizon; Melrose, S\'a Barreto and Vasy \cite{MelroseSaBarretoVasySdS} proved exponential decay to constants across the horizons.

The precise study of waves on rotating Kerr--de~Sitter spacetimes requires an analysis at normally hyperbolically trapped sets, which was first accomplished in the breakthrough work of Wunsch--Zworski \cite{WunschZworskiNormHypResolvent}. This was later extended and simplified by Nonnenmacher--Zworski \cite{NonnenmacherZworskiQuantumDecay} and Dyatlov \cite{DyatlovSpectralGaps}; see also \cite{HintzVasyNormHyp}. This enabled Dyatlov to obtain full asymptotic expansions for linear waves on exact, slowly rotating Kerr--de~Sitter spaces into quasinormal modes (resonances) \cite{DyatlovAsymptoticDistribution}, following his earlier work on exponential energy decay \cite{DyatlovQNM,DyatlovQNMExtended}; see also the more recent \cite{DyatlovWaveAsymptotics}.

Using rather different, physical space, techniques, Dafermos--Rodnianski \cite{DafermosRodnianskiSdS} proved super-polynomial energy decay on Schwarzschild--de~Sitter spacetimes. Such techniques were also used by Schlue in his analysis of linear waves in the \emph{cosmological} part of Kerr--de~Sitter spacetimes \cite{SchlueCosmological}, and by Keller for the Maxwell equation \cite{KellerMaxwellSdS}. We furthermore mention Warnick's physical space approach to the study of resonances \cite{WarnickQNMs}.

Regarding work on spacetimes \emph{without} black holes, but in the microlocal spirit, we mention specifically the works \cite{BaskinParamKGondS,BaskinStrichartzDeSitterLong,BaskinVasyWunschRadMink,BaskinVasyWunschRadMink2}.

The general microlocal analytic and geometric framework underlying our global study of asymptotically Kerr--de~Sitter type spaces by compactifying them to manifolds with boundary, which are then naturally equipped with b-metrics, is Melrose's b-analysis \cite{MelroseAPS}. The considerable flexibility and power of a microlocal point of view is exploited throughout the present paper, especially in \S\ref{SecAsy}, \S\ref{SecSCP} and \S\ref{SecESG}. We specifically mention the ease with which bundle-valued equations can be treated, as first noted in \cite{VasyMicroKerrdS}, and shown concretely in \cite{HintzPsdoInner,HintzVasyKdsFormResonances} where the authors prove decay to stationary states for Maxwell's (and more general) equations. We also point out that a stronger notion of normal hyperbolicity, called $r$-normal hyperbolicity---which is stable under perturbations \cite{HirschShubPughInvariantManifolds}---was proved for Kerr and Kerr--de~Sitter spacetimes in \cite{WunschZworskiNormHypResolvent,DyatlovWaveAsymptotics}, and allows for global results for (non-)linear waves under very general assumptions \cite{VasyMicroKerrdS,HintzVasyQuasilinearKdS}. Since, as we show, solutions to Einstein's equations near Kerr--de~Sitter always decay to an \emph{exact} Kerr--de~Sitter solution (up to exponentially decaying tails), the flexibility afforded by $r$-normal hyperbolicity is not used here.

Linear and non-linear wave equations on black hole spacetimes with $\Lambda=0$, specifically Kerr and Schwarzschild, have received more attention. They do not directly fit into the general frameworks mentioned above; a fundamental difference is that waves decay at most at a fixed polynomial rate on general asymptotically flat ($\Lambda=0$) spacetimes, which is in stark contrast to the exponential decay rate on spacetimes with asymptotically hyperbolic ($\Lambda>0$) ends. Directly related to the topic of the present paper is the recent proof of the linear stability of the Schwarzschild spacetime under gravitational perturbations without symmetry assumptions on the data \cite{DafermosHolzegelRodnianskiSchwarzschildStability}, which we already discussed above; a less quantitative version of this was obtained by simpler means by Hung, Keller, and Wang \cite{HungKellerWangSchwarzschild}. After pioneering work by Wald \cite{WaldSchwarzschild} and Kay--Wald \cite{KayWaldSchwarzschild}, Dafermos, Rodnianski and Shlapentokh-Rothman \cite{DafermosRodnianskiKerrDecaySmall,DafermosRodnianskiShlapentokhRothmanDecay} recently proved polynomial decay for the scalar wave equation on all (exact) subextremal Kerr spacetimes; Tataru and Tohaneanu \cite{TataruDecayAsympFlat,TataruTohaneanuKerrLocalEnergy} proved Price's law, i.e.\ precise polynomial decay rates, for slowly rotating Kerr spacetimes, and Marzuola, Metcalfe, Tataru and Tohaneanu obtained Strichartz estimates \cite{MarzuolaMetcalfeTataruTohaneanuStrichartz,TohaneanuKerrStrichartz}. There is also work by Donninger, Schlag and Soffer \cite{DonningerSchlagSofferPrice} on $L^\infty$ estimates on Schwarzschild black holes, following $L^\infty$ estimates of Dafermos and Rodnianski \cite{DafermosRodnianskiRedShift}, and of Blue and Soffer \cite{BlueSofferPhaseSpace} on non-rotating charged black holes giving $L^6$ estimates. Apart from \cite{DafermosHolzegelRodnianskiSchwarzschildStability}, bundle-valued (or coupled systems of) equations were studied in particular in the contexts of Maxwell's equations by Andersson and Blue \cite{BlueMaxwellSchwarzschild,AnderssonBlueHiddenKerr,AnderssonBlueMaxwellKerr} and Sterbenz--Tataru \cite{SterbenzTataruMaxwellSchwarzschild}, see also \cite{IngleseNicoloMaxwellSchwarzschild,DonningerSchlagSofferSchwarzschild}, and for Dirac equations by Finster, Kamran, Smoller and Yau \cite{FinsterKamranSmollerYauDiracKerrNewman}. Non-linear problems on exterior $\Lambda=0$ black hole spacetimes were studied by Dafermos, Holzegel and Rodnianski \cite{DafermosHolzegelRodnianskiKerrBw} who constructed \emph{backward} solutions of the Einstein vacuum equations settling down to Kerr \emph{exponentially fast} (regarding this point, see also \cite{DafermosShlapentokhRothmanBlueshift}); for forward problems, Dafermos \cite{DafermosEinsteinMaxwellScalarStability,DafermosBlackHoleNoSingularities} studied the non-linear Einstein--Maxwell--scalar field system under the assumption of spherical symmetry. We also mention Luk's work \cite{LukKerrNonlinear} on semi-linear equations on Kerr, as well as the steps towards understanding a model problem related to Kerr stability under the assumption of axial symmetry \cite{IonescuKlainermanWavemap}.

A fundamental driving force behind a large number of these works is Klainerman's vector field method \cite{KlainermanUniformDecay}; subsequent works by Klainerman and Chris\-to\-dou\-lou \cite{KlainermanNullCondition,ChristodoulouGlobalSolutionsSmallData} introduce the `null condition' which plays a major role in the analysis of non-linear interactions near the light cone in particular in $(3+1)$-dimensional asymptotically flat spacetimes---in the asymptotically hyperbolic case which we study here, there is no analogue of this condition.

Using these and related techniques, a number of works prove the global non-linear stability of Minkowski space as a solution to Einstein's field equations coupled to various matter models; we mention the works by Speck \cite{SpeckEinsteinMaxwell} for the Einstein--Maxwell system, LeFloch--Ma \cite{LeFlochMaEinsteinMassive} for the Einstein equation coupled to a massive scalar field, Taylor \cite{TaylorEinsteinVlasov} for Einstein--Vlasov, and references therein. There is also a large amount of literature studying stability questions under symmetry assumptions on the spacetime: we only mention the work by Choquet-Bruhat and Moncrief \cite{ChoquetBruhatMoncriefStabilityU1} in which they in particular solve for a (time-dependent) finite-dimensional (Teichm\"uller) parameter, but we point out that this is unrelated to the finite-dimensional gauge issues discussed in \S\ref{SubsecIntroIdeas}.

There is also ongoing work by Dafermos--Luk \cite{DafermosLukKerrCauchyHorI} on the stability of the interior (`Cauchy') horizon of Kerr black holes; note that the black hole interior is largely unaffected by the presence of a cosmological constant, but the a priori decay assumptions along the event horizon, which determine regularity properties at the Cauchy horizon, are vastly different: the merely polynomial decay rates on asymptotically flat ($\Lambda=0$) spacetimes, as compared to the exponential decay rate on asymptotically hyperbolic ($\Lambda>0$) spacetimes, is a low frequency effect, related to the very delicate behavior of the resolvent near zero energy on asymptotically flat spaces. A precise study in the spirit of \cite{VasyMicroKerrdS} and \cite{DatchevVasyGluing} is currently in progress \cite{HaefnerVasyKerr}.

In the physics community, black hole perturbation theory, i.e.\ the study of \emph{linearized} perturbations of black hole spacetimes, has a long history. For us, the most convenient formulation, which we use heavily in \S\ref{SecUEMS}, is due to Ishibashi, Kodama and Seto \cite{KodamaIshibashiSetoBranes,KodamaIshibashiMaster,IshibashiKodamaHigherDim}, building on earlier work by Kodama--Sasaki \cite{KodamaSasakiPerturbation}. The study was initiated in the seminal paper by Regge--Wheeler \cite{ReggeWheelerSchwarzschild}, with extensions by Vishveshwara \cite{VishveshwaraSchwarzschild} and Zerilli \cite{ZerilliPotential}, analyzing metric perturbations of the Schwarzschild spacetime; a gauge-invariant formalism was introduced by Moncrief \cite{MoncriefGravitation}, later extended to allow for coupling with matter models by Gerlach--Sengupta \cite{GerlachSenguptaSpherical} and Martel--Poisson \cite{MartelPoissonFormalism}. A different approach to the study of gravitational perturbations, relying on the Newman--Penrose formalism \cite{NewmanPenroseSpin}, was pursued by Bardeen--Press \cite{BardeenPressSchwarzschild} and Teukolsky \cite{TeukolskyKerr}, who discovered that certain curvature components satisfy decoupled wave equations; their mode stability was proved by Whiting \cite{WhitingKerrModeStability}. We refer to Chandrasekhar's monograph \cite{ChandrasekharBlackHoles} for a more detailed account.

For surveys of numerical investigations of quasinormal modes, often with the goal of quantifying the phenomenon of ringdown discussed in \S\ref{SubsecIntroCsq}, we refer the reader to the articles \cite{KokkotasSchmidtQNM,BertiCardosoStarinetsQNM} and the references therein. We also mention the paper by Dyatlov--Zworski \cite{DyatlovZworskiTrapping} connecting recent mathematical advances in particular related to quasinormal modes with the physics literature.

\subsection{Outline of the paper}
\label{SubsecIntroOutline}

We only give a broad outline and suggest ways to read the paper; we refer to the introductions of the individual sections for further details.

\begin{itemize}
\item In \S\ref{SecHyp} we discuss in detail the constraint equations and hyperbolic formulations of Einstein's equations;
\item in \S\ref{SecKdS} we give a precise description of the Kerr--de~Sitter family and its geometry as needed for the study of initial value problems for wave equations;
\item in \S\ref{SecKey} we describe the key ingredients of the proof in detail, namely UEMS, SCP and ESG, `essential spectral gap;' the latter is the statement that solutions of the linearized gauged Einstein equation, with our modification that gives SCP, have finite asymptotic expansions up to exponentially decaying remainders. As mentioned before, the key element here is that the subprincipal symbol of our linearized modified gauged Einstein equation has the correct behavior at the trapped set;
\item in \S\ref{SecAsy} we recall the linear global microlocal analysis results both in the smooth and in the non-smooth (Sobolev coefficients) settings, slightly extending these to explicitly accommodate initial value problems with non-vanishing initial data. (Our earlier works considered only inhomogeneous PDEs with vanishing initial conditions.) We also show how to modify the PDE in a finite rank manner in order to ensure solvability on spaces of decaying functions in spite of the presence of non-decaying modes (resonances);
\item in \S\ref{SecOp} we do some explicit computations for the Schwarzschild--de~Sitter metric that will be useful in the remaining sections;
\item in \S\ref{SecUEMS} we show the mode stability for the (ungauged) linearized Einstein equation (UEMS), in \S\ref{SecSCP} we show the stable gauge propagation (SCP), while in \S\ref{SecESG} we show the final key ingredient, the essential spectral gap (ESG) for the linearized gauged Einstein equation;
\item in \S\ref{SecKdSLStab} we put these ingredients together to show the linear stability of slowly rotating Kerr--de~Sitter black holes;
\item in \S\ref{SecKdSStab} we show their non-linear stability. (We also construct initial data sets in \S\ref{SubsecKdSStabIn}.)
\end{itemize}

In order for the reader to see that the linear stability result is extremely simple given the three key ingredients and the results of \S\ref{SubsecAsySm}, we suggest reading \S\ref{SecHyp}--\S\ref{SecKey} and taking the results in \S\ref{SecKey} for granted, looking up the two important results in \S\ref{SubsecAsySm} (Corollaries~\ref{CorAsySmResNoAsy} and \ref{CorAsySmPertNoAsy}), and then reading \S\ref{SecKdSLStab}. Following this, one may read \S\ref{SubsecKdSStab} for the proof of non-linear stability, which again uses the results of \S\ref{SecKey} as black boxes together with the perturbative analysis of \S\ref{SubsecAsyExp}, in particular Theorem~\ref{ThmAsyExpMain}. Only the reader interested in the (very instructive!) proofs of the key ingredients needs to consult \S\ref{SecOp}--\S\ref{SecESG}.

Appendix~\ref{SecB} recalls basic notions of Melrose's b-analysis. In Appendix~\ref{SecQ}, we state and prove a very general finite-codimensional solvability theorem for quasilinear wave equations on (Kerr--)de~Sitter-like spaces. In Appendix~\ref{SecdS} finally, we illustrate some of the key ideas of this paper by proving the non-linear stability of the static model of de~Sitter space; we recommend reading this section early on, since many of the obstacles we need to overcome in the black hole setting are exhibited very clearly in this simpler setting.

\subsection{Notation}
\label{SubsecIntroSummary}

For the convenience of the reader, we list some of the notation used throughout the paper, and give references to their first definition. (Some quantities and sets will be shrunk later in the paper as necessary, but we only give the first reference.)

\nomenstart
  \nomenitem{$\BoxCP_g$}{constraint propagation operator, see \eqref{EqHypDTCP}}
  \nomenitem{$\wtBoxCP_g$}{modified constraint propagation operator, see \eqref{EqKeySCPBoxCPMod}}
  \nomenitem{$\BoxGauge_g$}{wave operator for arranging linearized gauge conditions, see \eqref{EqHypLinBoxGauge}}
  \nomenitem{$\alpha$}{exponential decay rate, see \S\ref{SubsecKeySCP}}
  \nomenitem{$B$}{space of black hole parameters ($\subset\R^4$), see \eqref{EqKdSB}}
  \nomenitem{$b_0$}{fixed Schwarzschild--de~Sitter parameters, $b_0\in B$, see \eqref{EqSdSParam}}
  \nomenitem{$\Gamma$}{trapped set on Schwarzschild--de~Sitter space, see \eqref{EqKdSGeoTrapped}}
  \nomenitem{$\delta_g$}{divergence, $(\delta_g u)_{i_1\ldots i_n}=-u_{i_1\ldots i_n j;}{}^j$}
  \nomenitem{$\delta_g^*$}{symmetric gradient, $(\delta_g u)_{ij}=\frac{1}{2}(u_{i;j}+u_{j;i})$}
  \nomenitem{$\tdel$}{modified symmetric gradient, see \eqref{EqKeySCPDeltaTilde}}
  \nomenitem{$\dot\sD'$}{distributions, on a domain with corners, with supported character at the boundary, see \cite[Appendix~B]{HormanderAnalysisPDE3}}
  \nomenitem{$D^{s,\alpha}$}{space of data for initial value problems for wave equations, see Definition~\ref{DefAsySmDataSpace}}
  \nomenitem{$g_b$}{Kerr--de~Sitter metric with parameters $b\in B$, see \S\ref{SubsecKdSSlow}}
  \nomenitem{$G_b$}{dual metric of $g_b$}
  \nomenitem{$g'_b(b')$}{linearized (around $g_b$) Kerr--de~Sitter metric, with linearized parameters $b'\in T_b B$, see Definition~\ref{DefKdSSlowLinMet}}
  \nomenitem{$g^{\prime\Ups}_b(b')$}{linearized Kerr--de~Sitter metric put into the linearized wave map gauge, see Proposition~\ref{PropKdSLStabComplement}}
  \nomenitem{$\sfG_g$}{trace reversal operator, see \eqref{EqHypDTEinsteinify}}
  \nomenitem{$\ham_p$}{Hamilton vector field of the function $p$ on phase space, see Appendix~\ref{SubsecB}}
  \nomenitem{$\Hext^s$}{Sobolev space of extendible distributions on a domain with boundary or corners, see \cite[Appendix~B]{HormanderAnalysisPDE3}}
  \nomenitem{$\Hb^{s,\alpha}$}{weighted b-Sobolev space, see \eqref{EqBSobolevWeighted}}
  \nomenitem{$\Hb^{s,\alpha}(\Omega)^{\bullet,-}$}{space of restrictions of elements of $\Hb^{s,\alpha}(M)$ vanishing in the past of $\Sigma_0$ to the interior of $\Omega$, see \S\ref{SubsecBPsdo}}
  \nomenitem{$\Hbext^{s,\alpha}$}{weighted b-Sobolev space of extendible distributions on a domain with corners, see~\eqref{EqBSuppExt}}
  \nomenitem{$\Hbh^{s,\alpha}$}{semiclassical weighted b-Sobolev space, see Appendix~\ref{SubsecBSemi}}
  \nomenitem{$\cL_b$}{b-conormal bundle of the horizons of $(M,g_b)$, see \eqref{EqKdSGeoHorizonBConormal}}
  \nomenitem{$M$}{compactification of $M^\circ$ at future infinity, see \eqref{EqKdSSlowCompProd}}
  \nomenitem{$\cM$}{static coordinate chart $\subset M^\circ$ of a fixed Schwarzschild--de~Sitter spacetime, see \eqref{EqSdSMfdStatic}}
  \nomenitem{$M^\circ$}{open $4$-manifold on which the metrics $g_b$ are defined, see \eqref{EqSdSMfd}}
  \nomenitem{$\bhm$}{black hole mass, see \eqref{EqKdSB}}
  \nomenitem{$\bhm[0]$}{mass of a fixed Schwarzschild--de~Sitter black hole, see \eqref{EqSdSParam}}
  \nomenitem{$\Psib$}{algebra of b-pseudodifferential operators, see Appendix~\ref{SubsecBPsdo}}
  \nomenitem{$\Psibh$}{algebra of semiclassical b-pseudodifferential operators, see Appendix~\ref{SubsecBSemi}}
  \nomenitem{$\cR_b$}{(generalized) radial set of $(M,g_b)$ at the horizons, see \eqref{EqKdSGeoRadSet}}
  \nomenitem{$\sR_g$}{curvature term appearing in the linearization of $\Ric$, see \eqref{EqHypDTCurvatureTerm}}
  \nomenitem{$\Res(L)$}{set of resonances of the operator $L$, see \eqref{EqAsySmResSet}}
  \nomenitem{$\Res(L,\sigma)$}{linear space of resonant states of $L$ at $\sigma$, see \eqref{EqAsySmResStates}}
  \nomenitem{$\Res^*(L,\sigma)$}{linear space of dual resonant states of $L$ at $\sigma$, see \eqref{EqAsySmResDualStates}}
  \nomenitem{$\Sb^*$}{b-cosphere bundle, see Appendix~\ref{SubsecB}}
  \nomenitem{$\Sigma_0$}{Cauchy surface of the domain $\Omega$, see \eqref{EqKdSWaveDomain}}
  \nomenitem{$\Sigma_b$}{characteristic set of $G_b$, see \eqref{EqKdSGeoChar}}
  \nomenitem{$t$}{static time coordinate, see \eqref{EqSdSMetric}, or Boyer--Lindquist coordinate, see \eqref{EqKdSSlowStationary}}
  \nomenitem{$t_*$}{timelike function, smooth across the horizons, see \eqref{EqSdSTStar}}
  \nomenitem{$\Tb^*$}{b-cotangent bundle, see Appendix~\ref{SubsecB}}
  \nomenitem{$\ol{\Tb^*}$}{radially compactified b-cotangent bundle, see \eqref{EqBCompact}}
  \nomenitem{$\cU_B$}{small neighborhood of $b_0$ (parameters of slowly rotating Kerr--de~Sitter black holes), see Lemma~\ref{LemmaKdSSlowHorizon}}
  \nomenitem{$\Vb$}{space of b-vector fields, see Appendix~\ref{SubsecB}}
  \nomenitem{$X$}{boundary of $M$ at future infinity, see \eqref{EqKdSSlowCompProd}}
  \nomenitem{$\cX$}{spatial slice of the static chart $\cM$, see \eqref{EqSdSMfdStatic}}
  \nomenitem{$Y$}{boundary of $\Omega$ at future infinity, see \S\ref{SubsecKdSWave}}
  \nomenitem{$\Ups$}{gauge 1-form, see \eqref{EqKdSInGauge}}
  \nomenitem{$\Omega$}{domain with corners $\subset M$ on which we solve wave equations, see \eqref{EqKdSWaveDomain}}
  \nomenitem{$\omega^\Ups_b(b')$}{1-form used to put $g'_b(b')$ into the correct linearized gauge, see Proposition~\ref{PropKdSLStabComplement}.}
\nomenend

\newlength{\oldparindent}
\setlength{\oldparindent}{\parindent}
\setlength{\parindent}{0pt}
Furthermore, we repeatedly use the following acronyms:

\nomenstart
  \nomenitem{ESG}{`essential spectral gap,' see \S\ref{SubsecKeyESG}}
  \nomenitem{SCP}{`stable constraint propagation,' see \S\ref{SubsecKeySCP}}
  \nomenitem{UEMS}{`ungauged Einstein mode stability,' see \S\ref{SubsecKeyUEMS}}
\nomenend

\setlength{\parindent}{\oldparindent}

\subsection*{Acknowledgments}

We are very grateful to Richard Melrose and Maciej Zworski for discussions over the years that eventually inspired the completion of this work. We are also thankful to Rafe Mazzeo, Gunther Uhlmann, Jared Wunsch, Richard Bamler, Mihalis Dafermos, Semyon Dyatlov, Jeffrey Galkowski, Jesse Gell-Redman, Robin Graham, Dietrich H\"afner, Andrew Hassell, Gustav Holz\-egel, Sergiu Klainerman, Jason Metcalfe, Sung-Jin Oh, Michael Singer, Michael Taylor and Micha\l{} Wrochna for discussions, comments and their interest in this project. We are grateful to Igor Khavkine for pointing out references in the physics literature. We would also like to express our sincere gratitude to an anonymous and very thorough referee whose detailed comments led to substantial improvements in exposition and content throughout the entire paper.

The authors gratefully acknowledge partial support from the NSF under grant numbers DMS-1068742 and  DMS-1361432. At the time of writing of the first version of this paper, P.\ H.\ was a Miller Research Fellow, and he would like to thank the Miller Institute at the University of California, Berkeley, for support. The final revisions were made during the time P.\ H.\ served as a Clay Research Fellow.

\section{Hyperbolic formulations of the Einstein vacuum equations}
\label{SecHyp}

\subsection{Initial value problems; DeTurck's method}
\label{SubsecHypDT}

Einstein's field equations with a cosmological constant $\Lambda$ for a Lorentzian metric $g$ of signature $(1,3)$ on a smooth manifold $M$ take the form
\begin{equation}
\label{EqHypDTEinstein}
  \Ric(g) + \Lambda g = 0.
\end{equation}
The correct generalization to the case of $(n+1)$ dimensions is $\Ein(g)=\Lambda g$, which is equivalent to $(\Ric+\frac{2\Lambda}{n-1})g=0$; by a slight abuse of terminology and for the sake of brevity, we will however refer to \eqref{EqHypDTEinstein} as the Einstein vacuum equations also in the general case. Given a globally hyperbolic solution $(M,g)$ and a spacelike hypersurface $\Sigma_0\subset M$, the negative definite Riemannian metric $h$ on $\Sigma_0$ induced by $g$ and the second fundamental form $k(X,Y)=\la\nabla_X Y,N\ra$, $X,Y\in T\Sigma_0$, of $\Sigma_0$ satisfy the \emph{constraint equations}
\begin{equation}
\label{EqHypDTConstraints}
\begin{gathered}
  R_h + (\tr_h k)^2 - |k|_h^2 = (1-n)\Lambda, \\
  \delta_h k + d\tr_h k = 0,
\end{gathered}
\end{equation}
where $R_h$ is the scalar curvature of $h$, and $(\delta_h r)_\mu=-r_{\mu\nu;}{}^\nu$ is the divergence of the symmetric 2-tensor $r$. We recall that, given a unit normal vector field $N$ on $\Sigma_0$, the constraint equations are equivalent to the equations
\begin{equation}
\label{EqHypDTConstraints2}
  \Ein_g(N,N) = \frac{(n-1)\Lambda}{2}, \qquad \Ein_g(N,X)=0,\quad X\in T\Sigma_0,
\end{equation}
for the Einstein tensor $\Ein_g=\sfG_g\Ric(g)$, where
\begin{equation}
\label{EqHypDTEinsteinify}
  \sfG_g r=r-\frac{1}{2}(\tr_g r)g.
\end{equation}

Conversely, given an initial data set $(\Sigma_0,h,k)$ with $\Sigma_0$ a smooth $3$-manifold, $h$ a negative definite Riemannian metric on $\Sigma_0$ and $k$ a symmetric 2-tensor on $\Sigma_0$, one can consider the non-characteristic initial value problem for the Einstein equation \eqref{EqHypDTEinstein}, which asks for a Lorentzian $4$-manifold $(M,g)$ and an embedding $\Sigma_0\hra M$ such that $h$ and $k$ are, respectively, the induced metric and second fundamental form of $\Sigma_0$ in $M$. We refer to the survey of Bartnik--Isenberg \cite{BartnikIsenbergConstraints} for a detailed discussion of the constraint equations; see also \S\ref{SubsecKdSStabIn}.

As explained in the introduction, solving the initial value problem is non-trivial because of the lack of hyperbolicity of Einstein's equations due to their diffeomorphism invariance. However, as first shown by Choquet-Bruhat \cite{ChoquetBruhatLocalEinstein}, the initial value problem admits a local solution, provided $(h,k)$ are sufficiently regular, and the solution is unique up to diffeomorphisms in this sense; Choquet-Bruhat--Geroch \cite{ChoquetBruhatGerochMGHD} then proved the existence of a \emph{maximal} globally hyperbolic development of the initial data. (Sbierski \cite{SbierskiMGHD} recently gave a proof of this fact which avoids the use of Zorn's Lemma.)

We now explain the method of DeTurck \cite{DeTurckPrescribedRicci} for solving the initial value problem in some detail; we follow the presentation of Graham and Lee \cite{GrahamLeeConformalEinstein}. Given the initial data set $(\Sigma_0,h,k)$, we define $M=\R_{x^0}\times\Sigma_0$ and embed $\Sigma_0\hra\{x^0=0\}\subset M$; the task is to find a Lorentzian metric $g$ on $M$ near $\Sigma_0$ solving the Einstein equation and inducing the initial data $(h,k)$ on $\Sigma_0$. Choose a smooth non-degenerate background metric $g^0$, which can have arbitrary signature. We then define the gauge 1-form
\begin{equation}
\label{EqHypDTGaugeTerm}
  \Ups(g) := g (g^0)^{-1}\delta_g \sfG_g g^0 \in \CI(M,T^*M),
\end{equation}
viewing $g (g^0)^{-1}$ as a bundle automorphism of $T^*M$. As a non-linear differential operator acting on $g\in\CI(M,S^2T^*M)$, the operator $\Ups(g)$ is of first order.

\begin{rmk}
\label{RmkHypDTPutIntoGauge}
  A simple calculation in local coordinates gives
  \begin{equation}
  \label{EqHypDTPutIntoGaugeUpsLoc}
    \Ups(g)_\mu = g_{\mu\kappa} g^{\nu\lambda}(\Gamma(g)_{\nu\lambda}^\kappa-\Gamma(g^0)_{\nu\lambda}^\kappa).
  \end{equation}
  Thus, $\Ups(g)=0$ if and only if the pointwise identity map $\Id\colon(M,g)\to(M,g^0)$ is a wave map. Now, given any local solution $g$ of the initial value problem for Einstein's equations, we can solve the wave map equation $\Box_{g,g^0}\phi=0$ for $\phi\colon (M,g)\to(M,g^0)$ with initial data $\phi|_{\Sigma_0}=\Id_{\Sigma_0}$ and $D\phi|_{\Sigma_0}=\Id_{T\Sigma_0}$. Indeed, recalling that
  \[
    (\Box_{g,g^0}\phi)^k = g^{\mu\nu}\bigl(\pa_\mu\pa_\nu \phi^k - \Gamma(g)^\lambda_{\mu\nu}\pa_\lambda\phi^k + \Gamma(g^0)^k_{ij}\pa_\mu\phi^i\pa_\nu\phi^j\bigr),\quad  \phi(x)=(\phi^k(x^\mu)),
  \]
  we see that $\Box_{g,g^0}\phi=0$ is a semi-linear wave equation, hence a solution $\phi$ is guaranteed to exist locally near $\Sigma_0$, and $\phi$ is a diffeomorphism of a small neighborhood $U$ of $\Sigma_0$ onto $\phi(U)$; let us restrict the domain of $\phi$ to such a neighborhood $U$. Then $g^\Ups:=\phi_*g$ is well-defined on $\phi(U)$, and $\phi\colon(U,g)\to(\phi(U),g^\Ups)$ is an isometry; hence we conclude that $\Id\colon(\phi(U),g^\Ups)\to(\phi(U),g^0)$ is a wave map, so $\Ups(g^\Ups)=0$. Moreover, by our choice of initial conditions for $\phi$, the metric $g^\Ups$ induces the given initial data $(h,k)$ pointwise on $\Sigma_0$.
\end{rmk}

With $(\delta_g^*u)_{\mu\nu}=\frac{1}{2}(u_{\mu;\nu}+u_{\nu;\mu})$ denoting the symmetric gradient of a 1-form $u$, the non-linear differential operator
\begin{equation}
\label{EqHypDTOp}
  P_\DT(g) := \Ric(g) + \Lambda g - \delta_g^*\Ups(g)
\end{equation}
is hyperbolic. Indeed, following \cite[\S3]{GrahamLeeConformalEinstein}, the linearizations of the various terms are given by
\begin{equation}
\label{EqHypDTDRic}
  D_g\Ric(r) = \frac{1}{2}\Box_g r - \delta_g^*\delta_g \sfG_g r + \sR_g(r),
\end{equation}
where $(\Box_g r)_{\mu\nu} = -r_{\mu\nu;\kappa}{}^\kappa$, and
\begin{equation}
\label{EqHypDTCurvatureTerm}
  \sR_g(r)_{\mu\nu} = r^{\kappa\lambda}(R_g)_{\kappa\mu\nu\lambda} + \frac{1}{2}\bigl(\Ric(g)_\mu{}^\lambda r_{\lambda\nu} + \Ric(g)_\nu{}^\lambda r_{\lambda\mu}\bigr),
\end{equation}
and
\begin{equation}
\label{EqHypDTDUps}
  D_g\Ups(r) = -\delta_g \sfG_g r + \sC(r)-\sD(r),
\end{equation}
where
\begin{gather*}
  C_{\mu\nu}^\kappa = \frac{1}{2}((g^0)^{-1})^{\kappa\lambda}(g^0_{\mu\lambda;\nu}+g^0_{\nu\lambda;\mu}-g^0_{\mu\nu;\lambda}), \quad D^\kappa=g^{\mu\nu}C_{\mu\nu}^\kappa, \\
  \sC(r)_\kappa = g_{\kappa\lambda}C^\lambda_{\mu\nu}r^{\mu\nu}, \quad \sD(r)_\kappa=D^\lambda r_{\kappa\lambda},
\end{gather*}
so $D_g P_\DT(r)$ is equal to the principally scalar wave operator $\frac{1}{2}\Box_g r$ plus lower order terms. Therefore, one can solve the Cauchy problem for the quasilinear hyperbolic system $P_\DT(g)=0$, where the Cauchy data
\[
  \gamma_0(g):=(g|_{\Sigma_0},\cL_{\pa_{x^0}}g|_{\Sigma_0})=(g_0,g_1),
\]
$g_0,g_1\in\CI(\Sigma_0,S^2T^*_{\Sigma_0}M)$, are arbitrary. Moreover, we saw in Remark~\ref{RmkHypDTPutIntoGauge} that every solution of the Einstein vacuum equations can be realized as a solution of $P_\DT(g)=0$ by putting the solution into the wave map gauge $\Ups(g)=0$. (On the other hand, a solution of $P_\DT(g)=0$ with \emph{general} Cauchy data $(g_0,g_1)$ will have no relationship with the Einstein equation!)

We can now explain how to solve the initial value problem for \eqref{EqHypDTEinstein}. Given an initial data set $(\Sigma_0,h,k)$, so $h,k\in\CI(\Sigma_0,S^2 T^*\Sigma_0)$ (in local coordinates: $3\times 3$ matrices), one first constructs $g_0,g_1\in\CI(\Sigma_0,S^2T^*_{\Sigma_0}M)$ (in local coordinates: $4\times 4$ matrices) with the following properties:
\begin{itemize}
\item $g_0$ is of Lorentzian signature;
\item the data on $\Sigma_0$ induced by a metric $\ol g$ with $\gamma_0(\ol g)=(g_0,g_1)$ are equal to $(h,k)$;
\item $\Ups(\ol g)|_{\Sigma_0}=0$ as an element of $\CI(\Sigma_0,T^*_{\Sigma_0}M)$.
\end{itemize}
Note here that the metric induced on $\Sigma_0$ (which we want to be equal to $h$) only depends on $g_0$, while the second fundamental form and gauge 1-form (which we want to be $k$ and $\Ups(\ol g)$, respectively) only depend on $(g_0,g_1)$; in other words, any metric $g$ with the same Cauchy data $(g_0,g_1)$ induces the given initial data on $\Sigma_0$ and satisfies $\Ups(g)=0$ at $\Sigma_0$. We refer the reader to \cite[\S18.9]{TaylorPDE} for the construction of the Cauchy data, and also to \S\ref{SubsecKdSIn} for a detailed discussion in the context of the black hole stability problem.

Next, one solves the \emph{gauged Einstein equation}
\begin{equation}
\label{EqHypDTGaugedEinstein}
  P_\DT(g)=0,\quad \gamma_0(g)=(g_0,g_1)
\end{equation}
locally near $\Sigma_0$. Applying $\sfG_g$ to this equation and using the constraint equations \eqref{EqHypDTConstraints2}, we conclude that $(\sfG_g\delta_g^*\Ups(g))(N,N)=0$, where $N$ is a unit normal to $\Sigma_0\subset M$ with respect to the solution metric $g$, and $(\sfG_g\delta_g^*\Ups(g))(N,X)=0$ for all $X\in T\Sigma_0$. It is easy to see \cite[\S18.8]{TaylorPDE} that $\Ups(g)|_{\Sigma_0}=0$ and these equations together imply $\cL_{\pa_{x^0}}\Ups(g)|_{\Sigma_0}=0$. The final insight is that the second Bianchi identity,\footnote{The second Bianchi identity is in fact in a consequence of the diffeomorphism invariance $\Ric(\phi^*g)=\phi^*\Ric(g)$ of the Ricci tensor, see \cite[\S3.2]{ChowKnopfRicci}.} written as $\delta_g \sfG_g\Ric(g)=0$ for \emph{any} metric $g$, implies a hyperbolic evolution equation for $\Ups(g)$, which we call the \emph{(unmodified) constraint propagation equation}, to wit
\begin{equation}
\label{EqHypDTCPEq}
  \BoxCP_g\Ups(g) = 0,
\end{equation}
where $\BoxCP_g$ is the \emph{(unmodified) constraint propagation operator} defined as
\begin{equation}
\label{EqHypDTCP}
  \BoxCP_g := 2\delta_g \sfG_g\delta_g^*.
\end{equation}
The notation is justified: one easily verifies $\BoxCP_g u=\Box_g u-\Ric(g)(u,\cdot)$, with $\Box_g$ the tensor Laplacian on 1-forms. The terminology is motivated by the following fact: given a solution $g$ of \eqref{EqHypDTGaugedEinstein} with arbitrary initial data, and a spacelike surface $\Sigma_1$ (with unit normal $N$) at which $\Ups(g)|_{\Sigma_1}=0$, the constraint equations \eqref{EqHypDTConstraints2} are equivalent to $(\Ups(g)_{\Sigma_1},\cL_N\Ups(g)|_{\Sigma_1})=0$; it is in this sense that \eqref{EqHypDTCPEq} governs the propagation of the constraints.

Now, the uniqueness of solutions of the Cauchy problem for the equation \eqref{EqHypDTCPEq} implies $\Ups(g)\equiv 0$, and thus $g$ indeed satisfies the Einstein equation $\Ric(g)+\Lambda g=0$ in the wave map gauge $\Ups(g)=0$, and $g$ induces the given initial data on $\Sigma_0$. This justifies the terminology `gauged Einstein equation' for the equation \eqref{EqHypDTGaugedEinstein}, since its solution solves the Einstein equation in the chosen gauge.

\subsection{Initial value problems for linearized gravity}
\label{SubsecHypLin}

Suppose now we have a smooth family $g_s$, $s\in(-\eps,\eps)$, of Lorentzian metrics solving the Einstein equation $\Ric(g_s)+\Lambda g_s=0$ on a fixed $(n+1)$-dimensional manifold $M$, and a hypersurface $\Sigma_0\subset M$ which is spacelike for $g=g_0$. Let
\[
  r=\frac{d}{ds}g_s|_{s=0}.
\]
Differentiating the equation at $s=0$ gives the \emph{linearized (ungauged) Einstein equation}
\begin{equation}
\label{EqHypLinEinstein}
  D_g(\Ric+\Lambda)(r) = 0.
\end{equation}
The linearized constraints can be derived as the linearization of \eqref{EqHypDTConstraints} around the initial data induced by $g_0$, hence they are equations for the linearized metric $h'$ and the linearized second fundamental form $k'$, with $h',k'\in\CI(\Sigma_0,S^2T^*\Sigma_0)$; alternatively, we can use \eqref{EqHypDTConstraints2}: if $N_s$ is a unit normal field to $\Sigma_0$ with respect to the metric $g_s$, so $N_s$ also depends smoothly on $s$, then differentiating $(\Ein_{g_s}-\frac{(n-1)\Lambda}{2} g_s)(N_s,N_s)=0$ at $s=0$ gives
\begin{equation}
\label{EqHypLinConstraints1}
  \Bigl(D_g\Ein(r)-\frac{(n-1)\Lambda}{2}r\Bigr)(N,N) = 0,
\end{equation}
where we used \eqref{EqHypDTConstraints2} to see that the terms coming from differentiating either of the $N_s$ gives $0$; on the other hand, using the Einstein equation, the derivative of $\Ein_{g_s}(N_s,X)=0$ at $s=0$ takes the form
\[
  D_g\Ein(r)(N,X) + \frac{(n-1)\Lambda}{2}g(N',X) = 0, \quad N'=\frac{d}{ds}N_s|_{s=0};
\]
now $g_s(N_s,X)=0$ yields $g(N',X)=-r(N,X)$ upon differentiation, hence we arrive at
\begin{equation}
\label{EqHypLinConstraints2}
  \Bigl(D_g\Ein(r)-\frac{(n-1)\Lambda}{2}r\Bigr)(N,X) = 0,\quad X\in T\Sigma_0.
\end{equation}

If moreover each of the metrics $g_s$ is in the wave map gauge $\Ups(g_s)=0$, with $\Ups$ given in \eqref{EqHypDTGaugeTerm} for a fixed background metric $g^0$, then we also get
\[
  D_g\Ups(r) = 0;
\]
therefore, in this case, $r$ solves the \emph{linearized gauged Einstein equation}
\begin{equation}
\label{EqHypLinGaugedEinstein}
  D_g(\Ric+\Lambda)(r) - \delta_g^*D_g\Ups(r) = 0.
\end{equation}
As in the non-linear setting, one can use \eqref{EqHypLinGaugedEinstein}, which is a principally scalar wave equation as discussed after \eqref{EqHypDTOp}, to prove the well-posedness of the initial value problem for the linearized Einstein equation: given an initial data set $(h',k')$ of symmetric 2-tensors on $\Sigma_0$ satisfying the linearized constraint equations, one constructs Cauchy data $(r_0,r_1)$ for the gauged equation \eqref{EqHypLinGaugedEinstein} satisfying the linearized gauge condition at $\Sigma_0$; solving the Cauchy problem for \eqref{EqHypLinGaugedEinstein} yields a symmetric 2-tensor $r$. The linearized constraints in the form \eqref{EqHypLinConstraints1}--\eqref{EqHypLinConstraints2} imply that $\sfG_g\delta_g^* D_g\Ups(r)=0$ at $\Sigma_0$, which as before implies $\cL_{\pa_{x^0}}D_g\Ups(r)=0$ at $\Sigma_0$. Finally, since $\Ric(g)+\Lambda g=0$, linearizing the second Bianchi identity in $g$ gives
\[
  \delta_g \sfG_g D_g(\Ric+\Lambda)(r) = 0,
\]
and thus \eqref{EqHypLinGaugedEinstein} implies the evolution equation $\BoxCP_g(D_g\Ups(r))=0$, hence $D_g\Ups(r)\equiv 0$, and we therefore obtain a solution $r$ of \eqref{EqHypLinEinstein}.

Analogously to the discussion in Remark~\ref{RmkHypDTPutIntoGauge}, one can put a given solution $r$ of the linearized Einstein equation \eqref{EqHypLinEinstein}, with $g$ solving $\Ric(g)+\Lambda g=0$, into the linearized gauge $D_g\Ups(r)=0$ by solving a linearized wave map equation. Concretely, the diffeomorphism invariance of the non-linear equation \eqref{EqHypDTEinstein} implies that $D_g(\Ric+\Lambda)(\delta_g^*\omega')=0$ for all $\omega'\in\CI(M,T^*M)$; indeed, $\delta_g^*\omega'=\frac{1}{2}\cL_{(\omega')^\sharp}g$ is a Lie derivative. Thus, putting $r$ into the gauge $D_g\Ups(r)=0$ amounts to finding $\omega'$ such that
\begin{equation}
\label{EqHypLinGauge}
  D_g\Ups(r+\delta_g^*\omega')=0
\end{equation}
holds, or equivalently
\begin{equation}
\label{EqHypLinAdjustGauge}
  \BoxGauge_g\omega' = 2 D_g\Ups(r),
\end{equation}
where we define
\begin{equation}
\label{EqHypLinBoxGauge}
  \BoxGauge_g = -2 D_g\Ups\circ\delta_g^*,
\end{equation}
which agrees to leading order with the wave operator $\Box_g$ on 1-forms due to \eqref{EqHypDTDUps}. Thus, one can solve \eqref{EqHypLinAdjustGauge}, with any prescribed initial data $\gamma_0(\omega')$, and then \eqref{EqHypLinGauge} holds. Taking $\gamma_0(\omega')=0$ ensures that the linearized initial data, i.e.\ the induced linearized metric and linearized second fundamental form on $\Sigma_0$, of $r$ and $r+\delta_g^*\omega'$ coincide.

We remark that if one chooses the background metric $g^0$ in \eqref{EqHypDTGaugeTerm} to be equal to the metric $g$ around which we linearize, then $D_g\Ups(r)=-\delta_g \sfG_g r$ by \eqref{EqHypDTDUps} and thus $\BoxGauge_g=\Box_g+\Lambda$, see also \eqref{EqHypDTCP}.

\section{The Kerr--de~Sitter family of black hole spacetimes}
\label{SecKdS}

Let us fix the cosmological constant $\Lambda>0$. The Kerr--de~Sitter family of black holes, which we will recall momentarily, is parameterized by the mass $\bhm>0$ and the angular momentum $a$ of the black hole. We shall only consider black holes which are not `too large,' $9\Lambda\bhm^2<1$, which for Schwarzschild--de~Sitter black holes ensures that the cosmological horizon is outside of the event horizon; and the angular momentum will be small, $|a|\ll 1$, i.e.\ we only study \emph{slowly rotating} Kerr--de~Sitter black holes.

Since the $SO(3)$-action on Kerr--de~Sitter metrics (via pullback) degenerates at $a=0$, it will be useful to in fact use the larger, and hence redundant, parameter space
\begin{equation}
\label{EqKdSB}
  B = \{ (\bhm,\bfa)\colon \bhm>0, \ \bfa\in\R^3 \} \subset \R^4.
\end{equation}
This allows us to keep track of the rotation axis $\bfa/|\bfa|$ of the black hole for $a=|\bfa|\neq 0$. (Here, $|\cdot|$ denotes the Euclidean norm in $\R^3$.) The subfamily of Schwarzschild--de~Sitter black holes is then parameterized by elements $(\bhm,\bfzero)\subset B$, $\bhm>0$. As explained in the introduction, the relevance of Kerr--de~Sitter spacetimes in general relativity is that they are solutions of the Einstein vacuum equations with a cosmological constant:
\[
  \Ric(g_b) + \Lambda g_b = 0.
\]

In this section, we will define a manifold $M^\circ$ and the Kerr--de~Sitter family $g_b$ of smooth, stationary Lorentzian metrics on $M^\circ$; we proceed in two steps, first defining Schwarzschild--de  Sitter metrics in \S\ref{SubsecSdS}, and then Kerr--de~Sitter metrics \S\ref{SubsecKdSSlow}, in particular proving the smoothness of the family. A key aspect of our approach to the black hole stability problem is that we work on a compactification of Kerr--de~Sitter space at future infinity; we discuss this in \S\ref{SubsecKdSCpt}. In \S\ref{SubsecKdSGeo} and \S\ref{SubsecKdSWave}, we describe the geometric structure of these spacetimes in detail and explain how to set up initial value problems for wave equations. In \S\ref{SubsecKdSIn} finally, we construct Cauchy data for hyperbolic formulations of the Einstein equation, in suitable wave map gauges, out of geometric initial data.

\subsection{Schwarzschild--de~Sitter black holes}
\label{SubsecSdS}

We fix a black hole mass $\bhm[0]>0$ such that
\begin{equation}
\label{EqSdSNonDeg}
  9\Lambda\bhm[0]^2\in(0,1),
\end{equation}
and let
\begin{equation}
\label{EqSdSParam}
  b_0 = (\bhm[0],\bfzero) \in B
\end{equation}
be the parameters for a non-rotating Schwarzschild--de~Sitter black hole. In the \emph{static coordinate patch} $\cM=\R_t\times \cI_r\times\Sph^2$, which covers the \emph{exterior region} (also known as the \emph{domain of outer communications}) of the black hole, with the interval $\cI$ defined below, the metric is defined by
\begin{equation}
\label{EqSdSMetric}
  g_{b_0} = \mu_{b_0}\,dt^2 - \mu_{b_0}^{-1}\,dr^2 - r^2\,\slg,\quad \mu_{b_0}(r) = 1-\frac{2\bhm[0]}{r} - \frac{\Lambda r^2}{3},
\end{equation}
where $\slg$ is the round metric on $\Sph^2$. The non-degeneracy condition \eqref{EqSdSNonDeg} ensures that $\mu_{b_0}(r)$ has exactly two positive simple roots $0<r_{b_0,-}<r_{b_0,+}<\infty$, and then the given form of the metric $g_{b_0}$ is valid for
\begin{equation}
\label{EqSdSStaticRad}
  r\in\cI:=(r_{b_0,-},r_{b_0,+}).
\end{equation}
See Figure~\ref{FigSdSPenrose1}.

\begin{figure}[!ht]
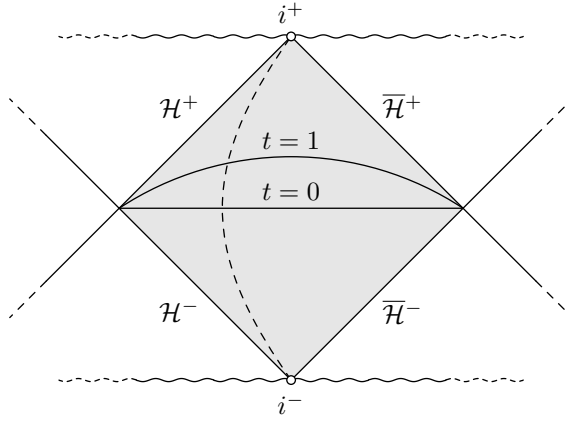

  \centering
  \inclfig{SdSPenrose1}
  \caption{Penrose diagram of Schwarzschild--de~Sitter space. The form \eqref{EqSdSMetric} of the metric is valid in the shaded region. Here, $\cH^\pm$ denotes the future/past event horizon, $\ol\cH{}^\pm$ the future/past cosmological horizon, and $i^\pm$ future/past timelike infinity. Also indicated are two level sets of the static time coordinate $t$, and a level set of $r$ (dashed).}
  \label{FigSdSPenrose1}
\end{figure}

The singularity of the expression \eqref{EqSdSMetric} at $r=r_{b_0,\pm}$ is resolved by a change of coordinates: we let
\begin{equation}
\label{EqSdSTStar}
  t_* = t - F_{b_0}(r),\quad F_{b_0}'(r)=\pm(\mu_{b_0}(r)^{-1}+c_{b_0,\pm}(r))\ \tn{near}\ r=r_{b_0,\pm},
\end{equation}
where $c_{b_0,\pm}(r)$ is smooth up to $r=r_{b_0,\pm}$; then
\begin{equation}
\label{EqSdSExt}
  g_{b_0} = \mu_{b_0}\,dt_*^2 \pm 2(1+\mu_{b_0} c_{b_0,\pm})\,dt_*\,dr + (2 c_{b_0,\pm} + \mu_{b_0} c_{b_0,\pm}^2)\,dr^2 - r^2\slg.
\end{equation}
It is easy to see that one can choose $c_{b_0,\pm}$ such that $dt_*$ is timelike up to $r=r_{b_0,\pm}$. In fact, there is a natural choice of $c_{b_0,\pm}$ making $|dt_*|^2_{G_{b_0}}$ constant, which will be convenient for computations later on:

\begin{lemma}
\label{LemmaSdSExt}
  Denote by $r_c:=\sqrt[3]{3\bhm[0]/\Lambda}$ the unique critical point of $\mu_{b_0}$ in $(r_{b_0,-},r_{b_0,+})$, and let $c_{t_*}=\mu_{b_0}(r_c)^{-1/2}=(1-\sqrt[3]{9\Lambda\bhm[0]^2})^{-1/2}$. Then
  \begin{equation}
  \label{EqSdSExtFPrime}
    F_{b_0}'(r)
      = \begin{cases}
          -\mu_{b_0}(r)^{-1}\sqrt{1-c_{t_*}^2\mu_{b_0}(r)}, & r<r_c, \\
          \mu_{b_0}(r)^{-1}\sqrt{1-c_{t_*}^2\mu_{b_0}(r)},  & r>r_c
        \end{cases}
  \end{equation}
  defines a smooth function $F_{b_0}(r)$ on $\cI$ up to an additive constant. Let $t_*=t-F_{b_0}(r)$. Then the metric $g_{b_0}$ and the dual metric $G_{b_0}$ are given by
  \begin{gather*}
    g_{b_0} = \mu_{b_0}\,dt_*^2 \pm 2\sqrt{1-c_{t_*}^2\mu_{b_0}}\,dt_*\,dr - c_{t_*}^2\,dr^2 - r^2\slg, \\
    G_{b_0} = c_{t_*}^2\,\pa_{t_*}^2 \pm 2\sqrt{1-c_{t_*}^2\mu_{b_0}}\,\pa_{t_*}\pa_r - \mu_{b_0}\,\pa_r^2 - r^{-2}\slG,
  \end{gather*}
  for $\pm(r-r_c)\geq 0$, where $\slG$ is the dual metric on $\Sph^2$. In particular, $|dt_*|^2_G\equiv c_{t_*}^2$.
\end{lemma}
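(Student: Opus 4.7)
The plan is to derive the formula by direct substitution starting from the static form \eqref{EqSdSMetric}, and then separately to verify the smoothness of $F_{b_0}$ across the critical point $r_c$ of $\mu_{b_0}$.

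First I would address smoothness of $F_{b_0}$. On $\cI$ the function $\mu_{b_0}$ has a unique critical point at $r_c=\sqrt[3]{3\bhm[0]/\Lambda}$, where its maximum equals $\mu_{b_0}(r_c)=1-\sqrt[3]{9\Lambda\bhm[0]^2}=c_{t_*}^{-2}$; hence $1-c_{t_*}^2\mu_{b_0}$ is nonnegative on $\cI$ and vanishes to exactly second order at $r_c$. Therefore we may write $\sqrt{1-c_{t_*}^2\mu_{b_0}(r)}=|r-r_c|h(r)$ for some smooth, strictly positive function $h$ on a neighborhood of $r_c$. With the sign convention in \eqref{EqSdSExtFPrime}, both one-sided expressions for $F_{b_0}'(r)$ combine into the single smooth formula $F_{b_0}'(r)=\mu_{b_0}(r)^{-1}(r-r_c)h(r)$ in a neighborhood of $r_c$, and $\mu_{b_0}^{-1}$ is smooth on $\cI$ since $\mu_{b_0}$ does not vanish there. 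Hence $F_{b_0}'$ is smooth on all of $\cI$, and $F_{b_0}$ is determined up to an additive constant.

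Next, I would substitute $dt=dt_*+F_{b_0}'(r)\,dr$ into \eqref{EqSdSMetric}, giving
\[
  g_{b_0}=\mu_{b_0}\,dt_*^2+2\mu_{b_0}F_{b_0}'\,dt_*\,dr+\bigl(\mu_{b_0}(F_{b_0}')^2-\mu_{b_0}^{-1}\bigr)dr^2-r^2\slg.
\]
The formula \eqref{EqSdSExtFPrime} gives $\mu_{b_0}F_{b_0}'=\pm\sqrt{1-c_{t_*}^2\mu_{b_0}}$ and $\mu_{b_0}(F_{b_0}')^2-\mu_{b_0}^{-1}=\mu_{b_0}^{-1}(1-c_{t_*}^2\mu_{b_0})-\mu_{b_0}^{-1}=-c_{t_*}^2$, which yields the claimed expression for $g_{b_0}$.

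Finally, for the dual metric I would invert the $(t_*,r)$ block directly. Its determinant is
\[
  \mu_{b_0}\cdot(-c_{t_*}^2)-(1-c_{t_*}^2\mu_{b_0})=-1,
\]
so the inverse block is exactly the one appearing in the statement; the spherical part inverts trivially to $-r^{-2}\slG$. Reading off the $\pa_{t_*}^2$ coefficient gives $|dt_*|^2_G=G_{b_0}(dt_*,dt_*)=c_{t_*}^2$, completing the proof.

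The only nontrivial point is the smoothness check at $r_c$, where the square root is non-differentiable as a function of $r$ but the sign flip in \eqref{EqSdSExtFPrime} exactly compensates; the rest is algebra. Motivationally, the choice of $F_{b_0}$ is forced by the requirement $g^{t_*t_*}=c_{t_*}^2$: setting $g_{rr}=-c_{t_*}^2$ in the new coordinates and using $\det=-1$ dictates the off-diagonal term and hence $F_{b_0}'$.
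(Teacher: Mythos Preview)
Your proof is correct and follows essentially the same approach as the paper: both hinge on the observation that $1-c_{t_*}^2\mu_{b_0}$ vanishes to precisely second order at the non-degenerate maximum $r_c$ of $\mu_{b_0}$, so that the sign flip in \eqref{EqSdSExtFPrime} renders $F_{b_0}'$ smooth across $r_c$, with the remaining content being the algebraic substitution. The only cosmetic difference is that the paper presents the argument as a derivation (imposing $|dt_*|^2_G=c_{t_*}^2$ and solving for $c_{b_0,\pm}$, which forces both the form of $F_{b_0}'$ and the specific value of $c_{t_*}$), whereas you verify the given formula directly and relegate the derivation to a closing remark; both arrive at the same place.
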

\begin{proof}
  For $F_{b_0}$ as in \eqref{EqSdSTStar}, we have $|dt_*|_{G_{b_0}}^2 = -(2 c_{b_0,\pm}+\mu_{b_0} c_{b_0,\pm}^2)$. For this to be constant, $|dt_*|_{G_{b_0}}^2 = c_{t_*}^2$, with $c_{t_*}$ determined in the course of the calculation, the smoothness of the functions $c_{b_0,\pm}$ at $r_{b_0,\pm}$ forces
  \[
    c_{b_0,\pm} = \mu_{b_0}^{-1}(-1+\sqrt{1-c_{t_*}^2 \mu_{b_0}}).
  \]
  In order for the two functions $\pm(\mu_{b_0}^{-1}+c_{b_0,\pm})$ to be real-valued and to match up, together with all derivatives, at a point $r_c\in(r_{b_0,-},r_{b_0,+})$, we thus need to arrange that the function defined as $\pm\sqrt{1-c_{t_*}^2 \mu_{b_0}}$ in $\pm(r-r_c)>0$ is smooth and real-valued. This forces $r_c$ to be the unique critical point of $\mu_{b_0}$ on $(r_{b_0,-},r_{b_0,+})$, which is a non-degenerate maximum, and $c_{t_*}^2=\mu_{b_0}(r_c)^{-1}$; this in turn is also sufficient for the smoothness of \eqref{EqSdSExtFPrime}, and the lemma is proved.
\end{proof}

\begin{rmk}
\label{RmkSdSCpmOtherSide}
  Once we have chosen a function $F_{b_0}$, or rather its derivative $F_{b_0}'$, for instance as in the above lemma, then one can define $c_{b_0,-}\in\CI([r_{b_0,-},r_{b_0,+}))$, i.e.\ up to but excluding $r_{b_0,+}$, and likewise $c_{b_0,+}\in\CI((r_{b_0,-},r_{b_0,+}])$, by \eqref{EqSdSTStar}; that is, $c_{b_0,\pm}=\pm F_{b_0}'-\mu_{b_0}^{-1}$.
\end{rmk}

We can extend $c_{b_0,\pm}$ in an arbitrary manner smoothly (with the choice given in this lemma even uniquely by analyticity, but this is irrelevant) beyond $r=r_{b_0,\pm}$. We can now define the smooth manifold
\begin{equation}
\label{EqSdSMfd}
  M^\circ = \R_{t_*} \times X, \quad
  X=I_r \times \Sph^2,\quad I_r=(r_{I,-},r_{I,+}):=(r_{b_0,-}-3\eps_M,r_{b_0,+}+3\eps_M),
\end{equation}
for $\eps_M>0$ small, and $g_{b_0}$, defined by \eqref{EqSdSExt} up to and beyond $r=r_{b_0,\pm}$, is a smooth Lorentzian metric on $M^\circ$ satisfying Einstein's equations. See Figure~\ref{FigSdSPenrose2} (and also Figure~\ref{FigIntroBaby}). At the end of \S\ref{SubsecKdSSlow}, we will compactify $M^\circ$ at future infinity, obtaining a manifold $M$ with boundary.

\begin{figure}[!ht]
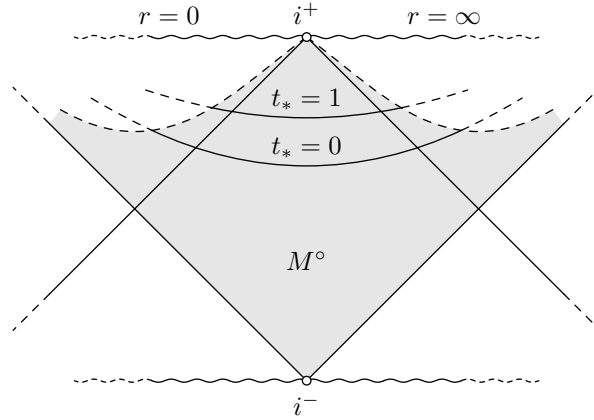

  \centering
  \inclfig{SdSPenrose2}
  \caption{The smooth manifold $M^\circ$ (shaded) within Schwarz\-schild--de~Sitter space, and two exemplary level sets of the timelike function $t_*$. The form \eqref{EqSdSExt} of the metric in fact extends beyond the dashed boundary of $M^\circ$, $r=r_\pm\pm 3\eps_M$, all the way up to (but excluding) the black hole singularity $r=0$ and the conformal boundary $r=\infty$ of the cosmological region.}
  \label{FigSdSPenrose2}
\end{figure}

Using the polar coordinate map, we can also view the spatial slice $X$ as
\begin{equation}
\label{EqSdSSpatialR3}
  X = \{p\in\R^3\colon r_{I,-}<|p|<r_{I,+}\} \subset \R^3.
\end{equation}
The static coordinate chart on $M^\circ$ (i.e.\ the dashed region in Figure~\ref{FigSdSPenrose1}) is the region
\begin{equation}
\label{EqSdSMfdStatic}
  \cM = \R_t \times \cX \subset M^\circ,\quad \cX = \cI \times \Sph^2,
\end{equation}
with $\cI$ defined in \eqref{EqSdSStaticRad}.

\subsection{Slowly rotating Kerr--de~Sitter black holes}
\label{SubsecKdSSlow}

Given Schwarzschild--de~Sitter parameters $b_0=(\bhm[0],\bfzero)\in B$ and the smooth manifold \eqref{EqSdSMfd} with time function $t_*\in\CI(M^\circ)$, we now proceed to define the Kerr--de~Sitter family of metrics, depending on the parameters $b\in B$, as a smooth family $g_b$ of stationary Lorentzian metrics on $M^\circ$ for $b$ close to $b_0$.

Given the angular momentum $a=|\bfa|$ of a black hole of mass $\bhm$ (spinning around the axis $\bfa/|\bfa|\in\R^3$ for $a\neq 0$), the Kerr--de~Sitter metric with parameters $b=(\bhm,\bfa)$ in \emph{Boyer--Lindquist coordinates} $(t,r,\phi,\theta)\in\R\times \cI_b\times\Sph^1_\phi\times(0,\pi)$, with $\cI_b\subset\R$ an interval defined below, takes the form
\begin{equation}
\label{EqKdSSlowStationary}
\begin{split}
  g_b &= -\rho_b^2\Bigl(\frac{dr^2}{\wt\mu_b}+\frac{d\theta^2}{\kappa_b}\Bigr) - \frac{\kappa_b\sin^2\theta}{(1+\lambda_b)^2\rho_b^2}(a\,dt-(r^2+a^2)\,d\phi)^2 \\
    &\qquad + \frac{\wt\mu_b}{(1+\lambda_b)^2\rho_b^2}(dt-a\sin^2\theta\,d\phi)^2,
\end{split}
\end{equation}
where
\begin{equation}
\label{EqKdSSlowFunctions}
\begin{gathered}
  \wt\mu_b(r) = (r^2+a^2)\Bigl(1-\frac{\Lambda r^2}{3}\Bigr) - 2\bhm r, \\
  \rho_b^2 = r^2+a^2\cos^2\theta, \quad \lambda_b = \frac{\Lambda a^2}{3}, \quad \kappa_b = 1+\lambda_b\cos^2\theta.
\end{gathered}
\end{equation}
For $a=0$, we have $\wt\mu_b(r)=r^2\mu_b(r)$, with $\mu_b$ defined in \eqref{EqSdSMetric}. For $a\neq 0$, the spherical coordinates $(\phi,\theta)$ are chosen such that the vector $\bfa/|\bfa|\in\Sph^2$ is defined by $\theta=0$, and the vector field $\pa_\phi$ generates counterclockwise rotation around $\bfa/|\bfa|$, with $\R^3$ carrying the standard orientation. Thus, for $\bfa=(0,0,a)$, $a>0$, the coordinates $(\phi,\theta)$ are the standard spherical coordinates of $\Sph^2\hra\R^3$. We note that, using these standard spherical coordinates, the expression for $g_{(\bhm,(0,0,-a))}$ is given by \eqref{EqKdSSlowStationary} with $a$ replaced by $-a$: this simply means that reflecting the angular momentum vector across the origin is equivalent to reversing the direction of rotation.

\begin{lemma}
\label{LemmaKdSSlowHorizon}
  Let $r_{b_0,-}<r_{b_0,+}$ denote the unique positive roots of $\wt\mu_{b_0}$. Then for $b$ in an open neighborhood $b_0\in\cU_B\subset B$, the largest two positive roots
  \[
    r_{b,-}<r_{b,+}
  \]
  of $\wt\mu_b$ depend smoothly on $b\in\cU_B$. In particular, for $b$ near $b_0$, we have
  \[
    |r_{b,\pm}-r_{b_0,\pm}| < \eps_M,
  \]
  and so $r_{b,\pm}\in I$, with $I$ defined in \eqref{EqSdSMfd}.
\end{lemma}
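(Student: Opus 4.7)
The plan is to apply the implicit function theorem to $\wt\mu_b(r)=0$ at the points $(b_0,r_{b_0,\pm})$. The very first thing to verify — which is perhaps the only subtle point, since $a=|\bfa|$ is merely Lipschitz in $\bfa\in\R^3$ at $\bfa=\bfzero$ — is that $\wt\mu_b(r)$ is a genuinely smooth function of $(b,r)\in B\times(0,\infty)$. This is immediate from \eqref{EqKdSSlowFunctions}: the parameter $a$ enters $\wt\mu_b$ only through $a^2=|\bfa|^2$, which is a polynomial, hence smooth, function of $\bfa\in\R^3$.

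Next I verify the non-degeneracy needed for the implicit function theorem. At $b=b_0$, where $\bfa=\bfzero$, the definition of $\wt\mu_b$ gives $\wt\mu_{b_0}(r)=r^2\mu_{b_0}(r)$ with $\mu_{b_0}$ from \eqref{EqSdSMetric}. The non-degeneracy condition \eqref{EqSdSNonDeg} guarantees that $\mu_{b_0}$ has exactly two positive roots $r_{b_0,-}<r_{b_0,+}$ and that they are simple, so $\mu_{b_0}'(r_{b_0,\pm})\neq 0$; hence
\[
  \pa_r\wt\mu_{b_0}(r_{b_0,\pm}) = r_{b_0,\pm}^2\mu_{b_0}'(r_{b_0,\pm}) \neq 0.
\]
The implicit function theorem therefore produces smooth functions $b\mapsto r_{b,\pm}$ on neighborhoods of $b_0$, with $\wt\mu_b(r_{b,\pm})=0$ and reducing to $r_{b_0,\pm}$ at $b=b_0$.

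It remains to check that these are the \emph{largest two} positive roots of $\wt\mu_b$, which is where I would expect the only real bookkeeping. The polynomial $\wt\mu_b(r)$ has degree four in $r$ with negative leading coefficient $-\Lambda/3$, and its four roots (in $\C$, with multiplicity) depend continuously on $b$. At $b=b_0$, two of these roots are the simple roots $r_{b_0,\pm}>0$ and the remaining two form a double root at $r=0$. For $b$ close to $b_0$ with $\bfa\neq\bfzero$, one has $\wt\mu_b(0)=a^2>0$, so the two roots coming from the double root at the origin either remain a complex-conjugate pair near $0$ or split into two real roots close to $0$; in either case their moduli stay uniformly small. Consequently, shrinking $\cU_B$ if necessary, these two extra roots stay strictly below $r_{b_0,-}-\eps_M$, while $r_{b,\pm}$ remain within $\eps_M$ of $r_{b_0,\pm}$ by continuity, establishing both that $r_{b,-}<r_{b,+}$ are the largest two positive roots and the inclusion $r_{b,\pm}\in I$ from \eqref{EqSdSMfd}.
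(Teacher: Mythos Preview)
Your proof is correct and follows the same route as the paper's one-line argument (simplicity of the roots $r_{b_0,\pm}$ plus the implicit function theorem), with useful extra detail on why $\wt\mu_b$ is smooth in $b$ despite $a=|\bfa|$ being only Lipschitz. One small factual slip in your bookkeeping paragraph: at $b=b_0$ the quartic factors as $\wt\mu_{b_0}(r)=r\bigl(-\tfrac{\Lambda}{3}r^3+r-2\bhm[0]\bigr)$, so $r=0$ is a \emph{simple} root and the fourth root of the quartic is the negative number $-(r_{b_0,-}+r_{b_0,+})$; there is no double root at the origin. This does not affect your conclusion, since continuity of the roots still keeps these two extra roots well below $r_{b_0,-}$ for $b$ near $b_0$.
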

\begin{proof}
  This follows from the simplicity of the roots $r_{b_0,\pm}$ of $\wt\mu_{b_0}$ and the implicit function theorem.
\end{proof}

The interval $\cI_b$ in which the radial variable $r$ of the Boyer--Lindquist coordinate system takes values is then
\[
  \cI_b = (r_{b,-},r_{b,+}).
\]

The coordinate singularity of \eqref{EqKdSSlowStationary} is removed by a change of variables
\begin{equation}
\label{EqKdSSlowCoordChange}
  t_* = t - F_b(r),\quad \phi_* = \phi - \Phi_b(r),
\end{equation}
where $F_b,\Phi_b$ are smooth functions on $(r_{b,-},r_{b,+})$ such that
\begin{equation}
\label{EqKdSSlowCoordChange2}
  F_b'(r) = \pm\Bigl(\frac{(1+\lambda_b)(r^2+a^2)}{\wt\mu_b}+c_{b,\pm}\Bigr),
  \quad
  \Phi_b'(r) = \pm\Bigl(\frac{(1+\lambda_b)a}{\wt\mu_b} + \wt c_{b,\pm}\Bigr)
\end{equation}
near $r=r_{b,\pm}$, with $c_{b,\pm},\wt c_{b,\pm}$ smooth up to $r_{b,\pm}$. Here, for $b=b_0$ we take $c_{b_0,-}\in\CI((r_{I,-},r_{b_0,+}))$ and $c_{b_0,+}\in\CI((r_{b_0,-},r_{I,+}))$, with $r_{I,\pm}$ defined in \eqref{EqSdSMfd}, to be equal to any fixed choice for the Schwarzschild--de~Sitter space $(M,g_{b_0})$, e.g.\ the one in Lemma~\ref{LemmaSdSExt}.

\begin{lemma}
\label{LemmaKdSSlowCbeta}
  Fix radii $r_1,r_2$ with $r_{b_0,-}+\eps_M < r_1 < r_2 < r_{b_0,+}-\eps_M$. Then there exists a neighborhood $b_0\in\cU_B\subset B$ such that the following holds:
  \begin{enumerate}
    \item There exist smooth functions
      \begin{gather*}
        \cU_B\times(r_{I,-},r_2)\ni(b,r)\mapsto c_{b,-}(r), \\
        \cU_B\times(r_1,r_{I,+})\ni(b,r)\mapsto c_{b,+}(r),
      \end{gather*}
      which are equal to the given $c_{b_0,\pm}$ for $b=b_0$, such that the two functions
      \[
        \pm\Bigl(\frac{(1+\lambda_b)(r^2+a^2)}{\wt\mu_b}+c_{b,\pm}\Bigr)
      \]
      agree on $(r_1,r_2)$.
    \item There exist functions
      \begin{gather*}
        \cU_B\times(r_{I,-},r_2)\ni(b,r)\mapsto \wt c_{b,-}(r), \\
        \cU_B\times(r_1,r_{I,+})\ni(b,r)\mapsto \wt c_{b,+}(r),
      \end{gather*}
      with $a^{-1}\wt c_{b,\pm}$ smooth, and $\wt c_{b_0,\pm}\equiv 0$, such that the two functions
      \[
        \pm\Bigl(\frac{(1+\lambda_b)a}{\wt\mu_b}+\wt c_{b,\pm}\Bigr)
      \]
      agree on $(r_1,r_2)$.
  \end{enumerate}
\end{lemma}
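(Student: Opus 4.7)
The matching conditions asked for in (1) and (2) are purely algebraic relations on $(r_1,r_2)$: requiring that
\[
\pm\Bigl(\tfrac{(1+\lambda_b)(r^2+a^2)}{\wt\mu_b}+c_{b,\pm}\Bigr),\qquad \pm\Bigl(\tfrac{(1+\lambda_b)a}{\wt\mu_b}+\wt c_{b,\pm}\Bigr)
\]
agree on the overlap is equivalent to
\[
c_{b,-}+c_{b,+}=f_b,\qquad \wt c_{b,-}+\wt c_{b,+}=\wt f_b \quad\text{on }(r_1,r_2),
\]
where $f_b(r):=-2(1+\lambda_b)(r^2+a^2)/\wt\mu_b(r)$ and $\wt f_b(r):=-2(1+\lambda_b)a/\wt\mu_b(r)$. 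By Lemma~\ref{LemmaKdSSlowHorizon}, after shrinking $\cU_B$ we have $r_{b,-}<r_1<r_2<r_{b,+}$ for $b\in\cU_B$, so $\wt\mu_b$ is bounded away from zero on $(r_1,r_2)$; hence $f_b$ is smooth on $\cU_B\times(r_1,r_2)$, and $\wt f_b=a\cdot g_b$ with $g_b:=-2(1+\lambda_b)/\wt\mu_b$ smooth. The problem thus reduces to constructing smooth extensions satisfying these sum constraints, with prescribed values $c_{b_0,\pm}$ and $\wt c_{b_0,\pm}\equiv 0$ at $b=b_0$.

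The compatibility at $b=b_0$ is automatic: any choice of $c_{b_0,\pm}$ arising from a single smooth function $F_{b_0}'$ on $\cI$ (such as the one given by Lemma~\ref{LemmaSdSExt}, for which $c_{b_0,\pm}=\pm F_{b_0}'-\mu_{b_0}^{-1}$) satisfies $c_{b_0,-}+c_{b_0,+}=-2\mu_{b_0}^{-1}=f_{b_0}$ on $(r_{b_0,-},r_{b_0,+})\supset(r_1,r_2)$; and $\wt c_{b_0,-}+\wt c_{b_0,+}=0=\wt f_{b_0}$ trivially.

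To obtain the $b$-dependent extensions I would fix a cutoff $\chi\in\CI(\R)$ equal to $1$ on $(-\infty,r_1+\tfrac{r_2-r_1}{3}]$ and to $0$ on $[r_1+\tfrac{2(r_2-r_1)}{3},\infty)$, and then set, on the middle strip $(r_1,r_2)$,
\begin{align*}
  c_{b,-}(r) &:= \chi(r)\,c_{b_0,-}(r)+(1-\chi(r))\bigl(f_b(r)-c_{b_0,+}(r)\bigr), \\
  c_{b,+}(r) &:= \chi(r)\bigl(f_b(r)-c_{b_0,-}(r)\bigr)+(1-\chi(r))\,c_{b_0,+}(r), \\
  \wt c_{b,-}(r) &:= (1-\chi(r))\,\wt f_b(r),\qquad \wt c_{b,+}(r):=\chi(r)\,\wt f_b(r),
\end{align*}
while on $(r_{I,-},r_1]$ I set $c_{b,-}:=c_{b_0,-}$ and $\wt c_{b,-}:=0$, and on $[r_2,r_{I,+})$ I set $c_{b,+}:=c_{b_0,+}$ and $\wt c_{b,+}:=0$ (all $b$-independent there). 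Because $\chi\equiv 1$ near $r_1$ and $\chi\equiv 0$ near $r_2$, these pieces glue smoothly across the interfaces. A direct computation gives $c_{b,-}+c_{b,+}=f_b$ and $\wt c_{b,-}+\wt c_{b,+}=\wt f_b$ on $(r_1,r_2)$, establishing the matching. Smoothness of $a^{-1}\wt c_{b,\pm}$ follows from $\wt f_b=a g_b$, so that $a^{-1}\wt c_{b,-}=(1-\chi)g_b$ and $a^{-1}\wt c_{b,+}=\chi g_b$ are smooth. Finally, at $b=b_0$ the identity $f_{b_0}=c_{b_0,-}+c_{b_0,+}$ makes every convex combination collapse to the given $c_{b_0,\pm}$, so the construction is consistent with the prescribed Schwarzschild--de~Sitter values.

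The only genuine content here is the compatibility check $c_{b_0,-}+c_{b_0,+}=f_{b_0}$ on $(r_1,r_2)$, which rests on the existence of a single global $F_{b_0}'$ from Lemma~\ref{LemmaSdSExt}; once this is in hand, the rest is a partition-of-unity interpolation with the sole bookkeeping issue being the factor of $a$ in part~(2), handled by writing $\wt f_b$ as $a$ times a smooth function.
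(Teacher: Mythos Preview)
Your argument is correct and follows essentially the same approach as the paper: both reduce the matching condition to the algebraic sum constraint $c_{b,-}+c_{b,+}=f_b$ (resp.\ $\wt c_{b,-}+\wt c_{b,+}=\wt f_b$) on $(r_1,r_2)$ and solve it by cutoff interpolation. The only cosmetic difference is that the paper keeps $c_{b,-}\equiv c_{b_0,-}$ and $\wt c_{b,-}\equiv 0$ globally, placing all the $b$-dependence into $c_{b,+}$ and $\wt c_{b,+}$ via a cutoff supported to the right of $(r_1,r_2)$, whereas you split the correction symmetrically between the two functions using a cutoff on the overlap itself.
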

\begin{proof}
  We can take $c_{b,-}\equiv c_{b_0,-}$ on $(r_{I,-},r_2)$; then, for a cutoff $\chi\in\CI(\R)$,  with $\chi\equiv 1$ on $[r_1,r_2]$ and $\chi\equiv 0$ on $[r_{b_0,+}-\eps_M,r_{I,+})$, we put
  \[
    c_{b,+} = -\Bigl(\frac{2(1+\lambda_b)(r^2+a^2)}{\wt\mu_b}+c_{b_0,-}\Bigr)\chi + c_{b_0,+}(1-\chi).
  \]
  A completely analogous construction works for $\wt c_{b,\pm}$: we can take $\wt c_{b,-}\equiv 0$ and put
  \[
    \wt c_{b,+}=-\frac{2(1+\lambda_b)a}{\wt\mu_b}\chi.
  \]
  Clearly, the functions $a^{-1}\wt c_{b,\pm}$ depend smoothly on $b$.
\end{proof}

This lemma ensures that the definitions on $F_b'$ and $\Phi_b'$ in the two regions in \eqref{EqKdSSlowCoordChange2} coincide, hence making $F_b$ and $\Phi_b$ well-defined up to an additive constant. Using $a F_b'-(r^2+a^2)\Phi_b' = \pm(a c_{b,\pm}-(r^2+a^2)\wt c_{b,\pm})$ and $F_b'-a\sin^2\theta\,\Phi_b'=\pm\bigl((1+\lambda_b)\rho_b^2/\wt\mu_b + c_{b,\pm}-a\sin^2\theta\,\wt c_{b,\pm}\bigr)$ for $r>r_1$ (`$+$' sign) or $r<r_2$ (`$-$' sign), one now computes
\begin{equation}
\label{EqKdSSlowGBeta}
\begin{split}
  g_b &= -\frac{\kappa_b\sin^2\theta}{(1+\lambda_b)^2\rho_b^2}\bigl(a(dt_*\pm c_{b,\pm}\,dr)-(r^2+a^2)(d\phi_*\pm\wt c_{b,\pm}dr)\bigr)^2 \\
    & \quad + \frac{\wt\mu_b}{(1+\lambda_b)\rho_b^2}\bigl(dt_*\pm c_{b,\pm}dr - a\sin^2\theta\,(d\phi_*\pm\wt c_{b,\pm}\,dr)\bigr)^2 \\
    & \quad \pm \frac{2}{1+\lambda_b}\bigl(dt_*\pm c_{b,\pm}dr - a\sin^2\theta\,(d\phi_*\pm\wt c_{b,\pm}\,dr)\bigr)\,dr - \frac{\rho_b^2}{\kappa_b}\,d\theta^2,
\end{split}
\end{equation}
which now extends smoothly to (and across) $r_{b,\pm}$. Since one can compute the volume form to be\footnote{For these calculations, a convenient frame of $TM^\circ$ is $v_1=\pa_r\mp c_{b,\pm}\pa_{t_*}\mp\wt c_{b,\pm}\pa_{\phi_*}$, $v_2=a\sin^2\theta\,\pa_{t_*}+\pa_{\phi_*}$, $v_3=\pa_{t_*}$, $v_4=\pa_\theta$.} $|dg_b|=(1+\lambda_b)^{-2}\rho_b^2\sin\theta\,dt_*\,dr\,d\phi_*\,d\theta$, the metric $g_b$ in the coordinates used in \eqref{EqKdSSlowGBeta} is a non-degenerate Lorentzian metric, apart from the singularity of the spherical coordinates at $\theta=0,\pi$, which we proceed to discuss: first, we compute the dual metric to be
\begin{equation}
\label{EqKdSSlowGBetaDual}
\begin{split}
  \rho_b^2 G_b &= -\wt\mu_b(\pa_r\mp c_{b,\pm}\pa_{t_*}\mp\wt c_{b,\pm}\pa_{\phi_*})^2 \\
   & \quad \pm 2 a(1+\lambda_b)(\pa_r\mp c_{b,\pm}\pa_{t_*}\mp\wt c_{b,\pm}\pa_{\phi_*})\pa_{\phi_*} \\
   & \quad \pm 2(1+\lambda_b)(r^2+a^2)(\pa_r\mp c_{b,\pm}\pa_{t_*}\mp\wt c_{b,\pm}\pa_{\phi_*})\pa_{t_*} \\
   & \quad -\frac{(1+\lambda_b)^2}{\kappa_b\sin^2\theta}(a\sin^2\theta\,\pa_{t_*}+\pa_{\phi_*})^2 - \kappa_b\,\pa_\theta^2.
\end{split}
\end{equation}
Smooth coordinates on $\Sph^2$ near the poles $\theta=0,\pi$ are $x=\sin\theta\cos\phi_*$ and $y=\sin\theta\sin\phi_*$, and for the change of variables $\zeta\,d\phi_*+\eta\,d\theta=\lambda\,dx+\nu\,dy$, one finds $\sin^2\theta=x^2+y^2$ and $\zeta=\nu x-\lambda y$, that is,
\[
  \pa_{\phi_*} = y\pa_x - x\pa_y,
\]
and thus the smoothness of $\rho_b^2 G_b$ near the poles follows from writing $-\kappa_b$ times the term coming from the last line of \eqref{EqKdSSlowGBetaDual} as
\[
  \frac{(1+\lambda_b)^2}{\sin^2\theta}\pa_{\phi_*}^2 + \kappa_b^2\pa_\theta^2 = (1+\lambda_b)^2(\sin^{-2}\theta\,\pa_{\phi_*}^2 + \pa_\theta^2) + (\kappa_b^2-(1+\lambda_b)^2)\pa_\theta^2.
\]
Indeed the first summand is smooth at the poles, since $\sin^{-2}\theta\,\pa_{\phi_*}^2+\pa_\theta^2=\slG$ is the dual metric of the round metric on $\Sph^2$ in spherical coordinates; and we can rewrite the second summand as
\begin{equation}
\label{EqKdSSlowGBetaDualSphPart}
  -(2+\lambda_b(1+\cos^2\theta))\lambda_b\sin^2\theta\,\pa_\theta^2
\end{equation}
and observe that $\sin^2\theta\,\pa_\theta^2=(1-x^2-y^2)(x\pa_x+y\pa_y)^2$ is smooth at $(x,y)=0$ as well. Since the volume form is given by
\[
  |dg_b|=(1+\lambda_b)^{-2}(r^2+a^2(1-x^2-y^2))(1-x^2-y^2)^{-1/2}\,dt_*\,dr\,dx\,dy
\]
and thus smooth at the poles, we conclude that $g_b$ indeed extends smoothly and non-degenerately to the poles.

Using the map
\[
  (t_*,r,\phi_*,\theta) \mapsto (t_*, r\sin\theta\cos\phi_*, r\sin\theta\sin\phi_*, r\cos\theta) \in \R_{t_*}\times X \subset M^\circ,
\]
with $X\subset\R^3$ as in \eqref{EqSdSSpatialR3}, we can thus push the metric $g_b$ forward to a smooth, stationary, non-degenerate Lorentzian metric, which we continue to denote by $g_b$, on $M^\circ$, and $g_{b_0}$ is equal (pointwise!) to the extended Schwarzschild--de~Sitter metric defined in \S\ref{SubsecSdS}. The Boyer--Lindquist coordinate patch of the Kerr--de~Sitter black hole with parameters $b\in B$ is the subset $\{r_{b,-}<r<r_{b,+}\}\subset M^\circ$.

Since the choice of spherical coordinates does not depend smoothly on $\bfa$ near $\bfa=0$, the smooth dependence of $g_b$, as a family of metrics on $M^\circ$, on $b$ is not automatic; we thus prove:

\begin{prop}
\label{PropKdSSlowSmooth}
  Let the neighborhood $\cU_B\subset B$ of $b_0$ be as in Lemma~\ref{LemmaKdSSlowHorizon}. Then the smooth Lorentzian metric $g_b$ on $M^\circ$ depends smoothly on $b\in B$.
\end{prop}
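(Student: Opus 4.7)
The dependence on $\bhm$ is manifestly smooth, since $\bhm$ enters polynomially through $\wt\mu_b$ and, via Lemma~\ref{LemmaKdSSlowCbeta}, smoothly through $c_{b,\pm}$, $a^{-1}\wt c_{b,\pm}$, and hence through $F_b, \Phi_b$ and every coefficient in \eqref{EqKdSSlowGBeta}. The substantive content of the proposition is thus the smooth dependence on the angular momentum vector $\bfa$, and particularly the smoothness at $\bfa=\bfzero$, where the polar axis $\hat n=\bfa/|\bfa|$ used to write the Boyer--Lindquist coordinates $(\phi_*,\theta)$ is ambiguous.

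For $\bfa\neq\bfzero$, I would argue by $SO(3)$-equivariance. Under the spatial rotation action of $SO(3)$ on $M^\circ=\R_{t_*}\times X$ (using $X\subset\R^3$), the family satisfies $g_{(\bhm,R\bfa)}=R_*g_{(\bhm,\bfa)}$. Picking, near any $\bfa_1\neq\bfzero$, a smooth local section $\bfa/|\bfa|\mapsto R(\bfa/|\bfa|)\in SO(3)$ of the surjection $SO(3)\ni R\mapsto R\hat n_0\in\Sph^2$ for a fixed reference direction $\hat n_0$, one obtains $g_{(\bhm,\bfa)}=R(\bfa/|\bfa|)_*\,g_{(\bhm,|\bfa|\hat n_0)}$. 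The inner family, with its rotation axis fixed at $\hat n_0$, is manifestly smooth in the scalar parameters $(\bhm,|\bfa|)$ directly from \eqref{EqKdSSlowGBeta}; composition with the smooth rotation therefore gives smoothness of $b\mapsto g_b$ away from $\bfa=\bfzero$.

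For smoothness at $b_0=(\bhm_0,\bfzero)$, the plan is to work in the Cartesian coordinates $(t_*,x,y,z)$ on $M^\circ$ and verify directly that every component of $g_b$ is smooth in $(t_*,p,b)$ near $(t_*,p,b_0)$. The approach is to rewrite \eqref{EqKdSSlowGBeta} in terms of $SO(3)$-covariant building blocks that are manifestly smooth in $(b,p)$:
\begin{itemize}
\item the scalars $r=|p|$, $\bhm$, $|\bfa|^2$, $a\cos\theta=\bfa\cdot p/r$, and the derived quantities $a^2\cos^2\theta$, $a^2\sin^2\theta=|\bfa|^2-(\bfa\cdot p)^2/r^2$, $\rho_b^2$, $\kappa_b$, $\lambda_b$, $\wt\mu_b$;
\item the 1-forms $dt_*$, $dr=(p\cdot dp)/r$, $\omega_\bfa:=(\bfa\times p)\cdot dp$ (which equals $ar^2\sin^2\theta\,d\phi$), and $\eta_\bfa:=-d(\bfa\cdot p/r)=-\bfa\cdot dp/r+(\bfa\cdot p)/r^2\,dr$ (which equals $a\sin\theta\,d\theta$);
\item the $SO(3)$-invariant symmetric 2-tensor $r^2\slg=|dp|^2-dr^2$.
\end{itemize}
Lemma~\ref{LemmaKdSSlowCbeta} moreover furnishes that $a\wt c_{b,\pm}=a^2\cdot(a^{-1}\wt c_{b,\pm})$ is smooth in $b$, so $a\Phi_b'(r)$ is $a^2$ times a smooth function of $(r,b)$; together with $d\phi_*=d\phi-\Phi_b'(r)\,dr$, this yields that $ar^2\sin^2\theta\,d\phi_*=\omega_\bfa\mp(a^2\sin^2\theta)\,r^2 h_\pm(r,b)\,dr$ is a smooth 1-form in a neighborhood of $b_0$.

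The expansion of \eqref{EqKdSSlowGBeta} then produces contributions of three kinds: smooth scalar multiples of tensors built from $dt_*$, $dr$, $\omega_\bfa$, and $r^2\slg$; cross-terms of the form $dt\,d\phi$ and $dr\,d\phi$, which enter only through the combination $a\sin^2\theta\,d\phi=\omega_\bfa/r^2$ and are therefore smooth; and the ``trace-free'' piece of the angular sector, which after decomposing
\[
  A\,d\theta^2+B\sin^2\theta\,d\phi^2=\frac{A+B}{2}\slg+\frac{A-B}{2}(d\theta^2-\sin^2\theta\,d\phi^2),
\]
with $A=-\rho_b^2/\kappa_b$ and $B=[-\kappa_b(r^2+a^2)^2+\wt\mu_b a^2\sin^2\theta]/[(1+\lambda_b)^2\rho_b^2]$, is handled using the crucial identity $a^2\sin^2\theta\cdot(d\theta^2-\sin^2\theta\,d\phi^2)=\eta_\bfa^2-\omega_\bfa^2/r^4$. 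The smoothness then follows from the elementary algebraic fact that $A-B$ carries a factor of $a^2\sin^2\theta$, which is confirmed by observing that both $A$ and $B$ restrict to $-(r^2+a^2)/(1+\lambda_b)$ on the rotation axis (where $\sin\theta=0$, equivalently $(\bfa\cdot p)^2/r^2=a^2$); hence $(A-B)/(a^2\sin^2\theta)$ extends to a smooth function of $(b,p)$. The hard part will be organizing this algebraic reshuffling cleanly; it is routine but voluminous, and once executed it completes the verification that $b\mapsto g_b$ is smooth at $b_0$.
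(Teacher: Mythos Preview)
Your approach is correct and is essentially the dual of the paper's. The paper works with the dual metric $G_b$ (expression \eqref{EqKdSSlowGBetaDual}) rather than $g_b$, and identifies the smooth building blocks on the tangent side: $a\,\pa_{\phi_{b,*}} = \nabla_{\bfa\times p}$ and $a\sin\theta_b\,\pa_{\theta_b} = |p|^{-1}\nabla_{p\times(p\times\bfa)}$, which are exactly dual to your $\omega_\bfa$ and $\eta_\bfa$. The residual angular piece in $G_b$, after peeling off the round dual metric $\slG$, is $\lambda_b\sin^2\theta$ times a smooth $2$-tensor (see \eqref{EqKdSSlowGBetaDualSphPart}); this is the contravariant counterpart of your observation that $A-B$ carries a factor of $a^2\sin^2\theta$ when viewed as a smooth function of $(r,a^2,a^2\sin^2\theta)$. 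The paper's route is a bit more economical because the dual metric \eqref{EqKdSSlowGBetaDual} has a cleaner block structure than \eqref{EqKdSSlowGBeta}: the vector fields $\pa_{\phi_*}$ and $\pa_\theta$ separate out directly, so one avoids your trace/trace-free decomposition of the spherical block and the associated algebra. Your $SO(3)$-equivariance reduction for $\bfa\neq\bfzero$ is correct but not needed --- the Cartesian verification you outline at $\bfa=\bfzero$ works uniformly in $\bfa$, as the paper's argument does.
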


Here, the smoothness of the family $g_b$ is equivalent to the statement that the map
\[
  B \times \R_{t_*}\times X \ni (b,t_*,p) \mapsto g_b(t_*,p),
\]
with $g_b(t_*,p)$ on the right the matrix of $g_b$ at the given point in the global coordinate system $(t_*,p)\in M^\circ$, is a smooth map $\cU_B\times\R_{t_*}\times X\to\R^{4\times 4}$.

\begin{proof}[Proof of Proposition~\ref{PropKdSSlowSmooth}]
  Given $(\bhm,\bfa)\in B$ with $a=|\bfa|\neq 0$, let us denote the spherical coordinate system on $X$ with north pole $\theta=0$ at $\bfa/|\bfa|$ by $\phi_{b,*},\theta_b$, so the pushforwards of the functions in \eqref{EqKdSSlowFunctions} to $M^\circ$ are simply obtained by replacing $\theta$ by $\theta_b$. Then, if $(r,\phi_{b,*},\theta_b)$ are the polar coordinates of a point $p\in X$, we have
  \[
    r = |p|,\quad a\cos\theta_b=\Big\la\bfa,\frac{p}{|p|}\Big\ra,\quad a^2\sin^2\theta_b = |\bfa|^2-a^2\cos\theta_b^2,
  \]
  where $|\cdot|$ and $\la\cdot,\cdot\ra$ denote the Euclidean norm and inner product on $X\subset\R^3$, respectively. Since $0\notin X$, this shows that $r$, $a\cos\theta_b$ and $a^2\sin^2\theta_b$, and hence the pushforwards of $\wt\mu_b$, $\rho_b$, $\lambda_b$, and $\kappa_b$, are smooth (in $b$) families of smooth functions on $M^\circ$, as are $c_{b,\pm}$ and $a^{-1}\wt c_{b,\pm}$ (which only depend on $r$), on their respective domains of definition, by Lemma~\ref{LemmaKdSSlowCbeta}.

  We can now prove the smooth dependence of the dual metric $G_b$ on $b$: in light of the expression \eqref{EqKdSSlowGBetaDual} and the discussion around \eqref{EqKdSSlowGBetaDualSphPart}, all we need to show is that the vector fields $\pa_{t_*}$, $\pa_r$, $a\,\pa_{\phi_{b,*}}$ and $a\sin\theta_b\,\pa_{\theta_b}$ depend smoothly on $\bfa$, in particular near $\bfa$ where they are defined to be identically $0$. (Note that the 2-tensor in \eqref{EqKdSSlowGBetaDualSphPart} is a smooth multiple of $a^2\sin^2\theta_b\,\pa_{\theta_b}^2$.) Indeed, this proves that $G_b$ is a smooth family of smooth sections of $S^2 T M^\circ$, and we already checked the non-degeneracy of $G_b$ at the poles, where the spherical coordinates are singular.

  For $\pa_{t_*}$ and for the radial vector field $\pa_r=|p|^{-1}p\pa_p$, which do not depend on $b$, the smoothness is clear. Further, we have
  \[
    a\,\pa_{\phi_{b,*}} = \nabla_{\bfa\times p}\quad\tn{at }p\in X,
  \]
  i.e.\ differentiation in the direction of the vector $\bfa\times p$. Indeed, if $\bfa=a\vec e_3:=(0,0,a)$, both sides equal $a(x\pa_y-y\pa_x)$ on $\R^3_{x,y,z}$, and if $\bfa\in\R^3$ is any given vector and $R\in SO(3)$ is a rotation with $R\bfa=a\vec e_3$, $a=|\bfa|$, then $(R_*(a\pa_{\phi_{b,*}}))|_p=(a\pa_{\phi_{a\vec e_3,*}})|_{R(p)}$, which we just observed to be equal to $\nabla_{a\vec e_3\times R(p)}=R_*\nabla_{\bfa\times p}$.

  In a similar vein, one sees that
  \[
    a\sin\theta_b\,\pa_{\theta_b} = |p|^{-1}\nabla_{p\times(p\times\bfa)}\quad\tn{at }p\in X,
  \]
  and the latter expression is clearly smooth in $\bfa$, finishing the proof of the proposition.
\end{proof}

\begin{rmk}
\label{RmkKdSSlowNonSdS}
  If one were interested in analyzing the non-linear stability of Kerr--de~Sitter spacetimes for general parameters---i.e.\ dropping the assumption of small angular momentum---one notes that the construction described in this section can be performed in the neighborhood of any Kerr--de~Sitter spacetime which is non-degenerate in the sense that the two largest roots of $\wt\mu_b$ are simple and positive.
\end{rmk}

Given the smooth family of Kerr--de~Sitter metrics $g_b$ on $M^\circ$ defined in the previous section, we can define its linearization around any $g_b$, $b\in\cU_B$.

\begin{definition}
\label{DefKdSSlowLinMet}
  For $b\in\cU_B$ and $b'\in T_b B$, we define the element $g'_b(b')$ of the linearized Kerr--de~Sitter family, linearized around $g_b$, by
  \[
    g'_b(b') = \frac{d}{ds} g_{b+s b'}|_{s=0}.
  \]
\end{definition}

Notice here that $\cU_B\subset B\subset\R^4$ is an open subset of $\R^4$, and thus we can identify $T_b B=\R^4$. The linearization of the Kerr--de~Sitter family around $g_b$ is the 4-dimensional vector space $g'_b(T_b B)\equiv \{g'_b(b')\colon b'\in T_b B\}$.

\begin{rmk}
\label{RmkKdSSlowLinMetDegFreedom}
  Define $d_b$ to be the number of parameters needed to describe a linearized Kerr--de~Sitter metric modulo Lie derivatives (i.e.\ the number of `physical degrees of freedom'); that is,
  \[
    \Gamma_b = \frac{g'_b(T_b B)}{\ran\delta_{g_b}^* \cap g'_b(T_b B)}, \quad d_b = \dim\Gamma_b.
  \]
  Then one can show that $d_b=4$ if $\bfa=\bfzero$, and $d_b=2$ if $\bfa\neq\bfzero$. See also the related discussion at the end of \cite[\S6.2.2]{DafermosHolzegelRodnianskiSchwarzschildStability}. The reason for $d_b=4$ for Schwarzschild--de~Sitter parameters $b$ is that slowly rotating Kerr--de~Sitter metrics with rotation axes which are far apart are related by a rotation by a large angle. This is one of the reasons to use the redundant (for non-zero angular momenta) parameterization \eqref{EqKdSB} of the Kerr--de~Sitter family.
\end{rmk}

\subsection{Compactification}
\label{SubsecKdSCpt}

In order to make full use of the asymptotic structure of Kerr--de~Sitter spaces, it is convenient to compactify the spacetime $M^\circ$, defined in \eqref{EqSdSMfd}, at future infinity: we define
\[
  \tau := e^{-t_*}
\]
and put
\[
  M = \bigl(M^\circ \sqcup ([0,\infty)_\tau\times X)\bigr) / \sim, \quad (t_*,x)\sim (\tau=e^{-t_*},x),
\]
which is a smooth manifold with boundary, where the smooth structure is defined such that $\tau$ is a boundary defining function, i.e.\ $\tau\in\CI(M)$ vanishes simply at $\tau=0$. Thus,
\begin{equation}
\label{EqKdSSlowCompProd}
  M\cong [0,\infty)_\tau\times X
\end{equation}
as manifolds with boundary. We often regard $X$ as the boundary at infinity of $M$. On $M$, we can now use the natural bundles $\Tb M$ (the \emph{b-tangent bundle}), $\Tb^*M$ (the \emph{b-cotangent bundle}), and their tensor powers. We refer the reader to Appendix~\ref{SecB} for precise definitions of these objects. The bundles $\Tb M$ and $\Tb^*M$ are naturally isomorphic to the usual translation-invariant (in $t_*$) tangent/cotangent bundle on $M^\circ$. Crucially, they extend smoothly to $X$; analysis on $M$ near $X$, or b-microlocal analysis in $\Tb^*M$ near $\Tb_X^*M$, is thus \emph{automatically} (and necessarily) uniform, i.e.\ gives uniform control on $M^\circ$ as $t_*\to\infty$. Structures central to understanding waves uniformly as $t_*\to\infty$, such as horizons or trapping, arise naturally as submanifolds of $\Tb_X^*M$ at which the null-geodesic flow (lifted to the cotangent bundle) has a special structure (invariant manifolds, saddle points of the flow, etc).

The use of the compactification, a common technique in geometric analysis, means that we do not need to repeatedly refer to the asymptotic structure when making statements about $t_*\to\infty$, though one could equivalently do that without introducing the compactification. Thus, to some extent, compactifying is a matter of taste, but it provides a very convenient language.

Note that smooth functions on $M$ are smooth functions of $(\tau,x)=(e^{-t_*},x)$ down to $\tau=0$, hence they have Taylor expansions at $\tau=0$ into powers of $\tau=e^{-t_*}$; in particular, they are invariant under translations in $t_*$ up to a remainder which decays exponentially fast as $t_*\to\infty$. Since $\pa_{t_*}=-\tau\pa_\tau$ and $dt_*=-\frac{d\tau}{\tau}$, and since functions on $M^\circ$ which are constant in $t_*$ extend to smooth functions on $M$, we can thus rephrase Proposition~\ref{PropKdSSlowSmooth} as follows:

\begin{prop}
\label{PropKdSSlowCompSmooth}
  With $\cU_B\subset B$ a neighborhood of $b_0$ as in Lemma~\ref{LemmaKdSSlowCbeta}, the family $g_b$ of Kerr--de~Sitter metrics with $b\in B$ is a smooth family of non-degenerate signature $(1,3)$ sections of $\CI(M;S^2\,\Tb^*M)$.
\end{prop}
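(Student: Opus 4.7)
The proposition is essentially a reformulation of Proposition~\ref{PropKdSSlowSmooth} in the language of b-geometry on the compactification $M \cong [0,\infty)_\tau \times X$; the content is that the stationarity of each $g_b$, together with smoothness in $b$ on $M^\circ$, promotes the family to a smooth family of sections of $S^2\,\Tb^*M$ over the closed manifold with boundary $M$. The plan is therefore a short bookkeeping argument in the coordinate change $\tau=e^{-t_*}$.

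First, I would recall that near $\pa M = X = \{\tau=0\}$ the b-cotangent bundle $\Tb^*M$ admits the local frame $\frac{d\tau}{\tau}$ together with the pullbacks of a frame of $T^*X$. Since $dt_* = -\frac{d\tau}{\tau}$, this means that the 1-forms $dt_*, dr$, and the angular differentials on $\Sph^2$ (or $dx,dy,dz$ using the realization $X\subset\R^3$ in \eqref{EqSdSSpatialR3}) form a smooth local frame of $\Tb^*M$ over the collar $[0,\infty)_\tau\times X$. Consequently, a symmetric 2-tensor $g$ on $M^\circ$ written in the coordinate basis $\{dt_*,dx^i\}$ as
\[
  g = \sum_{\alpha,\beta} g_{\alpha\beta}(t_*,x)\,d\xi^\alpha\otimes_s d\xi^\beta,\qquad \xi^\alpha\in\{t_*,x^i\},
\]
is a smooth section of $S^2\,\Tb^*M$ over $M$ if and only if each coefficient $g_{\alpha\beta}$, regarded as a function of $(\tau,x)=(e^{-t_*},x)$, extends smoothly to $\tau=0$.

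Second, I would invoke stationarity, $\cL_{\pa_{t_*}}g_b=0$, which is manifest from the construction of \S\ref{SubsecKdSSlow}: in the global frame $\{dt_*,dx^i\}$ the components of $g_b$ are functions of $x\in X$ alone, independent of $t_*$. These functions trivially extend as $\tau$-independent elements of $\CI(X)\subset\CI(M)$ across $\tau=0$, so $g_b\in\CI(M;S^2\,\Tb^*M)$. The signature $(1,3)$ non-degeneracy, already verified pointwise on $M^\circ$ in \S\ref{SubsecKdSSlow} (including at the coordinate-singular poles $\theta=0,\pi$, via the smooth coordinates $x=\sin\theta\cos\phi_*$, $y=\sin\theta\sin\phi_*$ and the computation of $|dg_b|$), is independent of $t_*$ by stationarity and hence extends to the boundary by continuity of the determinant in the fixed smooth frame $\{dt_*,dx^i\}$ of $\Tb^*M$.

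Finally, smoothness in $b\in\cU_B$ is immediate from Proposition~\ref{PropKdSSlowSmooth}: the map $(b,t_*,p)\mapsto g_b(t_*,p)$ was shown to be a smooth map $\cU_B\times M^\circ\to\R^{4\times 4}$ in the global frame $\{dt_*,dx^i\}$, and by stationarity this map factors through a smooth map $\cU_B\times X\to\R^{4\times 4}$, which is precisely a smooth family (in $b$) of smooth coefficient functions on $M$. I do not expect any real obstacle: the only thing one must be slightly careful about is that $\{dt_*,dx^i\}$ is a smooth frame of $\Tb^*M$ (not merely of $T^*M^\circ$) up to and including $\tau=0$, which is ensured by the identity $dt_*=-d\tau/\tau$ defining the b-conormal direction.
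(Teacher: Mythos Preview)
Your proposal is correct and takes essentially the same approach as the paper, which simply observes (in the sentence preceding the proposition) that since $dt_*=-\frac{d\tau}{\tau}$ and functions on $M^\circ$ constant in $t_*$ extend to smooth functions on $M$, Proposition~\ref{PropKdSSlowCompSmooth} is a direct rephrasing of Proposition~\ref{PropKdSSlowSmooth}. Your write-up is a more detailed version of the same bookkeeping.
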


The stationary nature of $g_b$ can be recast in this setting as the invariance of $g_b$ with respect to dilations in $\tau$ in the product decomposition \eqref{EqKdSSlowCompProd} of $M$.

\subsection{Geometric and dynamical aspects of Kerr--de~Sitter spacetimes}
\label{SubsecKdSGeo}

The dual metric function $G_b\in\CI(\Tb^*M)$ is defined by $G_b(z,\zeta)=|\zeta|_{(G_b)_z}^2$ for $z\in M$, $\zeta\in\Tb^*_z M$, and the characteristic set is
\begin{equation}
\label{EqKdSGeoChar}
  \Sigma_b = G_b^{-1}(0) \subset \Tb^*M\setminus o,
\end{equation}
which is conic in the fibers of $\Tb^*M$. We occasionally identify $\Sigma_b$ with its closure in the radially compactified (in the fibers) b-cotangent bundle, minus the zero section $o$, so $\Sigma_b\subset\ol{\Tb^*}M\setminus o$. See~\eqref{EqBCompact} for the definition of $\ol{\Tb^*}M$. Sometimes, we also identify $\Sigma_b$ with its boundary at fiber infinity $\pa\Sigma_b\subset\Sb^*M\subset\ol{\Tb^*}M$. Since by construction $dt_*=-d\tau/\tau$ is timelike everywhere on $M$, we can split the characteristic set into its two connected components
\[
  \Sigma_b = \Sigma_b^+ \sqcup \Sigma_b^-,
\]
the future, resp.\ backward, light cone $\Sigma_b^+$, resp.\ $\Sigma_b^-$, where
\begin{equation}
\label{EqKdSGeoCharPM}
  \Sigma_b^\pm = \Bigl\{ \zeta\in\Sigma_b \colon \pm\Big\la \zeta,-\frac{d\tau}{\tau}\Big\ra_{G_b} > 0 \Bigr\}
\end{equation}
\emph{The sign in the superscript will always indicate the future/past component of the characteristic set.}

We now discuss two main features of the null-geodesic flow on $(M,g_b)$: the saddle point structure (corresponding to the red-shift effect) at the event and cosmological horizons of Kerr--de~Sitter spacetimes, and the trapped set for Schwarzschild--de~Sitter, i.e.\ the photon sphere. The global dynamics of the null-geodesic flow of slowly rotating Kerr--de~Sitter spacetimes were described in detail in \cite[\S\S6.3--6.4]{VasyMicroKerrdS}. Here, we merely recall that for a future-causal null-bicharacteristic $\gamma$, i.e.\ an integral curve of the Hamilton vector field $H_{G_b}$ within $\Sigma_b^+$, one of the following three possibilities must occur in the \emph{backward} direction along $\gamma$: either
\begin{enumerate}
\item $\gamma$ tends to the trapped set (strictly speaking, the boundary of $\Gamma$, defined in~\eqref{EqKdSGeoTrapped}, within $\Tb^*M$ at future infinity), or
\item $\gamma$ tends to one of the radial points $\cR_{b,-}$ or $\cR_{b,+}$ (the b-conormal bundle of the event or cosmological horizon at future infinity), defined in~\eqref{EqKdSGeoRadSet}, or
\item $\gamma$ crosses the initial Cauchy hypersurface $\{t_*=0\}$ in finite time.
\end{enumerate}

We first describe the null-geodesic flow on $(M,g_b)$ near the horizons: defining $t_0$ and $\phi_0$ near $r=r_{b,\pm}$ exactly like $t_*$ and $\phi_*$ in \eqref{EqKdSSlowCoordChange}, except with $c_{b,\pm}=\wt c_{b,\pm}\equiv 0$ simplifies our calculations: introducing smooth coordinates on $\Tb^*M$ by letting $\tau_0=e^{-t_0}$ and writing b-covectors over the point $(\tau_0,r,\omega)\in M$ as
\[
  \sigma\,\frac{d\tau_0}{\tau_0} + \xi\,dr + \zeta\,d\phi_0 + \eta\,d\theta
\]
the dual metric function can be read off from \eqref{EqKdSSlowGBetaDual} by taking $c_{b,\pm}=\wt c_{b,\pm}=0$, and replacing $\pa_{t_*}$, $\pa_r$, $\pa_{\phi_*}$ and $\pa_\theta$ by $-\sigma$, $\xi$, $\zeta$ and $\eta$, respectively, so
\begin{equation}
\label{EqKdSGeoGb}
\begin{split}
  \rho_b^2 G_b &= -\wt\mu_b \xi^2 \pm 2 a(1+\lambda_b)\xi\zeta \mp 2(1+\lambda_b)(r^2+a^2)\xi\sigma \\
    &\quad - \frac{(1+\lambda_b)^2}{\kappa_b\sin^2\theta}(a\sin^2\theta\,\sigma-\zeta)^2 - \kappa_b\eta^2
\end{split}
\end{equation}
Denote the b-conormal bundles of the horizons by
\begin{equation}
\label{EqKdSGeoHorizonBConormal}
  \cL_{b,\pm} := \Nb^*\{r=r_{b,\pm}\} \setminus o = \{ (\tau_0,r_{b,\pm},\phi_0,\theta; 0,\xi,0,0) \} \setminus o \subset \Sigma_b.
\end{equation}
\emph{The sign in the subscript will always indicate the horizon at which one is working, `$-$' denoting the event horizon at $r=r_{b,-}$ and `$+$' the cosmological horizon at $r=r_{b,+}$.} One easily checks that the vector field $\ham_{\rho_b^2 G_b}=\rho_b^2\ham_{G_b}$ (equality holding on $\Sigma_b$) is tangent to $\cL_{b,\pm}$, so $\cL_{b,\pm}$ is invariant under the $\ham_{G_b}$-flow. Since $\la dr,dr\ra_{G_b}=0$ and $\rho_b^2\la dt_*,dr\ra_{G_b}=\pm(1+\lambda_b)(r_{b,\pm}^2+a^2)$ at $r=r_{b,\pm}$, $dr$ is future lightlike at $\cL_{b,+}$ and past lightlike at $\cL_{b,-}$. This allows us to locate the components of $\cL_{b,\pm}$ in the two halves of the characteristic set. To wit, if
\[
  \cL_{b,\pm}^+ = \cL_{b,\pm} \cap \Sigma_b^+,\quad \cL_{b,\pm}^- = \cL_{b,\pm} \cap \Sigma_b^-,
\]
adhering to our rule that signs in superscript indicate being a subset of the future/past light cone, then
\[
  \cL_{b,-}^\pm = \{ \pm\xi < 0 \} \cap \cL_{b,-}, \quad \cL_{b,+}^\pm = \{ \pm\xi > 0 \} \cap \cL_{b,+}.
\]
Let
\[
  \cL_b^+ = \cL_{b,-}^+ \cup \cL_{b,+}^+ \subset \Sigma_b^+, \quad \cL_b^- = \cL_{b,-}^- \cup \cL_{b,+}^- \subset \Sigma_b^-
\]
be the components of $\cL=\cL_{b,+}\cup\cL_{b,-}$ within the future/past light cone $\Sigma_b^\pm$.  

Let us identify $X$ with the boundary at future infinity $\{0\}_\tau\times X\subset M$, see~\eqref{EqKdSSlowCompProd}. We then define the boundaries at future infinity of the above invariant manifolds by
\begin{equation}
\label{EqKdSGeoRadSet}
  \cR^{(\pm)}_{b,(\pm)}=\cL^{(\pm)}_{b,(\pm)} \cap \Tb^*_X M;
\end{equation}
each of them itself is invariant under the $\ham_{G_b}$-flow. Moreover, we denote the boundaries of $\cR^{(\pm)}_{b,(\pm)}$ at fiber infinity by $\pa\cR^{(\pm)}_{b,(\pm)}\subset\Sb^*_X M$, likewise for $\cL^{(\pm)}_{b,(\pm)}$. See Figure~\ref{FigKdSGeoRadial}.

\begin{figure}[!ht]
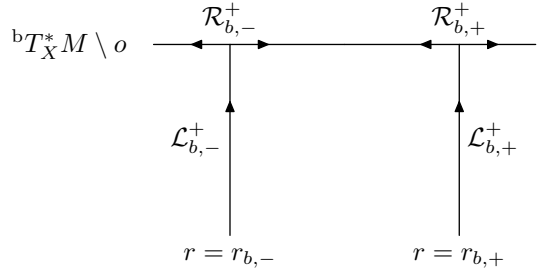

  \centering
  \inclfig{KdSGeoRadial}
  \caption{The b-conormal bundles $\cL_\pm^+$ of the horizons as well as their boundaries $\cR_\pm^+$ in $\Tb^*_X M\setminus o$. The arrows indicate the Hamilton vector field $\ham_{G_b}$. In $\Gamma^-$, the subscripts are replaced by `$-$', and the directions of the arrows are reversed.}
  \label{FigKdSGeoRadial}
\end{figure}

We claim that the \emph{(generalized) radial sets} $\pa\cR_b^\pm$ are \emph{saddle points} for (a rescaled version of the) null-geodesic flow $\ham_{G_b}$. Concretely, $\pa\cR_b^+$ has stable manifold $\pa\cL_b^+\subset\Sb^*M$ transversal to $\Sb^*_X M$, with unstable manifold $\Sigma_b^+\cap\ol{\Tb^*_X}M$ within the boundary at future infinity, i.e.\ it is a source within $\ol{\Tb^*_X}M$; on the other hand, $\pa\cR_b^-$ has unstable manifold $\pa\cL_b^-\subset\Sb^*M$ and stable manifold $\Sigma_b^-\cap\ol{\Tb^*_X}M$, i.e.\ it is a sink within $\ol{\Tb^*_X}M$. To verify this claim, let us introduce coordinates
\begin{equation}
\label{EqKdSGeoCoords}
  \wh\rho = |\xi|^{-1}, \quad \wh\sigma=\wh\rho\sigma, \quad \wh\zeta=\wh\rho\zeta, \quad \wh\eta=\wh\rho\eta
\end{equation}
of $\ol{\Tb^*}M$ near $\pa\cR_b^\pm$, and define the rescaled Hamilton vector field
\begin{equation}
\label{EqKdSGeoHamRescaled}
  \rham_{\rho_b^2 G_b} := \wh\rho\ham_{\rho_b^2 G_b},
\end{equation}
which is homogeneous of degree $0$ with respect to dilations in the fibers of $\Tb^*M$, hence extends to a smooth vector field on $\ol{\Tb^*}M$ tangent to $\Sb^*M$. Let us consider the $\rham_{G_b}$-flow near $\pa\cR_b^+$ first, where $\wh\sigma=\wh\zeta=\wh\eta=0$. There, with `$\pm$' denoting the component of the characteristic set (i.e.\ $\pm\xi>0$), one computes
\begin{gather*}
  \wh\rho^{-1}\rham_{\rho_b^2 G_b}\wh\rho = \ham_{\rho_b^2 G_b}|\xi|^{-1} = \pm\xi^{-2}\pa_r(\rho_b^2 G_b) = \mp\wt\mu_b'(r_{b,+}) = \pm|\wt\mu_b'(r_{b,+})|, \\
  \tau_0^{-1}\rham_{\rho_b^2 G_b}\tau_0 = \wh\rho \pa_\sigma(\rho_b^2 G_b) = \mp 2(1+\lambda_b)(r_{b,+}^2+a^2).
\end{gather*}
The calculation at $\cR_b^-$ is completely analogous. Defining
\begin{equation}
\label{EqKdSGeoRadPointQuant}
  \beta_{b,\pm,0} := \frac{|\wt\mu_b'(r_{b,\pm})|}{\rho_b^2}, \quad \beta_{b,\pm} := \frac{2(1+\lambda_b)(r_{b,\pm}^2+a^2)}{|\wt\mu_b'(r_{b,\pm})|},
\end{equation}
and using that $\log\tau_0-\log\tau$ is a function of $r$ only, while $\ham_{G_b}r=0$ at $\cL_{b,\pm}$, we thus find
\begin{equation}
\label{EqKdSGeoRadPoint}
\begin{gathered}
  \wh\rho^{-1}\ham_{G_b}\wh\rho = \beta_{b,\pm,0}, \quad -\tau^{-1}\ham_{G_b}\tau = \beta_{b,\pm,0}\beta_{b,\pm}\quad \tn{at }\cR_{b,\pm}^+, \\
  \wh\rho^{-1}\ham_{G_b}\wh\rho = -\beta_{b,\pm,0}, \quad -\tau^{-1}\ham_{G_b}\tau = -\beta_{b,\pm,0}\beta_{b,\pm}\quad \tn{at }\cR_{b,\pm}^-.
\end{gathered}
\end{equation}
We note that the functions $\beta_{b,\pm,0}$ are functions on the 2-spheres $\tau=0,r=r_{b,\pm}$; if $b\in B$ are Schwarzschild--de~Sitter parameters, so $a=0$, then they are in fact constants.

In order to finish the proof of the saddle point structure of the flow in the normal directions at $\pa\cR_b$, it suffices to note that, using \eqref{EqKdSGeoGb}, the function
\[
  \rho_0 := \wh\rho^2\Bigl(\frac{(1+\lambda_b)^2}{\kappa_b\sin^2\theta}(a\sin^2\theta\,\sigma-\zeta)^2 + \kappa_b\eta^2 + \sigma^2\Bigr),
\]
which is smooth on $\ol{\Tb^*}M$, satisfies $\ham_{G_b}\rho_0=\pm 2\beta_{b,\pm,0}\rho_0$ at $\cR^\pm_b$ by \eqref{EqKdSGeoRadPoint}, i.e.\ with the same sign as $\wh\rho\ham_{G_b}\wh\rho$; and $\rho_0$ is a quadratic defining function of $\pa\cR_b$ within $\pa\Sigma_b\cap\Sb^*_X M$. (See \cite[\S3.2]{HintzVasyCauchyHorizon} for further details.)

We next discuss the trapped set in the exterior region of a Schwarzschild--de~Sitter spacetime with parameters $b_0=(\bhm[0],\bfzero)$. Writing covectors in static coordinates as
\begin{equation}
\label{EqKdSGeoTrapCovec}
  -\sigma\,dt + \xi\,dr + \eta,\quad \eta\in T^*\Sph^2,
\end{equation}
the dual metric function is given by
\[
  G_{b_0} = \mu_{b_0}^{-1}\sigma^2 - \mu_{b_0}\xi^2 - r^{-2}|\eta|^2,
\]
so the Hamilton vector field equals
\[
  \ham_{G_{b_0}} = -2\mu_{b_0}^{-1}\sigma\pa_t - 2\mu_{b_0}\xi\pa_r - r^{-2}\ham_{|\eta|^2} + (\mu_{b_0}^{-2}\mu_{b_0}'\sigma^2+\mu_{b_0}'\xi^2-2r^{-3}|\eta|^2)\pa_\xi.
\]
Now, in $\mu_{b_0}>0$ and within the characteristic set $\Sigma_{b_0}$, we have $\ham_{G_{b_0}} r=-2\mu_{b_0}\xi=0$ iff $\xi=0$; for $\xi=0$, hence $\mu_{b_0}^{-1}\sigma^2=r^{-2}|\eta|^2$, we then have $\ham_{G_{b_0}}^2 r=-2\mu_{b_0}\ham_{G_{b_0}}\xi=0$ iff $\mu_{b_0}^{-2}\mu_{b_0}'\sigma^2=2r^{-3}|\eta|^2$, which is equivalent to
\[
  0=(\mu_{b_0} r^{-2})' = 2r^{-4}(3\bhm[0]-r),
\]
hence the radius of the photon sphere is $r_P=3\bhm[0]$, and the trapped set in phase space $T^*M^\circ\setminus o$ is
\begin{equation}
\label{EqKdSGeoTrapped}
  \Gamma = \{ (t,r_P,\omega; \sigma,0,\eta) \colon \sigma^2=\mu_{b_0} r^{-2}|\eta|^2 \},
\end{equation}
The trapped set has two components,
\begin{equation}
\label{EqKdSGeoTrappedPM}
  \Gamma = \Gamma^-\cup\Gamma^+,\quad \Gamma^\pm = \Gamma\cap\Sigma_{b_0}^\pm.
\end{equation}
At $\Gamma$, we have
\begin{equation}
\label{EqKdSGeoTrappedHam}
  \ham_{G_{b_0}}=-2\mu_{b_0}^{-1}\sigma\pa_t-r^{-2}\ham_{|\eta|^2}.
\end{equation}

\subsection{Wave equations on Kerr--de~Sitter spacetimes}
\label{SubsecKdSWave}

Within $M$, we next single out a domain $\Omega\subset M$ on which we will solve various wave equations, in particular Einstein's field equations, in the course of our arguments. Concretely, let
\[
  Y = [r_{b_0,-}-\eps_M,r_{b_0,+}+\eps_M]_r \times \Sph^2 \subset X
\]
be a smoothly bounded compact domain in the spatial slice $X$, and let
\begin{equation}
\label{EqKdSWaveDomain}
  \Omega = [0,1]_\tau\times Y \subset M, \quad \Sigma_0 = \{\tau=1\}\cap\Omega.
\end{equation}
Then $\Omega$ is a submanifold with corners of $M$. We will often identify $Y$, which is isometric to each spatial slice $\{\tau=c\}\cap\Omega$, $c\in[0,1]$, with the boundary of $\Omega$ at future infinity, so $Y=\{\tau=0\}\cap\Omega$. See Figure~\ref{FigKdSWave}. We discuss function spaces on domains such as $\Omega$ at the end of \S\ref{SubsecBPsdo}.

\begin{figure}[!ht]
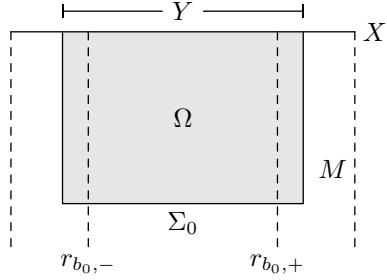

  \centering
  \inclfig{KdSWave}
  \caption{The domain $\Omega$ (shaded), with its boundary $Y=\Omega\cap X$ at future infinity, as a smooth domain with corners within $M$. The Cauchy surface is $\Sigma_0$, extending a bit beyond the horizons of slowly rotating Kerr--de~Sitter spacetimes.}
  \label{FigKdSWave}
\end{figure}

By a slight abuse of notation, we define the `finite part' of $\Omega$ as
\[
  \Omega^\circ = [0,\infty)_{t_*}\times Y,
\]
so $\Omega^\circ$ still contains the initial hypersurface $\Sigma_0=\{t_*=0\}$.

By construction of the function $t_*$, the surface $\Sigma_0$ is spacelike with respect to all metrics $g_b$, $b\in\cU_B$. Furthermore, we claim that the lateral boundary $\R_{t_*}\times\pa Y$, which has two components (one beyond the event and one beyond the cosmological horizon), is spacelike: indeed, this follows from \eqref{EqKdSSlowGBetaDual} and
\[
  \rho_b^2 G_b(dr,dr)=-\wt\mu_b > 0
\]
there; more precisely, by \eqref{EqKdSGeoCharPM}, the outward pointing conormal $\pm dr$ is future timelike at $r=r_{b,\pm}\pm\eps_M$ for sufficiently small $\eps_M$.

The Cauchy data of a function $u\in\CI(\Omega^\circ;E)$, which is a section of some tensor bundle $E=\bigotimes^k T^*\Omega^\circ$, are defined by
\begin{equation}
\label{EqKdSWaveInitialData}
  \gamma_0(u) := (u|_{\Sigma_0},(\cL_{\pa_{t_*}}u)|_{\Sigma_0}) \in \CI(\Sigma_0;E_{\Sigma_0}) \oplus \CI(\Sigma_0;E_{\Sigma_0}).
\end{equation}

Given a linear operator $L\in\Diff^2(M^\circ;E)$, with smooth coefficients, whose principal symbol is scalar and equal to $G_b\otimes\Id$ for some parameters $b\in\cU_B$, one can then study initial value problems for $L$. Using energy estimates, see e.g.\ \cite[\S6.5]{TaylorPDE} or \cite[\S23]{HormanderAnalysisPDE3}, and also \S\ref{SubsubsecAsyExpLocal}, one can show that
\[
  \begin{cases}
    L u = f & \tn{in }\Omega^\circ, \\
    \gamma_0(u) = (u_0,u_1) & \tn{on }\Sigma_0,
  \end{cases}
\]
with forcing $f\in\CI(\Omega^\circ;E)$ and initial data $u_0,u_1\in\CI(\Sigma_0;E_{\Sigma_0})$, has a unique solution $u\in\CI(\Omega^\circ;E)$. The future timelike nature of the outward pointing conormal at the lateral boundary of $\Omega^\circ$ ensures that no boundary data need to be specified there.

\subsection{Kerr--de~Sitter type wave map gauges}
\label{SubsecKdSIn}

Let us fix the background metric for the wave map gauge 1-form \eqref{EqHypDTGaugeTerm} to be the fixed Schwarzschild--de~Sitter metric $g_{b_0}$, so
\begin{equation}
\label{EqKdSInGauge}
  \Ups(g) = g g_{b_0}^{-1} \delta_g \sfG_g g_{b_0}.
\end{equation}
Now $g_{b_0}$ satisfies the gauge condition $\Ups(g_{b_0})=0$, but the particular form of the metrics $g_b$, $b\neq b_0$, we constructed is rather arbitrary, so in general we expect $\Ups(g_b)\neq 0$ for $b\neq b_0$. Therefore, we need to study more flexible gauges; a natural candidate is $\Ups(g)-\Ups(g_b)=0$, but for the formulation of the non-linear stability problem for the Einstein equation \eqref{EqHypDTEinstein}, one would like to have a fixed gauge near $\Sigma_0$ (we will choose the wave map gauge relative to $g_{b_0}$), yet a gauge relative to the metric $g_b$ of the final state. To implement such gauges, fix a cutoff function
\begin{equation}
\label{EqKdSInCutoff}
  \chi\in\CI(\R), \quad \chi(t_*)\equiv 0\tn{ for }t_*\leq 1,\quad \chi(t_*)\equiv 1\tn{ for }t_*\geq 2,
\end{equation}
and define
\begin{equation}
\label{EqKdSInPatchedMetric}
  g_{b_1,b_2} := (1-\chi)g_{b_1} + \chi g_{b_2},
\end{equation}
which is a Lorentzian metric for $b_1,b_2\in\cU_B$ smoothly interpolating between $g_{b_1}$ and $g_{b_2}$. We will then consider the gauge condition
\[
  \Ups(g) - \Ups(g_{b_1,b_2}) = 0.
\]
For $b_1=b_2=b$, this becomes $\Ups(g)-\Ups(g_b)=0$.

Denote the Kerr--de~Sitter initial data by
\begin{equation}
\label{EqKdSInKdSData}
  (h_b,k_b) \in \CI(\Sigma_0;S^2 T^*\Sigma_0) \oplus \CI(\Sigma_0;S^2 T^*\Sigma_0),
\end{equation}
that is, $h_b$ is the pullback of $g_b$ to $\Sigma_0$, and $k_b$ is the second fundamental form of $\Sigma_0$ with respect to the ambient metric $g_b$. In the remainder of this section, we merely study the initial gauge. In Proposition~\ref{PropKdSIn} below, we construct a map $i_b$, taking initial data on $\Sigma_0$ into Cauchy data for the gauged Einstein equation with gauge condition $\Ups(g)-\Ups(g_b)=0$, so that it maps the Kerr--de~Sitter initial data to
\[
  i_b(h_b,k_b) = \gamma_0(g_b) = (g_b|_{\Sigma_0},\cL_{\pa_{t_*}}g_b|_{\Sigma_0})=(g_b|_{\Sigma_0},0).
\]
The point is that this guarantees that the metric $g_b$ itself, rather than a pullback of it by some diffeomorphism, is the solution of the gauged Einstein equation with gauge $\Ups(g)-\Ups(g_b)=0$ and initial data $i_b(h_b,k_b)$.

The flexibility in choosing the gauge in this manner is very useful in the proof of linear stability of $g_b$ given in \S\ref{SecKdSLStab}, which is naturally done with the global choice of gauge $\Ups(g)-\Ups(g_b)=0$. For the full non-linear result, as indicated above, we will use only a single gauge $\Ups(g)-\Ups(g_{b_0})=0$ near $\Sigma_0$, which reads $\Ups(g)=0$ there; thus, for non-linear stability, we shall only use Proposition~\ref{PropKdSIn} for $b=b_0$.

\begin{prop}
\label{PropKdSIn}
  There exist neighborhoods $H\subset\cC^1(\Sigma_0;S^2T^*\Sigma_0)$ and $K\subset\cC^0(\Sigma_0;S^2T^*\Sigma_0)$ of $h_{b_0}$ and $k_{b_0}$, respectively, so that, firstly, $h_b\in H$ and $k_b\in K$ for all $b\in\cU_B$ (shrinking $\cU_B$, if necessary); and secondly, for each $b\in\cU_B$, there exists a map
  \begin{align*}
    i_b \colon (H\cap\cC^m(\Sigma_0;S^2&T^*\Sigma_0)) \times (K\cap\cC^{m-1}(\Sigma_0;S^2T^*\Sigma_0)) \\
      & \to \cC^m(\Sigma_0;S^2T^*_{\Sigma_0}M^\circ) \times \cC^{m-1}(\Sigma_0;S^2T^*_{\Sigma_0}M^\circ),
  \end{align*}
  smooth for all $m\geq 1$ (and smoothly depending on $b\in\cU_B$), with the following properties:
  \begin{enumerate}
    \item If $i_b(h,k)=(g_0,g_1)$, and $g\in\cC^1(M^\circ;S^2T^*M^\circ)$ is any symmetric 2-tensor with $\gamma_0(g)=(g_0,g_1)$, then $h$ and $k$ are, respectively, the metric and the second fundamental form of $\Sigma_0$ induced by $g$, and $\Ups(g)-\Ups(g_b)=0$ at $\Sigma_0$;
    \item for Kerr--de~Sitter initial data \eqref{EqKdSInKdSData}, one has $i_b(h_b,k_b)=\gamma_0(g_b)$.
  \end{enumerate}
\end{prop}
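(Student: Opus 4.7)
The plan is to define $i_b$ by prescribing $(g_0, g_1)$ componentwise from $(h, k)$, exploiting that both the second fundamental form of $\Sigma_0$ and $\Ups(g)|_{\Sigma_0}$ depend only on the $1$-jet of $g$ along $\Sigma_0$, hence algebraically on $(g_0, g_1)$. Fix coordinates $(t_*, x^1, x^2, x^3)$ with $\Sigma_0=\{t_*=0\}$ and split $g_0, g_1$ into \emph{spatial} (both indices in $\{1,2,3\}$), \emph{mixed} (one index equal to $0$), and \emph{normal} (both indices equal to $0$) components.

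First I specify $g_0$: take its spatial block to be $-h$ (forced by the signature convention of \eqref{EqSdSMetric}) and its mixed and normal blocks from $g_b|_{\Sigma_0}$; this gives $g_0$ smoothly in $h$ and $b$. For $h$ in a sufficiently small $\cC^2$-neighborhood $H$ of $h_{b_0}$, $g_0$ is Lorentzian and $\Sigma_0$ is $g_0$-spacelike, so in particular $g_0^{00}=G_0(dt_*,dt_*)>0$. Next, the spatial block of $g_1$ is determined by requiring that the second fundamental form of $\Sigma_0$ equal $k$: the standard ADM identity expresses $k_{ij}$ as a linear combination of the spatial components of $g_1$ and tangential derivatives of the spatial components of $g_0$, with coefficients depending smoothly on the $1$-jet of $g_0$ along $\Sigma_0$; this solves uniquely for $g_{1,ij}$ as a smooth function of $(h, k, b)$, costing one order of regularity.

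The remaining four transverse components $g_{1,00}, g_{1,0i}$ are then fixed by the four scalar equations $\Ups(g)|_{\Sigma_0}=\Ups(g_b)|_{\Sigma_0}$. A direct expansion of \eqref{EqHypDTPutIntoGaugeUpsLoc} using
\[
  g_{\mu\kappa}\,g^{\nu\lambda}\,\Gamma(g)^\kappa_{\nu\lambda} = g^{\nu\lambda}\pa_\nu g_{\lambda\mu} - \tfrac{1}{2}g^{\nu\lambda}\pa_\mu g_{\nu\lambda}
\]
and isolating the $\pa_0 g$ terms shows that the dependence of $\Ups(g)_\mu|_{\Sigma_0}$ on the transverse components of $g_1$ is exactly $g_0^{00}\bigl(\tfrac{1}{2}\delta^0_\mu g_{1,00} + (1-\delta^0_\mu)g_{1,0\mu}\bigr)$; hence one obtains a linear system with diagonal coefficient matrix $g_0^{00}\,\mathrm{diag}(\tfrac{1}{2},1,1,1)$ and right-hand side depending smoothly on $(h, k, b)$ through $g_0$, the spatial part of $g_1$, and $\Ups(g_b)|_{\Sigma_0}$. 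Invertibility follows from $g_0^{00}>0$, solving uniquely and smoothly for $g_{1,00}, g_{1,0i}$ and completing the construction with the claimed regularity. Part~(2) is then immediate: for $(h, k)=(h_b, k_b)$ each step above reproduces the corresponding block of $\gamma_0(g_b)=(g_b|_{\Sigma_0},0)$, using stationarity $\cL_{\pa_{t_*}}g_b=0$ (giving $g_1=0$), the compatibility of the ADM identity with $g_b$, and the tautology $\Ups(g_b)=\Ups(g_b)$. The only nontrivial computational ingredient is the diagonalization above, which is routine.
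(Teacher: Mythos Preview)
Your proof is correct and follows essentially the same strategy as the paper's: both fix $g_0$ by taking the spatial block to be $h$ and the mixed/normal blocks from $g_b|_{\Sigma_0}$, then determine the tangential-tangential part of $g_1$ from the second fundamental form condition and the remaining components from the gauge condition. The only difference is that the paper decomposes $g_1$ relative to the unit normal $N$ and $T\Sigma_0$, whereas you use the coordinate splitting $\partial_{t_*}$ versus $\partial_i$; since $T\Sigma_0 = \mathrm{span}\,\partial_i$, the tangential-tangential parts coincide, and your explicit diagonalization of the gauge system in the transverse components $(g_{1,00},g_{1,0i})$ is equivalent to (and arguably cleaner than) the paper's computation via \eqref{EqKdSInGaugeCond}.
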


The constraint equations play no role in the construction of the map $i_b$, as expected following the discussion of the initial value problem in \S\ref{SubsecHypDT}, and hence we do not need to restrict the spaces $H$ and $K$ further here.

\begin{proof}[Proof of Proposition~\ref{PropKdSIn}]
  If we define $\phi_b\in\CI(\Sigma_0)$ and $\omega_b\in\CI(\Sigma_0,T^*\Sigma_0)$ by writing
  \[
    g_b = \phi_b\,dt_*^2 + 2\,dt_*\cdot\omega_b + h_b,
  \]
  then we can define $g_0$ in $(g_0,g_1)=i_b(h,k)$ simply by
  \[
    g_0 = \phi_b\,dt_*^2 + 2\,dt_*\cdot\omega_b + h;
  \]
  this will be a non-degenerate Lorentzian signature section of $S^2T^*_{\Sigma_0}M^\circ$ if $h$ is sufficiently close in $\cC^0$ to $h_{b_0}$ (and hence to $h_b$). This choice of $g_0$ fixes a future timelike unit vector field $N\perp T\Sigma_0$, and we now need to choose $g_1$ so that
  \[
    g_0(\nabla_X^{g_0+t_*g_1}Y,N) = k(X,Y)\,\quad X,Y\in T\Sigma_0,
  \]
  at $\Sigma_0=\{t_*=0\}$, and such that $g_1=0$ if $(h,k)=(h_b,k_b)$; here the superscript denotes the metric with respect to which $\nabla$ is the Levi-Civita connection. That is, we want
  \begin{equation}
  \label{EqKdSInG1}
    g_0\bigl((\nabla_X^{g_0+t_*g_1}-\nabla_X^{g_0})Y,N) = k(X,Y) - g_0(\nabla^{g_0}_X Y,N);
  \end{equation}
  by definition of $g_0$, the right hand is identically zero for $(h,k)=(h_b,k_b)$. The difference of the two covariant derivatives on the left hand side is tensorial in $X$ and $Y$, and in a local coordinate system, one computes
  \[
    2(\nabla^{g_0+t_*g_1}_\mu\pa_\nu - \nabla^{g_0}_\mu\pa_\nu) = g_0^{\kappa\lambda}\bigl((\pa_\mu t_*)(g_1)_{\nu\lambda} + (\pa_\nu t_*)(g_1)_{\mu\lambda} - (\pa_\lambda t_*)(g_1)_{\mu\nu})\pa_\kappa
  \]
  at $t_*=0$, hence, using $X t_*=0=Y t_*$, \eqref{EqKdSInG1} is equivalent to
  \begin{equation}
  \label{EqKdSInSecondFundCond}
    -(N t_*)g_1(X,Y) = 2\bigl(k(X,Y)-g_0(\nabla^{g_0}_X Y,N)\bigr),
  \end{equation}
  which determines $g_1$ on $T\Sigma_0\times T\Sigma_0$, since $N t_*>0$ by the future timelike nature of $dt_*$ and $N$. Hence, the symmetric 2-tensor $g_1\in\cC^{m-1}(\Sigma_0;S^2T^*_{\Sigma_0}M^\circ)$ is determined uniquely once we fix the values of $g_1(N,N)$ and $g_1(X,N)$ for $X\in T\Sigma_0$.

  These values in turn are determined by the gauge condition, which reads $\Ups(g_0+t_* g_1)-\Ups(g_b)=0$ at $t_*=0$. Using \eqref{EqHypDTPutIntoGaugeUpsLoc} gives
  \[
    \Ups(g_0+t_*g_1)_\mu = \Ups(g_0)_\mu + \frac{1}{2}(g_0)^{\nu\lambda}\bigl((\pa_\nu t_*)(g_1)_{\lambda\mu} + (\pa_\lambda t_*)(g_1)_{\nu\mu} - (\pa_\mu t_*)(g_1)_{\nu\lambda}\bigr)
  \]
  at $t_*=0$, so the gauge condition is equivalent to
  \begin{equation}
  \label{EqKdSInGaugeCond}
  \begin{split}
    \bigl(\Ups(g_b)-\Ups(g_0)\bigr)(V) &= \bigl(\Ups(g_0+t_*g_1)-\Ups(g_0)\bigr)(V) \\
      &=g_1(\nabla^{g_0}t_*,V) - \frac{1}{2}(V t_*) \tr_{g_0} g_1 = (\sfG_{g_0}g_1)(\nabla^{g_0}t_*,V)
  \end{split}
  \end{equation}
  for all $V\in T_{\Sigma_0}M^\circ$. Now $\nabla^{g_0}t_*\perp T\Sigma_0$ is a non-zero scalar multiple of the unit normal vector $N$, hence \eqref{EqKdSInGaugeCond} determines $(\sfG_{g_0}g_1)(N,X)=g_1(N,X)$ for $X\in T\Sigma_0$. Since \eqref{EqKdSInSecondFundCond} determines $g_1(X,Y)$ for $X,Y\in T\Sigma_0$, we have sufficient information to calculate the value of $\tr_{g_0}g_1-g_1(N,N)$; and then we can solve $g_1(N,N)=2(\sfG_{g_0}g_1)(N,N)+(\tr_{g_0}g_1-g_1(N,N))$ for $g_1(N,N)$. Note that if $(h,k)=(h_b,k_b)$, this construction gives $g_1\equiv 0$.

  This finishes the construction of $g_1$ and hence of $i_b$; the smoothness and mapping properties follow from an inspection of the proof.
\end{proof}

As a consequence, we show that the linearization of $i_b$ yields correctly gauged Cauchy data for the linearized gauged Einstein equation; we phrase this for smooth data for simplicity:

\begin{cor}
\label{CorKdSInLin}
  Let $b\in\cU_B$. Suppose $(h,k)$ are smooth initial data on $\Sigma_0$ satisfying the constraint equations \eqref{EqHypDTConstraints}, let $(g_0,g_1)=i_b(h,k)$, and let $g$ be a symmetric 2-tensor with $\gamma_0(g)=(g_0,g_1)$. Moreover, suppose $(h',k')$ are smooth solutions of the linearized constraint equations around $(h,k)$, let $(r_0,r_1)=D_{(h,k)}i_b(h',k')$, and let $r$ be a smooth symmetric 2-tensor with $\gamma_0(r)=(r_0,r_1)$.
  
  Then $r$ induces the data $(h',k')$ on $\Sigma_0$, and $D_g\Ups(r)=0$ at $\Sigma_0$.
\end{cor}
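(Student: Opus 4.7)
The plan is to derive the corollary by linearizing Proposition~\ref{PropKdSIn} at the initial data $(h,k)$. The key structural observation is that, with $g$ fixed, both conclusions depend only on the Cauchy data $\gamma_0(r)$: the induced first fundamental form of $r$ is the restriction of $r$ to $T\Sigma_0\otimes T\Sigma_0$; the induced second fundamental form involves $g$, $r$, and at most their first normal derivatives at $\Sigma_0$; and $D_g\Ups$ is a first order linear differential operator, so $D_g\Ups(r)|_{\Sigma_0}$ is determined by $\gamma_0(g)$ and $\gamma_0(r)$. Consequently it suffices to exhibit a \emph{single} symmetric 2-tensor with Cauchy data $(r_0,r_1)$ for which both conclusions hold, and then invoke this observation to transfer them to the given $r$.

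To this end, choose any smooth one-parameter family $(h_s,k_s)_{s\in(-\eps,\eps)}\subset H\times K$ with $(h_0,k_0)=(h,k)$ and $\frac{d}{ds}(h_s,k_s)|_{s=0}=(h',k')$; the linearized constraint equations are not needed here, so for example the affine path $(h+sh',k+sk')$, which lies in the open sets $H,K$ for $|s|$ small, will do. Smoothness of $i_b$ from Proposition~\ref{PropKdSIn} then yields that $(g_{0,s},g_{1,s}):=i_b(h_s,k_s)$ is a smooth path of Cauchy data through $(g_0,g_1)$ with tangent vector $(r_0,r_1)$ at $s=0$. Lift this path to a smooth family $g_s$ of symmetric 2-tensors on $M^\circ$ with $\gamma_0(g_s)=(g_{0,s},g_{1,s})$---for example, extend $g_{0,s}$ and $g_{1,s}$ jointly smoothly in $(s,z)$ off $\Sigma_0$ and combine them via a cutoff $\chi(t_*)$ supported near $t_*=0$---and set $\tilde r:=\frac{d}{ds}g_s|_{s=0}$, which by construction has $\gamma_0(\tilde r)=(r_0,r_1)$.

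Proposition~\ref{PropKdSIn}(1) applies to each $g_s$ and asserts that the data induced by $g_s$ on $\Sigma_0$ are precisely $(h_s,k_s)$ and that $\Ups(g_s)-\Ups(g_b)=0$ at $\Sigma_0$. Because the induced first and second fundamental forms and $\Ups$ are all smooth differential operators of order at most one in $g_s$, differentiation in $s$ commutes with restriction to $\Sigma_0$ and with these operations. Evaluating the $s$-derivative at $s=0$ therefore yields at once that $\tilde r$ induces $(h',k')$ on $\Sigma_0$ and that $D_g\Ups(\tilde r)=0$ at $\Sigma_0$. The initial observation now transfers both conclusions to the originally given $r$.

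There is no substantial obstacle in this argument: Proposition~\ref{PropKdSIn} performs all the genuine work, and the corollary merely extracts its linearization. The only items to verify are the smoothness of $i_b$ (already supplied by Proposition~\ref{PropKdSIn}) and the routine commutation of $\frac{d}{ds}$ with restriction to $\Sigma_0$ and with first order nonlinear differential operations on $g_s$, both of which are immediate from the chain rule.
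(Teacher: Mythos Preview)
Your proof is correct and follows essentially the same approach as the paper: both differentiate the conclusion of Proposition~\ref{PropKdSIn} along the affine path $(h+sh',k+sk')$ and evaluate at $s=0$. Your version is simply more explicit about the intermediate lifting to a family $g_s$ and the transfer to the given $r$ via the observation that the conclusions depend only on $\gamma_0(r)$, whereas the paper compresses these steps into a single sentence.
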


The case of main interest will be $g=g_b$.

\begin{proof}[Proof of Corollary~\ref{CorKdSInLin}]
  The first statement follows immediately from the definition of $i_b$. Since $\Ups(i_b(h+sh',k+sk'))=\Ups(g_b)$ at $\Sigma_0$ for all small $s\in\R$, the linearized gauge condition $D_g\Ups(r)=0$ at $\Sigma_0$ follows by differentiation and evaluation at $s=0$.
\end{proof}

Since $D_{(h,k)}i_b$ is a linear operator with smooth coefficients, the conclusion continues to hold even for distributional $h',k'$.

\section{Key ingredients of the proof}
\label{SecKey}

Throughout this section, we continue to use the notation of \S\ref{SecKdS}, so let
\begin{equation}
\label{EqKeyParam}
  b_0=(\bhm[0],\bfzero)\in B,\quad \bhm[0]>0,
\end{equation}
be parameters for a Schwarzschild--de~Sitter spacetime. We describe the three main ingredients using which we will prove the non-linear stability of slowly rotating Kerr--de~Sitter black holes: first, UEMS---the mode stability for the ungauged Einstein equation, $D_{g_{b_0}}(\Ric+\Lambda)(r)=0$---see \S\ref{SubsecKeyUEMS}; second, SCP---the existence of a hyperbolic formulation of the linearized Einstein equation for which the constraint propagation equation exhibits mode stability, in a form which is stable under perturbations---see \S\ref{SubsecKeySCP}; and third, ESG---quantitative high energy bounds for the linearized Einstein equations which are stable under perturbations---see \S\ref{SubsecKeyESG}. We recall that the key for the non-linear problem is to understand the linear stability problem in a robust perturbative framework, and the ingredients SCP and ESG will allow us to set up such a framework, which then also makes UEMS a stable property.

Motivated by the discussion in \S\ref{SubsecIntroIdeas} of our approach to the non-linear stability problem, the linearized gauged Einstein equation which we will study to establish the linear stability of slowly rotating Kerr--de~Sitter spacetimes is
\begin{equation}
\label{EqKeyLinEin}
  L_b r = \bigl(D_{g_b}(\Ric+\Lambda) - \tdel^*D_{g_b}\Ups\bigr)r = 0,
\end{equation}
where $\Ups(g)$ is the gauge 1-form defined in \eqref{EqKdSInGauge}, and where $\tdel^*$ is a modification of $\delta_{g_b}^*$ which we define in \eqref{EqKeySCPDeltaTilde} below. For proving the linear stability of the metric $g_b$, using the linearization of the gauge condition $\Ups(g)-\Ups(g_b)=0$ around $g=g_b$ leads to the condition $D_{g_b}\Ups(r)=0$, hence the form of $L_b$ given here.

\subsection{Mode stability for the ungauged Einstein equation}
\label{SubsecKeyUEMS}

\begin{thm}[UEMS---Ungauged Einstein equation: Mode Stability]
\label{ThmKeyUEMS}
  Let $b_0$ be the parameters of a Schwarzschild--de~Sitter black hole as in \eqref{EqKeyParam}.
  \begin{enumerate}
  \item \label{ItKeyUEMSNonzero} Let $\sigma\in\C$, $\Im\sigma\geq 0$, $\sigma\neq 0$, and suppose that $h(t_*,x)=e^{-i\sigma t_*}h_0(x)$, with $h_0\in\CI(Y,S^2T_Y^*\Omega^\circ)$, is a mode solution of the linearized Einstein equation
    \begin{equation}
    \label{EqKeyUEMSLinEin}
      D_{g_{b_0}}(\Ric+\Lambda)(h) = 0.
    \end{equation}
    Then there exists a 1-form $\omega(t_*,x)=e^{-i\sigma t_*}\omega_0(x)$, with $\omega_0\in\CI(Y,T_Y^*\Omega^\circ)$, such that
    \[
      h = \delta_{g_{b_0}}^*\omega.
    \]
  \item \label{ItKeyUEMSZero} For all $k\in\N_0$, and all generalized mode solutions
    \[
      h(t_*,x) = \sum_{j=0}^k t_*^j h_j(x),\quad h_j\in\CI(Y,S^2T_Y^*\Omega^\circ),\ j=0,\ldots,k,
    \]
    of the linearized Einstein equation \eqref{EqKeyUEMSLinEin}, there exist $b'\in T_{b_0}B$ and $\omega\in\CI(\Omega^\circ,S^2T^*\Omega^\circ)$ such that
    \[
      h = g_{b_0}'(b') + \delta_{g_{b_0}}^*\omega.
    \]
  \end{enumerate}
\end{thm}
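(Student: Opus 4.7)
My plan is to reduce the problem, via the spherical symmetry of the Schwarzschild--de~Sitter background, to a family of scalar ODE mode stability problems, supplemented by a separate, hands-on treatment of a finite number of low-multipole exceptional modes, and to match the resulting non-decaying modes to gauge modes and to the linearized Kerr--de~Sitter family.

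First, using the $SO(3)$ isometry group of $(M^\circ,g_{b_0})$ and the formalism of Kodama--Ishibashi \cite{KodamaIshibashiMaster,IshibashiKodamaHigherDim} (going back to Regge--Wheeler and Zerilli), I would decompose any mode solution $r=e^{-i\sigma t_*}r_0(x)$ into scalar-type (even parity) and vector-type (odd parity) pieces, each expanded in spherical harmonics of angular momentum $\ell$. For $\ell\ge 2$ one forms gauge-invariant Moncrief--Gerlach--Sengupta master quantities out of $r_0$; after separating time, they satisfy scalar ODEs on $(r_{b_0,-},r_{b_0,+})$ of Regge--Wheeler / Zerilli type, whose potentials are real and (in the Regge--Wheeler case manifestly, in the Zerilli case after the standard Chandrasekhar transformation) non-negative. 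The smoothness of $r$ across both horizons $\cL_{b_0,\pm}$, which is built into the statement, translates into outgoing boundary conditions at $r=r_{b_0,\pm}$. A Wronskian / integration-by-parts argument in the tortoise coordinate, using $\Im\sigma\ge 0$ and positivity of the potential, then forces the master quantity to vanish identically for $\sigma\ne 0$, and reduces it to a trivial constant profile at $\sigma=0$. Once the master quantity is known, the explicit reconstruction formulas from the decomposition produce a 1-form $\omega_0(x)$, smooth on $Y$ including at the poles, with $r=\delta_{g_{b_0}}^*(e^{-i\sigma t_*}\omega_0)$, proving \itref{ItKeyUEMSNonzero} in the $\ell\ge 2$ sectors.

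Second, I would treat the exceptional low multipoles $\ell=0$ and $\ell=1$ directly, since the master-equation formalism degenerates there. For $\ell=0$ (spherically symmetric) scalar perturbations, choosing a radial gauge brings $r_0$ into a two-parameter ansatz; integration of $D_{g_{b_0}}(\Ric+\Lambda)(r)=0$ then shows that for $\sigma\ne 0$ the mode is purely gauge, while at $\sigma=0$ the only physical mode is the linearized mass perturbation $\tfrac{\pa}{\pa\bhm}g_{(\bhm,\bfzero)}|_{\bhm=\bhm[0]}=g'_{b_0}(b')$ with $b'=(1,\bfzero)$. For $\ell=1$ odd-parity (vector-type) perturbations, a similar ad hoc calculation shows the only non-decaying modes are the three axial vector harmonics at $\sigma=0$, which correspond exactly to linearized angular momentum in the three spatial directions, i.e.\ to $g'_{b_0}(0,\bfa')$, $\bfa'\in\R^3$; for the remaining low-multipole channels ($\ell=1$ even parity) the mode stability is again reduced to producing an explicit $\omega$ with $r=\delta_{g_{b_0}}^*\omega$.

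Third, for part \itref{ItKeyUEMSZero} at $\sigma=0$, I would extend to generalized modes $r=\sum_{j=0}^k t_*^j r_j$ by an induction on $k$: substituting into $D_{g_{b_0}}(\Ric+\Lambda)(r)=0$ and using stationarity of $g_{b_0}$ gives a triangular system relating $\cL_{\pa_{t_*}}$-applications to the $r_j$, whose top piece $r_k$ is an honest stationary zero mode and hence, by the analysis above, equal to $g'_{b_0}(b_k')+\delta_{g_{b_0}}^*\omega_k$ for some $b_k'$ and $\omega_k$. Subtracting these contributions and iterating, one shows that no genuinely $t_*$-polynomial non-trivial modes survive past those already captured, so the full $r$ has the asserted form $g'_{b_0}(b')+\delta_{g_{b_0}}^*\omega$ with $\omega$ a (possibly $t_*$-polynomial) smooth 1-form on $\Omega^\circ$.

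The main obstacle, I expect, is the low-multipole bookkeeping in the second step: one must match precisely four independent non-decaying physical zero modes with the four-dimensional tangent space $T_{b_0}B\cong\R^4$, and verify in each channel that the 1-forms $\omega_0$ extracted from the reconstruction are globally smooth on $Y$ (i.e.\ across the horizons and at the poles of the spherical coordinates). A secondary technical difficulty is the Wronskian argument for the Zerilli-type equation, where positivity of the effective potential is not immediate and requires either Chandrasekhar's intertwining with Regge--Wheeler or an explicit computation specific to the Schwarzschild--de~Sitter potential.
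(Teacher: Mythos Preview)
Your overall strategy---decompose via $SO(3)$, reduce $\ell\geq 2$ to Kodama--Ishibashi master equations with positive potential, and treat $\ell=0,1$ by hand---is exactly what the paper does in \S\ref{SecUEMS}, and your identification of the four physical zero modes with $T_{b_0}B$ matches as well.

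There is, however, a genuine gap in your induction on $k$ for part~\itref{ItKeyUEMSZero}, specifically in the $\ell=0$ and vector $\ell=1$ sectors. At the top step you write $r_k=g'_{b_0}(b_k')+\delta_{g_{b_0}}^*\omega_k$ and then ``subtract these contributions and iterate.'' Subtracting $\delta_{g_{b_0}}^*(t_*^k\omega_k)$ is fine, but when $k\geq 1$ and $b_k'\neq 0$ you are left with a top term $t_*^k g'_{b_0}(b_k')$, which is neither pure gauge nor a single linearized Kerr--de~Sitter metric, and your scheme gives no mechanism to rule it out. To conclude $r=g'_{b_0}(b')+\delta_{g_{b_0}}^*\omega$ with a \emph{stationary} $b'$ you must \emph{prove} that the Kerr--de~Sitter piece cannot acquire polynomial $t_*$-growth. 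The paper circumvents this by running the $\ell=0$ argument (a linearized Birkhoff theorem in the spirit of Schleich--Witt) and the vector $\ell=1$ argument \emph{without} Fourier-transforming in $t_*$: working directly with the PDE in $(t_*,r)$, one shows the gauge-invariant content is a constant (the mass, resp.\ angular momentum), hence automatically stationary; see the discussion around \eqref{EqUEMSEin01L}--\eqref{EqUEMSEin03L} and \eqref{EqUEMSVectorL1}. Your induction is then only needed in the $\ell\geq 2$ and scalar $\ell=1$ sectors, where there are no physical modes and $r_k=\delta_{g_{b_0}}^*\omega_k$ alone, so it closes trivially there (this is Lemma~\ref{LemmaKdSLStabLUEMS}).

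A secondary point: the scalar $\ell=1$ case is less routine than ``producing an explicit $\omega$.'' The $\ell\geq 2$ gauge-invariant quantities cease to be gauge-invariant at $\ell=1$, and the paper's argument is a dimension count: at fixed frequency the two-dimensional ODE solution space coincides with the image of a two-dimensional gauge family, which requires checking injectivity of the map from gauge 1-forms to changes in the Kodama--Ishibashi variables (the calculation around \eqref{EqUEMSScalar1GaugeMap}--\eqref{EqUEMSScalar1GaugeWave2}). Expect an actual computation there rather than a soft argument.
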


Thus, \eqref{ItKeyUEMSNonzero} asserts that any mode solution $h$ of the linearized Einstein equations which is exponentially growing, or non-decaying and oscillating, is a pure gauge solution $h=\delta_{g_{b_0}}^*\omega=\frac{1}{2}\cL_{\omega^\sharp}g_{b_0}$ coming from an infinitesimal diffeomorphism (Lie derivative), i.e.\ does not constitute a physical degree of freedom for the linearized Einstein equation. On the other hand, \eqref{ItKeyUEMSZero} asserts that mode solutions with frequency $\sigma=0$, and possibly containing polynomially growing terms, are linearized Kerr--de~Sitter metrics, up to a Lie derivative. Thus, the only non-decaying mode solutions of the linearized Einstein equation \eqref{EqKeyUEMSLinEin}, up to Lie derivatives, are the linearized Kerr--de~Sitter metrics $g'_{b_0}(b')$, linearized around the Schwarzschild--de~Sitter metric $g_{b_0}$.

Observe here that a small displacement of the center of a black hole with parameters $b=(\bhm,\bfa)$, preserving its mass and its angular momentum vector, corresponds to pulling back the metric $g_b$ on $M^\circ$ by a translation; the restriction of the pullback metric to $\Omega^\circ$ is well-defined. Therefore, on the linearized level, the change in the metric due to an infinitesimal displacement of the black hole is given by the Lie derivative of $g_b$ along a translation vector field. This justifies our restriction of the space $B$ of black hole parameters to only include mass and angular momentum, but not the center of mass.

The particular form of the Kerr--de~Sitter metrics $g_b$ and its linearizations used here is rather arbitrary: if $\phi_b\colon M^\circ\to M^\circ$ is a smooth family of diffeomorphisms that commute with translations in $t_*$, then one obtains another smooth family of Kerr--de~Sitter metrics on the extended spacetime $M^\circ$ by setting $\wt g_b:=\phi_b^*g_b$. Given a solution $h$ of \eqref{EqKeyUEMSLinEin}, we have $D_{\wt g_{b_0}}(\Ric+\Lambda)(\phi_{b_0}^*h)=0$, and the conclusion $h=\cL_X g_{b_0}$ in \eqref{ItKeyUEMSNonzero} for $X=\omega^\sharp$ implies $\phi_{b_0}^*h=\frac{1}{2}\cL_{(\phi_{b_0}^{-1})_*X}\wt g_{b_0}$. In order to see the invariance of \eqref{ItKeyUEMSZero} under $\phi_b$, we in addition compute
\[
  \wt g_{b_0}'(b') = \frac{d}{ds}\wt g_{b_0+s b'}|_{s=0} = \phi_{b_0}^*g_{b_0}'(b') + \cL_X\wt g_{b_0},\quad X=\frac{d}{ds}\phi_{b_0+s b'}|_{s=0},
\]
hence the linearized metrics indeed agree under the diffeomorphism $\phi_b$ up to a Lie derivative, as desired. In particular, UEMS is independent of the specific choice of the functions $c_{b,\pm}$ and $\wt c_{b,\pm}$ in \eqref{EqKdSSlowCoordChange2}.

We stress that Theorem~\ref{ThmKeyUEMS} \emph{only} concerns the mode stability of a (single) \emph{Schwarz\-schild}--de~Sitter black hole; the assumption does \emph{not} concern the mode stability of nearby slowly rotating Kerr--de~Sitter spacetimes. We give the proof of the theorem in \S\ref{SecUEMS}.

\subsection{Mode stability for the constraint propagation equation}
\label{SubsecKeySCP}

Recall from  \eqref{EqHypDTCPEq}, or rather its modification, taking the modification of $\delta_g^*$ into account, that for a solutions $r$ of the linearized gauged Einstein equation \eqref{EqKeyLinEin} (with \emph{arbitrary} Cauchy data), the linearized gauge 1-form $\rho=D_{g_b}\Ups(r)$ solves the equation
\begin{equation}
\label{EqKeySCPBoxCPMod}
  \wtBoxCP_{g_b}\rho \equiv 2\delta_{g_b}\sfG_{g_b}\tdel^* \rho = 0.
\end{equation}
The asymptotic behavior of general solutions $\rho\in\CI(M^\circ,T^*M^\circ)$ of this equation depends on the specific choice of $\tdel^*$. We will show:

\begin{thm}[SCP---Stable Constraint Propagation]
\label{ThmKeySCP}
  One can choose $\tdel^*$, equal to $\delta_{g_{b_0}}^*$ up to $0$-th order terms, such that there exists $\alpha>0$ with the property that smooth solutions $\rho\in\CI(M^\circ,T^*M^\circ)$ of the equation $\wtBoxCP_{g_{b_0}}\rho=0$ decay exponentially in $t_*$ with rate $\alpha$, that is, $\rho=\cO(e^{-\alpha t_*})$.
\end{thm}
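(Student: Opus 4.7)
The plan is to view $\wtBoxCP_{g_{b_0}}$ as a principally scalar b-wave operator on $T^*M$, insert it into the Fredholm framework of \cite{VasyMicroKerrdS,HintzVasySemilinear}, and then choose the modification $\tdel^*$ so that no resonances of the Mellin-conjugated operator lie in $\{\Im\sigma\geq-\alpha\}$ for some $\alpha>0$. Concretely, I would take
\[
    \tdel^*\omega = \delta_{g_{b_0}}^*\omega + \gamma_1\,dt_*\otimes_s\omega - \gamma_2\,g_{b_0}\tr_{g_{b_0}}(dt_*\otimes_s\omega)
\]
with real parameters $\gamma_1,\gamma_2\geq 0$. Since $dt_*=-\tau^{-1}d\tau$ extends smoothly as a section of $\Tb^*M$, this gives $\wtBoxCP_{g_{b_0}}\in\Diffb^2(M;T^*M)$ whose b-principal symbol agrees with that of $\BoxCP_{g_{b_0}}$, namely $G_{b_0}\otimes\Id$; the $\gamma_j$-modifications contribute only at the subprincipal level and below.

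Given the principally scalar b-wave structure, the exponential decay claim reduces to three ingredients applied to $\wtBoxCP_{g_{b_0}}$: (i) Melrose-type radial point estimates at $\pa\cR_{b_0}$ above the regularity threshold set by $\beta_{b_0,\pm,0},\beta_{b_0,\pm}$ of \S\ref{SubsecKdSGeo}, which hold for any $\gamma_j$ because the Hamilton flow is unchanged; (ii) a bundle-valued normally hyperbolic trapping estimate at $\Gamma$ in the style of \cite{HintzPsdoInner}, requiring control of the imaginary part of the subprincipal symbol of $\wtBoxCP_{g_{b_0}}$ on $\Gamma$ by a fraction of $\numin$; and (iii) absence of resonances of $\wtBoxCP_{g_{b_0}}(\sigma)$ in the closed upper half plane. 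Given these, the Fredholm theory combined with a contour deformation yields a spectral gap $\alpha>0$. Item (ii) is one of the non-obvious computations flagged in Remark~\ref{RmkIntroNonObvious}: I would carry it out in an orthonormal frame adapted to the splitting of 1-forms on Schwarzschild--de~Sitter into a $dt$-component, a $dr$-component, and a spherical component, and check that choosing $\gamma_1,\gamma_2\gg 0$ pushes the relevant imaginary part below the dynamical threshold. An analogous semiclassical subprincipal computation will be needed at the zero section (corresponding to Lemma~\ref{LemmaSCPLoRadialSubpr} in the paper's organization).

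Step (iii), which is the heart of the argument, I would attack using the staticity and $SO(3)$-symmetry of $g_{b_0}$. Decomposing 1-form mode solutions into tensor spherical harmonic components reduces $\wtBoxCP_{g_{b_0}}(\sigma)\rho=0$ to a family of ODE systems in $r$, one for each angular mode $\ell$. Pairing the equation with $\bar\rho$ against a suitably chosen weight yields an integral identity in which the $\gamma_j$-contributions are strictly coercive (after the signs of $\gamma_1,\gamma_2$ are fixed appropriately) while the remaining Hermitian part is controlled by $\Im\sigma\geq 0$; for $\gamma_1,\gamma_2$ sufficiently large this forces $\rho\equiv 0$. The main obstacle is making this energy argument quantitative and uniform: large $|\sigma|$ is absorbed into the high-energy Fredholm theory underlying (ii), so the real difficulty is concentrated at bounded $\sigma$, and in particular at $\sigma=0$. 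There the unmodified operator $\BoxCP_{g_{b_0}}=2\delta_{g_{b_0}}G_{g_{b_0}}\delta_{g_{b_0}}^*$ has many resonant states (any Killing 1-form of $g_{b_0}$ satisfies $\delta_{g_{b_0}}^*\omega=0$, hence $\BoxCP_{g_{b_0}}\omega=0$), and the $\gamma_j$-terms must be strong enough to eliminate all of these as well. Once the computation is carried out on $g_{b_0}$, perturbation stability of (i)--(iii) in the Fredholm framework automatically propagates the conclusion to nearby metrics, including slowly rotating Kerr--de~Sitter ones, as will be needed when applying SCP in the subsequent stability proofs.
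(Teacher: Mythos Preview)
Your outline is coherent, but it follows a genuinely different route from the paper's, and in one place the two routes get entangled in a way that does not quite make sense.

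The paper does \emph{not} fix $\gamma_1,\gamma_2$ and then attempt a separate mode-stability argument for $\wh{\wtBoxCP_{g_{b_0}}}(\sigma)$. Instead it sets $\gamma_1=\gamma$, $\gamma_2=\tfrac{1}{2}e\gamma$ with $e<1$ close to $1$, treats $h=\gamma^{-1}$ as a semiclassical parameter, and studies $\cP_h:=h^2\wtBoxCP_{g_{b_0}}$ as a semiclassical b-operator. The point is that $\cP_h$ is \emph{not} principally scalar semiclassically: the first-order piece $L_h$ (coming from your $\gamma_j$-terms) enters at principal level. Near fiber infinity the strict sign of the skew-adjoint part yields real-principal-type, radial-point and trapping propagation \emph{without loss}; near the zero section $L_h$ drives the analysis, and one proves propagation along $\nabla t_*$ together with a radial-point-type estimate at the critical set $r=r_c$ (this is where Lemma~\ref{LemmaSCPLoRadialSubpr} lives). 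These microlocal estimates, capped off by standard energy estimates near $\Sigma_0$ and beyond the horizons, combine into a global a priori bound with $\cO(h^N)$ error; for $h$ small this is an \emph{invertibility} estimate, which immediately rules out resonances in $\Im\sigma\geq-\alpha$. No spherical-harmonic decomposition, no boundary pairing, no mode-by-mode ODE analysis is used.

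Your plan---standard Fredholm setup plus a separate mode-stability step (iii) via harmonic decomposition and integral identities---is the ``direct'' alternative the paper explicitly declines (see the paragraph following the statement of Theorem~\ref{ThmSCP}). It could in principle succeed and would have the advantage of producing explicit admissible $(\gamma_1,\gamma_2)$, as indeed happens in the de~Sitter toy case of \S\ref{SubsecdSSCP}. But your step (iii) is only a hope at present: the assertion that ``the $\gamma_j$-contributions are strictly coercive'' after pairing with $\bar\rho$ is exactly the computation that needs doing, and on Schwarzschild--de~Sitter it is substantially harder than the de~Sitter indicial-root calculation. Separately, your reference to a ``semiclassical subprincipal computation at the zero section'' does not fit your own framework: the zero section only acquires a distinguished role once one rescales by $h^2$ and works semiclassically, which is the paper's approach, not yours. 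If you want to follow the paper, drop step (iii) and instead develop the semiclassical low-frequency analysis; if you want to pursue your direct route, drop the zero-section remark and be prepared for a nontrivial ODE computation in step (iii).
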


In fact, we will give a very concrete definition of $\tdel^*$. Namely, we will show that
\begin{equation}
\label{EqKeySCPDeltaTilde}
  \tdel^*u:=\delta_{g_{b_0}}^*u + \gamma\,dt_*\cdot u - \frac{1}{2}e\gamma \la u,dt_*\ra_{G_{b_0}} g_{b_0}
\end{equation}
works, for $e<1$ close to $1$ and for sufficiently large $\gamma>0$, for all $b$ near $b_0$. We give the proof of Theorem~\ref{ThmKeySCP} in \S\ref{SecSCP}.

We recall from \S\ref{SubsecIntroIdeas} that the usefulness of SCP stems from the observation that for a non-decaying smooth mode solution $r$ of the equation $L_{b_0} r=0$, the constraint propagation equation $\wtBoxCP_{g_{b_0}}D_{g_{b_0}}\Ups(r)=0$ implies $D_{g_{b_0}}\Ups(r)=0$, and hence $r$ in fact solves the linearized \emph{ungauged} Einstein equation $D_{g_{b_0}}(\Ric+\Lambda)r=0$. We can then appeal to UEMS to obtain very precise information about $r$. Thus, all non-decaying resonant states of $L_{b_0}$ are pure gauge solutions, plus linearized Kerr--de~Sitter metrics $g'_{b_0}(b')$; and furthermore all non-decaying resonant states $r$ satisfy the linearized gauge condition $D_{g_{b_0}}\Ups(r)=0$.

In fact, we will show in \S\ref{SecKdSLStab}, using the additional ingredient ESG below, that SCP is sufficient to deduce the mode stability, and in fact linear stability, of slowly rotating Kerr--de~Sitter black holes as well.

Note here that, as in \S\ref{SubsecKeyUEMS}, the above theorem \emph{only} concerns a (fixed) \emph{Schwarz\-schild}--de~Sitter metric.

\subsection{High energy estimates for the linearized gauged Einstein equation}
\label{SubsecKeyESG}

The final key ingredient ensures that solutions of the linear equations which appear in the non-linear iteration scheme have asymptotic expansions up to exponentially decaying remainder terms. Specifically, we need this to be satisfied for solutions of the linearized gauged Einstein equation \eqref{EqKeyLinEin} for $b=b_0$, and for perturbations of this equation. Since the linear equations we will need to solve are always invariant under translations in $t_*$ up to operators with exponentially decaying coefficients, the following theorem suffices for this purpose:

\begin{thm}[ESG---Essential Spectral Gap for the linearized gauged Einstein equation; informal version]
\label{ThmKeyESGInformal}
  For the choice of $\tdel^*$ in SCP, the linearized Einstein operator $L_{b_0}$ defined in \eqref{EqKeyLinEin} has a positive essential spectral gap; that is, there exists $\alpha>0$ and integers $N_L\geq 0$, $d_j\geq 1$ for $1\leq j\leq N_L$, and $n_{j\ell}\geq 0$ for $1\leq j\leq N_L$, $1\leq\ell\leq d_j$, as well as smooth sections $a_{j\ell k}\in\CI(Y,S^2 T^*_Y\Omega^\circ)$, such that solutions $r$ of the equation $L_{b_0}r=0$ with smooth initial data on $\Sigma_0$ have an asymptotic expansion
  \begin{equation}
  \label{EqKeyESGExpansion}
    r = \sum_{j=1}^{N_L}\sum_{\ell=1}^{d_j} r_{j\ell} \biggr(\sum_{k=0}^{n_{j\ell}} e^{-i\sigma_j t_*} t_*^k a_{j\ell k}(x)\biggr) + r',
  \end{equation}
  in $\Omega^\circ$, with $r_{j\ell}\in\C$ and $r'=\cO(e^{-\alpha t_*})$.

  The same holds true, with possibly different $N_L,d_j,n_{j\ell},a_{j\ell k}$, but the same constant $\alpha>0$, for the operator $L_b$, with $b\in\cU_B$.
\end{thm}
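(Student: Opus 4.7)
The plan is to apply the b-analytic Fredholm framework of \cite{VasyMicroKerrdS} (as extended in \cite{HintzVasySemilinear,HintzPsdoInner}) to the operator $L_{b_0}$ viewed as an element of $\Diffb^2(M;S^2\,\Tb^*M)$. After Mellin transforming in $\tau=e^{-t_*}$, one obtains a holomorphic family of operators $\widehat{L_{b_0}}(\sigma)$ on the boundary $X$, and the theorem will reduce to showing (i) that $\widehat{L_{b_0}}(\sigma)^{-1}$ exists as a meromorphic family on $\C$ with \emph{only finitely many poles} in any half-plane $\Im\sigma\geq-\alpha$, and (ii) that it satisfies polynomial high-energy estimates on a horizontal contour $\Im\sigma=-\alpha$. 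Combined with an energy estimate that propagates the initial data to a $\Hb^{s,\alpha}$-estimate (modulo finite-dimensional asymptotic terms), a standard contour deformation then yields the expansion~\eqref{EqKeyESGExpansion}.

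To obtain the Fredholm theory for $\widehat{L_{b_0}}(\sigma)$, I would verify the three standard microlocal ingredients for the principally scalar wave operator $L_{b_0}$, whose principal symbol is $G_{b_0}\otimes\Id$: elliptic regularity off $\Sigma_{b_0}$, real principal-type propagation along $\ham_{G_{b_0}}$ inside $\Sigma_{b_0}$, and radial point estimates at the generalized radial sets $\cR^\pm_{b_0,\pm}\subset\pa\Sigma_{b_0}$ described in \S\ref{SubsecKdSGeo}. The radial point dynamics is already laid out in \eqref{EqKdSGeoRadPoint}, exhibiting the source/sink (within the boundary) and saddle (transversally) structure with the explicit quantities $\beta_{b_0,\pm,0},\beta_{b_0,\pm}$; the only nontrivial input is the threshold condition on the subprincipal symbol, which for the \emph{tensorial} operator $L_{b_0}$ must be checked in terms of the endomorphism part of the subprincipal symbol at $\cR^\pm_{b_0,\pm}$. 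Since the radial point estimate only determines a threshold regularity, its failure would at worst force using higher Sobolev regularity, as noted in Remark~\ref{RmkIntroNonObvious}.

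The genuine technical heart of the argument is the estimate at the trapped set $\Gamma=\Gamma^+\cup\Gamma^-\subset T^*M^\circ$, using the framework of \cite{WunschZworskiNormHypResolvent,DyatlovSpectralGaps,HintzPsdoInner}. Normal hyperbolicity of $\Gamma$ is classical for Schwarzschild--de~Sitter, but for the \emph{bundle-valued} operator $L_{b_0}$ one must compute the \emph{endomorphism-valued} subprincipal symbol $\sigma_\sub(L_{b_0})$ restricted to $\Gamma$ and verify that the spread of its real part, together with the normally hyperbolic expansion rate, satisfies the spectral gap condition of \cite{HintzPsdoInner}. This will be done by direct calculation at $r=r_P=3\bhm[0]$ using \eqref{EqKdSGeoTrapped}--\eqref{EqKdSGeoTrappedHam}, exploiting the fact that both $D_{g_{b_0}}(\Ric+\Lambda)$ and $\tdel^*D_{g_{b_0}}\Ups$ have explicit first-order parts (cf.\ \eqref{EqHypDTDRic}--\eqref{EqHypDTDUps} and \eqref{EqKeySCPDeltaTilde}). \textbf{This is the step I expect to be the main obstacle}, both because the computation is on a $10$-dimensional bundle and because failure here would not merely worsen constants but would genuinely break the approach in the DeTurck-type gauge.

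Given the three microlocal estimates, $\widehat{L_{b_0}}(\sigma)$ is Fredholm on suitable anisotropic Sobolev spaces for all $\sigma$ with $\Im\sigma\geq-\alpha$, for some $\alpha>0$ coming from the radial point thresholds and the trapped set gap. The set $\Res(L_{b_0})\cap\{\Im\sigma\geq-\alpha\}$ is then finite by analytic Fredholm theory, and the associated resonant states span a finite-dimensional space, giving the indices $N_L,d_j,n_{j\ell}$ and sections $a_{j\ell k}$ in \eqref{EqKeyESGExpansion}. The expansion itself follows by writing $r$ via Mellin inversion of $\widehat{L_{b_0}}(\sigma)^{-1}$ applied to the forcing generated by the Cauchy data (as in \S\ref{SubsecAsySm}) and shifting the contour from a high $\Im\sigma=C_0$ down to $\Im\sigma=-\alpha$; the high-energy polynomial bound justifies the shift, and the residues at the finitely many poles produce exactly the sum in \eqref{EqKeyESGExpansion}. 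Finally, the statement for $L_b$ with $b\in\cU_B$ follows because all three microlocal ingredients are open conditions: the radial point dynamics varies smoothly in $b$ by \eqref{EqKdSGeoRadPointQuant}, the trapped set of $g_b$ for small $a$ is an $r$-normally hyperbolic perturbation of $\Gamma$ \cite{WunschZworskiNormHypResolvent,HirschShubPughInvariantManifolds}, and the subprincipal symbol condition is stable under small perturbations; hence the same $\alpha$ (after possibly shrinking $\cU_B$) works uniformly, while the resonance data $N_L,d_j,n_{j\ell},a_{j\ell k}$ may change.
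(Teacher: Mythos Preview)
Your proposal is correct and follows essentially the same approach as the paper: the proof in \S\ref{SecESG} proceeds exactly by verifying the radial point threshold (\S\ref{SubsecESGRadial}) and the subprincipal symbol condition at the trapped set (\S\ref{SubsecESGTrap}), then invoking the b-analytic Fredholm framework and contour shifting as you describe. The paper confirms your assessment that the trapped set computation is the crux --- it is carried out explicitly on the $10$-dimensional bundle $S^2T^*M^\circ$ in the splitting \eqref{EqESGS2TMMicrolocalSplit}, yielding eigenvalues $0,\,i\gamma_1\alpha^{-2}\sigma,\,2i\gamma_1\alpha^{-2}\sigma,\,2i\gamma_2\alpha^{-2}\sigma$ for the relevant endomorphism, all with the correct sign for $\gamma_1,\gamma_2\geq 0$; one minor terminological point is that the quantity controlling the gap is the \emph{skew-adjoint} part of the (conjugated) subprincipal operator rather than the ``spread of its real part.''
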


In order to give a precise and quantitative statement, which in addition will be straightforward to check, we recall from \cite{VasyMicroKerrdS} and, directly related to the present context, from \cite{HintzPsdoInner} that the only difficulty in obtaining an asymptotic expansion \eqref{EqKeyESGExpansion} is the precise understanding of the operator $L_{b_0}$ at the trapped set $\Gamma\subset T^*M^\circ\setminus o$ defined in \eqref{EqKdSGeoTrapped}; we discuss this in detail in \S\ref{SecAsy}. Concretely, one is set if, for a fixed $t_*$-independent positive definite inner product on the bundle $S^2T^*M^\circ$, one can find a stationary, elliptic ps.d.o.\ $Q\in\Psi^0(M^\circ;\End(S^2T^*M^\circ))$, defined microlocally near $\Gamma$, with microlocal inverse $Q^-$, such that in the coordinates \eqref{EqKdSGeoTrapCovec}, we have
\begin{equation}
\label{EqKeyESGTrappingBound}
  \pm|\sigma|^{-1}\sigma_1\Bigl(\frac{1}{2i}\bigl(Q L_{b_0}Q^- -(Q L_{b_0} Q^-)^*\bigr)\Bigr) < \alpha_\Gamma\Id
\end{equation}
in $\Gamma^\pm=\Gamma\cap\{\pm\sigma<0\}$ (recall \eqref{EqKdSGeoTrappedPM}), where $\alpha_\Gamma$ is a positive constant satisfying $\alpha_\Gamma<\numin/2$, with $\numin$ the minimal expansion rate of the $\ham_{G_{b_0}}$-flow in the normal directions at $\Gamma$, defined in \cite[Equation~(5.1)]{DyatlovResonanceProjectors} and explicitly computed for Schwarzschild--de~Sitter spacetimes in \cite[\S3]{DyatlovWaveAsymptotics} and \cite[\S2]{HintzPsdoInner}. If one arranges the estimate \eqref{EqKeyESGTrappingBound}, it also implies the same estimate at the trapped set for perturbations of $L_{b_0}$ within any fixed finite-dimensional family of stationary, second order, principally scalar differential operators; in particular, it holds for $L_b$ with $b\in\cU_B$, where we possibly need to shrink the neighborhood $\cU_B$ of $b_0$. (In the latter case, one in fact does not need to use the structural stability of the trapped set, since one can check it directly for Kerr--de~Sitter spacetimes, see \cite[\S3]{DyatlovWaveAsymptotics}, building on \cite[\S2]{WunschZworskiNormHypResolvent} and \cite[\S6]{VasyMicroKerrdS}. See the discussion at the end of \S\ref{SubsecESGTrap} for further details.)

Observe that the condition \eqref{EqKeyESGTrappingBound} only concerns principal symbols; thus, checking it amounts to an algebraic computation, which is most easily done using the framework of pseudodifferential inner products, see \cite{HintzPsdoInner} and Remark~\ref{RmkAsySmPsdoInner}.

\begin{thm}[ESG---Essential Spectral Gap for the linearized gauged Einstein equation; precise version]
\label{ThmKeyESG}
  Fix $\alpha_\Gamma>0$. Then there exist a neighborhood $\cU_B$ of $b_0$ and a stationary ps.d.o.\ $Q$ such that \eqref{EqKeyESGTrappingBound} holds for $L_b$ with $b\in\cU_B$. Moreover, for any fixed $s>1/2$, there exist constants $\alpha>0$, $C_L<\infty$ and $C>0$ such that for all $b\in\cU_B$, the estimate
  \begin{equation}
  \label{EqKeyESGHighEnergyEst}
    \| u \|_{H^s_{\la\sigma\ra^{-1}}} \leq C\la\sigma\ra^2 \| \wh{L_b}(\sigma)u \|_{H^{s-1}_{\la\sigma\ra^{-1}}}
  \end{equation}
  holds for all $u$ for which the norms on both sides are finite, provided $\Im\sigma>-\alpha$ and $|\Re\sigma|\geq C_L$, as well as for $\Im\sigma=-\alpha$. Here, $H^s_\semi\equiv\bar H^s_\semi(Y,S^2T^*_Y\Omega)$ is the semiclassical Sobolev space of distributions which are extendible at $\pa Y$ (see Appendix~\ref{SubsecBSemi}), defined using the volume density induced by the metric $g_{b_0}$, and using a fixed stationary positive definite inner product on $S^2T^*M^\circ$.

  Moreover, by reducing $\alpha>0$ if necessary, one can arrange that all resonances $\sigma$ of $L_{b_0}$, i.e.\ poles of the meromorphic family $\wh{L_{b_0}}(\sigma)^{-1}$, which satisfy $\Im\sigma>-\alpha$, in fact satisfy $\Im\sigma\geq 0$.
\end{thm}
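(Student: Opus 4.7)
The plan is to deduce Theorem~\ref{ThmKeyESG} from the microlocal/Fredholm framework of \cite{VasyMicroKerrdS,HintzPsdoInner} once the subprincipal bound \eqref{EqKeyESGTrappingBound} at the trapped set is verified for $L_{b_0}$; this verification is the main content and principal obstacle of the proof. Since $\sigma_2(L_{b_0})=G_{b_0}\otimes\Id$ is scalar, the obstruction to \eqref{EqKeyESGTrappingBound} lives in the $\End(S^2T^*M^\circ)$-valued subprincipal symbol of $L_{b_0}$ at $\Gamma$, which is computable from the curvature endomorphism $\sR_{g_{b_0}}$ in \eqref{EqHypDTDRic}--\eqref{EqHypDTCurvatureTerm} and from the zero-order modifications produced by $-\tdel^*D_{g_{b_0}}\Ups$ via \eqref{EqKeySCPDeltaTilde}. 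To construct the ps.d.o.\ $Q$ I will apply the pseudodifferential inner product formalism of \cite{HintzPsdoInner}: the self-adjoint part of the conjugated subprincipal symbol transforms by choice of Hermitian structure on $S^2T^*M^\circ$, and the available freedom, together with the large parameter $\gamma$ in $\tdel^*$ supplied by SCP, should be enough to force it strictly below $\alpha_\Gamma\Id$ on both $\Gamma^\pm$. Spherical symmetry of $g_{b_0}$ reduces this to a fiberwise check on a symmetric endomorphism of a $10$-dimensional vector space, a direct if tedious linear-algebra computation; this is the check singled out in Remark~\ref{RmkIntroNonObvious}, whose failure for the present $\tdel^*$ would doom the current gauge. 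Being a strict pointwise inequality on symbols depending smoothly on $b$, and with the trapping of $(M,g_b)$ a smooth perturbation of $\Gamma$ (cf.\ \cite[\S3]{DyatlovWaveAsymptotics}), its validity at $b_0$ transfers to $L_b$ for $b$ in a sufficiently small neighborhood $\cU_B\subset B$ of $b_0$.

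With the trapping bound in hand, the high-energy estimate \eqref{EqKeyESGHighEnergyEst} is assembled by chaining standard semiclassical microlocal estimates with $\semi=\la\sigma\ra^{-1}$: elliptic regularity off $\Sigma_b$; real principal type propagation along $\rham_{G_b}$ inside $\Sigma_b\setminus(\cL_b\cup\pa\Gamma)$; radial point estimates at the generalized radial sets $\pa\cR_b^\pm$, governed by the invariants $\beta_{b,\pm,0}$ and $\beta_{b,\pm}$ from \eqref{EqKdSGeoRadPointQuant} and valid under the threshold condition $s>1/2$; and, at $\pa\Gamma$, the normally hyperbolic trapping estimate of \cite{WunschZworskiNormHypResolvent,DyatlovSpectralGaps}, upgraded to the bundle-valued setting via the previous step as in \cite{HintzPsdoInner}. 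These combine to give \eqref{EqKeyESGHighEnergyEst} on $\{\Im\sigma>-\alpha,\ |\Re\sigma|\geq C_L\}$ for any $\alpha>0$ strictly smaller than each of $\alpha_\Gamma$, $\numin/2-\alpha_\Gamma$, and $\beta_{b,\pm,0}(s-\tfrac{1}{2})$; uniformity in $b\in\cU_B$ follows from continuous dependence of all symbolic quantities on $b$. A slight further decrease of $\alpha$ yields the estimate on the line $\Im\sigma=-\alpha$ as well.

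Finally, the analytic Fredholm theorem applied as in \cite{VasyMicroKerrdS} gives meromorphic continuation of $\sigma\mapsto\wh{L_{b_0}}(\sigma)^{-1}$ to $\{\Im\sigma>-\alpha\}$, with poles confined to $\{|\Re\sigma|<C_L\}$ by \eqref{EqKeyESGHighEnergyEst}; the resonances in any bounded-height horizontal strip therefore form a finite set. In particular only finitely many resonances lie in $-\alpha<\Im\sigma<0$, so their imaginary parts are bounded away from $0$, and replacing $\alpha$ by a value below the resulting gap yields the claim. Although not logically required for the statement, the combination of SCP and UEMS provides the natural conceptual picture: for any resonance $\sigma$ of $L_{b_0}$ in $\Im\sigma>-\alpha$ with resonant state $r$, the linearized second Bianchi identity gives $\wtBoxCP_{g_{b_0}}D_{g_{b_0}}\Ups(r)=-2\delta_{g_{b_0}}G_{g_{b_0}}L_{b_0}r=0$; SCP (after shrinking $\alpha$ if needed) forces $D_{g_{b_0}}\Ups(r)=0$, whence $r$ solves the ungauged equation $D_{g_{b_0}}(\Ric+\Lambda)r=0$, and UEMS classifies the resonances with $\Im\sigma\geq 0$, as exploited in \S\ref{SecKdSLStab}.
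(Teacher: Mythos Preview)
Your outline follows the paper's route (\S\ref{SecESG}): verify the subprincipal bound at $\Gamma$, feed it into Dyatlov's trapping estimate, and assemble \eqref{EqKeyESGHighEnergyEst} from elliptic, propagation, radial-point, and trapping estimates as in \cite{VasyMicroKerrdS,HintzPsdoInner}. Two genuine gaps remain.

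First, the threshold $s>1/2$ at the radial sets is not a consequence of the dynamical invariants $\beta_{b,\pm,0},\beta_{b,\pm}$ alone. The radial point estimate (assumption~\itref{ItAsySm2RadialSubpr} in \S\ref{SubsecAsySm}) carries the threshold $s>1/2+\beta_\pm\alpha-\wh\beta$, where $\wh\beta$ is the infimum over $\pa\cR_b$ of the eigenvalues of the bundle-valued subprincipal endomorphism $\wh\beta_\pm$ defined by \eqref{EqAsySm2RadialSubpr}. For $s>1/2$ to suffice (for small $\alpha>0$) you must show $\wh\beta\geq 0$, which requires computing $S_\sub(2L)$ at $\cL_\pm$ in the smooth splitting \eqref{EqOpSdSSmoothS2TM}, a calculation of the same flavor as the one at $\Gamma$. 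The paper carries this out in \S\ref{SubsecESGRadial}, arriving at the eigenvalue list \eqref{EqESGRadialSubprEigen}, all non-negative for $\gamma_1,\gamma_2\geq 0$. You omit this step entirely.

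Second, your heuristic that ``the large parameter $\gamma$ in $\tdel^*$\dots should be enough to force it strictly below $\alpha_\Gamma\Id$'' at $\Gamma$ misidentifies the mechanism. The computation in \S\ref{SubsecESGTrap} gives the characteristic polynomial $\lambda^6(\lambda-\gamma_1')^2(\lambda-2\gamma_1')(\lambda-2\gamma_2')$ for $(ir^{-1}\sigma)^{-1}S$ at $\Gamma$: six eigenvalues are identically zero regardless of $\gamma$, so large $\gamma$ does not help. What makes \eqref{EqKeyESGTrappingBound} hold for \emph{arbitrary} $\alpha_\Gamma>0$ is that all eigenvalues are $\geq 0$ for $\gamma_1,\gamma_2\geq 0$; one then conjugates by a constant matrix putting $S$ in Jordan form with small off-diagonal entries (this is the $Q$), bounding the symmetric part by any prescribed $\alpha_\Gamma>0$ --- see the paragraph following Lemma~\ref{LemmaESGHamMicro}. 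In particular ESG holds for every $\gamma_1,\gamma_2\geq 0$, as noted at the end of \S\ref{SubsecESGRadial}. A related minor slip: $\sR_{g_{b_0}}$ is order $0$ and contributes nothing to $S_\sub(2L)$; the two pieces at $\Gamma$ are the $0$-th order part $S_{(2)}$ of $-i\nabla^{\pi^*S^2T^*M^\circ}_{\ham_G}$ (from $\Box_{g_{b_0}}$) and $\sigma_1\bigl(2(\tdel^*-\delta_{g_{b_0}}^*)\delta_{g_{b_0}}G_{g_{b_0}}\bigr)$, cf.\ \eqref{EqESGWaveSubpr} and \eqref{EqESGAdditional}.
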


\begin{rmk}
\label{RmkKeyESGDecay}
  The dependence of the decay rate $\alpha$ on the regularity $s$ is very mild: the inequality that needs to be satisfied is $s>1/2+\beta\alpha$, with $\beta$ the larger value of $\beta_{b_0,\pm}$ defined in \eqref{EqKdSGeoRadPointQuant}. In particular, if this holds for some $s$ and $\alpha$ as in Theorem~\ref{ThmKeyESG}, then it continuous to hold for all larger values of $s$ as well. The size of $\alpha>0$ is thus really only restricted by the location of resonances in the lower half plane.
\end{rmk}

It is crucial here that there exists a choice of $\tdel^*$ which makes SCP hold and for which at the same time ESG is valid as well; this turns out to be very easy to arrange. We will prove this theorem in \S\ref{SecESG}.

That Theorem~\ref{ThmKeyESGInformal} is a consequence of Theorem~\ref{ThmKeyESG} follows from a representation of solutions of the linear equation $L_{b_0}r=0$ in the $t_*$-Fourier domain, and shifting the contour in the inverse Fourier transform from a line $\Im\sigma\gg 1$ to $\Im\sigma=-\alpha$, which is justified by the estimate~\eqref{EqKeyESGHighEnergyEst}. The asymptotic expansion is then a consequence of the residue theorem: the exponents $\sigma_j\in\C$ are the poles of $\wh{L_{b_0}}(\sigma)^{-1}$, $n_{j\ell}$ is one more than the order of the pole, and $d_j$ is the rank of the resonance; see~\S\ref{SubsubsecAsySmRes} for definitions, and the beginning of~\S\ref{SubsubsecAsyExpReg} for a sketch of the contour shifting argument.

Under the assumptions of this theorem, the operators $L_b$ have only finitely many resonances $\sigma$, in the half space $\Im\sigma\geq-\alpha$, and no resonances $\sigma$ have $\Im\sigma=-\alpha$. By general perturbation arguments discussed in \S\ref{SecAsy}, the total rank of the resonances of $L_b$ in $\Im\sigma>-\alpha$ thus remains constant, see Proposition~\ref{PropAsySmPert}. The essential spectral gap of $L_b$ is, by definition, the supremum over all $\wt\alpha>0$ such that $L_b$ has only finitely many resonances in the half space $\Im\sigma>-\wt\alpha$.

The regularity assumption in ESG, $s>1/2$, which we will justify in \S\ref{SubsecESGRadial}, is due to radial point estimates at the event and cosmological horizons intersected with future infinity $X$ (microlocally: near $\pa\cR_b$), as we explain in \S\ref{SubsecAsySm}. The power $\la\sigma\ra^2$ on the other hand is due to the loss of one power of the semiclassical parameter in the estimate at the semiclassical trapped set, which for $L_{b_0}$ can be identified with $\Gamma\cap\{\sigma=\mp 1\}$.

\section{Asymptotic analysis of linear waves}
\label{SecAsy}

In this section, we discuss the global regularity and asymptotic analysis for solutions to initial value problems for linear wave equations as needed for our proofs of linear and non-linear stability: thus, we show how to quantitatively control solutions (their asymptotic behavior, the unstable, i.e.\ exponentially growing, modes and exponentially decaying tails) using methods which are stable under perturbations. A crucial feature of the linear analysis is that we allow a modification of the initial data and forcing terms by elements of a (fixed) finite-dimensional space $\cZ$, as motivated in \S\ref{SubsecIntroIdeas}, and we show that for suitable choices of the space $\cZ$, one can solve any given linear initial value problem \emph{for perturbations of a fixed operator} on decaying function spaces if one modifies the initial data and the forcing by a suitable element $z\in\cZ$.

The analysis necessary for showing linear stability is presented in \S\ref{SubsecAsySm}. Due to the stationary nature of the Kerr--de~Sitter metrics $g_b$ defined in \S\ref{SubsecKdSSlow}, the linear wave equations we need to study have smooth coefficients and are stationary, i.e.\ they commute with $t_*$-translations; thus, we are in the setting of \cite{VasyMicroKerrdS} (addressing initial value problems explicitly however), and one can perform the analysis on the Mellin-transformed (in $\tau=e^{-t_*}$; equivalently: Fourier-transformed in $-t_*$) side. Perturbative arguments then yield the continuous dependence of the global linear solution operators on the wave operator one is inverting; these arguments will turn out to be rather straightforward in the settings of interest, since one only needs to consider a finite-dimensional family of operators, parameterized by $b\in\cU_B$.

The proof of non-linear stability requires additional work. In \S\ref{SubsecAsyExp}, we thus study the linear wave-type operators which appear naturally in the iteration scheme we will employ in \S\ref{SecKdSStab} to solve the gauged Einstein equation: these operators have a stationary part which depends only on the finite-dimensional parameter $b\in\cU_B$, but they are perturbed by a second order principally scalar operator which has small and exponentially decaying but non-smooth (yet high regularity) coefficients. Such operators were discussed in great detail in \cite{HintzQuasilinearDS,HintzVasyQuasilinearKdS}; we need to extend these works slightly to incorporate initial data problems, as well as the finite-dimensional modification space $\cZ$, which may also contain non-smooth elements. The forcing terms we consider are of the same type as the non-smooth perturbations, thus in \S\ref{SubsecAsyExp} we find exponentially decaying solutions to initial value problems with exponentially decaying forcing modified by elements of the space $\cZ$. The motivation is that these are precisely the linear equations we need to analyze in order to solve non-linear equations in \S\ref{SecKdSStab}---that is, solving a non-linear equation for a quantity which is exponentially decaying requires a study of stationary linear equations perturbed by operators which have coefficients and forcing terms of the same form.

We reiterate that \emph{the proof of linear stability of slowly rotating Kerr--de~Sitter spacetimes only relies on the material in} \S\ref{SubsecAsySm}. Thus, the reader interested in the latter can skip \S\ref{SubsecAsyExp}, which is \emph{only} needed for the non-linear analysis. Moreover, we point out that the technical details in the proof of non-linear stability in \S\ref{SecKdSStab}, to the extent that they are related to the material of the present section, on a conceptual level only rely on the ideas presented in \S\ref{SubsecAsySm} for the smooth, stationary linear theory; the non-smooth extension of this theory, while necessary to justify our non-linear iteration scheme, is primarily of technical nature, but does not introduce substantially new ideas.

\subsection{Smooth stationary wave equations}
\label{SubsecAsySm}

Let $E_X\to X$ be a complex vector bundle of finite rank, and define the vector bundle $E=\pi_X^*E_X\to M$, where $\pi_X\colon M\to X$ is the projection $\pi_X(\tau,x)=x$ onto the boundary at future infinity. Then dilations in $\tau$ by $c\in\R^\times$ induce isomorphisms $E_{(\tau,x)}\cong E_{(c\tau,x)}$. Equivalently, one can consider the restriction $E^\circ\to M^\circ$ of the bundle $E$ to $M^\circ$, and then translations in $t_*$ induce bundle isomorphisms of $E^\circ$. In particular, there is a well-defined notion of stationary sections of $E\to M$, which can also be thought of as pullbacks of sections of $E_X\to X$ along $\pi_X$. We call $E\to M$ a \emph{stationary vector bundle}.

The easiest example is the trivial bundle $E_X=X\times\C$, suited for the study of scalar waves. The main examples of interest for the purposes of the present paper are the cotangent bundle $E^\circ=T^*M^\circ$ and the bundle of symmetric 2-tensors $E^\circ=S^2T^*M^\circ$; in these cases, $E_X=T^*_X M^\circ$ (using $X\cong\{t_*=0\}$, for example) and $E_X=S^2T^*_X M^\circ$ (or rather $E_X=\Tb^*_X M$ and $E_X=S^2\,\Tb^*_X M$), and $E=\Tb^*M$ and $E=S^2\,\Tb^*M$, respectively. However, notice the natural inner products on the bundles in these examples are not (positive or negative) definite and hence largely irrelevant for the purposes of quantitative analysis; in fact one of the central guiding principles of the linear analysis throughout this paper is that positive definite fiber inner products only need to be chosen carefully at certain places---at radial points at the horizons and at the trapping, discussed below ---, and there the correct choices can be read off directly from properties of the linear operator, specifically the behavior of its subprincipal part.

Returning to the general setup, let $L\in\Diffb^2(M;E)$ be a stationary, principally scalar operator acting on sections of $E$; that is, $L$ commutes with dilations in $\tau$. (Equivalently, but less naturally from the point of view of non-stationary problems discussed in \S\ref{SubsecAsyExp}, one can view $L\in\Diff^2(M^\circ;E^\circ)$, and $L$ commutes with translations in $t_*$.) We start by assuming:
\begin{enumerate}
\item \label{ItAsySm1Sym} the principal symbol of $L$ is
  \begin{equation}
  \label{EqAsySm1Sym}
    \sigma_{\bop,2}(L)(\zeta) = |\zeta|_{G_b}^2 \otimes \Id, \quad \zeta\in\Tb^*M,
  \end{equation}
  for some Kerr--de~Sitter parameters $b\in\cU_B$; here $\Id$ is the identity operator on $\pi^*E\to\Tb^*M\setminus o$, with $\pi\colon\Tb^*M\setminus o\to M$ the projection. (This suffices for our applications, however one can allow more general metrics; see in particular \cite[\S1]{HintzVasyQuasilinearKdS}.)
\end{enumerate}

Denote by $\wh\rho$ the defining function \eqref{EqKdSGeoCoords} of fiber infinity in $\ol{\Tb^*}M$ near the (generalized) radial set $\pa\cR_{b,\pm}$ at the horizon $r=r_{b,\pm}$, defined in \eqref{EqKdSGeoRadSet}, and recall the definition \eqref{EqKdSGeoRadPoint} of the dynamical quantities $\beta_{b,\pm}$ and $\beta_{b,\pm,0}$. We define the positive function
\begin{equation}
\label{EqAsySm2RadialWeight}
  \beta\in\CI(\pa\cR_b),\quad \beta|_{\pa\cR_{b,\pm}}=\beta_{b,\pm}>0.
\end{equation}
Moreover, fix a positive definite inner product on $E$ near $r=r_{b,\pm}$, and write the subprincipal symbol of $L$ as
\begin{equation}
\label{EqAsySm2RadialSubpr}
  s\wh\rho\sigma_{\bop,1}\Bigl(\frac{1}{2i}(L-L^*)\Bigr) = -\beta_{b,\pm,0}\wh\beta_\pm
\end{equation}
at $\pa\cR_{b,\pm}^s$, with $s=\pm$ specifying the future (`$+$') and past (`$-$') half of the radial set. This defines $\wh\beta_\pm\in\CI(\pa\cR_b;\End(\pi^*E))$, with $\pi\colon\ol{\Tb^*}M\to M$; and $\wh\beta_\pm$ is pointwise self-adjoint with respect to the chosen inner product. We define
\begin{equation}
\label{EqAsySm2RadialSubprInf}
  \wh\beta := \inf_{\pa\cR_b}\wh\beta_\pm \in \R
\end{equation}
to be the smallest eigenvalue of all $\wh\beta_\pm(p)$, $p\in\pa\cR_b$. We further assume:

\begin{enumerate}
\setcounter{enumi}{1}
\item \label{ItAsySm3Trapped} denote by $\sigma$ the dual variable of $\tau$ at $X$, so $\sigma=\sigma_{\bop,1}(\tau D_\tau)$ (this is independent of choices at $X$). Then for all $\alpha_\Gamma>0$, there exists a ps.d.o.\ $Q\in\Psib^0(M;E)$, defined microlocally near the trapped set $\Gamma$ (given explicitly for Schwarzschild--de~Sitter metrics in \eqref{EqKdSGeoTrapped}), and elliptic near $\Gamma$ with microlocal parametrix $Q^-$, such that
  \begin{equation}
  \label{EqAsySm3TrappedSubpr}
    \pm|\sigma|^{-1} \sigma_{\bop,1}\Bigl(\frac{1}{2i}\bigl(Q L Q^- - (Q L Q^-)^*\bigr)\Bigr) < \alpha_\Gamma\Id
  \end{equation}
  on $\Gamma^\pm=\Gamma\cap\{\pm\sigma<0\}$.
\end{enumerate}

The scalar wave operator $\Box_{g_b}$ is the simplest example; one has $\wh\beta_\pm=0$, and the left hand side of \eqref{EqAsySm3TrappedSubpr} is equal to $0$, so indeed any $\alpha_\Gamma>0$ works. For the tensor wave operator on 1-forms and symmetric 2-tensors, one can still arrange \eqref{EqAsySm3TrappedSubpr} for any $\alpha_\Gamma>0$ by \cite[Theorem~4.8]{HintzPsdoInner}, and the calculations in \S\ref{SubsecESGRadial} will imply that one can choose a fiber inner product such that $\wh\beta=0$.

\begin{rmk}
\label{RmkAsySm3SubprAtTrapping}
  All of our arguments in \S\ref{SecAsy} go through with only minor modifications if we merely assume that \eqref{EqAsySm3TrappedSubpr} holds for some \emph{fixed} constant
  \begin{equation}
  \label{EqAsySm3TrappedBound}
    \alpha_\Gamma<\frac{\numin}{2},
  \end{equation}
  with $\numin$ the minimal expansion rate of the Hamilton flow of $G_b$ at $\Gamma$, see the discussion following \eqref{EqKeyESGTrappingBound}. When we turn to non-smooth perturbations of $L$ in \S\ref{SubsecAsyExp}, assuming that $\alpha_\Gamma>0$ is any fixed small number will simplify the bookkeeping of regularity, which is the main reason for us to make this stronger assumption; in our applications, it is always satisfied.
\end{rmk}

\begin{rmk}
\label{RmkAsySmPsdoInner}
  The quantities in \eqref{EqAsySm2RadialSubpr}--\eqref{EqAsySm3TrappedSubpr} are symbolic. A convenient way of calculating them involves \emph{pseudodifferential inner products}, introduced in \cite{HintzPsdoInner} and partially extending the scope of the partial connection for real principal type systems defined by Dencker \cite{DenckerPolarization}: first, one defines the \emph{subprincipal operator} in a local trivialization of $E$ and using local coordinates on $M$ by
  \[
    S_\sub(L) = -i\ham_{G_b}\otimes\Id + \sigma_\sub(L).
  \]
  (Here, we trivialize the half-density bundle $\Omegab^\half(M)$ using $|dg_b|^\half$ in order to define the subprincipal symbol.) This in fact gives a well-defined operator
  \[
    S_\sub(L) \in \Diffb^1(\Tb^*M\setminus o;\pi^*E),
  \]
  which is homogeneous of degree $1$ with respect to dilations in the fibers of $\Tb^*M\setminus o$. A pseudodifferential inner product (or $\Psi$-inner product) is then defined in terms of a positive definite inner product $k$ on the vector bundle $\pi^*E\to\Tb^*M\setminus o$ which is homogeneous of degree $0$ in the fibers of $\Tb^*M\setminus o$; equivalently, an inner product on $\pi_S^*E\to\Sb^*M$, where $\pi_S\colon\Sb^*M\to M$. Thus, $k$ can vary from point to point \emph{in phase space}. Now, fixing a positive definite inner product $k_0$ on $E\to M$, there exists an elliptic symbol $q\in\Shom^0(\Tb^*M\setminus o,\pi^*\End(E))$ intertwining the two inner products, i.e.\ $k(e,f)=k_0(q e,q f)$ for $e,f\in\pi^*\End(E)$, and for the quantization $Q\in\Psib^0(M;E)$ with parametrix $Q^-$, we have
  \begin{align*}
    \sigma_{\bop,1}\Bigl(\frac{1}{2i}\bigl(Q L Q^- - (Q L Q^-)^{*k_0}\bigr)\Bigr) = &q\frac{1}{2i}\bigl(S_\sub(L) - S_\sub(L)^{*k}\bigr)q^{-1} \\
     & \quad\in \Shom^1(\Tb^*M\setminus o,\pi^*\End(E)),
  \end{align*}
  where the superscripts denote the inner product used to define the adjoints, and we in addition use the symplectic volume density to define the adjoint of $S_\sub(L)$; see \cite[Proposition~3.12]{HintzPsdoInner}. Both sides are self-adjoint with respect to $k_0$. Therefore, the problem of obtaining \eqref{EqAsySm3TrappedSubpr} is reduced to finding an inner product $k$ such that the eigenvalues of $\pm|\sigma|^{-1}\frac{1}{2i}(S_\sub(L)-S_\sub(L)^{*k})$ are bounded from above by $\alpha_\Gamma$. (In our applications, the choice of $k$ will be clear from an inspection of the form of $S_\sub(L)$.)

  An a priori different, dynamically more natural, statement is that the exponential map $\pi^*E_{(x,\xi)}\to\pi^*E_{\exp(t_* H_{G_b})(x,\xi)}$ defined naturally from $i S_\sub(L)$ has norm growing slower than any exponential as $t_*\to\infty$. This statement is indeed an immediate consequence of our bound. In the context of scalar equations, the converse follows by averaging along the flow; in the present context it appears to be a bit more subtle.
\end{rmk}

\begin{rmk}
\label{RmkAsySmPsdoInnerRad}
  One could define $\wh\beta_\pm$ in \eqref{EqAsySm2RadialSubpr} for a conjugated version  of $L$ as in \eqref{ItAsySm3Trapped}, thereby possibly increasing the value of $\wh\beta$, but this increase in generality is unnecessary for our applications: the optimal choice for the pseudodifferential inner product near the radial sets will turn out to be constant in the fibers of $\Tb^*M\setminus o$.
\end{rmk}

Since $L$ is stationary, we can analyze it by considering its Mellin-transformed normal operator family $\wh L(\sigma)$, which is an entire family (depending on $\sigma\in\C$) of operators in $\Diff^2(X;E_X)$, defined by
\[
  \wh L(\sigma)u = \tau^{-i\sigma} L \tau^{i\sigma} u,\quad u\in\CIc(X;E_X).
\]
The natural function spaces on which $\wh L(\sigma)$ acts are semiclassical Sobolev spaces, see \S\ref{SubsecBPsdo}. In order to measure the size of sections of $E$, we equip $E$ with any stationary positive definite inner product; all such choices lead to equivalent norms since $X$ is compact. We state the estimates for $\wh L(\sigma)$ using spaces of extendible (and later supported) distributions; we refer the reader to \cite[Appendix~B]{HormanderAnalysisPDE3} and the end of Appendix~\ref{SubsecBPsdo} for definitions. We then recall:

\begin{thm}
\label{ThmAsySmMero}
  Let $C\in\R$, and fix $s>s_0=1/2+\sup(\beta C)-\wh\beta$. Let $\cY^{s-1}:=\Hext^{s-1}(Y;E_Y)$ and $\cX^s:=\{u\in\Hext^s(Y;E_Y)\colon \wh L(\sigma)u\in\cY^{s-1}\}$. (Note that the principal symbol of $\wh L(\sigma)$ is independent of $\sigma$.) Then
  \[
    \wh L(\sigma)\colon\cX^s\to\cY^{s-1}, \quad \Im\sigma>-C,
  \]
  is a holomorphic family of Fredholm operators. Furthermore, there exist constants $C,C_1,\alpha>0$ such that the high energy estimate
  \begin{equation}
  \label{EqAsySmMeroEst}
    \|u\|_{\Hext_{\la\sigma\ra^{-1}}^s(Y;E_Y)} \leq C\|\wh L(\sigma)u\|_{\Hext_{\la\sigma\ra^{-1}}^{s-1}(Y;E_Y)}, \quad \Im\sigma>-\alpha,\ |\Re\sigma|>C_1,
  \end{equation}
  holds for any fixed $s>1/2+\alpha\sup\beta-\wh\beta$. In particular, $\wh L(\sigma)$ is invertible there. Moreover, changing $\alpha>0$ by an arbitrarily small amount if necessary, one can arrange that \eqref{EqAsySmMeroEst} holds for \emph{all} $\sigma$ with $\Im\sigma=-\alpha$ as well.

  Lastly, for any fixed $C_2>0$, the estimate
  \begin{equation}
  \label{EqAsySmMeroEstPosIm}
    \|u\|_{\Hext_{\la\sigma\ra^{-1}}^s(Y;E_Y)} \leq C\la\sigma\ra^{-1}\|\wh L(\sigma)u\|_{\Hext_{\la\sigma\ra^{-1}}^{s-1}(Y;E_Y)}, \quad \Im\sigma>C_2,\ |\Re\sigma|>C_1,
  \end{equation}
  holds.
\end{thm}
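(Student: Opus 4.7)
The plan is to establish the Fredholm property, the high-energy (semiclassical) estimates, and the invertibility statements via the standard microlocal toolkit: elliptic estimates, real principal type propagation, radial point estimates at $\pa\cR_b$, and the normally hyperbolic trapping estimate at $\Gamma$; all four ingredients have by now been developed in a form directly suited to this situation in \cite{VasyMicroKerrdS,HintzVasySemilinear,HintzPsdoInner,DyatlovSpectralGaps,WunschZworskiNormHypResolvent}. The Fredholm property of $\wh L(\sigma)\colon\cX^s\to\cY^{s-1}$ follows by combining these into a global estimate of the form
\[
  \|u\|_{\Hext^s} \leq C\bigl(\|\wh L(\sigma)u\|_{\Hext^{s-1}} + \|u\|_{\Hext^{-N}}\bigr),
\]
together with an analogous estimate for the adjoint $\wh L(\sigma)^*$ acting on Sobolev spaces of supported distributions (with dual threshold regularity). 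The threshold $s>1/2+\sup(\beta C)-\wh\beta$ is precisely the one imposed by the radial point estimate at $\pa\cR_b$: the weight $\tau^{-\Im\sigma}$ present in the identification of $\wh L(\sigma)$ with a conjugated version of $L$ on $M$ shifts the threshold through the $\beta$ factor from \eqref{EqAsySm2RadialWeight}, while $\wh\beta$ captures the subprincipal contribution \eqref{EqAsySm2RadialSubprInf} to the radial point threshold. Meromorphy on $\Im\sigma>-C$ is automatic from Fredholm analytic perturbation theory applied to the holomorphic family $\wh L(\sigma)$.

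For the high-energy estimate \eqref{EqAsySmMeroEst}, I would reinterpret $\wh L(\sigma)$ for $|\Re\sigma|\to\infty$ as a semiclassical operator with $h=\la\sigma\ra^{-1}$ and rescaled spectral parameter $z=h\sigma$. The four ingredients above have semiclassical analogues: semiclassical elliptic estimates away from the semiclassical characteristic set; semiclassical real principal type propagation along the rescaled bicharacteristic flow; semiclassical radial point estimates at the horizons, reproducing the same threshold regularity (and indeed only requiring $s$ to exceed the threshold by an arbitrarily small amount above it as $h\to 0$); and, most importantly, the semiclassical normally hyperbolic trapping estimate. The hard step here is the last one: it is exactly assumption \itref{ItAsySm3Trapped}, in the form \eqref{EqAsySm3TrappedSubpr}, that yields the required polynomial loss. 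Using the results of \cite{NonnenmacherZworskiQuantumDecay,DyatlovSpectralGaps} in the scalar case and their bundle-valued extensions via pseudodifferential inner products (Remark~\ref{RmkAsySmPsdoInner}, \cite[\S4]{HintzPsdoInner}), the condition $\alpha_\Gamma<\numin/2$ gives an estimate with loss of a single power of $h$, which after restoring $h=\la\sigma\ra^{-1}$ accounts for the $\la\sigma\ra^2$ (one power from inverting a second-order operator, one extra from the trapping). Combining these four ingredients globally yields \eqref{EqAsySmMeroEst}, which in particular forces invertibility of $\wh L(\sigma)$ in the region $\Im\sigma>-\alpha$, $|\Re\sigma|>C_1$.

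To arrange for \eqref{EqAsySmMeroEst} to hold on the entire line $\Im\sigma=-\alpha$, I would invoke the Fredholm meromorphy: resonances in any horizontal strip $-C<\Im\sigma<0$ form a discrete subset, and the high-energy estimate restricts them to $|\Re\sigma|\leq C_1$. Therefore the set of resonances in the half-plane $\Im\sigma>-C$ is finite for each $C$. One then decreases $\alpha$ by an arbitrarily small amount so that $\{\Im\sigma=-\alpha\}$ avoids this finite resonance set; on this line the estimate \eqref{EqAsySmMeroEst} then extends from $|\Re\sigma|>C_1$ to all $\sigma$ by a standard compactness argument combined with the invertibility of $\wh L(\sigma)$ at each point of the line.

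Finally, for the estimate \eqref{EqAsySmMeroEstPosIm}, the point is that for $\Im\sigma>C_2$ (large and positive) the conjugated operator $\tau^{-i\sigma}L\tau^{i\sigma}$ picks up a large purely imaginary lower-order contribution of definite sign; concretely, the semiclassical principal symbol of $\wh L(\sigma)$, regarded with $h=\la\sigma\ra^{-1}$, becomes elliptic on the real semiclassical phase space (since the characteristic set of $L$ meets $\{\Im\sigma>C_2\}\cdot h$ only away from the real bicharacteristic variety). The semiclassical elliptic estimate for an elliptic operator of order two then directly gives the gain of an additional power of $h=\la\sigma\ra^{-1}$ relative to \eqref{EqAsySmMeroEst}, which is exactly \eqref{EqAsySmMeroEstPosIm}. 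The hardest conceptual step throughout is the semiclassical trapping estimate; all remaining ingredients are by now standard and can be quoted, the only care required being the bookkeeping of the radial point threshold in terms of $\beta$ and $\wh\beta$ and the fact that one uses extendible (rather than supported) Sobolev spaces at $\pa Y$, which is handled exactly as in \cite[\S2]{HintzVasySemilinear}.
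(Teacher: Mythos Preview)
Your proof follows essentially the same route as the paper's: Fredholm theory via elliptic, propagation, and radial point estimates (with the threshold $s>1/2+\sup(\beta C)-\wh\beta$ coming from the radial set), and the high energy estimate \eqref{EqAsySmMeroEst} via the semiclassical reformulation together with Dyatlov's normally hyperbolic trapping bound. Your handling of the line $\Im\sigma=-\alpha$ by slightly decreasing $\alpha$ to avoid the finite resonance set is also correct and matches the paper.

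There is, however, a genuine gap in your argument for \eqref{EqAsySmMeroEstPosIm}. You claim that for $\Im\sigma>C_2$ the semiclassical principal symbol of $\wh L(\sigma)$ becomes elliptic. This is false in the relevant regime: with $h=\la\sigma\ra^{-1}$ and $z=h\sigma$, the condition $\Im\sigma>C_2$ with $|\Re\sigma|\to\infty$ forces $\Im z\to 0$, so the semiclassical principal symbol $G_b(-z\,dt_*+\xi)$ approaches the real characteristic set and ellipticity is lost uniformly. (For $\Im z$ bounded below by a positive constant the symbol is indeed elliptic, but that is not the regime in question.) The correct mechanism is the one the paper invokes: for $\Im\sigma>C_2$ one still needs propagation, radial point, and trapping estimates, but the positive imaginary part of $\sigma$ provides a favorable sign (complex absorption) at the trapped set, so the normally hyperbolic trapping estimate holds \emph{without} the extra $h^{-1}$ loss. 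This is precisely the content of ``semiclassical mild local trapping'' in the sense of \cite[Definition~2.16]{VasyMicroKerrdS}, and Dyatlov's result \cite{DyatlovSpectralGaps} establishes it in the present setting; the paper explicitly attributes both \eqref{EqAsySmMeroEst} and \eqref{EqAsySmMeroEstPosIm} to this. The upshot is a non-trapping-type estimate with loss of one derivative (i.e.\ one power of $h^{-1}$) rather than the two lost in \eqref{EqAsySmMeroEst}, which is exactly \eqref{EqAsySmMeroEstPosIm}.
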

\begin{proof}
  The proof is contained in the references \cite[\S2]{HintzVasySemilinear} (which uses Cauchy hypersurfaces beyond the horizons instead of the complex absorption used in \cite{VasyMicroKerrdS}), \cite[\S4.4]{HintzVasyQuasilinearKdS} (for the trapping analysis for bundle-valued operators under condition~\eqref{EqAsySm3TrappedSubpr}) and \cite[\S2]{HintzPsdoInner} (for the verification of that condition for tensor-valued equations on slowly rotating Kerr--de~Sitter spacetimes).
  
  In brief, the theorem combines the statements of \cite[Theorem~2.17]{VasyMicroKerrdS} and the extension of \cite[Theorem~7.5]{VasyMicroKerrdS} to the case of semiclassical mild local trapping, see \cite[Definition~2.16]{VasyMicroKerrdS}. In fact, the first statement does not rely on the structure of the trapped set. The quantitative high energy bounds on the other hand do use the normally hyperbolic nature of the trapped set: condition \eqref{EqAsySm3TrappedSubpr} gives a bound on the skew-adjoint part of $L$, or rather its conjugated version $Q L Q^-$, at the trapped set, and likewise for the Mellin-transformed problem, and then Dyatlov's result \cite{DyatlovSpectralGaps}, see also \cite[\S4.4]{HintzVasyQuasilinearKdS} and the remark after \cite[Theorem~1]{DyatlovSpectralGaps}, proves the semiclassical mild trapping, giving \eqref{EqAsySmMeroEst} in a half plane extending to a strip beyond the real line, as well as \eqref{EqAsySmMeroEstPosIm} in the upper half plane.
\end{proof}

\begin{rmk}
\label{RmkAsySmMeroEstLoss}
  A slightly more natural way of writing the estimates in this theorem is by means of the semiclassical rescaling $L_{\semi,z}:=\semi^2 L(\semi^{-1}z)$, where $\semi=\la\sigma\ra^{-1}$ and $z=\semi\sigma$. Replacing $\wh L(\sigma)$ by $L_{\semi,z}$ necessitates a factor of $\semi^{-2}$ on the right hand side of \eqref{EqAsySmMeroEst}, which ultimately comes from  the main result of \cite{DyatlovSpectralGaps}, and a factor of $\semi^{-1}$ on the right hand side of \eqref{EqAsySmMeroEstPosIm}. Relating the estimate \eqref{EqAsySmMeroEst}, resp.\ \eqref{EqAsySmMeroEstPosIm}, to weighted b-Sobolev spaces via the Mellin transform, see \eqref{EqBSobolevIso}, shows that the power of $\la\sigma\ra^0$, resp.\ $\la\sigma\ra^1$, on the right hand side corresponds to a loss of $2$, resp.\ $1$, derivatives for the operator $L^{-1}$ relative to elliptic estimates which would gain full $2$ derivatives.
\end{rmk}

Recall from Appendix~\ref{SubsecBPsdo} the definition of b-Sobolev spaces $\Hbext^{s,\alpha}(\Omega;E)$ of distributions which are extendible at the boundary hypersurfaces of $\Omega$. Elements of $\Hbext^{s,\alpha}(\Omega;E)$ for $s\in\N_0$ are precisely those elements of $e^{-\alpha t_*}L^2_{t_*,x}$ (consisting of distributions which are equal to $e^{-\alpha t_*}$ times an $L^2$ function) which remain in this space upon applying up to $s$ derivatives in the $(t_*,x)$ variables. Let us then define the space of data (Cauchy data and forcing) for initial value problems:

\begin{definition}
\label{DefAsySmDataSpace}
  For $s,\alpha\in\R$, the \emph{space of data with regularity $s$ and decay rate $\alpha$} is
  \[
    D^{s,\alpha}(\Omega;E) := \Hbext^{s,\alpha}(\Omega;E) \oplus \Hext^{s+1}(\Sigma_0;E_{\Sigma_0}) \oplus \Hext^s(\Sigma_0;E_{\Sigma_0}),
  \]
  with the norm $\|(f,u_0,u_1)\|_{D^{s,\alpha}} := \|f\|_{\Hbext^{s,\alpha}} + \|u_0\|_{\Hext^{s+1}} + \|u_1\|_{\Hext^s}$.
\end{definition}

We can now discuss the global behavior of the solution of the initial value problem
\begin{equation}
\label{EqAsySmIVP}
  (L,\gamma_0)u = (f,u_0,u_1) \in D^{s,\alpha}(\Omega;E),
\end{equation}
where $s>s_0=1/2+\alpha\sup\beta-\wh\beta$ so that we have the Fredholm property of $\wh L(\sigma)$ as well as the high energy estimate \eqref{EqAsySmMeroEst}. Written in a more conventional manner, this is equivalent to
\[
  \begin{cases}
    L u = f &\text{in}\ \Omega, \\
    (u|_{\Sigma_0},\,\pa_{t_*}u|_{\Sigma_0}) = (u_0,u_1) &\text{at}\ \Sigma_0.
  \end{cases}
\]
It is convenient to rephrase this as a forcing problem: we can solve \eqref{EqAsySmIVP} until $t_*=3$ using the standard local well-posedness theory, see also \S\ref{SubsubsecAsyExpLocal}; denote the local solution by $u'\in\Hext^{s+1}(\{0<t_*<3\};E)$. Then, with $\chi\in\CI(\R)$ a cutoff, $\chi\equiv 0$ for $t_*\leq 1$ and $\chi\equiv 1$ for $t_*\geq 2$, we write $u=(1-\chi)u'+v$; solving \eqref{EqAsySmIVP} is then equivalent to solving the forward problem
\begin{equation}
\label{EqAsySmIVPForcing}
  L v = \phi := \chi f + [L,\chi]u' \in \Hb^{s,\alpha}(\Omega;E)^{\bullet,-}
\end{equation}
for $v$, with $v\equiv 0$ near $\Sigma_0$. Here, `$\bullet$' indicates vanishing in $t_*\leq 0$ (i.e.\ supported distributions at $\Sigma_0$, in particular meaning vanishing Cauchy data), while `$-$' indicates extendibility beyond the artificial spacelike hypersurfaces $\pa Y\times[0,\infty)_\tau$ beyond the horizons; see also Appendix~\ref{SubsecBPsdo}. By global energy estimates, see \cite[Lemma~2.4]{HintzVasySemilinear}, the problem \eqref{EqAsySmIVPForcing} has a solution $v\in\Hb^{s+1,r_0}(\Omega;E)^{\bullet,-}$ for some large negative $r_0$. In order to analyze it (see also \cite[Lemma~3.5]{VasyMicroKerrdS}), we Mellin transform (see~\eqref{EqBMellinTrafo} for the definition) equation~\eqref{EqAsySmIVPForcing} in $\tau=e^{-t_*}$, obtaining
\[
  \wh v(\sigma)=\wh L(\sigma)^{-1}\wh\phi(\sigma)
\]
in $\Im\sigma>-r_0$. The right hand side is finite-meromorphic in the half space $\Im\sigma>-\alpha$. Defining $\Xi$ to be the finite set of poles (resonances) of $\wh L(\sigma)^{-1}$ in this half space. A contour shifting argument as in the proof of \cite[Theorem~2.21]{HintzVasySemilinear}, which generalizes \cite[Proposition~3.5]{VasyMicroKerrdS}, then implies
\begin{equation}
\label{EqAsySmIVPForcingExpansion}
  v(\tau) = \sum_{\sigma\in\Xi} i\chi \res_{\zeta=\sigma}\bigl(\tau^{i\zeta}\wh L(\zeta)^{-1}\wh\phi(\zeta)\bigr) + v', \quad v'\in\Hb^{s,\alpha}(\Omega;E)^{\bullet,-},
\end{equation}
where the regularity of $v'$ is guaranteed by \eqref{EqAsySmMeroEst}, see also \S\ref{SubsecBSemi}. The terms in this finite asymptotic expansion (often also called \emph{resonance expansion}) are \emph{(generalized) modes}; they have the form
\[
  \sum_{k=0}^d e^{-i t_*\sigma}t_*^k a_k(x)
\]
for some $a_k\in\CI(Y;E_Y)$. We will discuss these in more detail in \S\ref{SubsubsecAsySmRes}.

One can easily read off the asymptotic behavior of the solution of the original problem \eqref{EqAsySmIVP} directly. Namely, let us view
\begin{equation}
\label{EqAsySmViewAsSupp1}
  f\in\Hbext^{s,\alpha}(\Omega;E) \subset \Hbsupp^{0,\alpha}\bigl([0,1]_\tau;\Hext^s(Y;E_Y)\bigr)
\end{equation}
as a forcing term which is a supported distribution at $\Sigma_0$; that is, we regard $f$ as a distribution with merely $L^2$ regularity in $t_*$, which can then be viewed as a \emph{supported} distribution at $\Sigma_0$. Similarly, denoting by $u^1\in\Hextloc^{s+1}(\Omega^\circ;E^\circ)$ the solution of $L u^1=0$, $\gamma_0(u^1)=(u_0,u_1)$, we can view $u^1\in \Hsupploc^0\bigl((0,1]_\tau;\Hext^{s+1}(Y;E_Y)\bigr)$ as the solution of the forward problem $L u^1=[L,H]u^1=:f^1$, with $H=H(t_*)$ the Heaviside function; now $[L,H]$ is a first order differential operator whose coefficients are at most once differentiated $\delta$-distributions supported at $\Sigma_0$, so $f^1$ only depends on the Cauchy data $(u_0,u_1)$, and
\begin{equation}
\label{EqAsySmViewAsSupp2}
  f^1 \in \Hsupp^{-3/2-0}\bigl((0,1]_\tau;\Hext^s(Y;E_Y)\bigr).
\end{equation}
By a slight abuse of notation, we define the operator $[L,H]$ acting on Cauchy data by
\begin{equation}
\label{EqAsySmCommInitial}
  [L,H](u_0,u_1) := f^1.
\end{equation}
Thus, if $u$ solves \eqref{EqAsySmIVP}, then $u$ (interpreted as the supported distribution $H u$) solves the forcing problem $L u=f+f^1$ (with $u$ vanishing identically in $t_*<0$). Solving this using the Mellin transform as above yields the asymptotic expansion for $u$, given by the same sum as in \eqref{EqAsySmIVPForcingExpansion} with $f+f^1$ in place of $\phi$; however, a priori the regularity of the remainder term $u'$ is only $\Hb^{-3/2-,\alpha}$. Since no non-zero linear combination of terms in the asymptotic expansion lies in this space (due to the weight $\alpha$), we conclude that the asymptotic expansions of $u$ thus obtained and of $v$ in \eqref{EqAsySmIVPForcingExpansion} in fact agree, and we thus again obtain the $\Hb^{s,\alpha}$ regularity of $u'$ near $\tau=0$. In summary then, the solution $u$ of \eqref{EqAsySmIVP} has the form
\begin{equation}
\label{EqAsySmResExp}
  u(\tau) = \sum_{\sigma\in\Xi} i\chi\res_{\zeta=\sigma}\bigl(\tau^{i\zeta}\wh L(\zeta)^{-1}(\wh f(\zeta)+\wh{f^1}(\zeta))\bigr) + u', \quad u'\in\Hbext^{s,\alpha}(\Omega;E).
\end{equation}
(Since $f$ is a supported distribution in $t_*$ by~\eqref{EqAsySmViewAsSupp1}, the Mellin transform of $f$ here is the same as the Fourier transform in $t_*$ of $f(t_*,x)$, extended by $0$ to $t_*<0$.) This representation suggests the strategy of how to modify $f$ or the initial data (which determine $f^1$) in order to ensure that the solution $u$ of \eqref{EqAsySmIVP} is exponentially decaying: one needs to ensure that $\wh L(\zeta)^{-1}$ applied to the Mellin transform of $f+f^1$ plus the modification is regular at $\zeta\in\Xi$. We describe this in detail in the next section.

\subsubsection{Spaces of resonant states and dual states}
\label{SubsubsecAsySmRes}

As a consequence of Theorem~\ref{ThmAsySmMero}, the family $\wh L(\sigma)^{-1}$ is a meromorphic family of operators on $\CI(Y;E_Y)$ for $\sigma\in\C$. We denote the set of resonances of $L$ by
\begin{equation}
\label{EqAsySmResSet}
  \Res(L) = \bigl\{ \sigma \in\C \colon \wh L(\zeta)^{-1}\tn{ has a pole at }\zeta=\sigma \bigr\}.
\end{equation}
For $\sigma\in\Res(L)$, let us denote by
\begin{equation}
\label{EqAsySmResStates}
  \Res(L,\sigma) = \biggl\{ r\colon \exists\,n\in\N_0,\ r=\sum_{k=0}^n e^{-i\sigma t_*}t_*^k r_k(x),\ L r=0, \ r_k\in\CI(Y;E_Y) \biggr\}
\end{equation}
the space of resonant states at $\sigma$, i.e.\ the space of all (generalized) mode solutions of $L r=0$ with frequency $\sigma$ in $t_*$. For a set $\Xi\subset\C$ containing finitely many resonances of $L$, we put
\[
  \Res(L,\Xi) := \bigoplus_{\sigma\in\Xi}\Res(L,\sigma).
\]

Given $\sigma_0\in\Res(L)$ and $r\in\Res(L,\sigma_0)$, define $f:=L(\chi r)$, where $\chi\in\CI(\Omega^\circ)$ is a cutoff, $\chi=\chi(t_*)$, $\chi\equiv 0$ near $\Sigma_0$ and $\chi\equiv 1$ for large $t_*$; then $f\in\CIc(\Omega^\circ;E^\circ)$, and the forward solution of $L u = f$ is of course $u=\chi r$. Hence, every element of $\Res(L,\sigma_0)$ is realized as the (one term) asymptotic expansion of a forward solution of $L$ with compactly supported smooth forcing. On the Mellin-transformed side, we have $\wh u(\sigma)=\wh L(\sigma)^{-1}\wh f(\sigma)$, and the asymptotic part of $u$ with frequency $\sigma_0$, as a function of $(t_*,x)\in\Omega^\circ$, is equal to $i\res_{\sigma=\sigma_0}\bigl(e^{-i t_*\sigma}\wh L(\sigma)^{-1}\wh f(\sigma)\bigr)$. Since all poles of $\wh L(\sigma)^{-1}$ have finite order, this shows that we can equivalently define
\begin{equation}
\label{EqAsySmResStatesAlt}
\begin{split}
  \Res(L,\sigma) = \biggl\{ \res_{\zeta=\sigma}\bigl(e^{-i t_*\zeta}&\wh L(\zeta)^{-1}p(\zeta)\bigr) \colon p(\zeta)\tn{ polynomial in }\zeta \\
    & \tn{ with values in }\CI(Y;E_Y) \biggr\}.
\end{split}
\end{equation}
Taking the mapping properties of $\wh L(\sigma)$ on Sobolev spaces into account, we can more generally allow $p$ here to take values in $\Hext^{s-1}(Y;E_Y)$ for any fixed $s>1/2-\inf(\beta\Im\sigma)-\wh\beta$.

Fixing a stationary inner product on the bundle $E$---which for the present purpose only needs to be non-degenerate, but not necessarily positive definite---we can define the adjoints $L^*$ and $\wh L(\sigma)^*=\wh{L^*}(\ol\sigma)$. For $\sigma\in\Res(L)$, the space of dual resonant states is then
\begin{equation}
\label{EqAsySmResDualStates}
  \Res^*(L,\sigma) = \biggl\{ r=\sum_{k=0}^n e^{-i\ol\sigma t_*}t_*^k r_k(x) \colon L^* r=0, \ r_k\in\sDsupp'(Y;E_Y)\biggr\}
\end{equation}
For a finite set $\Xi\subset\C$, we put $\Res^*(L,\Xi) = \bigoplus_{\sigma\in\Xi} \Res^*(L,\sigma)$. Analogously to \eqref{EqAsySmResStatesAlt}, we also have
\begin{equation}
\label{EqAsySmResDualStatesAlt}
\begin{split}
  \Res^*(L,\sigma) = \biggl\{ \res_{\zeta=\ol\sigma}\bigl(e^{-i t_*\zeta}&\wh{L^*}(\zeta)^{-1}p(\zeta)\bigr) \colon p(\zeta)\tn{ polynomial in }\zeta \\
    &\hspace{7em} \tn{ with values in }\sDsupp'(Y;E_Y) \biggr\}
\end{split}
\end{equation}
By the below threshold regularity radial point estimate \cite[Proposition~2.4]{VasyMicroKerrdS}, and taking the contribution from the skew-adjoint part of $L$---defined using the inner product used in \eqref{EqAsySm2RadialSubpr}---into account as in \cite[Footnote~5]{HintzVasySemilinear}, one finds that the restriction of a dual resonant state $\psi\in\Res^*(L,\sigma)$ to any $t_*=const.$ slice has regularity $\Hsupp^{1/2+\inf(\beta\Im\sigma)+\wh\beta-0}(Y;E_Y)$; the norm of $\psi(t_*)$ in this space is bounded by $e^{(-\Im\sigma+0)t_*}$ as $t_*\to\infty$.

We now prove a criterion which gives a necessary and sufficient condition on a forcing term $f$ for the solution $u$ of $L u=f$ to \emph{not} have contributions in its (formal) asymptotic expansion coming from a given finite set of resonances:

\begin{prop}
\label{PropAsySmResNoAsy}
  Let $\Xi=\{\sigma_1,\ldots,\sigma_N\}\subset\Res(L)$ be a finite set of resonances, let $R^*=\Res^*(L,\Xi)$, and fix $r>\max\{-\Im\sigma_j\colon 1\leq j\leq N\}$. Suppose $s>1/2+\sup(\beta r)-\wh\beta$. Define the continuous linear map
  \[
    \lambda \colon \Hbsupp^{-\infty,r}\bigl([0,1]_\tau;\Hext^{s-1}(Y;E_Y)\bigr) \ni f \mapsto \la f,\cdot \ra \in \cL(R^*,\ol\C)
  \]
  mapping $f$ to a $\C$-antilinear functional on $R^*$. Then $\lambda(f)=0$ if and only if $\wh L(\sigma)^{-1}\wh f(\sigma)$ is holomorphic in a neighborhood of $\Xi$.
\end{prop}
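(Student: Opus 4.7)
The plan is to reduce the proposition to a finite-dimensional duality statement between the principal parts of the meromorphic families $\wh L(\sigma)^{-1}$ at $\sigma_j\in\Xi$ and of $\wh{L^*}(\zeta)^{-1}$ at $\ol\sigma_j$, using the explicit representation \eqref{EqAsySmResDualStatesAlt} of dual resonant states.

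First, I would unpack the pairing $\la f,\psi\ra$ for a general $\psi\in\Res^*(L,\sigma_j)$: write
\[
\psi = \Res_{\zeta=\ol\sigma_j}\bigl(e^{-i\zeta t_*}\wh{L^*}(\zeta)^{-1}p(\zeta)\bigr)
\]
for some polynomial $p$ valued in $\sDsupp'(Y;E_Y)$. Bringing the sesquilinear spacetime pairing inside the residue (a small contour integral), invoking adjointness $\wh L(\sigma)^*=\wh{L^*}(\bar\sigma)$, and applying a Mellin--Parseval identity (which is legitimate because the decay assumption $r>-\Im\sigma_j$ places $\sigma_j$ strictly inside the strip of holomorphy $\Im\eta>-r$ of $\wh f$) would yield, after the substitution $\eta=\bar\zeta$ and up to conjugation conventions, the key identity
\[
\la f,\psi\ra = \Res_{\eta=\sigma_j}\bigl\la\wh L(\eta)^{-1}\wh f(\eta),\,p(\bar\eta)\bigr\ra_Y.
\]
Since $p(\bar\eta)$ is entire in $\eta$, the only singularity of the right-hand integrand at $\eta=\sigma_j$ comes from the Laurent principal part of $\wh L(\eta)^{-1}\wh f(\eta)$ at $\sigma_j$.

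From this identity both implications of the proposition follow easily. If $\wh L^{-1}\wh f$ is holomorphic in a neighborhood of every $\sigma_j\in\Xi$, then each residue above vanishes for all choices of $p$, so $\lambda(f)=0$. Conversely, suppose that the principal part of $\wh L(\eta)^{-1}\wh f(\eta)$ at some $\sigma_j\in\Xi$ is non-trivial. It is a finite-rank meromorphic germ taking values in $\Hext^s(Y;E_Y)$, by the high-energy estimates of Theorem~\ref{ThmAsySmMero} applied above the radial-point threshold (which is the reason for the hypothesis $s>1/2+\sup(\beta r)-\wh\beta$). Since the pairing $\Hext^s(Y;E_Y)\times\sDsupp'(Y;E_Y)\to\C$ is non-degenerate, I can select a polynomial $p(\bar\eta)$ valued in $\sDsupp'(Y;E_Y)$ for which the displayed residue is non-zero; the corresponding $\psi\in\Res^*(L,\sigma_j)$ then witnesses $\lambda(f)(\psi)\neq 0$, completing the argument. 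Surjectivity of $p\mapsto\psi$ onto $\Res^*(L,\sigma_j)$ is built into the definition \eqref{EqAsySmResDualStatesAlt}, so no additional input is needed on this point.

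The main technical hurdle I expect is justifying the exchange of the contour integral (defining the residue of $\psi$) with the spacetime pairing when $f$ has only distributional regularity on $Y$ and is merely a supported distribution at $\Sigma_0$ in $t_*$. My strategy would be to first verify the identity for $f\in\CIc$ — where Mellin--Parseval is classical and the interchange of integrals is transparent from Fubini — and then to extend to the general $f\in\Hbsupp^{-\infty,r}([0,1]_\tau;\Hext^{s-1}(Y;E_Y))$ by density and continuity, noting that both sides of the identity depend continuously on $f$ in this topology once $s>s_0$, precisely because $\wh L(\eta)^{-1}\colon\Hext^{s-1}\to\Hext^s$ is bounded uniformly on a small punctured neighborhood of $\sigma_j$ (away from the pole) by Theorem~\ref{ThmAsySmMero}, with the principal-part coefficients depending continuously on $\wh f(\eta)|_{\eta\text{ near }\sigma_j}$.
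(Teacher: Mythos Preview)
Your proposal is correct and takes essentially the same approach as the paper. The paper tests holomorphicity of $\wh L(\sigma)^{-1}\wh f(\sigma)$ against arbitrary $v\in\Hsupp^{-s}(Y;E_Y)$, rewrites $\la\wh f(\sigma),(\wh L(\sigma)^*)^{-1}v\ra_Y$ as the spacetime pairing $\la f,e^{-i\ol\sigma t_*}\wh{L^*}(\ol\sigma)^{-1}v\ra$, expands the latter in a Laurent series at $\ol\sigma_j$, and identifies the principal-part coefficients with dual resonant states via \eqref{EqAsySmResDualStatesAlt}; your route packages the same computation as a single residue identity by starting from the parameterization of $\psi$ by $p$ and reading off both implications directly, which is a perfectly equivalent (and slightly more streamlined) presentation of the same argument.
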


In particular, this gives a criterion for $f\in\CIc(\Omega^\circ;E^\circ)$; we allow more general $f$ to include the types of terms that arose in \eqref{EqAsySmViewAsSupp1}--\eqref{EqAsySmViewAsSupp2}.

\begin{proof}[Proof of Proposition~\ref{PropAsySmResNoAsy}]
  Given $f$, we note that $\wh L(\sigma)^{-1}\wh f(\sigma)$ is holomorphic near every $\sigma_j$ iff this holds for the pairing $\la \wh f(\sigma),(\wh L(\sigma)^*)^{-1} v\ra$ for every $v\in\Hsupp^{-s}(Y;E_Y)$. Using the definition of $\wh f(\sigma)=\int_0^\infty e^{i\sigma t_*}f(t_*)\,dt_*$, this is equivalent to the holomorphicity of
  \begin{equation}
  \label{EqAsySmResNoAsyCond}
    \la f, e^{-i\ol\sigma t_*}\wh{L^*}(\ol\sigma)^{-1}v\ra
  \end{equation}
  near $\sigma_j$ (where the pairing is the $L^2$ pairing on the spacetime region $\Omega^\circ$), or in other words the vanishing of the principal part of \eqref{EqAsySmResNoAsyCond} at $\sigma_j$. Now, for a fixed $v\in\Hsupp^{-s}(Y;E_Y)$, we have for $\sigma$ near $\sigma_j$
  \begin{equation}
  \label{EqAsySmResNoAsyPole}
    e^{-i\ol\sigma t_*}\wh{L^*}(\ol\sigma)^{-1}v = \sum_{k=1}^{n_j} (\ol\sigma-\ol\sigma_j)^{-k}u_{jk} + v'(\ol\sigma),
  \end{equation}
  with $u_{jk}\in e^{-(\Im\sigma_j)t_*}\Hsupp^{1-s}(Y;E_Y)$ (using $1-s<1/2+\inf(\beta\Im\sigma_j)+\wh\beta$), and $v'$ holomorphic near $\sigma_j$ with values in $\Hbext^{\infty,-r}([0,1]_\tau;\Hbsupp^{1-s}(Y;E_Y))$; here, $n_j\geq 0$, and $u_{j n_j}\neq 0$ in case $n_j\geq 1$. Therefore, the holomorphicity of \eqref{EqAsySmResNoAsyCond} at $\sigma_j$ is equivalent to the holomorphicity of
  \[
    \sum_{k=1}^{n_j} (\sigma-\sigma_j)^{-k} \la f,u_{jk}\ra,
  \]
  at $\sigma_j$, which is equivalent to the condition $\la f,u_{jk}\ra=0$ for all $j=1,\ldots,N$, $k=1,\ldots,n_j$. This latter condition, for fixed $j$, is in turn equivalent to
  \[
    \Big\la f, c.c. \res_{\sigma=\sigma_j} (\sigma-\sigma_j)^{k-1}\ol{e^{-i\ol\sigma t_*}\wh{L^*}(\ol\sigma)^{-1}v} \Big\ra = 0,
  \]
  with $c.c.$ denoting complex conjugation, for all $k=1,\ldots,n_j$. In view of \eqref{EqAsySmResNoAsyPole}, this equality automatically holds for $k>n_j$; hence we arrive at the equivalent condition
  \[
    \Big\la f, c.c. \res_{\sigma=\sigma_j} \ol{e^{-i\ol\sigma t_*}\wh{L^*}(\ol\sigma)^{-1} v(\ol\sigma)} \Big\ra = 0
  \]
  for all polynomials $v(\zeta)$ in $\zeta$ with values in $\Hsupp^{-s}(Y;E_Y)$. Comparing this with \eqref{EqAsySmResDualStatesAlt} shows that the latter condition is equivalent to $\lambda(f)=0$, as claimed.
\end{proof}

Following the discussion around \eqref{EqAsySmViewAsSupp1}, under the hypotheses of this proposition, we can define more generally the map
\begin{equation}
\label{EqAsySmResIVPDualStatePairing}
  \lambda_\IVP \colon D^{s-1,\alpha}(\Omega;E) \to \cL(R^*,\ol\C)
\end{equation}
by
\[
  \lambda_\IVP(f,u_0,u_1) := \lambda\bigl(H f + [L,H](u_0,u_1)\bigr)
\]
with $H=H(t_*)$; recall here the notation $[L,H]$ from \eqref{EqAsySmCommInitial}. Note here that we can define $\lambda_\IVP$ on the larger space $D^{s-1,\alpha}$ due to the regularity properties of dual resonant states; the only reason for assuming $D^{s,\alpha}$ for the data in the global analysis of the initial value problem is the loss in the high energy estimates \eqref{EqAsySmMeroEst}, which are of course irrelevant when one is merely studying resonances in relatively compact subsets of $\C$. 

\begin{cor}
\label{CorAsySmResNoAsy}
  Suppose $\alpha>0$ is as in Theorem~\ref{ThmAsySmMero}, and let $s>1/2+\alpha\sup\beta-\wh\beta$. Let $\Xi$ be the set of all resonances of $L$ with $\Im\sigma>-\alpha$, and let $R^*=\Res^*(L,\Xi)$. Suppose $\cZ\subset D^{s,\alpha}(\Omega;E)$ is a finite-dimensional linear subspace. In terms of the map \eqref{EqAsySmResIVPDualStatePairing}, define the map $\lambda_\cZ\colon\cZ\to\cL(R^*,\ol\C)$ by restriction, $\lambda_\cZ:=\lambda_\IVP|_\cZ$.

  If $\lambda_\cZ$ is surjective, then for all $(f,u_0,u_1)\in D^{s,\alpha}(\Omega;E)$, there exists an element $z\in\cZ$ such that the solution of the initial value problem
  \[
    (L,\gamma_0)u = (f,u_0,u_1) + z
  \]
  has an exponentially decaying solution $u\in\Hbext^{s,\alpha}(\Omega)$.

  If moreover $\lambda_\cZ$ is bijective, this $z$ is unique, and the linear map $(f,u_0,u_1)\mapsto z$ is continuous.
\end{cor}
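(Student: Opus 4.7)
The plan is to reduce the initial value problem to a forcing problem supported at $\Sigma_0$ and then invoke Proposition~\ref{PropAsySmResNoAsy} together with the high energy estimate \eqref{EqAsySmMeroEst}. Concretely, given data $(f,u_0,u_1) \in D^{s,\alpha}$, the surjectivity of $\lambda_\cZ$ lets us pick $z \in \cZ$ with
\[
  \lambda_\cZ(z) = -\lambda_\IVP(f,u_0,u_1),
\]
so that $(f_\sharp,u_{0,\sharp},u_{1,\sharp}) := (f,u_0,u_1)+z$ lies in $\ker\lambda_\IVP$. Passing to the equivalent forward forcing problem $L u = h$, where
\[
  h := H f_\sharp + [L,H](u_{0,\sharp},u_{1,\sharp}) \in \Hbsupp^{-\infty,\alpha}\bigl([0,1]_\tau;\Hext^{s-1}(Y;E_Y)\bigr),
\]
with $H=H(t_*)$ the Heaviside function, the vanishing $\lambda_\IVP(f_\sharp,u_{0,\sharp},u_{1,\sharp})=\lambda(h)=0$ combined with Proposition~\ref{PropAsySmResNoAsy} (whose hypothesis $r=\alpha>\max\{-\Im\sigma_j:\sigma_j\in\Xi\}$ holds by construction of $\Xi$) yields that $\wh L(\sigma)^{-1}\wh h(\sigma)$ extends holomorphically across every point of $\Xi$.

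Next, I would solve $L u = h$ as in the derivation of \eqref{EqAsySmIVPForcingExpansion}: first use global energy estimates to produce a forward solution $u \in \Hb^{s+1,r_0}(\Omega;E)^{\bullet,-}$ for some large $r_0 \ll 0$, then Mellin transform to obtain $\wh u(\sigma) = \wh L(\sigma)^{-1}\wh h(\sigma)$ in $\Im\sigma > -r_0$, and finally shift the contour of inverse Mellin transformation from $\Im\sigma = -r_0$ up to $\Im\sigma = -\alpha$. The residues that would ordinarily appear at the resonances $\sigma_j \in \Xi$ all vanish by the holomorphicity just established. The remaining contour integral on $\{\Im\sigma = -\alpha\}$ is then estimated in $\Hbext^{s,\alpha}$ using the high energy bound \eqref{EqAsySmMeroEst} of Theorem~\ref{ThmAsySmMero}, which is applicable precisely because of the regularity assumption $s > 1/2 + \alpha\sup\beta - \wh\beta$. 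Thus $u = H u \in \Hbext^{s,\alpha}(\Omega;E)$, which is the desired conclusion upon restriction to $\Omega^\circ$.

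For the second statement, bijectivity of $\lambda_\cZ$ makes $z = -\lambda_\cZ^{-1}\bigl(\lambda_\IVP(f,u_0,u_1)\bigr)$ uniquely determined. Continuity then follows from composing the continuous linear maps: the inclusion $D^{s,\alpha} \hookrightarrow D^{s-1,\alpha}$, the continuous map $\lambda_\IVP$ into the finite-dimensional space $\cL(R^*,\ol\C)$, and the inverse $\lambda_\cZ^{-1}$ of a linear isomorphism of finite-dimensional normed spaces. The only substantive issue in the argument is verifying that the contour shift is legitimate and produces a remainder in $\Hbext^{s,\alpha}$ rather than merely a space with a worse weight or regularity; this is exactly what the combination of the Fredholm theory of Theorem~\ref{ThmAsySmMero} together with the subprincipal and trapping assumptions \itref{ItAsySm2RadialSubpr}--\itref{ItAsySm3Trapped} on $L$ guarantees, so no additional work is needed beyond what is already packaged in the cited results.
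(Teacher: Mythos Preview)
Your proposal is correct and follows the same approach as the paper: choose $z$ so that $\lambda_\IVP$ vanishes on the modified data, invoke Proposition~\ref{PropAsySmResNoAsy} to kill the residues, and then use the contour-shifted inverse Mellin transform together with \eqref{EqAsySmMeroEst} to land in $\Hbext^{s,\alpha}$; in the bijective case, $z=-\lambda_\cZ^{-1}\lambda_\IVP(f,u_0,u_1)$ is continuous as a composite of continuous linear maps on finite-dimensional targets. The paper's own proof is terser because it simply defers to the preceding discussion, but the logic is identical.

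One small technical point worth tightening: when you solve $Lu=h$ with the distributional forcing $h=Hf_\sharp+[L,H](u_{0,\sharp},u_{1,\sharp})$ and shift the contour, the direct estimate only puts the remainder in $\Hb^{-3/2-0,\alpha}$ because of the delta-type singularities in $[L,H](u_{0,\sharp},u_{1,\sharp})$ at $\Sigma_0$ (see \eqref{EqAsySmViewAsSupp2}). The full $\Hbext^{s,\alpha}$ regularity of $u$ comes from the alternative route of first solving locally near $\Sigma_0$ and then treating the remaining forward problem \eqref{EqAsySmIVPForcing} with forcing in $\Hb^{s,\alpha}(\Omega;E)^{\bullet,-}$, as in the discussion surrounding \eqref{EqAsySmIVPForcingExpansion}; the two expansions then agree, and the good regularity transfers. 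You gesture at this by writing ``as in the derivation of \eqref{EqAsySmIVPForcingExpansion}'', but strictly speaking that derivation uses the smooth (not distributional) forcing, so you should invoke the comparison argument that follows it rather than \eqref{EqAsySmIVPForcingExpansion} alone.
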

\begin{proof}
  In view of Proposition~\ref{PropAsySmResNoAsy} and formula~\eqref{EqAsySmResExp}, the task is to find $z\in\cZ$ such that $\lambda_\cZ(z)=-\lambda_\IVP(f,u_0,u_1)\in\cL(R^*,\ol\C)$. If $\lambda_\cZ$ is surjective, this is certainly possible, and in the case $\lambda_\cZ$ is bijective, the map $(f,u_0,u_1)\mapsto z$ is given by the composition $\lambda_\cZ^{-1}\circ\lambda_\IVP$ of continuous linear maps, hence itself linear and continuous.
\end{proof}

In other words, we can solve initial value problems for $L$ in exponentially decaying spaces if we are allowed to modify the forcing or the initial data by elements of a fixed finite-dimensional space.

\begin{rmk}
\label{RmkAsySmResGrushin}
  We can rephrase this also as follows: define the spaces $\cY=D^{s,\alpha}(\Omega;E)$ and $\cX=\{u\in\Hbext^{s,\alpha}(\Omega;E)\colon (L,\gamma_0)u\in\cY\}$, then $(L,\gamma_0)\colon\cX\to\cY$ is injective and has closed range with codimension equal to $\dim R^*$. From this perspective, the space $\cZ$ in Corollary~\ref{CorAsySmResNoAsy} merely provides a complement of $(L,\gamma_0)(\cX)$ within $\cY$ if $\lambda_\cZ$ is bijective, and in the more general case of surjectivity $(L,\gamma_0)(\cX)+\cZ=\cY$. Thus, adding the space $\cZ$ is akin to setting up a Grushin problem for $(L,\gamma_0)$, see \cite[Appendix~D]{ZworskiSemiclassical}.
\end{rmk}

We end this section by recalling the definition of further quantities associated with resonances:

\begin{definition}
\label{DefAsySmResOrderRank}
  The \emph{order} of a resonance $\sigma\in\Res(L)$ is defined to be the order of the pole of the meromorphic function $\wh L(\zeta)^{-1}$ at $\zeta=\sigma$:
  \[
    \ord_{\zeta=\sigma}\wh L(\zeta)^{-1} = \min\bigl\{ \ell\in\N_0 \colon (\zeta-\sigma)^\ell\wh L(\zeta)\tn{ is holomorphic near }\sigma \bigr\}.
  \]
  We define the \emph{rank} of $\sigma\in\Res(L)$ as
  \[
    \rank_{\zeta=\sigma}\wh L(\zeta)^{-1} = \dim \Res(L,\sigma).
  \]
  By convention, $\ord_\sigma\wh L(\zeta)^{-1}=0$ and $\rank_\sigma\wh L(\zeta)^{-1}=0$ for $\sigma\notin\Res(L)$.
\end{definition}

Expanding the exponential in \eqref{EqAsySmResStatesAlt} into its Taylor series around $\zeta=\sigma$, one easily sees using Smith factorization, see \cite{GohbergSigalOperatorResidue}, \cite[\S4.3]{GohbergLeitererHolomorphic} and also \cite[Appendix~A]{HintzThesis}, that one can equivalently define the rank as
\begin{align}
  \rank_{\zeta=\sigma}\wh L(\zeta)^{-1} &= \dim \biggl\{ q(\zeta)=\sum_{j=1}^{\ord_\sigma\wh L(\zeta)^{-1}} q_j(\zeta-\sigma)^{-j} \colon \nonumber\\
    &\hspace{20ex} \wh L(\zeta)q(\zeta) \tn{ is holomorphic near }\sigma \biggr\} \nonumber\\
\label{EqAsySmResRankTrace}&= \frac{1}{2\pi i}\tr\oint_\sigma \wh L(\zeta)^{-1}\pa_\zeta\wh L(\zeta)\,d\zeta,
\end{align}
where $\oint_\sigma$ denotes the contour integral over a small circle around $\sigma$, oriented counter-clockwise.

\subsubsection{Perturbation theory}
\label{SubsubsecAsySmPert}

We now make the linear operator depend on a finite-dimensional parameter $w\in W\subset\R^{N_W}$, with $W$ an open neighborhood of some fixed $w_0\in\R^{N_W}$. We then assume that for every $w\in W$, we are given a stationary, principally scalar operator $L_w\in\Diffb^2(M;E)$ depending continuously on $w$, and $L_{w_0}=L$ satisfies the assumptions \eqref{ItAsySm1Sym}--\eqref{ItAsySm3Trapped} of \S\ref{SubsecAsySm}. For simplicity, we make the additional assumption that the principal symbol is
\begin{equation}
\label{EqAsySmPertSymbol}
  \sigma_{\bop,2}(L_w)(\zeta)=|\zeta|_{G_{b(w)}}^2\otimes\Id,
\end{equation}
where $b(w)\in\cU_B$ depends continuously on $w\in W$; this will be satisfied in our applications.

Simple examples are $W=\cU_B$, $w_0=b_0$, and $L_w=\Box_{g_w}$ for $w\in\cU_B$ is the tensor wave operator on any tensor bundle over $M$. Linearizations of the gauged Einstein equation will be the objects of interest for the linear stability problem, see \S\ref{SecKey} and \S\ref{SecKdSLStab}.

Returning to the general setup, note that the supremum and infimum of the radial point quantity $\beta$ as well as the real number $\wt\beta$, defined in \eqref{EqAsySm2RadialWeight}--\eqref{EqAsySm2RadialSubprInf}, depend continuously on $w$; furthermore, if \eqref{EqAsySm3TrappedSubpr} (and \eqref{EqAsySm3TrappedBound}, if one were working in the more general setting) are satisfied for $L=L_{w_0}$ for some fixed elliptic operator $Q$ and positive definite inner product on $E$, then it holds for $L=L_w$ as well (with $\numin$ denoting the minimal expansion rate for the Hamilton flow of $L_w$ at the trapped set of $L_w$), provided $w$ is near $w_0$; in the more general setting, this follows from the structural stability of the trapped set, see \cite{HirschShubPughInvariantManifolds} and \cite[\S5.2]{DyatlovResonanceProjectors}, but under the assumption \eqref{EqAsySmPertSymbol}, this follows more simply by a direct computation of the location of the trapped set and the minimal expansion rate, see \cite[\S3.2]{DyatlovWaveAsymptotics}. Thus, the assumptions in \S\ref{SubsecAsySm} hold uniformly for $L_w$ with $w$ near $w_0$, and consequently Theorem~\ref{ThmAsySmMero} holds as well, with the constants $C,C_1,C_2,\alpha$ \emph{uniform} in $w\in W$, shrinking $W$ if necessary.

The estimate on $\wh{L_w}(\sigma)^{-1}$ for $\Im\sigma>C_2$, with $C_2>0$ sufficiently large, mentioned in the proof of Theorem~\ref{ThmAsySmMero} also holds uniformly in $w$; alternatively, the global energy estimates for $L_w$ on growing function spaces $\Hb^{s,-C_2}(\Omega;E)$ (increasing $C_2$ slightly if necessary) hold uniformly in $w$. Either way, it remains to study the dependence of $\wh{L_w}(\sigma)$ on $w\in W$ for $\sigma$ in the precompact set $\{-\alpha<\Im\sigma<C_2+1,\ |\Re\sigma|<C_1+1\}\subset\C$, where $\wh{L_w}(\sigma)^{-1}$ has only finitely many poles for any fixed $w\in W$ by Theorem~\ref{ThmAsySmMero}. This was first discussed in \cite[\S2.7]{VasyMicroKerrdS} and \cite[Appendix~A]{HintzThesis}; we prove a slight extension:

\begin{prop}
\label{PropAsySmPert}
  Let $V\subset\C$ be a non-empty precompact open set such that $\Res(L_{w_0})\cap\pa V=\emptyset$, and fix $s_0>1/2-\inf_{\sigma\in V}(\beta\Im\sigma)-\wh\beta$. Then:
  \begin{enumerate}
    \item \label{ItAsySmPert1Open} The set $I:=\{(w,\sigma)\in W\times V\colon \wh{L_w}(\sigma)^{-1}\tn{ exists}\}$ is open.
    \item \label{ItAsySmPert2Cont} The map $I\ni(w,\sigma)\mapsto\wh{L_w}(\sigma)^{-1}\in\cL_\weak(\Hext^{s-1},\Hext^s)$ (the space of bounded linear operators, equipped with the weak operator topology) is continuous for all $s>s_0$, and also as a map into $\cL_\op(\Hext^{s-1+\eps},\Hext^{s-\eps})$ (i.e.\ equipped with the norm topology) for $s>s_0$ and all $\eps>0$. Here, $\Hext^\rho\equiv\Hext^\rho(Y;E_Y)$.
    \item \label{ItAsySmPert3TotalRank} The set $\Res(L_w)\cap V$ depends continuously on $w$ in the Hausdorff distance sense, and the total rank
      \[
        D := \sum_{\sigma\in\Res(L_w)\cap V} \rank_{\zeta=\sigma} \wh{L_w}(\zeta)^{-1}
      \]
      is constant for $w$ near $w_0$.
    \item \label{ItAsySmPert4ResCont} The total space of resonant states $\Res(L_w,V)\subset\CI(\Omega^\circ;E^\circ)$ depends continuously on $w$ in the sense that there exists a continuous map $W\times\C^D \to \CI(\Omega^\circ;E^\circ)$ such that $\Res(L_w,V)$ is the image of $\{w\}\times\C^D$.
    \item \label{ItAsySmPert5DualCont} Likewise, fixing a smooth inner product on $E_Y$, the total space of dual states $\Res^*(L_w,V)\subset\Hsupploc^{1-s_0}(\Omega^\circ;E^\circ)$ depends continuously on $w$.
  \end{enumerate}
\end{prop}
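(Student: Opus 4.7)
The plan is to reduce the infinite-dimensional perturbation problem to a finite-dimensional, purely matrix-valued, analytic problem via a Grushin reduction, exploiting the uniformity of the high-energy and radial/trapping estimates in the parameter $w$.

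First I would verify that the structural hypotheses \itref{ItAsySm1Sym}--\itref{ItAsySm3Trapped} carry over to $L_w$ uniformly for $w$ near $w_0$. By \eqref{EqAsySmPertSymbol} the radial sets $\cR_{b(w),\pm}$ and the dynamical quantities $\beta_{b(w),\pm,0},\beta_{b(w),\pm}$ depend continuously on $w$; by the explicit Kerr--de~Sitter computations referenced after Theorem~\ref{ThmKeyESG}, so do the location of the trapped set and the minimal expansion rate $\numin$. Consequently the subprincipal trapping bound \eqref{EqAsySm3TrappedSubpr} persists for $L_w$ with the same $Q$, at the price of an arbitrarily small adjustment of $\alpha_\Gamma$. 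Shrinking $W$, Theorem~\ref{ThmAsySmMero} then applies uniformly: $\wh{L_w}(\sigma)\colon\cX^s\to\cY^{s-1}$ is a jointly continuous family of Fredholm operators of index $0$ (the index being $0$ because the operator is invertible for $\Im\sigma\gg 0$, by the analogue of \eqref{EqAsySmMeroEstPosIm}), and the inverse is norm-bounded uniformly in $w$ outside any fixed neighborhood of $\bigcup_{w\in W}\Res(L_w)$.

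Next, fix $(w_1,\sigma_1)\in W\times V$ and build a Grushin problem. Since $\wh{L_{w_1}}(\sigma_1)$ is Fredholm of index $0$, there exist bounded operators $R_+\colon\cX^s\to\C^N$ and $R_-\colon\C^N\to\cY^{s-1}$, given by pairings against finitely many smooth elements, such that
\[
  \cP_{w_1}(\sigma_1):=\begin{pmatrix}\wh{L_{w_1}}(\sigma_1) & R_- \\ R_+ & 0\end{pmatrix}\colon \cX^s\oplus\C^N\to\cY^{s-1}\oplus\C^N
\]
is invertible, with inverse written blockwise as $\bigl(\begin{smallmatrix}E & E_+\\ E_- & E_{-+}\end{smallmatrix}\bigr)$. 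The analogous family $\cP_w(\sigma)$ depends holomorphically on $\sigma$ and continuously on $w$; the difference $\cP_w(\sigma)-\cP_{w_1}(\sigma_1)$ is a second order differential operator with small, continuously varying coefficients. The uniform Fredholm/radial point estimates then show that $\cP_w(\sigma)^{-1}$ exists for $(w,\sigma)$ near $(w_1,\sigma_1)$ and depends continuously on $(w,\sigma)$ in the norm topology on $\cL(\cY^{s-1+\eps}\oplus\C^N,\cX^{s-\eps}\oplus\C^N)$ for every $\eps>0$, and in the weak operator topology without loss. By the Schur complement formula, $\wh{L_w}(\sigma)$ is invertible if and only if the finite matrix $E_{-+}(w,\sigma)$ is, and
\[
  \wh{L_w}(\sigma)^{-1}=E(w,\sigma)-E_+(w,\sigma)\,E_{-+}(w,\sigma)^{-1}E_-(w,\sigma),
\]
which yields \itref{ItAsySmPert1Open} and \itref{ItAsySmPert2Cont}.

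To pass to \itref{ItAsySmPert3TotalRank}--\itref{ItAsySmPert5DualCont}, I would cover the compact set $\ol V$ by finitely many Grushin charts as above and, after further shrinking $W$, identify $\Res(L_w)\cap V$ in each chart with the zero set of the holomorphic function $\sigma\mapsto\det E_{-+}(w,\sigma)$. Rouch\'e's theorem applied on $\pa V$ and on small circles around the resonances of $L_{w_0}$ then gives the Hausdorff continuity, while the total rank identity \eqref{EqAsySmResRankTrace} combined with the finite-dimensional version
\[
  D(w)=\frac{1}{2\pi i}\tr\oint_{\pa V}E_{-+}(w,\zeta)^{-1}\pa_\zeta E_{-+}(w,\zeta)\,d\zeta
\]
shows that $D(w)$ is an integer-valued continuous, hence locally constant, function of $w$. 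For \itref{ItAsySmPert4ResCont}, I would use that by \eqref{EqAsySmResStatesAlt} the total space of resonant states is spanned by the residues at $\sigma\in\Res(L_w)\cap V$ of $e^{-it_*\zeta}\wh{L_w}(\zeta)^{-1}p(\zeta)$, which via the Schur formula reduce to contour integrals against $R_-(\zeta)E_{-+}(w,\zeta)^{-1}q(\zeta)$ for polynomial $q$; choosing a basis of such $q$'s adapted to $w_0$ produces a continuous family of bases of $\Res(L_w,V)$. The analogous argument applied to $L_w^*$, using that the adjoint Grushin problem $\cP_w(\sigma)^*$ inherits all continuity properties, gives \itref{ItAsySmPert5DualCont}; the threshold $s_0$ is exactly the radial point threshold for the dual problem.

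The main obstacle is the one-derivative loss at the radial sets: $\wh{L_w}(\sigma)^{-1}$ maps $\cY^{s-1}$ into $\Hext^s$ rather than $\Hext^{s+1}$, so a naive Neumann series against the order-two perturbation $\wh{L_w}(\sigma)-\wh{L_{w_1}}(\sigma_1)$ does not close in the graph norm. This forces the operator-topology weakening in \itref{ItAsySmPert2Cont} (the loss $\eps$), and it is why the perturbation argument must be routed through the finite-dimensional Schur reduction rather than applied directly to $\wh{L_w}(\sigma)^{-1}$; verifying the uniform radial point estimates for the family $L_w$ is what makes this reduction legitimate.
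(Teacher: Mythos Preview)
Your proposal is correct and follows essentially the same strategy as the paper: uniform Fredholm estimates (the analogue of \eqref{EqAsySmPertEst}) plus a finite-rank reduction to a matrix-valued analytic family, with total rank read off via \eqref{EqAsySmResRankTrace} and resonant states via contour integrals as in \eqref{EqAsySmResStatesAlt}. The only cosmetic difference is that the paper sets up the finite-dimensional reduction by adding a smoothing operator $R$ and factoring $\wh{L_w}(\sigma)=Q_w(\sigma)P_w(\sigma)$ with $P_w(\sigma)=\wh{L_w}(\sigma)+R$ invertible near $(w_0,\sigma_0)$, rather than your enlarged Grushin block $\cP_w(\sigma)$ with Schur complement $E_{-+}$; these are standard equivalent formalisms (cf.\ Remark~\ref{RmkAsySmResGrushin}), and both route the perturbation argument through the invertible auxiliary family precisely to sidestep the derivative loss you flag at the end. (One minor slip: in your sketch of \itref{ItAsySmPert4ResCont} the residues land in the range of $E_+$, not $R_-$.)
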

\begin{proof}
  The main input of the proof is the fact that we have the estimates
  \begin{equation}
  \label{EqAsySmPertEst}
  \begin{gathered}
    \|u\|_{\Hext^s} \leq C\bigl(\|\wh{L_w}(\sigma)u\|_{\Hext^{s-1}} + \|u\|_{\Hext^{s_0}}\bigr), \\
    \|v\|_{\Hext^{1-s}} \leq C\bigl(\|\wh{L_w}(\sigma)^* v\|_{\Hext^{-s}} + \|v\|_{\Hext^{-N}}\bigr),
  \end{gathered}
  \end{equation}
  with $s>s_0$ and $N>s-1$ arbitrary but fixed, \emph{uniformly} for $w\in W$ and $\sigma\in V$, and the invertibility of $\wh{L_{w_0}}(\sigma)$ at some point $\sigma\in V$. The statements \eqref{ItAsySmPert1Open} and \eqref{ItAsySmPert2Cont} then follow from a simple functional analysis argument, see \cite[\S2.7]{VasyMicroKerrdS}. We remark that the first part of statement~\eqref{ItAsySmPert2Cont} is false if one uses instead the operator norm topology since the characteristic set of $\wh{L_w}(\sigma)$ varies with $w$, see also \cite[Footnote~33]{HintzQuasilinearDS}.

  In order to prove the remaining statements \eqref{ItAsySmPert3TotalRank}--\eqref{ItAsySmPert5DualCont}, it suffices to consider the case that $V$ is a small disc $V=\{|\sigma-\sigma_0|<\eps\}$ around a resonance $\sigma_0\in\Res(L_{w_0})$, with $\Res(L_{w_0})\cap\pa V=\emptyset$ and $\Res(L_{w_0})\cap V=\{\sigma_0\}$. Now $\wh{L_{w_0}}(\sigma_0)$ has index $0$ as an operator $\wh{L_{w_0}}(\sigma_0)\colon\cX^s_{w_0}\to\cY^{s-1}_{w_0}$, where $\cX_w^s=\{u\in \Hext^s\colon \wh{L_w}(\sigma_0)u\in \Hext^{s-1}\}$ and $\cY_w^{s-1}=\Hext^{s-1}$; its kernel is a subspace of $\CI(Y;E_Y)$, say with basis $\{u_1,\ldots,u_n\}$, and the range
  \[
    \cY_1 = \ran_{\cX_0^s} \wh{L_{w_0}}(\sigma_0) \subset \Hext^{s-1},
  \]
  being closed and of codimension $n$, has a complement $\cY_2\subset\CI(Y;E_Y)$ with basis $\{f_1,\ldots,f_n\}$. Now, let
  \[
    R u := \sum_{k=1}^n \la u,u_k\ra f_k,
  \]
  where the pairing is the $L^2$-pairing on $Y$, using any fixed positive definite inner product on $E_Y$. Then the family of operators
  \[
    P_w(\sigma) := \wh{L_w}(\sigma)+R \colon \cX^s_w \to \cY^{s-1}_w
  \]
  is invertible at $w=w_0$, $\sigma=\sigma_0$; moreover $P_w(\sigma)$ also satisfies the estimates \eqref{EqAsySmPertEst} (with a different constant $C$), since the contribution of $R\colon \Hext^{-\infty}\to \Hext^\infty$ can be absorbed into the error term. Therefore, parts \eqref{ItAsySmPert1Open} and \eqref{ItAsySmPert2Cont} apply to the family $P_w(\sigma)$ as well; shrinking $\eps>0$ and the parameter space $W$ if necessary, we may thus assume that $P_w(\sigma)\colon\cX^s_w\to\cY^{s-1}_w$ is invertible on $W\times V$ (note that for $u\in\cX^s_w$, we have $P_w(\sigma)u\in\cY^{s-1}_w$ indeed), with continuous inverse family in the sense of \eqref{ItAsySmPert2Cont}. Writing
  \[
    \wh{L_w}(\sigma) = Q_w(\sigma)P_w(\sigma),\quad Q_w(\sigma)=\Id-R P_w(\sigma)^{-1}\colon\Hext^{s-1}\to\Hext^{s-1},
  \]
  the invertibility of $\wh{L_w}(\sigma)$ is thus equivalent to that of $Q_w(\sigma)$; but in the decomposition $\Hext^{s-1}=\cY_1\oplus\cY_2$, we have
  \[
    Q_w(\sigma) = \begin{pmatrix} \Id & Q_{w,1}(\sigma) \\ 0 & Q_{w,2}(\sigma) \end{pmatrix},
  \]
  where
  \[
    Q_{w,1}(\sigma)=-R P_w(\sigma)^{-1}|_{\cY_2}\in\cL(\cY_2,\cY_1), \ Q_{w,2}(\sigma)=\Id-R P_w(\sigma)^{-1}|_{\cY_2}\in\cL(\cY_2).
  \]
  Therefore, the invertibility of $Q_w(\sigma)$ is in turn equivalent to that of the operator $Q_{w,2}(\sigma)$---which depends continuously on $w\in W$ and holomorphically on $\sigma\in V$---acting on the \emph{fixed, finite-dimensional space} $\cY_2$. For fixed $w$, the contour integral expression \eqref{EqAsySmResRankTrace} yields $\rank_\sigma\wh{L_w}(\zeta)^{-1}=\rank_\sigma Q_{w,2}(\zeta)^{-1}$. Therefore, claim~\eqref{ItAsySmPert3TotalRank} follows from
  \[
    \sum_{\sigma\in\Res(L_w)\cap V}\rank_{\zeta=\sigma}\wh{L_w}(\zeta)^{-1} = \frac{1}{2\pi i}\oint_{\pa V} Q_{w,2}(\zeta)^{-1}\pa_\zeta Q_{w,2}(\zeta)\,d\zeta,
  \]
  the latter expression being integer-valued and continuous in $w$.

  For establishing claim \eqref{ItAsySmPert4ResCont}, pick polynomials $p_1(\zeta),\ldots,p_D(\zeta)$ with values in $\CI(Y;E_Y)$ such that the sections
  \[
    \phi_j(0) := \oint_{\pa V} \wh{L_{w_0}}(\zeta)^{-1}p_j(\zeta)\,d\zeta \in \CI(Y;E_Y)
  \]
  span $\Res(L_{w_0},\sigma_0)$. For sufficiently small $w\in W$, $\wh{L_w}(\zeta)^{-1}$ exists for $\zeta\in\pa V$, hence the contour integral
  \[
    \phi_j(w) := \oint_{\pa V} \wh{L_w}(\zeta)^{-1}p_j(\zeta)\,d\zeta \in \CI(Y;E_Y)
  \]
  is well-defined. By \eqref{ItAsySmPert2Cont}, $\phi_j(w)$ depends continuously on $w$ in the topology of $\CI(Y;E_Y)$; therefore $\{\phi_1(w),\ldots,\phi_D(w)\}\subset\CI(Y;E_Y)$ is $D$-dimensional for small $w$. On the other hand, we have $\phi_j(w)\in\Res(L_w,V)$ for all $j$, and $\Res(L_w,V)$ is $D$-dimensional as well, so we conclude that $\Res(L_w,V)=\mathspan\{\phi_1(w),\ldots,\phi_D(w)\}$. The statement now follows for the map $W\times\C^D\ni(w,(c_1,\ldots,c_D))\mapsto\sum c_j\phi_j(w)$.

  The proof of the corresponding statement \eqref{ItAsySmPert5DualCont} for dual states proceeds in the same manner.
\end{proof}

Part~\eqref{ItAsySmPert5DualCont} of this proposition implies that Corollary~\ref{CorAsySmResNoAsy} holds uniformly for all $L_w$ with $\cZ$ fixed; we state a more general version, allowing the space $\cZ$ of modifications to depend on $w\in W$ as well. In our applications, the space $\cZ$ will naturally be a sum of finite-dimensional spaces $\cZ_j$ each parameterized by vectors in some $\C^{N_j}$; however, the sum of the $\cZ_j$ may not be direct, and its dimension may be different for different values of $w\in W$. A robust description of $\cZ$ therefore rather amounts to parameterizing its elements by $\bigoplus\C^{N_j}$, with the parameterization possibly not being one to one. This motivates the assumption in the following result:

\begin{cor}
\label{CorAsySmPertNoAsy}
  Under the assumptions stated at the beginning of this section, and under the assumptions and using the notation of Corollary~\ref{CorAsySmResNoAsy}, let $V$ be a small open neighborhood of the set $\Xi$ of resonances $\sigma$ with $\Im\sigma>-\alpha$. Suppose ${N_\cZ}\in\N_0$, and suppose we are given a continuous map
  \[
    z \colon W \times \C^{N_\cZ} \to D^{s,\alpha}(\Omega;E)
  \]
  which is linear in the second argument; we will write $z_w^{\bfc}\equiv z(w,\bfc)$. Define the map\footnote{Thus, $\lambda_{w_0}(\bfc)=\lambda_\cZ(z_{w_0}^{\bfc})$ in the notation of Corollary~\ref{CorAsySmResNoAsy} if $\cZ=z(w_0,\C^{N_\cZ})$.}
  \begin{align*}
    \lambda_w\colon\C^{N_\cZ} & \to\cL(\Res^*(L_w,V),\ol\C), \\
    \bfc & \mapsto \lambda_\IVP(z_w^{\bfc}).
  \end{align*}
  Then the bijectivity, resp.\ surjectivity, of $\lambda_{w_0}$ implies the bijectivity, resp.\ surjectivity, of $\lambda_w$ for $w\in W$ near $w_0$.

  Furthermore, assuming $\lambda_{w_0}$ is surjective, there exists a continuous map
  \begin{equation}
  \label{EqAsySmPertNoAsyMap}
  \begin{split}
    W \times D^{s,\alpha}(\Omega;E) \ni (w,(f,u_0,u_1)) \mapsto \bfc\in\C^{N_\cZ},
  \end{split}
  \end{equation}
  linear in the second argument, such that the initial value problem
  \[
    (L_w,\gamma_0)u = (f,u_0,u_1) + z_w^{\bfc}
  \]
  has an exponentially decaying solution $u\in\Hbext^{s,\alpha}(\Omega)$, and this solution $u$ depends continuously on $(w,f,u_0,u_1)$ as well. If $\lambda_{w_0}$ is bijective, then $\bfc$ and $u$ are unique given $w\in W$ and the data $(f,u_0,u_1)$.
\end{cor}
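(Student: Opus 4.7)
The plan is to reduce the claim to a statement about a continuously varying family of linear maps between finite-dimensional vector spaces, once one has trivialized the bundle of dual resonant states using Proposition~\ref{PropAsySmPert}. All the analytic work has already been done; what remains is essentially finite-dimensional linear algebra with a continuous parameter.

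First, by Proposition~\ref{PropAsySmPert}~\itref{ItAsySmPert3TotalRank} and \itref{ItAsySmPert5DualCont}, after possibly shrinking $W$, the integer $D := \dim \Res^*(L_w,V)$ is independent of $w$, and I can pick a continuously varying basis $\psi_1(w),\ldots,\psi_D(w)$ of $\Res^*(L_w,V)$ as elements of $\Hsupploc^{1-s_0}(\Omega^\circ;E^\circ)$. Using this basis I identify $\cL(\Res^*(L_w,V),\ol\C)$ with $\ol\C^D$ continuously in $w$. Under this identification, the $j$-th component of $\lambda_w(\bfc)$ equals the pairing of the supported distribution $H f_w^{\bfc} + [L_w,H](u_{0,w}^{\bfc},u_{1,w}^{\bfc})$ associated to $z_w^{\bfc}$ against $\psi_j(w)$, and the continuity of $z$, of the basis, and of the coefficients of $L_w$ together imply that the matrix of $\lambda_w$ with respect to the standard basis of $\C^{N_\cZ}$ and the chosen basis of $\ol\C^D$ depends continuously on $w\in W$.

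Next, in the bijective case one automatically has $N_\cZ=D$ and $\det\lambda_{w_0}\neq 0$, so continuity of the determinant yields bijectivity of $\lambda_w$ on a smaller neighborhood of $w_0$. In the merely surjective case, I select a coordinate subspace $\C^S\subset\C^{N_\cZ}$ of size $|S|=D$ such that $\lambda_{w_0}|_{\C^S}$ is an isomorphism (which exists since $\lambda_{w_0}$ has rank $D$); the previous argument applied to the square matrix $\lambda_w|_{\C^S}$ gives that this restriction remains an isomorphism, and hence $\lambda_w$ remains surjective, for $w$ near $w_0$. In both cases I then define the selection map in \eqref{EqAsySmPertNoAsyMap} by
\[
  \bfc(w,f,u_0,u_1) := \iota_S\bigl((\lambda_w|_{\C^S})^{-1}\lambda_\IVP(f,u_0,u_1)\bigr),
\]
with $\iota_S\colon\C^S\hra\C^{N_\cZ}$ the inclusion (taking $S=\{1,\ldots,N_\cZ\}$ in the bijective case); continuity follows from the continuity of matrix inversion on the open set of invertible matrices combined with the continuity of $\lambda_\IVP$ in the data, and linearity in $(f,u_0,u_1)$ is manifest. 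By construction $\lambda_\IVP((f,u_0,u_1)+z_w^{\bfc})$ vanishes as a functional on $\Res^*(L_w,V)$, so Proposition~\ref{PropAsySmResNoAsy} applied to $L_w$ produces the exponentially decaying solution $u\in\Hbext^{s,\alpha}(\Omega;E)$ via the contour-shift argument of \eqref{EqAsySmIVPForcingExpansion}; uniqueness in the bijective case is immediate from the injectivity of $\lambda_w$.

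The continuous dependence of $u$ on $(w,f,u_0,u_1)$ then follows by combining continuity of $\bfc$ with the uniform high energy estimate of Theorem~\ref{ThmAsySmMero} for the family $L_w$ (valid on a neighborhood of $w_0$ by the discussion at the start of \S\ref{SubsubsecAsySmPert}) and the continuity of $\wh{L_w}(\sigma)^{-1}$ from Proposition~\ref{PropAsySmPert}~\itref{ItAsySmPert2Cont}. The main subtlety I anticipate is ensuring that all the estimates entering the contour-shift argument --- in particular the bound for $\wh{L_w}(\sigma)^{-1}$ on $\{\Im\sigma=-\alpha\}$ and on the high-frequency part of the strip $-\alpha<\Im\sigma<C_2$ --- are genuinely uniform in $w$; however, this uniformity is already implicit in the stability under perturbation of the hypotheses \itref{ItAsySm1Sym}--\itref{ItAsySm3Trapped} noted at the beginning of \S\ref{SubsubsecAsySmPert}, so no further analytic input beyond what has already been assembled is required.
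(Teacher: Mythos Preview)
Your proof is correct and follows essentially the same approach as the paper's: trivialize $\Res^*(L_w,V)$ via Proposition~\ref{PropAsySmPert}~\itref{ItAsySmPert5DualCont}, represent $\lambda_w$ as a continuously varying $D\times N_\cZ$ matrix, use lower semicontinuity of rank (equivalently, your determinant argument on a $D$-dimensional subspace) to propagate surjectivity/bijectivity, and define the selection map via $(\lambda_w|_C)^{-1}\circ\lambda_\IVP$ composed with the inclusion. Your additional discussion of the continuous dependence of $u$ on $(w,f,u_0,u_1)$ via the uniform high energy estimates is a welcome elaboration that the paper leaves implicit.
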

\begin{proof}
  We use Proposition~\ref{PropAsySmPert} \eqref{ItAsySmPert5DualCont} and the parameterization of the family of spaces $\Res^*(L_w,V)$ by means of a continuous map with domain $W\times\C^D$, linear in the second argument; then $\lambda_w$ is represented by a complex $D\times N_\cZ$ matrix which depends continuously on $w\in W$. The lower semicontinuity of the rank proves the first part.

  For the second part, we pick a $D$-dimensional subspace $C\subset\C^{N_\cZ}$ such that $\lambda_{w_0}|_C$ (and hence $\lambda_w|_C$) is bijective. Then we can define a map \eqref{EqAsySmPertNoAsyMap} with the stated properties by $(\lambda_w|_C)^{-1}\circ\lambda_\IVP(f,u_0,u_1)$, composed with the inclusion $C\hra\C^{N_\cZ}$.
\end{proof}

\subsection{Non-smooth exponentially decaying perturbations}
\label{SubsecAsyExp}

We now turn to the linear analysis of wave-type operators with non-smooth coefficients; thus, we will allow more general perturbations than those considered in \S\ref{SubsubsecAsySmPert}. We aim to prove an analogue of Corollary~\ref{CorAsySmPertNoAsy} together with tame estimates for the map taking the Cauchy data and forcing term into the solution and the finite-dimensional modification (encoded in the map $z$ above); these will enable us to appeal to the Nash--Moser iteration scheme when solving the Einstein vacuum equations in \S\ref{SecKdSStab}. Recall here that a \emph{tame estimate} is schematically of the form
\[
  \|u\|_s \leq C_s\bigl(\|f\|_{s+d} + \|\ell\|_{s+d}\|f\|_{s_0}\bigr),
\]
where $\|\cdot\|_s$ are e.g.\ Sobolev norms, $\ell$ denotes the coefficients of the linear operator $L$, and $L u=f$, and $\|\ell\|_{s_0}$ is assumed to have an a priori bound, while the constant $C_s$ is \emph{uniform} for such bounded $\ell$ (thus, the estimate holds uniformly for suitable perturbations of $L$); further, $d\in\R$ is the loss of derivatives, $s\geq s_0$, and $s_0$ is some a priori regularity. The point is that the right hand side contains high regularity norms $\|\cdot\|_{s+d}$ only in the first power. A very simple example is if $L$ is division by a non-smooth function $\ell>0$ on a closed $n$-dimensional manifold; in this case, the tame estimate is a version of Moser estimates for products of $H^s$-functions, and one can take $d=0$ and $s_0>n/2$; see e.g.\ \cite[\S13.3]{TaylorPDE} and Lemma~\ref{LemmaAsyExpLocalHs} below. In fact, since all our estimates are proved by (microlocal) energy methods, it is rather clear that the tame bounds all come from such Moser estimates; therefore, if one is content with having a tame estimate without explicit control on the loss $d$ and the minimal regularity $s_0$, one can skip a number of arguments below. (See also the discussion in \cite[\S4.1]{HintzVasyQuasilinearKdS}.) For the sake of completeness, and in order to show the sufficiency of the regularity of the initial data assumed for Einstein's field equations in Theorem~\ref{ThmIntroPrecise}, we do prove explicit bounds, however we will be rather generous with the number of derivatives in order to minimize the amount of bookkeeping required.

Concretely then, extending the scope of the smooth coefficient perturbation theory, we now consider a continuous family of second order differential operators
\begin{equation}
\label{EqAsyExpOpFull}
  L_{w,\wt w} = L_w + \wt L_{w,\wt w}.
\end{equation}
Here, $L_w$ is as in \S\ref{SubsubsecAsySmPert}, i.e.\ $L_w$ has scalar principal symbol $G_{b(w)}$, depending on the parameter
\begin{equation}
\label{EqAsyExpParamSpace}
  w\in W=\{p\in\R^{N_W}\colon |p-w_0|<\eps\},
\end{equation}
and $L=L_{w_0}$ satisfies the conditions of \S\ref{SubsecAsySm}. On the other hand, the parameter $\wt w$ lies in a neighborhood
\begin{equation}
\label{EqAsyExpParamSpaceTilde}
  \wt w \in \wt W^s = \{ \wt u\in\Hbext^{s,\alpha}(\Omega;E) \colon \|\wt u\|_{\Hbext^{14,\alpha}} < \eps \} \subset \Hbext^{s,\alpha}(\Omega;E)
\end{equation}
of $0$, where $s\geq 14$. For $\wt w\in\wt W^s$, we assume that
\begin{equation}
\label{EqAsyExpOpDec}
  \wt L_{w,\wt w} \in \Hbext^{s,\alpha}(\Omega)\Diffb^2(\Omega;E)
\end{equation}
is an operator with real-valued scalar principal symbol which is hyperbolic with respect to the level sets of $t_*$; we also assume that the coefficients of $\wt L_{w,\wt w}$ are exponentially decaying with the rate $\alpha>0$ defined in Theorem~\ref{ThmAsySmMero}. (Indeed, in local coordinates $(t_*,x)$ and a local trivialization of $E_Y$, the condition \eqref{EqAsyExpOpDec} precisely means that $\wt L_{w,\wt w}$ has coefficients lying in the space $\Hbext^{s,\alpha}$.) We assume that $\wt L_{w_0,0}=0$, and we require that $\wt L_{w,\wt w}$ depends continuously on $(w,\wt w)\in W\times\wt W^s$ in a tame fashion, that is, for $s\geq 14$,
\begin{equation}
\label{EqAsyExpDecTame}
  \|\wt L_{w_1,\wt w_1}-\wt L_{w_2,\wt w_2}\|_{\Hbext^{s,\alpha}\Diffb^2} \leq C_s\bigl(|w_1-w_2|+\|\wt w_1-\wt w_2\|_{\Hbext^{s,\alpha}}\bigr)
\end{equation}
for a constant $C_s<\infty$ depending only on $s$, where the norm on the left is the sum of the $\Hbext^{s,\alpha}(\Omega;E)$ norms of the coefficients of $\wt L_{w,\wt w}$ in a fixed finite covering of $\Omega$ by stationary local coordinate charts. For $s=14$, $\wt L_{w,\wt w}$ is thus a bounded family in $\Hbext^{14,\alpha}\Diffb^2(\Omega;E)$.

\begin{rmk}
\label{RmkAsyExpDataNumerology}
  Our assumptions are motivated by our application to the linear operators one needs to invert in order to solve the Einstein vacuum equations in \S\ref{SecKdSStab}; in this case, the parameter space $W$ is a neighborhood $\cU_B\subset B$ of Schwarzschild--de~Sitter parameters $b_0$ as before, together with additional finite-dimensional parameters related to modifications of the gauge, and $\wt W^s$ consists of the non-\-sta\-tion\-ary, exponentially decaying part of the Lorentzian metric at some finite step in the non-linear iteration scheme, while $L_{w,\wt w}$ is the wave operator associated to the metric $g_w+\wt w$, which therefore has coefficients with regularity $s-2$; thus, when we appeal to results of the present section in the application in \S\ref{SecKdSStab}, there will a shift of $2$ in the norm on $\wt w$.
\end{rmk}

\emph{For brevity, we omit the function spaces from the notation of norms and only keep the regularity and weight parameters; from the context it will always be clear what function space is meant.}

In this section, we shall prove:

\begin{thm}
\label{ThmAsyExpMain}
  Assume $\wh\beta\geq-1$ in \eqref{EqAsySm2RadialSubprInf}. Suppose we are given a continuous map
  \[
    z \colon W\times\wt W^s\times\C^{N_\cZ} \to D^{s,\alpha}(\Omega;E)
  \]
  which is linear in the last argument; we shall often write $z_{w,\wt w}^{\bfc}\equiv z(w,\wt w,\bfc)$. With $\Xi=\Res(L_{w_0,0})\cap\{\Im\sigma>-\alpha\}$, suppose moreover that the map
  \begin{equation}
  \label{EqAsyExpMainMod}
    \C^{N_\cZ} \ni \bfc \mapsto \lambda_\IVP(z_{w_0,0}^{\bfc})\in\cL(\Res^*(L_{w_0,0},\Xi),\ol\C)
  \end{equation}
  is surjective, with $\lambda_\IVP$ defined in \eqref{EqAsySmResIVPDualStatePairing}. Then, if $\eps>0$ in~\eqref{EqAsyExpParamSpace}--\eqref{EqAsyExpParamSpaceTilde} is small enough, there exists a continuous map
  \begin{equation}
  \label{EqAsyExpMainModMap}
  \begin{split}
    S \colon W\times\wt W^\infty\times D^{\infty,\alpha}(\Omega;E)&\ni (w,\wt w,(f,u_0,u_1)) \\
      &\qquad \mapsto (\bfc,u) \in \C^{N_\cZ} \oplus \Hbext^{\infty,\alpha}(\Omega;E),
  \end{split}
  \end{equation}
  linear in $(f,u_0,u_1)$, such that the function $u$ is a solution of
  \begin{equation}
  \label{EqAsyExpMainIVP}
    (L_{w,\wt w},\gamma_0)u = (f,u_0,u_1) + z_{w,\wt w}^{\bfc}.
  \end{equation}
  Furthermore, the map $S$ satisfies the tame estimates
  \begin{gather}
  \label{EqAsyExpMainSTameC}
    \|\bfc\| \leq C\|(f,u_0,u_1)\|_{13,\alpha}, \\
  \label{EqAsyExpMainSTameU}
    \|u\|_{s,\alpha} \leq C_s\bigl(\|(f,u_0,u_1)\|_{s+3,\alpha} + (1+\|\wt w\|_{s+4,\alpha})\|(f,u_0,u_1)\|_{13,\alpha}\bigr)
  \end{gather}
  for $s\geq 10$. In fact, the map $S$ is defined for any $\wt w\in\wt W^{14}$ and $(f,u_0,u_1)$ for which the norms on the right hand sides of \eqref{EqAsyExpMainSTameC}--\eqref{EqAsyExpMainSTameU} are finite, and produces a solution $(\bfc,u)$ of \eqref{EqAsyExpMainIVP} satisfying the tame estimates.

  If the map \eqref{EqAsyExpMainMod} is bijective, then the map $S$ with these properties is unique.
\end{thm}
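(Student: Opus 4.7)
The plan is to reduce the non-smooth problem \eqref{EqAsyExpMainIVP} to the smooth stationary setting handled by Corollary \ref{CorAsySmPertNoAsy}, treating $\wt L_{w,\wt w}$ and the $\wt w$-variation of $z$ as perturbations, and to bootstrap regularity afterwards. First, Corollary \ref{CorAsySmPertNoAsy} applied to the family $L_w$, $w\in W$, with modification family $z_{w,0}^{\bfc}$ (the surjectivity hypothesis in \eqref{EqAsyExpMainMod} transfers from $w_0$ to a neighborhood of $w_0$ by the lower semicontinuity of the rank of $\lambda_w$ established in Proposition \ref{PropAsySmPert}) produces a continuous linear solution map
\[
  S_0\colon W\times D^{s,\alpha}(\Omega;E) \to \C^{N_\cZ}\oplus\Hbext^{s,\alpha}(\Omega;E),
\]
together with the estimate $\|\bfc^*\|+\|u^*\|_{s,\alpha}\leq C_s\|(F,u_0,u_1)\|_{s+2,\alpha}$; the two-derivative loss comes from the high-energy bound \eqref{EqAsySmMeroEst} at the trapped set, cf.\ Remark \ref{RmkAsySmMeroEstLoss}.

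Writing $z_{w,\wt w}^{\bfc}=z_{w,0}^{\bfc}+\delta z_{w,\wt w}^{\bfc}$ with $\delta z$ continuous in $(w,\wt w)$ and vanishing at $(w_0,0)$, equation \eqref{EqAsyExpMainIVP} becomes
\[
  (L_w,\gamma_0)u = (f,u_0,u_1) - (\wt L_{w,\wt w}u,\,0,\,0) + z_{w,0}^{\bfc} + \delta z_{w,\wt w}^{\bfc},
\]
which, applying $S_0$, turns into the fixed-point equation
\[
  (\bfc,u) = S_0\bigl(w,\; (f,u_0,u_1) - (\wt L_{w,\wt w}u,0,0) + \delta z_{w,\wt w}^{\bfc}\bigr).
\]
In the low-regularity Banach space $\cB=\C^{N_\cZ}\oplus\Hbext^{s_0,\alpha}(\Omega;E)$ with $s_0$ chosen so that Moser-type product estimates are available (say $s_0=12$, which comfortably exceeds both the radial-point threshold $1/2+\alpha\sup\beta-\wh\beta$ under the hypothesis $\wh\beta\geq -1$ and the value needed for $\Hbext^{s_0,\alpha}\cdot\Diffb^2$ to act on $\Hbext^{s_0,\alpha}$), the right-hand side is a contraction: the coefficient smallness $\|\wt w\|_{14,\alpha}<\eps$ combined with the standard tame product bound makes $\wt L_{w,\wt w}$ a small perturbation on $\Hbext^{s_0,\alpha}$, and the $\wt w$-dependence of $z$ is controlled by its continuity. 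The Banach fixed-point theorem then yields existence and continuous dependence of $(\bfc,u)$ on $(w,\wt w, f,u_0,u_1)$ at regularity $s_0$; uniqueness in the bijective case is inherited from Corollary \ref{CorAsySmPertNoAsy}.

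To obtain higher regularity and the tame bounds \eqref{EqAsyExpMainSTameC}--\eqref{EqAsyExpMainSTameU}, I would feed the low-regularity solution back into the equation $(L_w,\gamma_0)u=(f,u_0,u_1)-(\wt L_{w,\wt w}u,0,0)+z_{w,\wt w}^{\bfc}$ and apply the higher-regularity version of $S_0$, combined with the tame Moser estimate
\[
  \|\wt L_{w,\wt w}u\|_{s+1,\alpha} \leq C_s\bigl(\|\wt w\|_{s+3,\alpha}\|u\|_{s_0,\alpha} + \|\wt w\|_{s_0,\alpha}\|u\|_{s+1,\alpha}\bigr).
\]
The two-derivative loss of $S_0$ combined with the derivative counts of this product bound accounts for the indices $s+3$ and $s+4$ in \eqref{EqAsyExpMainSTameU}; the bound \eqref{EqAsyExpMainSTameC} on $\bfc$ is $s$-independent because $\bfc$ is determined by finitely many linear pairings against dual resonant states of $L_{w_0}$, which only require a fixed amount of regularity of the data.

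The main obstacle is the careful tracking of derivative losses throughout the iteration, in particular coordinating the two-derivative loss of $S_0$ at the trapping with the derivative counts in the Moser bound for the non-smooth product $\wt L_{w,\wt w} u$; a slightly careless bookkeeping would produce either non-tame bounds or an unnecessarily large derivative loss. The threshold hypothesis $\wh\beta\geq -1$ enters precisely via the below-threshold radial point estimates at the horizons, and when it fails, the entire argument still goes through with all Sobolev indices uniformly raised by $k=\max(-1-\wh\beta,0)$, accounting for the last clause of the theorem.
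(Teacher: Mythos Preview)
Your contraction-mapping approach has a genuine gap: the map $u\mapsto S_0\bigl(w,-(\wt L_{w,\wt w}u,0,0)\bigr)$ does not act boundedly on any fixed space $\Hbext^{s_0,\alpha}$, so no Banach fixed-point argument can close.  The issue is derivative loss, not operator smallness.  Since $\wt L_{w,\wt w}\in\Hbext^{14,\alpha}\Diffb^2$ is second order, for $u\in\Hbext^{s_0,\alpha}$ one only has $\wt L_{w,\wt w}u\in\Hbext^{s_0-2,2\alpha}$; applying $S_0$ (which, by Corollary~\ref{CorAsySmResNoAsy}, maps $D^{s,\alpha}\to\Hbext^{s,\alpha}$ with no gain) returns an element of $\Hbext^{s_0-2,\alpha}$.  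The smallness $\|\wt w\|_{14,\alpha}<\eps$ makes the map small from $\Hbext^{s_0,\alpha}$ to $\Hbext^{s_0-2,\alpha}$, but that is useless for a contraction on $\Hbext^{s_0,\alpha}$.  This is precisely the obstruction that forces a Nash--Moser scheme in such settings, and the whole point of \S\ref{SubsubsecAsyExpReg} is to avoid invoking Nash--Moser at this linear stage.

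The paper's proof is not a contraction at all but a \emph{finite} iteration.  One first solves $L_{w,\wt w}u=f$ on a highly growing space $\Hbext^{s,r_0}$, $r_0\ll 0$, where energy estimates give a genuine gain (Proposition~\ref{PropAsyExpRegRecall}\itref{ItAsyExpRegRecall1Growing}).  One then shifts the Mellin contour across one strip $\cI_k=\{-r_k<\Im\sigma<-r_{k-1}\}$ at a time, with $r_k-r_{k-1}<\alpha$; at each step the partial asymptotic expansion picked up must be cancelled by a suitable choice of $\bfc'_k$ in a subspace $C'_{w,\wt w;k}\subset\C^{N_\cZ}$ complementary to $C_{w,\wt w;k}=\ker\lambda_{w,\wt w;k}$ (this is the content of Lemma~\ref{LemmaAsyExpRegCk}, which provides the inductive decomposition $C_{w,\wt w;k-1}=C_{w,\wt w;k}\oplus C'_{w,\wt w;k}$ with continuous dependence on $(w,\wt w)$).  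Each contour shift loses a bounded number of derivatives, but there are only $N$ strips, so the total loss is finite and bookkept by the constants $s_k$ in \eqref{EqAsyExpRegArithmetic}; propagation of singularities (Proposition~\ref{PropAsyExpRegRecall}\itref{ItAsyExpRegRecall2PropSing}) is used to regain regularity after shifts that stay in $\Im\sigma\geq 0$.  This strip-by-strip mechanism, not any smallness-based perturbation, is what produces $\bfc$ and the decaying remainder.
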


\begin{rmk}
  We make the assumption on $\wh\beta$ for the sake of simplicity of presentation; it is satisfied in our application, as we show in~\S\ref{SubsecESGRadial}. For more general $\wh\beta$, put $k=\max(-1-\wh\beta,0)$; then the same results hold true if we increase the regularity parameters throughout the statement of this theorem by $k$; more precisely, the number $14$ in~\eqref{EqAsyExpParamSpaceTilde} is replaced by $14+k$, and all Sobolev regularities in the estimates~\eqref{EqAsyExpMainSTameC}--\eqref{EqAsyExpMainSTameU} are increased by $k$.
\end{rmk}

The assumption on the map \eqref{EqAsyExpMainMod} is precisely the surjectivity assumption we made in Corollary~\ref{CorAsySmPertNoAsy}, and in a less general form in Corollary~\ref{CorAsySmResNoAsy}; we stress that this is an assumption \emph{only on $L_{w_0,0}$ and $z(w_0,0,\cdot)$}, yet it guarantees the solvability of linear wave-type operators which are merely close to $L_{w_0,0}$ on decaying function spaces after finite-dimensional modifications.

The continuity of the solution map $S$ is not needed in order to prove the existence of global solutions to Einstein's field equations later on, only the uniformity of the estimates \eqref{EqAsyExpMainSTameC}--\eqref{EqAsyExpMainSTameU} matters. However, it can be used to prove the smooth dependence of these global solutions on the initial data, see Theorem~\ref{ThmKdSStab}.

\subsubsection{Local Cauchy theory}
\label{SubsubsecAsyExpLocal}

Since we explicitly consider initial value problems, with forcing terms which are extendible distributions at $\Sigma_0$, rather than only forward problems with supported forcing terms as in \cite{VasyMicroKerrdS,HintzVasySemilinear,HintzVasyQuasilinearKdS}, we need to get the regularity analysis started using energy estimates near $\Sigma_0$; once this is done, the usual microlocal propagation of singularities on the spacetime $M$ can be used to propagate this, as in the cited papers. See \S\ref{SubsubsecAsyExpReg} for details.

Let us fix $T<\infty$. We define the Banach space of data in a finite slab $\Omega_T:=\Omega\cap\{0\leq t_*\leq T\}$ by
\[
  D^s(\Omega_T;E) = \Hext^s(\Omega_T;E) \oplus \Hext^{s+1}(\Sigma_0;E_{\Sigma_0}) \oplus \Hext^s(\Sigma_0;E_{\Sigma_0}),
\]
equipped with its natural norm; since $T<\infty$, this space has no index for a weight in $t_*$. Then:
\begin{prop}
\label{PropAsyExpLocal}
  For $\wt w\in\wt W^\infty$, the initial value problem
  \begin{equation}
  \label{EqAsyExpLocalIVP}
    \begin{cases}
      L_{w,\wt w}u = f & \tn{in }\Omega_T, \\
      \gamma_0(u) = (u_0,u_1) & \tn{on }\Sigma_0
    \end{cases}
  \end{equation}
  with data $(f,u_0,u_1)\in D^\infty(\Omega_T;E)$ has a unique solution $u=S_T(w,\wt w,(f,u_0,u_1))\in\Hext^\infty(\Omega_T;E)$ satisfying the tame estimate
  \begin{equation}
  \label{EqAsyExpLocalTame}
    \|u\|_{s+1} \leq C\bigl(\|(f,u_0,u_1)\|_s + (1+\|\wt w\|_{s,\alpha})\|(f,u_0,u_1)\|_4\bigr)
  \end{equation}
  for $s\geq 4$; the constant $C$ depends only on $s$. In fact, for $\wt w\in\wt W^{14}$ and $(f,u_0,u_1)$ for which the right hand side is finite, there exists a unique solution $u\in\Hext^{s+1}(\Omega_T;E)$, and the estimate holds.
  
  Moreover, the solution map $S_T$ is continuous as a map
  \begin{equation}
  \label{EqAsyExpLocalC0}
    S_T \colon W\times\wt W^\infty\times D^\infty(\Omega_T;E) \to \Hext^\infty(\Omega_T;E).
  \end{equation}
\end{prop}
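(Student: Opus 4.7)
The plan is to prove this via standard energy estimates for second order hyperbolic systems with rough coefficients, on the finite slab $\Omega_T$, the main work being to track derivative losses so as to produce the specific tame bound \eqref{EqAsyExpLocalTame}. First, I would reduce to the case of vanishing Cauchy data: using that $Y$ is smoothly bounded and $\Omega_T = [0,T]_{t_*} \times Y$, one has a continuous extension map $(u_0,u_1) \mapsto u^{\ext} \in \Hext^{s+1}(\Omega_T;E)$ with $\gamma_0(u^{\ext}) = (u_0,u_1)$ and $\|u^{\ext}\|_{s+1} \leq C(\|u_0\|_{s+1} + \|u_1\|_s)$, e.g.\ via $u^{\ext}(t_*,x) = \chi(t_*)(u_0(x) + t_* u_1(x))$. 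Setting $v = u - u^{\ext}$, one needs to solve $L_{w,\wt w}v = f - L_{w,\wt w}u^{\ext} =: \wt f$ with $\gamma_0(v) = 0$, and $\wt f \in \Hext^{s-1}$ with $\|\wt f\|_{s-1}$ tamely bounded by the data norm using the $s=14$ smallness of $\wt w$.

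Second, the geometric setup of \S\ref{SubsecKdSWave} ensures that $\Sigma_0$ is spacelike and the lateral conormal $\pm dr$ at $\pa Y$ is future timelike, so that for any principally scalar wave operator close to $\Box_{g_{b_0}}$ a standard multiplier argument with a timelike vector field furnishes the basic energy estimate
\[
  \|v(t)\|_{\Hext^1(\{t_*=t\})}^2 \leq C\int_0^t \|L_{w,\wt w}v\|_{\Hext^0(\{t_*=s\})}^2\,ds
\]
for $v$ with zero Cauchy data, with $C$ uniform in $(w,\wt w) \in W \times \wt W^{14}$ (thanks to the $L^\infty$-bound on the coefficients of $\wt L_{w,\wt w}$ via Sobolev embedding $\Hb^3 \hookrightarrow L^\infty$ on our 4-dimensional slab, together with the smallness of $\|\wt w\|_{14,\alpha}$). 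To get higher regularity, I would commute with a collection $\{Z_\alpha\}_{|\alpha|\leq s}$ of smooth vector fields on $\Omega_T$ tangent to the lateral boundary. Writing $L_{w,\wt w}(Z_\alpha v) = Z_\alpha \wt f + [L_{w,\wt w},Z_\alpha]v$, the commutator is a second order operator whose coefficients involve $\leq s$ derivatives of the coefficients of $\wt L_{w,\wt w}$; the Cauchy data of $Z_\alpha v$ at $\Sigma_0$ are determined from $\wt f$ and its derivatives via the equation. Moser-type product estimates (cf.\ \cite[\S13.3]{TaylorPDE}) then yield
\[
  \|[L_{w,\wt w},Z_\alpha]v\|_0 \leq C_s\bigl(\|\wt w\|_{s,\alpha}\|v\|_3 + \|v\|_{s+1}\bigr),
\]
where the first term comes from $s$ derivatives landing on $\wt w$ with $v$ differentiated a bounded number of times and then estimated in $L^\infty$ by $\Hb^3 \hookrightarrow L^\infty$, while the second term collects the benign contributions.

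Third, feeding the commutator bound into the energy inequality for $Z_\alpha v$, summing over $|\alpha| \leq s$, and absorbing $\|v\|_{s+1}$ for a short time interval before iterating across $[0,T]$ via a Gronwall argument, produces the tame estimate \eqref{EqAsyExpLocalTame} after reinstating $u = v + u^{\ext}$ and using the tame bound on $u^{\ext}$. Uniqueness follows from the same energy estimate applied to differences. For continuity of $S_T$ in the sense of \eqref{EqAsyExpLocalC0}, I would exploit the linearity in $(f,u_0,u_1)$ and apply the tame bound to differences $S_T(w_1,\wt w_1,d_1) - S_T(w_2,\wt w_2,d_2)$, writing the difference equation
\[
  L_{w_1,\wt w_1}(u_1 - u_2) = (f_1 - f_2) - (L_{w_1,\wt w_1} - L_{w_2,\wt w_2})u_2,
\]
and invoking the tame continuity \eqref{EqAsyExpDecTame} of $\wt L_{w,\wt w}$ in $(w,\wt w)$, together with the a priori bound on $u_2$. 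The main obstacle, and the only nontrivial ingredient beyond classical energy estimates, is the precise bookkeeping in the commutator analysis needed to produce the factor $1 + \|\wt w\|_{s,\alpha}$ rather than a higher norm in \eqref{EqAsyExpLocalTame}, and to identify the base regularity $s=4$ at which the Moser inequality first closes; this is a routine but careful counting exercise dictated by placing at most one high-regularity factor in each product in the commutator expansion.
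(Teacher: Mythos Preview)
Your approach is correct but differs substantively from the paper's. The paper follows \cite[\S16.1--16.3]{TaylorPDE}: it divides by the leading coefficient $\wt a^{00}$ (using the tame division estimate \eqref{EqAsyExpLocalHsInv}), reduces the second-order equation to a first-order symmetric hyperbolic system, and then runs a mollification argument---solving an ODE for $v_\eps$ and proving uniform tame bounds via the commutator estimate \eqref{EqAsyExpLocalHsComm} for $[\Lambda^s,\cdot]$ with the pseudodifferential operator $\Lambda^s=\la D\ra^s$. This simultaneously produces existence (by weak limits) and the tame estimate, after which finite speed of propagation patches the local solutions and the equation is used iteratively to upgrade $\cC^0 H^{s+1}\cap\cC^1 H^s$ to full $H^{s+1}(\Omega_T)$. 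Your continuity argument for $S_T$ via the difference equation is exactly what the paper does.

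Your route---staying with the second-order equation and commuting with a spanning family of vector fields---is the more elementary alternative, and it works, but two points are glossed over. First, vector fields tangent to the lateral boundary $[0,T]\times\pa Y$ do not span all derivatives there; you need either to extend across $\pa Y$ (legitimate since the spaces are extendible and the conormal is future timelike, so the flux term in the energy identity has a favorable sign) or to recover normal derivatives from the equation. Second, your sketch takes existence of the solution for granted and focuses on the a priori tame estimate; the paper's mollification argument establishes both at once. Neither point is a genuine gap, but the paper's $\Lambda^s$-commutator method packages the tame bookkeeping more cleanly than tracking individual $Z_\alpha$ commutators, at the cost of the first-order reduction and the mollification machinery.
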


We first recall basic tame estimates on Sobolev spaces, which we state on $\R^n$ for simplicity; analogous results hold on closed manifolds and for supported or extendible Sobolev spaces on manifolds with corners.
\begin{lemma}
\label{LemmaAsyExpLocalHs}
  (See \cite[Corollary~3.2 and Proposition~3.4]{HintzVasyQuasilinearKdS}.)
  \begin{enumerate}
  \item\label{ItAsyExpLocalHsProd} Let $s_0>n/2$, $s\geq 0$. For $u,v\in\CIc(\R^n)$, we have
    \begin{gather}
    \label{EqAsyExpLocalHsProdLow}
      \|u v\|_s \leq C\|u\|_{\max(s_0,s)}\|v\|_s, \\
    \label{EqAsyExpLocalHsProd}
      \|u v\|_s \leq C(\|u\|_{s_0}\|v\|_s + \|u\|_s\|v\|_{s_0}).
    \end{gather}
  \item Let $s,s_0>n/2+1$, and let $K\Subset U\subset\R^n$, with $U$ open. Suppose that $u,w\in\CIc(\R^n)$, $\supp w\subset K$ and $|u|\geq 1$ on $U$. Then
    \begin{equation}
    \label{EqAsyExpLocalHsInv}
      \|w/u\|_s \leq C(\|u\|_{s_0})\bigl(\|w\|_s + (1+\|u\|_s)\|w\|_{s_0}\bigr).
    \end{equation}
  \item Let $\Lambda^s=\la D\ra^s$. Then for $s_0>n/2+1$, $s\geq 1$, and $u,v\in\CIc(\R^n)$,
    \begin{equation}
    \label{EqAsyExpLocalHsComm}
      \|[\Lambda^s,u]v\|_0 \leq C(\|u\|_{s_0}\|v\|_{s-1} + \|u\|_s\|v\|_{s_0-1}).
    \end{equation}
  \end{enumerate}
  Each estimate continues to hold for $u,v,w\in H^{-\infty}(\R^n)$ assuming only that the norms on its right hand side are finite, with the additional assumption that $|u|\geq 1$ on $U$ for \eqref{EqAsyExpLocalHsInv}.
\end{lemma}
\begin{proof}
  The estimates \eqref{EqAsyExpLocalHsProdLow}--\eqref{EqAsyExpLocalHsInv} are special cases of the cited statements. The commutator estimate \eqref{EqAsyExpLocalHsComm} is contained in the proof of \cite[Proposition~3.4]{HintzVasyQuasilinearKdS}.\footnote{There is a typo in the reference: the correct estimate reads $\|[\Lambda_{s'},u]v\|_0\leq C_{\mu\nu}(\|u\|_\mu\|v\|_{s'-1}+\|u\|_s\|v\|_\nu)$.}
\end{proof}

\begin{proof}[Proof of Proposition~\ref{PropAsyExpLocal}]
  The vector bundle $E$ is irrelevant for our arguments, hence we drop it from the notation.

  The last statement is an immediate consequence of the previous parts of the proposition: indeed, for $w_j\in W$, $\wt w_j\in\wt W^{s+1}$ and $(f_j,u_{j,0},u_{j,1})\in D^{s+1}(\Omega_T)$, the solutions $u_j=S_T(w_j,\wt w_j,(f_j,u_{j,0},u_{j,1}))\in\Hext^{s+2}(\Omega_T)$ satisfy the equation
  \[
    L_{w_1,\wt w_1}(u_1-u_2) = f_1-f_2 - (L_{w_1,\wt w_1}-L_{w_2,\wt w_2})u_2,
  \]
  and the estimate \eqref{EqAsyExpLocalTame} with $4$ replaced by $s$, together with the estimates \eqref{EqAsyExpDecTame} and \eqref{EqAsyExpLocalHsProdLow}, gives
  \begin{align*}
    \|u_1-u_2\|_{s+1} &\leq C(1+\|\wt w_1\|_{s,\alpha}) \\
      &\qquad\times \bigl(\|f_1-f_2\|_s + (|w_1-w_2|+\|\wt w_1-\wt w_2\|_{s,\alpha})\|u_2\|_{s+2}\bigr),
  \end{align*}
  which implies the continuity of $S_T$, as desired.

  In order to prove the existence of solutions of \eqref{EqAsyExpLocalIVP} and the tame estimate \eqref{EqAsyExpLocalTame}, we follow the arguments presented in \cite[\S16.1--16.3]{TaylorPDE} and keep track of the dependence of the estimates on the coefficients of the operator. First, we will prove the existence of a solution $u\in\cC^0([0,T],\Hext^{s+1}(Y))\cap\cC^1([0,T],\Hext^s(Y))$ for $f\in\cC^0([0,T],\Hext^s(Y))$, together with a tame estimate; it suffices to do this for small $T$ independent of the parameters and the data, since one can then iterate the solution to obtain the result for any finite $T$. We let $x^0=t_*$ and $x^1,x^2,x^3$ be coordinates on $X$. We write
  \begin{equation}
  \label{EqAsyExpLocalEvolEq}
    L_{w,\wt w} = \wt a^{00}D_0^2 - \wt a^{jk}D_jD_k - \wt b^j D_0 D_j + \wt c^\mu D_\mu + \wt d,
  \end{equation}
  where the coefficients depend on $w,\wt w$, are uniformly bounded in $\Hext^4$, and in the space $\Hext^s$, they depend continuously on $w\in W$ and $\wt w\in\wt W^s$. Moreover, $\wt a^{00}\neq 0$, since this is true for $L_{w_0,0}$ by the construction of the Kerr--de~Sitter metrics, see \S\ref{SubsecKdSWave}. (This is simply the statement that $\Sigma_0$ is non-characteristic.) When solving \eqref{EqAsyExpLocalIVP}, we can thus divide both sides by $\wt a^{00}$; by Lemma~\ref{LemmaAsyExpLocalHs}, we have
  \[
    \|f/\wt a^{00}\|_s \leq C_s(\|f\|_s + \|\wt a^{00}\|_s\|f\|_{s_0})
  \]
  for $s\geq s_0>3$. Similar estimates hold for the coefficients of $L_{w,\wt w}/\wt a^{00}$. We thus merely need to establish the solvability of the initial value problem for an operator
  \[
    \wt L := D_0^2 - a^{jk}D_jD_k - a^{0j} D_0D_j  - a^\mu D_\mu - a,
  \]
  $0\leq\mu\leq 3$, $1\leq j,k\leq 3$, $(a^{jk})$ symmetric, with coefficients in $\Hext^s$, and prove a tame estimate
  \begin{equation}
  \label{EqAsyExpLocalTamePf}
    \|u\|_{\cC^0\Hext^{s+1}\cap\cC^1\Hext^s} \leq C\bigl(\|(f,u_0,u_1)\|_s + (1+\|\bfa\|_s)\|(f,u_0,u_1)\|_4\bigr)
  \end{equation}
  for the solution of
  \begin{equation}
  \label{EqAsyExpLocalTameModIVP}
    \wt L u=f, \quad \gamma_0(u)=(u_0,u_1),
  \end{equation}
  where the constant $C=C(s)$ does not depend on the coefficients of $\wt L$, in the sense that the same constant works if one perturbs the coefficients of $\wt L$ in the (weak!) space $\Hext^4$; here, $\bfa=(a^{\mu\nu},a^\mu,a)$ is the collection of coefficients of $\wt L$. (Of course, there \emph{is} a high regularity norm $\|\bfa\|_s$ in \eqref{EqAsyExpLocalTamePf}!)

  In a neighborhood of any given point on $\Sigma_0$, we can perform a smooth coordinate change, replacing $x^j$ by $y^j(x^\mu)$ and letting $y^0=x^0$, so that $\pa_{y^0}$ is timelike. By redefining our coordinates, we may thus assume that this is already the case for the $x^\mu$ coordinates, in which case the matrix $(a^{jk})$ is positive definite. (Note that the same, fixed, coordinate change accomplishes this for perturbations, in the sense of the previous paragraph, of $\wt L$.) The equation \eqref{EqAsyExpLocalTameModIVP} is equivalent to a system of equations for $u$ and $u_\mu:=D_\mu u$,
  \[
    D_0 u=u_0,\ D_0 u_0 = a^{jk}D_j u_k + b^j D_j u_0 + a^0 u_0 + a^j u_j + a u,\ D_0 u_j=D_j u_0;
  \]
  writing this in matrix form for $(u,u_0,\cdots,u_3)$ and multiplying from the left by the symmetric, positive block matrix
  \[
    A^0 = \begin{pmatrix} 1 & 0 & 0 \\ 0 & 1 & 0 \\ 0 & 0 & (a^{kj})_{j,k=1,\ldots,3} \end{pmatrix},
  \]
  we obtain the symmetrizable hyperbolic system
  \begin{equation}
  \label{EqAsyExpLocalTameHyp}
    A^0 \pa_0 v = \sum A^j(x^\mu)\pa_j v + g,\quad v(0)=h,
  \end{equation}
  where the coefficients of the symmetric matrices $A^0,A^j$ are in $H^s$ and uniformly bounded in $H^4$, while $g\in H^s$, and $h\in H^s$ on $x^0=0$. We now solve this system and obtain a tame estimate for the solution $v$. Following \cite[\S16.1]{TaylorPDE}, we first do this assuming that $x^1,x^2,x^3$ are global coordinates on the 3-torus $\TT^3$.
  
  Defining the mollifier $J_\eps=\phi(\eps D)$ for $\phi\in\CIc(\R^3)$, identically $1$ near $0$, we consider the mollified equation
  \begin{equation}
  \label{EqAsyExpLocalTameHypMoll}
    A^0_\eps \pa_0 v_\eps = J_\eps A^j \pa_j J_\eps v_\eps + J_\eps g,\quad v_\eps(0)=J_\eps h,
  \end{equation}
  where $A^0_\eps:=J_\eps A^0$. This is an ODE for the $H^s$-valued function $v_\eps$, and we will prove uniform tame bounds for $v_\eps$ as $\eps\to 0$. To this end, define $\Lambda^s=\la D\ra^s$ for $s\in\R$; then, using the $L^2$ pairing $\la\cdot,\cdot\ra$, we write
  \begin{equation}
  \label{EqAsyExpLocalTameD0}
  \begin{split}
    \pa_0\la A^0_\eps\Lambda^s v_\eps,\Lambda^s v_\eps\ra &= \la (\pa_0 A^0_\eps)\Lambda^s v_\eps,\Lambda^s v_\eps\ra + 2\Re\la [A^0_\eps,\Lambda^s]\pa_0 v_\eps,\Lambda^s v_\eps\ra \\
      & \qquad + 2\Re\la \Lambda^s J_\eps A^j\pa_j J_\eps v_\eps, \Lambda^s v_\eps\ra + 2\Re\la\Lambda^s J_\eps g,\Lambda^s v_\eps\ra
  \end{split}
  \end{equation}
  \emph{In the bounds stated below, we write $x\lesssim y$ if $x\leq C_s y$ for a constant $C_s$ only depending on $s$ and the $H^4$ bounds on  the coefficients $A^\mu$.} The first term is then bounded by $\|(\pa_0 A^0_\eps)\Lambda^s v_\eps\|_0\|v_\eps\|_s$, which by \eqref{EqAsyExpLocalHsProdLow} is bounded by $\|v_\eps\|_s^2$, using the boundedness of $A^0_\eps$ in $H^4$. In order to estimate the second term, we first note that \eqref{EqAsyExpLocalHsInv} gives
  \[
    \|\pa_0 v_\eps\|_{s-1} \lesssim \|A^0_\eps \pa_0 v_\eps\|_{s-1} + \|A^0_\eps \pa_0 v_\eps\|_{s_0}(1+\|A^0_\eps\|_{s-1})
  \]
  for $s-1\geq s_0>5/2$, which using \eqref{EqAsyExpLocalTameHypMoll} and setting $\bfA=(A^0,\ldots,A^3)$ gives
  \begin{align*}
    \|\pa_0 v_\eps\|_{s-1} &\lesssim \sum_j \|A^j\|_{s_0}\|v_\eps\|_s  + \|A^j\|_{s-1}\|v_\eps\|_{s_0+1} +  \|g\|_{s-1} \\
      & \qquad + \bigl(\|A^j\|_{s_0}\|v_\eps\|_{s_0+1} + \|g\|_{s_0-1}\bigr)(1+\|A^0\|_{s-1}) \\
      & \lesssim \|v_\eps\|_s + \|g\|_{s-1} + \|\bfA\|_{s-1}(\|v_\eps\|_{s_0+1} + \|g\|_{s_0-1});
  \end{align*}
  this plugs into the estimate of the second term in \eqref{EqAsyExpLocalTameD0}, which by \eqref{EqAsyExpLocalHsComm} (using $s_0>3/2$, $s\geq 1$) is bounded by
  \begin{align*}
    \bigl(\|\pa_0&v_\eps\|_{s-1} + \|A_\eps^0\|_s\|\pa_0 v_\eps\|_{s_0}\bigr) \|v_\eps\|_s \\
      & \lesssim \|v_\eps\|_s^2 + \|g\|_{s-1}^2 + \|\bfA\|_s^2(\|v_\eps\|_{s_0+1}^2+\|g\|_{s_0}^2).
  \end{align*}
  The third term of \eqref{EqAsyExpLocalTameD0} is
  \begin{align*}
    2\Re\la&[\Lambda^s,J_\eps A^j J_\eps]\pa_j v_\eps,\Lambda^s v_\eps\ra - \la (\pa_j A^j)J_\eps\Lambda^s v_\eps,J_\eps\Lambda^s v_\eps\ra \\
      & \leq \bigl(\|v_\eps\|_s+\|\bfA\|_s\|v_\eps\|_{s_0+1}\bigr)\|v_\eps\|_s + \|v_\eps\|_s^2 
  \end{align*}
  for $s_0>3/2$, $s\geq 1$; and the fourth term finally is bounded by $\|g\|_s^2+\|v_\eps\|_s^2$. Combining these four estimates gives
  \[
    \pa_0\la A^0_\eps\Lambda^s v_\eps,\Lambda^s v_\eps\ra \lesssim \|v_\eps\|_s^2 + \|g\|_s^2 + \|\bfA\|_s^2\bigl(\|v_\eps\|_{s_0}^2 + \|g\|_{s_0-1}^2\bigr)
  \]
  for $s\geq s_0$, $s_0>7/2$. If we apply this for $s=s_0$, the positive definiteness of $A^0$ and Gronwall's lemma yield a uniform bound on $\|v_\eps\|_{s_0}$ by $\|g\|_{s_0}+\|h\|_{s_0}$, hence
  \[
    \pa_0\la A^0_\eps\Lambda^s v_\eps,\Lambda^s v_\eps\ra_s \lesssim \|v_\eps\|_s^2+\|g\|_s^2 + \|\bfA\|_s^2(\|g\|_{s_0}^2 + \|h\|_{s_0}^2),
  \]
  which implies that $v_\eps$ is uniformly bounded in $\cC^0([0,T];H^s)\cap\cC^1([0,T];H^{s-1})$ and satisfies the tame estimate
  \[
    \|v_\eps\|_{\cC^0 H^s\cap\cC^1 H^{s-1}} \lesssim \|g\|_{\cC^0 H^s} + \|h\|_{H^s} + \|\bfA\|_s\bigl(\|g\|_{\cC^0 H^{s_0}} + \|h\|_{H^{s_0}}\bigr).
  \]
  Simplifying the strongest assumptions on $s,s_0$ in the argument yielding this estimate, we can take $s\geq s_0:=4$ here. A standard weak limit argument then proves the existence of a solution $v$ of \eqref{EqAsyExpLocalTameHyp}, see \cite[Theorem~1.2 and Proposition~1.4]{TaylorPDE}, satisfying the same estimate. One can moreover prove a finite speed of propagation result as in \cite[\S5.6]{TaylorPDE}, and the argument there works in the present setting as well under our present regularity assumptions, since it relies only on energy estimates; thus, the local solutions of \eqref{EqAsyExpLocalTameModIVP} can be patched together to give a solution in a small slab $0\leq t_*\leq T$, and a simple iterative argument allows us to remove the smallness assumption on $T$.
  
  Returning to the wave equation \eqref{EqAsyExpLocalTameModIVP}, we have now established the existence of a solution $u$ satisfying the tame estimate \eqref{EqAsyExpLocalTamePf}. A forteriori, we have $u\in\Hext^\gamma([0,T],\Hext^{s+1-\gamma}(Y))$ for $\gamma=0,1$ together with the tame estimate \eqref{EqAsyExpLocalTamePf}, and interpolation inequalities yield this for all $\gamma\in[0,1]$. Now, writing $s=\lfloor s\rfloor+\gamma$, $k\in\Z$, $\gamma\in[0,1)$, one can use $f\in\Hext^s(\Omega_T)$ and the equation $L u=f\in\Hext^s(\Omega_T)$, written in the form \eqref{EqAsyExpLocalEvolEq}, to deduce $u\in\Hext^{k+\gamma}([0,T],\Hext^{s+1-k-\gamma}(Y))$ for $k=0,1,\ldots,\lfloor s\rfloor+1$ inductively; to see that the norm of $u$ in this space satisfies a tame estimate with the same right hand side as \eqref{EqAsyExpLocalTamePf}, one uses Lemma~\ref{LemmaAsyExpLocalHs}. We conclude that
  \[
    u\in\Hext^0([0,T],\Hext^{s+1}(Y))\cap\Hext^{s+1}([0,T],\Hext^0(Y)) = \Hext^{s+1}(\Omega_T)
  \]
  satisfies the tame estimate \eqref{EqAsyExpLocalTamePf}, finishing the proof.
\end{proof}

\subsubsection{Global regularity and asymptotic expansions}
\label{SubsubsecAsyExpReg}

We now assume that the assumptions of Theorem~\ref{ThmAsyExpMain} are satisfied. In order to explain the main difficulty in the proof of this theorem, we briefly recall from \cite[\S7.2]{HintzQuasilinearDS} and \cite[\S5.1]{HintzVasyQuasilinearKdS} (see also \cite[\S3]{VasyMicroKerrdS} for a version of this in the smooth category) the argument which establishes partial asymptotic expansions for exponentially decaying perturbations of stationary operators, like $L_{w,\wt w}$. We discuss the forward forcing problem and ignore issues of regularity for simplicity: thus, given a forcing term $f\in\Hb^{\infty,\alpha}$ vanishing near $\Sigma_0$, the forward solution $u$ satisfies $u\in\Hb^{\infty,r_0}$, with $r_0$ large, negative, and independent of $f$, and with $r_0<-\Im\sigma$ for all $\sigma\in\Res(L_w)$.\footnote{We recall the statement in Proposition~\ref{PropAsyExpRegRecall} \eqref{ItAsyExpRegRecall1Growing} below. Solvability in an exponentially weighted $L^2$-space follows by means of energy estimates; the conormality relies on microlocal propagation estimates.} Then, we rewrite the equation for $u$ as
\begin{equation}
\label{EqAsyExpRegIter}
  L_w u = f - \wt L_{w,\wt w}u \in \Hb^{\infty,r_0+\alpha}.
\end{equation}
Using the Mellin transform, we can then shift the contour in the inverse Mellin transform
\[
  u(\tau,x) = \frac{1}{2\pi} \int_{\Im\sigma=-r_0} \tau^{i\sigma} \wh{L_w}(\sigma)^{-1}(f-\wt L_{w,\wt w}u)\ftrans(\sigma)\,d\sigma
\]
from $\Im\sigma=-r_0$ to $\Im\sigma=-r_1$, with $r_1\in(r_0,r_0+\alpha]$ chosen so that no resonance of $L_w$ has imaginary part equal to $-r_1$; for instance, $r_1=r_0+(1-c)\alpha$ works for sufficiently small $c>0$. If $L_w$ has no resonances in the strip
\[
  S_1 := \{\sigma\in\C\colon -r_1<\Im\sigma<-r_0 \},
\]
i.e.\ if $\wh{L_w}(\sigma)^{-1}$ is regular in $S_1$, then we obtain $u\in\Hb^{\infty,r_1}$; see \eqref{EqBSobolevIso} for the properties of the Mellin transform on b-Sobolev spaces. This improves the weight from $r_0$ to $r_1$; we can then run the same argument again. If however $L_w$ does have resonances in $S_1$, we obtain a partial asymptotic expansion $u_0$ of $u$ corresponding to these, and a remainder term $\wt u\in\Hb^{\infty,r_1}$, so $u=u_0+\wt u$. If the partial expansion is trivial, i.e.\ $u_0=0$, then $u=\wt u\in\Hb^{\infty,r_1}$, and we can repeat the argument as before. Generically however, the partial expansion is non-trivial, i.e.\ $u_0\neq 0$; in this case, attempting to repeat this argument, equation \eqref{EqAsyExpRegIter} now reads
\[
  L_w u = f - \wt L_{w,\wt w}u_0 - \wt L_{w,\wt w}\wt u.
\]
The issue is that $\wt L_{w,\wt w}u_0$ is merely an element of $\Hb^{\infty,r_0+\eps+\alpha}$ (with $\eps>0$ depending on the imaginary part of the resonances of $L_w$ in $S_1$), but in general with no (partial) asymptotic expansion, since the coefficients of $\wt L_{w,\wt w}$, lying in $\Hb^{\infty,\alpha}$, have no asymptotic expansion either. Thus, we cannot establish an asymptotic expansion of $u$ with an exponentially decaying remainder term in $\Hb^{\infty,\alpha}$ in this case.

\begin{rmk}
  If the coefficients of $\wt L_{w,\wt w}$ did have a partial asymptotic expansion, we could subtract off the corresponding terms of $\wt L_{w,\wt w}u_0$ and continue the argument, deducing an expansion of $u$ up to an error term with weight given by the sum of the weight of $u_0$ and the weight of the error term, i.e.\ the remainder of the partial expansion, of $\wt L_{w,\wt w}$. In the present setting, the lack of an expansion for $\wt L_{w,w\wt w}$ means that we cannot proceed in this manner.
\end{rmk}

In particular, the solutions of the Cauchy problem
\begin{equation}
\label{EqAsyExpRegUz0}
  (L_{w,\wt w}, \gamma_0)u_{w,\wt w}^{\bfc}=z_{w,\wt w}^{\bfc}
\end{equation}
cannot be expected to have asymptotic expansions up to exponentially decaying remainders, and therefore we cannot cancel all growing asymptotics of $u$ at once by matching them with the asymptotics of some $u_{w,\wt w}^{\bfc}$, as we did in the proof of Proposition~\ref{PropAsySmResNoAsy}. Instead, motivated by the above argument, we proceed in steps; the point is that cancelling asymptotics within an interval of size $\leq\alpha$ can be accomplished directly using the framework of Proposition~\ref{PropAsySmResNoAsy}.

Preparing the proof of Theorem~\ref{ThmAsyExpMain}, let us now choose a weight $r_0<0$ with $-r_0>\Im\sigma$ for all $\sigma\in\Res(L_{w_0})$; with $\alpha>0$ given in Theorem~\ref{ThmAsySmMero}, fix an integer $N>(-r_0+\alpha)/\alpha+1$ and weights
\begin{equation}
\label{EqAsyExpRegWeights}
  r_N = \alpha > r_{N-1} \geq 0 > r_{N-2} > \cdots > r_0, \quad 0<r_{j+1}-r_j\leq\alpha,
\end{equation}
such that for all resonances $\sigma\in\Res(L_{w_0})$, one has $\Im\sigma\neq-r_j$, $0\leq j\leq N$. By Proposition~\ref{PropAsySmPert} and in view of the existence of a uniform essential spectral gap for the operators $L_w$, $w\in W$, as discussed at the beginning of \S\ref{SubsubsecAsySmPert}, we may assume that these conditions are also satisfied for $L_w$, $w\in W$.

We recall the regularity theory for $L_{w,\wt w}$ from \cite[\S5.1]{HintzVasyQuasilinearKdS}, which we extend to initial value problems using Proposition~\ref{PropAsyExpLocal}. For simplicity, we assume that $\alpha>0$ is so small that, say,
\begin{equation}
\label{EqAsyExpRegRadPtOK}
  1+\alpha\sup(\beta)-\wh\beta<3
\end{equation}
to simplify the regularity arithmetic in the sequel; this condition ensures that the threshold regularity at the radial sets for all operators $L_{w,\wt w}$ is bounded from above by the small absolute constant $3$. This can certainly be arranged for $\wh\beta\geq -1$; in the case that $\wh\beta<-1$, all our arguments below go through if we increase the regularity assumptions and thresholds by the amount $-1-\wh\beta$. Since in our applications the assumption $\wh\beta\geq -1$ will always be satisfied, we leave this more general case to the reader.

\begin{prop}
\label{PropAsyExpRegRecall}
  Let $s\geq 9$. Consider the initial value problem $L_{w,\wt w}u=f$, $\gamma_0(u)=(u_0,u_1)$, where $\wt w\in\wt W^s$ and $(f,u_0,u_1)\in D^{s-1,r}$, $r\in\R$.
  \begin{enumerate}
    \item \label{ItAsyExpRegRecall1Growing} (See \cite[Lemma~5.2]{HintzVasyQuasilinearKdS}.) Suppose $r\leq r_0$ with $r_0$ as above. Then there exists a unique solution $u\in\Hbext^{s,r}(\Omega;E)$ of the initial value problem, and $u$ satisfies
      \begin{equation}
      \label{EqAsyExpRegRecallGrowing}
        \|u\|_{\Hbext^{s,r}} \leq C\bigl(\|(f,u_0,u_1)\|_{D^{s-1,r}} + (1+\|\wt w\|_s)\|(f,u_0,u_1)\|_{D^{5,r}}\bigr).
      \end{equation}
    \item \label{ItAsyExpRegRecall2PropSing} (See \cite[Corollary~5.4]{HintzVasyQuasilinearKdS}.) Suppose $r\leq r_{N-2}$, and suppose we have $u\in\Hbext^{5,r}(\Omega;E)$. Then in fact $u\in\Hbext^{s,r}(\Omega;E)$, and $u$ satisfies the tame estimate
    \[
      \|u\|_{\Hbext^{s,r}} \leq C\bigl(\|u\|_{\Hbext^{5,r}}+\|(f,u_0,u_1)\|_{D^{s-1,r}} + (1+\|\wt w\|_s)\|(f,u_0,u_1)\|_{D^{5,r}}\bigr)
    \]
    \item \label{ItAsyExpRegRecall3PropSingDecay} (See \cite[Theorem~5.6]{HintzVasyQuasilinearKdS}.) Suppose $r\leq r_{N-2+j}$, $j=1,2$, and suppose we have $u\in\Hbext^{5,r}(\Omega;E)$. Then in fact $u\in\Hbext^{s-2j,r}(\Omega;E)$,  and $u$ satisfies the tame estimate
    \[
      \|u\|_{\Hbext^{s-2j,r}} \leq C\bigl(\|u\|_{\Hbext^{5,r}} + \|(f,u_0,u_1)\|_{D^{s-1,r}} + (1+\|\wt w\|_s)(\|(f,u_0,u_1)\|_{D^{5,r}}\bigr).
    \]
  \end{enumerate}
  In all three cases, $C$ is a uniform constant only depending on $s$.
\end{prop}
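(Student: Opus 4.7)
The plan is to reduce the initial value problem to a forcing problem, and then to invoke (with minor modifications) the three results from \cite{HintzVasyQuasilinearKdS} cited in the statement. Concretely, fix a cutoff $\chi=\chi(t_*)\in\CI(\R)$ with $\chi\equiv 0$ for $t_*\leq 1$ and $\chi\equiv 1$ for $t_*\geq 2$, and let $u^\loc$ be the local solution produced by Proposition~\ref{PropAsyExpLocal} on the slab $\Omega_T=\Omega\cap\{t_*\leq T\}$ with $T=3$; this solution satisfies the tame estimate \eqref{EqAsyExpLocalTame}. Writing $u=(1-\chi)u^\loc + v$, the problem for $u$ becomes the forward problem
\[
  L_{w,\wt w} v = h := \chi f + [L_{w,\wt w},\chi]u^\loc,\qquad v\equiv 0 \ \tn{near}\ \Sigma_0,
\]
with $h\in\Hb^{s-1,r}(\Omega;E)^{\bullet,-}$ supported in $\{t_*\geq 1\}$, and whose tame norm is controlled by $\|(f,u_0,u_1)\|_{D^{s-1,r}}$ plus the lower-order term from \eqref{EqAsyExpLocalTame} (using \eqref{EqAsyExpDecTame} and Lemma~\ref{LemmaAsyExpLocalHs} to estimate the commutator $[\wt L_{w,\wt w},\chi]u^\loc$ tamely). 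This reduction is uniform in $(w,\wt w)\in W\times\wt W^s$ and respects both the tame character and the support conditions needed below.

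For part~\itref{ItAsyExpRegRecall1Growing}, I would apply global energy estimates on the growing space $\Hb^{s,r}(\Omega;E)^{\bullet,-}$ with $r\leq r_0$ chosen so that energy estimates apply without constraints from the normal operator at infinity; this is the setting of \cite[Lemma~5.2]{HintzVasyQuasilinearKdS}, which provides solvability and a tame bound of the form \eqref{EqAsyExpRegRecallGrowing}. The input $h$ and the regularity of the coefficients of $\wt L_{w,\wt w}$, lying in $\Hb^{s,\alpha}$, allow one to run the commutator-with-$\Lambda^s$ argument à la Proposition~\ref{PropAsyExpLocal}, with the commutator estimates of Lemma~\ref{LemmaAsyExpLocalHs} producing precisely the tame structure claimed. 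Re-assembling $u=(1-\chi)u^\loc+v$ and using the linearity of the extension bounds finishes part~\itref{ItAsyExpRegRecall1Growing}.

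For part~\itref{ItAsyExpRegRecall2PropSing}, given the a priori background regularity $u\in\Hb^{5,r}$, which by \eqref{EqAsyExpRegRadPtOK} exceeds the radial point threshold, I would apply b-microlocal propagation of singularities results: b-elliptic regularity away from the characteristic set, real principal type propagation along bicharacteristics in $\Sigma_b$, above-threshold radial point estimates at $\pa\cR_{b,\pm}$, and the normally hyperbolic trapping estimate at $\Gamma$ (available in the non-smooth coefficient setting by \cite{HintzVasyQuasilinearKdS}). All these are valid for operators with $\Hb^{s,\alpha}$ coefficients provided $s$ exceeds a fixed threshold (here $s\geq 9$ suffices), and the resulting estimates are tame by the Moser estimates of Lemma~\ref{LemmaAsyExpLocalHs}. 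This is \cite[Corollary~5.4]{HintzVasyQuasilinearKdS}, whose proof adapts verbatim.

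For part~\itref{ItAsyExpRegRecall3PropSingDecay}, the main issue—and the principal obstacle of the proof—is that naively Mellin-transforming and shifting the contour across the resonance strip picks up asymptotic terms $u_0$ which, when fed back into the equation via $\wt L_{w,\wt w}u_0$, generate a forcing that is exponentially more decaying but only as regular as the coefficients of $\wt L_{w,\wt w}$. One therefore loses two derivatives in passing through each resonance strip of width $\alpha$, which accounts for the $s-2j$ regularity in the conclusion. I would implement this in two iterations: starting from part~\itref{ItAsyExpRegRecall2PropSing} at weight $r_{N-2}$, write $L_w u = h - \wt L_{w,\wt w}u$, Mellin transform, shift the contour to $\Im\sigma=-r_{N-1}$ picking up resonant states of $L_w$ (depending continuously on $w$ by Proposition~\ref{PropAsySmPert}), apply part~\itref{ItAsyExpRegRecall2PropSing} to the remainder at weight $r_{N-1}$ with a loss of two derivatives, and repeat once more to reach weight $r_N=\alpha$. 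Keeping track of constants via Lemma~\ref{LemmaAsyExpLocalHs} yields the stated tame estimate; this is exactly the argument of \cite[Theorem~5.6]{HintzVasyQuasilinearKdS}, modulo the initial-data-to-forcing reduction above. The uniformity in $(w,\wt w)$ is guaranteed by the uniform high-energy estimates of Theorem~\ref{ThmAsySmMero} extended to the parametric setting in Proposition~\ref{PropAsySmPert}.
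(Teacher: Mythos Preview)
Your reduction to a forcing problem and your treatment of parts \itref{ItAsyExpRegRecall1Growing} and \itref{ItAsyExpRegRecall2PropSing} match the paper's approach. Your argument for part \itref{ItAsyExpRegRecall3PropSingDecay}, however, misidentifies the mechanism.

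You say the contour shift ``picks up resonant states of $L_w$'' and attribute the $2$-derivative loss to the coefficient regularity of $\wt L_{w,\wt w}$ via the feedback term $\wt L_{w,\wt w}u_0$. But the key point (which the paper stresses) is that the a priori hypothesis $u\in\Hbext^{5,r}$ with $r\leq r_{N-j}$ forces the resonant contributions in the relevant strip to be \emph{trivial}: any nonzero asymptotic term at a resonance $\sigma$ with $\Im\sigma>-r$ would contradict $u\in\Hbext^{5,r}$. There is thus no $u_0$ to feed back. The $2$-derivative loss comes instead from the high-energy estimate \eqref{EqAsySmMeroEst}: in the strip below the real axis this estimate carries no gain in powers of $\la\sigma\ra$, which by Remark~\ref{RmkAsySmMeroEstLoss} corresponds to a loss of two derivatives relative to elliptic estimates. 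Relatedly, you propose to ``apply part~\itref{ItAsyExpRegRecall2PropSing} to the remainder at weight $r_{N-1}$,'' but part~\itref{ItAsyExpRegRecall2PropSing} is only valid for $r\leq r_{N-2}$; the unavailability of that regain-of-regularity step at weights $r_{N-1},r_N$ is exactly why the loss in part~\itref{ItAsyExpRegRecall3PropSingDecay} persists.

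A second, more technical omission: the trapping estimates in \cite{HintzVasyQuasilinearKdS} are stated only for the case where the conjugating operator $Q$ in assumption~\itref{ItAsySm3Trapped} is the identity (or a bundle endomorphism). For genuinely pseudodifferential $Q$, the paper separately verifies that $QL_{w,\wt w}Q^- - L_{w,\wt w}$ lies in a suitable lower-order non-smooth operator class (this is where $s\geq 9$ is used). You invoke the trapping estimate as simply ``available'' without addressing this point.
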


Note that for $\wt w\in\wt W^{\wt s}$, the coefficients of the operator $L_{w,\wt w}$ have $H^{\wt s}$ regularity, hence $L_{w,\wt w}$ satisfies the assumption \cite[Equation~(5.1)]{HintzVasyQuasilinearKdS}. The regularity assumptions here are stronger than what is actually needed, but they will simplify the arithmetic below.

\begin{proof}[Proof of Proposition~\ref{PropAsyExpRegRecall}]
  The estimates do not explicitly include a low regularity norm of the coefficients of $L_{w,\wt w}$ due to the uniform boundedness assumption \eqref{EqAsyExpParamSpaceTilde} of $\wt w$ in the norm of $\wt W^9$ (cf.\ the norm on $v$ in the line below \cite[Equation~(5.3)]{HintzVasyQuasilinearKdS}).
  
  Using Proposition~\ref{PropAsyExpLocal}, we first find a local solution $u'\in\Hext^s(\Omega_T;E)$; we can then rewrite the Cauchy problem for $u$ as a forcing problem as in \eqref{EqAsySmIVPForcing}, for which \cite[Lemma~5.2]{HintzVasyQuasilinearKdS} produces the solution, together with the stated tame estimate. The only slightly subtle point here is the radial point estimate, which however does apply at this weight and regularity level due to assumption \eqref{EqAsyExpRegRadPtOK}; see in particular the proof of \cite[Corollary~5.4]{HintzVasyQuasilinearKdS}.

  For part \eqref{ItAsyExpRegRecall2PropSing}, we proceed similarly, noting that we have $H^s$ regularity for $u$ near the Cauchy surface $\Sigma_0$ by Proposition~\ref{PropAsyExpLocal}, and b-microlocal estimates yield the global regularity as in \cite{HintzVasyQuasilinearKdS}. Part \eqref{ItAsyExpRegRecall3PropSingDecay} follows by first using part \eqref{ItAsyExpRegRecall2PropSing} with $r=r_{N-2}$, which gives $u\in\Hbext^{s,r_{N-2}}(\Omega;E)$, and then using the contour shifting argument explained above, together with Lemma~\ref{LemmaAsyExpLocalHs} \eqref{ItAsyExpLocalHsProd} for obtaining b-regularity of $\wt L_{w,\wt w}u$: since we are \emph{assuming} $u\in\Hbext^{5,r}(\Omega;E)$, the partial asymptotic expansion of $u$ corresponding to resonances $\sigma$ with $\Im\sigma\geq\max(-r,-r_{N-1})$ must be trivial; since the high energy estimate \eqref{EqAsySmMeroEst} of $L_w$ loses two powers of $\sigma$ (see Remark~\ref{RmkAsySmMeroEstLoss}), the contour shifting argument gives $u\in\Hbext^{s-2,\min(r,r_{N-1})}(\Omega;E)$. If $r>r_{N-1}$, we repeat this argument once more, losing $2$ additional derivatives.

  In fact, the reference only gives these results in case the elliptic ps.d.o.\ $Q\in\Psib^0(M;E)$ in assumption~\eqref{ItAsySm3Trapped} of \S\ref{SubsecAsySm}, concerning the subprincipal operator at the trapped set, is equal to the identity, or more generally a smooth section of $\End(E)\to M$, which is equivalent to choosing a different inner product in \eqref{EqAsySm3TrappedSubpr}. For general $Q$, we merely need to check that the proof of b-estimates at the normally hyperbolic trapping on growing function spaces presented in \cite[\S4.3]{HintzVasyQuasilinearKdS} goes through with $Q$ present. This in turn is a consequence of the observation that the form of $L_{w,\wt w}$ implies for $\wt w\in\wt W^s$:
  \[
    Q L_{w,\wt w} Q^- = L_{w,\wt w} + L',\quad L'\in\Diffb^1+\Hb^s\Diffb^2+\Hb^{s-1}\Psib^1+\Psib^{0;0}\Hb^{s-2},
  \]
  with $Q^-$ a parametrix of $Q$; see \cite[\S3]{HintzVasyQuasilinearKdS} for the definition of the space of operators $\Psib^{0;0}\Hb^{s-2}$. Indeed, this follows from the regularity of the symbols appearing in the partial expansion of the symbol of a composition of two operators, see \cite[Theorem~3.12 (2)]{HintzQuasilinearDS}; or more directly by analyzing a partial expansion of the commutator of $Q$ with a section of $E$ with (high) b-Sobolev regularity, the commutators of $Q$ with smooth differential operators being understood using the smooth b-calculus. Since the remainder term $\Psib^{0;0}\Hb^{s-2}$ is one order less regular relative to the leading order term than what is assumed in \cite[\S5.1]{HintzVasyQuasilinearKdS}, we use the regularity assumption $s\geq 9$ here. In fact, much less would suffice, but we are assuming $H^{14}$ regularity of $\wt w$ already anyway.
\end{proof}

Define the spaces of dual states in each strip
\[
  \cI_k=\{\sigma\in\C\colon -r_k<\Im\sigma<-r_{k-1}\}
\]
by
\[
  R^*_{w;k} := \Res^*(L_w,\cI_k).
\]
We have $R^*_{w;k}\subset \Hb^{1/2+\inf(\alpha\Im\sigma)+\wh\beta-1/2,-r_{k-1}}(\Omega;E)^{-,\bullet}$, the $\inf$ taken over $\sigma\in\cI_k$ and the radial set $\pa\cR$ of $L_{w_0}$ (see the discussion at the beginning of \S\ref{SubsubsecAsySmPert}); this makes use of the discussion following \eqref{EqAsySmResDualStatesAlt}. We reduce however the regularity by $1/2$ to give ourselves some room to ensure the validity of the inclusion for small $w\in W$. A forteriori, in view of \eqref{EqAsyExpRegRadPtOK}, we have
\[
  R^*_{w;k} \subset \Hb^{-2,-r_{k-1}}(\Omega;E)^{-,\bullet}.
\]
Set
\begin{equation}
\label{EqAsyExpRegDims}
  D'_k := \dim R^*_{w;k}, \quad D_k:=\sum_{j=1}^k D'_j,\ D_0:=0, \quad D:=D_N;
\end{equation}
these are constants independent of $w$. See Figure~\ref{FigAsyExpRegWeights}. Note that
\[
  D=\dim\Res(L_w, \{\Im\sigma>-\alpha\}) \leq N_\cZ,
\]
with $N_\cZ$ the number of parameters in the modification map $z$, due to the surjectivity assumption \eqref{EqAsyExpMainMod}.

\begin{figure}[!ht]
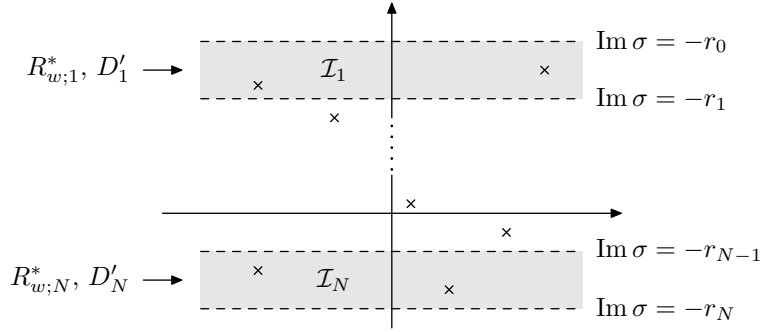

  \centering
  \inclfig{AsyExpRegWeights}
  \caption{Illustration of two strips $\cI_k\subset\C$, and the associated spaces of dual resonant states $R^*_{w;k}$ and their dimensions $D'_k$. Crosses schematically indicate resonances.}
  \label{FigAsyExpRegWeights}
\end{figure}

For $\bfc\in \C^{N_\cZ}$, we recall the notation $u_{w,\wt w}^{\bfc}$ from \eqref{EqAsyExpRegUz0}. For $w\in W$, $\wt w\in\wt W^{14}$, and $k=0,\ldots,N$, we then define
\[
  C_{w,\wt w;k} := \{ \bfc\in\C^{N_\cZ} \colon u_{w,\wt w}^{\bfc}\in \Hbext^{14-s_k,r_k}(\Omega;E) \}, 
\]
with $s_k$ given below. Thus, $C_{w,\wt w;k}$ is the space of all $\bfc\in\C^{N_\cZ}$ for which the asymptotic expansion of the solution $u_{w,\wt w}^{\bfc}$ of the corresponding initial value problem does \emph{not} contain any terms corresponding to resonances in $\Im\sigma>-r_k$. By Proposition~\ref{PropAsyExpRegRecall} \eqref{ItAsyExpRegRecall3PropSingDecay}, the regularity which we can obtain for $u_{w,\wt w}^{\bfc}$ under the assumption that it decays exponentially at rate $r_k$ is $14-s_k$ as stated, with
\begin{equation}
\label{EqAsyExpRegArithmetic}
  s_k=\begin{cases}
        0, & k\leq N-2, \\
        2, & k=N-1, \\
        4, & k=N.
      \end{cases}
\end{equation}

Clearly, $C_{w,\wt w;k}\subseteq C_{w,\wt w;\ell}$ for $k\geq\ell$. For $\bfc\in C_{w,\wt w;k-1}$, we have $\wt L_{w,\wt w}u_{w,\wt w}^{\bfc}\in\Hbext^{12-s_{k-1},r_k}(\Omega;E)\subset\Hbext^{8,r_k}(\Omega;E)$, hence the map
\begin{equation}
\label{EqAsyExpRegZNoAsy}
\begin{gathered}
  \lambda_{w,\wt w;k} \colon C_{w,\wt w;k-1} \to \cL(R_{w;k}^*,\ol\C), \\
    \bfc \mapsto \lambda_\IVP\bigl(z_{w,\wt w}^{\bfc} - (\wt L_{w,\wt w},\gamma_0)(u_{w,\wt w}^{\bfc})\bigr),
\end{gathered}
\end{equation}
is well-defined and linear; recall here the map $\lambda_\IVP$ from \eqref{EqAsySmResIVPDualStatePairing}. The argument of $\lambda_\IVP$ in this definition is the analogue of \eqref{EqAsyExpRegIter} for $f=z_{w,\wt w}^{\bfc}$ and $u=u_{w,\wt w}^{\bfc}$, now also taking the initial data into account.

\begin{lemma}
\label{LemmaAsyExpRegCk}
  The spaces $C_{w,\wt w;k}$ have the following properties:
  \begin{enumerate}
    \item \label{ItAsyExpRegCk1Alt} We have $C_{w,\wt w;0}=\C^{N_\cZ}$, and inductively
      \[
        C_{w,\wt w;k} = \{ \bfc\in C_{w,\wt w;k-1} \colon \lambda_{w,\wt w;k}(\bfc)=0 \}
      \]
      for $k\geq 1$.
    \item \label{ItAsyExpRegCk2Codim} The space $C_{w,\wt w;k}\subset C_{w,\wt w;k-1}$ has codimension $D'_k$.
    \item \label{ItAsyExpRegCk3Basis} $C_{w,\wt w;k}\subset\C^{N_\cZ}$ depends continuously on $(w,\wt w)\in W\times\wt W^{14}$ in the sense that it has a basis with each basis vector depending continuously on $(w,\wt w)\in W\times\wt W^{14}$.
    \item \label{ItAsyExpRegCk4Cont} There exist spaces
      \begin{equation}
      \label{EqAsyExpRegCkPrime}
        C'_{w,\wt w;k} := \mathspan\{ \bfc_{w,\wt w}^{D_{k-1}+1}, \ldots, \bfc_{w,\wt w}^{D_k} \} \cong \C^{D'_k},
      \end{equation}
      with each $\bfc_{w,\wt w}^\ell\in\C^{N_\cZ}$ depending continuously on $(w,\wt w)\in W\times\wt W^{14}$, so that
      \begin{equation}
      \label{EqAsyExpRegCkPrimeCompl}
        C_{w,\wt w;k-1} = C_{w,\wt w;k} \oplus C'_{w,\wt w;k}.
      \end{equation}
      By property \eqref{ItAsyExpRegCk1Alt}, $\lambda_{w,\wt w;k}$ induces a map on the quotient $C_{w,\wt w;k-1}/C_{w,\wt w;k}$ and hence induces a map
      \[
        [\lambda_{w,\wt w;k}] \colon \C^{D'_k} \cong C'_{w,\wt w;k} \to \cL(R^*_{w;k},\ol\C) \cong \C^{D'_k}
      \]
      on the fixed vector space $\C^{D'_k}$, where for the second isomorphism, we use Proposition~\ref{PropAsySmPert} to find a parametrization of $R^*_{w;k}$ which depends continuously on $w\in W$. Then the map
      \[
        W\times\wt W^{14}\ni (w,\wt w) \mapsto [\lambda_{w,\wt w;k}] \in \C^{D'_k\times D'_k}
      \]
      is continuous. See Figure~\ref{FigAsyExpRegCk}.
    \item \label{ItAsyExpRegCk5Inv} The map $[\lambda_{w,\wt w;k}]$ is invertible for all $k=1,\ldots,N$.
    \item \label{ItAsyExpRegCk6NoN} If $N_\cZ=D$, then $C_{w,\wt w;N}=0$.
  \end{enumerate}
\end{lemma}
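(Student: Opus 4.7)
The plan is to establish the six items essentially in the order \itref{ItAsyExpRegCk1Alt}, \itref{ItAsyExpRegCk4Cont}, \itref{ItAsyExpRegCk5Inv}, \itref{ItAsyExpRegCk3Basis}, \itref{ItAsyExpRegCk2Codim}, \itref{ItAsyExpRegCk6NoN}, with \itref{ItAsyExpRegCk5Inv} being the only item of substance; the rest is essentially bookkeeping combined with the smooth-coefficient perturbation theory of \S\ref{SubsubsecAsySmPert}.

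For \itref{ItAsyExpRegCk1Alt}, the equality $C_{w,\wt w;0}=\C^{N_\cZ}$ is immediate from Proposition~\ref{PropAsyExpRegRecall}\itref{ItAsyExpRegRecall1Growing} applied at the weight $r_0$, since $z_{w,\wt w}^{\bfc}\in D^{14,\alpha}\hookrightarrow D^{13,r_0}$. For the inductive characterization, fix $\bfc\in C_{w,\wt w;k-1}$ and rewrite
\[
  (L_w,\gamma_0)u_{w,\wt w}^{\bfc} = z_{w,\wt w}^{\bfc} - (\wt L_{w,\wt w}u_{w,\wt w}^{\bfc},0,0),
\]
whose right hand side, by the arithmetic \eqref{EqAsyExpRegArithmetic} together with \eqref{EqAsyExpOpDec}, lies in $D^{12-s_{k-1},r_k}$ (a decaying space at the next rate). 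Proposition~\ref{PropAsySmResNoAsy}, rephrased for initial value problems via the pairing $\lambda_\IVP$ of \eqref{EqAsySmResIVPDualStatePairing}, then shows that $\wh{L_w}(\sigma)^{-1}$ applied to the Mellin transform of this data is holomorphic across the strip $\cI_k$ if and only if $\lambda_{w,\wt w;k}(\bfc)=0$, and in that case a contour shifting argument as in Proposition~\ref{PropAsyExpRegRecall}\itref{ItAsyExpRegRecall3PropSingDecay} (using the derivative losses $2$, $2$ encoded in $s_{N-1}-s_{N-2}$ and $s_N-s_{N-1}$ for the last two strips, and no loss otherwise) delivers $u_{w,\wt w}^{\bfc}\in\Hbext^{14-s_k,r_k}$, i.e.\ $\bfc\in C_{w,\wt w;k}$.

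For \itref{ItAsyExpRegCk4Cont}, the continuity of $[\lambda_{w,\wt w;k}]$ follows from two ingredients: the continuous dependence of the solution map $(w,\wt w,\bfc)\mapsto u_{w,\wt w}^{\bfc}$ on the subspace $C_{w,\wt w;k-1}$, in the topology of $\Hbext^{14-s_{k-1},r_{k-1}}$ (a consequence of the tame estimate in Proposition~\ref{PropAsyExpRegRecall} combined with the continuity statement of Proposition~\ref{PropAsyExpLocal} and the dependence \eqref{EqAsyExpDecTame}); and the continuous parametrization of the dual resonant states $R^*_{w;k}$ from Proposition~\ref{PropAsySmPert}\itref{ItAsySmPert5DualCont}. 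The pairing $\lambda_\IVP$ is a composition of continuous maps, so $[\lambda_{w,\wt w;k}]$ depends continuously on $(w,\wt w)$. I would then prove \itref{ItAsyExpRegCk5Inv}: at the base point, $\wt L_{w_0,0}=0$, hence $\lambda_{w_0,0;k}(\bfc)=\lambda_\IVP(z_{w_0,0}^{\bfc})|_{R^*_{w_0;k}}$, and the surjectivity hypothesis \eqref{EqAsyExpMainMod} on the full map $\bfc\mapsto\lambda_\IVP(z_{w_0,0}^{\bfc})$ onto $\bigoplus_{k=1}^N\cL(R^*_{w_0;k},\ol\C)$ inductively yields, for each $k$, surjectivity of the restriction $\lambda_{w_0,0;k}|_{C_{w_0,0;k-1}}$. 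By the dimension inequality $\dim(C_{w_0,0;k-1}/C_{w_0,0;k})\leq D'_k$ (from \itref{ItAsyExpRegCk1Alt}) and the surjectivity, this dimension must equal $D'_k$ and $[\lambda_{w_0,0;k}]$ is an isomorphism between $D'_k$-dimensional spaces; shrinking $\eps$ in \eqref{EqAsyExpParamSpace}--\eqref{EqAsyExpParamSpaceTilde} if necessary, $[\lambda_{w,\wt w;k}]$ stays invertible on all of $W\times\wt W^{14}$ by continuity.

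The remaining parts are then routine. For \itref{ItAsyExpRegCk3Basis} and \itref{ItAsyExpRegCk2Codim}, one produces the continuous complement $C'_{w,\wt w;k}$ of \eqref{EqAsyExpRegCkPrime} by pulling back a fixed basis of $\C^{D'_k}$ under $[\lambda_{w,\wt w;k}]^{-1}$ and lifting to $C_{w,\wt w;k-1}$; inductively, this simultaneously exhibits continuous bases for all $C_{w,\wt w;k}$ and verifies the codimension count $\codim_{C_{w,\wt w;k-1}} C_{w,\wt w;k}=D'_k$. Finally \itref{ItAsyExpRegCk6NoN} is immediate: summing the codimensions gives $\dim C_{w,\wt w;N}=N_\cZ-\sum_{k=1}^N D'_k=N_\cZ-D=0$ when $N_\cZ=D$. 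The principal technical point, and the only place where new work beyond the smooth theory is needed, is the continuity of $[\lambda_{w,\wt w;k}]$ in \itref{ItAsyExpRegCk4Cont}: it requires that the $(w,\wt w)$-continuity of $u_{w,\wt w}^{\bfc}$ be proved \emph{at the improved weight} $r_{k-1}$ on $C_{w,\wt w;k-1}$, which is exactly the reason for carrying the regularity hierarchy \eqref{EqAsyExpRegArithmetic} and for restricting to $\wt w\in\wt W^{14}$.
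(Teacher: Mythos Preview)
Your sketch has the right ingredients—base-point surjectivity from \eqref{EqAsyExpMainMod}, continuity of the solution map $u_{w,\wt w}^{\bfc}$ at the improved weight $r_{k-1}$, continuous parametrization of $R^*_{w;k}$ via Proposition~\ref{PropAsySmPert}—and these are exactly what the paper uses. But the order you propose, namely \itref{ItAsyExpRegCk4Cont} then \itref{ItAsyExpRegCk5Inv} then \itref{ItAsyExpRegCk3Basis}, has a circularity: the matrix $[\lambda_{w,\wt w;k}]$ in \itref{ItAsyExpRegCk4Cont} is \emph{defined} using the identification $\C^{D'_k}\cong C'_{w,\wt w;k}$, which requires a continuous basis of $C'_{w,\wt w;k}$; yet you only construct $C'_{w,\wt w;k}$ later, in \itref{ItAsyExpRegCk3Basis}, by ``pulling back a fixed basis under $[\lambda_{w,\wt w;k}]^{-1}$.'' As written, you are using $[\lambda_{w,\wt w;k}]$ before it exists.

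The paper breaks this loop by first establishing \itref{ItAsyExpRegCk2Codim}--\itref{ItAsyExpRegCk5Inv} completely at the base point $(w_0,0)$, where $\wt L_{w_0,0}=0$ and each $\lambda_k$ is defined on the full $\C^{N_\cZ}$; there one chooses once and for all a fixed basis $\{\bfc_{w_0,0}^1,\ldots,\bfc_{w_0,0}^{N_\cZ}\}$ adapted to the flag $C_{w_0,0;0}\supset\cdots\supset C_{w_0,0;N}$. Then for general $(w,\wt w)$ one inducts on $k$: assuming a continuous basis of $C_{w,\wt w;k-1}$ is already in hand, one \emph{declares} $C'_{w,\wt w;k}$ to be the span of the appropriate $D'_k$ basis vectors, proves continuity of $[\lambda_{w,\wt w;k}]$ with respect to this basis (this is where the continuity of $u_{w,\wt w}^{\bfc}$ in $\Hbext^{14-s_{k-1},r_{k-1}}$ enters), deduces invertibility by perturbation from the base point, and only then projects the remaining basis vectors onto $C_{w,\wt w;k}=\ker\lambda_{w,\wt w;k}$ via $\pi_{w,\wt w;k}=\Id-(\lambda_{w,\wt w;k}|_{C'_{w,\wt w;k}})^{-1}\circ\lambda_{w,\wt w;k}$ to produce the continuous basis at level $k$. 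Your construction of $C'_{w,\wt w;k}$ via $[\lambda_{w,\wt w;k}]^{-1}$ can be made to work, but only after this inductive scaffolding is in place; the key point is that $C'_{w,\wt w;k}$ must be specified \emph{before} $[\lambda_{w,\wt w;k}]$ makes sense as a matrix.
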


\begin{figure}[!ht]
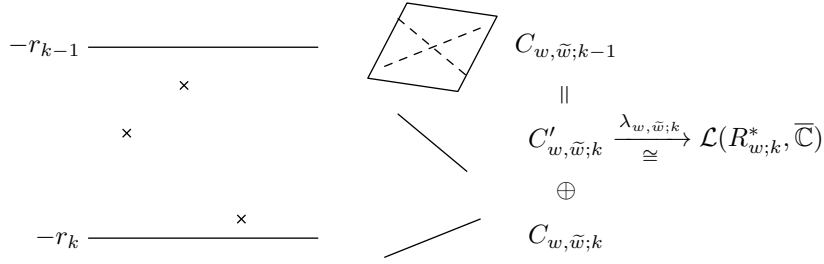

  \centering
  \inclfig{AsyExpRegCk}
  \caption{Illustration of the spaces $C_{w,\wt w;k}$ and $C'_{w,\wt w;k}$ in Lemma~\ref{LemmaAsyExpRegCk}; the crosses indicate resonances, the total rank of which in the displayed strip is equal to $D'_k$.}
  \label{FigAsyExpRegCk}
\end{figure}

\begin{proof}[Proof of Lemma~\ref{LemmaAsyExpRegCk}]
  \eqref{ItAsyExpRegCk1Alt} follows from the above discussion, and \eqref{ItAsyExpRegCk6NoN} follows from \eqref{ItAsyExpRegCk2Codim} and \eqref{EqAsyExpRegDims}.

  We first prove \eqref{ItAsyExpRegCk2Codim}--\eqref{ItAsyExpRegCk5Inv} in the special case $(w,\wt w)=(w_0,0)$, so $\wt L_{w_0,0}\equiv 0$. In this case, $\lambda_{w_0,0;k}$ is well-defined on the full space $\C^{N_\cZ}$ for all $k=1,\ldots,N$; we denote the thus extended map by $\lambda_k$. Then we have $C_{w_0,0;k}=\bigcap_{j\leq k}\ker\lambda_j$. If we let
  \[
    R^* := \bigoplus_{k=1}^N R^*_{0;k} = \Res^*(L_{w_0}, \{\Im\sigma>-\alpha\}),
  \]
  then the map $\C^{N_\cZ}\ni\bfc\mapsto\lambda_\IVP(z_{w_0,0}^\bfc)\in\cL(R^*,\ol\C)$, which we are assuming is surjective, is equal to $(\lambda_1,\ldots,\lambda_N)$; its kernel equals $C_{w_0,0;N}$ by definition. Therefore, each restriction $\lambda_k|_{C_{w_0,0;k-1}}$ is onto, hence its kernel $C_{w_0,0;k}$ has codimension $D'_k$, and $\lambda_k$ induces an isomorphism $C_{w_0,0;k-1}/C_{w_0,0;k}\to\cL(R^*_{0;k},\ol\C)$. We can then declare a set $\{\bfc_{w_0,0}^{D_{k-1}+1},\ldots,\bfc_{w_0,0}^{D_k}\}$ of vectors in $C_{w_0,0;k-1}$ whose equivalence classes in $C_{w_0,0;k-1}/C_{w_0,0;k}$ give a basis of the quotient to be a basis of $C'_{w_0,0;k}$. Lastly, we pick a basis $\{\bfc_{w_0,0}^{D+1},\ldots,\bfc_{w_0,0}^{N_\cZ}\}$ of $C_{w_0,0;N}$.

  For general $(w,\wt w)$, we can now establish \eqref{ItAsyExpRegCk2Codim}--\eqref{ItAsyExpRegCk5Inv} by induction on $k$. \emph{We will use subscripts to refer to claims \eqref{ItAsyExpRegCk2Codim}--\eqref{ItAsyExpRegCk5Inv} for any fixed $k$.} We introduce the notation
  \[
    \{\psi_{w;k}^1,\ldots,\psi_{w;k}^{D'_k}\} \subset R^*_{w;k}
  \]
  for a basis of $R^*_{w;k}$, with each $\psi_{w;k}^j$ depending continuously on $w\in W$ in the topology of $\Hbext^{-2,-r_{k-1}}(\Omega;E)^{-,\bullet}$.
  
  Let us assume now that \eqref{ItAsyExpRegCk2Codim}$_m$ holds for $m\leq k-1$ for some $1\leq k\leq N$, and that $\{\bfc_{w,\wt w}^1,\ldots,\bfc_{w,\wt w}^{N_\cZ}\}$, with each $\bfc_{w,\wt w}^\ell$ equal to the element of $\C^{N_\cZ}$ for $(w,\wt w)=(w_0,0)$ chosen in the first part of the proof, is a basis of $\C^{N_\cZ}$, with continuous dependence on $(w,\wt w)\in W\times\wt W^{14}$ as in \eqref{ItAsyExpRegCk3Basis}, so that \eqref{ItAsyExpRegCk4Cont}$_m$ and \eqref{ItAsyExpRegCk5Inv}$_m$ hold for $m\leq k-1$, and so that moreover $C_{w,\wt w;m}=\mathspan\{\bfc_{w,\wt w}^{D_m+1},\ldots,\bfc_{w,\wt w}^{N_\cZ}\}$ for $m\leq k-1$. (Note that for $k=1$, these assumptions are satisfied for the choice $\bfc_{w,\wt w}^\ell=\bfc_{w_0,0}^\ell$, $\ell=1,\ldots,N_\cZ$.) We will show how to update the $\bfc_{w,\wt w}^\ell$, $\ell=D_k+1,\ldots,N_\cZ$, so as to arrange \eqref{ItAsyExpRegCk2Codim}$_k$--\eqref{ItAsyExpRegCk5Inv}$_k$ to hold.
  
  To this end, put $C'_{w,\wt w;k}=\mathspan\{\bfc_{w,\wt w}^{D_{k-1}+1},\ldots,\bfc_{w,\wt w}^{D_k}\}$. We contend that the restriction
  \[
    \lambda_{w,\wt w;k}\colon C'_{w,\wt w;k}\to\cL(R^*_{w;k},\ol\C)
  \]
  is injective and hence (by dimension counting) an isomorphism; this implies \eqref{ItAsyExpRegCk2Codim}$_k$ and \eqref{ItAsyExpRegCk5Inv}$_k$. Injectivity holds by construction for $(w,\wt w)=(w_0,0)$; the contention therefore follows once we prove the continuity of the map
  \[
    W\times\wt W^{14} \ni (w,\wt w) \mapsto \big\la z_{w,\wt w}^{\bfc_{w,\wt w}^\ell} - (\wt L_{w,\wt w},\gamma_0)(u_{w,\wt w}^{\bfc_{w,\wt w}^\ell}),\psi_{w;k}^j\big\ra
  \]
  for $\ell=D_{k-1}+1,\ldots,D_k$ and $j=1,\ldots,D'_k$; we will in fact establish this for $\ell=D_{k-1}+1,\ldots,N_\cZ$. (Note that for such $\ell,j$, this map is indeed well-defined.) To show the latter, it suffices to prove the continuous dependence of $u_{w,\wt w}^{\bfc_{w,\wt w}^\ell}$ on $(w,\wt w)\in W\times\wt W^{14}$ in the topology of $\Hbext^{4,r_{k-1}}(\Omega;E)$. This in turn follows by an argument similar to the one used in the proof of the continuity part of Proposition~\ref{PropAsyExpLocal}: let $(w_p,\wt w_p)\in W\times\wt W^{14}$, $p=1,2$, and write $\bfc_p=\bfc_{w_p,\wt w_p}^\ell$, $z_p=z_{w_p,\wt w_p}^\ell$ and $u_p=u_{w_p,\wt w_p}^{\bfc_p}\in\Hb^{14-s_{k-1},r_{k-1}}(\Omega;E)$, then we have
  \[
    (L_{w_1,\wt w_1},\gamma_0)(u_1-u_2) = z_1-z_2 - (L_{w_1,\wt w_1}-L_{w_2,\wt w_2},0)(u_2).
  \]
  Now $z_1-z_2$ is small in $D^{14,\alpha}(\Omega;E)$ for $(w_1,\wt w_1)$ close to $(w_2,\wt w_2)$, while the last term is small in $D^{12-s_{k-1},r_{k-1}}(\Omega;E)$; by Proposition~\ref{PropAsyExpRegRecall} \eqref{ItAsyExpRegRecall2PropSing} and \eqref{ItAsyExpRegRecall3PropSingDecay}, this implies that $u_1-u_2$ is small in $\Hb^{13-2 s_{k-1},r_{k-1}}(\Omega;E)\subset\Hb^{9,r_{k-1}}(\Omega;E)$, as desired. Thus, statement \eqref{ItAsyExpRegCk4Cont}$_k$ is well-defined now, and we just proved that it holds.
  
  We also obtain \eqref{ItAsyExpRegCk3Basis}$_k$; indeed, the projection map
  \[
    \pi_{w,\wt w;k} \colon C_{w,\wt w;k-1}\ni \bfc\mapsto \bfc-(\lambda_{w,\wt w;k}|_{C'_{w,\wt w;k}})^{-1}(\lambda_{w,\wt w;k}(\bfc)) \in \ker \lambda_{w,\wt w;k} = C_{w,\wt w;k}
  \]
  is surjective, and $\pi_{w_0,0;k}$ is the identity map on $\mathspan\{\bfc_{w_0,0}^{D_k+1},\ldots,\bfc_{w_0,0}^{N_\cZ}\}$. Thus,
  \[
    \{\pi_{w,\wt w;k}(\bfc_{w,\wt w}^\ell) \colon \ell=D_k+1,\ldots,N_\cZ\}
  \]
  provides a basis of $C_{w,\wt w;k}$ which depends continuously on $(w,\wt w)\in W\times\wt W^{14}$. By an abuse of notation, we may replace $\bfc_{w,\wt w}^\ell$ by $\pi_{w,\wt w;k}(\bfc_{w,\wt w}^\ell)$ for $\ell=D_k+1,\ldots,N_\cZ$; then $C_{w,\wt w;k}=\mathspan\{\bfc_{w,\wt w}^{D_k+1},\ldots,\bfc_{w,\wt w}^D\}$. Therefore, we have arranged \eqref{ItAsyExpRegCk2Codim}$_k$--\eqref{ItAsyExpRegCk5Inv}$_k$. The proof is complete.
\end{proof}

We now have all ingredients for establishing the main result of this section:

\begin{proof}[Proof of Theorem~\ref{ThmAsyExpMain}]
  We fix the basis $\{\bfc_{w,\wt w}^\ell\colon 1\leq\ell\leq D\}$ of $\C^{N_\cZ}$ constructed in the above lemma. Suppose $(w,\wt w)\in W\times\wt W^{14}$, and suppose we are given data $(f,u_0,u_1)\in D^{13,\alpha}(\Omega;E)$. For $\bfc\in\C^{N_\cZ}$, we denote by $u^{\bfc}\in\Hbext^{14,r_0}(\Omega;E)$ the solution of the Cauchy problem \eqref{EqAsyExpMainIVP}. We first find $\bfc\in\C^{N_\cZ}$ for which $u^{\bfc}\in\Hbext^{10,\alpha}(\Omega;E)$ is exponentially decaying; by part \eqref{ItAsyExpRegCk6NoN} of the previous lemma, this $\bfc$ is unique if the map \eqref{EqAsyExpMainMod} is bijective. In order to do so, we will inductively choose $\bfc'_k\in C'_{w,\wt w;k}$ such that for $\bfc_k:=\sum_{j=1}^k \bfc'_j$, we have $u^{\bfc_k}\in\Hbext^{14-s_k,r_k}(\Omega;E)$. Suppose we have already chosen $\bfc'_m$, $m\leq k-1$ (with $1\leq k\leq N$) with this property, then writing $z_{k-1}=z_{w,\wt w}^{\bfc_{k-1}}$, the element $\bfc'_k$ is determined by the equation
  \begin{align*}
    \lambda_\IVP\bigl( (f,u_0,u_1) &+ z_{k-1} - (\wt L_{w,\wt w},\gamma_0)u^{\bfc_{k-1}} \\
      &\quad + z_{w,\wt w}^{\bfc'_k}-(\wt L_{w,\wt w},\gamma_0)u_{w,\wt w}^{\bfc'_k} \bigr) = 0 \in \cL(R^*_{w;k},\ol\C);
  \end{align*}
  the fact that $\lambda_\IVP$ does map the argument into $\cL(R^*_{w;k},\ol\C)$ uses the inductive assumption $u^{\bfc_{k-1}}\in\Hb^{14-s_{k-1},r_{k-1}}(\Omega;E)$. This equation, rewritten as
  \[
    \lambda_{w,\wt w;k}(\bfc'_k) = -\lambda_\IVP\bigl((f,u_0,u_1)+z_{k-1}-(\wt L_{w,\wt w},\gamma_0)u^{z_{k-1}}\bigr) \in \cL(R^*_{w;k},\ol\C),
  \]
  has a unique solution $\bfc'_k\in\cZ'_{w,\wt w;k}$ by part \eqref{ItAsyExpRegCk5Inv} of the previous lemma, finishing the inductive step and thus the construction of $\bfc:=\bfc_N$.
  
  Since we are assuming that $\wt w$ is uniformly bounded in $\wt W^{14}$, an inspection of the argument implies that the norm of $\bfc$ in $\C^{N_\cZ}$ is bounded by $\|(f,u_0,u_1)\|_{D^{13,\alpha}}$, proving the estimate \eqref{EqAsyExpMainSTameC}, and the solution $u=u^{\bfc}$ of the Cauchy problem \eqref{EqAsyExpMainIVP} satisfies the estimate
  \[
    \|u\|_{10,\alpha} \lesssim \|(f,u_0,u_1)\|_{13,\alpha}.
  \]
  To obtain a tame estimate for higher Sobolev norms of $u$, we use Proposition~\ref{PropAsyExpRegRecall} \eqref{ItAsyExpRegRecall3PropSingDecay}, which gives
  \[
    \|u\|_{s,\alpha} \lesssim \|(f,u_0,u_1)\|_{s+3,\alpha} + (1+\|\wt w\|_{s+4})\|(f,u_0,u_1)\|_{13,\alpha}
  \]
  for $s\geq 10$, proving \eqref{EqAsyExpMainSTameU}.
  
  Lastly, we prove the continuity of the solution map $S$, defined in \eqref{EqAsyExpMainModMap}; this follows by the usual argument, noting that for $d_j=(f_j,u_{j,0},u_{j,1})$, $(w_j,\wt w_j)\in W\times\wt W^\infty$, $(\bfc_j,u_j)=S(w_j,\wt w_j,d_j)$ and $z_j=z_{w_j,\wt w_j}^{\bfc_j}$ for $j=1,2$, we have
  \[
    u_1-u_2 = S\bigl(w_1,\wt w_1,d_1-d_2+z_1-z_2 - (L_{w_1,\wt w_1}-L_{w_2,\wt w_2},0)(u_2)\bigr)
  \]
  which implies the continuity in view of our estimates on $S(w_1,\wt w_1,\cdot)$. This finishes the proof of Theorem~\ref{ThmAsyExpMain}.
\end{proof}

\section{Computation of the explicit form of geometric operators}
\label{SecOp}

In the following four sections of the paper, we will only consider natural linear operators related to the fixed Schwarzschild--de~Sitter metric $g_{b_0}$; hence, \emph{we drop the subscript $b_0$ from now on}, so
\begin{equation}
\label{EqOpNoB0}
\begin{gathered}
  g \equiv g_{b_0},\ \bhm\equiv\bhm[0],\ r_\pm\equiv r_{b_0,\pm},\\
  \cR\equiv\cR_{b_0},\ \cL\equiv\cL_{b_0},\ \beta_{\pm,0}\equiv\beta_{b_0,\pm,0},\ \beta_\pm\equiv\beta_{b_0,\pm},
\end{gathered}
\end{equation}
where we recall the radial point quantities \eqref{EqKdSGeoRadPointQuant}.

\subsection{Warped product metrics}
\label{SecOpWarped}

We start by considering a general metric
\[
  g = q^2\,dt^2 - h,\quad q=q(x),\quad h=h(x,dx)
\]
on a manifold
\[
  \cM=\R_t\times \cX_x
\]
of arbitrary dimension $\geq 2$. It is natural to define
\[
  e_0 := q^{-1}\pa_t,\quad e^0 := q\,dt;
\]
setting $\Omega := \log q$, we compute for $v\in\CI(\cM,T\cX)$, $w\in\CI(\cM,T^*\cX)$
\begin{equation}
\label{EqOpWarpedCov}
\begin{gathered}
  \nabla_{e_0}e^0=-d\Omega,\quad \nabla_{e_0}w=e_0 w - w(\nabla^h\Omega)e^0, \\
  \nabla_v e^0=0, \quad \nabla_v w=\nabla^h_v w,
\end{gathered}
\end{equation}
where $\nabla^h$ denotes the Levi-Civita connection of the metric $h$, so in particular $\nabla^h\Omega$ is the gradient of $\Omega$ with respect to $h$. We define decompositions of the bundles $T^*\cM$ and $S^2T^*\cM$ adapted to the form of the metric, separating the normal (`$N$') and the tangential (`$T$') components: we let
\begin{equation}
\label{EqOpWarpedBundles}
  T^*\cM = W_N \oplus W_T,\quad S^2T^*\cM=V_{NN}\oplus V_{NT}\oplus V_{TT},
\end{equation}
where
\begin{gather*}
  W_N=\la e^0\ra,\quad W_T=T^*\cX, \\
  V_{NN}=\la e^0e^0\ra,\quad V_{NT}=\{ 2e^0 w\colon w\in W_T \},\quad V_{TT}=S^2T^*\cX,
\end{gather*}
where we write $\xi\eta\equiv \xi\cdot\eta = \frac{1}{2}(\xi\otimes\eta+\eta\otimes\xi)$ for the symmetrized product. We trivialize $W_N\cong \cM\times\R$ via the section $e^0$, $V_{NN}\cong \cM\times\R$ via $e^0e^0$, and we moreover identify $V_{NT}\cong W_T$ by means of $W_T\ni w\mapsto 2e^0 w\in V_{NT}$. One then easily computes the form of the operators $\delta_g$ (divergence), $\delta_g^*$ (symmetric gradient, formal adjoint of $\delta_g$), and $\sfG_g$, see \eqref{EqHypDTEinsteinify}:

\begin{lemma}
\label{LemmaOpWarped}
  In the bundle splittings \eqref{EqOpWarpedBundles}, we have
  \[
    \delta_g^*=\begin{pmatrix}
                 e_0                       & -\nabla^h\Omega \\
                 \frac{1}{2}q d_\cX q^{-1} & \frac{1}{2}e_0  \\
                 0                         & \delta_h^*
               \end{pmatrix},
    \quad
    \delta_g=\begin{pmatrix}
               -e_0          & -q^{-2}\delta_h q^2 & 0 \\
               (d_\cX\Omega) & -e_0                & -q^{-1}\delta_h q
             \end{pmatrix},
  \]
  and
  \[
    \sfG_g = \begin{pmatrix}
            \frac{1}{2}  & 0 & \frac{1}{2}\tr_h \\
            0            & 1 & 0                \\
            \frac{1}{2}h & 0 & 1-\frac{1}{2}h\tr_h
          \end{pmatrix}.
  \]
\end{lemma}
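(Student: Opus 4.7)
The plan is to verify the three matrices by direct computation using the covariant derivative formulas in \eqref{EqOpWarpedCov}, evaluating each operator on the basis sections of the splittings \eqref{EqOpWarpedBundles} and then pairing with $(e_0,e_0)$, $(e_0,Y)$, and $(X,Y)$ for $X,Y\in T\cX$.

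\textbf{Step 1 (the matrix for $\delta_g^*$).} I would decompose an arbitrary 1-form as $u e^0 + w$ with $u\in\CI(\cM)$ and $w\in W_T=T^*\cX$, and compute $\nabla(u e^0+w)$ by Leibniz from \eqref{EqOpWarpedCov}. Pairing $\delta_g^*(u e^0)$ with $(e_0,e_0)$ gives $e_0 u$ (since $d\Omega$ annihilates $e_0$), yielding the $(V_{NN},W_N)$ entry $e_0$; pairing with $(X,Y)\in T\cX\times T\cX$ gives $0$ since $\nabla_X e^0=0$, yielding the zero in the bottom-left; and pairing with $(e_0,Y)$ gives $\tfrac12(Yu-u\,Y\Omega)$, which matches $\tfrac12\alpha\,d_\cX(\alpha^{-1}u)$ under the identification $V_{NT}\cong W_T$ via $w\mapsto 2e^0 w$. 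Similarly, $\delta_g^*w$ for $w\in W_T$ pairs with $(e_0,e_0)$ to give $-w(\nabla^h\Omega)$, with $(e_0,Y)$ to give $\tfrac12(e_0 w)(Y)$, and with $(X,Y)$ to give $\delta_h^*w(X,Y)$, producing the right column.

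\textbf{Step 2 (the matrix for $G_g$).} Relative to the frame $\{e_0,\partial_{x^i}\}$ the dual metric is $G=e_0\!\otimes\! e_0-h^{-1}$, so $\tr_g(u e^0 e^0)=u$, $\tr_g(2e^0 w)=0$, and $\tr_g r'=-\tr_h r'$ for $r'\in V_T$. Combined with $g=e^0 e^0-h$, the formula $G_g r=r-\tfrac12(\tr_g r)g$ immediately yields the three columns of $G_g$ as written; this is the easiest of the three computations and involves only linear algebra on the fibers.

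\textbf{Step 3 (the matrix for $\delta_g$).} Rather than recomputing from $(\delta_g r)_\mu=-r_{\mu\nu;}{}^\nu$, I would obtain $\delta_g$ as the formal adjoint of $\delta_g^*$ with respect to the fiber inner products induced by $g$ and the volume density $|dg|=\alpha\,dt\,|dh|$. The only care needed is that the identifications $W_N\cong\cM\times\R$ via $e^0$ and $V_{NN}\cong\cM\times\R$ via $e^0 e^0$ (and $V_{NT}\cong W_T$ via $w\mapsto 2e^0 w$) introduce combinatorial factors one must track, and that the factor $\alpha$ in $|dg|$ produces the conjugations $\alpha^{-2}\delta_h\alpha^2$ and $\alpha^{-1}\delta_h\alpha$ appearing in the matrix. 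Reading off the transpose of the matrix for $\delta_g^*$ and incorporating these conjugation factors yields the stated formula.

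\textbf{Main obstacle.} The only nontrivial point is bookkeeping: signs coming from the Lorentzian signature, the factor-of-$2$ arising in the identification $V_{NT}\cong W_T$, and the conjugation by powers of $\alpha$ in passing between $\delta_g$ and $\delta_h$ via the formal-adjoint computation. Everything else is a routine application of \eqref{EqOpWarpedCov} and the definition of $G_g$.
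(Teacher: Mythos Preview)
Your proposal is correct and matches the paper's approach: the paper treats this lemma as a routine computation (``One then easily computes the form of the operators\ldots'') and gives no proof, so your plan to verify each matrix entry directly from the covariant derivative formulas \eqref{EqOpWarpedCov} and the definition of $G_g$ is exactly what is intended. The only minor comment is on Step~3: computing $\delta_g$ as the formal adjoint of $\delta_g^*$ is valid but, as you note, the Lorentzian signature and the weight $\alpha$ in the volume density require careful bookkeeping; it is arguably no harder to compute $(\delta_g r)_\mu=-g^{\nu\lambda}\nabla_\lambda r_{\mu\nu}$ directly from \eqref{EqOpWarpedCov} on each of the three summands $V_{NN}$, $V_{NT}$, $V_T$, which avoids tracking adjoints in an indefinite inner product.
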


Since we need it for computations of gauge modifications, we also note
\begin{equation}
\label{EqOpWarpedDelgGg}
  2\delta_g \sfG_g=
    \begin{pmatrix}
      -e_0            & -2 q^{-2}\delta_h q^2 & -e_0\tr_h \\
      q^{-2}d_\cX q^2 & -2 e_0                & -2 q^{-1}\delta_h q-d_\cX\tr_h
    \end{pmatrix}.
\end{equation}

\subsection{Spatial warped product metrics}
\label{SubsecOpSpat}

Next, we specialize to the case $\cX=I_r\times S$, where $I\subset\R$ an open interval, $S$ is an $\sln$-dimensional Riemannian manifold, $\sln\in\N_0$, and the spatial metric $h$ is of the form
\[
  h = q^{-2}\,dr^2 + r^2\,\slg,\quad q=q(r),
\]
where $\slg=\slg(y,dy)$ is a Riemannian metric on $S$. All operators with a slash are those induced by $\slg$ and its Levi-Civita connection $\slnabla$. We write
\[
  e_1:=q\,\pa_r,\quad e^1:=q^{-1}\,dr;
\]
then for $v\in\CI(\cX,TS)$, $w\in\CI(\cX,T^*S)$, we compute
\begin{equation}
\label{EqOpSpatCov}
\begin{gathered}
  \nabla^h_{e_1}e^1=0,\quad \nabla^h_{e_1}w=e_1 w-q r^{-1}w, \\
  \nabla^h_v e^1=r q i_v\slg,\quad \nabla^h_v w=\slnabla_v w-q r^{-1} w(v) e^1,
\end{gathered}
\end{equation}
where we extend $\slg$ to a bilinear form on $T\cX$ by declaring $\slg(v,e_1)=0$ for all $v\in T\cX$. If we let
\begin{equation}
\label{EqOpSpatBundles}
  T^*\cX = W_{TN} \oplus W_{TT},\quad S^2T^*\cX=V_{TNN}\oplus V_{TNT}\oplus V_{TTT},
\end{equation}
where
\begin{gather*}
  W_{TN}=\la e^1\ra,\quad W_{TT}=T^*S, \\
  V_{TNN}=\la e^1 e^1\ra,\quad V_{TNT}=\{ 2 e^1 w\colon w\in W_{TT}\},\quad V_{TTT}=S^2T^*S,
\end{gather*}
Similar to before, we trivialize $W_{TN}$, resp.\ $V_{TTN}$, via $e^1$, resp.\ $e^1 e^1$, and identify $W_{TT}\cong V_{TNT}$ via $w\mapsto 2 e^1 w$. (If $\dim S=0$, then $W_{TT}$, $V_{TNT}$ and $V_{TTT}$ are trivial, i.e.\ have rank $0$.) Then one easily checks:

\begin{lemma}
\label{LemmaOpSpat}
  In the bundle splittings \eqref{EqOpSpatBundles}, we have
  \[
    d_\cX=\begin{pmatrix}e_1\\\sld\end{pmatrix},\quad \delta_h=\begin{pmatrix} -r^{-\sln}e_1 r^\sln & r^{-2}\sldelta \end{pmatrix},
  \]
  with $\sldelta$ here the co-differential $\CI(\cX,T^*\cX)\to\CI(\cX)$, hence
  \[
    d_\cX\Omega=\begin{pmatrix}q' \\ 0\end{pmatrix},\quad \nabla^h\Omega=\begin{pmatrix}q' & 0\end{pmatrix} \in \Hom(T^*\cX,\R);
  \]
  moreover
  \[
    h = \begin{pmatrix} 1 \\ 0 \\ r^2\slg \end{pmatrix},\quad \tr_h=\begin{pmatrix} 1 & 0 & r^{-2}\sltr \end{pmatrix} \in \Hom(S^2T^*\cX,\R),
  \]
  and
  \begin{equation}
  \label{EqOpSpatDelH}
    \delta_h^*=\begin{pmatrix}
                 e_1             & 0                         \\
                 \frac{1}{2}\sld & \frac{1}{2}r^2 e_1 r^{-2} \\
                 r q\slg         & \sldelta^*
               \end{pmatrix},
    \quad
    \delta_h=\begin{pmatrix}
               -r^{-\sln}e_1 r^\sln & r^{-2}\sldelta       & r^{-3}q \sltr \\
               0                    & -r^{-\sln}e_1 r^\sln & r^{-2}\sldelta
             \end{pmatrix}.
  \end{equation}
\end{lemma}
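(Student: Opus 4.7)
The plan is to derive each entry of the listed matrices by direct computation using the covariant derivative identities \eqref{EqOpSpatCov}, which provide the analogue for $(\cX,h)$ of what \eqref{EqOpWarpedCov} provides for $(\cM,g)$. The expressions for $d_\cX u$, $h$, and $\tr_h$ are essentially tautological given the splittings \eqref{EqOpSpatBundles} and the conventions that $e^1$ trivializes $W_{TN}$ and $V_{TNN}$, while $w\mapsto 2e^1 w$ identifies $W_{TT}\cong V_{TNT}$: from $d_\cX u=(\pa_r u)\,dr+\sld u=(e_1 u)\,e^1+\sld u$ one reads off the column for $d_\cX$, and from $h=(e^1)^2+r^2\slg$ and the formula for the trace of a symmetric 2-tensor $\tau=\tau_{NN}(e^1)^2+2e^1 w+\tau_{TT}$, namely $\tr_h\tau=\tau_{NN}+r^{-2}\sltr\tau_{TT}$, one obtains the given expressions. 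The formula for $d_\cX\Omega$, resp.\ $\nabla^h\Omega$, is immediate since $\Omega=\Omega(r)$, giving $d_\cX\Omega=\alpha^{-1}\alpha'\cdot e^1$, equivalently $\alpha'$ in the trivialization.

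For $\delta_h^*$ acting on a one-form $u=u_N e^1+u_T$ (with $u_T\in W_{TT}$), I would compute $\delta_h^*u(V,W)=\tfrac12\bigl((\nabla^h_V u)(W)+(\nabla^h_W u)(V)\bigr)$ for $V,W$ each equal to $e_1$ or tangent to $S$, and use \eqref{EqOpSpatCov} to read off the action of $\nabla^h_{e_1}$ and $\nabla^h_v$ on $e^1$ and elements of $T^*S$. The $e^1 e^1$ entry yields $e_1 u_N$; the $e^1\cdot(\cdot)$ entries produce $\tfrac12\sld u_N+\tfrac12 r^2 e_1(r^{-2}u_T)$ after using $\nabla^h_{e_1}u_T=e_1 u_T-\alpha r^{-1}u_T$; and the $S^2T^*S$ entry becomes $\slnabla^{\sym}u_T+r\alpha\slg\,u_N$ upon using $\nabla^h_v e^1=r\alpha\,i_v\slg$. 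This produces precisely the stated matrix for $\delta_h^*$, modulo keeping consistent the factor $2$ coming from the identification $V_{TNT}\cong W_{TT}$.

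The formula for $\delta_h$ on one-forms follows either by taking the formal $L^2$-adjoint of $d_\cX$ with respect to the volume density $\alpha^{-1}r^\sln\,dr\,|d\slg|$ (which instantly yields the $-r^{-\sln}e_1 r^\sln$ on the $W_{TN}$ component and $r^{-2}\sldelta$ on the $W_{TT}$ component), or equivalently by taking the trace of $\nabla^h$ using \eqref{EqOpSpatCov}. The formula for $\delta_h$ on symmetric 2-tensors can be obtained most quickly as the formal adjoint of $\delta_h^*$ with respect to the same volume form, which transposes the rows/columns of \eqref{EqOpSpatDelH} after the appropriate sign change; the off-diagonal cross-terms $r^{-3}\alpha\sltr$ and $r^{-2}\sldelta$ arise from integrating the $r\alpha\,\slg\,u_N$ and $\tfrac12 r^2 e_1 r^{-2}$ entries against the volume form and using $\slg\cdot\sltr$-duality.

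The main obstacle is purely bookkeeping: one must consistently track the factor of $2$ built into the identification $V_{TNT}\cong W_{TT}$ given by $w\mapsto 2e^1 w$, the asymmetry between $V_{TTN}$ and $V_{TNT}$ notation, the conformal weight $r^2$ in $r^2\slg$ which forces factors $r^{\pm 2}$ when passing between $\sltr,\sldelta,\sldelta^*$ and their $h$-counterparts, and the warping factor $\alpha$ which enters via both $e_1=\alpha\pa_r$ and the identity $\nabla^h_{e_1}w=e_1 w-\alpha r^{-1}w$ on $W_{TT}$. Once these conventions are fixed, each matrix entry is a one-line consequence of \eqref{EqOpSpatCov}; there is no conceptual difficulty beyond ensuring that the two derivations of $\delta_h$ (adjoint vs.\ trace of $\nabla^h$) agree, which provides a convenient internal check.
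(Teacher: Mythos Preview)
Your proposal is correct and is precisely the direct verification that the paper omits: the paper states the lemma with the phrase ``one easily checks'' and gives no proof, relying on the reader to carry out exactly the computations you outline using the connection identities \eqref{EqOpSpatCov}. Your identification of the bookkeeping pitfalls (the factor $2$ in $V_{TNT}\cong W_{TT}$, the $r^{\pm 2}$ weights, the $\alpha$ from $e_1=\alpha\pa_r$) is accurate, and the two routes you describe for $\delta_h$ (formal adjoint versus trace of $\nabla^h$) are the natural ones and indeed agree.
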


Putting the two decompositions \eqref{EqOpWarpedBundles} and  \eqref{EqOpSpatBundles} together, we have
\begin{equation}
\label{EqOpSpatBundlesFull}
\begin{gathered}
  T^*\cM = W_N \oplus W_{TN} \oplus W_{TT}, \\
  S^2T^*\cM = V_{NN} \oplus (V_{NTN} \oplus V_{NTT}) \oplus (V_{TNN} \oplus V_{TNT} \oplus V_{TTT}),
\end{gathered}
\end{equation}
where we wrote
\[
  V_{NT} = V_{NTN} \oplus V_{NTT},
\]
by means of the identification $V_{NT}\cong W_T=W_{TN}\oplus W_{TT}$, i.e.\ $V_{NTN}$ and $V_{NTT}$ are the preimages of $W_{TN}$ and $W_{TT}$, respectively, under this isomorphism. Thus, we trivialize $W_N$ via $e^0$, and $W_{TN}$ via $e^1$, further $V_{NN}$ via $e^0e^0$, and $V_{NTN}$ via $2e^0e^1$, as well as $V_{TNN}$ via $e^1e^1$, and identify
\[
  T^*S=V_{NTT},\ w\mapsto 2 e^0 w; \quad T^*S=V_{TNT},\ w\mapsto 2 e^1 w; \quad S^2T^*S\cong V_{TTT}.
\]

\subsection{Specialization to the Schwarzschild--de~Sitter metric}
\label{SubsecOpSdS}

Now, we specialize to the Schwarzschild--de~Sitter metric, for which $S=\Sph^2$ is equipped with the round metric, and
\[
  q^2 = 1-\frac{2\bhm}{r}-\frac{\Lambda r^2}{3} = \mu,
\]
where $\mu=\mu_{b_0}$ was defined in~\eqref{EqSdSMetric}. We compute the subprincipal operator
\[
  S_\sub(\Box_g)=-i\nabla^{\pi^*T^*\cM}_{\ham_G}
\]
at the trapped set $\Gamma$ defined in \eqref{EqKdSGeoTrapped}, where $\pi\colon T^*\cM\to \cM$ the projection and $\nabla^{\pi^*T^*\cM}$ the pullback connection induced by the Levi-Civita connection on $\cM$; see \cite[\S3.3 and \S4]{HintzPsdoInner} for details. We can compute the form of this in the (partial) trivialization \eqref{EqOpSpatBundlesFull} of the bundle $T^*\cM$ using \eqref{EqKdSGeoTrappedHam}, \eqref{EqOpWarpedCov} and \eqref{EqOpSpatCov}, with the coordinates on $T^*\cM$ given by
\begin{equation}
\label{EqOpSdSTrapCovec}
  -\sigma\,dt + \xi\,dr + \eta,\quad \eta\in T^*\Sph^2
\end{equation}
as in \eqref{EqKdSGeoTrapCovec}; so we have $\sigma_1(2 q' e_0)=-2 i r^{-1}\sigma$ at $r=r_P$, hence
\begin{equation}
\label{EqOpSdSTrapSubpr0}
\begin{split}
  S_\sub(\Box_g) &= -2\mu^{-1}\sigma D_t
   +ir^{-2}
     \begin{pmatrix}
       \ham_{|\eta|^2} & 0               & 0 \\
       0               & \ham_{|\eta|^2} & 0 \\
       0               & 0               & \nabla^{\pi_{\Sph^2}^*T^*{\Sph^2}}_{\ham_{|\eta|^2}}
     \end{pmatrix} \\
   &\qquad
   +i\begin{pmatrix}
       0              & -2r^{-1}\sigma & 0 \\
       -2r^{-1}\sigma & 0              & -2 q r^{-3}i_\eta \\
       0              & 2 q r^{-1}\eta & 0
     \end{pmatrix}
\end{split}
\end{equation}
at $\Gamma$, where $\pi_{\Sph^2}\colon\cM\to\Sph^2$ is the projection map $(t,r,\omega)\mapsto\omega$, and $\nabla^{\pi_{\Sph^2}^*T^*\Sph^2}$ is the pullback connection on  $\pi_{\Sph^2}^*T^*\Sph^2$ induced by the Levi-Civita connection on $\Sph^2$. For the details of this calculation, we refer the reader to\footnote{Our $\sigma$ differs from the $\sigma$ in \cite{HintzPsdoInner} by a sign.} \cite[Proposition~4.7]{HintzPsdoInner}.

In order to compute regularity thresholds at the radial set $\cR$, we will also need to compute the subprincipal operator of $\Box_g$, acting on symmetric 2-tensors, at $\cR$; up to a factor of $-i$, this is given by $\nabla_{\ham_G}^{\pi^*S^2T^*M^\circ}$. We first calculate the form of $\nabla_{\ham_G}^{\pi^*T^*M^\circ}$ at $\cR$. To simplify our calculations, we use the change of coordinates
\[
  t_0 := t - F,\quad F'=\pm\mu^{-1}
\]
near $r=r_\pm$, so $\pa_t=\pa_{t_0}$ and $dt_0=q^{-1}e^0\mp q^{-1}e^1$; this change of coordinates amounts to taking $c_{b_0,\pm}\equiv 0$ in \eqref{EqSdSTStar}, see also the related discussion in \S\ref{SubsecKdSGeo}. Furthermore, writing covectors as
\begin{equation}
\label{EqOpSdSSmoothCoord}
  -\sigma\,dt_0 + \xi\,dr + \eta, \quad \eta\in T^*\Sph^2,
\end{equation}
the dual metric function is $G=\mp 2\sigma\xi-\mu\xi^2-r^{-2}|\eta|^2$. Hence, at the conormal bundle of the horizon at $r=r_\pm$, given in coordinates by
\begin{equation}
\label{EqOpSdSRadLocation}
  \cL_\pm = \{ (t_0,r_\pm,\omega; 0,\xi,0) \in \Sigma \},
\end{equation}
the Hamilton vector field is $\ham_G=\pm 2\xi\pa_{t_0}+\mu'\xi^2\pa_\xi-2\mu\xi\pa_r$. We kept only those terms here which are not the product of a smooth function vanishing at $\cL_\pm$ with a vector field tangent to $\cL_\pm$. Now $\pa_{t_0}=\pa_t=q e_0$; using \eqref{EqOpWarpedCov}, \eqref{EqOpSpatCov} and Lemma~\ref{LemmaOpSpat}, we find
\begin{gather*}
  \nabla^{\pi^*T^*M^\circ}_{e_0}(u e^0) = (e_0 u)e^0 - q' u e^1, \\
  \nabla^{\pi^*T^*M^\circ}_{e_0}(u e^1) = (e_0 u)e^1 - q' u e^0, \\
  \nabla^{\pi^*T^*M^\circ}_{e_0}(w) = e_0 w,
\end{gather*}
where $w\in\CI(T^*M^\circ,\pi^*T^*\Sph^2)$ in the last line; therefore, in the static splitting \eqref{EqOpSpatBundlesFull} of $T^*M^\circ$, we have
\[
  \nabla^{\pi^*T^*M^\circ}_{\pa_{t_0}}
   = \begin{pmatrix}
       \pa_t            & -\frac{1}{2}\mu' & 0 \\
       -\frac{1}{2}\mu' & \pa_t            & 0 \\
       0                & 0                & \pa_t
     \end{pmatrix}.
\]
Since the partial frame used to define the splitting \eqref{EqOpSpatBundlesFull} ceases to be smooth at $r=r_\pm$, we use, near $r=r_\pm$, the smooth splitting
\begin{equation}
\label{EqOpSdSSmoothTM}
  u = \wt u_N\,dt_0 + \wt u_{TN}\,dr + \wt u_{TT};
\end{equation}
for smooth sections $u$ of the bundle $\pi^*T^*M^\circ\to T^*M^\circ$ near $r=r_\pm$, with $\wt u_N,\wt u_{TN}$ smooth functions on $T^*M^\circ$ near $r_\pm$, and $\wt u_{TT}$ a smooth section of $\pi^*T^*\Sph^2\to T^*M^\circ$. We have
\[
  u = u_N\,e^0 + u_{TN}\,e^1 + u_{TT}
\]
with
\[
  \begin{pmatrix} u_N \\ u_{TN} \\ u_{TT} \end{pmatrix} = \sC^{[1]}_\pm\begin{pmatrix} \wt u_N \\ \wt u_{TN} \\ \wt u_{TT} \end{pmatrix},\qquad
  \sC^{[1]}_\pm
          = \begin{pmatrix}
              q^{-1}     & 0 & 0 \\
              \mp q^{-1} & q & 0 \\
              0          & 0 & 1
            \end{pmatrix}.
\]
Therefore, the matrix of $\nabla^{\pi^*T^*M^\circ}_{\pa_{t_0}}$ in the splitting \eqref{EqOpSdSSmoothTM} at $\cL_\pm$ (where in particular $\mu=0$) is given by
\begin{equation}
\label{EqOpSdSConjForms}
  \nabla^{\pi^*T^*M^\circ}_{\pa_{t_0}}
   = (\sC^{[1]}_\pm)^{-1}
     \begin{pmatrix}
       \pa_t            & -\frac{1}{2}\mu' & 0 \\
       -\frac{1}{2}\mu' & \pa_t            & 0 \\
       0                & 0                & \pa_t
     \end{pmatrix}
     \sC^{[1]}_\pm
   = \begin{pmatrix}
       \pa_{t_0}\pm\frac{1}{2}\mu' & 0                           & 0 \\
       0                           & \pa_{t_0}\mp\frac{1}{2}\mu' & 0 \\
       0                           & 0                           & \pa_{t_0}
     \end{pmatrix}.
\end{equation}
We therefore have
\begin{equation}
\label{EqOpSdSSubprRadForms}
  \nabla^{\pi^*T^*M^\circ}_{\ham_G}
   = \pm 2\xi \pa_{t_0}
     \mp 2\kappa_\pm \xi^2\pa_\xi
     - 2\mu\xi\pa_r
     \pm 2\kappa_\pm\xi
       \begin{pmatrix}
         -1 & 0 & 0 \\
         0  & 1 & 0 \\
         0  & 0 & 0
       \end{pmatrix}
\end{equation}
at $\cL_\pm$, where we use the surface gravities
\[
  \kappa_\pm=\mp\frac{1}{2}\mu'(r_\pm)>0
\]
of the horizons, $\kappa_\pm=\beta_\pm^{-1}$ with $\beta_\pm$ as in \eqref{EqOpNoB0}. Note that $0$-th order terms of $\nabla_{\pa r}^{\pi^*T^*M^\circ}$ do not contribute at $\cL_\pm$ due to the extra factor of $\mu$; the $\pa_r$-derivative is needed to calculate the skew-adjoint part of $\nabla_{\ham_G}^{\pi^*T^*M^\circ}$ in \S\ref{SubsecESGTrap}.

The smooth splitting of sections $u$ of $\pi^* S^2T^*M^\circ\to T^*M^\circ$ induced by \eqref{EqOpSdSSmoothTM} is, analogously to \eqref{EqOpSpatBundlesFull},
\begin{equation}
\label{EqOpSdSSmoothS2TM}
  u = \wt u_{NN}\,dt_0^2 + 2\wt u_{NTN}\,dt_0\,dr + 2 dt_0\,\wt u_{NTT} + \wt u_{TNN} dr^2 + 2 dr\,\wt u_{TNT} + \wt u_{TTT},
\end{equation}
with $\wt u_{NN},\wt u_{NTN},\wt u_{TNN}$ functions on $T^*M^\circ$, $\wt u_{NTT},\wt u_{TNT}$ sections of $\pi^*T^*\Sph^2\to T^*M^\circ$, and $\wt u_{TTT}$ a section of $\pi^*S^2T^*\Sph^2\to T^*M^\circ$. The change of frame from this partial frame to the partial frame used in \eqref{EqOpSpatBundlesFull} is given by $\sC^{[1]}_\pm$ lifted to symmetric 2-tensors, i.e.\ by
\[
  \begin{pmatrix} u_{NN} \\ u_{NTN} \\ u_{NTT} \\ u_{TNN} \\ u_{TNT} \\ u_{TTT} \end{pmatrix}
     =\sC^{(2)}_\pm
       \begin{pmatrix} \wt u_{NN} \\ \wt u_{NTN} \\ \wt u_{NTT} \\ \wt u_{TNN} \\ \wt u_{TNT} \\ \wt u_{TTT} \end{pmatrix},
  \quad
  \sC^{(2)}_\pm
  = \begin{pmatrix}
      q^{-2}     & 0     & 0          & 0   & 0 & 0 \\
      \mp q^{-2} & 1     & 0          & 0   & 0 & 0 \\
      0          & 0     & q^{-1}     & 0   & 0 & 0 \\
      q^{-2}     & \mp 2 & 0          & q^2 & 0 & 0 \\
      0          & 0     & \mp q^{-1} & 0   & q & 0 \\
      0          & 0     & 0          & 0   & 0 & 1
    \end{pmatrix};
\]
changing the frame in the other direction uses the inverse matrix
\begin{equation}
\label{EqOpSdSConjTensors}
  (\sC^{(2)}_\pm)^{-1}
  = \begin{pmatrix}
      q^2    & 0            & 0          & 0      & 0      & 0 \\
      \pm 1  & 1            & 0          & 0      & 0      & 0 \\
      0      & 0            & q          & 0      & 0      & 0 \\
      q^{-2} & \pm 2 q^{-2} & 0          & q^{-2} & 0      & 0 \\
      0      & 0            & \pm q^{-1} & 0      & q^{-1} & 0 \\
      0      & 0            & 0          & 0      & 0      & 1
    \end{pmatrix}.
\end{equation}

Computing the second symmetric tensor power of the operator \eqref{EqOpSdSSubprRadForms} in the splitting \eqref{EqOpSdSSmoothS2TM}, we find
\begin{equation}
\label{EqOpSdSSubprRadTensors}
  \nabla^{\pi^*S^2T^*M^\circ}_{\ham_G}
    = \pm 2 \xi\pa_{t_0} \mp 2 \kappa_\pm\xi^2\pa_\xi - 2\mu\xi\pa_r
      \pm 2 \kappa_\pm\xi
        \begin{pmatrix}
          -2 & 0 & 0 & 0 & 0 & 0 \\
          0 & 0 & 0 & 0 & 0 & 0 \\
          0 & 0 & -1 & 0 & 0 & 0 \\
          0 & 0 & 0 & 2 & 0 & 0 \\
          0 & 0 & 0 & 0 & 1 & 0 \\
          0 & 0 & 0 & 0 & 0 & 0
        \end{pmatrix}
\end{equation}
at $\cL_\pm$, and thus at $\cR_\pm$ as a b-differential operator, writing $\pa_{t_0}=-\tau_0\pa_{\tau_0}$ for the boundary defining function $\tau_0=e^{-t_0}$ of $M$.

\section{Mode stability for the Einstein equation (UEMS)}
\label{SecUEMS}

In this section, we prove Theorem~\ref{ThmKeyUEMS}.\footnote{In the follow-up work \cite{HintzKNdSStability}, the first author gives a detailed self-contained proof of UEMS for the Einstein--Maxwell system, linearized around a spherically symmetric Reissner--Nordstr\"om--de~Sitter spacetime. A forteriori, this gives UEMS for Schwarzschild--de~Sitter spacetimes, Theorem~\ref{ThmKeyUEMS}, as the special case when both the black hole charge and the electromagnetic perturbation vanish.} We recall the mode stability results on Schwarzschild--de~Sitter space from previous physics works by Ishibashi, Kodama and Seto \cite{KodamaIshibashiSetoBranes,KodamaIshibashiMaster,IshibashiKodamaHigherDim}. These are not stated in the form we need them, but it is easy to put them into the required form.

These works rely on a decomposition of tensors into scalar and vector parts, as discussed in \cite{KodamaIshibashiSetoBranes}; we restrict our discussion here to the case of interest in the present paper, namely we work in $3$ spatial dimensions, so the spherical metric is the round metric on $\Sph^2$, corresponding to taking $n=2$ in \cite{KodamaIshibashiMaster}. (For higher dimensional spheres there would be a tensor part as well.) In brief, if $h$ is a (generalized) mode solution of the linearized Einstein equation~\eqref{EqKeyUEMSLinEin}, we shall expand $h$ into spherical harmonics; since $h$ is a symmetric 2-tensor, this can be accomplished by means of the formulas~\eqref{EqUEMSScalar} and \eqref{EqUEMSScalar0} (for scalar perturbations) and \eqref{EqUEMSVector} (for vector perturbations) below. Now, the Schwarzschild--de~Sitter metric is rotationally invariant, hence so is the linearized Einstein equation; therefore, each spherical harmonic component by itself solves the linearized Einstein equation. The calculations in the rest of the section analyze each of these components and show that each is a sum of a pure gauge term and a linearized Kerr--de~Sitter metric.

Let us describe this in more detail. First, one considers a decomposition of tensors into aspherical ($dr,dt$), mixed and spherical parts. Then scalar perturbations arise from non-constant scalar eigenfunctions of the (negative) spherical Laplacian $\slDelta$ on $\Sph^2$,
\begin{equation}
\label{EqUEMSScalarEigenfunction}
  (\slDelta+k^2)\scal=0,\quad k>0,
\end{equation}
via considering $\sld\scal$ as well as $\sldelta^*\sld\scal$ and $\scal\slg$, namely
\begin{equation}
\label{EqUEMSScalar}
  h=\wt f\scal - \frac{2 r}{k}(f\otimes_s\sld\scal) + 2r^2\Bigl[H_L\scal\slg+H_T\Bigl(\frac{1}{k^2}\sldelta^*\sld+\frac{1}{2}\slg\Bigr)\scal\Bigr],
\end{equation}
where $\wt f$ is valued in aspherical 2-tensors, $f$ in aspherical one-forms, and $H_L,H_T$ are functions, all independent of the spherical variables; see \cite[Equation~(2$\cdot$4)]{KodamaIshibashiMaster}, or \cite[\S3]{KodamaIshibashiSetoBranes} in a more general setting.

On the other hand, vector perturbations arise from eigen-1-forms of the (negative) tensor Laplacian $\slDelta$,
\begin{equation}
\label{EqUEMSVectorEigenfunction}
  (\slDelta+k^2)\vect=0,\quad \sldelta\vect=0,
\end{equation}
via considering $\sldelta^*\vect$, namely
\begin{equation}
\label{EqUEMSVector}
  h=2r(f\otimes_s\vect)-\frac{2}{k}r^2 H_T\sldelta^*\vect,
\end{equation}
where $f$ is an aspherical one-form and $H_T$ a function, all independent of the spherical variables; see \cite[\S5.2]{KodamaIshibashiMaster} or \cite[Equation~(29)]{KodamaIshibashiSetoBranes}.

Together with the rotationally invariant ($k=0$) scalar case, when there is only
\begin{equation}
\label{EqUEMSScalar0}
  h=\wt f \scal+2r^2 H_L\scal\slg,\quad \scal\equiv 1,
\end{equation}
\eqref{EqUEMSScalar} and \eqref{EqUEMSVector} give a Hilbert basis of the $L^2$ space of 2-tensors on Schwarzschild--de~Sitter space. Indeed, this decomposition is a consequence of the well-known scalar--vector--tensor decomposition on $\Sph^2$, after tensoring the latter with the $L^2$ space in the non-spherical variables $(t_*,r)$. We briefly recall the decomposition on $\Sph^2$: aspherical 2-tensors are captured by $\wt f\scal$ in \eqref{EqUEMSScalar} and \eqref{EqUEMSScalar0}; next, the Helmholtz decomposition of 1-forms on $\Sph^2$ together with the fact that $(\slDelta+k^2)$ preserves the divergence-free condition in \eqref{EqUEMSVectorEigenfunction} show that the $f\otimes_s\sld\scal$ and $f\otimes_s\vect$ terms in \eqref{EqUEMSScalar} and \eqref{EqUEMSVector} capture all symmetric products of aspherical 1-forms with spherical 1-forms. Spherical pure trace tensors are described by $H_L\scal\slg$ in \eqref{EqUEMSScalar} and \eqref{EqUEMSScalar0}. Lastly, the space of traceless tensors on $\Sph^2$ can be decomposed into an orthogonal sum of the space of traceless and divergence-free tensors, and the space of tensors of the form $(\sldelta^*+\frac{1}{2}\slg\sldelta)W$ (the operator here is the adjoint of $\sldelta$ acting on tracefree symmetric 2-tensors), with $W$ a 1-form on $\Sph^2$. Now on $\Sph^2$, there are no non-trivial traceless and divergence-free 2-tensors (see for example \cite[section~III]{HiguchiSpherical} for a proof), and the Helmholtz decomposition for $W$ then yields the last terms in \eqref{EqUEMSScalar} and \eqref{EqUEMSVector}, respectively.

One also considers possible gauge changes. In the scalar case these correspond to 1-forms
\begin{equation}
\label{EqUEMSScalarGauge}
  \xi=T\scal+rL\,\sld\scal,
\end{equation}
where $T$ is an aspherical one-form, $L$ a function, both independent of the spherical variables, and $\scal$ as in \eqref{EqUEMSScalarEigenfunction}; see \cite[Equation~(47)]{KodamaIshibashiSetoBranes}. In the vector case, they are
\[
  \xi=r L\vect,
\]
where $L$ is a function, independent of the spherical variables, with $\vect$ as in \eqref{EqUEMSVectorEigenfunction}; see \cite[Equation~(31)]{KodamaIshibashiSetoBranes}.

It is convenient to also introduce the angular momentum variable, which is related to $k$ by
\[
  k^2=l(l+1),\quad l\in\N_0,
\]
in the scalar case, and
\[
  k^2=l(l+1)-1,\quad l\in\N_+
\]
in the vector case.

\subsection{Perturbations with \texorpdfstring{$l\geq 2$}{l at least 2}}

There are special cases corresponding to $l=0,1$ in the scalar case and $l=1$ in the vector case, which we discuss below, so we first consider $l\geq 2$. Then \cite{KodamaIshibashiMaster,IshibashiKodamaHigherDim} show that certain gauge-invariant quantities constructed in \cite{KodamaIshibashiSetoBranes} necessarily vanish for any (temporal, i.e.\ with fixed frequency $\sigma$ in $t$) mode solution of the linearized ungauged Einstein equation (linearized at Schwarzschild--de~Sitter) with $\Im\sigma\geq 0$ (in our notation). In the scalar case, for $h$ as in \eqref{EqUEMSScalar}, introducing
\[
  \bfX=\frac{r}{k}\Bigl(f+\frac{r}{k}\,dH_T\Bigr),
\]
these quantities are 
\begin{equation}
\label{EqUEMSScalarGaugeInv}
  F=H_L+\frac{1}{2}H_T+\frac{1}{r}G(dr,\bfX), \quad \wt F=\wt f+2\delta_{g_{\AS}}^*\bfX,
\end{equation}
where
\[
  g_{\AS}=q^2\,dt^2-q^{-2}\,dr^2
\]
is the aspherical part of the metric; see \cite[Equations~(2$\cdot$7a)--(2$\cdot$8)]{KodamaIshibashiMaster} or \cite[Equations~(57)--(59)]{KodamaIshibashiSetoBranes}. By \eqref{EqOpSpatDelH}, we have $\delta_{g_{\AS}}^*\bfX=\delta_g^*\bfX - r G(\bfX,dr)\slg$. Now if $F=0$ and $\wt F=0$ then $h$ can be written as
\[
  h = \delta_g^*\Bigl(-2\bfX\scal+2\frac{r^2}{k^2} H_T\sld\scal\Bigr),
\]
where we use Lemma~\ref{LemmaOpSpat} to compute $\delta_g^*(r^2\sld\scal)=r^2\sldelta^*\sld\scal$; therefore, the mode $h$ is a pure gauge mode.

In the vector case, with $h$ given by \eqref{EqUEMSVector}, the gauge invariant quantity is
\begin{equation}
\label{EqUEMSVectorGaugeInv}
  F=f+\frac{r}{k}dH_T,
\end{equation}
see \cite[Equation~(5$\cdot$10)]{KodamaIshibashiMaster} or \cite[Equation~(33)]{KodamaIshibashiSetoBranes}, and if it vanishes then
\[
  h=-\frac{2}{k}\delta_g^*(r^2 H_T\vect),
\]
so again the mode is a pure gauge mode.

We remark that both in the scalar and the vector cases, the gauge 1-form is also a temporal mode 1-form, with the same frequency $\sigma$ as $h$.

Note that one in fact has a more precise result here: the gauge 1-forms are well-behaved even on the extension of the spacetime across the horizons. This is due to the fact that the vanishing of the gauge invariant quantities, constructed from a scalar or vector perturbation satisfying the linearized Einstein equation, follows by reducing to a `master equation' satisfied by a `master variable' $\Phi$. For the precise definition of $\Phi$, we refer the reader to \cite[\S3]{KodamaIshibashiMaster} (and \cite[\S4]{KodamaIshibashiMaster} for the discussion of stationary perturbations, i.e.\ $\sigma=0$); here, we merely remark that $\Phi$ is a suitable linear combination (with $r$-dependent coefficients) of components of $F$ and $\wt F$ in the scalar case, and $F$ is the vector case. This master equation is a (time-harmonic, i.e.\ stationary) Schr\"odinger equation with a positive potential: the corresponding Schr\"odinger operator is of the form
\begin{equation}
\label{EqUEMSSchrodinger}
  (\mu D_r)^2 + V_S,
\end{equation}
see \cite[Equation~(3$\cdot$5)]{KodamaIshibashiMaster}. A simple coordinate change $x=x(r)$ transforms this into the Schr\"odinger operator $D_x^2+V_S$, $x\in\R$, \emph{on the real line} with $V_S(x)$ exponentially decaying as $|x|\to\infty$;\footnote{Roughly, at $\mu=0$, the Schr\"odinger operator is essentially $(\mu D_\mu)^2+V_S$, which with $x=-\log\mu$ equals $D_x^2+V_S$, with $V_S=\cO(\mu)=\cO(e^{-x})$ exponentially decaying.} thus, this operator is essentially self-adjoint. The vanishing of the master variable for mode solutions then follows from
\begin{itemize}
  \item the spectrum of this Schr\"odinger operator being non-negative: this excludes $\Im\sigma>0$;
  \item the absence of embedded eigenvalues and, more generally, of real non-zero resonance: this is proved via a boundary pairing formula (see~\cite[\S\S2.3 and 2.5]{MelroseGeometricScattering} for the case of embedded eigenvalues) and a unique continuation at infinity for Schr\"odinger operators on the real line with exponentially decaying potentials;\footnote{See also the arguments around \cite[Equations~(3.29) and (3.32)]{HintzVasyKdsFormResonances} for a discussion in the context of asymptotically hyperbolic spaces---the present problem can be regarded as living on 1-dimensional hyperbolic space, given as the interval $(r_-,r_+)_r$ with the metric $\mu^{-2}\,d r^2$, whose Laplacian is $(\mu D_r)^2$ as in~\eqref{EqUEMSSchrodinger}}
  \item the absence of a $0$-eigenvalue and, more generally, of a $0$-resonance, due to the positivity of the potential.
\end{itemize}
Moreover, the Schr\"odinger operator~\eqref{EqUEMSSchrodinger} is well-behaved even on the extended space; the resonances of the extended problem (corresponding to these modes) correspond to resonances of the asymptotically hyperbolic problem described by this Schr\"odinger operator, see for instance \cite[Lemma~2.1]{HintzVasyKdsFormResonances} and \cite[Footnote~58]{VasyMicroKerrdS}. Thus, the vanishing of $\Phi$ at first in the exterior of the event and cosmological horizons in fact guarantees its vanishing globally (across the horizons); then the gauge invariant quantities are reconstructed from this and thus vanish; and finally the above arguments then show that the metric perturbations under consideration are all pure gauge modes. This completes the proof of UEMS for modes if $l\geq 2$; see Lemma~\ref{LemmaUEMSWithLogs} for the case of generalized modes.

\subsection{Spherically symmetric perturbations}

Next, the case $l=0$ exists only in the scalar case, and it corresponds to spherical symmetry. In this case, we do not need to assume the metric perturbation $h$ to be a generalized mode; we shall show that \emph{any} spherically symmetric perturbation arises by an infinitesimal change of the black hole mass.

The extension of Birkhoff's theorem on the classification of solutions of Einstein's equations with spherical symmetry---namely, that the only such solutions are Schwarzschild space\-times---to positive cosmological constants was done in a particularly simple manner by Schleich and Witt \cite{SchleichWittBirkhoff}. One needs to check that their arguments work already at the linear level---a priori there may be solutions of the linearized equation that do not correspond to solutions of the non-linear equation---but this is straightforward, as we show in the remainder of this section.

The proof in~\cite{SchleichWittBirkhoff} proceeds by writing the Lorentzian metric, with the negative of our sign convention, in the form
\begin{equation}
\label{EqUEMSSdS}
  g=F\,du^2+2X\,du\otimes_s dv+Y^2\,\slg,
\end{equation}
with $F,X,Y$ independent of the spherical variables, which one may always do by a diffeomorphism; on the linearized level, one can similarly bring a metric perturbation into this form by adding a $\delta_g^*$ term. Note in particular that the Schwarzschild--de~Sitter metric is locally in $r$, but globally otherwise, in this form for an appropriate choice of $t_*$ (in terms of the definition \eqref{EqSdSTStar} of $t_*$, with $c_{b_0,\pm}\equiv 0$) with $u=t_*$, $v=r$, and then $X=\pm 1$, $Y=v$, $F=\frac{\Lambda}{3} v^2+\frac{2M}{v}-1$.  Notice that by spherical symmetry a priori $g$ is of the form
\[
  g=\wt F\,d\wt u^2+2\wt X\,d\wt u\otimes_s d\wt v+\wt Z\,d\wt v^2+\wt Y^2\,\slg,
\]
with coefficients independent of the spherical variables, i.e.\ $g$ simply has an additional $\wt Z\,d\wt v^2$ term; in our near-Schwarzschild--de~Sitter regime one may even assume (for convenience only) that $\wt Z$ is small; then the coordinate change is $v=\wt v$, $u=U(\wt u,\wt v)$, and conversely $\wt v=v$, $\wt u=\wt U(u,v)$, which gives the $dv^2$ component $\wt F(\pa_{ v}\wt U)^2+2\wt X\pa_{v}\wt U+\wt Z$, which is easily solvable for $\pa_v\wt U$ whether $\wt F$ vanishes (since $\wt X$ is near $1$) or not (since $\wt Z$ is assumed small, so the discriminant of the quadratic equation is positive). Note that with $\delta \wt U$ denoting linearized change in $\wt U$ (relative to $\wt U=u$, corresponding to the trivial coordinate change $\wt v=v$, $\wt u=u$ needed in the case of the Schwarzschild--de~Sitter metric with $\wt Z=0$), $\delta\wt Z$ the change in $\wt Z$ (relative to $\wt Z=0$), we get at $\wt X=1$ the equation $2\pa_v\delta\wt U+\delta\wt Z=0$ for the linearized gauge change, which in particular preserves the $u$-modes.

The gauge term for the linearized equation around Schwarzschild--de~Sitter can be seen even more clearly by considering $\delta^*_{g_{0,\pm}}$, where $g_{0,\pm}$, is the Schwarzschild--de~Sitter metric in the above form \eqref{EqUEMSSdS}, thus with $v=r$, $u=t_*$ with an appropriate choice of $t_*$, corresponding to taking $c_{b_0,\pm}\equiv 0$ in \eqref{EqSdSTStar}. See Figure~\ref{FigUEMSNullCoord}.

\begin{figure}[!ht]
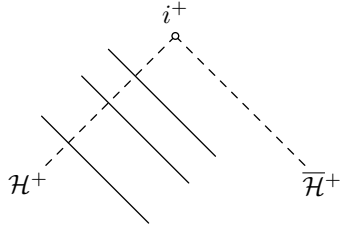

  \centering
  \inclfig{UEMSNullCoord}
  \caption{Level sets of $u$ for which $(u,v=r)$ gives coordinates near the event horizon, away from the cosmological horizon, in which the Schwarzschild--de~Sitter metric takes the form \eqref{EqUEMSSdS}. Analogous coordinates can be chosen near the cosmological horizon, away from the even horizon.}
  \label{FigUEMSNullCoord}
\end{figure}

Then, acting on aspherical 1-forms (the only ones for $l=0$, see \eqref{EqUEMSScalarGauge}), written in the basis $du$, $dr$, and with output written in terms of the basis $du\otimes du$, $2du\otimes_s dr$, $dr\otimes dr$, $\slg$, and moreover writing $\mu=\mu(r)$ for the $du^2$ component of Schwarzschild--de~Sitter, we compute
\[
  \delta^*_{g_{0,\pm}}
   =\begin{pmatrix}
      \pa_u+\frac{1}{2}\mu' & -\frac{1}{2}\mu\mu'              \\
      \frac{1}{2}\pa_r      & \frac{1}{2}\pa_u-\frac{1}{2}\mu' \\
      0                     & \pa_r                            \\
      r                     & -r \mu
    \end{pmatrix},
\]
and thus, with tangent vectors written in terms of the basis $\pa_u$, $\pa_r$,
\[
  \delta^*_{g_{0,\pm}}g_{0,\pm}
    =\begin{pmatrix}
       \mu\pa_u                             & \pa_u+\frac{1}{2}\mu' \\
       \frac{1}{2}\mu\pa_r+\frac{1}{2}\pa_u & \frac{1}{2}\pa_r      \\
       \pa_r                                & 0                     \\
       0                                    & r
     \end{pmatrix},
\]
where the second $g_{0,\pm}$ on the left is the isomorphism from the tangent to the cotangent bundle. This shows that, given an $l=0$ symmetric 2-tensor, its $dr^2$ component, say $\dot Z\,dr^2$, can be removed by subtracting $\delta^*_{g_{0,\pm}}g_{0,\pm}$ applied to an appropriate multiple $f\pa_u$ of $\pa_u$; one simply solves $\dot Z=\pa_r f$. Notice that this gauge change preserves mode expansions.

Einstein's equations\footnote{Recall that the sign conventions in \cite{SchleichWittBirkhoff}, which we are using presently, are the opposite of what we use in the rest of the paper.} $\Ric(g)-\Lambda g=0$ for the metric \eqref{EqUEMSSdS} are stated in \cite[Equations~(6)--(9)]{SchleichWittBirkhoff}; for us the important ones are
\begin{gather}
\label{EqUEMSEin01}
  -\pa_v X\pa_v Y+X\pa_v^2 Y=0, \\
\label{EqUEMSEin02}
  X^2+Y\pa_v F\pa_v Y+F(\pa_v Y)^2-2X\pa_u Y\pa_v Y-2XY\pa_u\pa_v Y=-\Lambda X^2 Y^2, \\
\begin{split}
\label{EqUEMSEin03}
  X\pa_u F\pa_v Y+2 X F\pa_u\pa_v Y-2F\pa_u X&\pa_v Y-X\pa_v F\pa_u Y \\
    & + 2X\pa_u X\pa_u Y-2X^2\pa^2_u Y=0.
\end{split}
\end{gather}
(Equation~\eqref{EqUEMSEin01} is the $dv^2$ component of the Einstein equation, equation~\eqref{EqUEMSEin02} the spherical part, simplified using \eqref{EqUEMSEin01}, and lastly\footnote{There seem to be two typos in \cite[Equation~(9)]{SchleichWittBirkhoff}: a missing factor $F$ in the second term, and the differentiations in the fifth term are with respect to $u$ rather than $v$. This does not affect the argument in \cite{SchleichWittBirkhoff} however, since this equation is only used once one has arranged $Y=v$ and $X=\pm 1$, hence $\pa_u X=\pa_v X\equiv 0$ and $\pa_u\pa_v Y\equiv 0$.} \eqref{EqUEMSEin03} is the $du^2$ component, simplified by plugging in the expression for $\Lambda$ from \eqref{EqUEMSEin02}.)

Now, the linearized version of \eqref{EqUEMSEin01}, linearized around Schwarzschild--de~Sitter, so $X=1$, $Y=v$, with dotted variables denoting the linearization, is
\begin{equation}
\label{EqUEMSEin01L}
  -\pa_v \dot X+\pa_v^2 \dot Y=0,
\end{equation}
so
\[
  \dot X-\pa_v\dot Y=\xi(u),
\]
with $\xi$ independent of $v$. With this in mind it is convenient to further arrange that in $g$, one has $Y=v$, as one always may do by a diffeomorphism, and thus infinitesimally at Schwarzschild--de~Sitter by a $\delta_{g_{0,\pm}}^*$ term. Indeed, we are assuming that $Y$ is near $v$ by virtue of considering deformations of Schwarzschild--de~Sitter, so $Y$ can be used as a coordinate in place of $v$, and thus the inverse function theorem is applicable at least locally. Thus $v=V(u,Y)$ and the form of the metric (with no $dv^2$ term) is preserved. Arranging this directly on the linearized level can in fact be done globally (except in $r$, since our Schwarzschild--de~Sitter metric is only local in $r$): one can remove the $\dot Y$ component (appearing as a coefficient of $\slg$ in the linearized metric) by subtracting $\delta_{g_{0,\pm}}^* g_{0,\pm}(2 r^{-1}\dot Y\pa_r)$, which preserves mode expansions. Having arranged $\dot Y=0$, we conclude that $\dot X=\xi(u)$.

In fact, we can use this additional information to further simplify the metric by a simple change of variables to fix the coefficient of $du\otimes_s dv$ as $2$. Indeed, the $du\otimes_s dv$ term is $2X\,du\otimes_s dv=2\xi(u)\,du\otimes_s dv$, so changing $u$ appropriately, namely by doing a change of variables $\wt u=\wt U(u)$ with $\wt U'=\xi$, arranges this. In the linearized version, we note that solving $\pa_u f=2\dot X$ \emph{with $f$ independent of $r$}, and subtracting $\delta_{g_{0,\pm}}^*g_{0,\pm}(f\,\pa_u)$ from the symmetric 2-tensor removes its $du\otimes_s dr$ component as well. Note that this uses strongly that $\dot X=\xi(u)$: this is what ensures that the $\pa_r$ derivative in $\delta_{g_{0,\pm}}^*g_{0,\pm}$ does not give a $dr^2$ component. Again, notice that this gauge change keeps the $u$-modes unchanged except the $0$-mode, in which case a linear in $u$ term is generated.

But now, with $\dot X=0$, $\dot Y=0$, the linearization of \eqref{EqUEMSEin02} is
\begin{equation}\label{EqUEMSEin02L}
  v\pa_v \dot F+\dot F=0.
\end{equation}
This says $\pa_v(v\dot F)=0$, and hence
\[
  \dot F=M(u)v^{-1}.
\]
Finally \eqref{EqUEMSEin03} becomes
\begin{equation}
\label{EqUEMSEin03L}
  \pa_u\dot F=0,
\end{equation}
and thus $M(u)=M$ is independent of $u$. Comparing with \eqref{EqSdSMetric}, this is exactly the infinitesimal Schwarz\-schild--de~Sitter deformation corresponding to changing the mass. Thus, one concludes that locally in $r$, but globally in $t_*$, the only solution, \emph{without fixing a frequency $\sigma$ in $t_*$}, i.e.\ not working on the Fourier-transformed (in $t_*$) picture, is linearized Schwarzschild--de~Sitter---which is a 0-mode---up to gauge terms, which are mode gauge terms, with possibly linear growth in $t_*$, as described above.

Now, this argument applies separately in two regions of the form $r_- -\delta<r<r_2$ and $r_1<r<r_+ +\delta$, with $r_\pm$ as in \eqref{EqOpNoB0}, where $r_-<r_1<r_2<r_+$. Thus, in the overlap $r_1<r<r_2$, we can write the linearized solution as
\[
  \dot g=\delta_g^*\dot\theta_1+\dot g_{0,-}=\delta_g^*\dot\theta_2+\dot g_{0,+},
\]
where $\dot g_{0,\pm}$ are the linearized Schwarzschild--de~Sitter solutions in the two regions, and where we are using the global Schwarzschild--de~Sitter metric $g$, so the $\theta_\pm$ differ from the ones constructed above by pull-back by the diffeomorphism that puts $g$ into the form considered above, with vanishing $dr^2$ term; notice that this again preserves the $u$-mode expansions. This in particular implies that the $\dot g_{0,\pm}$ have the same mass parameter $M$, thus they are equal, that is $\dot g_{0,\pm}=\dot g_0$. Then one concludes that $\delta_g^*(\dot\theta_--\dot\theta_+)=0$, i.e.\ $\dot\theta_--\dot\theta_+$ is the one-form corresponding to a local Killing vector field which is spherically symmetric. This is necessarily a constant multiple of $\pa_t=\pa_{t_*}=\pa_u$, and is thus globally defined. Adding this multiple of the 1-form $g(\pa_t)$ to $\dot\theta_+$ to get $\dot\theta_+'$ we then have $\dot\theta_-=\dot\theta_+'$ in the overlap, and thus the one-form $\dot\theta$, equal to $\dot\theta_-$ and $\dot\theta_+'$ in the domains of definition of these two one-forms, is a well-defined global one-form. This proves the $l=0$ case of UEMS.

\subsection{Scalar perturbations with \texorpdfstring{$l=1$}{l equal to 1}}

In the scalar $l=1$ case one proceeds differently. Namely in this case the quantities \eqref{EqUEMSScalarGaugeInv} discussed for $l\geq 2$ are not gauge-invariant anymore as $\sldelta^*\sld\scal=-\scal\slg$ for $\scal$ as in \eqref{EqUEMSScalarEigenfunction}, that is $(\frac{1}{k^2}\sldelta^*\sld+\frac{1}{2}\slg)\scal\equiv 0$ (recalling $k^2=l(l+1)=2$), so the $H_T$ component in \eqref{EqUEMSScalar} is no longer defined; one simply puts $H_T:=0$. Moreover, they do not solve the full set of equations from $l\geq 2$ a priori, but it is shown in \cite{KodamaIshibashiMaster} that the additional equations can be regarded as gauge conditions. The extra gauge equations one gets are linear wave equations, thus solvable, with remaining gauge freedom given by the corresponding initial data, and thus these extra gauge equations can be assumed to hold. In fact, since we are working in $(2+1)$ spatial dimensions, the extra gauge equation is {\em exactly} the scalar wave equation, see \cite[Equation~(B$\cdot$3)]{KodamaIshibashiMaster},
\begin{equation}
\label{EqUEMSScalar1GaugeWave}
  \Box_g(L/r)\equiv r^{-2}\delta_{g_{\AS}}r^2 d_{\AS}(L/r)=-\frac{1}{k}\dot E_T,
\end{equation}
where $L$ is the coefficient of the gauge 1-form $r L\,\sld\scal$ (i.e.\ the special case $T=0$ of the expression \eqref{EqUEMSScalarGauge}),\footnote{The notation here is that of \cite[Equation~(47)]{KodamaIshibashiSetoBranes}, not the slightly different one used in \cite[Appendix~B]{KodamaIshibashiMaster}.} and where $\dot E_T$ is the (non-existing) component of the linearized Einstein tensor that \emph{would} correspond to $H_T$ in \eqref{EqUEMSScalar} \emph{if} $l\geq 2$ were the case;\footnote{In this case, the linearized Einstein tensor can be decomposed in the same manner as the linearized metric perturbation, i.e.\ in the present context as \eqref{EqUEMSScalar}. For $l=1$ however, as discussed above, the coefficient of $H_T$, or $\dot E_T$ for the linearized Einstein tensor, vanishes.} here it is directly defined in terms of $F$ and $\wt F$ from \eqref{EqUEMSScalarGaugeInv} as $\frac{2r^2}{k^2}\dot E_T=-\tr\wt F$, see \cite[Equation~(A$\cdot$1d)]{KodamaIshibashiMaster}. Now, for fixed temporal frequency $\sigma$ one still has an ODE system, \cite[Equation~(2$\cdot$24)]{KodamaIshibashiMaster}, which has a 2-dimensional space of solutions as a linear ODE in the exterior of the black hole/cosmological horizon, {\em without} imposing any conditions at the horizons. On the other hand, for fixed temporal frequency, again without imposing any conditions at the horizon, the ODE corresponding to the gauge wave equation \eqref{EqUEMSScalar1GaugeWave} also has a 2-dimensional space of solutions $\wh L$. Now the map from $\wh L$ to changes in the scalar quantities $(X,Y,\wt Z)$ of \cite[Equation~(2$\cdot$24)]{KodamaIshibashiMaster} can be thought of, via lowering an index by the metric and multiplying by $k$ (recall $k^2=2$), as the map
\begin{equation}
\label{EqUEMSScalar1GaugeMap}
  \wh L\mapsto-2\wh{\delta_{g_{\AS}}^*}r^2 \wh{d_{\AS}}(\wh L/r)+2r G(dr,d(\wh L/r))g_{\AS} + \frac{k^2}{r}\wh L g_{\AS},
\end{equation}
with $\wh{\phantom{a}}$ denoting the Fourier transform in $-t$. (Here, we regard $(X,Y,Z)$, with $\wt Z$ defined in terms of $Z$ by \cite[Equation~(2$\cdot$23)]{KodamaIshibashiMaster}, as the components of the aspherical tensor $\wt F-2 F g_{\AS}$; see \cite[Equation~(2$\cdot$20)]{KodamaIshibashiMaster}.) We thus need that on the 2-dimensional space of time-harmonic solutions of the aspherical $\Box_{g_{\AS}}$, i.e.\ on the space of $\wh L$ satisfying
\begin{equation}
\label{EqUEMSScalar1GaugeWave2}
  (\pa_r q^2 r^2\pa_r+\sigma^2 q^{-2}r^2)(\wh L/r)=0,
\end{equation}
this map is injective. This is straightforward to check: on the kernel of the Fourier-transformed version of the map \eqref{EqUEMSScalar1GaugeWave} (with right hand side $\dot E_T=0$), the trace of the first term in \eqref{EqUEMSScalar1GaugeMap} vanishes, so the pure trace component of the map \eqref{EqUEMSScalar1GaugeMap}, with $\wt L=\wh L/r$, is $2g_{\AS}$ times
\[
  \wt L \mapsto r q^2\pa_r\wt L + \frac{k^2}{2}\wt L.
\]
Assuming $\wt L$ is in the kernel of this map and solves \eqref{EqUEMSScalar1GaugeWave2}, we can rewrite $\pa_r\wt L$ in terms of $\wt L$ itself and obtain
\[
  \pa_r r\Bigl(-\frac{k^2}{2}\wt L\Bigr) + \sigma^2 q^{-2}r^2\wt L=0;
\]
expanding the derivative and substituting again, we get (using $k^2=2$)
\[
  (-q^2+1+\sigma^2 r^2)\wt L=\Bigl(\frac{2\bhm}{r}+\frac{\Lambda r^2}{3}+\sigma^2 r^2\Bigr)\wt L=0.
\]
For any value of $\sigma\in\C$, the factor here does not vanish for all but at most $3$ values of $r$, so we obtain $\wt L=0$, hence $\wh L=0$, as desired.

Since such a gauge change by $\wh L$ still gives a solution of the ODE system satisfied by $(X,Y,\wt Z)$, we conclude that {\em all solutions} of the ODE system are given by a gauge change from the $0$ solution; see the related discussion in \cite[Appendix~E]{KodamaIshibashiSetoBranes}. Thus, one may assume that the no-longer gauge invariant quantities constructed in fact vanish. In this case, with\footnote{This is the 1-form, denoted $X_a$ in \cite[Equation~(2$\cdot$8)]{KodamaIshibashiMaster}, not the scalar quantity $X$ from the previous paragraph.}
\[
  \bfX=\frac{r}{k}f,
\]
these quantities take the form
\[
  F=H_L+\frac{1}{r}G(dr,\bfX),\quad \wt F=\wt f+2\delta_{g_{\AS}}^*\bfX,
\]
and the vanishing of $F$ and $\wt F$ (which as we said can be assumed up to gauge terms) implies
\[
  \delta_g^*(-2\bfX\scal)=\wt f\scal-2\frac{r}{k}f\otimes_s \sld\scal+2r^2H_L\scal\slg=h,
\]
as desired. Note that as the ODE analysis in \cite{KodamaIshibashiMaster} is done in the exterior of the black hole/cosmological horizon, this is not quite yet the statement we want: we want to express the metric perturbation as a Lie derivative also \emph{beyond the horizons}. The only potential problem is that the gauge transformation $\wh L$ above could be non-smooth at the horizons. But note that the mode (which we set out to show is pure gauge) is smooth at the horizons, and the result of adding to it the pure gauge solution according to \eqref{EqUEMSScalar1GaugeMap} is identically $0$ between the horizons. However, by direct substitution one can check that the non-smooth asymptotics which could a priori be present for the solution of \eqref{EqUEMSScalar1GaugeWave} are not annihilated by the map \eqref{EqUEMSScalar1GaugeMap}, and therefore $\wh L$ \emph{cannot} have non-smooth asymptotics, as desired. This establishes UEMS in this case for mode solutions; for the case of generalized modes, see Lemma~\ref{LemmaUEMSWithLogs} below.

\subsection{Vector perturbations with \texorpdfstring{$l=1$}{l equal to 1}}

We now consider the $l=1$ vector case, i.e.\ $k=1$ in \eqref{EqUEMSVectorEigenfunction}. One can again proceed as in the scalar $l=1$ case; now the $H_T$ component in \eqref{EqUEMSVector} does not exist, since for $k=1$, we have $\sldelta^*\vect=0$, i.e.\ $\vect$ is the one-form corresponding to a Killing vector field on $\scal^2$. If one sets $H_T=0$ in the formula \eqref{EqUEMSVectorGaugeInv}, one gets the non-gauge invariant quantity $F=f$. However, $rd_{\AS}(f/r)$ becomes a gauge-invariant quantity in this case, see \cite[Equation~(34)]{KodamaIshibashiSetoBranes}. The linearized Einstein equation, with $g_{\AS}$ the aspherical part of the metric, then becomes
\[
  r^{-3}\delta_{g_{\AS}} r^3 \bigl(r d_{\AS}(f/r)\bigr)=0,
\]
see \cite[Equation~(37)]{KodamaIshibashiSetoBranes}, which gives (as $d_{\AS}(f/r)$ is a top degree, i.e.\ degree 2, aspherical form) that
\begin{equation}
\label{EqUEMSVectorL1}
  r d_{\AS}(f/r)=C r^{-3}(\star_{g_{\AS}}1)
\end{equation}
for a constant $C$, where $\star_{g_{\AS}}$ is the Hodge-star operator. (Note that this does {\em not} require the actual solution to be a mode solution, just like in the $l=0$ case there was no such requirement in our argument!) In particular, this gauge-invariant quantity is static, and in fact is the image of an infinitesimal Kerr--de~Sitter solution: indeed, recall from \eqref{EqUEMSVector} that the actual solution is $h=2 r f\otimes_s\vect$, which for a rotation vector field $\vect$ on $\Sph^2$ gives an infinitesimal Kerr--de~Sitter solution rotating around the same axis as $\vect$ if we take $f=(\frac{\Lambda r}{3}+\frac{2\bhm}{r^2}\bigr)dt$ (which indeed satisfies \eqref{EqUEMSVectorL1} with $C=6\bhm$); this follows from differentiating the metric \eqref{EqKdSSlowStationary} in the parameter $a$ at $a=0$.

Therefore, subtracting the corresponding infinitesimal Kerr--de~Sitter solution, we may assume that $rd_{\AS}(f/r)=0$. Now, $\delta_g^*(r^2\psi\vect)=r^2 d_{\AS}\psi\otimes_s\vect$; thus we want to have $2rf=r^2 d_{\AS}\psi$, i.e.\ $d_{\AS}\psi=f/2r$. But this can be arranged as $d_{\AS}(f/r)=0$ and the cohomology of the $(t,r)$ region is trivial; indeed it is easy to write down such a $\psi$ explicitly, simply integrating $f/r$. Notice that this gives, if $f$ had an expansion in powers of $t_*$, one higher power in terms of $t_*$. This establishes UEMS, and completes the proof of Theorem~\ref{ThmKeyUEMS}.

\subsection{Generalized modes}

We finally bridge the gap between the modes considered in Theorem~\ref{ThmKeyUEMS} \eqref{ItKeyUEMSNonzero}, which do not contain powers of $t_*$, and the modes appearing in the expansion \eqref{EqKeyESGExpansion}:

\begin{lemma}
\label{LemmaUEMSWithLogs}
  Let $\sigma\in\C$, $\Im\sigma\geq 0$, $\sigma\neq 0$, and suppose
  \begin{equation}
  \label{EqKdSLStabLUEMSMode}
    h(t_*,x)=\sum_{j=0}^k e^{-i\sigma t_*}t_*^j h_j(x),
  \end{equation}
  with $h_j\in\CI(Y,S^2T^*_Y\Omega^\circ)$, is a generalized mode solution of $D_{g_{b_0}}(\Ric+\Lambda)(h)=0$. Then there exists a 1-form $\omega\in\CI(\Omega^\circ,S^2T^*\Omega^\circ)$ such that $h=\delta_{g_{b_0}}^*\omega$. In fact, $\omega$ is a generalized mode as well, $\omega(t_*,x)=\sum_{j=0}^k e^{-i\sigma t_*}t_*^j\omega_j(x)$, with the same $k$, and with $\omega_j\in\CI(Y,S^2 T^*_Y\Omega^\circ)$.
\end{lemma}
\begin{proof}
  For $k=0$ and $l=0$ scalar or $l=1$ vector perturbations, this was already shown above. In the remaining cases, we have already shown this result for $k=0$. Let us now assume that the lemma holds for $k-1\geq 0$ in place of $k$, and let $h$, of the form \eqref{EqKdSLStabLUEMSMode}, be a solution of $D_{g_{b_0}}(\Ric+\Lambda)(h)=0$. Since $g_{b_0}$ is a stationary metric, the operator $D_{g_{b_0}}(\Ric+\Lambda)$ has $t_*$-independent coefficients; therefore $D_{g_{b_0}}(\Ric+\Lambda)(h)=0$ implies that
  \[
    0 = t_*^k D_{g_{b_0}}(\Ric+\Lambda)(e^{-i\sigma t_*}h_k) + \cO(e^{(\Im\sigma)t_*}t_*^{k-1}),
  \]
  hence $D_{g_{b_0}}(\Ric+\Lambda)(e^{-i\sigma t_*}h_k)=0$, and UEMS gives $h_k=\delta_{g_{b_0}}^*\omega$ with $\omega(t_*,x)=e^{-i\sigma t_*}\omega_0(x)$, where $\omega_0\in\CI(Y,T^*_Y\Omega^\circ)$. Let now $h'=h-\delta_{g_{b_0}}^*(t_*^k\omega)$, then we have
  \[
    h' = \sum_{j=0}^{k-1} e^{-i\sigma t_*}t_*^j h'_j(x),
  \]
  i.e.\ the leading term of $h$ involving $t_*^k$ is eliminated, and moreover $D_{g_{b_0}}(\Ric+\Lambda)(h')=0$; by the inductive hypothesis, $h'=\delta_{g_{b_0}}^*\omega'$, with $\omega'$ a generalized mode only including powers of $t_*$ up to $t_*^{k-1}$, and we conclude
  \[
    h = \delta_{g_{b_0}}(t_*^k\omega+\omega'),
  \]
  as claimed.
\end{proof}

\section{Stable constraint propagation (SCP)}
\label{SecSCP}

\emph{We continue to drop the subscript `$b_0$' as in \eqref{EqOpNoB0} and work only with the fixed Schwarzschild--de~Sitter metric $g=g_{b_0}$.}

We now aim to modify $\delta_g^*$ by suitable stationary $0$-th order terms, producing the operator $\tdel^*$, so as to move the resonances of $\delta_g \sfG_g\tdel^*$ into the lower half plane, thus establishing SCP. (Note that the principal symbol of $\delta_g^*$, $\sigma_1(\delta_g^*)(\zeta)=i\zeta\otimes_s(\cdot)$, is independent of the metric; we will later on use the \emph{same} modification $\tdel^*$, irrespective of the metrics we will be dealing with when solving linearized or non-linear gauged Einstein-type equations.)

Using the stationary structure of the spacetime, two natural modifications of $\delta_g^*$ that leave the principal symbol unchanged are the conjugated version $e^{-\gamma t_*}\delta_g^* e^{\gamma t_*}$ and the conformally weighted version $\delta_{e^{-2\gamma t_*}g}^*$, where $\gamma$ is a real parameter. The general form of a linear combination of these two for which the principal symbol is $i\zeta\otimes_s(\cdot)$ takes the form
\begin{equation}
\label{EqSCPDelStarTilde}
  \tdel^*u := \delta_g^*u + \gamma_1\,dt_*\cdot u - \gamma_2(i_{\nabla t_*}u)g,\quad \gamma_1,\gamma_2\in\R.
\end{equation}

We will prove:
\begin{thm}
\label{ThmSCP}
  Let $t_*$ be the timelike function on Schwarzschild--de~Sitter space $(M,g_{b_0})$ constructed in Lemma~\ref{LemmaSdSExt}, and define $\tdel^*$ by \eqref{EqSCPDelStarTilde}. Then there exist parameters $\gamma_1,\gamma_2>0$ and a constant $\alpha>0$ such that all resonances $\sigma$ of the constraint propagation operator $\wtBoxCP_g = 2\delta_g \sfG_g\tdel^*$ satisfy $\Im\sigma<-\alpha$.
\end{thm}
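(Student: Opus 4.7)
The plan is to realize $\wtBoxCP_g$ as a principally scalar wave operator on the bundle $T^*M$ satisfying all the hypotheses \itref{ItAsySm1Sym}--\itref{ItAsySm3Trapped} of \S\ref{SubsecAsySm}, and then to choose $\gamma_1,\gamma_2\gg 0$ so that \emph{all} resonances of the Mellin-transformed family $\widehat{\wtBoxCP_g}(\sigma)$ are pushed strictly into the lower half plane. Since the modification $\tdel^*-\delta_g^*$ is a stationary bundle endomorphism, $\wtBoxCP_g-\BoxCP_g$ is a first order operator with scalar principal part $G_g\otimes\Id$, so \itref{ItAsySm1Sym} is automatic and Theorem~\ref{ThmAsySmMero} applies; the question reduces to (i) pinning down where the resonances of $\wtBoxCP_g$ can be, and (ii) using the $\gamma_j$'s to control the subprincipal data. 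Concretely, the argument will rest on three symbolic computations: the endomorphism $\wh\beta_\pm$ at the radial sets $\cR_\pm$ (controlling which weighted spaces the Fredholm theory produces), the skew-adjoint part at the trapped set $\Gamma$ (controlling the essential spectral gap), and the subprincipal behavior at the semiclassical zero section in the lower half plane (to prevent resonances accumulating on $\R$).

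Using \eqref{EqOpWarpedDelgGg} and the splitting \eqref{EqOpSpatBundlesFull}, I will compute the first order part of $\wtBoxCP_g-\BoxCP_g$ explicitly. The crucial fact is that $\tdel^*-\delta_g^*$ couples the $W_N$ and $W_T$ summands by terms proportional to $\gamma_1$ and $\gamma_2$, and composing with $2\delta_g G_g$ produces a skew-adjoint contribution at the radial sets that adds a positive multiple of $\gamma_j$ (in an appropriate positive definite pseudodifferential inner product) to $\wh\beta_\pm$ in \eqref{EqAsySm2RadialSubpr}. Using the conjugation matrices $\sC^{[1]}_\pm$ of \eqref{EqOpSdSConjForms} to pass to the smooth splitting at the horizon, and the rescaled Hamilton flow \eqref{EqKdSGeoRadPoint}, I will show that for a suitable choice of stationary fiber inner product on $T^*M$, the endomorphism $\wh\beta_\pm$ becomes $\gamma_1 A_\pm^{(1)}+\gamma_2 A_\pm^{(2)}+A^{(0)}_\pm$ with $A^{(1)}_\pm,A^{(2)}_\pm$ positive semidefinite and jointly positive definite. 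Hence for $\gamma_1,\gamma_2\gg 0$, $\wh\beta$ is as large as desired, and the radial point thresholds in Theorem~\ref{ThmAsySmMero} allow an arbitrary weight $\alpha>0$.

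The trapped set analysis follows the same strategy: the subprincipal operator of $\BoxCP_g$ at $\Gamma$ is given by \eqref{EqOpSdSTrapSubpr0} pulled back to 1-forms, and the $\gamma_j$-modification adds a stationary endomorphism-valued term. A pseudodifferential inner product, constructed in the spirit of Remark~\ref{RmkAsySmPsdoInner} and adapted to the trapped null-geodesic flow on Schwarzschild--de~Sitter space (as in \cite{HintzPsdoInner}), will reduce \eqref{EqAsySm3TrappedSubpr} to an algebraic eigenvalue estimate; since the leading trapping subprincipal term from $\BoxCP_g$ alone already satisfies the bound with room to spare (the 1-form calculation in \cite{HintzPsdoInner} applies), adding a bounded $\gamma$-dependent endomorphism and then enlarging $\alpha_\Gamma$ accordingly preserves \eqref{EqAsySm3TrappedBound}. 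I will also need an analogous argument at the semiclassical zero section, which is encoded in Lemma~\ref{LemmaSCPLoRadialSubpr} in the paper's notation; this is where low-energy (real $\sigma$) radial-point propagation into the upper half plane is forbidden, again by choosing $\gamma_1,\gamma_2$ to make the relevant subprincipal endomorphism definite of the correct sign.

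The main obstacle is ensuring that a \emph{single} choice of $(\gamma_1,\gamma_2)$ simultaneously delivers (a) sufficiently large $\wh\beta$ at both $\cR_+$ and $\cR_-$, (b) the trapped set bound \eqref{EqAsySm3TrappedBound} with $\alpha_\Gamma<\numin/2$, (c) the correct sign at the semiclassical zero section, and (d) no resonances on the real line itself (so that one can actually push below $\R$ by a fixed $\alpha>0$). The first three are perturbative in $\gamma_1,\gamma_2$ large and should be compatible by choosing $\gamma_1,\gamma_2$ in a common cone, with the ratio $\gamma_1/\gamma_2$ essentially fixed by the requirement that the endomorphism-valued threshold at $\cR_\pm$ be positive definite in a chosen inner product. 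For (d), a mode stability argument at $\sigma=0$ will use UEMS together with the fact that the only $\sigma=0$ mode solutions of $\wtBoxCP_g \rho=0$ arising from non-trivial resonant states would have to contradict the positivity achieved at the radial and trapped sets; combined with the perturbation stability of Theorem~\ref{ThmAsySmMero} this upgrades the absence of resonances in $\Im\sigma\geq 0$ to the existence of a strip $\Im\sigma\geq -\alpha$ free of resonances, yielding the claimed exponential decay $\rho=\cO(e^{-\alpha t_*})$.
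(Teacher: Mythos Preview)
Your proposal has a fundamental gap: you conflate the radial point \emph{threshold} conditions with the \emph{location} of resonances. Making $\wh\beta$ large at $\cR_\pm$ only changes which weighted spaces $\Hb^{s,r}$ carry the Fredholm theory of Theorem~\ref{ThmAsySmMero}; it does not move the poles of $\wh{\wtBoxCP_g}(\sigma)^{-1}$. Those poles are determined by the normal operator, and you give no mechanism by which increasing $\gamma_1,\gamma_2$ forces them into $\Im\sigma<0$. Relatedly, at the trapped set the first order contribution $2\delta_g G_g(\tdel^*-\delta_g^*)$ has coefficients proportional to $\gamma_j$, so its subprincipal symbol is \emph{not} bounded as $\gamma\to\infty$; the required bound $\alpha_\Gamma<\numin/2$ with $\numin$ fixed therefore fails in the non-semiclassical framework you invoke. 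Finally, your appeal to UEMS for step (d) is a non-sequitur: UEMS concerns $D_{g_{b_0}}(\Ric+\Lambda)$ acting on symmetric 2-tensors, not $\wtBoxCP_g$ acting on 1-forms, and there is no reason zero modes of the latter should be constrained by the former.

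The paper's proof is structurally different. It sets $\gamma_1=\gamma$, $\gamma_2=\tfrac12 e\gamma$ with $e<1$ close to $1$, puts $h=\gamma^{-1}$, and studies $\cP_h=h^2\wtBoxCP_g$ as a \emph{semiclassical} b-operator. The point is that $\cP_h$ is not semiclassically principally scalar: its semiclassical principal symbol is $G-i\ell_e$ with $\ell_e=\sigma_{\bop,\hbar}(L_{e,h})$ an endomorphism-valued symbol, and $\ell_e$ is elliptic on the light cone (Lemma~\ref{LemmaSCPSemiEll}) and future timelike in a precise sense (Lemma~\ref{LemmaSCPLoTimelike}). This allows direct a priori estimates---not merely Fredholm statements---by semiclassical positive commutator arguments: high frequency propagation near $\Sb^*M$ (\S\ref{SubsecSCPHi}), low frequency propagation near the zero section driven by $L_h$ (\S\ref{SubsecSCPLo}, where Lemma~\ref{LemmaSCPLoRadialSubpr} gives the crucial sign at the critical set of $\nabla t_*$), and energy estimates near spacelike hypersurfaces (\S\ref{SubsecSCPEn}). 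Patching these yields $\|u\|_{\Hbh^{1,\alpha}}\lesssim h^{-1}\|\cP_h u\|_{\Hb^{0,\alpha}}$ for small $h$, which directly excludes resonances in $\Im\sigma\geq-\alpha$. The semiclassical rescaling is what makes the large-$\gamma$ damping visible as a principal, rather than subprincipal, effect; your attempt to run the argument in the fixed-$\gamma$ framework of \S\ref{SubsecAsySm} cannot access this.
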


Rather than excluding the presence of resonances in $\Im\sigma\geq-\alpha$ by direct means, for example integration by parts and boundary pairings as in \cite{HintzVasyKdsFormResonances}, we will take $\gamma_1,\gamma_2\sim\semi^{-1}$, with the semiclassical parameter $\semi>0$ small, and study $\semi^2\wtBoxCP_g$ as a semiclassical b-differential operator on the spacetime domain $\Omega$. In principle, a direct computation in the spirit of \S\ref{SecUEMS} would show this directly and provide concrete values of $\gamma_1,\gamma_2$ for which the conclusion holds; we proceed in a more systematic (and less computational) manner, at the marginal cost of not obtaining such concrete bounds. In the simpler setting of de~Sitter space however, we can exactly compute the values of $\gamma_1,\gamma_2$ for which SCP holds; see \S\ref{SubsecdSSCP}.

In \S\ref{SubsecSCPSemi}, we discuss the semiclassical reformulation in some detail, preparing the high frequency analysis of \S\ref{SubsecSCPHi} and the low frequency analysis of \S\ref{SubsecSCPLo}. For simplicity, we use standard energy estimates beyond the horizons, proved in \S\ref{SubsecSCPEn}, to cap off the global estimates; we will obtain the latter in \S\ref{SubsecSCPGl}, and use them to finish the proof of Theorem~\ref{ThmSCP}.

We give a brief overview of the proof of Theorem~\ref{ThmSCP}. A good model operator to think about (in place of the more complicated operator of interest, $\semi^2\wtBoxCP_g$, which is discussed in \S\ref{SubsecSCPSemi}) is the scalar operator $P_\semi=\semi^2\Box_g-i L_\semi$, where $g$ is a Schwarzschild--de~Sitter metric and $-i L_\semi=-\semi\,i_{\nabla t_*}d u$ is a first order semiclassical differential operator.
\begin{itemize}
  \item For (semiclassical) microlocal propagation estimates (away from the artificial boundaries beyond the horizons), we need to determine the semiclassical characteristic set. Lemma~\ref{LemmaSCPSemiEll} shows that this is the union of the zero section and the intersection of fiber infinity with the light cone.
  \item \S\ref{SubsecSCPHi} deals with fiber infinity. The analysis there is similar to that of \cite{VasyMicroKerrdS}; the operator $-i L_\semi$ has a favorable sign and can thus be treated as a complex absorbing operator. (While $L_\semi$ is lower order in the sense of differential orders, it is a principal term in the semiclassical sense, i.e.\ it does affect the overall principal symbol at finite semiclassical frequencies.) Concretely, Proposition~\ref{PropSCPHiRealPrType} gives real principal type propagation of singularities, Proposition~\ref{PropSCPHiRad} provides the radial point estimate, and Proposition~\ref{PropSCPHiTrap} gives the estimate at the trapped set. Because of the extra damping $-i L_\semi$, these estimates (in particular the one at the trapping) work in \emph{all} weighted b-Sobolev spaces, including those with very fast exponential decay. This is different from the operator $\semi^2\Box_g$ where estimates only work on spaces with exponential decay rate $\rho<\alpha$ where $\alpha>0$ is the size of the essential spectral gap.
  \item \S\ref{SubsecSCPLo} deals with the zero section of the b-cotangent bundle. Here we use the operator $i P_\semi$ to deduce propagation estimates: $L_\semi$ is its real part, and $i\semi^2\Box_g$ becomes the complex absorption. The Hamiltonian flow of the principal symbol of $L_\semi$ is the symplectic lift of the gradient flow of $t_*$. There is a critical set for this flow at the zero section intersected with the boundary at future infinity, located at $r=r_c=\sqrt[3]{3\bhm/\Lambda}$ (see Lemma~\ref{LemmaSdSExt}) and thus between the two horizons. All Hamiltonian trajectories either tend to this critical set or escape to $t_*=0$ in the backward direction. This is used to set up the propagation of singularities: by Proposition~\ref{PropSCPLoPropRad}, one can reduce to a neighborhood of the critical set; in this neighborhood then, we use a radial point type estimate, Proposition~\ref{PropSCPLoPropTime}. The latter is the only part of the proof which restricts the amount of exponential decay to $\rho<\alpha$, where $\alpha>0$ is explicitly given in Remark~\ref{RmkSCPLoPropTimeWeight}.
  \item Beyond the horizons, we use standard energy estimates, proved in \S\ref{SubsecSCPEn}, which allow us to propagate our estimates all the way up to the artificial hypersurfaces located there.
  \item In \S\ref{SubsecSCPGl}, we show how to combine the results of the preceding sections to obtain global semiclassical estimates on $P_\semi$ on spaces of exponentially decaying functions. This implies the invertibility of $P_\semi$ on such spaces when one fixes $\semi>0$ to be small enough; thus, $P_\semi$ has no resonances in $\Im\sigma>-\alpha$.
\end{itemize}

The actual operator is a \emph{tensorial} operator acting on 1-forms. A considerable technical complication in \S\ref{SubsecSCPLo} is then that the principal symbol of $L_\semi$, defined below, is not a scalar matrix.

\begin{rmk}
  Using definition \eqref{EqSCPDelStarTilde}, the form of $\wtBoxCP_g$
  is the same as
  \cite[Equation~(13)]{GundlachCalabreseHinderMartinConstraintDamping}. (See
  also the paper by Pretorius \cite{PretoriusBinaryBlackHole} for
  impressive numerical results obtained using such techniques.) Thus,
  Theorem~\ref{ThmSCP} rigorously proves that
  \eqref{EqSCPDelStarTilde} leads to constraint damping, justifying in
  the setting of Schwarzschild--de~Sitter spacetimes and its
  (asymptotically) stationary perturbations the heuristic analysis of
  \cite[\S
  III]{GundlachCalabreseHinderMartinConstraintDamping}. Moreover, we
  point out the connection of the discussion of $\tdel^*$ vs.\
  $\delta_g^*$ around \eqref{EqIntroWhyTdel} to numerical
  investigations of Einstein's equations: introducing the modification
  $\tdel^*$ removes the otherwise very sensitive dependence of the
  solution of the gauged Einstein equation \eqref{EqIntroWhyTdel} on
  the constraint equations being satisfied.
\end{rmk}

\subsection{Semiclassical reformulation}
\label{SubsecSCPSemi}

With $\tdel^*$ defined in \eqref{EqSCPDelStarTilde}, we now let $e>0$ and take
\begin{equation}
\label{EqSCPGamma12}
  \gamma_1=\gamma, \quad \gamma_2=\frac{1}{2}e\gamma.
\end{equation}
The constraint propagation operator is thus
\begin{equation}
\label{EqSCPSemiFullOp}
  \wtBoxCP_g = 2\delta_g \sfG_g\delta_g^* + \gamma\bigl(-i_{\nabla t_*}du + (dt_*)\delta_g u + (\Box_g t_*)u - e\,d(i_{\nabla t_*}u)\bigr).
\end{equation}
We view $\semi=\gamma^{-1}$ as a semiclassical parameter; then
\begin{equation}
\label{EqSCPSemiRescaledOp}
  \cP_\semi := \semi^2\wtBoxCP_g
\end{equation}
is a semiclassical b-differential operator. Occasionally, we will indicate the parameter $e$ by writing $\cP_{e,\semi}$. We will show that Theorem~\ref{ThmSCP} follows from \emph{purely symbolic} microlocal arguments; the energy estimates we use for propagation in the $r$-direction beyond the horizons are rather crude, and also symbolic (albeit for differential operators) in character. (We will sketch an alternative, completely microlocal symbolic argument using complex absorption in Remark~\ref{RmkSCPSusp}.) The relevant operator algebra is the semiclassical b-algebra, which is described in Appendix~\ref{SubsecBSemi}.

In order to analyze $\cP_\semi\in\Diffbh^2(M)$, we first calculate the form of the second term in \eqref{EqSCPSemiFullOp}. \emph{We now work with the coordinate $t_*$ of Lemma~\ref{LemmaSdSExt}}, which simplifies computations significantly: the metric $g$ and the dual metric $G$ take the form
\begin{equation}
\label{EqSCPSemiMetric}
\begin{gathered}
  g = \mu\,dt_*^2 - 2\nu\,dt_*\,dr - c^2\,dr^2 - r^2\slg, \\
  G = c^2\,\pa_{t_*}^2 - 2\nu\,\pa_{t_*}\pa_r - \mu\,\pa_r^2 - r^{-2}\slG,
\end{gathered}
\end{equation}
where $c=c_{t_*}$ in the notation of Lemma~\ref{LemmaSdSExt}, and
\begin{equation}
\label{EqSCPSemiNu}
  \nu=\mp\sqrt{1-c^2\mu},\quad \pm(r-r_c)>0,
\end{equation}
with $r_c=\sqrt[3]{3\bhm/\Lambda}$ given in Lemma~\ref{LemmaSdSExt}. See Figure~\ref{FigSCPSemiGraphNu}.

\begin{figure}[!ht]
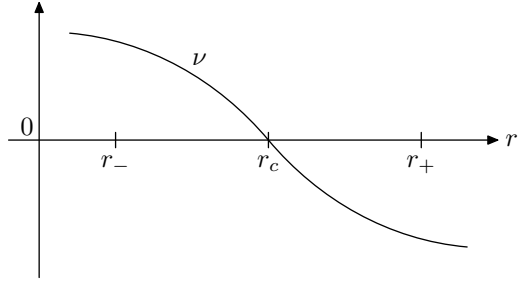

  \centering
  \inclfig{SCPGraphNu}
  \caption{The graph of the function $\nu$ defined in \eqref{EqSCPSemiNu}.}
  \label{FigSCPSemiGraphNu}
\end{figure}

We will also make use of the boundary defining function $\tau=e^{-t_*}$ of $M$.

Using the identities
\[
  \nu^2+c^2\mu = 1,\quad 2\nu\nu'+c^2\mu' = 0,
\]
one can then compute the connection coefficients and verify:
\begin{lemma}
\label{LemmaSCPSemiConn}
  For $w\in\CI(M^\circ,T^*\Sph^2)$ and $v\in\CI(M^\circ,T\Sph^2)$, we have
  \begin{gather*}
    \nabla_{\pa_{t_*}}dt_* = -\frac{1}{2}\nu\mu' dt_* - \frac{1}{2}c^2\mu' dr, \  \nabla_{\pa_{t_*}}dr = -\frac{1}{2}\mu\mu' dt_* + \frac{1}{2}\nu\mu' dr, \  \nabla_{\pa_{t_*}}w = \pa_{t_*}w, \\
    \nabla_{\pa_r} dt_* = -\frac{1}{2}c^2\mu' dt_* + c^2\nu' dr,               \  \nabla_{\pa_r}dr = \frac{1}{2}\nu\mu' dt_* - \nu\nu' dr,                 \  \nabla_{\pa_r}w = \pa_r w - r^{-1}w, \\
    \nabla_v dt_* = r\nu i_v\slg,                                              \  \nabla_v dr = r\mu i_v\slg,                                              \  \nabla_v w = \slnabla_v w - r^{-1}w(v) dr.
  \end{gather*}
\end{lemma}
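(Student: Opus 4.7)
My plan is to prove the lemma by a direct computation of the Christoffel symbols of $g$ in the coordinate system $(x^0,x^1,x^A)=(t_*,r,\theta^A)$, followed by the standard conversion $(\nabla_i\alpha)_j=\pa_i\alpha_j-\Gamma^k_{ij}\alpha_k$ for 1-forms. The matrix of $g$ relative to the aspherical coordinates is $g_{00}=\mu$, $g_{01}=-\nu$, $g_{11}=-c^2$, and $g_{AB}=-r^2\slg_{AB}$, with $g_{0A}=g_{1A}=0$; using the identity $\nu^2+c^2\mu=1$, the inverse is $g^{00}=c^2$, $g^{01}=-\nu$, $g^{11}=-\mu$, $g^{AB}=-r^{-2}\slG^{AB}$. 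Two structural simplifications make the calculation short. First, the metric is stationary, so $\pa_0 g_{ij}=0$ identically; this will make $\Gamma^k_{0B}$ vanish and $\nabla_{\pa_{t_*}}w=\pa_{t_*}w$ for purely spherical $w$. Second, $c=c_{t_*}$ is a constant by Lemma~\ref{LemmaSdSExt}, so $\pa_1 g_{11}=0$.

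In the aspherical block the only nonvanishing radial derivatives are $\pa_1 g_{00}=\mu'$ and $\pa_1 g_{01}=-\nu'$, so the Koszul formula $\Gamma^k_{ij}=\frac12 g^{kl}(\pa_i g_{jl}+\pa_j g_{il}-\pa_l g_{ij})$ yields immediately
\[
\Gamma^0{}_{00}=\tfrac12\nu\mu',\ \ \Gamma^1{}_{00}=\tfrac12\mu\mu',\ \ \Gamma^0{}_{01}=\tfrac12 c^2\mu',\ \ \Gamma^1{}_{01}=-\tfrac12\nu\mu',\ \ \Gamma^0{}_{11}=-c^2\nu',\ \ \Gamma^1{}_{11}=\nu\nu',
\]
and the identity $2\nu\nu'+c^2\mu'=0$ (obtained by differentiating $\nu^2+c^2\mu=1$) provides the internal consistency check. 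Feeding these into $(\nabla_i dx^k)_j=-\Gamma^k_{ij}$ produces the first two columns of the table in the $\pa_{t_*},\pa_r$ rows.

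For the mixed and spherical Christoffel symbols, the only nontrivial radial derivative of the spherical block is $\pa_1 g_{AB}=-2r\slg_{AB}$. This gives $\Gamma^A{}_{0B}=0$, $\Gamma^A{}_{1B}=r^{-1}\delta^A_B$, $\Gamma^0{}_{AB}=-r\nu\,\slg_{AB}$, $\Gamma^1{}_{AB}=-r\mu\,\slg_{AB}$, and the purely spherical symbols $\Gamma^C{}_{AB}$ reduce to the Levi-Civita Christoffels of $\slg$ because the conformal factor $-r^2$ is annihilated in the Koszul combination. From $\Gamma^A{}_{0B}=0$ one reads off $\nabla_{\pa_{t_*}}w=\pa_{t_*}w$ for $w$ spherical, and from $\Gamma^A{}_{1B}=r^{-1}\delta^A_B$ one obtains $\nabla_{\pa_r}w=\pa_r w-r^{-1}w$. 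The bottom row follows by computing $(\nabla_{\pa_A}w)_k$ component by component: the $dx^0$ component vanishes (since $\Gamma^l{}_{0A}w_l=0$), the $dr$ component equals $-\Gamma^C{}_{A1}w_C=-r^{-1}w_A=-r^{-1}w(\pa_A)$, and the spherical component reduces to $\slnabla_A w_B$. The formulas for $\nabla_v dt_*$ and $\nabla_v dr$ are then simply $-\Gamma^0{}_{AB}dx^B$ and $-\Gamma^1{}_{AB}dx^B$, matching the claim.

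There is no genuine obstacle here; the only real point of care is bookkeeping the signs arising from the Lorentzian signature and from the off-diagonal term $-2\nu\,dt_*\,dr$, and remembering that $c'=0$ and $\pa_0\equiv 0$ on all metric coefficients eliminates what would otherwise be the longest terms. The identity $2\nu\nu'+c^2\mu'=0$ is used repeatedly to verify the symmetry $\Gamma^k_{ij}=\Gamma^k_{ji}$ and to keep the expressions in the compact form stated.
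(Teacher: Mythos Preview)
Your proof is correct and takes essentially the same approach as the paper, which simply states that one computes the connection coefficients using the identities $\nu^2+c^2\mu=1$ and $2\nu\nu'+c^2\mu'=0$ and verifies the formulas. Your explicit computation of the Christoffel symbols via the Koszul formula and conversion to covariant derivatives of 1-forms is precisely this verification carried out in full.
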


Splitting the bundle
\begin{equation}
\label{EqSCPSemiBundle}
  T^*M^\circ = \la dt_* \ra \oplus \la dr \ra \oplus T^*\Sph^2
\end{equation}
and using $\nabla t_*=c^2\pa_{t_*}-\nu\pa_r$, we then calculate:

\begin{lemma}
  In the bundle decomposition \eqref{EqSCPSemiBundle}, we have
  \[
    -i_{\nabla t_*}d
      = \begin{pmatrix}
          \nu\pa_r & -\nu\pa_{t_*} & 0 \\
          c^2\pa_r & -c^2\pa_{t_*} & 0 \\
          c^2\sld  & -\nu\sld      & -c^2\pa_{t_*}+\nu\pa_r
        \end{pmatrix},
  \]
  further
  \[
    (dt_*)\delta_g
      = \openbigpmatrix{2pt}
          -c^2\pa_{t_*}+\nu\pa_r & \nu\pa_{t_*}+\mu\pa_r & -r^{-2}\sldelta \\
          0                      & 0                     & 0               \\
          0                      & 0                     & 0
        \closebigpmatrix
      + \openbigpmatrix{2pt}
          \nu'+2r^{-1}\nu & \mu'+2r^{-1}\mu & 0 \\
          0               & 0               & 0 \\
          0               & 0               & 0
        \closebigpmatrix,
  \]
  and
  \[
    d(i_{\nabla t_*}(\cdot))
     = \begin{pmatrix}
         c^2\pa_{t_*} & -\nu\pa_{t_*}  & 0 \\
         c^2\pa_r     & -\nu\pa_r-\nu' & 0 \\
         c^2\sld      & -\nu\sld       & 0
       \end{pmatrix}.
  \]
  Lastly, we have
  \[
    \Box_g t_* = \nu' + 2r^{-1}\nu.
  \]
\end{lemma}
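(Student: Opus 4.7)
The proof is a direct computation using the connection coefficient formulas in the preceding Lemma~\ref{LemmaSCPSemiConn}. Write an arbitrary section of $T^*M^\circ$ in the splitting \eqref{EqSCPSemiBundle} as $\omega = u\,dt_* + v\,dr + w$, with $u,v$ scalar functions and $w\in\CI(M^\circ,T^*\Sph^2)$. The task is to expand each of the four expressions in this splitting, separating principal (derivative) contributions from $0$th order (connection) contributions, keeping in mind the identities $\nu^2+c^2\mu=1$ and $2\nu\nu'+c^2\mu'=0$ that were used to set up \eqref{EqSCPSemiMetric}.

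I would start with the scalar $\Box_g t_* = -\tr_g\nabla dt_*$. Using $\nabla t_* = c^2\pa_{t_*}-\nu\pa_r$, the dual metric $G$ from \eqref{EqSCPSemiMetric}, and the formulas for $\nabla_{\pa_{t_*}}dt_*$, $\nabla_{\pa_r}dt_*$, $\nabla_v dt_*$ read off from Lemma~\ref{LemmaSCPSemiConn}, the trace reduces to an algebraic sum; the $\pa_{t_*}\pa_{t_*}$, $\pa_{t_*}\pa_r$, and $\pa_r\pa_r$ contributions combine via $2\nu\nu'+c^2\mu'=0$, and the spherical trace $-r^{-2}\sltr$ applied to $r\nu\,\slg$ contributes $2r^{-1}\nu$, giving the claimed $\nu'+2r^{-1}\nu$. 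Next, I would handle $d(i_{\nabla t_*}(\cdot))$: since $i_{\nabla t_*}\omega = c^2 u-\nu v$ is a \emph{scalar} (so no connection is needed), its exterior derivative is just $c^2\,du - \nu\,dv - v\nu'\,dr$, and expanding $du$, $dv$ in the splitting produces precisely the displayed matrix (noting $c$ is constant while $\nu=\nu(r)$).

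For $-i_{\nabla t_*}d\omega$, I would use the torsion-free identity $(d\omega)(X,Y)=(\nabla_X\omega)(Y)-(\nabla_Y\omega)(X)$, which gives
\[
  -(i_{\nabla t_*}d\omega)(Y) = -(\nabla_{\nabla t_*}\omega)(Y)+(\nabla_Y\omega)(\nabla t_*).
\]
Substituting $\nabla t_*=c^2\pa_{t_*}-\nu\pa_r$ and testing against $Y=\pa_{t_*}$, $Y=\pa_r$, and $Y$ a spherical vector field, every Christoffel-type contribution appearing can be read off verbatim from Lemma~\ref{LemmaSCPSemiConn}. The terms $-c^2\pa_{t_*}+\nu\pa_r$ along the diagonal come from $-\nabla_{\nabla t_*}$ on $w$, while the cross terms for the $\langle dt_*\rangle$, $\langle dr\rangle$ components arise from evaluating $(\nabla_Y\omega)(\nabla t_*)$; a small miracle (forced by $\nabla_{\nabla t_*}g=0$) is that the $r^{-1}$ connection coefficients produced by differentiating in spherical directions cancel in the first two rows, leaving only the displayed derivative terms.

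Finally, $(dt_*)\delta_g\omega$ is the scalar $\delta_g\omega = -G^{\mu\nu}(\nabla_{\pa_\mu}\omega)(\pa_\nu)$ placed in the $dt_*$-component. Expanding $\omega$ and applying Lemma~\ref{LemmaSCPSemiConn} once more, the derivative terms produce the first matrix on the right-hand side; the $0$th order terms arise from $(\nabla_{\pa_{t_*}}dr)(\pa_{t_*})$, $(\nabla_{\pa_r}dt_*)(\pa_r)$, and the two $r^{-1}$ contractions coming from $\nabla_v dt_*=r\nu\,i_v\slg$ and $\nabla_v dr=r\mu\,i_v\slg$ under the spherical trace $-r^{-2}\sltr$; these assemble to the second matrix, which has entries $\nu'+2r^{-1}\nu$ and $\mu'+2r^{-1}\mu$ in its $dt_*$-column. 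All four computations are mechanical; the only real obstacle is careful bookkeeping of signs (especially the sign convention for $\delta_g$) and of the $r^{-1}$ contributions from differentiation in spherical directions.
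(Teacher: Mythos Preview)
Your proposal is correct and is exactly the intended computation: the paper states this lemma without proof, implicitly leaving the direct verification from the connection formulas of Lemma~\ref{LemmaSCPSemiConn} to the reader. Your outline---handling $\Box_g t_*$ and $d(i_{\nabla t_*}(\cdot))$ first as the easy cases, then unwinding $-i_{\nabla t_*}d$ via the torsion-free identity and $(dt_*)\delta_g$ via the divergence formula, with the identities $\nu^2+c^2\mu=1$ and $2\nu\nu'+c^2\mu'=0$ controlling the cancellations---is the natural route and matches what the paper expects.
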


The part of $\cP_\semi$ corresponding to the second term in \eqref{EqSCPSemiFullOp} is given by
\begin{align*}
  -i L_\semi u &:= -i_{\nabla t_*}\semi d u + (dt_*)\semi\delta_g u + \semi(\Box_g t_*)u - e\,\semi d(i_{\nabla t_*}u) \\
    &= \semi\begin{pmatrix}
         -(1+e)c^2\pa_{t_*}+2\nu\pa_r & e\nu\pa_{t_*}+\mu\pa_r  & -r^{-2}\sldelta \\
         (1-e)c^2\pa_r                & -c^2\pa_{t_*}+e\nu\pa_r & 0               \\
         (1-e)c^2\sld                 & (e-1)\nu\sld            & -c^2\pa_{t_*}+\nu\pa_r
       \end{pmatrix} \\
    &\qquad
      + \semi\begin{pmatrix}
          2\nu'+4r^{-1}\nu & \mu'+2r^{-1}\mu      & 0 \\
          0                & (1+e)\nu'+2r^{-1}\nu & 0 \\
          0                & 0                    & \nu'+2r^{-1}\nu
        \end{pmatrix}.
\end{align*}
Thus, $L_\semi\in\Diffbh^1(M;\Tb^*M)$; the first term here is semiclassically principal, the second one subprincipal due to the extra factor of $\semi$. We will sometimes indicate the parameter $e$ by writing $L_{e,\semi}$. We denote the principal symbol of $L_{e,\semi}$ by
\[
  \ell_e=\sigmabh(L_{e,\semi}).
\]
We decompose $L_\semi$ in the coordinates $(t_*,r,\omega)$ as
\begin{equation}
\label{EqSCPSemiLSplit}
  L_\semi = M_{t_*} \semi D_{t_*} + M_r \semi D_r + M_\omega + i \semi S,
\end{equation}
with $M_\omega$ capturing the $\sld$ and $\sldelta$ components, and  $S$ the subprincipal term (not containing differentiations); the bundle endomorphisms $M_{t_*}$, $M_r$ and $S$ of $\Tb^*M$ have real coefficients.

Since
\[
  \cP_\semi=\BoxCP_{g,\semi} - i L_\semi,\quad \BoxCP_{g,\semi}:=\semi^2\BoxCP_g = \semi^2\delta_g \sfG_g\delta_g^*,
\]
see \eqref{EqHypDTCP}, we see directly that $\cP_\semi$ is \emph{not} principally scalar due to the non-scalar nature of $L_\semi$; of course, the principal part in the sense of differentiable order is scalar and equal to $G\Id$, with $G$ the dual metric function.

\begin{lemma}
\label{LemmaSCPSemiEll}
  For $e\leq 1$ sufficiently close to $1$, the semiclassical principal symbol of $\cP_{e,\semi}$ is elliptic in $\ol{\Tb^*}M\setminus\bigl(o\cup(\Sigma\cap\Sb^*M)\bigr)$, where $\Sigma=G^{-1}(0)\subset\ol{\Tb^*}M$ is the characteristic set, and $\Sb^*M$ is the boundary of $\ol{\Tb^*}M$ at fiber infinity.
\end{lemma}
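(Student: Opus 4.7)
The plan is to compute the semiclassical principal symbol $p_e(\zeta):=\sigmabh(\cP_{e,h})(\zeta)=G(\zeta)\,\Id-i\,\ell_e(\zeta)$ on $\rcTbdual M$ and to check its invertibility away from $o$ and $\Sigma\cap\Sb^*M$. At fiber infinity, the degree-two part of $p_e$ is the scalar $G\,\Id$, independent of $e$, so ellipticity on $\Sb^*M\setminus\Sigma$ is immediate. The work is therefore to handle finite fiber points, $\zeta\in\Tb^*M\setminus o$.

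At $e=1$, a direct inspection of the explicit matrix form of $L_h$ shows that the $(2,1)$, $(3,1)$, and $(3,2)$ entries all carry a factor of $\pm(1-e)$ and so vanish at $e=1$; hence $\ell_1$ is upper block-triangular in the splitting (\ref{EqSCPSemiBundle}), with diagonal symbols read off from the diagonal of $L_h|_{e=1}$, namely $2q(\zeta)$, $q(\zeta)$, $q(\zeta)\,\Id_{T^*\Sph^2}$, where $q(\zeta):=c^2\sigma-\nu\xi$ for $\zeta=-\sigma\,dt_*+\xi\,dr+\eta$. Since $M_{t_*}$ and $M_r$ are real, $q$ is real-valued; consequently, $p_1(\zeta)$ is upper block-triangular with diagonal eigenvalues $G-2iq$, $G-iq$, $G-iq$. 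As $G$ and $q$ are both real, any such eigenvalue vanishes iff $G(\zeta)=0$ and $q(\zeta)=0$ simultaneously. Using the identity $\nu^2+c^2\mu=1$ from \S\ref{SubsecSCPSemi}, the condition $q=0$ gives $\sigma=\nu\xi/c^2$, whence substitution into $G$ yields
\[
G=-\xi^2/c^2-r^{-2}|\eta|^2,
\]
so $G=0$ forces $\xi=0$ and $\eta=0$, and then $\sigma=0$, i.e.\ $\zeta=0$. This proves ellipticity of $p_1$ on $\Tb^*M\setminus o$.

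For $e$ close to $1$, the perturbation of $\ell_1$ is controlled by $|e-1|$ in its off-diagonal entries only, and I would propagate the ellipticity by a scaling-plus-compactness argument. Writing $\zeta=s\zeta_1$ with $s>0$ and $\zeta_1$ on the unit sphere bundle, one has $p_e(s\zeta_1)=s\bigl(sG(\zeta_1)\,\Id-i\,\ell_e(\zeta_1)\bigr)$, and invertibility on $\Tb^*M\setminus o$ reduces to invertibility of the bracketed matrix $M_e(s,\zeta_1)$ for $s>0$ on the compact unit sphere. At $e=1$ the determinant of $M_1$ factors as the product of the entries described above, and the argument just given shows it is non-vanishing for $s>0$; continuity of $\det M_e$ in $e$, combined with dominance of $sG\,\Id$ as $s\to\infty$ away from $\Sigma$ and with compactness of the slice $\Sigma\cap\{|\zeta_1|=1\}$ (on which $q\neq 0$), then yields the desired non-vanishing for $e$ sufficiently close to $1$. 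The main obstacle in this last step is the delicate region of unit vectors $\zeta_1$ where $\ell_1(\zeta_1)$ is nilpotent, for instance at $\zeta_1$ with $\sigma=\xi=0$, $\eta\neq 0$: there the perturbation $\ell_e-\ell_1$ can move the zero eigenvalues of $\ell_1(\zeta_1)$ into $\C$, and a careful tracking of the sign of the resulting perturbed eigenvalues (which are either real or purely imaginary by reality of $\ell_e$) shows that they stay off the imaginary axis only for a one-sided neighbourhood of $e=1$, consistent with the later restriction $e<1$ in the SCP analysis.
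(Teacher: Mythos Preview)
Your argument at $e=1$ has the right shape but a sign error: reading off the diagonal of $\ell_1$ from \eqref{EqSCPSemiEllSymbol} gives $-(c^2\sigma+\nu\xi)$ (the first entry doubled), not $c^2\sigma-\nu\xi$. With your stated $q$, substituting $\sigma=\nu\xi/c^2$ into $G=c^2\sigma^2+2\nu\sigma\xi-\mu\xi^2-r^{-2}|\eta|^2$ produces $(4\nu^2-1)\xi^2/c^2-r^{-2}|\eta|^2$, whose $\xi^2$-coefficient is \emph{positive} at the horizons (where $\nu^2=1$), so the conclusion $\zeta=0$ does not follow. With the correct sign one recovers your claimed $-\xi^2/c^2-r^{-2}|\eta|^2$, and the paper packages this more cleanly: $-(c^2\sigma+\nu\xi)=G(dt_*,\zeta)$, which is nonzero on $\Sigma\setminus o$ simply because $dt_*$ is timelike.

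The genuine gap is your perturbation to $e$ near $1$. The assertion that the eigenvalues of the real matrix $\ell_e(\zeta_1)$ are ``either real or purely imaginary'' is false---eigenvalues of a real matrix are real or come in complex-conjugate pairs $a\pm ib$, with no constraint forcing $a=0$---so the ``careful tracking of the sign'' at the nilpotent locus has no foundation. Your scaling argument via $M_e(s,\zeta_1)=sG(\zeta_1)\Id-i\ell_e(\zeta_1)$ likewise fails to close: $s$ ranges over the non-compact interval $(0,\infty)$, and near $s=0$ with $\zeta_1$ at a nilpotent point of $\ell_1$ (e.g.\ $\sigma=\xi=0$, $\eta\neq 0$) continuity in $e$ alone gives no uniform control. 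The paper bypasses all of this: since $p_e=G\,\Id-i\ell_e$ with $G$ a real scalar and $\ell_e$ real, ellipticity of $p_e$ on $\Tb^*M\setminus o$ reduces to ellipticity of $\ell_e$ on $\Sigma\cap(\Tb^*M\setminus o)$; by homogeneity of $\ell_e$ this is a condition on the \emph{precompact} set $\Sigma\cap\Sb^*M$, hence manifestly open in $e$, and one checks it at $e=1$ via the timelike-$dt_*$ observation above.
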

\begin{proof}
  At a point $\zeta\in\Sb^*M$, the ellipticity of $\cP_\semi$ is equivalent to $G(\zeta)\neq 0$, proving $\Ellbh(\cP_\semi)\cap\Sb^*M=\Sb^*M\setminus\Sigma$.

  Next, working away from fiber infinity, we note that $\sigmabh(\BoxCP_{g,\semi})=G$ is real, while $\sigmabh(i L_\semi)$ has purely imaginary eigenvalues; this follows from~\eqref{EqSCPSemiEllSymbol} below for $e=1$, and either by direct computation or from Lemma~\ref{LemmaSCPLoSymm} below for $0<e<1$. We thus only need to show that $\sigmabh(L_\semi)$ is elliptic on the part of the light cone $\Sigma\cap(\Tb^*M\setminus o)$ away from fiber infinity and the zero section. Moreover, $\ell_e$, as a smooth section of $\pi^*\End(\Tb^*M)$ over $\Tb^*M$ (with $\pi\colon\Tb^*M\to M$), is homogeneous of degree $1$ in the fibers of $\Tb^*M$; since $\Sigma\cap\Sb^*M$ is precompact (the non-compactness only being due to the boundaries $r=r_\pm\pm 3\eps_M$ being excluded in the definition of $M$), the set of $e$ for which $\ell_e$ is elliptic on $\Sigma\cap(\Tb^*M\setminus o)$ is therefore open. Thus, it suffices to prove the lemma for $e=1$. Writing b-covectors as
  \begin{equation}
  \label{EqSCPSemiEllCoord}
    \zeta = -\sigma\,dt_* + \xi\,dr + \eta,\quad \eta\in T^*\Sph^2,
  \end{equation}
  we have
  \begin{equation}
  \label{EqSCPSemiEllSymbol}
    \ell_1
     = \begin{pmatrix}
          -2(c^2\sigma+\nu\xi) & \nu\sigma-\mu\xi    & -r^{-2}i_\eta \\
          0                    & -(c^2\sigma+\nu\xi) & 0             \\
          0                    & 0                   & -(c^2\sigma+\nu\xi)
        \end{pmatrix}.
  \end{equation}
  But $-(c^2\sigma+\nu\xi)=G(d t_*,\zeta)$, which by the timelike nature of $dt_*$ is non-zero for $\zeta\in\Sigma\cap(\Tb^*M\setminus o)$.
\end{proof}

The last part of the proof shows:

\begin{cor}
\label{CorSCPSemiEllSymbol}
  For $e$ close to $1$, the symbol $\sigmabh(L_{e,\semi})$ is elliptic in the causal double cone $\{\zeta\in\Tb^*M\setminus o\colon G(\zeta)\geq 0\}$.
\end{cor}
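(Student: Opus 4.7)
The plan is to reuse the computation already carried out in the proof of Lemma~\ref{LemmaSCPSemiEll} and observe that it in fact yields the stronger conclusion stated here, modulo a standard Lorentzian geometry input and a routine continuity argument in $e$.

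First I would inspect the explicit expression \eqref{EqSCPSemiEllSymbol} for $\ell_1$: in the splitting $\la dt_*\ra \oplus \la dr\ra \oplus T^*\Sph^2$ it is upper triangular, and all four diagonal entries are nonzero scalar multiples of the single quantity $c^2\sigma+\nu\xi$, which equals $-G(dt_*,\zeta)$ by the form of $G$ recorded in \eqref{EqSCPSemiMetric}. Consequently $\det\ell_1(\zeta)$ is a nonzero constant multiple of $G(dt_*,\zeta)^4$, so $\ell_1$ is elliptic at $\zeta\in\Tb^*M\setminus o$ iff $G(dt_*,\zeta)\neq 0$.

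The second step is the key geometric input: by Lemma~\ref{LemmaSdSExt}, $|dt_*|^2_G\equiv c^2>0$, i.e.\ $dt_*$ is everywhere timelike. The reverse Cauchy--Schwarz inequality for Lorentzian inner products then gives
\[
  G(dt_*,\zeta)^2 \;\geq\; G(dt_*,dt_*)\,G(\zeta,\zeta) \;=\; c^2\,G(\zeta,\zeta) \;\geq\; 0
\]
for every causal $\zeta$, with equality only if $\zeta$ is a real multiple of $dt_*$, which in the regime $G(\zeta)\geq 0$ forces $\zeta=0$. This establishes the corollary at $e=1$ on all of $\{G\geq 0\}\setminus o$.

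The final step is a routine continuity argument to propagate the conclusion to $e$ near $1$. Since $\ell_e$ depends linearly on $e$ and is homogeneous of degree $1$ in the fibers of $\Tb^*M$, ellipticity on the conic set $\{\zeta\in\Tb^*M\setminus o\colon G(\zeta)\geq 0\}$ is equivalent to invertibility of $\ell_e$ on the compact subset $K:=\{G\geq 0\}\cap\Sb^*M|_Y$ of the b-cosphere bundle (using that $Y$ is compact and $K$ is closed). Continuity of $(e,\zeta)\mapsto\det\ell_e(\zeta)$ together with its nonvanishing on $K$ at $e=1$ yields an open neighborhood of $e=1$ on which the determinant remains nonzero on all of $K$. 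There is no substantive obstacle in this argument; the corollary is essentially a repackaging of the already-completed proof of Lemma~\ref{LemmaSCPSemiEll}.
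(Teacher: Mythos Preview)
Your proposal is correct and takes essentially the same approach as the paper: the paper's proof of the corollary is simply the observation that ``the last part of the proof [of Lemma~\ref{LemmaSCPSemiEll}] shows'' the claim, and you have unpacked precisely that last part --- the upper-triangular form of $\ell_1$ with diagonal entries proportional to $G(dt_*,\zeta)$, the timelike nature of $dt_*$ forcing $G(dt_*,\zeta)\neq 0$ on nonzero causal covectors, and the openness-in-$e$ argument via homogeneity and (pre)compactness. One minor point: the corollary is stated over $\Tb^*M$, not $\Tb^*_\Omega M$, so the relevant fiber-infinity set is only precompact (as in the paper's proof of the lemma) rather than compact; but by stationarity this is harmless, and your restriction to a compact spatial slice captures the essential content.
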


Schematically, the principal symbol of $\cP_{1,\semi}$ is
\[
  G - i\begin{pmatrix}2\nabla t_* & * & * \\ 0 & \nabla t_* & 0 \\ 0 & 0 & \nabla t_* \end{pmatrix},
\]
viewing $\nabla t_*$ as a linear function on the fibers of $\Tb^*M$. Since in a conical neighborhood of the two components $\Sigma^\pm$ of the light cone, see \eqref{EqKdSGeoCharPM}, we have $\pm\nabla t_*>0$, the imaginary part has the required sign for real principal type propagation of regularity along the (rescaled) null-geodesic flow generated by $\rham_G$ in the forward, resp.\ backward, direction within $\Sigma^+\cap\Sb^*M$, resp.\ $\Sigma^-\cap\Sb^*M$; see \S\ref{SubsecSCPHi} for details.

Near the zero section, one would like to think of $L_{1,\semi}$ as a coupled system of first order ordinary differential operators, transporting energy along the orbits of the vector field $\nabla t_*$. See Figure~\ref{FigSCPNablaT}.

\begin{figure}[!ht]
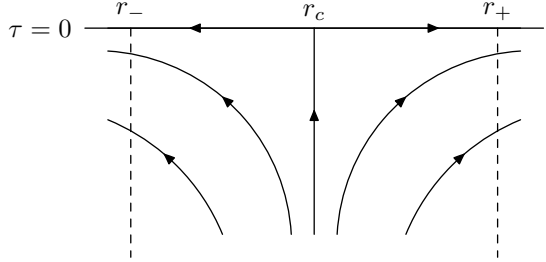

  \centering
  \inclfig{SCPNablaT}
  \caption{The flow of the vector field $\nabla t_*$, including at the boundary at infinity.}
  \label{FigSCPNablaT}
\end{figure}

\begin{rmk}
\label{RmkSCPLCP}
  We explain the structure of $L_{e,\semi}$ in a bit more detail. We first note that one can check that the conclusion of Lemma~\ref{LemmaSCPSemiEll} in fact holds for all $e>0$, but fails for $e=0$. Indeed, for $e=0$, one can easily compute the eigenvalues of $\ell_0(\zeta)$, $\zeta\in\Tb^*M\setminus o$ as in \eqref{EqSCPSemiEllCoord}, to be $-(c^2\sigma+\xi\nu)$ with 2-dimensional eigenspace,\footnote{This is independent of $e$ and can be computed explicitly: for $\eta\neq 0$ and $\xi\neq 0$, the eigenspace is $\eta^\perp\oplus\la \nu|\eta|^2\,dt_*+c^2|\eta|^2\,dr-r^2\xi\eta\ra$; if $\eta\neq 0$ and $\xi=0$, it is $\eta^\perp\oplus\la\nu\,dt_*+c^2\,dr\ra$; and if $\eta=0$, it is $\eta^\perp=T^*\Sph^2$.} and $c^2\sigma+\xi\nu\pm\sqrt{\xi^2+c^2 r^{-2}|\eta|^2}$ with 1-dimensional eigenspaces, respectively; the vanishing of any one of the latter two eigenvalues implies $G=0$. Thus, for $e=0$, $L_\semi$ can roughly be thought of as transporting energy in phase space along the flow of $\nabla t_*$ for a corank $2$ part of $L_\semi$, and along the flows of two other vector fields, whose projections to $M$ are \emph{null}, on two rank $1$ parts. As soon as $0<e<1$ however, the projections to $M$ of these two vector fields become future timelike, as we will discuss in Lemma~\ref{LemmaSCPLoTimelike} and Remark~\ref{RmkSCPLo2nd} below, and $\ell_e$ is still diagonalizable due to Lemma~\ref{LemmaSCPLoSymm} below; for $e=1$ on the other hand, all \emph{three} vector fields coincide (up to positive scalars), with projection to $M$ equal to a positive multiple of $\nabla t_*$, but $\ell_1$ is no longer diagonalizable as it is nilpotent with non-trivial Jordan block structure when evaluated on $(dt_*)^\perp\setminus o\subset\Tb^*M\setminus o$.

  In summary, ellipticity considerations force us to use $e>0$, while the structure of $L_{e,\semi}$ is simplest for $e=1$, with the technical caveat of non-diagonalizability, which disappears for $e<1$; this is the reason for us to work with $e<1$ close to $1$.
\end{rmk}

\subsection{High frequency analysis}
\label{SubsecSCPHi}

We now analyze $\cP_\semi$ near fiber infinity $\pa\ol{\Tb^*}M$. By Lemma~\ref{LemmaSCPSemiEll}, we only need to study the propagation of regularity within $\pa\Sigma=G^{-1}(0)\cap\Sb^*M$ along the flow of the rescaled Hamilton vector field
\[
  \rham_G=\wh\rho\ham_G\in\Vb(\Sb^*M),
\]
see \eqref{EqKdSGeoHamRescaled}; here $\wh\rho$ is a boundary defining function of fiber infinity $\Sb^*M\subset\ol{\Tb^*}M$. Note that $\ham_G$ is equal to the Hamilton vector field of the real part of $\sigmabh(\cP_\semi)$.

Let $\ell=\sigmabh(L_{1,\semi})$, so the eigenvalues of $\pm\wh\rho\ell$ are positive near $\pa\Sigma^\pm$. In order to prove the propagation of regularity (i.e.\ of estimates) in the future direction, as explained after the proof of Lemma~\ref{LemmaSCPSemiEll}, we need to choose a positive definite inner product on $\pi^*\Tb^*M$ (with $\pi\colon\pa\Sigma\to M$ the projection) so that $\pm\frac{\wh\rho}{2}(\ell+\ell^*)$ is positive at (and hence near) $\pa\Sigma^\pm$ in the sense of self-adjoint endomorphisms; see also \cite[Proposition~3.12]{HintzPsdoInner}. We phrase this in a more direct way in Lemma~\ref{LemmaSCPHiInner} below. For real principal type propagation estimates, we only need to arrange this locally in $\pa\Sigma$, but we can in fact arrange it globally, which will make the estimates at radial points and at the trapped set straightforward.

\begin{lemma}
\label{LemmaSCPHiInner}
  Fix the positive definite inner product
  \begin{equation}
  \label{EqSCPHiInnerRiem}
    g_R = dt_*^2 + dr^2 + \slg
  \end{equation}
  on $\Tb^*M$, used to define adjoints below. Then there exists a pseudodifferential operator $Q\in\Psibh^0(M;\Tb^*M)$ which is elliptic near $\pa\Sigma$, with microlocal inverse $Q^-$, so that
  \begin{equation}
  \label{EqSCPHiInnerEst}
    \pm\wh\rho \sigmabh\Bigl(\frac{1}{2i}\bigl(Q\cP_{e,\semi}Q^- - (Q\cP_{e,\semi}Q^-)^*\bigr)\Bigr) > 0
  \end{equation}
  holds near $\pa\Sigma^\pm$ for all $e$ close to $1$.
\end{lemma}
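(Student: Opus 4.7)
The plan is to use the pseudodifferential inner product framework from \cite[Proposition~3.12]{HintzPsdoInner}: the existence of $Q\in\Psibh^0$ satisfying \eqref{EqSCPHiInnerEst} is equivalent to equipping $\pi^*\Tb^*M$ near $\pa\Sigma$ with a positive definite Hermitian bundle metric varying in the fibers of $\Sb^*M$ (a $\Psi$-inner product, which I will denote $h$), such that the order-one symbol of $\pm\wh\rho\cdot\frac{1}{2i}(\cP_{e,h}-\cP_{e,h}^*)$, with the adjoint computed with respect to $h$ rather than the fixed $g_R$, is positive definite at $\pa\Sigma^\pm$. The required $Q$ is then recovered as any quantization of an elliptic symbol intertwining $g_R$ and $h$, so \eqref{EqSCPHiInnerEst} reduces to a pointwise matrix positivity problem modulo choosing $h$.

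Since the order-two principal symbol of $\cP_{e,h}=h^2\BoxCP_g-iL_{e,h}$ is the real scalar $G\cdot\Id$, the relevant order-one symbol has only two contributions: one from the endomorphism $\ell_e=\sigmabh(L_{e,h})$ coming from $-iL_{e,h}$, and one from the subprincipal of $h^2\BoxCP_g$, determined by the covariant derivative of $h$ along $\rham_G$ together with the Ricci endomorphism in $\BoxCP_g=\Box_g-\Ric$. Writing $L_{e,h}=h\cdot L^{(1)}$ with $L^{(1)}\in\Diff^1$ while the semiclassical subprincipal of $h^2\BoxCP_g\in\Diffbh^2$ carries an extra factor of $h$, a short computation of orders shows that the $\BoxCP_g$-contribution to $\wh\rho$ times the order-one symbol is $O(h)$ in the semiclassical limit, while the $\ell_e$-contribution is $O(1)$; hence for $h$ small the $\ell_e$-contribution is dominant.

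The main structural input is the explicit form of $\ell_e$: at $e=1$, \eqref{EqSCPSemiEllSymbol} exhibits $\ell_1$ as upper triangular in the splitting \eqref{EqSCPSemiBundle} of $\Tb^*M$, with diagonal entries $-(c^2\sigma+\nu\xi)$ of multiplicity three and $-2(c^2\sigma+\nu\xi)$ of multiplicity one. Since $-(c^2\sigma+\nu\xi)=G(\zeta,dt_*)$ is strictly positive on $\Sigma^+$ and strictly negative on $\Sigma^-$ by the timelikeness of $dt_*$, the two distinct eigenvalues of $\pm\wh\rho\,\ell_1$ are strictly positive on $\pa\Sigma^\pm$. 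Because these eigenvalues remain separated throughout $\pa\Sigma$, the eigenspace decomposition of $\ell_1$ is smooth in $\zeta\in\pa\Sigma$ and admits an intrinsic description via $\nabla t_*$ and its transverse complement, avoiding the spherical coordinate singularity at the poles. Declaring this eigenbasis to be $h$-orthonormal defines a smooth $\Psi$-inner product $h$ near $\pa\Sigma$ rendering $\ell_1$ $h$-self-adjoint, so that $\pm\wh\rho\cdot\frac{1}{2}(\ell_1+\ell_1^*)=\pm\wh\rho\,\ell_1$ is positive definite on $\pa\Sigma^\pm$. By continuity in $e$, the same $h$ gives the desired positivity for $\ell_e$ with $e$ close to $1$.

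Finally, the $O(h)$ contribution of $h^2\BoxCP_g$ to the skew-adjoint symbol is absorbed by the strict positivity of the $\ell_e$-contribution for $h$ sufficiently small --- equivalently, for $\gamma$ sufficiently large --- yielding \eqref{EqSCPHiInnerEst} on a conic neighborhood of $\pa\Sigma$ in $\rcTbdual M$. The principal technical obstacle will be checking that the pointwise diagonalization extends smoothly past the coordinate singularities of the spherical chart and that the $\Psi$-inner product extends smoothly off $\pa\Sigma$ in a manner compatible with the symbolic calculus; both follow from the intrinsic form of the eigenspaces of $\ell_e$ via $\nabla t_*$ and standard perturbation theory for diagonalizable matrices with separated eigenvalues.
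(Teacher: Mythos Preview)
Your core strategy---diagonalize $\ell_1$ and exploit that its eigenvalues are positive multiples of $G(\zeta,dt_*)$ on $\pa\Sigma^+$---is exactly the paper's, and the $\Psi$-inner product framing is equivalent to the paper's explicit conjugation by the matrix $q$ that puts $\ell_1$ into diagonal form.

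However, your discussion of an $O(h)$ contribution from $h^2\BoxCP_g$ is both unnecessary and reflects a misconception about what \eqref{EqSCPHiInnerEst} asks for. The symbol $\sigmabh$ in the lemma is the \emph{semiclassical} principal symbol, i.e.\ the full symbol evaluated at $h=0$; it is a fixed function on $\Tb^*M$ with no $h$-dependence whatsoever. The semiclassical principal symbol of $Q\cP_{e,h}Q^-$ is $q(G-i\ell_e)q^{-1}=G\cdot\Id-iq\ell_eq^{-1}$ (since $G$ is scalar), and the skew-adjoint part of this is $-\tfrac12(q\ell_eq^{-1}+(q\ell_eq^{-1})^*)$, to which $G$ contributes \emph{exactly zero} because $G$ is real. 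There is no subprincipal term of $\BoxCP_{g,h}$ to absorb: the lemma is not asking about a subprincipal operator in the $S_\sub$ sense (which would be the relevant object if $\cP_{e,h}$ had \emph{real} scalar principal symbol, as in \S\ref{SubsecKeyESG}), but about the imaginary part of the full semiclassical principal symbol itself. The paper states this in one line: ``Since $\BoxCP_{g,h}$ is principally scalar, it does not contribute.''

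Your concern about smoothness of the diagonalization past the spherical coordinate singularity is also unnecessary: the paper's explicit $q$ uses only $\wh\rho$, $\sigma$, $\xi$, $\mu$, $\nu$, and $r^{-2}i_\eta$, all of which are globally defined smooth objects on $\Tb^*M$ near $\pa\Sigma$.
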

\begin{proof}
  Since $\BoxCP_{g,\semi}$ has a real scalar principal symbol, it does not contribute to \eqref{EqSCPHiInnerEst}; thus, we need to arrange \eqref{EqSCPHiInnerEst} for $-i L_{e,\semi}$ in place of $\cP_{e,\semi}$. Moreover, if \eqref{EqSCPHiInnerEst} holds for $e=1$ for some choice of $Q$, then one can take the same $Q$ for $e$ close to $1$ by compactness considerations. Explicitly then, we may diagonalize the symbol of $L_{1,\semi}$, given in \eqref{EqSCPSemiEllSymbol}, near $\pa\Sigma^\pm$ by conjugating it by an endomorphism-valued zeroth order symbol $q$; quantizing $q$ gives an operator $Q$ with the desired properties. Concretely, near $\pa\Sigma$, we may take $\wh\rho=|c^2\sigma+\nu\xi|^{-1}$, and then
  \[
    q = \begin{pmatrix}
          1 & \mp\wh\rho(\nu\sigma-\mu\xi) & \pm\wh\rho r^{-2}i_\eta \\
          0 & 1                            & 0                       \\
          0 & 0                            & 1
        \end{pmatrix}.
  \]
  The proof is complete.
\end{proof}

This immediately gives the propagation of regularity/estimates:

\begin{prop}
\label{PropSCPHiRealPrType}
  Suppose $B_1,B_2,S\in\Psibh^0(M;\Tb^*M)$ are operators with wave front set disjoint from $\pa\Sigma^-$, resp.\ $\pa\Sigma^+$, as well as from the zero section $o\subset\ol{\Tb^*}M$. Suppose also that all backward, resp.\ forward, null-bicharacteristics of $\rham_G$ from $\WFbh'(B_2)\cap\Sb^*M$ reach $\Ellbh(B_1)$ in finite time while remaining in $\Ellbh(S)$, and with $S$ elliptic on $\WFbh'(B_2)$. Then for all $s,\rho\in\R$ and $N\in\R$, there exists $\semi_0>0$ such that one has
  \begin{equation}
  \label{EqSCPHiRealPrTypeEst}
    \|B_2 u\|_{\Hbh^{s,\rho}} \lesssim \|B_1 u\|_{\Hbh^{s,\rho}} + \semi^{-1}\|S\cP_\semi u\|_{\Hbh^{s-1,\rho}} + \semi^N\|u\|_{\Hbh^{-N,\rho}}, \quad 0<\semi<\semi_0,
  \end{equation}
  in the strong sense that if the norms on the right are finite, then so is the norm on the left, and the estimate holds. The same holds if one replaces $\cP_\semi$ by its adjoint $\cP_\semi^*$ (with respect to any non-degenerate fiber inner product) and interchanges `backward' and `forward.' See Figure~\ref{FigSCPHiRealPrType}. 
\end{prop}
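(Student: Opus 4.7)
The plan is to run a standard positive commutator (Hörmander) propagation argument in the semiclassical b-calculus, with the twist that the principal symbol of $\cP_h$ is not scalar. The key observation is that the real part of $\sigmabh(\cP_h)$ equals $G\otimes\Id$ and the only non-scalar behavior occurs in the first-order skew-adjoint part, which by Lemma~\ref{LemmaSCPHiInner} can be diagonalized and given a \emph{favorable sign} microlocally near $\pa\Sigma^\pm$. Thus, after microlocal conjugation by the operator $Q$ from that lemma, we reduce to analyzing a principally scalar operator $P:=Q\cP_h Q^-$ whose symbol has real part $G$ and whose skew-adjoint part satisfies $\pm\wh\rho\,\sigmabh(\tfrac{1}{2i}(P-P^*))>0$ near $\pa\Sigma^\pm$. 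We will work with $\pa\Sigma^+$; the case $\pa\Sigma^-$ is identical after reversing the flow direction.

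First, I would construct a standard commutant $a\in S^{s-1/2,\rho}_\bop$ supported in a small neighborhood of the null-bicharacteristic segments from $\WFbh'(B_2)$ to $\Ellbh(B_1)$, chosen so that along the rescaled flow $\rham_G$,
\begin{equation*}
  \rham_G a^2 = -b_2^2 + b_1^2 + e,
\end{equation*}
with $b_2$ elliptic on $\WFbh'(B_2)$, $b_1$ supported in $\Ellbh(B_1)$, and $e$ a controlled error supported in $\Ellbh(S)$. Quantizing $a$ to $A\in\Psibh^{s-1/2,\rho}$ with $\WFbh'(A)\subset\Ellbh(S)$ and forming the pairing $2\Im\la P u, A^*A u\ra$, the symbolic calculation in the semiclassical b-calculus yields
\begin{equation*}
  2\Im\la P u, A^* A u\ra = h^{-1}\la i[A^*A,\Re P]u,u\ra + 2 h^{-1}\la (\tfrac{1}{2i}(P-P^*))A u, A u\ra + \cO(h^\infty),
\end{equation*}
where $\Re P = \tfrac{1}{2}(P+P^*)$.

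The first term, $h^{-1} i[A^*A,\Re P]$, has principal symbol $\rham_G a^2 = -b_2^2 + b_1^2 + e$, producing the negative square $-\|B_2 u\|^2$ plus the controlled $\|B_1 u\|^2$ plus the error term, by the usual sharp Gårding / positivity argument. The second term, coming from the skew-adjoint part of $P$, has the sign $\pm\wh\rho\,\sigmabh(\tfrac{1}{2i}(P-P^*)) > 0$ at $\pa\Sigma^+$, and with our forward-propagation convention this contribution is of the \emph{same} sign as $-b_2^2$ and therefore can be absorbed favorably (or, if one prefers, simply discarded by the sign). Rearranging and applying Cauchy--Schwarz on the left side $\la Pu, A^*Au\ra$ to bound it by $\|SPu\|_{s-1,\rho}\|u\|$-type expressions yields the estimate \eqref{EqSCPHiRealPrTypeEst}. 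The adjoint statement, propagating in the opposite direction, follows by the same construction after flipping the direction of the flow in the commutant; the skew-adjoint sign again reinforces rather than competes, since passing to $\cP_h^*$ flips the sign of the skew-adjoint part, which is compensated by flipping the bicharacteristic direction.

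The main technical point to verify carefully is that the microlocal conjugation by $Q$, which only diagonalizes $\sigmabh(L_h)$ on $\pa\Sigma$ modulo lower order terms, produces error terms that can be absorbed: residuals of the conjugation $\cP_h - Q^-(Q\cP_hQ^-)Q$ lie in $h\Psibh$ microlocally on $\Ellbh(Q)$, hence their contribution to the commutator argument carries an extra power of $h$ and is controlled by the $h^N\|u\|_{\Hbh^{-N,\rho}}$ term after iteration (or by choosing the regularization parameter appropriately). The passage from the ellipticity region away from $\Sigma$ covered by Lemma~\ref{LemmaSCPSemiEll} to the propagation region handled here is seamless: elliptic regularity in $\Ellbh(\cP_h)$ provides the semiclassical estimate with an $h^N$ remainder, which then feeds into the propagation estimate at the boundary of that region. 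The weight $\rho\in\R$ plays no role in the argument, as $\cP_h$ commutes with $\tau$-dilations to leading order, so one can conjugate through by $\tau^{-\rho}$ without affecting any of the symbolic computations.
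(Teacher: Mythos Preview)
Your approach is essentially the same as the paper's: conjugate by the $Q$ of Lemma~\ref{LemmaSCPHiInner} to reduce to $\cP'_h=Q\cP_hQ^-$ and then run the standard positive commutator argument with a principally scalar commutant. One conceptual point is slightly off in emphasis: you write that the skew-adjoint contribution ``can be absorbed favorably (or, if one prefers, simply discarded by the sign),'' treating it as a bonus on top of the commutator term $-b_2^2$. In fact the opposite hierarchy holds: the commutator $i[\cP'_h,A^2]$ has principal symbol $h\,\ham_G a^2$, i.e.\ is of order $h$, while the strictly positive skew-adjoint part $\tfrac{1}{2i}(\cP'_h-(\cP'_h)^*)$ is of order $1$. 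It is the latter that furnishes the main control of $u$ on $\Ellbh(A)$ and is responsible for the estimate \eqref{EqSCPHiRealPrTypeEst} having no $h^{-1}$ loss in front of $\|S\cP_hu\|$; merely discarding it by sign would yield only the lossy real-principal-type estimate. Your displayed identity for $2\Im\langle Pu,A^*Au\rangle$ also has stray $h^{-1}$ factors that do not match this accounting. None of this affects the correctness of the scheme, but the write-up should make clear that strict positivity (not just nonnegativity) of the skew-adjoint part is what is being used.
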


This propagates $\Hbh^{s,\rho}$-control of $u$ in the forward, resp.\ backwards, direction along the $\rham_G$-flow in $\pa\Sigma^+$, resp.\ $\pa\Sigma^-$.

\begin{figure}[!ht]
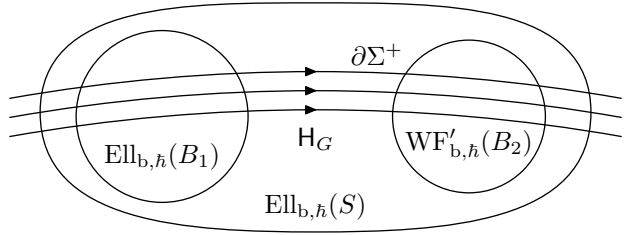

  \centering
  \inclfig{SCPHiRealPrType}
  \caption{Propagation of regularity in the future direction within $\pa\Sigma^+$: we can propagate microlocal control from $\Ellbh(B_1)$ to $\Ellbh(B_2)$. In $\pa\Sigma^-$, the arrows are reversed, corresponding to propagation in the backwards direction along $\rham_G$ (which is still the future direction in $\pa\Sigma$).}
  \label{FigSCPHiRealPrType}
\end{figure}

\begin{proof}[Proof of Proposition~\ref{PropSCPHiRealPrType}]
  First of all, with $Q$ as in Lemma~\ref{LemmaSCPHiInner}, we can write $Q\cP_\semi u=Q\cP_\semi Q^-(Q u)+R u$ with $\WFbh'(R)\cap\WFbh'(S)=\emptyset$; from this one easily sees that it suffices to prove the proposition for
  \begin{equation}
  \label{EqSCPHiRealPrTypePprime}
    \cP'_\semi:=Q\cP_\semi Q^-
  \end{equation}
  in place of $\cP_\semi$. This then follows from a standard positive commutator argument, considering
  \begin{equation}
  \label{EqSCPHiRealPrTypeComm}
  \begin{split}
    i\semi^{-1}(\la A u, A\cP'_\semi u\ra - \la A\cP'_\semi u,A u\ra) &= \la i\semi^{-1}[\cP'_\semi,A^2]u,u\ra \\
     &\quad - \la (i\semi)^{-1}(\cP'_\semi-(\cP'_\semi)^*)A^2 u,u\ra
  \end{split}
  \end{equation}
  for a suitable principally scalar commutant $A=A^*\in\tau^{-2r}\Psibh^{s-1/2}(M;\Tb^*M)$ with wave front set contained in a small neighborhood of $\pa\Sigma^\pm$, quantizing a non-negative symbol $a$ whose $\ham_G$-derivative has a sign on $\Ellbh(B_2)$, with an error term with the opposite sign on $\Ellbh(B_1)$. For example, in the case of $\pa\Sigma^+$, the term coming from the commutator can be written $\sigmabh(i\semi^{-1}[\cP'_\semi,A^2])=\ham_G a^2=-b_2^2+b_1^2$, while by the previous lemma, the skew-adjoint part gives a contribution (in fact of order $\semi^{-1}$ rather than $1$ due to the strict positivity in~\eqref{EqSCPHiInnerEst}) with the \emph{same sign} as the term $-b_2^2$; thus, this contribution can be dropped from the estimate.
  
  More concretely, quantizing $b_1$ and $b_2$, one can evaluate the right hand side of~\eqref{EqSCPHiRealPrTypeComm}; the a priori control term $\|B_1 u\|_{\Hbh^{s,\rho}}$, resp.\ conclusion term $\|B_2 u\|_{\Hbh^{s,\rho}}$, in the estimate~\eqref{EqSCPHiRealPrTypeEst} arises from the quantization of $b_1^2$, resp.\ $b_2^2$. The left hand of~\eqref{EqSCPHiRealPrTypeComm} is estimated by $\|A u\|^2+\semi^{-2}\|A\cP'_\semi u\|$, and the first term can be absorbed into the $\|B_2 u\|_{\Hbh^{s,\rho}}$ if one chooses $a$ carefully. We refer the reader to \cite[\S2]{DeHoopUhlmannVasyDiffraction} and \cite[\S2.5]{VasyMicroKerrdS} for a more detailed discussion and further references.
\end{proof}

\begin{rmk}
  Using the strict positivity of the contribution of the skew-adjoint part in~\eqref{EqSCPHiRealPrTypeComm}, the estimate~\eqref{EqSCPHiRealPrTypeEst} can be improved, to wit
  \begin{equation}
  \label{EqSCPHiRealPrTypeEstImproved}
    \|B_2 u\|_{\Hbh^{s,\rho}} \lesssim \semi^{1/2}\|B_1 u\|_{\Hbh^{s,\rho}} + \|S\cP_\semi u\|_{\Hbh^{s-1,\rho}} + \semi^N \|u\|_{\Hbh^{-N,\rho}},\quad 0<\semi<\semi_0.
  \end{equation}
  Indeed, the right hand side of~\eqref{EqSCPHiRealPrTypeComm} controls $\semi^{-1}\|A u\|^2$ (rather than merely $\|B_2 u\|^2$) due to the presence of the second term, provided one has a priori control of $\|B_1 u\|^2$; the left hand side on the other hand can be estimated by $\eps\semi^{-1}\|A u\|^2 + C_\eps\semi^{-1}\|A\cP'_\semi u\|^2$, the first term of which can be absorbed, giving~\eqref{EqSCPHiRealPrTypeEstImproved}. The same improvement can be made in the statements of Propositions~\ref{PropSCPHiRad} and \ref{PropSCPHiTrap} below.
\end{rmk}

Likewise, we have microlocal estimates for the propagation near the radial set $\pa\cR$:

\begin{prop}
\label{PropSCPHiRad}
  Suppose the wave front sets of $B_1,B_2,S\in\Psibh^0(M;\Tb^*M)$ are contained in a small neighborhood of $\pa\Sigma^+$, resp.\ $\pa\Sigma^-$, with $B_2$ elliptic at $\pa\cR^+_\pm$, resp.\ $\pa\cR^-_\pm$ (see \S\ref{SubsecKdSGeo}), and so that all backward, resp.\ forward, null-bicharacteristics from $\WFbh'(B_2)\cap\Sb^*M$ either tend to $\pa\cR^+_\pm$, resp.\ $\pa\cR^-_\pm$, or enter $\Ellbh(B_1)$ in finite time, while remaining in $\Ellbh(S)$; assume further that $S$ is elliptic on $\WFbh'(B_2)$. Then for all $s,\rho\in\R$ and $N\in\R$, there exists $\semi_0>0$ such that the estimate \eqref{EqSCPHiRealPrTypeEst} holds for $0<\semi<\semi_0$. See Figure~\ref{FigSCPHiRad}. 

  The same estimate holds if one replaces $\cP_\semi$ by its adjoint and interchanges `backward' and `forward.'
\end{prop}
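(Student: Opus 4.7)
The plan is to mirror the positive-commutator argument of Proposition~\ref{PropSCPHiRealPrType}, but with a commutant adapted to the radial point dynamics at $\pa\cR_\pm^\pm$. As a first step, I would replace $\cP_h$ by the conjugated operator $\cP'_h = Q\cP_h Q^-$ from \eqref{EqSCPHiRealPrTypePprime}, so that by Lemma~\ref{LemmaSCPHiInner} the skew-adjoint part satisfies $\pm\wh\rho\,\sigmabh\!\bigl(\tfrac{1}{2i}(\cP'_h-(\cP'_h)^*)\bigr)>0$ near $\pa\Sigma^\pm$; the error $Q\cP_h-\cP'_h Q$ is microlocalized away from $\WFbh'(S)$, so it is absorbed into the $h^N\|u\|_{\Hbh^{-N,\rho}}$ term of the final estimate.

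Next, recall from \eqref{EqKdSGeoRadPoint} that $\pa\cR^\pm_\pm$ is a saddle point (a source in the $\pm$ case, sink in the $\mp$ case, within $\rcTbdual M$) for the rescaled flow $\rham_G$, with $\wh\rho^{-1}\rham_G\wh\rho=\pm\beta_{\pm,0}$ and $-\tau^{-1}\rham_G\tau=\pm\beta_{\pm,0}\beta_\pm$ at $\cR_\pm^\pm$, and with a smooth quadratic defining function $\rho_0$ of $\pa\cR_\pm$ within $\pa\Sigma\cap\Sb^*_X M$ satisfying $\rham_G\rho_0=\pm 2\beta_{\pm,0}\rho_0$ there. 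I would use a principally scalar commutant $A=A^*\in\tau^{-2\rho}\Psibh^{s-1/2}(M;\Tb^*M)$ with symbol
\[
 a = \chi(\rho_0/\delta)\,\chi_0(\wh\rho/\delta')\,\tau^{-2\rho}\wh\rho^{-(2s-1)},
\]
where $\chi,\chi_0\in\CIc([0,\infty))$ are non-negative with $\chi_0\equiv 1$ near $0$ and $\chi_0'\leq 0$. Then $\sigmabh(\tfrac{i}{h}[\cP'_h,A^2]) = \ham_G a^2$ gives, at $\pa\cR_\pm^\pm$, a sum of (i) a definite-sign term from $\rham_G\wh\rho$ combined with $\chi_0'$, localized near fiber infinity in the radial set, (ii) a term from $\rham_G\tau^{-2\rho}$, proportional to $\beta_\pm\rho$ times $a^2$, and (iii) a term from $\rham_G\rho_0$, which for the source case ($\pm$ at $\pa\cR^\pm_\pm$) has the same sign as (i) on $\supp\chi$ via the saddle structure, and for the sink case can be made small by shrinking $\delta$ --- the standard radial point bookkeeping. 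Outside a small neighborhood of $\pa\cR_\pm^\pm$, the derivative of $\chi$ produces the $B_1$ error term.

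The crucial point, and the reason there is \emph{no threshold condition} on $s$ or $\rho$ in the statement, is that $\cP'_h$ is not principally scalar: its skew-adjoint part contributes at principal order (i.e.\ of order $\wh\rho^{-1}$ at fiber infinity), whereas the weight/regularity terms (ii) and their analogues appear only at order $h\wh\rho^{-1}$. Thus the pairing $\la\tfrac{i}{h}[\cP'_h,A^2]u,u\ra - \la\tfrac{1}{ih}(\cP'_h-(\cP'_h)^*)A^2 u,u\ra$ is dominated, at $\pa\cR_\pm^\pm$, by the sign-definite contribution of Lemma~\ref{LemmaSCPHiInner}, uniformly in $s,\rho$ and for $h$ small. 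Rewriting this pairing via $\la\tfrac{i}{h}(A\cP'_h u,Au)-\la Au,\tfrac{i}{h}A\cP'_h u\ra\ra$, applying Cauchy--Schwarz with an $\eps$-weight to absorb $\|Au\|$ into the main term, and using microlocal ellipticity of $S$ on $\WFbh'(B_2)$ to control the $\cP'_h u$ term, yields \eqref{EqSCPHiRealPrTypeEst}. The adjoint statement is proved in the same way after reversing the sign of the commutant's Hamiltonian derivative, which exchanges source and sink and hence `backward' and `forward'.

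The main obstacle, and the place requiring the most care, is the treatment of the error term (iii) coming from $\rham_G\rho_0$: in the sink case one is propagating \emph{out of} the radial set along $\rho_0$, and one must verify that the sign-definite subprincipal contribution of Lemma~\ref{LemmaSCPHiInner} is strong enough to absorb this regardless of the sign one would normally have to arrange by fiat through a threshold condition on $s$. Since that subprincipal contribution is positive of order $\wh\rho^{-1}$ on all of $\pa\Sigma^\pm\cap\supp a$ and independent of $s,\rho$, while (iii) is a fixed multiple of the main commutator term and localized to $\supp\chi$, shrinking $\delta$ makes the absorption work uniformly --- this is precisely what allows us to omit threshold hypotheses from the statement.
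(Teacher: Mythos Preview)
Your approach is correct and mirrors the paper's. The paper's proof (which just cites the standard saddle radial point argument of \cite[Proposition~2.1]{HintzVasySemilinear}) identifies exactly the same mechanism: in the usual positive commutator setup, the threshold quantity $s-\beta_\pm\rho-\tfrac12$ multiplies the $O(h)$ commutator term, while the strict positivity of $\tfrac{1}{2i}(\cP'_h-(\cP'_h)^*)$ from Lemma~\ref{LemmaSCPHiInner} contributes at $O(1)$; hence the effective main term is $\geq h(s-\beta_\pm\rho-\tfrac12+\delta h^{-1})\geq\delta/2$ for small $h$, with no threshold required.

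One clarification regarding your final paragraph: your term (iii), coming from $\chi'(\rho_0)\,\rham_G\rho_0$, is just another $O(h)$ commutator contribution, no different in kind from the threshold terms arising from the $\wh\rho$- and $\tau$-weights. It is absorbed by the $O(1)$ skew-adjoint term via the same $h$-smallness mechanism, regardless of its sign; ``shrinking $\delta$'' plays no role beyond ensuring that $\supp a$ lies in the neighborhood where Lemma~\ref{LemmaSCPHiInner} applies. In fact, one can check directly that in both the $\pa\cR^+_\pm$ (source in $\Sb^*_XM$, forward propagation in $\Sigma^+$) and $\pa\cR^-_\pm$ (sink in $\Sb^*_XM$, backward propagation in $\Sigma^-$) cases, term (iii) already has the \emph{same} sign as the main commutator term, since the sign of $\rham_G\rho_0$ flips together with the direction of propagation; so the ``obstacle'' you identified is not present even in the standard argument.
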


\begin{figure}[!ht]
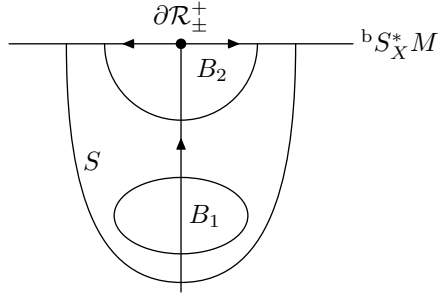

  \centering
  \inclfig{SCPHiRad}
  \caption{Propagation of singularities near a component $\pa\cR^+_\pm$ of the radial set within $\pa\Sigma^+$: we can propagate microlocal control from $\Ellbh(B_1)$ to $\Ellbh(B_2)$, in particular into the boundary, along the forward direction of the $\rham_G$-flow. In $\pa\Sigma^-$, we can propagate into the boundary as well by propagating backwards along the $\rham_G$-flow.}
  \label{FigSCPHiRad}
\end{figure}

\begin{proof}[Proof of Proposition~\ref{PropSCPHiRad}]
  This follows again from a positive commutator argument, see \cite[Proposition~2.1]{HintzVasySemilinear}. We may replace $\cP_\semi$ by $\cP'_\semi$ defined in \eqref{EqSCPHiRealPrTypePprime}. Using the non-negativity of $(i\semi)^{-1}(\cP'_\semi-(\cP'_\semi)^*)$ near $\pa\Sigma^+$, we get an estimate under a threshold condition on the regularity $s$ relative to the weight $\rho$ of the form $s-\beta\rho-1/2>0$, with $\beta=\beta_\pm$; but the strict positivity of $(i\semi)^{-1}(\cP'_\semi-(\cP'_\semi)^*)$ gives an extra contribution of size $\semi^{-1}$, so the main term in the positive commutator argument is (up to an overall sign) $\geq s-\beta \rho-1/2+\delta \semi^{-1}$ for some (small) $\delta>0$, which is $\geq\delta/2$ for small $\semi$. This explains why the radial point estimate does \emph{not} require any above-threshold assumptions which appear in the estimate given in the reference.
\end{proof}

Finally, at the trapping, we have:

\begin{prop}
\label{PropSCPHiTrap}
  There exist operators $B_1,B_2,S\in\Psibh^0(M;\Tb^*M)$, with $B_2$ and $S$ elliptic near $\pa\Gamma^+$, resp.\ $\pa\Gamma^-$, and $\WFbh'(B_1)\cap\pa\Gamma^+_\bw=\emptyset$, resp.\ $\WFbh'(B_1)\cap\pa\Gamma^-_\fw=\emptyset$, where $\Gamma^+_\bw\subset\Tb^*_X M$ is the backward trapped  set (with respect to the $\ham_G$ flow) in $\Sigma^+$ and $\Gamma^-_\fw\subset\Tb^*_X M$ is the forward trapped set in $\Sigma^-$, so that for all $s,\rho\in\R$ and $N\in\R$, there exists $\semi_0>0$ such that the estimate \eqref{EqSCPHiRealPrTypeEst} holds for $0<\semi<\semi_0$. See Figure~\ref{FigSCPHiTrap}.

  The same estimate holds if one replaces $\cP_\semi$ by its adjoint, now with $\WFbh'(B_1)\cap\pa\Gamma^+_\fw=\emptyset$, resp.\ $\WFbh'(B_1)\cap\pa\Gamma^-_\bw=\emptyset$, where $\Gamma^+_\fw\subset\Tb^*M$ is the forward trapped set in $\Sigma^+$ and $\Gamma^-_\bw\in\Tb^*M$ the backward trapped set in $\Sigma^-$.
\end{prop}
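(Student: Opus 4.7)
The plan is to carry out a positive commutator argument that exploits both the normally hyperbolic geometry of the trapped set $\Gamma$ in Schwarzschild--de~Sitter and the strict positivity of the skew-adjoint subprincipal contribution to the conjugated operator $\cP'_h := Q \cP_h Q^-$ provided by Lemma~\ref{LemmaSCPHiInner}; here $Q$ is the same conjugating operator used in the proof of Proposition~\ref{PropSCPHiRealPrType}. After this conjugation, $\cP'_h$ still has scalar principal symbol $G \Id$, but the payoff is that $\pm\wh\rho\,\sigmabh\!\left(\tfrac{1}{2i}(\cP'_h - (\cP'_h)^*)\right) > 0$ near $\pa\Sigma^\pm$. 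The trapped set $\Gamma = \Gamma^+ \cup \Gamma^-$ of the Hamilton flow of $G$ is $r$-normally hyperbolic \cite{WunschZworskiNormHypResolvent,DyatlovWaveAsymptotics}, with positive minimal normal expansion rate, and admits smooth defining functions $\varphi_{\fw}, \varphi_{\bw}$ of the forward, resp.\ backward, trapped sets (stable and unstable manifolds within $\Sigma$) with $\ham_G \varphi_{\fw}$ and $\ham_G \varphi_{\bw}$ carrying opposite signs near $\pa\Gamma$.

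Next I would take a commutant $A = A^* \in \Psibh^{s-1/2}(M;\Tb^* M)$ whose symbol is of the form $a^2 = \chi(\varphi_{\bw}^2)\,\psi$, where $\chi \in \CIc(\R)$ is a nonnegative bump function supported near $0$ and decreasing in $|\varphi_\bw|^2$, and $\psi$ is a microlocal cutoff concentrated in a neighborhood of $\pa\Gamma^+$ (with the analogous choice at $\pa\Gamma^-$). The commutator $i [\cP'_h, A^2]$ has semiclassical principal symbol $h\,\ham_G a^2$; by the normally hyperbolic structure, $\ham_G \varphi_{\bw}^2$ has a definite sign in a punctured neighborhood of $\Gamma$, and the resulting symbol is of the correct sign up to a term with wave front set contained in $\Ellbh(B_1)$ (using the hypothesis $\WFbh'(B_1) \cap \pa\Gamma^+_\bw = \emptyset$). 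However, this commutator symbol \emph{vanishes on} $\Gamma$ itself, which in the scalar case would produce precisely the $h^{-1}$-loss of Dyatlov's classical normally hyperbolic trapping estimate \cite{DyatlovSpectralGaps}. By contrast, the skew-adjoint part $\tfrac{1}{2i}(\cP'_h - (\cP'_h)^*)$ contributes at the principal level in semiclassics (its symbol is of order $h^0$, not $h^1$), with a strictly positive rescaled symbol near $\pa\Gamma^+ \subset \pa\Sigma^+$ by Lemma~\ref{LemmaSCPHiInner}. This order-$h^0$ positive contribution \emph{dominates} the order-$h^1$ vanishing commutator term on a full neighborhood of $\pa\Gamma^+$, so the main term in the positive commutator estimate carries a definite sign without any loss in $h$.

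Standard microlocal absorption of the remaining error terms --- located by construction in $\Ellbh(B_1)$ --- then yields the estimate \eqref{EqSCPHiRealPrTypeEst}. The adjoint version is proved by the symmetric argument with the direction of propagation reversed, swapping the roles of $\varphi_\fw$ and $\varphi_\bw$ and correspondingly the sign hypothesis on $B_1$. The main technical point, which is where the normal hyperbolicity enters sharply, is the simultaneous construction of a commutant whose symbol has the right sign of $\ham_G a^2$ away from $\Gamma$ \emph{and} whose $B_1$-controlled error region matches the complement of $\pa\Gamma^+_\bw$ (resp.\ $\pa\Gamma^-_\fw$); this is essentially the standard normally hyperbolic positive commutant construction as in \cite{WunschZworskiNormHypResolvent,NonnenmacherZworskiQuantumDecay}. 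The upshot is the same mechanism as in Proposition~\ref{PropSCPHiRealPrType}: the order-$h^0$ positivity from Lemma~\ref{LemmaSCPHiInner} --- itself a consequence of the choice of $\gamma_1, \gamma_2$ in \eqref{EqSCPGamma12} with $e$ close to $1$ --- absorbs what would otherwise be the standard $h^{-1}$ trapping loss, yielding the lossless estimate stated in the proposition.
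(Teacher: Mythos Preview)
Your proposal is correct and follows essentially the same approach as the paper. The paper's proof is a one-sentence reference to \cite[Theorem~3.2]{HintzVasyNormHyp}, noting that the strict positivity of $i^{-1}(\cP'_h-(\cP'_h)^*)$ yields a main term of size $1$ rather than $h$; you have spelled out precisely this mechanism, correctly identifying that the order-$h^0$ skew-adjoint contribution from Lemma~\ref{LemmaSCPHiInner} dominates the order-$h^1$ commutator term which vanishes on $\Gamma$, thereby removing the usual trapping loss.
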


\begin{figure}[!ht]
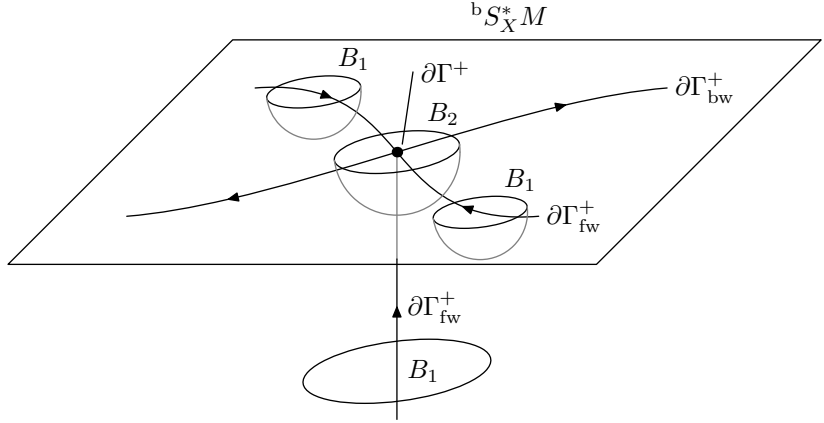

  \centering
  \inclfig{SCPHiTrap}
  \caption{Propagation of singularities near the trapped set $\pa\Gamma^+$: we can propagate microlocal control from $\Ellbh(B_1)$ to $\Ellbh(B_2)$, hence from the forward trapped set $\pa\Gamma^+_\fw$ into the trapped set $\pa\Gamma^+$ (and from there into the backward trapped set $\pa\Gamma^+_\bw$), along the forward direction of the $\rham_G$-flow. For the corresponding propagation result in $\pa\Gamma^-$, the subscripts `bw' and `fw' are interchanged.}
  \label{FigSCPHiTrap}
\end{figure}

\begin{proof}[Proof of Proposition~\ref{PropSCPHiTrap}]
  This follows from a straightforward adaption of the proof of the last part of \cite[Theorem~3.2]{HintzVasyNormHyp}; again the strict positivity of $(i\semi)^{-1}(\cP'_\semi-(\cP'_\semi)^*)$, and the fact that it has size $\semi^{-1}$ rather than $\cO(1)$ (which is the size of the main term of the commutator argument, where the main term of the commutant is $\tau^{-2 r}$, whose $\rham_G$-derivative has a sign near $\Gamma^\pm$), shifts the threshold weight ($r=0$ in the reference) to $C$ for any fixed $C>0$ provided $\semi>0$ is sufficiently small. This is the reason for the trapping estimate holding for \emph{all} weighted spaces, unlike in the reference.
\end{proof}

Combining these propagation results with elliptic regularity in $\Sb^*M\setminus\pa\Sigma$ and using the global structure of the null-geodesic flow, see \cite[\S6]{VasyMicroKerrdS} or \cite[\S2]{HintzVasyCauchyHorizon}, we can thus control $u$ microlocally near fiber infinity $\Sb^*_U M$ over any open set $U\Subset\Omega$, provided we have a priori control of $u$ at $\pa\Sigma$ near the Cauchy surface $\Sigma_0$ and the two spacelike hypersurfaces $[0,1]_\tau\times\pa Y$. (That is, $U$ is disjoint from $\Sigma_0$ and $[0,1]_\tau\times\pa Y$.)

\subsection{Low frequency analysis}
\label{SubsecSCPLo}

We next study the operator $\cP_\semi$ microlocally near the zero section $o\subset\ol{\Tb^*}M$. Since $\sigmabh(\BoxCP_{g,\semi})$ vanishes quadratically at the zero section, the propagation of semiclassical regularity for solutions of $\cP_\semi u=f$ there is driven by $L_\semi$. Let us consider the situation on the symbolic level in a model case first: namely, if $L$ is $i^{-1}$ times a future timelike vector field, then the principal symbol of the scalar operator $i(\Box_g-i L)$ is given by $\ell+i G$, with $\ell=\sigma(L)$. Propagation in the forward direction along $\ham_\ell$ requires a sign condition on $G$ which is not met here. The key is to rewrite
\begin{equation}
\label{EqSCPLoToy}
  \ell+iG = \ell+i C\ell^2 - i(C\ell^2-G)
\end{equation}
for a constant $C$; by the timelike nature of $iL$, we have $\ell>0$ in the future causal cone $\{\zeta\in\Tb^*_p M\colon G(\zeta)\geq 0,\ G(\zeta,dt_*)>0\}$, and therefore, for large $C$, we have $\sigma_2(C\ell^2-G)(\zeta)\geq 0$, $\zeta\in\Tb^*_p M$. (In fact for large $C$, the left hand side is a sum of squares, with positive definite Hessian at $\zeta=0$.) Then we can factor out $(1+i C\ell)$ from \eqref{EqSCPLoToy}, giving $\ell-i(C\ell^2-G)$ up to terms which vanish cubically at the zero section. Algebraically more simply, we can multiply $\sigma_2(L+i\Box_g)$ by $(1-iC\ell)$, obtaining
\begin{equation}
\label{EqSCPLoToy2}
  (1-i C\ell)(\ell+i G) = (\ell+C\ell G) - i(C\ell^2-G),
\end{equation}
for which we have propagation  of singularities in the zero section along the flow of $i L$.

The key lemma allowing us to make this work in a conceptually straightforward manner for the operator at hand, $\cP_\semi$, is the following:

\begin{lemma}
\label{LemmaSCPLoSymm}
  Denoting by $g_R$ the inner product \eqref{EqSCPHiInnerRiem}, the principal symbol $\ell_e=\sigmabh(L_{e,\semi})$ is symmetric with respect to the inner product
  \[
    b_e:=g_R(B_e\cdot,\cdot), \quad
    B_e=\begin{pmatrix}
             c^2  & -\nu                                  & 0 \\
             -\nu & c^{-2}\bigl(\nu^2+\frac{1}{1-e}\bigr) & 0 \\
             0    & 0                                     & \frac{1}{(1-e)r^2}
           \end{pmatrix}.
  \]
\end{lemma}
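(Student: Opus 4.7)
\emph{Proof plan.} The symmetry of $\ell_e$ with respect to the inner product $b_e$ is equivalent to the statement that the composition $B_e\ell_e$, viewed as an endomorphism of $\Tb^*M$, is self-adjoint with respect to the fixed background inner product $g_R$. Since $g_R$ is orthogonal in the splitting \eqref{EqSCPSemiBundle} and $B_e$ is block-diagonal with respect to it (an aspherical $2\times 2$ block and the scalar multiple $\frac{1}{(1-e)r^2}\,\mathrm{id}$ on $T^*\Sph^2$), the task reduces to a block-by-block symmetry check of $B_e\ell_e$.

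First, I would compute $\ell_e = \sigmabh(L_{e,h})$ explicitly in the splitting by reading off the coefficients from the displayed formula for $-iL_h$ and substituting $h\pa_{t_*}\rightsquigarrow i\xi_{t_*}=-i\sigma$, $h\pa_r\rightsquigarrow i\xi$, $\sld\rightsquigarrow i\eta\cdot$, and $\sldelta\rightsquigarrow -ii_\eta$ in the coordinates of \eqref{EqSCPSemiEllCoord}, obtaining (with $\alpha:=c^2\sigma+\nu\xi$)
\[
  \ell_e^{\text{asph}} = \begin{pmatrix} -(1+e)c^2\sigma-2\nu\xi & e\nu\sigma-\mu\xi \\ -(1-e)c^2\xi & -c^2\sigma-e\nu\xi \end{pmatrix},
\]
$\ell_e^{\text{asph-ang}}(w)=(-r^{-2}i_\eta w,\,0)^T$, $\ell_e^{\text{ang-asph}}(a,b)=(1-e)(-c^2a+\nu b)\eta$, and $\ell_e^{\text{ang}}=-\alpha\cdot\mathrm{id}$. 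The case $e=1$ recovers \eqref{EqSCPSemiEllSymbol}, providing a consistency check.

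Next, I would verify the three required symmetries directly. The angular-angular block of $B_e\ell_e$ is the scalar $-\frac{\alpha}{(1-e)r^2}$ times the identity, hence trivially symmetric. For the aspherical-aspherical block, a direct matrix multiplication using only $\nu^2+c^2\mu=1$ yields
\[
  B_e^{\text{asph}}\ell_e^{\text{asph}} = \begin{pmatrix} -(1+e)c^2\alpha & (1+e)\nu\alpha-\xi \\ (1+e)\nu\alpha-\xi & * \end{pmatrix},
\]
so the two off-diagonal entries agree. For the cross-blocks, computing shows
\[
  B_e^{\text{asph}}\ell_e^{\text{asph-ang}}(w) = \bigl(-c^2r^{-2}\la\eta,w\ra_\slG,\ \nu r^{-2}\la\eta,w\ra_\slG\bigr),
\]
whose $g_R$-adjoint sends $(a,b)\mapsto r^{-2}(-c^2a+\nu b)\eta$, and this matches $B_e^{\text{ang}}\ell_e^{\text{ang-asph}}(a,b) = \frac{1}{(1-e)r^2}\cdot(1-e)(-c^2a+\nu b)\eta$ on the nose --- the factor $\frac{1}{1-e}$ in $B_e^{\text{ang}}$ being precisely calibrated to cancel the $(1-e)$ in $\ell_e^{\text{ang-asph}}$. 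The only nontrivial step is the algebraic manipulation in the aspherical block, which is essentially routine and uses $\nu^2=1-c^2\mu$ a single time; no deeper obstacle arises, as the form of $B_e$ has been reverse-engineered to make the computation work.
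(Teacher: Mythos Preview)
Your proof is correct and follows essentially the same computational route as the paper: both reduce the claim to the $g_R$-symmetry of $B_e\ell_e$ and verify it by direct block-by-block calculation, with the only algebraic input being $\nu^2+c^2\mu=1$. The one difference worth noting is that the paper carries out the computation at the operator level, displaying the coefficient matrices $\wt M_{t_*}=B_eM_{t_*}$, $\wt M_r=B_eM_r$, $\wt M_\omega=B_eM_\omega$ of $B_eL_h$ explicitly; these matrices are then reused verbatim in the subsequent Lemmas~\ref{LemmaSCPLoDiffR} and~\ref{LemmaSCPLoRadialSubpr}, so the paper's version does double duty. Your symbol-level computation establishes the lemma equally well but would need to be redone (or translated) when those later lemmas call for $\wt M_{t_*}$ and $\wt M_r$ individually.
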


As before, we shall often drop the subscript `$e$' from the notation, thus simply writing $B\equiv B_e$.

\begin{proof}[Proof of Lemma~\ref{LemmaSCPLoSymm}]
  First of all, $B$ is symmetric and positive definite with respect to $g_R$ for $e\in(0,1)$. Next, and also to prepare subsequent arguments, we simply compute the precise form of $B L_\semi$. Namely, in the decomposition~\eqref{EqSCPSemiLSplit}, we find
  \begin{equation}
  \label{EqSCPLoSymmOp}
    B L_\semi = \wt M_{t_*} \semi D_{t_*} + \wt M_r \semi D_r + \wt M_\omega + i \semi\wt S,
  \end{equation}
  where $\wt M_{t_*}=B M_{t_*}$, $\wt M_r=B M_r$, $\wt M_\omega=B M_\omega$ and $\wt S=B S$; concretely,
  \begin{gather*}
    \wt M_{t_*}=\begin{pmatrix}
                  (1+e)c^4     & -(1+e)c^2\nu             & 0 \\
                  -(1+e)c^2\nu & (1+e)\nu^2+\frac{1}{1-e} & 0 \\
                  0            & 0                        & \frac{c^2}{(1-e)r^2}
                \end{pmatrix}, \\
    \wt M_r=\begin{pmatrix}
              -(1+e)c^2\nu  & -1+(1+e)\nu^2                                     & 0 \\
              -1+(1+e)\nu^2 & -c^{-2}\nu\bigl(\frac{2e-1}{1-e}+(1+e)\nu^2\bigr) & 0 \\
              0             & 0                                                 & -\frac{\nu}{(1-e)r^2}
            \end{pmatrix}
  \end{gather*}
  and
  \[
    \wt M_\omega=i \semi r^{-2}
                 \begin{pmatrix}
                   0       & 0        & -c^2\sldelta \\
                   0       & 0        & \nu\sldelta  \\
                   c^2\sld & -\nu\sld & 0
                 \end{pmatrix}.
  \]
  The lemma is now obvious.
\end{proof}

We occasionally indicate `$e$' explicitly as a subscript for the operators appearing in \eqref{EqSCPLoSymmOp}.

\begin{rmk}
\label{RmkSCPLo2nd}
  While we choose a slightly different route below, the conceptually cleanest and most precise way to proceed would be to use second microlocalization at the zero section in semiclassical b-phase space $\ol{\Tb^*}M$. For $0<e<1$, one can smoothly diagonalize $\ell_e$ on the front face $\ff$ of the blow-up $[\ol{\Tb^*}M;o]$ near the $2$-microlocal characteristic set of $L_{e,\semi}$ (which due to the homogeneity of $\ell_e$ is equivalent to the smooth diagonalizability of $\ell_e$ in a conic neighborhood of the characteristic set of $L_{e,\semi}$ in $\Tb^*M\setminus o$). Indeed, the pointwise diagonalizability follows directly from Lemma~\ref{LemmaSCPLoSymm}; the condition $0<e<1$ then ensures that in $\Tb^*M\setminus o$, the eigenvalue $-(c^2\sigma+\nu\xi)$ of $\ell_e$ discussed in Remark~\ref{RmkSCPLCP} is always different from the two remaining eigenvalues (which must then themselves be distinct, as their average is equal to $-(c^2\sigma+\nu\xi)$ by trace considerations), and the smooth diagonalizability follows easily. Moreover, for $e>0$, one has $G<0$ in the second microlocal characteristic set of $L_\semi$; this is just the statement that $\ell_e$ is elliptic in the causal double cone, away from the zero section. Therefore, one has $2$-microlocal propagation of regularity in the future direction along $\ff$, giving semiclassical Lagrangian regularity for solutions of $\cP_\semi u\in\CIdot$; the propagation into the boundary $X$ at future infinity uses a $2$-microlocal radial point estimate at the critical points of the $2$-microlocal null-bicharacteristics of $L_\semi$, which places a restriction on the weight of the function space at $X$. Finally, to deal with the Lagrangian error, one would use a direct microlocal energy estimate, which we explain below, near the zero section.
\end{rmk}

Following the observation in Lemma~\ref{LemmaSCPLoSymm}, we obtain the following sharpening of Corollary~\ref{CorSCPSemiEllSymbol}:

\begin{lemma}
\label{LemmaSCPLoTimelike}
  For $e<1$ close to $1$, the first order differential operator $L_{e,\semi}$ is future timelike in the sense that $\sigmabh(L_{e,\semi})(\zeta)$ is positive definite with respect to the inner product $b_e$ for all $\zeta\in\Tb^*M\setminus o$ which are future causal, i.e.\ $G(\zeta)\geq 0$, $G(\zeta,dt_*)>0$.
\end{lemma}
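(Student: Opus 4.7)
The plan is to show that the symmetric (by Lemma~\ref{LemmaSCPLoSymm}) bilinear form $Q_\zeta(v) := b_e(\ell_e(\zeta)v, v) = g_R(B_e\ell_e(\zeta)v, v)$ on $\Tb^*_p M$ is positive definite for every $p \in M$ and every future causal $\zeta \in \Tb^*_p M$. Since $B_e$ is $g_R$-symmetric and positive definite (for $0 < e < 1$) and $\ell_e(\zeta)$ is $b_e$-symmetric by Lemma~\ref{LemmaSCPLoSymm}, the matrix $B_e\ell_e(\zeta)$ is $g_R$-symmetric, so $Q_\zeta$ is indeed a symmetric form depending linearly on $\zeta$. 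The approach is to reduce the problem, via a signature/continuity argument, to checking positivity at the single reference covector $\zeta = dt_*$.

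By Corollary~\ref{CorSCPSemiEllSymbol}, for $e$ close to $1$ the matrix $\ell_e(\zeta)$ is invertible on the full causal double cone $\{\zeta \in \Tb^*M\setminus o : G(\zeta) \geq 0\}$, in particular on its future component
\[
  C^+ := \{\zeta \in \Tb^*M\setminus o : G(\zeta)\geq 0,\ G(\zeta,dt_*)>0\},
\]
using that $dt_*$ is globally future timelike. The set $C^+$ is connected: it is an open subset of $\Tb^*M$ fibering over the connected base $M$ with convex (hence connected) fibers. On $C^+$, the form $Q_\zeta$ is non-degenerate (as $\ell_e(\zeta)$ is invertible and $B_e$ positive definite), so its signature cannot change along paths in $C^+$: no eigenvalue of $B_e\ell_e(\zeta)$ can cross zero. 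Hence the signature of $Q_\zeta$ is constant on $C^+$, and it suffices to check positive definiteness at any one point.

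I would evaluate at $\zeta_0 = dt_*$, which lies in $C^+$ since $G(dt_*, dt_*) = c^2 > 0$. In the coordinates \eqref{EqSCPSemiEllCoord}, this corresponds to $(\sigma,\xi,\eta) = (-1,0,0)$, and reading off from \eqref{EqSCPLoSymmOp} the semiclassical b-principal symbol of $B_e L_{e,h}$ gives $B_e\ell_e(dt_*) = \wt M_{t_*}$. The explicit form of $\wt M_{t_*}$ displayed in Lemma~\ref{LemmaSCPLoSymm} is block-diagonal with respect to the splitting $\Tb^*M = \la dt_*, dr\ra \oplus T^*\Sph^2$. Sylvester's criterion gives positive definiteness for $0 < e < 1$: the $(1,1)$ entry is $(1+e)c^4 > 0$; the upper $2\times 2$ determinant is
\[
  (1+e)c^4\bigl((1+e)\nu^2 + \tfrac{1}{1-e}\bigr) - (1+e)^2 c^4\nu^2 = \tfrac{(1+e)c^4}{1-e} > 0;
\]
and the angular block is $\tfrac{c^2}{(1-e)r^2}\,\Id$, positive definite for $e<1$.

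The main obstacle is really just the global ellipticity input from Corollary~\ref{CorSCPSemiEllSymbol}: one needs invertibility of $\ell_e(\zeta)$ throughout the causal double cone (including null directions), not merely in the open timelike interior; without this, the signature argument could fail at the null boundary. With that corollary in hand, the remainder is the abstract signature count plus the matrix computation above. It is worth noting that the constraint $e < 1$ is essential and natural: the positive $\tfrac{1}{1-e}$ contributions in $\wt M_{t_*}$ are precisely what produce the positive determinant, while at $e=1$ the inner product $b_e$ itself degenerates, consistent with the non-diagonalizability of $\ell_1$ pointed out in Remark~\ref{RmkSCPLCP}.
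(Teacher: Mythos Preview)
Your argument is correct, but it differs substantially from the paper's two-line proof. The paper simply observes that at $e=1$ the matrix $\ell_1(\zeta)$ is upper triangular (see \eqref{EqSCPSemiEllSymbol}) with diagonal entries equal to positive multiples of $-(c^2\sigma+\nu\xi)=G(dt_*,\zeta)$, which is strictly positive on the future causal cone; hence all eigenvalues of $\ell_1(\zeta)$ are positive there, and by continuity in $e$ the same holds for $\ell_e(\zeta)$ with $e$ close to $1$. Since $\ell_e(\zeta)$ is $b_e$-symmetric, positivity of its eigenvalues is exactly $b_e$-positive definiteness.

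Your route instead fixes $e$ from the start, invokes Corollary~\ref{CorSCPSemiEllSymbol} to get non-degeneracy of $Q_\zeta$ on the whole future causal cone, and then uses constancy of signature on a connected set to reduce to the single covector $dt_*$, where the Sylvester computation for $\wt M_{t_*}$ finishes the job. This is a legitimate and self-contained alternative; in fact your check at $dt_*$ is precisely the first half of Lemma~\ref{LemmaSCPLoDiffR} ($M_{t_*,e}>0$), so you are effectively proving that statement early. The paper's approach is shorter and avoids the signature machinery, while yours is more explicit about the range of admissible $e$ (any $e<1$ for which Corollary~\ref{CorSCPSemiEllSymbol} holds, since the check at $dt_*$ works for all $-1<e<1$). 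One small correction: $C^+$ is \emph{not} open in $\Tb^*M$ (null covectors lie on its boundary), but this is harmless---your argument only uses path-connectedness, which follows from the convexity of the fibers together with the existence of the global section $dt_*$.
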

\begin{proof}
  The eigenvalues of $\sigmabh(L_{1,\semi})(\zeta)$ are positive for such $\zeta$, and the claim follows by continuity.
\end{proof}

\emph{For the rest of this section, symmetry and positivity will always be understood with respect to $b_e$.}

\begin{cor}
\label{CorSCPLoImagEll}
  There exists a constant $C>0$ such that $C\ell_e^2 - G\Id$ is positive definite on $\Tb^*M\setminus o$; that is, $(C\ell_e^2-G\Id)(\zeta)\in\End(\Tb^*_z M)$ is positive definite for $\zeta\in\Tb^*_z M\setminus o$, $z\in M$.
\end{cor}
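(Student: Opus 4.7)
My plan is to exploit two facts in parallel: that $\ell_e^2$ is automatically $b_e$-positive semidefinite (being the square of a $b_e$-symmetric endomorphism, by Lemma~\ref{LemmaSCPLoSymm}) and strictly positive definite on the causal double cone, while on the spacelike region the term $-G\Id$ carries the positivity by itself. Concretely, I would split $\Tb^*M\setminus o$ into the closed subsets $\{G\leq 0\}$ and $\{G\geq 0\}$ and verify uniform positive definiteness of $C\ell_e^2 - G\Id$ on each.

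Since $\ell_e$ is the principal symbol of the \emph{first-order} operator $L_{e,h}$, it is linear in $\zeta$; in particular $\ell_e(-\zeta)=-\ell_e(\zeta)$. Combining this with Lemma~\ref{LemmaSCPLoTimelike}, $\ell_e(\zeta)$ is $b_e$-positive definite on future causal $\zeta$ and $b_e$-negative definite on past causal $\zeta$, hence invertible throughout $\{G\geq 0\}\setminus o$; therefore $\ell_e^2(\zeta)$ is $b_e$-positive definite on $\{G\geq 0\}\setminus o$. On the closed region $\{G\leq 0\}$, we have $-G\Id\geq 0$ and $\ell_e^2\geq 0$, and the only way both can fail to be strictly positive is on the light cone $\{G=0\}\setminus o$, where however $\ell_e^2>0$ by the previous observation. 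So pointwise $\ell_e^2-G\Id>0$ on $\{G\leq 0\}\setminus o$, and any $C\geq 1$ works in this region. On $\{G\geq 0\}\setminus o$ we just need $C$ large enough that the minimum $b_e$-eigenvalue of $C\ell_e^2$ dominates $G$, which is possible pointwise.

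The one real item to handle is the \emph{uniformity} of $C$ over all $(z,\zeta)$. Here I would exploit that $\ell_e^2$, $G\Id$, and $b_e$ are all stationary (the coefficients $c,\nu,\mu$ depend only on $r$) and homogeneous in $\zeta$ (of degrees $2$, $2$, and $0$ respectively); so it suffices to check positivity on the $g_R$-unit sphere bundle over a compact fundamental domain for $\tau$-translations, e.g.\ over $\{\tau=0\}\cap\Omega \subset X$. On this compact set, the minimum $b_e$-eigenvalue of $\ell_e^2-G\Id$ on $\{G\leq 0\}$ is a continuous strictly positive function, hence has a positive lower bound $\delta_1$; and on $\{G\geq 0\}$ the minimum eigenvalue of $\ell_e^2$ has a positive lower bound $\delta_2$ while $G$ has a finite upper bound $M_G$, so any $C>\max(1,M_G/\delta_2)$ suffices. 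This compactness/homogeneity reduction is the only step that requires any care, and it is entirely routine; the substantive content of the corollary is Lemma~\ref{LemmaSCPLoTimelike}.
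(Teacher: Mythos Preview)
Your proposal is correct and takes essentially the same approach as the paper: both arguments rest on $\ell_e^2\geq 0$ everywhere (as the square of a $b_e$-symmetric endomorphism) together with strict positivity of $\ell_e^2$ on the causal double cone via Lemma~\ref{LemmaSCPLoTimelike} (extended to past causal covectors by oddness of $\ell_e$ in $\zeta$, equivalently Corollary~\ref{CorSCPSemiEllSymbol}), followed by a compactness/homogeneity reduction to the unit sphere bundle over a compact base to get a uniform $C$. The only cosmetic difference is that the paper splits at $G\geq -\delta$ rather than at $G\geq 0$, which lets it package the two cases into a single inequality $C\ell_e^2-G\,\Id\geq\min(C\delta-\delta^{-1},\delta)$; your version with the split at $G=0$ is equally valid but requires the extra sentence about the infimum of the minimum eigenvalue on $\{G\leq 0\}$.
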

\begin{proof}
  Note that $\ell_e^2\geq 0$, and indeed by Lemma~\ref{LemmaSCPLoTimelike} and using a continuity and compactness argument, there exists $\delta>0$ such that $\ell_e(\zeta)^2\geq\delta>0$ holds for all $\zeta$ with $|\zeta|_{g_R}=1$ and $G(\zeta)\geq -\delta$, while on the other hand $G(\zeta)\leq\delta^{-1}$ for all $\zeta$ with $|\zeta|_{g_R}=1$. But then $C\ell_e(\zeta)^2-G(\zeta)\Id\geq\min(C\delta - \delta^{-1},\delta)$ is bounded away from zero for sufficiently large $C>0$.
\end{proof}

The principally symmetric nature of $L_{e,\semi}$ will give rise to a (microlocal) energy estimate with respect to the inner product
\[
  \la u,v\ra = \int b_e(u,v) \,|dg_R|.
\]
Denoting formal adjoints with respect to this inner product by a superscript `$b$' (suppressing the dependence on $e$ here), so $Q^{*b}=B^{-1}Q^*B$ for an operator $Q$ on $\CIdotc(M;\Tb^*M)$, where $Q^*$ is the adjoint with respect to $g_R$. \emph{From now on, all adjoints are taken with respect to the fiber inner product $b$.} The energy estimate is then based on the commutator
\[
  \frac{i}{\semi}\bigl(\la A u, A L_\semi u \ra - \la A L_\semi u, A u \ra\bigr) = \Big\la \frac{i}{\semi}[L_\semi,A^2]u,u\Big\ra + \Big\la \frac{1}{i\semi}(L_\semi-L_\semi^*)A^2 u,u\Big\ra
\]
with a principally scalar commutant $A=A^*\in\Psibh(M;\Tb^*M)$, microlocalized near $o\subset\Tb^*M$, which we construct below. Roughly, near the critical set $\{\tau=0,r=r_c\}$ of the vector field $\nabla t_*$, we will take $A=\psi_0(t_*)\psi_1(r)$, composed with pseudodifferential cutoffs localizing near the zero section; the contribution of the main term, which is the commutator, then has a sign according to the following lemma.

\begin{lemma}
\label{LemmaSCPLoDiffR}
  For $e\in(0,1)$, we have $M_{t_*,e}>0$. Moreover, given $\delta_r>0$, there exists $\delta>0$ such that $\pm M_{r,e}>\delta\Id$ in $\pm(r-r_c)\geq\delta_r$ for all $e\in(1-\delta,1)$.
\end{lemma}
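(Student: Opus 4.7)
By definition, an endomorphism $T$ of $\Tb^*M$ is positive with respect to $b_e$ if and only if $B_e T$ is positive definite with respect to $g_R$; hence the claims for $M_{t_*,e}$ and $M_{r,e}$ reduce to proving positive definiteness of the explicit matrices $\wt M_{t_*,e}=B_e M_{t_*,e}$ and $\wt M_{r,e}=B_e M_{r,e}$ already computed in \eqref{EqSCPLoSymmOp}, with respect to the fixed Riemannian metric $g_R$. Since both matrices are block diagonal with a scalar block on $T^*\Sph^2$ and a $2\times 2$ block on $\la dt_*,dr\ra$, we analyze each piece separately.

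For $\wt M_{t_*,e}$, the $T^*\Sph^2$ block is the scalar $c^2/((1-e)r^2)\cdot\Id$, which is strictly positive for $e\in(0,1)$. The $2\times 2$ aspherical block has trace $(1+e)c^4+(1+e)\nu^2+(1-e)^{-1}>0$ and determinant
\[
  (1+e)c^4\bigl((1+e)\nu^2+(1-e)^{-1}\bigr)-(1+e)^2 c^4\nu^2 = \frac{(1+e)c^4}{1-e},
\]
which is strictly positive for $e\in(0,1)$; this gives the first assertion.

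For $\wt M_{r,e}$ the analysis is more delicate, and hinges on the observation from \eqref{EqSCPSemiNu} that $\nu$ has the opposite sign to $r-r_c$ and vanishes only at $r=r_c$. Thus on $\pm(r-r_c)\geq\delta_r$ one has $\mp\nu\geq\nu_0>0$ for some $\nu_0=\nu_0(\delta_r)>0$; this is the quantitative lower bound that will power the estimate. The $T^*\Sph^2$ block of $\wt M_{r,e}$ is the scalar $-\nu/((1-e)r^2)\cdot\Id$, whose sign is opposite to that of $\nu$, i.e.\ the same as the sign of $r-r_c$, with magnitude $\gtrsim\nu_0/(1-e)$. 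The plan for the aspherical $2\times 2$ block is to extract the leading behavior as $e\to 1^-$. Its $(1,1)$ entry $-(1+e)c^2\nu$ has the desired sign $\pm$ on $\pm(r-r_c)\geq\delta_r$ with magnitude bounded below independently of $e$, while the $(2,2)$ entry expands as
\[
  -c^{-2}\nu\bigl((2e-1)/(1-e)+(1+e)\nu^2\bigr)=-\frac{c^{-2}\nu}{1-e}+O(1)\quad(e\to 1^-),
\]
whose leading term has the same sign $\pm$ and blows up as $e\to 1^-$, while the off-diagonal entry $-1+(1+e)\nu^2$ stays bounded. Consequently the determinant of the $2\times 2$ block equals
\[
  (1+e)\nu^2\bigl((2e-1)/(1-e)+(1+e)\nu^2\bigr)-\bigl(-1+(1+e)\nu^2\bigr)^2 = \frac{2\nu^2}{1-e}+O(1),
\]
which is bounded below by $\nu_0^2/(1-e)$ for $e$ close to $1$. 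Combined with the sign of the $(1,1)$ entry this gives $\pm\wt M_{r,e}\geq\delta\,\Id$ uniformly on $\pm(r-r_c)\geq\delta_r$, for some $\delta>0$ and all $e\in(1-\delta,1)$, as required.

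The main obstacle is purely one of bookkeeping: tracking how the $(1-e)^{-1}$ factors in the inner product $b_e$ interact with the vanishing of $\nu$ at the critical radius $r=r_c$. The hypothesis $\pm(r-r_c)\geq\delta_r$ isolates us uniformly away from this degeneracy, after which positivity follows by the trace--determinant criterion in the limit $e\to 1^-$.
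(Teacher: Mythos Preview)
Your proof is correct and follows essentially the same approach as the paper's. Both arguments reduce to the $g_R$-positivity of the explicit matrices $\wt M_{t_*,e}$ and $\wt M_{r,e}$ and apply the trace--determinant criterion on the $2\times 2$ aspherical block together with the scalar spherical block. The only cosmetic difference is that the paper multiplies the aspherical block of $\wt M_{r,e}$ by $-\nu$ and records the exact determinant $\nu^2\bigl(\tfrac{1+e}{1-e}\nu^2-1\bigr)$, whereas you keep $\wt M_{r,e}$ itself and extract the leading $\tfrac{2\nu^2}{1-e}$ behavior as $e\to 1^-$; these are of course the same computation.
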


This, together with Lemma~\ref{LemmaSCPLoTimelike}, is consistent with the idea of $L_\semi$ being roughly differentiation along $\nabla t_*$. Indeed, $\nabla t_*(dt_*)=c^2>0$, and $\pm(\nabla t_*)(dr)=\mp\nu>0$ for $\pm(r-r_c)>0$.

\begin{proof}[Proof of Lemma~\ref{LemmaSCPLoDiffR}]
  We obtain equivalent statements by replacing $M_{t_*,e}$ and $M_{r,e}$ by $\wt M_{t_*,e}$ and $\wt M_{r,e}$, respectively, and proving the corresponding estimates in the sense of symmetric operators with respect to $g_R$.

  The first claim then follows immediately from the observation that the $(3,3)$ entry of $\wt M_{t_*}$ (acting on $T^*\Sph^2$) is positive, and the $2\times 2$ minor of $\wt M_{t_*}$ has positive trace and determinant for $e\in(0,1)$. For the second claim, we must show that $-\nu\wt M_{r,e}>0$ in $\nu\neq 0$. The $(3,3)$ entry of $-\nu\wt M_{r,e}$ is scalar multiplication by $r^{-2}\nu^2/(1-e)>0$, and for the $2\times 2$ minor of $-\nu\wt M_{r,e}$, one easily checks that its trace is positive for $e\in(1/2,1)$, while its determinant is
  \[
    \nu^2\Bigl(\frac{1+e}{1-e}\nu^2-1\Bigr).
  \]
  Given $\delta_r>0$, $\nu^2$ has a positive lower bound on $|r-r_c|\geq\delta_r$, and thus this determinant is positive for $e\in(1-\delta,1)$ if $\delta>0$ is sufficiently small.
\end{proof}

The skew-adjoint part of $L_\semi$ is
\begin{equation}
\label{EqSCPLoLSkewAdj}
  \ell' := \frac{1}{2i\semi}(L_\semi-L_\semi^*) = \frac{1}{2}\bigl(B^{-1}[\pa_r,B M_r] + S+S^*\bigr),
\end{equation}
which is an element of $\CI(M;\End(\Tb^*M))$ and pointwise self-adjoint with respect to the fiber inner product $b$. Again, we shall display the parameter $e$ as a subscript of $\ell'$ whenever necessary. Near the critical set of $\nabla t_*$, it will be crucial that $\ell'$ is negative definite in order to show propagation into $o_Y\subset\ol{\Tb^*_Y}M$ on decaying function spaces (see below for details):

\begin{lemma}
\label{LemmaSCPLoRadialSubpr}
  There exist $\delta_r,\delta>0$ such that for all $e\in(1-\delta,1)$, we have $\ell'_e\leq-\delta\Id$ in the neighborhood $|r-r_c|<\delta_r$ of the critical point of $\nabla t_*$.
\end{lemma}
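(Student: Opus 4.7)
The plan is to evaluate $\ell'_e$ explicitly at the critical point $r=r_c$ of $\nabla t_*$ and verify its negative definiteness there, then extend to a neighborhood by continuity. The key observation is that at $r_c$ the computations simplify dramatically: by Lemma~\ref{LemmaSdSExt} we have $c^2\mu(r_c)=1$ and $\mu'(r_c)=0$, so $\nu(r_c)=0$; moreover, writing $f(r):=1-c^2\mu(r)$, one has $f(r_c)=f'(r_c)=0$ and $f''(r_c)=-c^2\mu''(r_c)>0$, from which $\nu'(r_c)=-\kappa$ for the positive constant $\kappa:=\sqrt{c^2|\mu''(r_c)|/2}$. Thus many of the matrix entries of $M_r$, $\wt M_r$, and $S$ at $r=r_c$ are governed by the single parameter $\kappa$.

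First I would compute $B^{-1}[\pa_r,BM_r]|_{r_c}=B^{-1}(\pa_r\wt M_r)|_{r_c}$ directly from the formula for $\wt M_r$ in the proof of Lemma~\ref{LemmaSCPLoSymm}. Using $\nu=0$, $\nu'=-\kappa$ at $r_c$ together with the block-diagonal form of $B(r_c)$, the $(1-e)^{-1}$ weights in $B$ cancel with the $(1-e)^{-1}$ weight in the $(2,2)$-entry of $\wt M_r$, and the computation yields the diagonal matrix $\kappa\,\mathrm{diag}(1+e,\,2e-1,\,1)$, which is positive definite for $e$ near~$1$. Next I would compute $(S+S^{*b})|_{r_c}$ from the explicit form of $S$ read off from the expression for $-iL_h$ in \S\ref{SubsecSCPSemi}; conjugation by $B$ to pass from the $g_R$-transpose to the $b$-adjoint is straightforward since $B(r_c)$ is diagonal, and the outcome is that $(S+S^{*b})|_{r_c}$ is a $b$-symmetric matrix whose symmetrized matrix $B(S+S^{*b})|_{r_c}$ has negative diagonal and an off-diagonal coupling between the first two components which remains bounded as $e\to1$.

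Combining the two contributions via \eqref{EqSCPLoLSkewAdj}, one finds that the matrix $B\ell'_e(r_c)$ is block-diagonal (with a $1{\times}1$ block on the $T^*\Sph^2$ summand given by a strictly negative scalar, and a $2{\times}2$ aspherical block). The $1{\times}1$ block is manifestly negative. For the $2{\times}2$ aspherical block, the diagonal entries are negative (of orders $1$ and $(1-e)^{-1}$, respectively, with the appropriate signs), while the off-diagonal entries are $O(1)$ independent of $e$; hence the trace is negative and the determinant is positive and in fact blows up like $(1-e)^{-1}$. Since $B\ell'_e(r_c)$ is symmetric in the ordinary sense, negative trace and positive determinant of the $2{\times}2$ block force $b$-negative definiteness at $r_c$ for all $e$ sufficiently close to~$1$. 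A continuity argument in $(e,r)$ and compactness of the spherical factor then yield uniform $b$-negative definiteness $\ell'_e\le-\delta\,\Id$ on a neighborhood $|r-r_c|<\delta_r$ for $e\in(1-\delta,1)$.

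The main obstacle in carrying this out is tracking the singular behavior as $e\to1$: several entries of $B^{-1}$, $\wt M_r$, and $S^{*b}$ individually blow up like $(1-e)^{-1}$, and one must verify that the \emph{problematic $(1-e)^{-1}$ terms either cancel or contribute to the diagonal with the correct (negative) sign} so that negative definiteness is preserved in the limit. It is precisely this delicate balance — which relies on the specific choice of the inner product $b_e$ from Lemma~\ref{LemmaSCPLoSymm} and on the subprincipal terms in \eqref{EqSCPSemiFullOp} being linked to $\delta_g^*$ with the coefficients chosen in \eqref{EqSCPGamma12} — that makes the lemma hold, and failure of the sign at this step would doom the semiclassical low frequency analysis.
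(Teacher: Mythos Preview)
Your approach does establish $b_e$-negative definiteness of $\ell'_e(r_c)$ for each fixed $e<1$, but the passage to a \emph{uniform} bound is where a gap remains. You argue via the ordinary-symmetric matrix $B\ell'_e(r_c)$: negative trace and positive determinant of its $2\times 2$ block indeed give negative definiteness of the quadratic form $v\mapsto b_e(\ell'_e v,v)$, but since $B\ell'_e$ blows up like $(1-e)^{-1}$ as $e\to 1$, your ``continuity argument in $(e,r)$'' cannot be run on $B\ell'_e$ over a set reaching $e=1$. To obtain the uniform $\delta$ and $\delta_r$, you need uniform control of the \emph{eigenvalues of $\ell'_e$} (not of $B\ell'_e$) as $e\to 1$, and nothing in your sketch supplies this.

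The paper's route is simpler and closes exactly this gap. The key observation is that the $(1-e)^{-1}$ singularities are an artifact of multiplying by $B$: the endomorphism $\ell'_e$ itself extends \emph{smoothly} to $e=1$. One computes the limit $\ell'_1$ explicitly as a function of $r$ (not just at $r_c$); at $r=r_c$ it is upper triangular with diagonal entries $\nu',\ \tfrac{3}{2}\nu',\ \tfrac{1}{2}\nu'$, all strictly negative since $\nu'(r_c)<0$. Continuity of eigenvalues in $(e,r)$ on a compact set that now \emph{includes} $e=1$ immediately yields the uniform bound $\ell'_e\le-\delta\,\Id$ on a fixed neighborhood $|r-r_c|<\delta_r$. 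In your own language, the correct resolution of the ``main obstacle'' is that the $(1-e)^{-1}$ terms \emph{cancel} in $\ell'_e$ (rather than merely contributing with a favorable sign to $B\ell'_e$); once you see that, the trace/determinant detour through $B\ell'_e$ becomes unnecessary.
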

\begin{proof}
  A simple calculation shows that $\ell'_e$ depends smoothly on $e$ in a full neighborhood of $e=1$, and at $e=1$, one computes
  \[
    \ell'_1:=\lim_{e\to 1} \ell'_e
    =\frac{1}{2}
     \begin{pmatrix}
       2\nu'+8r^{-1}\nu & c^{-2}r^{-1}(2-4\nu^2+r\nu\nu') & 0 \\
       0                & 3\nu'+4r^{-1}\nu                & 0 \\
       0                & 0                               & \nu'+6r^{-1}\nu
     \end{pmatrix}.
  \]
  Since $\nu=0$ and $\nu'<0$ at $r=r_c$, this shows that all eigenvalues of $\ell'_1$, and hence of $\ell'_e$ for $e$ near $1$, are indeed negative in a neighborhood of $r=r_c$.
\end{proof}

Choose $\delta_r$ according to Lemma~\ref{LemmaSCPLoRadialSubpr}, and then take $\delta>0$ to be equal to the smaller value among the ones provided by Lemmas~\ref{LemmaSCPLoDiffR}, with $\delta_r/2$ in place of $\delta_r$, and Lemma~\ref{LemmaSCPLoRadialSubpr}; then, rescaling $\delta_r$, we have for $e\in(1-\delta,1)$
\begin{equation}
\label{EqSCPLoSigns}
  \ell'_e\leq-\delta\Id\ \tn{ for }|r-r_c|< 16\delta_r,\quad \pm M_{r,e}>\delta\Id\ \tn{ for }\pm(r-r_c)\geq 8\delta_r.
\end{equation}
Let us fix such an $e$ and the corresponding inner product $b\equiv b_e$. We also pick $C>0$ according to Corollary~\ref{CorSCPLoImagEll} and define an elliptic (near the zero section) multiple of $\cP_\semi$ in analogy to \eqref{EqSCPLoToy2} by
\begin{equation}
\label{EqSCPLoEllMultOp}
  \cP'_\semi := (1 - i C L_\semi^*)i \cP_\semi = (L_\semi + J_\semi) - i Q_\semi,
\end{equation}
where
\begin{equation}
\label{EqSCPLoEllMultPieces}
\begin{gathered}
  J_\semi = C\Re(L_\semi^*\BoxCP_{g,\semi}) - \Im(\BoxCP_{g,\semi}), \\
  Q_\semi = C L_\semi^*L_\semi - \Re(\BoxCP_{g,\semi}) - C\Im(L_\semi^*\BoxCP_{g,\semi});
\end{gathered}
\end{equation}
here we use the inner product $b$ to compute adjoints as well as symmetric ($\Re$) and skew-symmetric ($\Im$) parts. Note that $J_\semi,Q_\semi$ are semiclassical b-differential operators, and $J_\semi=J_\semi^*$, $Q_\semi=Q_\semi^*$. Denoting by $\cO(\zeta^k)$ a symbol vanishing at least to order $k$ at the zero section, their full symbols satisfy
\begin{equation}
\label{EqSCPLoEllMultSymb}
\begin{gathered}
  \sigmabhfull(J_\semi) = \cO(\zeta^3) + \semi\cO(1), \\
  \sigmabhfull(Q_\semi) = (C\ell^2-G\Id) + \semi\cO(\zeta) + \semi^2\cO(1).
\end{gathered}
\end{equation}

We now prove the propagation of regularity (with estimates) away from the critical set:
\begin{prop}
\label{PropSCPLoPropRad}
  Suppose that $r_c+14\delta_r<r_1<r_2<r_++3\eps_M$ (see \eqref{EqOpNoB0} and \eqref{EqSdSMfd} for the definitions of $r_+$ and $\eps_M$). Moreover, fix $0<\tau_0<\tau_1$. Let $\cV\subset\Tb^*M$ be a fixed neighborhood of the zero section $o\subset\Tb^*M$, in particular $\cV$ is disjoint from $\Sb^*M$. Let $B_1,B_2,S\in\Psibh(M;\Tb^*M)$ be operators such that $B_1$ is elliptic near the zero section over the set
  \[
    \bigl([0,\tau_1)_\tau\times(r_c+10\delta_r,r_c+14\delta_r)_r \cup (\tau_0,\tau_1)_\tau\times(r_c+10\delta_r,r_2)_r\bigr)\times\Sph^2,
  \]
  while $\WFbh'(B_2)\subset\cV\cap\{\tau\in[0,\tau_0),\ r\in(r_1,r_2)\}$, and $S$ is elliptic in $\cV\cap\{\tau\in[0,\tau_1),\ r\in(r_c+10\delta_r,r_2)\}$. Then for all $\rho\in\R$ and $N\in\R$, the estimate
  \begin{equation}
  \label{EqSCPLoPropRadEst}
    \| B_2 u \|_{\Hbh^{0,\rho}} \lesssim \| B_1 u \|_{\Hbh^{0,\rho}} + \semi^{-1}\| S \cP_\semi u \|_{\Hbh^{0,\rho}} + \semi^N\| u \|_{\Hbh^{0,\rho}}
  \end{equation}
  holds for sufficiently small $\semi>0$. See Figure~\ref{FigSCPLoPropRad}.

  The same holds if instead $r_--3\eps_M<r_1<r_2<r_c-14\delta_r$, now with $B_1$ elliptic near the zero section over
  \[
    \bigl([0,\tau_0)_\tau\times(r_c-14\delta_r,r_c-10\delta_r) \cup (\tau_0,\tau_1)_\tau\times(r_1,r_c-10\delta_r)_r\bigr)\times\Sph^2,
  \]
  and $S$ elliptic in $\cV\cap\{\tau\in[0,\tau_1),\ r\in(r_1,r_c-10\delta_r)\}$.
\end{prop}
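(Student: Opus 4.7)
The plan is to prove \eqref{EqSCPLoPropRadEst} via a positive commutator argument, microlocalized near the zero section $o\subset\rcTbdual M$, applied to the elliptic multiple $\cP'_h=(1-iCL_h^*)i\cP_h$ defined in \eqref{EqSCPLoEllMultOp}. Since the principal symbol of $1-iCL_h^*$ equals the identity at $o$, this operator is microlocally invertible there; a standard argument, analogous to the one following \eqref{EqSCPHiRealPrTypePprime}, reduces the desired estimate to the corresponding one for $\cP'_h$ in place of $\cP_h$, at the cost of slightly enlarging $S$ within $\Ellbh(S)$.

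The commutant I would use is a principally scalar, self-adjoint (with respect to the fiber inner product $b_e$ of Lemma~\ref{LemmaSCPLoSymm}) operator $A\in\Psibh^{-\infty,\rho}(M;\Tb^*M)$ with semiclassical principal symbol
\[
\sigmabh(A)=\tau^{-\rho}\psi_\tau(\tau)\phi(r)\chi(\zeta),
\]
where $\chi$ is a smooth cutoff to a small neighborhood of $o$ inside $\cV$; $\phi\in\CIc(\R)$ is supported in $(r_c+10\delta_r,r_2)$ and equal to $1$ on $(r_c+14\delta_r,r_2-\varepsilon)$ for small $\varepsilon>0$; and $\psi_\tau\in\CIc([0,\tau_1))$ equals $1$ on $[0,\tau_0]$ and is monotone decreasing on $[\tau_0,\tau_1)$. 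Thus $A$ is elliptic on $\WFbh'(B_2)$, satisfies $\WFbh'(A)\subset\Ellbh(S)$, and all ``bad-sign'' transitions of $\phi$ and $\psi_\tau$ lie in $\Ellbh(B_1)$. In the positive commutator identity
\[
\la i[\cP'_h,A^2]u,u\ra_b + \la -i(\cP'_h-(\cP'_h)^{*_b})Au,Au\ra_b = 2\Im\la \cP'_h u,A^2 u\ra_b,
\]
the commutator has semiclassical principal symbol $h\{\sigmabh(\cP'_h),a^2\}$, which at the zero section reduces by \eqref{EqSCPLoEllMultSymb} to $h\bigl(M_{t_*}\partial_{t_*}(a^2)+M_r\partial_r(a^2)\bigr)$; using $\partial_{t_*}(\tau^{-2\rho})=2\rho\tau^{-2\rho}$ isolates a bulk contribution $2h\rho M_{t_*}a^2$, which by Lemma~\ref{LemmaSCPLoDiffR} is bounded below by $2h\rho\delta a^2\,\Id$ for $\rho>0$. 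The $r$-transition near $r=r_2-\varepsilon$ contributes with the same (good) sign since $\phi'<0$ and $M_r>0$, providing a bonus on the same side of the estimate; the transitions near $r=r_c+14\delta_r$ and in $(\tau_0,\tau_1)$ contribute with the opposite sign, but are supported in $\Ellbh(B_1)$ and are controlled by $\|B_1 u\|_{\Hbh^{0,\rho}}^2+h^N\|u\|_{\Hbh^{-N,\rho}}^2$ via microlocal elliptic regularity. The skew-adjoint contribution reduces to $\la 2(Q_h-h\ell')Au,Au\ra_b+\cO(h^2)\|u\|^2\geq -Ch\|Au\|^2$ by Corollary~\ref{CorSCPLoImagEll} and the boundedness of $\ell'$; for $\rho>0$ fixed and $h$ small this is dominated by the leading-order bulk term. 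Cauchy--Schwarz applied to the right-hand side, $2\Im\la \cP'_h u,A^2 u\ra_b\leq 2h\|Au\|\cdot h^{-1}\|S\cP'_h u\|_{\Hbh^{0,\rho}}$, and absorption then yield \eqref{EqSCPLoPropRadEst}. The case $r<r_c$ is entirely analogous: $\nabla t_*$ there points toward decreasing $r$, so Lemma~\ref{LemmaSCPLoDiffR} gives $-M_r>0$, and the roles of the $r$-transitions are reversed in the construction of $\phi$.

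The main obstacle is the matrix-valued bookkeeping required to convert the symbolic positivity into an operator estimate. The bulk positivity of $M_{t_*}\partial_{t_*}(a^2)+M_r\partial_r(a^2)$ must be understood as an endomorphism inequality with respect to the non-standard inner product $b_e$, which requires a bundle-valued sharp G\aa rding inequality to turn symbol-level positivity into an operator inequality on $\la\cdot,\cdot\ra_b$. Moreover, for $\rho\leq 0$ the weight-derived bulk term has the wrong sign, and one must instead obtain bulk positivity by choosing a strictly monotone profile $\phi$ so that $M_r\partial_r(a^2)>0$ provides the bulk term directly via $M_r>0$, again arranging the single resulting bad-sign transition to lie in $\Ellbh(B_1)$. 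All subprincipal corrections---from the spherical part $M_\omega$ of $L_h$, the endomorphism $\ell'$, and the commutator of the weight $\tau^{-\rho}$ with $\cP'_h$---contribute only $\cO(h)\|Au\|^2$ and are absorbed by the leading bulk commutator term.
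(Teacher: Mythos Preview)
Your overall strategy—reducing to $\cP'_h$ via the elliptic factor $1-iCL_h^*$ and running a positive-commutator argument near $o\subset\Tb^*M$—matches the paper, but your commutant is wrong in a way that breaks the proof. The defect is the $r$-profile $\phi$: taking it flat on $(r_c+14\delta_r,r_2-\varepsilon)$ forces the bulk contribution to $\ham_\ell(a^2)$ to come entirely from the weight $\tau^{-2\rho}$, namely $2\rho M_{t_*}a^2$. This has no definite sign uniformly in $\rho\in\R$ (it vanishes for $\rho=0$, and for small $|\rho|$ cannot dominate the $\cO(1)$ endomorphism $\ell'$), so the argument cannot yield the estimate for all $\rho$ as stated. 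Your sign claim at the $r_2$-transition is also wrong: there $\phi'<0$ and $M_r>0$, hence $M_r\partial_r(a^2)<0$, \emph{opposite} to your bulk term; and this region lies in $\{\tau<\tau_0,\ r\in(r_2-\varepsilon,r_2)\}$, which is \emph{not} contained in $\Ellbh(B_1)$, so it cannot be treated as a priori control. Your fallback of a strictly increasing $\phi$ for $\rho\leq 0$ is incompatible with $\phi$ vanishing near $r_2$, which is required for $\WFbh'(A)\subset\Ellbh(S)$. Finally, the skew-adjoint term has the wrong sign: $-i(\cP'_h-(\cP'_h)^{*_b})=2h\ell'-2Q_h$, so $Q_h$ enters with a minus, and the positivity from Corollary~\ref{CorSCPLoImagEll} gives the inequality in the wrong direction for the lower bound you want.

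The paper's remedy is to replace the flat $\phi$ by $\chi_1(r)=\psi_1(r)\,\psi_2\bigl(\digamma^{-1}(r_2-r)\bigr)$ with $\psi_2(x)=H(x)e^{-1/x}$, so that $\chi_1'=-\digamma(r_2-r)^{-2}\chi_1+\psi_1'\psi_2$. The first summand makes $\chi_1'M_r$ the dominant (negative-definite) contribution to $\ham_\ell a$ \emph{throughout} $\{r>r_c+13\delta_r\}\cap\supp\chi_1$, not merely at a transition; the large parameter $\digamma$ then absorbs $\rho M_{t_*}$ of either sign, the $\ell'$ term, and an auxiliary constant $\sfM>0$, yielding $\ham_{\ell+j}a=-(b'_2)^2-\sfM a+e_0+e_1$ with $b'_2$ elliptic on $\WFbh'(B_2)$. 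The only wrong-sign $r$-transition comes from $\psi_1$, which \emph{does} lie in $\Ellbh(B_1)$. In the paper's arrangement the $Q_h$ contribution enters the quantity $I=h^{-1}\Im\la A\cP'_h u,Au\ra+h^{-1}\Re\la AQ_h u,Au\ra$ with a plus sign, and its lower bound follows by writing the leading part as $h^2\nabla^*\cQ\nabla$ with $\cQ>0$ and taking a symbolic square root, combined with strong $\digamma_*$-localization near $\zeta=0$ to control the $h\,\cO(\zeta)$ subprincipal piece.
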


\begin{figure}[!ht]
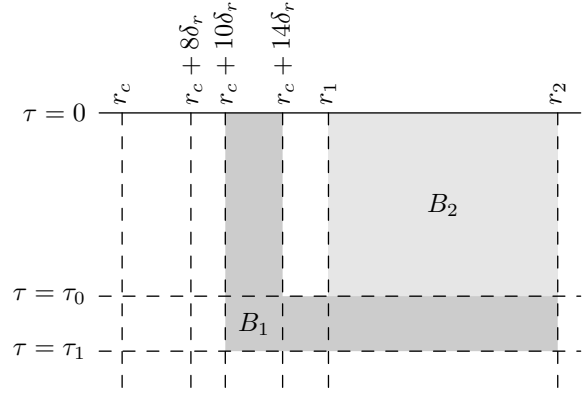

  \centering
  \inclfig{SCPLoPropRad}
  \caption{Propagation near the zero section away from the critical set of $\nabla t_*$: control on the elliptic set of $B_1$ propagates to the elliptic set of $B_2$.}
  \label{FigSCPLoPropRad}
\end{figure}

\begin{rmk}
\label{RmkSCPLo2ndProp}
  Continuing Remark~\ref{RmkSCPLo2nd}, there is a cleaner and more precise $2$-microlocal statement. Namely, $2$-microlocal regularity propagates along the Hamilton vector fields of the eigenvalues of $\ell$.
\end{rmk}

\begin{proof}[Proof of Proposition~\ref{PropSCPLoPropRad}]
  Since we are working near the zero section, the differentiability order is irrelevant. Furthermore, by Lemma~\ref{LemmaSCPSemiEll}, we may localize as close to the zero section as we wish, and by conjugating microlocally near $o\subset\Tb^*M$ by the elliptic operator $(1-i C L_\semi^*)$, it suffices to prove the estimate \eqref{EqSCPLoPropRadEst} for $\cP'_\semi$ in place of $\cP_\semi$. We then consider the scalar commutant
  \[
    a(z,\zeta) = e^{\rho t_*}\chi_0(t_*)\chi_1(r)\chi_*(\zeta),
  \]
  where $z=(t_*,r,\omega)\in M$ and $\zeta\in\Tb^*_z M$, where we suppress a factor of $\Id$, the identity map on $(\pi^*\Tb^*M)_{(z,\zeta)}$, $\pi\colon\Tb^*M\to M$ the projection. Here, writing $t_{*,j}=-\log\tau_j$, we take $\chi_0$ with $\chi_0\equiv 0$ for $t_*\leq t_{*,1}+\frac{1}{3}(t_{*,0}-t_{*,1})$ and $\chi_0\equiv 1$ for $t_*\geq t_{*,1}+\frac{2}{3}(t_{*,0}-t_{*,1})$, while
  \[
    \chi_1(r) = \psi_1(r)\psi_2(\digamma^{-1}(r_2-r)),\quad \psi_2(x)=H(x)e^{-1/x},
  \]
  and $\psi_1\equiv 0$ for $r\leq r_c+11\delta_r$ and $\psi_1\equiv 1$ for $r\geq r_c+13\delta_r$, see Figure~\ref{FigSCPLoPropRadChi1}; moreover,
  \[
    \chi_*(\zeta)=\chi_{*,0}(\digamma_*\zeta),
  \]
  with $\chi_{*,0}\in\CIc(\Tb^*M)$ identically $1$ for $|\zeta|\leq 1/2$ and identically $0$ for $|\zeta|\geq 1$. Further, we assume that $\sqrt{\chi_0}$, $\sqrt{\chi_1}$ and $\sqrt{\chi_*}$ are smooth; finally, $\digamma,\digamma_*>0$ are large, to be chosen later.

  \begin{figure}[!ht]
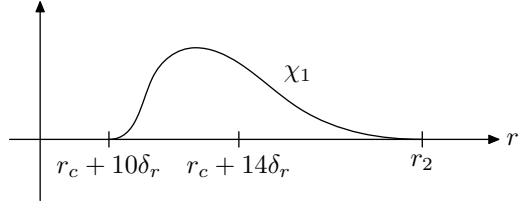

    \centering
    \inclfig{SCPLoPropRadChi1}
    \caption{Graph of the function $\chi_1$, with the negative derivative between $r_c+14\delta_r$ and $r_2$ giving positivity in the positive commutator argument, and the region $r_c+10\delta_r\leq r\leq r_c+14\delta_r$ being the a priori control region.}
    \label{FigSCPLoPropRadChi1}
  \end{figure}

  Let $\ell$ and $j$ denote the principal symbols of $L_\semi$ and $J_\semi$, respectively; then $H_\ell t_*=M_{t_*}$ and $H_\ell r=M_r$. Therefore, we compute
  \begin{equation}
  \label{EqSCPLoPropRadComm}
    \ham_\ell a = e^{\rho t_*}\bigl( \chi_1' M_r\chi_0\chi_* + (\rho\chi_0+\chi_0')M_{t_*}\chi_1\chi_* + (\ham_\ell\chi_*)\chi_0\chi_1 \bigr);
  \end{equation}
  using $\psi_2'=x^{-2}\psi_2$, we further have
  \[
    \chi_1' = \psi_1'\psi_2 - \digamma(r_2-r)^{-2}\chi_1,
  \]
  with the second term being the main term, and the first term supported in the a priori control region in $r$. On the other hand,
  \[
    \ham_j a = e^{\rho t_*}\bigl(\chi_1'(\ham_j r)\chi_0\chi_* + (\rho\chi_0+\chi_0')(\ham_j t_*)\chi_1\chi_* + (\ham_j\chi_*)\chi_0\chi_1\bigr);
  \]
  now in view of the cubic vanishing of $j$ at $\zeta=0$ by \eqref{EqSCPLoEllMultSymb}, the first two terms in the parenthesis---which are stationary ($t_*$-independent) smooth functions---vanish quadratically there; for large $\digamma_*$, we will thus be able to absorb them into the main term $\chi_1'M_r$ in \eqref{EqSCPLoPropRadComm} up to a priori controlled terms. Concretely, given a constant $\sfM>0$ (used to absorb further error terms), chosen below, we can write
  \begin{equation}
  \label{EqSCPLoPropRadComm2}
    \ham_{\ell+j}a = -(b'_2)^2 - \sfM a + e_0 + e_1,
  \end{equation}
  where
  \begin{align*}
    b'_2 &= e^{\rho t_*/2}\sqrt{\chi_0\chi_1\chi_*}\bigl( \digamma(r_2-r)^{-2}(M_r+\ham_j r) - \rho(M_{t_*}+\ham_j t_*) - \sfM\Id \bigr)^{1/2}, \\
    e_0 &= e^{\rho t_*}\chi_0\chi_1(\ham_{\ell+j}\chi_*), \\
    e_1 &= e^{\rho t_*}\chi_*\bigl(\psi_1'\psi_2\chi_0(M_r+\ham_j r) + \chi_0'\chi_1(M_{t_*}+\ham_j t_*)\bigr).
  \end{align*}
  Here, the parenthesis in the definition of $b'_2$ is a strictly positive self-adjoint endomorphism on $\supp a$ provided $\digamma_*$ and $\digamma$ are large, and we take $b'_2$ to be the unique positive definite square root, which is smooth; the term $e_1$ is supported in the region where we assume a priori control, corresponding to the elliptic set of $B_1$, and $e_0$ is supported in the elliptic set of $\cP'_\semi$.

  Let now $A=A^*\in\Psibh(M;\Tb^*M)$, $B'_2=(B'_2)^*\in\Psibh(M;\Tb^*M)$, $E_0$ and $E_1\in\Psibh(M;\Tb^*M)$ denote quantizations of $a$, $b'_2$, $e_0$ and $e_1$, respectively, with operator wave front set contained in the respective supports of the symbols, and Schwartz kernels supported in fixed small neighborhoods of the projection of the wave front set to the base. We then evaluate the $L^2=\Hb^0$ pairing (using the fiber inner product $b$)
  \begin{align}
    I := \semi^{-1}\Im&\la A\cP'_\semi u,A u\ra + \semi^{-1}\Re\la A Q_\semi u, A u\ra \nonumber\\
    &= \frac{i}{2\semi}\bigl(\la A u,A(L_\semi+J_\semi)u\ra - \la A(L_\semi+J_\semi)u,A u\ra\bigr) \nonumber\\
  \label{EqSCPLoPropRadComm3}
    &= \Big\la\frac{i}{2\semi}[L_\semi+J_\semi,A^2]u,u\Big\ra + \la \ell' A^2 u,u\ra,
  \end{align}
  recalling \eqref{EqSCPLoLSkewAdj}. Using the description \eqref{EqSCPLoPropRadComm2} on the symbolic level and integrating by parts, there exists $E_3\in\Psibh(M;\Tb^*M)$ with $\WFbh'(E_3)\subseteq\WFbh'(A)$ so that the right hand side is equal to
  \[
    I = \la E_0 u, A u\ra + \la E_1 u, A u\ra - \sfM\|A u\|^2 - \|B'_2 A u\|^2 + \semi\la E_3 u,A u\ra  + \la \ell' A^2 u,u\ra.
  \]
  We estimate
  \[
    |\la E_0 u,A u\ra| \leq C'' \semi^{-2}\|S\cP'_\semi u\|_{\Hbh^{0,\rho}}^2 + \semi^2\|A u\|^2 + C_N \semi^N\|u\|_{\Hbh^{0,\rho}}^2
  \]
  using the ellipticity of $\cP'_\semi$ on $\WFbh'(E_0)$; further, for $\eta>0$ arbitrary,
  \[
    |\la E_1 u,A u\ra| \leq C_\eta\|B_1 u\|_{\Hbh^{0,\rho}}^2 + \eta\|A u\|^2 + C_N \semi^N\|u\|_{\Hbh^{0,\rho}}^2
  \]
  by virtue of the ellipticity of $B_1$ on $\WFbh'(E_1)$; in both cases, the constants $C_\eta$ and $C_N$ depend implicitly also on the parameters $\sfM$, $\digamma$ and $\digamma_*$. Fixing $\wt A\in\Psibh(M;\Tb^*M)$, elliptic in a small neighborhood of $\WFbh'(A)$ (for $\digamma_*=1$, say), we further have
  \[
    |\semi\la E_3 u,A u\ra| \leq \eta\|A u\|^2 + C_\eta\|\semi\wt A u\|_{\Hbh^{0,\rho}}^2 + C_N \semi^N\|u\|_{\Hbh^{0,\rho}}^2.
  \]
  We can furthermore estimate
  \begin{align*}
    |\la \ell' A^2 u,u\ra| &\leq |\la \ell' A u,A u\ra| + |\la A u,[A,\ell']u\ra| \\
      &\leq (C'+\eta)\|A u\|^2 + C_\eta\|\semi\wt A u\|_{\Hbh^{0,\rho}}^2 + C_N \semi^N\|u\|_{\Hbh^{0,\rho}}^2.
  \end{align*}
  Lastly, we may estimate $\|B_2 A u\|^2\leq C_2'\|B_2' A u\|^2 + C_N \semi^N\|u\|_{\Hbh^{0,\rho}}^2$ by elliptic regularity, and therefore obtain
  \begin{equation}
  \label{EqSCPLoPropRadComm3RHS}
  \begin{split}
    I \leq C''&\semi^{-2}\|S\cP'_\semi u\|_{\Hbh^{0,\rho}}^2 + C_\eta\|B_1 u\|_{\Hbh^{0,\rho}}^2 - C_2'\|B_2 A u\|^2 \\
      &- (\sfM - C' - 3\eta - \semi^2)\|A u\|^2 + 2 C_\eta\|\semi\wt A u\|_{\Hbh^{0,\rho}}^2 + C_N \semi^N\|u\|_{\Hbh^{0,\rho}}^2.
  \end{split}
  \end{equation}
  We henceforth fix $\sfM>C'$.

  Turning to the left hand side of \eqref{EqSCPLoPropRadComm3}, we bound the first term by
  \[
    \semi^{-1}\Im\la A\cP'_\semi u,A u\ra \geq -\eta\|A u\|^2 - C_\eta \semi^{-2}\|A\cP'_\semi u\|^2.
  \]
  To treat the first term of \eqref{EqSCPLoPropRadComm3}, we write
  \begin{equation}
  \label{EqSCPLoPropRadSkew}
    \semi^{-1}\Re\la A Q_\semi u, A u\ra = \Re\la \semi^{-1}[A,Q_\semi]u,A u\ra + \semi^{-1}\la Q_\semi A u,A u\ra
  \end{equation}
  and bound the first term on the right by writing it as
  \[
    \semi \Re \Big\la \semi^{-1}\bigl[A,\semi^{-1}[A,Q_\semi]\bigr]u,u\Big\ra \geq -\wt C \semi\|\wt A u\|_{\Hbh^{0,\rho}}^2 - C_N \semi^N\|u\|_{\Hbh^{0,\rho}}^2
  \]
  For the second term of \eqref{EqSCPLoPropRadSkew} on the other hand, we recall the form \eqref{EqSCPLoEllMultSymb} of the full symbol of $Q_\semi$. We begin by estimating the contribution of the lower order terms: any $\semi^2\cO(1)$ term gives a contribution which is bounded by $C_2 \semi\|A u\|^2$, i.e.\ has an additional factor of $\semi$ relative to the term $\sfM\|A u\|^2$ in \eqref{EqSCPLoPropRadComm3RHS} and is thus small for small $\semi>0$. The contribution of an $\semi\cO(\zeta)$ term is small for $\digamma_*\gg 0$ (i.e.\ strong localization near the zero section):\footnote{A fixed bound would be sufficient for present, real principal type, purposes, but the smallness will be essential at the critical set in the proof of Proposition~\ref{PropSCPLoPropTime} below.} Indeed, if $\semi V$, with $V\in\Vb(M)$, is a semiclassical b-vector field (so $\sigmabh(\semi V)=\cO(\zeta)$), then we have
  \[
    \|V A u\| \leq \|\Xi V A u\| + C_{N,\digamma_*} \semi^N\|u\|_{\Hbh^{0,\rho}}
  \]
  for $\Xi\in\Psibh(M;\Tb^*M)$ a quantization of $\chi_{*,0}(\digamma^*\zeta/2)\Id$; but the supremum of the principal symbol of $\Xi V$ is then bounded by a constant times $\digamma_*^{-1}$. By \cite[Theorem~5.1]{ZworskiSemiclassical}, the operator norm on $L^2$ of a semiclassical operator is given by the $L^\infty$ norm of its symbol up to $\cO(\semi^{1/2})$ errors, hence we obtain
  \[
    \|V A u\| \leq (C'\digamma_*^{-1}+C_{\digamma_*} \semi^{1/2})\|A u\| + C_{N,\digamma_*} \semi^N\|u\|_{\Hbh^{0,\rho}};
  \]
  fixing $\digamma_*$, we get the desired bound for the contribution of $\semi\cO(\zeta)$ to the second term in \eqref{EqSCPLoPropRadSkew},
  \[
    \semi^{-1}|\la \semi V A u,A u\ra| \leq 2C'\digamma_*^{-1}\|A u\|^2+C_N \semi^N\|u\|^2_{\Hbh^{0,\rho}},
  \]
  for $\semi>0$ sufficiently small.

  In order to treat the main term of $Q_\semi$, define $\cQ\in\CI(M;\End(\Tb^*M\otimes\Tb^*M))$ as the pointwise self-adjoint section with quadratic form
  \[
    (\cQ(\zeta\otimes v),\zeta\otimes v)=C|\ell(\zeta)v|_b^2-G(\zeta)|v|_b^2, \quad \zeta,v\in\Tb^*_z M,\ z\in M,
  \]
  then $Q_\semi=\semi^2\nabla^*\cQ\nabla$ modulo lower order terms, i.e.\ semiclassical b-differential operators with full symbols of the form $\semi\cO(\zeta)+\semi^2\cO(1)$. By Corollary~\ref{CorSCPLoImagEll}, the \emph{classical} b-differential operator $Q':=\nabla^*\cQ\nabla$ is an elliptic element of $\Psib^2(M;\Tb^*M)$ and formally self-adjoint with respect to the fiber inner product $b$, therefore we can construct its square root using the symbol calculus, giving $R\in\Psib^1(M;\Tb^*M)$ such that $Q'-R^*R=R'\in\Psib^{-\infty}(M;\Tb^*M)$; but then the boundedness of $R'$ on $\Hbh^{0,0}$ gives
  \begin{equation}
  \label{EqSCPLoPropRadSkew2}
    \semi^{-1}\la \semi^2\nabla^*\cQ\nabla A u,A u\ra \geq -C' \semi\|A u\|^2.
  \end{equation}
  We conclude that the left hand side of \eqref{EqSCPLoPropRadComm3} satisfies the bound
  \begin{equation}
  \label{EqSCPLoPropRadComm3LHS}
  \begin{split}
    I &\geq -C_\eta \semi^{-2}\|S\cP'_\semi u\|^2 - (\eta+C_2 \semi+2 C'\digamma_*^{-1}+C' \semi)\|A u\|^2 \\
      &\qquad - \wt C\|\semi^{1/2}\wt A u\|_{\Hbh^{0,\rho}}^2 - C_N \semi^N\|u\|_{\Hbh^{0,\rho}}^2
  \end{split}
  \end{equation}
  for $\semi>0$ sufficiently small. Note that the constant in front of $\|A u\|^2$ can be made arbitrarily small by choosing $\eta>0$ small, $\digamma_*$ large and then $\semi>0$ small.

  Combining this estimate with \eqref{EqSCPLoPropRadComm3RHS}, we can absorb the $\|A u\|^2$ term from \eqref{EqSCPLoPropRadComm3LHS} into the corresponding term in \eqref{EqSCPLoPropRadComm3RHS}, and then drop it as it has the same sign as the main term $\|B_2 A u\|^2$. We thus obtain the desired estimate \eqref{EqSCPLoPropRadEst}, but with an additional control term $\|\semi^{1/2}\wt A u\|^2$ on the right hand side; this term however can be removed iteratively by applying the estimate to this control term itself, which weakens the required control by $\semi^{1/2}$ at each step, until after finitely many iterations we reach the desired power of $\semi^N$.
\end{proof}

Not using the flexibility in choosing $\digamma_*$, and simply using the semiclassical sharp G\aa{}rding inequality to estimate $Q_\semi$ in \eqref{EqSCPLoPropRadSkew}, we could take the support of $\chi_*$ in the above proof to be fixed (e.g.\ by choosing $\digamma_*=1$): the positivity of $\sigmabh(Q_\semi)$ would give a lower bound $-C'\|A u\|^2$ in \eqref{EqSCPLoPropRadSkew2} with some constant $C'$, which could be absorbed in the term in \eqref{EqSCPLoPropRadComm3RHS} involving $\|A u\|^2$ by taking $\sfM>0$ large. On the other hand, our argument presented above in particular shows that, using the structure of $Q_\semi$, the constant $C'$ can be made arbitrarily small, which is crucial in the more delicate radial point type estimate near the critical set:

\begin{prop}
\label{PropSCPLoPropTime}
  Fix $0<\tau_0<\tau_1$, and let $\cV\subset\Tb^*M$ be a fixed neighborhood of $o\subset\Tb^*M$. There exists $\alpha>0$ such that the following holds: let $B_1,B_2,S\in\Psibh(M;\Tb^*M)$ be operators such that $B_1$ is elliptic near the zero section over the set $(\tau_0,\tau_1)_\tau \times (r_c-16\delta_r,r_c+16\delta_r)_r \times \Sph^2$, while
  \[
    \WFbh'(B_2)\subset\cV\cap\{\tau\in[0,\tau_1),\ r\in[r_c-15\delta_r,r_c+15\delta_r]\},
  \]
  and $S$ is elliptic in $\cV\cap\{\tau\in[0,\tau_1),\ r\in(r_c-16\delta_r,r_c+16\delta_r)\}$. Then for all $\rho\leq\alpha$, the estimate
  \[
    \|B_2 u\|_{\Hbh^{0,\rho}} \lesssim \|B_1 u\|_{\Hbh^{0,\rho}} + \semi^{-1}\|S\cP_\semi u\|_{\Hbh^{0,\rho}} + \semi^N\|u\|_{\Hbh^{0,\rho}}
  \]
  holds. See Figure~\ref{FigSCPLoPropTime}.
\end{prop}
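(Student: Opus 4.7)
The plan is to run a positive commutator argument analogous to the proof of Proposition~\ref{PropSCPLoPropRad}, but now localizing near the critical set $\{r = r_c\}$ of the vector field $\nabla t_*$; at this set, the spatial commutant $\chi_1'M_r$ no longer provides the principal positivity, so we will instead draw it from the combination of the weight $\rho$ and the strict negative definiteness of the skew-adjoint part $\ell'$ of $L_h$ near $r_c$ guaranteed by Lemma~\ref{LemmaSCPLoRadialSubpr}. As before, we first conjugate microlocally near the zero section by $(1-iCL_h^*)$, reducing to $\cP'_h = (L_h+J_h) - iQ_h$ of \eqref{EqSCPLoEllMultOp}. We then choose a principally scalar commutant $A = A^* = \Op(a)$ with
\[
  a(z,\zeta) = e^{\rho t_*}\chi_0(t_*)\chi_1(r)\chi_*(\zeta),
\]
where $\chi_0$ transitions from $0$ to $1$ in the slab $t_{*,1} + \tfrac{1}{3}(t_{*,0}-t_{*,1}) \leq t_* \leq t_{*,1} + \tfrac{2}{3}(t_{*,0}-t_{*,1})$ (so that $\chi_0'$ is supported in $\Ellbh(B_1)$), $\chi_1(r) \equiv 1$ on $|r-r_c| \leq 15\delta_r$ and is supported in $|r-r_c| < 16\delta_r$, and $\chi_*(\zeta) = \chi_{*,0}(\digamma_*\zeta)$ strongly localizes near the zero section, each square root being smooth.

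Evaluating $I = h^{-1}\Im\la A\cP'_h u, Au\ra + h^{-1}\Re\la AQ_h u, Au\ra$ as in \eqref{EqSCPLoPropRadComm3} yields
\[
  I = \la \tfrac{i}{2h}[L_h+J_h, A^2]u, u\ra + \la \ell' A^2 u, u\ra,
\]
and on $\Ellbh(B_2)$, where $\chi_1 \equiv 1$, the full matrix-valued ``symbolic main term'' of the right-hand side has the form
\[
  e^{\rho t_*}\chi_0\chi_*\bigl(-\rho(M_{t_*}+\ham_j t_*)\bigr) - \ell' \cdot (a^2\text{-weight}),
\]
and the combination $-\rho M_{t_*} - \ell'$ is pointwise bounded below by $\tfrac{\delta}{2}\Id$, provided one fixes $\alpha > 0$ with $\alpha \sup\|M_{t_*}\|_{\op} < \delta/2$; the contribution of the cubically vanishing $J_h$-symbol (see \eqref{EqSCPLoEllMultSymb}) is rendered harmless by choosing $\digamma_*$ large, exactly as in the bound for $h\cO(\zeta)$-terms in Proposition~\ref{PropSCPLoPropRad}. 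The three error terms in $\ham_{\ell+j}(a^2)$ are handled separately: the $\chi_0'$ term is in $\Ellbh(B_1)$ and thus controlled by $\|B_1 u\|_{\Hbh^{0,\rho}}$; the $\ham_{\ell+j}\chi_*$ term is supported away from the zero section, where $\cP'_h$ is elliptic (Lemma~\ref{LemmaSCPSemiEll}), hence controlled by $h^{-2}\|S\cP'_h u\|_{\Hbh^{0,\rho}}$ modulo higher-order remainders; and the $\chi_1'$ term is supported in the annulus $15\delta_r \leq |r-r_c| \leq 16\delta_r$, where by Lemma~\ref{LemmaSCPLoDiffR} one has $\pm M_r > \delta\Id$ for $\pm(r-r_c) > 0$ while $\chi_1'(r)$ has the sign of $-(r-r_c)$, so $-\chi_1' M_r \geq 0$ there, contributing a term with the \emph{same} (positive) sign as the principal term and therefore discardable.

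The treatment of the remaining $Q_h$-contribution is identical to that in Proposition~\ref{PropSCPLoPropRad}: we decompose $h^{-1}\Re\la AQ_h u, Au\ra$ into an iterated commutator of order $h$ and a term $h^{-1}\la Q_h Au, Au\ra$; writing $Q_h = h^2\nabla^*\cQ\nabla$ modulo symbols in $h\cO(\zeta) + h^2\cO(1)$, using the pointwise positivity $\cQ \geq \delta'\Id$ from Corollary~\ref{CorSCPLoImagEll} to bound the main piece, and again using strong zero-section localization to shrink the $h\cO(\zeta)$-contribution below any prescribed threshold, one concludes that this term contributes $\geq -\eta\|Au\|^2$ with $\eta$ arbitrarily small. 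Putting everything together and absorbing sufficiently small $\|Au\|^2$-errors into the main positive contribution, one obtains
\[
  \|B_2 u\|_{\Hbh^{0,\rho}}^2 \lesssim \|B_1 u\|_{\Hbh^{0,\rho}}^2 + h^{-2}\|S\cP_h u\|_{\Hbh^{0,\rho}}^2 + h\|\wt A u\|_{\Hbh^{0,\rho}}^2 + h^N\|u\|_{\Hbh^{0,\rho}}^2
\]
for a suitable $\wt A \in \Psibh^0(M;\Tb^*M)$ elliptic on a neighborhood of $\WFbh'(A)$, and the extra $h\|\wt A u\|^2$ term is removed by finitely many iterations exactly as at the end of the proof of Proposition~\ref{PropSCPLoPropRad}.

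The hard part will be the first paragraph above, namely isolating the symbolic algebra at the critical set that produces the strict positivity $-\rho M_{t_*} - \ell' \geq \tfrac{\delta}{2}\Id$ for all $|\rho| \leq \alpha$: this is where the specific choice of the fiber inner product $b_e$ (making $\ell_e$ symmetric, Lemma~\ref{LemmaSCPLoSymm}) and the delicate sign computation underlying Lemma~\ref{LemmaSCPLoRadialSubpr} both enter essentially, and it is the sole source of the constraint $\rho \leq \alpha$ in the proposition.
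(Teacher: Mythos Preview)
Your proof is correct and follows essentially the same approach as the paper. The key insight—that at the critical set the positivity must come from the strict negativity of $\ell'$ (Lemma~\ref{LemmaSCPLoRadialSubpr}) combined with the weight term $-\rho M_{t_*}$, rather than from $\chi_1' M_r$ as in Proposition~\ref{PropSCPLoPropRad}—is exactly what the paper does; the paper packages this as the single symbolic identity $\ham_{\ell+j}a + \ell' a = -(b_2')^2 - (b_2'')^2 - \sfm a + e_0 + e_1$, with $(b_2')^2$ containing the factor $(-\ell'-\rho(M_{t_*}+\ham_j t_*)-\sfm\Id)$ and $(b_2'')^2$ absorbing the favorable-sign $\chi_1'$ contribution, but the content is identical to your description.
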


\begin{figure}[!ht]
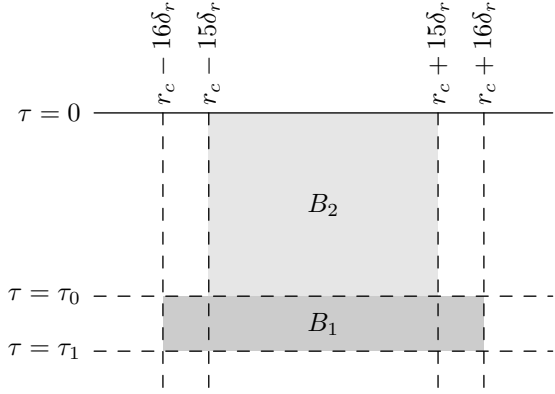

  \centering
  \inclfig{SCPLoPropTime}
  \caption{Propagation near the zero section in the vicinity of the critical set of $\nabla t_*$: control on the elliptic set of $B_1$ propagates to the elliptic set of $B_2$.}
  \label{FigSCPLoPropTime}
\end{figure}

The decisive gain here is that by taking $B_2$ to be elliptic near the zero section over $[0,\tau_0]_\tau\times(r_c-15\delta_r,r_c+15\delta_r)\times\Sph^2$, we can propagate estimates \emph{on decaying function spaces} $e^{-\alpha t_*}\Lb^2$ into the boundary $X$ at future infinity, as well as to the region $|r-r_c|\in(10\delta_r,14\delta_r)$ from where we can use Proposition~\ref{PropSCPLoPropRad} to propagate estimates outwards, i.e.\ in the radial direction across the event and the cosmological horizons.

\begin{proof}[Proof of Proposition~\ref{PropSCPLoPropTime}]
  By microlocal elliptic regularity, we may again prove the estimate for $\cP'_\semi$ in place of $\cP_\semi$. We consider the commutant
  \[
    a(z,\zeta) = e^{\rho t_*}\chi_0(t_*)\chi_1(r)\chi_*(\zeta),
  \]
  with the cutoffs $\chi_0$ (identically $1$ for $t_*\geq-\log\tau_0$ and $0$ for $t_*\leq-\log\tau_1$) and $\chi_*$ (localizing $\digamma_*$-close to the zero section) similar to the ones used in the proof of the previous proposition; furthermore, the radial cutoff is $\chi_1\equiv 1$ for $|r-r_c|\leq 9\delta_r$, $\chi_1\equiv 0$ for $|r-r_c|\geq 15\delta_r$, and $\nu\chi_1'\geq 0$ for all $r$; we moreover require that $\sqrt{\chi_1}$ and $\sqrt{\nu\chi_1'}$ are smooth, and
  \begin{equation}
  \label{EqSCPLoPropTimeRadPos}
    \chi_1' M_r\leq-c_1\delta \quad \tn{for }\pm(r-r_c)\geq 8\delta_r.
  \end{equation}
  We again have the commutator calculation \eqref{EqSCPLoPropRadComm3}, but since we can only get a limited amount positivity near $r=r_c$ from the weight in $t_*$, the sign of $\ell'$ on $\supp a$, guaranteed in \eqref{EqSCPLoSigns} plays a key role; for $\sfm>0$ sufficiently small, it allows us to write
  \[
    \ham_{\ell+j}a + \ell' a = -(b_2')^2 - (b_2'')^2 - \sfm a + e_0 + e_1,
  \]
  where
  \begin{align*}
    b_2' &= e^{\rho t_*/2}\sqrt{\chi_0\chi_1\chi_*}\bigl(-\ell'-\rho(M_{t_*}+\ham_j t_*) - \sfm\Id\bigr)^{1/2}, \\
    b_2'' &= e^{\rho t_*/2}\sqrt{\chi_0\chi_*}(-\chi_1'(M_r+\ham_j r))^{1/2}, \\
    e_0 &= e^{\rho t_*}\chi_0\chi_1(\ham_{\ell+j}\chi_*), \\
    e_1 &= e^{\rho t_*}\chi_0'\chi_1\chi_*(M_{t_*}+\ham_j t_*).
  \end{align*}
  Now for $\rho\leq\alpha$, with $\alpha>0$ sufficiently small, and all small $\sfm>0$, the square root defining $b_2'$ exists on $\supp(\chi_0\chi_1\chi_*)$ within the space of positive definite self-adjoint endomorphisms of $\Tb^*M$ due to $\ell'\leq-\delta\Id$, provided $\digamma_*$ is sufficiently large so that the contribution of $\ham_j$ is small. Let us fix such $\sfm>0$ and $\alpha>0$. Observe that, similarly, $b_2''$ is well-defined and smooth; in fact it is an elliptic symbol whenever $\chi_1'\neq 0$, but this does not provide any further control beyond what $b'_2$ gives. The error term $e_0$ is controlled by elliptic regularity for $\cP'_\semi$, and the term $e_1$ is the a priori control term.

  We can now quantize these symbols and follow the proof of Proposition~\ref{PropSCPLoPropRad} \emph{mutatis mutandis}. Note that the constant $C'$ in \eqref{EqSCPLoPropRadComm3RHS} does not appear anymore in the present context, since it came from $\ell'$, which, however, we incorporated into the symbolic calculation above; thus, the quantity in the parenthesis multiplying $\|A u\|^2$ in \eqref{EqSCPLoPropRadComm3RHS} can be made positive ($\geq\sfm/2$, say), and the prefactor of $\|A u\|^2$ in \eqref{EqSCPLoPropRadComm3LHS} can be made arbitrarily small, by choosing $\eta>0$ small, $\digamma_*>0$ large and then $\semi>0$ small. This completes the proof.
\end{proof}

\begin{rmk}
\label{RmkSCPLoPropTimeWeight}
  An inspection of the proof shows that for any fixed
  \[
    \alpha < \inf\bigl\{\rho\colon \spec(-\ell_1'-\rho M_{t_*,1})\subset\R_+\tn{ at }r=r_c \bigr\} = \frac{|\nu'(r_c)|}{2c^2},
  \]
  one can take $e<1$ so close to $1$ that the above microlocal propagation estimates hold.
\end{rmk}

\subsection{Energy estimates near spacelike surfaces}
\label{SubsecSCPEn}

\emph{We continue dropping the subscript `$e$' and using the positive definite fiber metric $b$ on $\Tb^*M$.} In order to `cap off' global estimates for $\cP_\semi$, we will use standard energy estimates near the Cauchy surface $\Sigma_0$, as well as near the two connected components of $[0,1]_\tau\times\pa Y$ (which are spacelike hypersurfaces, located beyond the horizons). Since we need \emph{semiclassical} estimates for the principally non-real operator $\cP_\semi$, this does not quite parallel the analysis of\footnote{The assumption in the reference that $|\Im\sigma|$ is bounded renders the skew-adjoint part of the operator in \cite[Proposition~3.8]{VasyMicroKerrdS} semiclassically subprincipal.} \cite[\S3.3]{VasyMicroKerrdS} if one extended it to estimates on b-Sobolev spaces as in \cite[\S2.1]{HintzVasySemilinear}; moreover, we need to take the non-scalar nature of the `damping' term $L_\semi$ into account.

The key to proving energy estimates in the principally scalar setting is the coercivity of the energy-momentum tensor when evaluated on two future timelike vector fields, one of which is a suitably chosen `multiplier,' the other often coming from boundary unit normals, see \cite[\S2.7]{TaylorPDE} and \cite[Appendix~D]{DafermosRodnianskiLectureNotes}. Since $\BoxCP_{g,\semi}-i L_\semi$ is not principally scalar in the semiclassical sense due to the presence of $L_\semi$, which however is future timelike as explained in Lemma~\ref{LemmaSCPLoTimelike} and should thus be considered a (strong) non-scalar damping term, it is natural to use
\[
  L_\semi = \ell^\mu\,\semi D_\mu + i \semi S
\]
(so $\ell^\mu=\ell(dx^\mu)\in\End(T^*M^\circ)$) as a non-scalar multiplier. For the resulting bundle-valued `energy-momentum tensor,' we have the following positivity property:

\begin{lemma}
\label{LemmaSCPEnEnMom}
  Given a future timelike covector $w\in T^*_z M^\circ$, $z\in M^\circ$, define
  \begin{equation}
  \label{EqSCPEnEnMom}
    T^{\mu\nu} = (G^{\mu\kappa}\ell^\nu + G^{\nu\kappa}\ell^\mu - G^{\mu\nu}\ell^\kappa)w_\kappa \in \End(T^*_z M^\circ).
  \end{equation}
  Then:
  \begin{enumerate}
  \item \label{ItSCPEnEnMom1Weak} For all $\zeta\in T^*_z M^\circ\setminus o$, we have $T^{\mu\nu}\zeta_\mu\zeta_\nu>0$, that is, there exists a constant $C_T>0$ such that
    \[
      \la T^{\mu\nu}\zeta_\mu\zeta_\nu v,v\ra \geq C_T|\zeta|_{g_R}^2|v|_b^2,\quad \zeta,\,v\in T^*_z M^\circ.
    \]
  \item \label{ItSCPEnEnMom2Strong} Write $w=d z^0$ near $z$, with $z^0=0$ at $z$, and denote by $z^1,\cdots,z^3$ local coordinates on $\{z^0=0\}$ near $z$ with $G(d z^0,d z^i)=0$, $i=1,\ldots,3$.\footnote{Given any local coordinates $z^i$ on $\{z_0=0\}$, the coordinates $z^i-\frac{G(dz^0,dz^i)}{G(dz^0,dz^0)}z^0$ have the desired property.} Then, with indices $i,j$ running from $1$ to $3$, we have
    \begin{equation}
    \label{EqSCPEnEnMom2Strong}
    \begin{split}
      \la T^{00}&v_0,v_0\ra + \la T^{0j}v_0,\zeta'_j v\ra + \la T^{i0}\zeta'_i v,v_0\ra + \la T^{ij}\zeta'_i v,\zeta'_j v\ra \\
        &\geq C_T\bigl(|v_0|_b^2 + |\zeta'|_{g_R}^2|v|_b^2\bigr),
         \qquad v_0,v\in T^*_z M^\circ,\ \zeta'=(\zeta'_1,\ldots,\zeta'_3)\in\R^3,
    \end{split}
    \end{equation}
    for some constant $C_T>0$, where we identify $\zeta'$ with $\zeta_j'\,dz^j\in T^*_z M^\circ$.
  \end{enumerate}
\end{lemma}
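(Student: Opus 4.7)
The plan is to prove part~(1) by a pointwise algebraic argument using the symmetry (Lemma~\ref{LemmaSCPLoSymm}) and future causal positivity (Lemma~\ref{LemmaSCPLoTimelike}) of $\ell(\zeta) := \sigmabh(L_h)(\zeta)$, and then to deduce part~(2) via a Cauchy--Schwarz estimate that upgrades the pointwise positivity of part~(1) to a uniformly strict operator inequality on the $w$-orthogonal spacelike hyperplane.

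For part~(1), I would first contract $w_\kappa$ with the metric: letting $W^\mu = G^{\mu\kappa}w_\kappa$, this gives $T^{\mu\nu} = W^\mu\ell^\nu + W^\nu\ell^\mu - G^{\mu\nu}\ell(w)$, hence $q(\zeta) := T^{\mu\nu}\zeta_\mu\zeta_\nu = 2 G(w,\zeta)\,\ell(\zeta) - |\zeta|_G^2\,\ell(w)$. I would then decompose $\zeta = s w + \eta$ with $G(w,\eta) = 0$, which makes $\eta$ spacelike with $|\eta|_G^2 \le 0$, and rewrite
\[
q(\zeta) = |w|_G^2 \bigl(s^2\ell(w) + 2 s\,\ell(\eta)\bigr) + (-|\eta|_G^2)\,\ell(w).
\]
The crucial step is to observe that $\ell(w + t\eta) = \ell(w) + t\,\ell(\eta)$ is positive definite as a self-adjoint endomorphism for $b$ for $|t| \le |w|_G/\sqrt{-|\eta|_G^2}$ (the range in which $w + t\eta$ is future causal), which implies the pointwise bound $|\la\ell(\eta)v,v\ra_b| \le (\sqrt{-|\eta|_G^2}/|w|_G)\la\ell(w)v,v\ra_b$ for every $v \in T^*_z M^\circ$. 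Substituting into $\la q(\zeta)v,v\ra_b$ and completing the square yields
\[
\la q(\zeta)v,v\ra_b \ge \la\ell(w)v,v\ra_b\bigl(|s||w|_G - \sqrt{-|\eta|_G^2}\bigr)^2 \ge 0,
\]
which vanishes only when $|\zeta|_G^2 = 0$. For non-zero null $\zeta$, direct computation gives $q(\zeta) = 2 G(w,\zeta)\,\ell(\zeta)$, which is positive definite since $G(w,\zeta)$ and $\ell(\zeta)$ have the same sign (both positive for future causal $\zeta$, and the quadratic dependence in $\zeta$ handles past null). Homogeneity of $q$ and compactness of the $g_R$-unit sphere in $T^*_z M^\circ$ then produce the uniform lower bound with some $C_T > 0$.

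For part~(2), in the coordinates with $w = dz^0$ and $G(dz^0, dz^j) = 0$, one has $W^0 = G^{00}$ and $W^j = 0$, giving $T^{00} = G^{00}\ell(w)$, $T^{0j} = T^{j0} = G^{00}\ell^j$, and $T^{ij} = -G^{ij}\ell(w)$. Writing $\zeta' := \sum_{j\ge 1}\zeta'_j\,dz^j$ (a spacelike covector) and $\ell(\zeta') = \ell^j\zeta'_j$, the left-hand side of \eqref{EqSCPEnEnMom2Strong} becomes
\[
E = G^{00}\bigl[\la\ell(w)v_0,v_0\ra_b + 2\Re\la\ell(\zeta')v_0,v\ra_b\bigr] + (-|\zeta'|_G^2)\la\ell(w)v,v\ra_b.
\]
The crucial input here is a uniformly strict operator version of the bound above: since $\ell(w + t\zeta')$ is positive \emph{definite} --- not merely semidefinite --- at the boundary $|t| = |w|_G/\sqrt{-|\zeta'|_G^2}$ (a future null direction, where Lemma~\ref{LemmaSCPLoTimelike} still applies), compactness in $\zeta'$ on the unit sphere in $\{w\}^\perp$ furnishes $\epsilon_0 \in (0,1)$ so that $\ell(\zeta')\ell(w)^{-1}\ell(\zeta') \le (1-\epsilon_0)^2(-|\zeta'|_G^2/|w|_G^2)\,\ell(w)$. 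Applying Cauchy--Schwarz in the inner product $\la\ell(w)\cdot,\cdot\ra_b$ to $\la\ell(\zeta')v_0,v\ra_b = \la\ell(w)^{-1}\ell(\zeta')v_0,\ell(w)v\ra_b$ then gives $|\la\ell(\zeta')v_0,v\ra_b| \le (1-\epsilon_0)(\sqrt{-|\zeta'|_G^2}/|w|_G)\sqrt{\la\ell(w)v_0,v_0\ra_b\la\ell(w)v,v\ra_b}$; combined with the elementary estimate $a^2 - 2(1-\epsilon_0)ab + b^2 \ge \epsilon_0(a^2+b^2)$ for $a,b \ge 0$ and the uniform positivity of $\ell(w)$, this yields $E \ge C_T(|v_0|_b^2 + |\zeta'|_{g_R}^2|v|_b^2)$, using that $-|\zeta'|_G^2$ and $|\zeta'|_{g_R}^2$ are comparable on the spacelike slice.

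The main obstacle is precisely the upgrade of the pointwise Cauchy--Schwarz bound to a strict operator inequality with a uniform gap $\epsilon_0 > 0$: without strictness, the square-completion in part~(1) and the square bound in part~(2) only give $\ge 0$, not a useful coercive estimate. Strictness is where positive \emph{definiteness} (rather than mere semidefiniteness) of $\ell$ on future causal directions in Lemma~\ref{LemmaSCPLoTimelike}, including the limiting null directions, enters decisively; a uniform gap then follows from continuity and compactness on a single fiber, which is all that the pointwise statements of Lemma~\ref{LemmaSCPEnEnMom} require.
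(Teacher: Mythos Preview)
Your proof is correct, but the route differs from the paper's. The paper normalizes coordinates at $z$ so that $(G^{\mu\nu})$ is Minkowski-diagonal and chooses a basis of $T^*_zM^\circ$ orthonormal for $\ell^0$, thus arranging $\ell^0=\Id$; then $(T^{\mu\nu})$ is written out explicitly, and the left-hand side of \eqref{EqSCPEnEnMom2Strong} is completed to $|v_0+\ell^j\zeta'_jv|^2+|\zeta'|^2|Rv|^2$ with $R^*R=(\Id-\ell^j\xi'_j)(\Id+\ell^j\xi'_j)$, $\xi'=\zeta'/|\zeta'|$. Vanishing forces $v_0=0$ and $\zeta'\otimes v=0$, and strict positivity follows by homogeneity; part~\itref{ItSCPEnEnMom1Weak} is then the special case $v_0=\zeta_0 v$. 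By contrast, you work coordinate-free: for \itref{ItSCPEnEnMom1Weak} you decompose $\zeta=sw+\eta$ orthogonally and complete the square using the scalar bound $|\la\ell(\eta)v,v\ra|\le(\sqrt{-|\eta|_G^2}/|w|_G)\la\ell(w)v,v\ra$, handling null $\zeta$ separately; for \itref{ItSCPEnEnMom2Strong} you upgrade to the operator inequality $\ell(\zeta')\ell(w)^{-1}\ell(\zeta')\le(1-\epsilon_0)^2(-|\zeta'|_G^2/|w|_G^2)\ell(w)$ via strict positivity on future null directions plus compactness, then apply Cauchy--Schwarz in the $\la\ell(w)\cdot,\cdot\ra_b$ inner product. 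Your argument is sound---the operator bound follows from $-\ell(w)<T\ell(\zeta')<\ell(w)$ (strict) giving $\|\ell(w)^{-1/2}T\ell(\zeta')\ell(w)^{-1/2}\|<1$, hence $\le 1-\epsilon_0$ uniformly over the sphere---but it is more elaborate than the paper's, which gets the same conclusion by an explicit factorization and avoids the separate treatment of \itref{ItSCPEnEnMom1Weak} entirely. What your approach buys is that it never leaves the invariant language of $\ell$ and $b$; what the paper's buys is brevity and a single argument covering both parts.
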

\begin{proof}
  Choosing local coordinates $z^\mu$ as in \eqref{ItSCPEnEnMom2Strong}, we may assume that $\{d z^\mu\}$ is an orthonormal frame at $z$; thus, $w_\kappa=\delta_{\kappa 0}$, $G^{00}=1$, $G^{ii}=-1$ and $G^{\mu\nu}=0$ for $\mu\neq\nu$. Moreover, by picking a basis of $T^*_z M^\circ$ which is orthonormal with respect to $\ell^0$ (which is positive definite by the timelike nature of $w$ and $L_\semi$), we may assume $\ell^0=\Id$ and
  \[
    (T^{\mu\nu}) = \begin{pmatrix}
                     \Id    & \ell^1 & \ell^2 & \ell^3 \\
                     \ell^1 & \Id    & 0      & 0      \\
                     \ell^2 & 0      & \Id    & 0      \\
                     \ell^3 & 0      & 0       & \Id
                   \end{pmatrix}.
  \]
  The future timelike nature of $L_\semi$ is then equivalent to $\ell^i\zeta'_i<\Id$ for all $\zeta'_1,\ldots,\zeta'_3\in\R$ with $|\zeta'|_\eucl^2:=\sum(\zeta'_j)^2\leq 1$. Given $v_0,v,\zeta'_1,\ldots,\zeta'_3$, we then compute the left hand side of \eqref{EqSCPEnEnMom2Strong} to be
  \[
    |v_0|^2 + |\zeta'|_\eucl^2|v|^2 + 2\la v_0,\ell^j\zeta'_j v\ra = |v_0+\ell^j\zeta'_j v|^2 + |\zeta'|_\eucl^2|v|^2 - |\ell^j\zeta'_j v|^2;
  \]
  the sum of the last two terms is equal to $\la T'v,v\ra$ for $T'=\sum(\zeta'_j)^2\Id - (\ell^j\zeta'_j)^2$. Now, factoring out $|\zeta'|_{\eucl}^2$ and letting $\xi'_j=\zeta'_j/|\zeta'|_\eucl$, which has $|\xi'|_\eucl^2\leq 1$, we can factor
  \[
    |\zeta'|_\eucl^{-2}T = (\Id - \ell^j\xi'_j)(\Id + \ell^j\xi'_j).
  \]
  Both factors on the right are positive definite, and they commute; thus, we can write $T'=R^*R$ for
  \[
    R = \bigl((\Id - \ell^j\xi'_j)(\Id + \ell^j\xi'_j)\bigr)^{1/2}
  \]
  the symmetric square root. Therefore,
  \[
    |v_0|^2 + |\zeta'|_\eucl^2|v|^2 + 2\la v_0,\ell^j\zeta'_j v\ra = |v_0+\ell^j\zeta'_j v|^2 + |\zeta'|^2 |R v|^2
  \]
  is non-negative. Suppose it vanishes, then we first deduce that $\zeta'=0$ or $R v=0$; in the first case, we find $|v_0|^2=0$, hence $v_0=0$, while in the second case, the non-degeneracy of $R$ gives $v=0$, and it again follows that $v_0=0$. Therefore, for $(v_0,\zeta'\otimes v)\neq 0$, the left hand side of \eqref{EqSCPEnEnMom2Strong} is in fact strictly positive. The estimate \eqref{EqSCPEnEnMom2Strong} then follows by homogeneity.

  This proves \eqref{ItSCPEnEnMom2Strong}; part \eqref{ItSCPEnEnMom1Weak} is the special case in which $v_0=\zeta_0 v$.
\end{proof}

This of course continues to hold, \emph{mutatis mutandis}, uniformly in $t_*$, or more precisely on the b-cotangent bundle. We leave the necessary (notational) modifications to the reader.

For the sake of simplicity, we will first prove semiclassical energy estimates near $\Sigma_0$ in order to illustrate the necessary arguments, see Proposition~\ref{PropSCPEn} below; the energy estimates of more central importance for our purposes concern estimates in semiclassical weighted (in $t_*$) b-Sobolev spaces, propagating control in the $r$-direction beyond the horizons; see Proposition~\ref{PropSCPEnR}.

For convenience, we construct a suitable time function near $\Sigma_0$:
\begin{lemma}
\label{LemmaSCPEnTime}
  Let $\phi=\phi(r)$ be a smooth function of $r\in(r_--3\eps_M,r_++3\eps_M)$, identically $0$ for $r_--\eps_M/2<r<r_++\eps_M/2$, with $\phi(r)\to+\infty$ as $r\to r_\pm \pm 3\eps_M$, and $\pm\phi'(r)\geq 0$ for $\pm(r-r_c)\geq 0$. Define
  \[
    \frakt := t_* + \phi(r);
  \]
  see Figure~\ref{FigSCPEnTimeLvl}. Then $d\frakt$ is future timelike.
\end{lemma}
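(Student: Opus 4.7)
The plan is to give a direct pointwise computation of $|d\frakt|^2_G$ using the explicit form of the dual Schwarzschild--de~Sitter metric from Lemma~\ref{LemmaSdSExt}, and then verify future-directedness by a continuity/connectedness argument.

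First I would simply compute: $d\frakt = dt_* + \phi'(r)\,dr$, hence
\[
  |d\frakt|^2_G = |dt_*|_G^2 + 2\phi'\,G(dt_*,dr) + (\phi')^2\,G(dr,dr).
\]
From Lemma~\ref{LemmaSdSExt}, $|dt_*|^2_G = c_{t_*}^2 > 0$, while reading off from the dual metric
\[
  G_{b_0} = c_{t_*}^2\,\pa_{t_*}^2 \pm 2\sqrt{1-c_{t_*}^2\mu_{b_0}}\,\pa_{t_*}\pa_r - \mu_{b_0}\,\pa_r^2 - r^{-2}\slG \quad \text{for } \pm(r-r_c) \geq 0,
\]
one has $G(dt_*,dr) = \pm\sqrt{1-c_{t_*}^2\mu_{b_0}}$ and $G(dr,dr) = -\mu_{b_0}$. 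Thus
\[
  |d\frakt|^2_G = c_{t_*}^2 \pm 2\phi'\sqrt{1-c_{t_*}^2\mu_{b_0}} - (\phi')^2\mu_{b_0},
\]
with the sign matching the sign of $r - r_c$.

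Next I would verify positivity region by region. On $r_--\eps_M/2 < r < r_++\eps_M/2$, by hypothesis $\phi' \equiv 0$, so $|d\frakt|^2_G = c_{t_*}^2 > 0$. For $r \geq r_++\eps_M/2$ (where the metric formula uses the $+$ sign), we have $\phi' \geq 0$ by the sign hypothesis on $\phi'$, and $\mu_{b_0} \leq 0$ since $r > r_+$ lies outside the two roots of $\mu_{b_0}$; hence both the cross term $2\phi'\sqrt{1-c_{t_*}^2\mu_{b_0}}$ and the term $-(\phi')^2\mu_{b_0}$ are non-negative, giving $|d\frakt|^2_G \geq c_{t_*}^2 > 0$. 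The case $r \leq r_--\eps_M/2$ is symmetric: the $-$ sign convention applies, $\phi' \leq 0$, and $\mu_{b_0} \leq 0$, so again both additional terms are non-negative.

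Finally, $d\frakt$ is timelike on all of $M^\circ$, and coincides with the future-timelike covector $dt_*$ throughout the connected open region $\{r_--\eps_M/2 < r < r_++\eps_M/2\}$. Since $M^\circ \cong \R_{t_*}\times I_r\times\Sph^2$ is connected and $d\frakt$ is a continuous, nowhere-null section of timelike covectors, it lies in the same (future) component of the timelike double cone everywhere, completing the proof. The computation presents no real obstacle — the only thing to watch is matching the $\pm$ sign in the dual metric with the sign of $\phi'$ on either side of $r_c$, which is precisely what the monotonicity hypothesis $\pm\phi' \geq 0$ for $\pm(r-r_c) \geq 0$ was designed to arrange.
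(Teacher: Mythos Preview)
Your proof is correct and takes essentially the same approach as the paper: the paper computes $|d\frakt|_G^2 = c^2 - 2\nu\phi' - \mu(\phi')^2$ using the notation $\nu = \mp\sqrt{1-c^2\mu}$ from \eqref{EqSCPSemiMetric}, then observes $\nu\phi' \leq 0$ by the sign hypothesis and $\mu < 0$ on $\supp\phi'$, which is exactly your region-by-region analysis in compressed form. Your explicit treatment of future-directedness via connectedness is a nice addition that the paper leaves implicit.
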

\begin{proof}
  Using \eqref{EqSCPSemiMetric}, we compute $|d\frakt|_G^2 = c^2 - 2\nu\phi' - \mu(\phi')^2$; now $\nu\phi'\leq 0$ by assumption, while $\mu<0$ on $\supp\phi'$, hence $|d\frakt|_G\geq c^2$ everywhere.
\end{proof}

\begin{figure}[!ht]
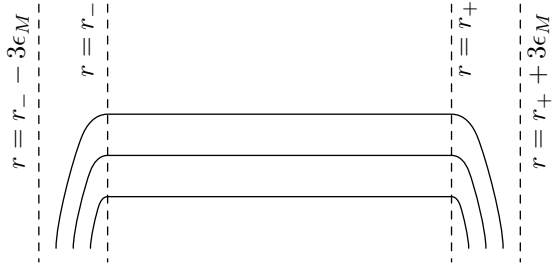

  \centering
  \inclfig{SCPEnTimeLvl}
  \caption{Level sets of the function $\frakt$.}
  \label{FigSCPEnTimeLvl}
\end{figure}

The main term of the energy estimate will have a sign up to low energy errors due to the following result.
\begin{lemma}
\label{LemmaSCPEnPos}
  Let $\frakt_0\in\R$, let $S:=\{\frakt=\frakt_0\}$, and fix $K\Subset S$. For the conormal $w=d\frakt$ of $S$, define the $\End(T^*_S M^\circ)$-valued tensor $T^{\mu\nu}$ by \eqref{EqSCPEnEnMom}, and let $C_T$ denote the constant in \eqref{EqSCPEnEnMom2Strong}. Then for all $\eta>0$, there exists a constant $C_\eta$ such that for all $u\in\CIc(S;T^*_S M^\circ)$ with $\supp u\subset K$, we have
  \[
    \int_S \la T^{\mu\nu}\nabla_\mu u,\nabla_\nu u\ra_b \geq (C_T-\eta)\int_S|\nabla u|_{g_R\otimes b}^2\,dx - C_\eta\int_S|u|_b^2\,dx,
  \]
  where $dx$ is the hypersurface measure on $S$ induced by the metric $g$.
\end{lemma}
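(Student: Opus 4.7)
The plan is to establish the estimate pointwise in adapted coordinates, exploiting a product structure of $T^{\mu\nu}$ on tangential indices, and then patch via a finite partition of unity on $K$.

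First I would cover $K\Subset S$ by finitely many coordinate charts on $M^\circ$, in each of which coordinates $z^0,\ldots,z^3$ may be chosen as in Lemma~\ref{LemmaSCPEnEnMom}\itref{ItSCPEnEnMom2Strong}: one sets $z^0=\frakt-\frakt_0$ so that $S$ is locally $\{z^0=0\}$ and $w=dz^0$, with tangential coordinates $z^i$ arranged so that $G(dz^0,dz^i)=0$ at points of $S$. Substituting $w_\kappa=\delta_{\kappa 0}$ and $G^{0i}=0$ into the definition \eqref{EqSCPEnEnMom} gives, at points of $S$,
\[
  T^{ij} = -G^{ij}\,\ell^0 \in \End(T^*_zM^\circ),\qquad i,j\in\{1,2,3\},
\]
where $\ell^0=\ell(d\frakt)$. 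Both factors are positive definite: $(-G^{ij})_{1\leq i,j\leq 3}$ represents $-G|_{(dz^0)^\perp}$, which is positive definite because $G|_{(dz^0)^\perp}$ has signature $(0,3)$, given that $dz^0=d\frakt$ is future timelike by Lemma~\ref{LemmaSCPEnTime}; meanwhile $\ell^0\in\End(T^*_zM^\circ)$ is positive definite by Lemma~\ref{LemmaSCPLoTimelike}.

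Interpreting $\nabla u$ at points of $S$ as the tangential covariant derivative (so that only $\nabla_i u\in T^*_zM^\circ$ for $i=1,2,3$ arise, and the integrand reduces to a sum over tangential indices), the product structure of $T^{ij}$ yields the pointwise bound
\[
  \la T^{ij}\nabla_i u,\nabla_j u\ra_b = -G^{ij}\la\ell^0\nabla_i u,\nabla_j u\ra_b \geq C_T\sum_{i=1}^3|\nabla_i u|_b^2,
\]
with $C_T>0$ uniform on $K$. To see this, I would set $B_i=(\ell^0)^{1/2}\nabla_i u$; the middle expression equals $\sum_{ij}(-G^{ij})\la B_i,B_j\ra_b$, which is bounded below by $c_1\sum_i|B_i|_b^2$ with $c_1$ the smallest eigenvalue of $(-G^{ij})$, and then $|B_i|_b^2\geq c_2|\nabla_i u|_b^2$ with $c_2$ the smallest eigenvalue of $\ell^0$; both $c_1,c_2$ have positive uniform lower bounds on $K$ by smoothness and compactness.

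Finally, to convert $\sum_i|\nabla_i u|_b^2$ into $|\nabla u|_{g_R\otimes b}^2$, I would use that the coordinate coframe $\{dz^i\}$ on $T^*S$ is uniformly equivalent to a $g_R$-orthonormal coframe on $K$, and that the Christoffel terms in $\nabla_i u=\pa_i u+\Gamma_i u$ are pointwise bounded by $C|u|_b$. Applying Young's inequality with parameter $\eta>0$ absorbs the resulting discrepancies into $\eta|\nabla u|_{g_R\otimes b}^2+C_\eta|u|_b^2$; integrating over $S$ with the induced hypersurface measure $dx$ and summing over the finite partition of unity yields the claimed estimate. The only mild obstacle is the bookkeeping of coordinate-dependent constants to guarantee uniformity on $K$, which is routine given smoothness and the compactness of $K$.
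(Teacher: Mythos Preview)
Your proposal has a genuine gap: you drop the normal derivative $\nabla_0 u$ from both sides of the estimate, but it is present in both. The integrand $\la T^{\mu\nu}\nabla_\mu u,\nabla_\nu u\ra_b$ runs over all $\mu,\nu\in\{0,1,2,3\}$, and the norm $|\nabla u|^2_{g_R\otimes b}$ on the right likewise includes $|\nabla_0 u|^2_b$; in the application (the proof of Proposition~\ref{PropSCPEn}) the section $u$ lives on spacetime, and the lemma is applied slice by slice with $\nabla_0 u|_S$ as independent data. Your restriction to the tangential block $T^{ij}=-G^{ij}\ell^0$ therefore throws away the terms $T^{00}\nabla_0 u\cdot\nabla_0 u$ and the cross terms $T^{0j}\nabla_0 u\cdot\nabla_j u$ on the left, and cannot produce $|\nabla_0 u|^2$ on the right. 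Since $T^{0j}=G^{00}\ell^j\neq 0$, the cross terms have no sign, so one cannot simply add the missing pieces back and estimate them separately.

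This is precisely why the lemma statement ties $C_T$ to \eqref{EqSCPEnEnMom2Strong} rather than to \eqref{ItSCPEnEnMom1Weak}: the point of part~\itref{ItSCPEnEnMom2Strong} of Lemma~\ref{LemmaSCPEnEnMom} is that the quadratic form in $(v_0,\zeta'\otimes v)$ is positive definite even though $v_0$ and $\zeta'\otimes v$ are decoupled (unlike in part~\itref{ItSCPEnEnMom1Weak} where $v_0=\zeta_0 v$). The paper's proof exploits exactly this: after a Fourier transform in the tangential variables on $S$, one sets $v_0=\widehat{\nabla_0 u}$, $\zeta'_j=\xi_j$, $v=\wh u$ and applies \eqref{EqSCPEnEnMom2Strong} pointwise in frequency. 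The freezing-of-coefficients and partition-of-unity argument then handles the variable-coefficient case with the loss of $\eta$. Your product structure $T^{ij}=-G^{ij}\ell^0$ is correct but is only the bottom-right block of the full form; the essential content of the lemma lies in controlling the mixed $T^{0j}$ terms, which your argument never addresses.
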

\begin{proof}
  We first consider a special case: let $x^0=\frakt$, and denote by $x=(x^1,\ldots,x^3)$ local coordinates on $S$; further, choose a local trivialization of the bundle $T^*_S M^\circ$, identifying the fibers with $\R^4$. Suppose then that $b=b(x)$ and $T^{\mu\nu}$ are constant matrices, and that $\nabla_\mu u=\pa_\mu u$ is component-wise differentiation; suppose moreover that $G^{\mu\nu}$ is diagonal (with diagonal entries $1,-1,\ldots,-1$), and that $dx$ is the Lebesgue measure in the local coordinate system on $S$. For $u$ with support in the local coordinate patch, we can then use the Fourier transform on $S$, that is, $\wh u(x^0,\xi)=\int e^{i x\xi}u(x^0,x)\,dx$, and compute (with $i,j=1,\ldots,3$)
  \begin{align*}
    (2\pi)^4&\int_S \la T^{\mu\nu}\nabla_\mu u,\nabla_\nu u\ra_b\,dx \\
      &= \int_S \la T^{00}\wh{\nabla_0 u},\wh{\nabla_0 u}\ra_b\,d\xi + \int_S \la T^{0j}\wh{\nabla_0 u},\xi_j\wh u\ra_b\,d\xi \\
      &\hspace{20ex} + \int_S \la T^{i0}\xi_i\wh u,\wh{\nabla_0 u}\ra_b\,d\xi + \int_S \la T^{ij}\xi_i\wh u,\xi_j\wh u\ra_b\,d\xi \\
      &\geq C_T\int_S |\wh{\nabla_0 u}|_b^2+|\xi|^2|\wh u|^2\,d\xi = C_T (2\pi)^4\int_S |\nabla u|_{g_R\otimes b}^2\,dx,
  \end{align*}
  proving the desired estimate in this case (in fact with $\eta=0$ and $C_\eta=0$).

  To prove the estimate in general for any fixed $\eta>0$, we take a partition of unity $\{\phi_k^2\}$ on $S$, subordinate to coordinate patches, so that $T^{\mu\nu}$, $b$ and $dx$ vary by a small amount over each $\supp\phi_k$, where the required smallness depends on the given $\eta$, as will become clear momentarily. Fixing points $z_k\in\supp\phi_k$, we then have
  \begin{align*}
    \la \phi_k^2 T^{\mu\nu}\nabla_\mu u,\nabla_\nu u\ra &= \big\la T^{\mu\nu}(z_k)\nabla_\mu(\phi_k u),\nabla_\nu(\phi_k u)\big\ra \\
    & \quad + \big\la (T^{\mu\nu}-T^{\mu\nu}(z_k))\nabla_\mu(\phi_k u),\nabla_\nu(\phi_k u)\big\ra \\
    & \quad + \big\la T^{\mu\nu}[\nabla_\mu,\phi_k]u,\nabla_\nu(\phi_k u)\big\ra + \big\la \phi_k T^{\mu\nu}\nabla_\mu u,[\nabla_\nu,\phi_k]u\big\ra.
  \end{align*}
          The integral of the leading part of the first term, i.e.\ without the zeroth order terms in the local coordinate expressions of $\nabla_\mu u$, can then be estimated from below by $C_T\|\nabla(\phi_k u)\|^2$ as above, while the second term yields a small multiple of $\|\nabla(\phi_k u)\|^2$ upon integration provided $\supp\phi_k$ is sufficiently small (since then $T^{\mu\nu}-T^{\mu\nu}(z_k)$ is small on $\supp\phi_k$), and all remaining terms involve at most one derivative of $u$, hence by Cauchy--Schwarz can be estimated by a small constant times $\|\phi_k\nabla u\|^2$ or $\|\nabla(\phi_k u)\|^2$ plus a large constant times $\|\psi_k u\|^2$, where we fix smooth functions $\psi_k$ with $\psi_k\equiv 1$ on $\supp\phi_k$ and so that $\{\supp\psi_k\}$ is locally finite. Summing all these estimates, we obtain
  \begin{align*}
    \int_S&\la T^{\mu\nu}\nabla_\mu u,\nabla_\nu u\ra_b\,dx \\
      &\geq C_T\sum_k\|\nabla(\phi_k u)\|^2 - \eta\sum_k\bigl(\|\nabla(\phi_k u)\|^2+\|\phi_k\nabla u\|^2\bigr) - C_\eta\sum_k\|\psi_k u\|^2 \\
      &\geq (C_T-2\eta)\|\nabla u\|^2 - C_\eta'\|u\|^2
  \end{align*}
  by another application of the Cauchy--Schwarz inequality, finishing the proof.
\end{proof}

We can now prove:
\begin{prop}
\label{PropSCPEn0}
  Fix $0<\frakt_0<\frakt_1$. Then for $u\in\CIc(M^\circ;T^*M^\circ)$ with support in $t_*\geq 0$, we have the energy estimate
  \begin{equation}
  \label{EqSCPEn0}
    \|u\|_{H_\semi^1(\frakt^{-1}((-\infty,\frakt_0]))} \lesssim \semi^{-1}\|\cP_\semi u\|_{L^2(\frakt^{-1}((-\infty,\frakt_1]))},
  \end{equation}
  see Figure~\ref{FigSCPEn0}.
\end{prop}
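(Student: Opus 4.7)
The plan is a semiclassical energy estimate by the multiplier method, adapted to the bundle-valued setting. The natural multiplier is $L_1 u$, where $L_h = h L_1$ with $L_1\in\Diff^1(M^\circ;T^*M^\circ)$; this plays the role of a future timelike bundle-valued derivative multiplier, and the positivity of the associated energy-momentum tensor \eqref{EqSCPEnEnMom} is exactly what Lemmas~\ref{LemmaSCPEnEnMom} and~\ref{LemmaSCPEnPos} encode. I would work on the regions $\Omega_c := \frakt^{-1}((-\infty,c])$ for $c\in[\frakt_0,\frakt_1]$ and study $2\Re\la L_1 u,\cP_h u\ra_{\Omega_c}$. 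Writing $\cP_h = h^2\BoxCP_g - ih L_1$, the contribution of the damping term drops out,
\[
  2\Re\la L_1 u, -ih L_1 u\ra_b = -2h\,\Im\|L_1 u\|_b^2 = 0,
\]
so only the principally scalar wave part $h^2\BoxCP_g$ matters.

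Integration by parts of $2h^2\Re\la L_1 u,\BoxCP_g u\ra_{\Omega_c}$, after symmetrizing so as to build up the bilinear form $T^{\mu\nu}$ of \eqref{EqSCPEnEnMom} with $w = d\frakt$ (future timelike by Lemma~\ref{LemmaSCPEnTime}), yields the boundary integral
\[
  h^2\int_{S_c}\la T^{\mu\nu}\nabla_\mu u, \nabla_\nu u\ra_b\,dx,\qquad S_c := \{\frakt = c\},
\]
plus bulk error terms at most quadratic in $\nabla u$ with $\cO(1)$ coefficients (arising from $\nabla G$, $\nabla L_1$, and the subprincipal parts of $\BoxCP_g$ and $L_1$). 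No contribution arises from below, since $u$ vanishes in $\frakt < 0$. By Lemma~\ref{LemmaSCPEnPos}, the boundary integral is bounded below by $c_0\|u\|_{H^1_h(S_c)}^2 - C\|u\|_{L^2(S_c)}^2$; and by Cauchy--Schwarz, together with the elementary bound $\|L_1 u\|_{L^2}\leq C'\|u\|_{H^1} = C'h^{-1}\|u\|_{H^1_h}$,
\[
  |2\Re\la L_1 u,\cP_h u\ra_{\Omega_c}| \leq \delta C' \|u\|_{H^1_h(\Omega_c)}^2 + \delta^{-1}h^{-2}\|\cP_h u\|_{L^2(\Omega_c)}^2
\]
for any $\delta>0$. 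Combining these with analogous bounds on the bulk error terms yields a differential inequality of the form $\frac{d}{dc}\|u\|_{H^1_h(\Omega_c)}^2 \leq C\|u\|_{H^1_h(\Omega_c)}^2 + Ch^{-2}\|\cP_h u\|_{L^2(\Omega_c)}^2$; since $\|u\|_{H^1_h(\Omega_c)} = 0$ for sufficiently negative $c$ (as $\supp u\subset\{\frakt\geq 0\}$), Grönwall's inequality applied up to $c = \frakt_0$ delivers the claim.

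The main subtlety has already been dealt with by the structural work of \S\ref{SubsecSCPLo} and the opening of \S\ref{SubsecSCPEn}, namely the bundle-valued positivity of $T^{\mu\nu}$ despite the non-scalar damping term. This required the earlier choice of $e\in(0,1)$ close to $1$, without which $L_h$ would fail to be future timelike with respect to $b$ and the energy-momentum tensor would cease to be coercive in the sense of Lemma~\ref{LemmaSCPEnPos}. The $h$-bookkeeping is otherwise routine: the multiplier $L_1 u$ (of order $h^0$) paired against $\cP_h u$ (whose principal part is $h^2\BoxCP_g$) yields an $h^2$-weighted slice energy, and the single $h^{-1}$ loss in the final estimate arises precisely from the Cauchy--Schwarz balance, matching the standard loss of one power of $h$ for semiclassical wave operators.
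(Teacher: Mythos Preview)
Your argument is correct and close in spirit to the paper's, but the packaging differs. The paper deduces Proposition~\ref{PropSCPEn0} from the more general Proposition~\ref{PropSCPEn}, which it proves via a \emph{positive commutator} argument: the commutant is $V_h=\chi L_h$ with $\chi=\chi_1(t_*)\chi_2(\frakt)$, where $\chi_2(\frakt)=\psi_2(\digamma^{-1}(\frakt_1-\frakt))$ is an exponential weight that builds the Gr\"onwall step directly into the estimate; moreover, the paper takes the \emph{imaginary} part of $\la\cP_h u,V_h u\ra$, so the damping term survives as $-2h^{-1}\|\sqrt{\chi}L_h u\|^2$ with a favorable sign. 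You instead run the classical multiplier method on the slabs $\Omega_c=\frakt^{-1}((-\infty,c])$, take the \emph{real} part (so the damping term vanishes identically), and apply Gr\"onwall explicitly in $c$. Both routes use the same structural input, namely the bundle-valued coercivity of $T^{\mu\nu}$ from Lemmas~\ref{LemmaSCPEnEnMom} and~\ref{LemmaSCPEnPos}, and arrive at the same $h^{-1}$ loss.

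Two small points worth tightening in your write-up. First, Lemma~\ref{LemmaSCPEnPos} applied to the boundary integral gives $(C_T-\eta)h^2\|\nabla u\|_{L^2(S_c)}^2 - C_\eta h^2\|u\|_{L^2(S_c)}^2$, not quite $c_0\|u\|_{H^1_h(S_c)}^2 - C\|u\|_{L^2(S_c)}^2$; the $L^2$ part has to be recovered separately via the fundamental theorem of calculus from the vanishing of $u$ in $\frakt<0$ (this is the analogue of the paper's estimate~\eqref{EqSCPEnFundamental}). Second, your differential inequality is more naturally phrased as an integral inequality for the slice energy $e(c)\sim h^2\|\nabla u\|_{L^2(S_c)}^2$, namely $e(c)\lesssim\int_0^c e(c')\,dc' + h^{-2}\|\cP_h u\|_{L^2(\Omega_{\frakt_1})}^2$, to which Gr\"onwall applies directly; integrating the result over $c\in[0,\frakt_0]$ then gives the domain norm. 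Neither of these is a gap---just bookkeeping.
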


\begin{figure}[!ht]
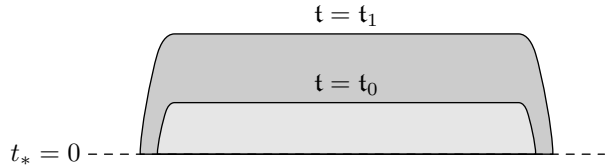

  \centering
  \inclfig{SCPEn0}
  \caption{Illustration of the energy estimate \eqref{EqSCPEn0}, with $u$ vanishing in $t_*<0$. The norm of $\cP_\semi u$ is taken on the union of the shaded regions, and we obtain control of $u$ on the lightly shaded region.}
  \label{FigSCPEn0}
\end{figure}

This follows from:
\begin{prop}
\label{PropSCPEn}
  Fix $0<t_{*,0}<\frakt_0<\frakt_1$. Then for all $u\in\CIc(M^\circ;T^*M^\circ)$, we have the energy estimate
  \begin{equation}
  \label{EqSCPEn}
  \begin{split}
    \|u\|_{H_\semi^1(\frakt^{-1}((-\infty,\frakt_0])\cap t_*^{-1}([t_{*,0},\infty)))} & \lesssim \semi^{-1}\|\cP_\semi u\|_{L^2(\frakt^{-1}((-\infty,\frakt_1])\cap t_*^{-1}([0,\infty)))} \\
      &\qquad\qquad + \|u\|_{H^1_\semi(\frakt^{-1}((-\infty,\frakt_1])\cap t_*^{-1}([0,t_{*,0}]))},
  \end{split}
  \end{equation}
  see Figure~\ref{FigSCPEn}.
\end{prop}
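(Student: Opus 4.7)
The plan is to establish \eqref{EqSCPEn} by a bundle-valued energy estimate, using $L_h$ itself as a non-scalar damping multiplier. Lemmas~\ref{LemmaSCPEnEnMom} and \ref{LemmaSCPEnPos} are designed precisely for this purpose: with $w=d\frakt$ future timelike, the endomorphism-valued stress-energy tensor $T^{\mu\nu}$ of \eqref{EqSCPEnEnMom} paired with $(h\nabla u, h\nabla u)$ gives a coercive quadratic form in $|h\nabla u|_b^2$ on level sets of $\frakt$, modulo zeroth order errors in $u$.

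I would first introduce smooth cutoffs $\chi_0=\chi_0(\frakt)$ with $\chi_0\equiv 1$ for $\frakt\leq\frakt_0$, $\chi_0\equiv 0$ for $\frakt\geq\frakt_1$, and $\chi_0'\leq 0$; and $\psi_0=\psi_0(t_*)$ with $\psi_0\equiv 0$ for $t_*\leq 0$, $\psi_0\equiv 1$ for $t_*\geq t_{*,0}$. Set $\chi=\chi_0\psi_0$. The main step is to compute and integrate by parts the pairing
\[
  I = 2h\,\Re\bigl\la\chi^2 L_h u,\cP_h u\bigr\ra_b,
\]
splitting $\cP_h=\BoxCP_{g,h}-iL_h$. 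The damping contribution $2h\,\Re\la\chi^2 L_h u,-iL_h u\ra_b = \pm 2h\,\Im\la\chi^2 L_h u,L_h u\ra_b$ vanishes identically, since $\chi^2$ is a real scalar and $\int\chi^2|L_h u|_b^2$ is real. The wave contribution, after one integration by parts against $\BoxCP_{g,h}=h^2\delta_g G_g\delta_g^*$, produces the integral of the stress-energy quadratic form from \eqref{EqSCPEnEnMom} against the measure $(-2\chi_0\chi_0')\psi_0^2\,|dg|$, plus two classes of commutator terms: one from $\psi_0'$, supported in $\{0\leq t_*\leq t_{*,0}\}\cap\{\frakt\leq\frakt_1\}$, which is exactly the a priori control region on the right of \eqref{EqSCPEn}; and a zeroth order error $C\int(-\chi_0\chi_0')\psi_0^2\,|u|_b^2\,|dg|$ from the derivatives of the multiplier acting on $u$.

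Slicing on level sets of $\frakt$ and applying the coercivity of Lemma~\ref{LemmaSCPEnPos}, the main term is bounded from below by $c\int(-\chi_0\chi_0')\psi_0^2\,\|h\nabla u\|_b^2\,|dg|$ minus the above $|u|_b^2$ error. Bounding $I$ from above by Cauchy--Schwarz, $|I|\leq \eta\,\|\chi hL_h u\|^2+\eta^{-1}\|\chi\cP_h u\|^2$, and using that $\sigmabh(L_h)$ is fiber-elliptic on $d\frakt$ by Lemma~\ref{LemmaSCPLoTimelike} --- so that $\|\chi hL_h u\|^2$ is itself comparable to $\|\chi h\nabla u\|_b^2$ modulo $\|\chi u\|^2$ --- the $\|\chi hL_h u\|^2$ term is absorbed into the coercive stress-energy contribution for $\eta$ small. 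The residual zeroth order errors are handled by subdividing $[\frakt_0,\frakt_1]$ into finitely many short subintervals and iterating (a discrete Gr\"onwall step). Dividing the resulting inequality by $h^2$ yields \eqref{EqSCPEn}.

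The main technical obstacle is the bundle-valued, non-scalar nature of $L_h$ in the integration by parts, which makes the book-keeping of lower order contributions --- in particular of the skew-adjoint part $\ell'$ from \eqref{EqSCPLoLSkewAdj} --- more delicate than in the familiar scalar case. However, since \eqref{EqSCPEn} already loses a factor of $h^{-1}$ relative to the elliptic scale, these subprincipal terms are harmless: the contributions of $\ell'$ and of the commutators $[L_h,\chi]$ are of lower order in $h$ times $\|\chi u\|\cdot\|\chi h\nabla u\|$ type expressions, and are absorbed by the Gr\"onwall step together with the main zeroth order error.
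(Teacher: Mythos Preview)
There is a genuine error in the choice of pairing. Recall from \eqref{EqSCPSemiLSplit} that $L_h=M_{t_*}hD_{t_*}+M_r hD_r+M_\omega+ihS$ with $D_\mu=-i\pa_\mu$ and with the endomorphisms $M_\bullet$ $b_e$-symmetric by Lemma~\ref{LemmaSCPLoSymm}; thus $L_h$ is, to leading order, $b$-\emph{self-adjoint}, and the underlying real multiplier vector field is $iL_h\approx h\ell^\mu\pa_\mu$. The classical energy identity therefore arises from
\[
  2h^{-1}\Im\la\cP_h u,\chi L_h u\ra \;=\; 2h^{-1}\Re\la\chi(iL_h)u,\cP_h u\ra,
\]
which is precisely what the paper uses: writing out $2h^{-1}\Im\la\Box_h u,\chi L_h u\ra=\tfrac{i}{h}\la(\Box_h^*\chi L_h-L_h^*\chi\Box_h)u,u\ra$, this is a \emph{commutator}-type expression, hence second order, and its principal part is $h^2\nabla^*(\chi_1\chi_2'T)\nabla$ with $T^{\mu\nu}$ exactly the stress--energy tensor \eqref{EqSCPEnEnMom}.

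Your pairing $I=2h\,\Re\la\chi^2 L_h u,\cP_h u\ra$ has the wrong parity. The damping contribution indeed vanishes, but that is a warning sign, not a feature: it means you are pairing against the $b$-self-adjoint part of $-iL_h$, which carries no information. For the wave part you obtain the \emph{anti}-commutator $\la(\Box_h^*\chi^2 L_h+L_h^*\chi^2\Box_h)u,u\ra$; to leading order this is $\chi^2(\Box_h L_h+L_h\Box_h)$ plus commutators with $\chi^2$, a genuinely third-order self-adjoint operator with principal symbol $2\chi^2 G\ell$, which has no sign. It does \emph{not} reduce to the coercive quadratic form $T^{\mu\nu}\zeta_\mu\zeta_\nu$ of Lemma~\ref{LemmaSCPEnEnMom}. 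A quick sanity check makes this transparent: to leading order,
\[
  2\Re\la\chi^2 L_h u,\Box_h u\ra
  \;=\; -2h^3\,\Im\big\la\chi^2\ell^\mu\pa_\mu u,\ G^{\alpha\beta}\pa_\alpha\pa_\beta u\big\ra,
\]
which vanishes identically for real $u$; the paper's pairing instead gives $-2h^2\,\Re\la\chi\ell^\mu\pa_\mu u,G^{\alpha\beta}\pa_\alpha\pa_\beta u\ra$, the standard multiplier expression. (Even the commutator terms with $\chi$ in your anti-commutator would produce $(\ham_G\chi)\ell+(\ham_\ell\chi)G$, i.e.\ the wrong relative sign compared to \eqref{EqSCPEnEnMom}.) Consequently, the coercivity step invoking Lemma~\ref{LemmaSCPEnPos} has no input to act on, and the argument does not close.

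Two secondary remarks. First, once the pairing is fixed to $\Im$, the damping term is $-2h^{-1}\|\sqrt\chi L_h u\|^2\leq 0$, which has a good sign and can simply be dropped. Second, your Gr\"onwall/subdivision scheme for the residual $\int(-\chi_0\chi_0')\psi_0^2|u|_b^2$ is a reasonable alternative to the paper's device of taking $\chi_2(\frakt)=\psi_2(\digamma^{-1}(\frakt_1-\frakt))$ with $\psi_2(x)=e^{-1/x}H(x)$ (so $\chi_2$ is dominated by $|\chi_2'|$ for large $\digamma$) together with a Poincar\'e-type bound \eqref{EqSCPEnFundamental}; but this only becomes relevant after the main term has been set up correctly.
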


\begin{figure}[!ht]
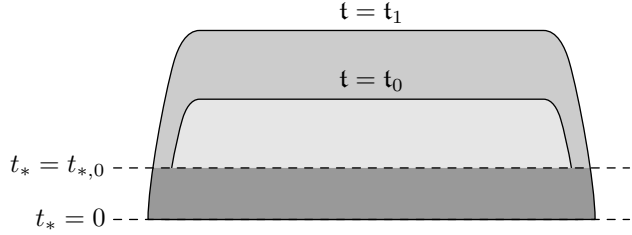

  \centering
  \inclfig{SCPEn}
  \caption{Illustration of the energy estimate \eqref{EqSCPEn}. The norm of $\cP_\semi u$ is taken on the union of the shaded regions, and we obtain control of $u$ on the lightly shaded region, assuming a priori control on the dark region.}
  \label{FigSCPEn}
\end{figure}

Indeed, after shifting $t_*$ and $\frakt$ by $t_{*,0}$, the a priori control term in \eqref{EqSCPEn} vanishes under the support assumption in Proposition~\ref{PropSCPEn0}.

\begin{proof}[Proof of Proposition~\ref{PropSCPEn}]
  We prove \eqref{EqSCPEn} by means of a positive commutator argument, using the compactly supported commutant (or `multiplier')
  \[
    V_\semi = \chi L_\semi,\quad \chi=\chi_1(t_*)\chi_2(\frakt),
  \]
  where $\chi_1$ is a non-negative function with $\chi_1(t_*)\equiv 0$ for $t_*\leq 0$ and $\chi_1(t_*)\equiv 1$ for $t_*\geq t_{*,0}$, while
  \[
    \chi_2(\frakt)=\psi_2\bigl(\digamma^{-1}(\frakt_1-\frakt)\bigr)
  \]
  with $\psi_2(x)=e^{-1/x}H(x)$. Write $\Box_\semi\equiv\BoxCP_{g,\semi}$ for brevity. Then, we consider
  \begin{equation}
  \label{EqSCPEnComm1}
    2 \semi^{-1}\Im\la\cP_\semi u,V_\semi u\ra = 2 \semi^{-1}\Im\la\Box_\semi u,V_\semi u\ra - 2 \semi^{-1}\|\sqrt{\chi}L_\semi u\|^2.
  \end{equation}
  In the first term on the right, we can integrate by parts, obtaining the operator $\frac{i}{\semi}(\Box_\semi^*\chi L_\semi - L_\semi^*\chi\Box_\semi)$, whose form we proceed to describe: first, recall that $\Box_\semi\in \semi^2\Diff^2$, hence $\Box_\semi^*-\Box_\semi\in \semi^2\Diff^1$; similarly, $L_\semi^*-L_\semi\in \semi\CI$. We then note that in local coordinates, we can rewrite $\Box_\semi\chi L_\semi-L_\semi\chi\Box_\semi$ by expanding $\Box_\semi$ into its zeroth and first order terms, plus the scalar principal terms which involve two derivatives; for the latter, we can then write (dropping the zeroth order term $S$ in $L_\semi$ and factoring out $\semi^3$)
  \begin{align*}
    D_\mu G^{\mu\nu}&D_\nu \chi \ell^\kappa D_\kappa - D_\mu\ell^\mu\chi D_\nu G^{\nu\kappa}D_\kappa \\
      &=D_\mu\bigl(\chi(G^{\mu\nu}D_\nu\ell^\kappa - \ell^\mu (D_\nu G^{\nu\kappa})) + G^{\mu\nu}\ell^\kappa(D_\nu\chi)\bigr)D_\kappa,
  \end{align*}
  the point being that one can write this as $\nabla^*A\nabla$, where the components $A^{\mu\nu}$ of $A$ only involve $\chi$ and $d\chi$, but no second derivatives. The upshot is that we have
  \begin{equation}
  \label{EqSCPEnComm2}
  \begin{split}
    \frac{i}{\semi}&(\Box_\semi^*\chi L_\semi-L_\semi^*\chi\Box_\semi) \\
      &= \semi^2\nabla^*(\chi_1\chi_2' T+\chi B)\nabla + \semi^2\nabla^*(\chi_1\chi_2'A_1^\sharp+\chi A_2^\sharp) \\
      &\hspace{20ex}+ \semi^2(\chi_1\chi_2' A_1^\flat+\chi A_2^\flat)\nabla + \semi^2(\chi_1\chi_2' A_3+\chi A_4) \\
      &\quad + \semi^2\nabla^*\chi_1'\chi_2\wt A\nabla + \semi^2\nabla^*\chi_1'\chi_2\wt A_1^\sharp + \semi^2\chi_1'\chi_2\wt A_1^\flat\nabla + \semi^2\chi_1'\chi_2\wt A_3,
  \end{split}
  \end{equation}
  where $T,\wt A,B$ are sections of $\End(T^*M^\circ\otimes T^*M^\circ)$, while $A_1^\sharp,\wt A_1^\sharp,A_2^\sharp$ are sections of $\Hom(T^*M^\circ,\bigotimes^2 T^*M^\circ)$, $A_1^\flat,\wt A_1^\flat,A_2^\flat$ sections of $\Hom(\bigotimes^2 T^*M^\circ,T^*M^\circ)$, and $A_3,\wt A_3$ and $A_4$ sections of $\End(T^*M^\circ)$. Crucially then, we can compute $T$ by a principal symbol calculation. Namely, the principal symbol of $\frac{i}{\semi}(\Box_\semi^*\chi L_\semi-L_\semi^*\chi\Box_\semi)$ is equal to that of
  \[
    \frac{i}{\semi}[\Box_\semi,\chi]L_\semi - \frac{i}{\semi}[L_\semi,\chi]\Box_\semi + \frac{i}{\semi}\chi[\Box_\semi,L_\semi];
  \]
  the sum of those terms in the principal symbol which contain derivatives of $\chi_2$ is equal to $\chi_1\bigl((\ham_G\chi_2)\ell-(\ham_\ell\chi_2)G\bigr)$, and we therefore find\footnote{This is the unique \emph{symmetric} choice of $T$, i.e.\ for which $T^{\mu\nu}=T^{\nu\mu}$.}
  \[
    T^{\mu\nu} = (d\frakt)_\kappa\bigl(G^{\mu\kappa}\ell^\nu + G^{\nu\kappa}\ell^\mu - G^{\mu\nu}\ell^\kappa\bigr),
  \]
  which is an `energy-momentum tensor' of the form \eqref{EqSCPEnEnMom}. Now
  \begin{equation}
  \label{EqSCPEnDeriv}
    \chi_2' = -\digamma(\frakt_1-\frakt)^{-2}\chi_2
  \end{equation}
  is a smooth function of $\frakt$ only; but when estimating, for $0\leq\frakt'\leq\frakt_1$, the integral
  \[
    \semi^2\int_{\frakt=\frakt'} \la\nabla^*\chi_1\chi_2' T\nabla u,u\ra\,dx,
  \]
  we are not quite in the setting of Lemma~\ref{LemmaSCPEnPos} due to the presence of $\chi_1$. Note however that $\chi_1|_{\frakt=\frakt'}$ is uniformly bounded in $\CI$ for $\frakt'\in[0,\frakt_1]$, hence, arranging as we may that $\sqrt{\chi_1}\in\CI$, we can commute $\sqrt{\chi_1}$ past $\nabla$, generating error terms in $\supp d\chi_1$, where we have a priori control (after integrating in $\frakt$), given by the second term on the right hand side of \eqref{EqSCPEn}. Due to \eqref{EqSCPEnDeriv}, we can choose $\digamma>0$ so large that the spacetime integral of $\semi^2\la\nabla^*\chi B\nabla u,u\ra$, estimated simply by Cauchy--Schwarz, can be absorbed by the main term $\nabla^*\chi_1\chi_2' T\nabla$, while the lower order terms in \eqref{EqSCPEnComm2} can be estimated directly by Cauchy--Schwarz, again estimating $\chi_2$ by $\chi_2'$ for the terms involving $\chi$ directly. Since $\chi_2'\leq 0$, we conclude that for any $\eta>0$, we have, for all $\digamma>0$ large enough,
  \begin{equation}
  \label{EqSCPEnRHS}
  \begin{split}
    \frac{i}{\semi}&\big\la(\Box_\semi^*\chi L_\semi-L_\semi^*\chi\Box_\semi)u,u\big\ra \\
      & \leq -(C_T-\eta)\bigl\|(-\chi_1\chi'_2)^{1/2} \semi\nabla u\bigr\|^2 + C_\eta \semi^2\|(-\chi_1\chi_2')^{1/2}u\|^2 \\
      & \qquad + C_\digamma\|u\|_{H^1_\semi(\frakt^{-1}((-\infty,\frakt_1])\cap t_*^{-1}([0,t_{*,0}]))}^2,
  \end{split}
  \end{equation}
  where we estimated the terms involving $\chi_1'$ rather crudely, resulting in the last term on the right, which is the a priori control term in \eqref{EqSCPEn}. We estimate the second term using the a priori control term and the fundamental theorem of calculus: in the present setting, this is conveniently done by considering a principally scalar operator $W=W^*\in\Diffh^1(M^\circ;T^*M^\circ)$ with principal symbol equal to $\sigma_1(\semi D_{t_*})$, and computing the pairing
  \[
    2 \semi^{-1}\Im\la \sqrt{\chi}u,\sqrt{\chi}W u\ra = \frac{1}{i \semi}\la[W,\chi]u,u\ra = -\la(\chi_1'\chi_2+\chi_1\chi_2')u,u\ra,
  \]
  where we used $\pa_{t_*}\frakt=1$ to differentiate $\chi_2$; again using \eqref{EqSCPEnDeriv} and taking $\digamma>0$ large, this implies after applying the Cauchy--Schwarz inequality that
  \begin{equation}
  \label{EqSCPEnFundamental}
    \semi^2\|(-\chi_1\chi_2')^{1/2}u\|^2 \leq C\bigl(\semi^2\|u\|^2_{L^2(\frakt^{-1}((-\infty,\frakt_1])\cap t_*^{-1}([0,t_{*,0}]))} + \|\sqrt{\chi_1\chi_2} \semi\nabla u\|^2\bigr).
  \end{equation}
  Plugging this into the estimate \eqref{EqSCPEnRHS}, we see that we can drop the second term on the right in \eqref{EqSCPEnRHS}, up to changing the constant $C_\digamma$ and increasing $\eta$ by an arbitrarily small but fixed amount if we choose $\digamma>0$ large, thus obtaining
  \begin{equation}
  \label{EqSCPEnRHS2}
  \begin{split}
    \frac{i}{\semi}&\big\la(\Box_\semi^*\chi L_\semi-L_\semi^*\chi\Box_\semi)u,u\big\ra \\
       &\leq -(C_T-\eta)\bigl\|(-\chi_1\chi'_2)^{1/2} \semi\nabla u\bigr\|^2 + C_{\digamma,\eta}\|u\|_{H^1_\semi(\frakt^{-1}((-\infty,\frakt_1])\cap t_*^{-1}([0,t_{*,0}]))}^2
  \end{split}
  \end{equation}
  for any $\eta>0$.

  On the other hand, using the fact that $L_\semi$ in local coordinates is equal to semiclassical derivatives plus zeroth order terms of size $\cO(\semi)$, we can bound the left hand side of \eqref{EqSCPEnComm1} by
  \begin{equation}
  \label{EqSCPEnLHS}
  \begin{split}
    2 \semi^{-1}&\Im\la\cP_\semi u,V_\semi u\ra \geq -\eta\|\sqrt{\chi}L_\semi u\|^2 - C_\eta \semi^{-2}\|\sqrt{\chi}\cP_\semi u\|^2 \\
      &\geq -C\eta\|\sqrt{\chi}\semi\nabla u\|^2-C_\eta \semi^2\|\sqrt{\chi}u\|^2-C_\eta \semi^{-2}\|\sqrt{\chi}\cP_\semi u\|^2;
  \end{split}
  \end{equation}
  estimating the second term by using \eqref{EqSCPEnFundamental} again and absorbing the $\|\sqrt{\chi}\semi\nabla u\|^2$ term into the main term of \eqref{EqSCPEnRHS2}, we obtain the desired estimate \eqref{EqSCPEn} by combining \eqref{EqSCPEnRHS2} and \eqref{EqSCPEnLHS}, noting that for the now fixed value $\digamma>0$, $-\chi_2'$ is bounded from below by a positive constant in $\frakt^{-1}((-\infty,\frakt_0])\cap t_*^{-1}([t_{*,0},\infty))$.
\end{proof}

The arguments presented here extend directly to give a proof of energy estimates beyond the event, resp.\ cosmological horizons, propagating `outwards,' i.e.\ in the direction of decreasing $r$, resp.\ increasing $r$:

\begin{prop}
\label{PropSCPEnR}
  Fix $\tau_0<1$ and $r_+<r_0<r_1<r_2<r_3<r_+ +3\eps_M$, and let $\rho\in\R$. Then for all $u\in\CIc(M^\circ;T^*M^\circ)$, we have the energy estimate
  \begin{equation}
  \label{EqSCPEnR}
  \begin{split}
    \|u\|&_{\Hbh^{1,\rho}(r^{-1}([r_1,r_2])\cap\tau^{-1}([0,\tau_0]))} \\
     &\lesssim \semi^{-1}\|\cP_\semi u\|_{\Hbh^{0,\rho}(r^{-1}([r_0,r_3])\cap\tau^{-1}([0,1]))} + \|u\|_{\Hbh^{1,\rho}(r^{-1}([r_0,r_1])\cap\tau^{-1}([0,1]))} \\
     & \qquad + \|u\|_{H_\semi^1(r^{-1}([r_0,r_3])\cap\tau^{-1}([\tau_0,1]))}
  \end{split}
  \end{equation}
  beyond the cosmological horizon, see Figure~\ref{FigSCPEnR}.
  
  Likewise, we have an estimate beyond the event horizon: if $r_- -3\eps_M<r_0<r_1<r_2<r_3<r_-$, we have the estimate \eqref{EqSCPEnR}, replacing the second term on the right by $\|u\|_{\Hbh^{1,\rho}(r^{-1}([r_2,r_3])\cap\tau^{-1}([0,1]))}$.
\end{prop}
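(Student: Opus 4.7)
The strategy is to adapt the positive-commutator argument of Proposition \ref{PropSCPEn}, replacing the timelike function $\frakt$ by the radial coordinate $r$: beyond the horizons, the outward conormal $dr$ (resp.\ $-dr$) is future-timelike on $r \in (r_+, r_+ + 3\eps_M)$ (resp.\ $r \in (r_- - 3\eps_M, r_-)$), as recorded in \S\ref{SubsecKdSWave}. I would first reduce to the case $\rho = 0$ by conjugating: setting $\wt u := \tau^{-\rho} u$, one has $\|u\|_{\Hbh^{s,\rho}} = \|\wt u\|_{\Hbh^{s,0}}$, and
\[
  \tau^{-\rho}\cP_h \tau^{\rho} = \cP_h + h R_\rho,
\]
with $R_\rho \in \Diffb^1(M; \Tb^*M)$ stationary; the principal symbol (both b- and semiclassical) is unchanged, and the $h R_\rho$ correction is semiclassically subprincipal and absorbable via Cauchy--Schwarz at the end of the argument.

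For the beyond-cosmological-horizon estimate I would use the multiplier
\[
  V_h = \chi L_h, \qquad \chi(\tau, r) = \chi_1(\tau)\, \psi_1(r)\, \psi_2\bigl(\digamma^{-1}(r_3 - r)\bigr),
\]
with $\chi_1(\tau) \equiv 1$ on $[0, \tau_0]$ and $\equiv 0$ near $\tau = 1$; $\psi_1(r) \equiv 0$ for $r \leq r_0$ and $\equiv 1$ for $r \geq r_1$, with $\psi_1' \geq 0$; and $\psi_2(x) = H(x) e^{-1/x}$, so that $\sqrt{\chi_1}$, $\sqrt{\psi_1}$ and $\sqrt{\psi_2}$ are smooth and
\[
  \pa_r\chi = \chi_1\psi_1'\,\psi_2(\digamma^{-1}(r_3 - r)) - \digamma(r_3 - r)^{-2}\chi.
\]
Evaluating $2h^{-1}\Im\la\cP_h u, V_h u\ra$ and integrating by parts in the $\BoxCP_{g,h}$ piece as in the derivation of \eqref{EqSCPEnComm2}, the principal part of the resulting commutator is $h^2\nabla^*(\pa_r\chi)\,T\,\nabla$, where $T^{\mu\nu} = G^{\mu\kappa}\ell^\nu + G^{\nu\kappa}\ell^\mu - G^{\mu\nu}\ell^\kappa$ with $w = dr$ is the energy-momentum tensor of Lemma \ref{LemmaSCPEnEnMom}. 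Since $dr$ is future-timelike on $\supp\chi \subset (r_+, r_+ + 3\eps_M)$ and $L_h$ is future-timelike there by Lemma \ref{LemmaSCPLoTimelike} (having chosen $e < 1$ close to $1$), the variable-coefficient coercivity provided by Lemmas \ref{LemmaSCPEnEnMom}\itref{ItSCPEnEnMom2Strong} and \ref{LemmaSCPEnPos}, combined with the dominance of the negative term $-\digamma(r_3 - r)^{-2}\chi$ in $\pa_r\chi$ for $\digamma$ large, produces a positive main contribution bounded below by a positive multiple of $\digamma\,\|(-\pa_r\chi)^{1/2}h\nabla u\|^2$.

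The remaining terms are treated exactly as in the proof of Proposition \ref{PropSCPEn}: the $\psi_1'$ part of $\pa_r\chi$ is supported in $[r_0, r_1]$ and yields the $\Hbh^{1,\rho}$ a priori control term in \eqref{EqSCPEnR}; the $\chi_1'$ part from $\pa_\tau\chi$ is supported in $\tau \in [\tau_0, 1]$ and yields the $H^1_h$ term (on this region the weight in $\tau$ is irrelevant); the subprincipal corrections, the $-2h^{-1}\|\sqrt{\chi}L_h u\|^2$ contribution on the left, and the $h^2\|\sqrt{\chi}u\|^2$ lower-order error are absorbed by taking $\digamma$ large and then $h$ small, the last using the $r$-analog of \eqref{EqSCPEnFundamental} obtained by pairing with a principally scalar semiclassical operator of symbol $\sigma(hD_r)$. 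The beyond-event-horizon estimate is proved by the same argument with $\chi_2(r) = \psi_1(r)\,\psi_2(\digamma^{-1}(r - r_0))$, $\psi_1$ now decreasing and supported on $r \leq r_3$; both the sign of $\pa_r\chi$ and the future direction of $\pm dr$ reverse, preserving the sign of $\pa_r\chi \cdot T$. The main technical subtlety lies in extracting the energy-momentum tensor $T$ with the correct positivity property from a commutator involving the non-scalar, bundle-valued operator $L_h$; this is precisely what Lemma \ref{LemmaSCPEnEnMom}\itref{ItSCPEnEnMom2Strong} provides, and no new analytic ingredients beyond those of Proposition \ref{PropSCPEn} are needed.
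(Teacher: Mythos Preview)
Your proposal is correct and follows essentially the same approach as the paper's proof: conjugate by $\tau^{\pm\rho}$ to reduce to the unweighted case, then run the positive-commutator argument of Proposition~\ref{PropSCPEn} with the commutant $\chi L_h$, where $\chi=\chi_1(\tau)\psi_1(r)\psi_2(\digamma^{-1}(r_3-r))$, extracting the energy-momentum tensor $T$ of Lemma~\ref{LemmaSCPEnEnMom} with $w=dr$ future-timelike on $\supp\chi$, and identifying the a priori control regions from $\supp\psi_1'$ and $\supp\chi_1'$. The paper's proof is terser but structurally identical.
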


\begin{figure}[!ht]
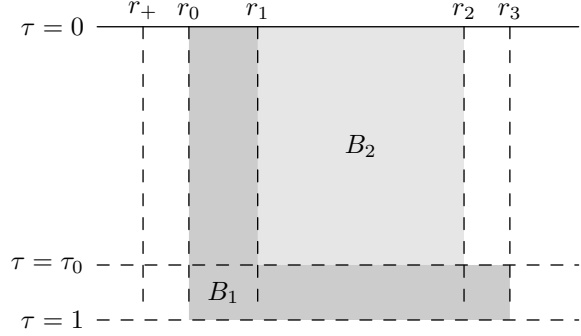

  \centering
  \inclfig{SCPEnR}
  \caption{Illustration of the energy estimate \eqref{EqSCPEnR} beyond the cosmological horizon $r=r_+$. The norm of $\cP_\semi u$ is taken on $r^{-1}([r_0,r_3])\cap\tau^{-1}([0,1])$, and we obtain control of $u$ on the lightly shaded region, assuming a priori control on the dark region.}
  \label{FigSCPEnR}
\end{figure}

\begin{proof}
  In order to eliminate the weight $\rho$ and work on unweighted spaces, one proves these estimates for the conjugated operator $\tau^\rho\cP_\semi\tau^{-\rho}$. Then, for the proof of \eqref{EqSCPEnR}, one uses the commutant $\chi L_\semi$, $\chi=\chi_1\chi_2\chi_3$, where now $\chi_1(\tau)\equiv 1$ for $\tau\leq\tau_0$ and $\chi_1(\tau)\equiv 0$ for $\tau\geq 1$, while $\chi_2(r)=\psi_2(\digamma^{-1}(r_3-r))$, with $\psi_2(x)=e^{-1/x}H(x)$ as before, and $\chi_3(r)\equiv 0$ for $r\leq r_0$ and $\chi_3(r)\equiv 1$ for $r\geq r_1$. The regions $\supp(\chi_1\chi_3')$ and $\supp(\chi_1'\chi_3)$ are where we assume a priori control, corresponding to the second and third term in \eqref{EqSCPEnR}, while the future timelike nature of $dr$ on $\supp\chi$, combined with the b-version of Lemma~\ref{LemmaSCPEnEnMom}, gives the conclusion on $u$ in $r^{-1}([r_1,r_2])\cap\tau^{-1}([0,\tau_0])$ for $\digamma>0$ large and fixed (used to dominate $\chi_2$ by a small constant times $-\chi_2'$), using the positive lower bound on $-\chi_2'$ in this region.
\end{proof}

Using the propagation of singularities, Propositions~\ref{PropSCPHiRealPrType} and \ref{PropSCPLoPropRad}, one can improve these estimates to allow for arbitrary real regularity, as we will indicate in the next section.

\subsection{Global estimates}
\label{SubsecSCPGl}

We now piece together the estimates obtained in the previous sections to establish an a priori estimate for $u$ solving $\cP_\semi u=f$ on \emph{decaying} b-Sobolev spaces on the domain $\Omega$ defined in \eqref{EqKdSWaveDomain}.

To do so, fix a weight $\alpha$ as in Proposition~\ref{PropSCPLoPropTime}. Suppose we are given $u\in\CIc(\Omega;\Tb^*_\Omega M)$ vanishing to infinite order at $\Sigma_0$, and let $f=\cP_\semi u$. For $\delta\in\R$ small, let
\[
  \Omega_\delta = [0,1]_\tau\times[r_--\eps_M-\delta,r_++\eps_M+\delta]_r\times\Sph^2 \subset M
\]
be a small modification of $\Omega$, so $\Omega_0=\Omega$, and $\Omega_{\delta_1}\subseteq\Omega_{\delta_2}$ if $\delta_1\leq\delta_2$. Let $\wt\Omega:=\Omega_{\eps_M/2}$. Denote by $\wt f\in\CIc(\wt\Omega;\Tb^*_{\wt\Omega}M)$ an extension of $f$, vanishing for $t_*\leq 0$, and with
\begin{equation}
\label{EqSCPGlExtF}
  \|\wt f\|_{\Hb^{0,\alpha}(\wt\Omega)^{\bullet,-}}\leq 2\|f\|_{\Hb^{0,\alpha}(\Omega)^{\bullet,-}}.
\end{equation}
We can then uniquely solve the forward problem
\[
  \cP_\semi\wt u=\wt f
\]
in $\wt\Omega$. If $u'$ is any smooth compactly supported extension of $u$ to $\wt\Omega$, then $\cP_\semi(\wt u-u')$ is supported in $\{r\geq r_++\eps_M\}\cup\{r\leq r_--\eps_M\}$, hence by the support properties of forward solutions of $\cP_\semi$, we find that $\wt u-u'$ has compact support, and moreover $\wt u\equiv u$ in $\Omega$.

Now, by the energy estimate near $\Sigma_0$, Proposition~\ref{PropSCPEn0}, we have
\[
  \|u\|_{H^1_\semi(\Omega_\delta \cap t_*^{-1}([0,1]))} \leq C \semi^{-1}\|\wt f\|_{L^2(t_*^{-1}([0,2]))}
\]
for fixed $\delta\in(0,\eps_M/2)$. But then we can use this information to propagate $\Hbh^{1,\alpha}$ regularity of $u$: at fiber infinity, this uses Propositions~\ref{PropSCPHiRealPrType}, \ref{PropSCPHiRad} and \ref{PropSCPHiTrap}, while we can use Proposition~\ref{PropSCPLoPropTime} to obtain an estimate on $u$ near the critical set $r=r_c$ of $\nabla t_*$, and propagate this control outwards in the direction of increasing $r$ for  $r>r_c$ and decreasing $r$ for $r<r_c$ by means of Proposition~\ref{PropSCPLoPropRad}. Away from the semiclassical characteristic set of $\cP_\semi$, we simply use elliptic regularity. We obtain an estimate
\begin{equation}
\label{EqSCPGlEst0}
  \|u\|_{\Hbh^{1,\alpha}(\Omega_{-\delta})} + \|u\|_{H_\semi^1(\Omega_\delta\cap t _*^{-1}([0,1]))} \leq C\bigl(\semi^{-1}\|\wt f\|_{\Hb^{0,\alpha}(\wt\Omega)} + \semi\|u\|_{\Hbh^{1,\alpha}(\Omega)}\bigr)
\end{equation}
for small $\semi>0$. The error term in $u$ here is measured on a larger set than the conclusion on the left hand side, so we now use the energy estimate beyond the horizons, Proposition~\ref{PropSCPEnR}, in order to bound
\[
  \|u\|_{\Hbh^{1,\alpha}(\Omega)} \leq C\bigl(\semi^{-1}\|\wt f\|_{\Hb^{0,\alpha}(\wt\Omega)}+\|u\|_{\Hbh^{1,\alpha}(\Omega_{-\delta})} + \|u\|_{H_\semi^1(\Omega_\delta\cap t _*^{-1}([0,1])}\bigr).
\]
Plugging \eqref{EqSCPGlEst0} into this estimate and choosing $0<\semi<\semi_0$ small, we can thus absorb the term $\semi\|u\|_{\Hbh^{1,\alpha}(\Omega)}$ from \eqref{EqSCPGlEst0} into the left hand side, and in view of \eqref{EqSCPGlExtF}, we obtain the desired a priori estimate
\begin{equation}
\label{EqSCPGlEst}
  \|u\|_{\Hbh^{1,\alpha}(\Omega)^{\bullet,-}} \leq C \semi^{-1}\|\cP_\semi u\|_{\Hb^{0,\alpha}(\Omega)^{\bullet,-}}, \quad 0<\semi<\semi_0,
\end{equation}
which by a simple approximation argument continues to hold for all $u\in\Hb^{1,\alpha}(\Omega)^{\bullet,-}$ for which $\cP_\semi u\in\Hb^{0,\alpha}(\Omega)^{\bullet,-}$.

Let us fix $0<\semi<\semi_0$ and drop the subscript `$\semi$.' There are a number of ways in which \eqref{EqSCPGlEst} can be used to rule out resonances of $\cP$ in $\Im\sigma\geq-\alpha$. One way is to notice that the a priori estimate \eqref{EqSCPGlEst} for $\cP$ yields the solvability of the adjoint $\cP^*$ (the adjoint taken with respect to the fiber inner product $b$) on growing function spaces by a standard application of the Hahn--Banach theorem (see e.g.\ \cite[Proof of Theorem~26.1.7]{HormanderAnalysisPDE4}); concretely, there is a bounded inverse
\begin{equation}
\label{EqSCPGlPAdjInverse}
  (\cP^*)^{-1} \colon \Hb^{-1,-\alpha}(\Omega;\Tb^*_\Omega M)^{-,\bullet} \to \Hb^{0,-\alpha}(\Omega;\Tb^*_\Omega M)^{-,\bullet}
\end{equation}
for the backwards problem. Now if $\sigma$ with $\Im\sigma>-\alpha$ were a resonance of $\cP$, then there would exist a dual resonant state $\psi\in\sD'(Y;\Tb^*_Y M)^\bullet$ with $\cP^*(\tau^{i\ol\sigma}\psi)=0$, and in fact by the radial point arithmetic (see the proof of Proposition~\ref{PropSCPHiRad}), we have $\psi\in L^2(Y;\Tb^*_Y M)^\bullet$ if our fixed $\semi>0$ is small enough. Letting $\chi(x)$ denote a smooth cutoff, $\chi\equiv 0$ for $x\leq 0$ and $\chi\equiv 1$ for $x\geq 1$, we put
\[
  v_j := \chi(j-t_*)\tau^{i\ol\sigma}\psi,\quad g_j := \cP^*v_j=[\cP^*,\chi(j-\cdot)]\tau^{i\ol\sigma}\psi.
\]
Then $v_j$ is the unique backwards solution of $\cP^*v=g_j$, so $v_j=(\cP^*)^{-1}g_j$; however $g_j\to 0$ in $\Hb^{-1,-\alpha}(\Omega;\Tb^*\Omega)^{-,\bullet}$ as $j\to\infty$, while $\|v_j\|_{\Hb^{0,-\alpha}}$ converges to a non-zero number. This contradicts the boundedness of \eqref{EqSCPGlPAdjInverse}, and establishes Theorem~\ref{ThmSCP} after reducing $\alpha>0$ by an arbitrarily small positive amount.

Another, somewhat more direct way of proving Theorem~\ref{ThmSCP} proceeds as follows: using the same arguments as above for $\cP_\semi$, but in reverse, one can prove an estimate for $\cP_\semi^*$ of the form
\begin{equation}
\label{EqSCPGlEstAdj}
  \|u\|_{\Hbh^{1,-\alpha}(\Omega)^{-,\bullet}} \leq C \semi^{-1}\|\cP_\semi^* u\|_{\Hbh^{0,-\alpha}(\Omega)^{-,\bullet}},\quad 0<\semi<\semi_0;
\end{equation}
this relies on versions of the propagation estimates proved in \S\ref{SubsecSCPHi} and \S\ref{SubsecSCPLo} in which the direction of propagation is reversed; since passing from $\cP_\semi=\BoxCP_{g,\semi}-i L_\semi$ to $\cP_\semi^*=(\BoxCP_{g,\semi})^*+i L_\semi^*$ effects a change of sign in the skew-adjoint part, the adjoint version of the crucial Proposition~\ref{PropSCPLoPropTime} now requires that the weight satisfy $\rho\geq-\alpha$. The estimate \eqref{EqSCPGlEstAdj} then gives the solvability of the forward problem  for $\cP_\semi$ on the dual spaces, which are spaces of decaying functions. Concretely, we obtain a forward solution operator
\[
  \cP_\semi^{-1}\colon \Hb^{-1,\alpha}(\Omega;\Tb^*_\Omega M)^{\bullet,-} \to \Hb^{0,\alpha}(\Omega;\Tb^*_\Omega M)^{\bullet,-},\quad 0<\semi<\semi_0.
\]
(Using elliptic regularity and propagation estimates, one also has $\cP_\semi^{-1}\colon\Hb^{s-1,\alpha}\to\Hb^{s,\alpha}$ for $s\geq 0$, or in fact for any fixed $s\in\R$ provided $\semi>0$ is sufficiently small.) Since the forward problem for $\cP_\semi$ is \emph{uniquely} solvable, the operator $\cP_\semi$ cannot have resonances with $\Im\sigma>-\alpha$, since otherwise solutions of $\cP_\semi u=f$ for suitable (generic) $f\in\CIc$ would have an asymptotic expansion with such a resonant state appearing with a non-zero coefficient, contradicting the fact that the unique solution $u$ lies in the space $\Hb^{0,\alpha}$.

This concludes the proof of Theorem~\ref{ThmSCP}.

\begin{rmk}
\label{RmkSCPSusp}
  The argument we presented above in some sense does more than what is strictly necessary; after all we only want to rule out resonances of the normal operator of $\wtBoxCP_g$ in the closed upper half plane, not study the solvability properties of $\wtBoxCP_g$, though the two are closely related. Thus, the `right' framework would be to work fully on the Mellin transform side, where one would have two large parameters, $\sigma$ and $\gamma=\gamma_1$, which can be thought of as a joint parameter $(\sigma,\gamma)$ lying in a region of $\C\times\R$. (This is also related to the notion of a `suspended algebra' in the sense of Mazzeo and Melrose \cite{MazzeoMelroseFibred}.) In this case one could use complex absorption around $r=r_--\eps_M$ and $r=r_++\eps_M$, as was done in \cite{VasyMicroKerrdS}, without the need for the initial or final hypersurfaces for Cauchy problems. Energy estimates still would play a minor role, as in \cite[\S3.3]{VasyMicroKerrdS}, to ensure that in a neighborhood of the black hole exterior, the resonant states are independent of the particular `capping' used (complex absorption vs.\ Cauchy hypersurfaces). We remark that if the final Cauchy surface is still used (rather than complex absorption), the proof of the absence of resonances in the closed upper half plane for the Mellin transformed normal operator $\wh N(\wtBoxCP_g)$ can be obtained by taking $u=e^{-i\sigma t_*}v$ in the above arguments, where $v$ is a function on $Y=\Omega\cap X$, and only integrating in $X$ (not in $M$) in the various pairings, dropping any cutoffs or weights in $t_*$ ($\Im\sigma$ plays the role of weights, contributing to the skew-adjoint part of $\wh N(\wtBoxCP_g)$ in the arguments), with the b-Sobolev spaces thus being replaced by large-parameter versions of standard Sobolev spaces. In any case, we hope that not introducing further microlocal analysis machinery, but rather working on $\Omega$ directly, makes this section more accessible.
\end{rmk}

\section{Spectral gap for the linearized gauged Einstein equation (ESG)}
\label{SecESG}

As in \eqref{EqOpNoB0}, we drop the subscript `$b_0$' from the metric since we will be only considering the fixed Schwarzschild--de~Sitter metric $g_{b_0}$.

\subsection{Microlocal structure at the trapped set}
\label{SubsecESGTrap}

We now analyze the high frequency behavior of the linearized gauged Einstein operator, modified so as to arrange SCP. Thus, we consider
\[
  L = D_g(\Ric+\Lambda) + \tdel^*\delta_g \sfG_g = \frac{1}{2}\bigl(\Box_g + 2\Lambda + 2\sR_g + 2(\tdel^*-\delta_g^*)\delta_g \sfG_g\bigr),
\]
where $\tdel^*$ is the modified symmetric gradient \eqref{EqSCPDelStarTilde}, and $\sR_g$ the $0$-th order curvature term \eqref{EqHypDTCurvatureTerm}. In order for $L$ to satisfy polynomial high energy bounds on (and in a strip below) the real axis, we only need to check a condition on the subprincipal part of $L$ at the trapped set $\Gamma$, see \eqref{EqKdSGeoTrapped}--\eqref{EqKdSGeoTrappedPM} for the definition. Concretely, it suffices to show that for our choice of $\tdel^*$, the skew-adjoint part of the subprincipal operator $S_\sub(L)$, evaluated at a point $(t,r,\omega;\sigma,0,\eta)\in\Gamma$, with respect to a suitable inner product on $\pi^*S^2T^*M^\circ$, has all eigenvalues in the half space $\{i\sigma\lambda \colon \Re\lambda\geq 0\}$, where we use the coordinates \eqref{EqKdSGeoTrapCovec}. (Note that the skew-adjoint part of $S_\sub$ is a smooth section of $\pi^*\End(S^2T^*M^\circ)$, homogeneous of degree $1$ in $(\sigma,\eta)$.) Namely, in this case, one can choose a stationary inner product on $\pi^*S^2T^*M^\circ$ such that the skew-adjoint part of $|\sigma|^{-1}S_\sub(L)$ (with respect to this inner product) is bounded above on $\Gamma^+=\Gamma\cap\{\sigma<0\}$ (which is a subset of the \emph{forward} light cone) by an arbitrarily small positive multiple of the identity, and bounded below on $\Gamma^-=\Gamma\cap\{\sigma>0\}$ (which is a subset of the \emph{backward} light cone) by an arbitrarily small negative multiple of the identity; see also \eqref{EqAsySm3TrappedSubpr}. This gives the desired high energy estimates by combining Dyatlov's result \cite{DyatlovSpectralGaps} with the framework of pseudodifferential inner products \cite{HintzPsdoInner}, as explained in \S\ref{SubsecKeyESG} and \S\ref{SubsecAsySm}.

The calculation of the subprincipal symbol of the wave operator $\Box_g$ on symmetric 2-tensors in the partial trivialization \eqref{EqOpSpatBundlesFull} is straightforward using \cite[Proposition~4.1]{HintzPsdoInner}; this states that the subprincipal operator is equal to the covariant derivative $-i\nabla^{\pi^*S^2T^*M^\circ}_{\ham_G}$, defined using the pullback connection on $\pi^*S^2T^*M^\circ$. Since the latter is simply the restriction of the product connection $\nabla^{\pi^*T^*M^\circ}\otimes\nabla^{\pi^*T^*M^\circ}$ to $\pi^*S^2T^*M^\circ$, we conclude that
\[
  S_\sub(\Box_g)(w_1 w_2) = S_\sub(\Box_g^{(1)}) w_1\cdot w_2 + w_1\cdot S_\sub(\Box_g^{(1)}) w_2
\]
for $w_1,w_2\in\CI(T^*M^\circ,\pi^*T^*M^\circ)$; $\Box^{(1)}_g$ on the right denotes the wave operator on 1-forms, whose subprincipal symbol at the trapped set we computed in \eqref{EqOpSdSTrapSubpr0}. We can now calculate the $0$-th order part $S_{(2)}$ of $S_\sub(\Box_g)$ in the bundle splitting \eqref{EqOpSpatBundlesFull}; indeed, $S_\sub(\Box_g)$ in this splitting has a canonical first order part, induced by the canonical first order part of $S_\sub(\Box_g^{(1)})$ which is the first line in \eqref{EqOpSdSTrapSubpr0}, i.e.\ the first order part involves precisely $t$-derivatives (which uses the stationary nature of the spacetime metric and the relevant vector bundles) and covariant derivatives on $\Sph^2$. Thus, $S_{(2)}$ is equal to the second symmetric tensor power of the $0$-th order part of $S_\sub(\Box_g^{(1)})$, i.e.\ of the final term in \eqref{EqOpSdSTrapSubpr0}, so
\begin{equation}
\label{EqESGWaveSubpr}
  S_{(2)}
  =i\begin{pmatrix}
     0              & -4r^{-1}\sigma & 0                 & 0              & 0                 & 0                 \\
     -2r^{-1}\sigma & 0              & -2 q r^{-3}i_\eta & -2r^{-1}\sigma & 0                 & 0                 \\
     0              & 2 q r^{-1}\eta & 0                 & 0              & -2r^{-1}\sigma    & 0                 \\
     0              & -4r^{-1}\sigma & 0                 & 0              & -4 q r^{-3}i_\eta & 0                 \\
     0              & 0              & -2r^{-1}\sigma    & 2 q r^{-1}\eta & 0                 & -2 q r^{-3}i_\eta \\
     0              & 0              & 0                 & 0              & 4 q r^{-1}\eta    & 0
   \end{pmatrix},
\end{equation}
where we recall $q=q(r)=(1-2\bhm/r-\Lambda r^2/3)^{1/2}$ from~\S\ref{SubsecOpSdS}.

In order to calculate the spectrum of the $0$-th order term of $S_\sub(2 L)$ at $\Gamma$ efficiently, we split
\begin{equation}
\label{EqESGTSSplit}
  T^*\Sph^2=\la q\sigma^{-1}\eta\ra \oplus \eta^\perp,
\end{equation}
and also use the induced splitting
\begin{equation}
\label{EqESGS2TSSplit}
  S^2T^*\Sph^2 = \la (q\sigma^{-1}\eta)^2 \ra \oplus 2 q\sigma^{-1}\eta\cdot\eta^\perp \oplus S^2 \eta^\perp
\end{equation}
of $S^2T^*\Sph^2$. Thus, recalling $e^0=q\,dt$ and $e^1=q^{-1}\,dr$, we refine \eqref{EqOpSpatBundlesFull} to the splitting
\begin{equation}
\label{EqESGS2TMMicrolocalSplit}
\begin{split}
  S^2T^*M^\circ &= \la e^0e^0\ra \oplus \la 2e^0e^1 \ra \oplus \Bigl( \la 2e^0 q\sigma^{-1}\eta \ra \oplus 2e^0\cdot\eta^\perp \Bigr) \\
   & \quad \oplus \la e^1e^1\ra \oplus \Bigl(\la 2e^1 q\sigma^{-1}\eta \ra \oplus 2e^1\cdot\eta^\perp \Bigr) \\
   & \quad \oplus \Bigl( \la (q\sigma^{-1}\eta)^2 \ra \oplus 2 q\sigma^{-1}\eta\cdot\eta^\perp \oplus S^2 \eta^\perp \Bigr).
\end{split}
\end{equation}
Now, in the decomposition \eqref{EqESGTSSplit} and using $|\eta|^2=q^{-2}r^2\sigma^2$ at $\Gamma$, one sees that $\eta\colon\R\to T^*\Sph^2$ and $i_\eta\colon T^*\Sph^2\to\R$ are given by
\[
  \eta = \begin{pmatrix}q^{-1}\sigma\\0\end{pmatrix},\quad i_\eta=\begin{pmatrix}q^{-1}r^2\sigma&0\end{pmatrix},
\]
which gives
\begin{gather*}
  \eta
  =\begin{pmatrix}
     q^{-1}\sigma & 0 \\
     0            & \frac{1}{2}q^{-1}\sigma \\
     0            & 0
   \end{pmatrix}
      \colon T^*\Sph^2\to S^2T^*\Sph^2, \\
  i_\eta
  =\begin{pmatrix}
     q^{-1}r^2\sigma & 0               & 0 \\
     0               & q^{-1}r^2\sigma & 0
   \end{pmatrix}
      \colon S^2T^*\Sph^2\to T^*\Sph^2
\end{gather*}
in the splittings \eqref{EqESGTSSplit} and \eqref{EqESGS2TSSplit}. Furthermore, we write
\begin{equation}
\label{EqESGTSMetric}
  \slg = |\eta|^{-2}\eta\cdot\eta + r^{-2}\slg_\perp = \begin{pmatrix} r^{-2} \\ 0 \\ r^{-2}\slg_\perp \end{pmatrix},
\end{equation}
where $r^{-2}\slg_\perp$ is defined as the orthogonal projection of $\slg$ to $S^2\eta^\perp$. Consequently,
\begin{equation}
\label{EqESGTSTrace}
  \sltr = \begin{pmatrix} r^2 & 0 & r^2\tr_{\slg_\perp} \end{pmatrix} \colon S^2T^*\Sph^2 \to \R.
\end{equation}
Since $\dim\Sph^2=2$, we can pick $\psi\in\eta^\perp\subset T^*\Sph^2$ such that $\slg_\perp=\psi\cdot\psi$; trivializing $S^2\eta^\perp$ via $\psi\cdot\psi$ therefore amounts to dropping $\slg_\perp$ and $\tr_{\slg_\perp}$ in \eqref{EqESGTSMetric}--\eqref{EqESGTSTrace}. We can be more explicit: on $\Sph^2$, we have the Hodge star operator $\star\colon T\Sph^2\to T\Sph^2$, which induces $\star\colon T^*\Sph^2\to T^*\Sph^2$, and we can then let $\psi=|\eta|^{-1} (\star\eta)$.

In terms of \eqref{EqESGS2TMMicrolocalSplit}, we therefore find
\[
  S_{(2)}=i r^{-1}\sigma
  \openbigpmatrix{2.3pt}
    0  & -4 & 0  & 0  & 0  & 0  & 0  & 0  & 0  & 0 \\
    -2 & 0  & -2 & 0  & -2 & 0  & 0  & 0  & 0  & 0 \\
    0  & 2  & 0  & 0  & 0  & -2 & 0  & 0  & 0  & 0 \\
    0  & 0  & 0  & 0  & 0  & 0  & -2 & 0  & 0  & 0 \\
    0  & -4 & 0  & 0  & 0  & -4 & 0  & 0  & 0  & 0 \\
    0  & 0  & -2 & 0  & 2  & 0  & 0  & -2 & 0  & 0 \\
    0  & 0  & 0  & -2 & 0  & 0  & 0  & 0  & -2 & 0 \\
    0  & 0  & 0  & 0  & 0  & 4  & 0  & 0  & 0  & 0 \\
    0  & 0  & 0  & 0  & 0  & 0  & 2  & 0  & 0  & 0 \\
    0  & 0  & 0  & 0  & 0  & 0  & 0  & 0  & 0  & 0
  \closebigpmatrix.
\]

Proceeding with the computation of $S_\sub(2 L)$, we next compute the form of the operator $\tdel^*-\delta_g^*$, with $\tdel^*$ as in \eqref{EqSCPDelStarTilde}. We write $t_*=t-F(r)$, then $dt_*=q^{-1}e^0-q F' e^1$ and $\nabla t_*=q^{-1}e_0+q F' e_1$, so we have
\[
  \tdel^* u = \delta_g^* u + \gamma_1(q^{-1}e^0\cdot u-q F' e^1\cdot u) - \gamma_2(q^{-1}u_0+q F' u_1)g.
\]
In  the decomposition~\eqref{EqOpWarpedBundles}, we find
\begin{align*}
  \tdel^*-\delta_g^*
    &= \gamma_1 q^{-1}
        \begin{pmatrix}
          1 & 0 \\
          0 & \frac{1}{2} \\
          0 & 0
        \end{pmatrix}
     -\gamma_1 q F'
        \begin{pmatrix}
          0              & 0 \\
          \frac{1}{2}e^1 & 0 \\
          0              & e^1
        \end{pmatrix} \\
     &\quad
    - \gamma_2 q^{-1}
      \begin{pmatrix}
        1  & 0 \\
        0  & 0 \\
        -h & 0
      \end{pmatrix}
    - \gamma_2 q F'
      \begin{pmatrix}
        0 & i_{e_1} \\
        0 & 0       \\
        0 & -h i_{e_1}
      \end{pmatrix}.
\end{align*}
Using \eqref{EqOpWarpedDelgGg}, this gives
\begin{equation}
\label{EqESGAdditional}
\begin{split}
  2&(\tdel^*-\delta_g^*)\delta_g \sfG_g
    = \gamma_1 q^{-1}
      \begin{pmatrix}
        -e_0                       & -2 q^{-2}\delta_h q^2 & -e_0\tr_h                               \\
        \frac{1}{2}q^{-2}d_\cX q^2 & -e_0                  & -q^{-1}\delta_h q-\frac{1}{2}d_\cX\tr_h \\
        0                          & 0                     & 0
      \end{pmatrix} \\
    &\quad -\gamma_1 q F'
       \begin{pmatrix}
         0                   & 0                         & 0                        \\
         -\frac{1}{2}e^1 e_0 & -e^1 q^{-2}\delta_h q^2   & -\frac{1}{2}e^1 e_0\tr_h \\
         e^1 q^{-2}d_\cX q^2 & -2 e^1 e_0                & -e^1(2 q^{-1}\delta_h q+d_\cX\tr_h)
       \end{pmatrix} \\
     &\qquad -\gamma_2 q^{-1}
        \begin{pmatrix}
          -e_0  & -2 q^{-2}\delta_h q^2  & -e_0\tr_h  \\
          0     & 0                      & 0          \\
          h e_0 & 2 h q^{-2}\delta_h q^2 & h e_0\tr_h
        \end{pmatrix} \\
     &\quad - \gamma_2 q F'
        \begin{pmatrix}
          i_{e_1}q^{-2}d_\cX q^2    & -2i_{e_1}e_0  & -i_{e_1}(2 q^{-1}\delta_h q+d_\cX\tr_h) \\
          0                         & 0             & 0                                       \\
          -h i_{e_1}q^{-2}d_\cX q^2 & 2h i_{e_1}e_0 & h i_{e_1}(2 q^{-1}\delta_h q+d_\cX\tr_h)
        \end{pmatrix},
\end{split}
\end{equation}
so the contribution of this term to the subprincipal operator of $2L$ at $\Gamma$ in the full splitting \eqref{EqOpSpatBundlesFull} is given by its principal symbol, which we evaluate with the help of Lemma~\ref{LemmaOpSpat}:
\begin{align*}
  \wt S&:=\sigma_1(2(\tdel^*-\delta_g^*)\delta_g \sfG_g) \\
    &= i\gamma_1 q^{-1}
       \begin{pmatrix}
         q^{-1}\sigma    & 0            & 2r^{-2}i_\eta & q^{-1}\sigma     & 0            & q^{-1}r^{-2}\sigma\sltr             \\
         0               & q^{-1}\sigma & 0             & 0                & r^{-2}i_\eta & 0                                   \\
         \frac{1}{2}\eta & 0            & q^{-1}\sigma  & -\frac{1}{2}\eta & 0            & r^{-2}(i_\eta-\frac{1}{2}\eta\sltr) \\
         0               & 0            & 0             & 0                & 0            & 0                                   \\
         0               & 0            & 0             & 0                & 0            & 0                                   \\
         0               & 0            & 0             & 0                & 0            & 0
       \end{pmatrix} \\
    &\quad - i\gamma_1 q F'
       \begin{pmatrix}
         0                       & 0              & 0            & 0                       & 0             & 0                                   \\
         \frac{1}{2}q^{-1}\sigma & 0              & r^{-2}i_\eta & \frac{1}{2}q^{-1}\sigma & 0             & \frac{1}{2}q^{-1}r^{-2}\sigma\sltr  \\
         0                       & 0              & 0            & 0                       & 0             & 0                                   \\
         0                       & 2 q^{-1}\sigma & 0            & 0                       & 2r^{-1}i_\eta & 0                                   \\
         \frac{1}{2}\eta         & 0              & q^{-1}\sigma & -\frac{1}{2}\eta        & 0             & r^{-2}(i_\eta-\frac{1}{2}\eta\sltr) \\
         0                       & 0              & 0            & 0                       & 0             & 0
       \end{pmatrix} \\
    &\quad - i\gamma_2 q^{-1}
       \begin{pmatrix}
         q^{-1}\sigma         & 0 & 2r^{-2}i_\eta  & q^{-1}\sigma         & 0 & q^{-1}r^{-2}\sigma\sltr  \\
         0                    & 0 & 0              & 0                    & 0 & 0                        \\
         0                    & 0 & 0              & 0                    & 0 & 0                        \\
         -q^{-1}\sigma        & 0 & -2r^{-2}i_\eta & -q^{-1}\sigma        & 0 & -q^{-1}r^{-2}\sigma\sltr \\
         0                    & 0 & 0              & 0                    & 0 & 0                        \\
         -q^{-1}r^2\sigma\slg & 0 & -2\slg i_\eta  & -q^{-1}r^2\sigma\slg & 0 & -q^{-1}\sigma \slg\sltr
       \end{pmatrix} \\
    &\quad - i\gamma_2 q F'
       \begin{pmatrix}
         0 & 2 q^{-1}\sigma     & 0 & 0 & 2r^{-2}i_\eta  & 0 \\
         0 & 0                  & 0 & 0 & 0              & 0 \\
         0 & 0                  & 0 & 0 & 0              & 0 \\
         0 & -2 q^{-1}\sigma    & 0 & 0 & -2r^{-2}i_\eta & 0 \\
         0 & 0                  & 0 & 0 & 0              & 0 \\
         0 & -2 q^{-1}r^2\sigma & 0 & 0 & -2\slg i_\eta  & 0
       \end{pmatrix}.
\end{align*}
In terms of \eqref{EqESGS2TMMicrolocalSplit} then, and defining $\gamma_j'=q^{-2}r\gamma_j$, $\gamma_j''=\gamma_j r F'$, for $j=1,2$, one finds that $\wt S$ equals $i r^{-1}\sigma$ times
\begin{align*}
  &\frac{\gamma_1'}{2}
          \openbigpmatrix{2pt}
            2 & 0 & 4 & 0 & 2  & 0 & 0 & 2 & 0 & 2  \\
            0 & 2 & 0 & 0 & 0  & 2 & 0 & 0 & 0 & 0  \\
            1 & 0 & 2 & 0 & -1 & 0 & 0 & 1 & 0 & -1 \\
            0 & 0 & 0 & 2 & 0  & 0 & 0 & 0 & 2 & 0  \\
            0 & 0 & 0 & 0 & 0  & 0 & 0 & 0 & 0 & 0  \\
            0 & 0 & 0 & 0 & 0  & 0 & 0 & 0 & 0 & 0  \\
            0 & 0 & 0 & 0 & 0  & 0 & 0 & 0 & 0 & 0  \\
            0 & 0 & 0 & 0 & 0  & 0 & 0 & 0 & 0 & 0  \\
            0 & 0 & 0 & 0 & 0  & 0 & 0 & 0 & 0 & 0  \\
            0 & 0 & 0 & 0 & 0  & 0 & 0 & 0 & 0 & 0
          \closebigpmatrix
       -\frac{\gamma_1''}{2}
          \openbigpmatrix{2pt}
            0 & 0 & 0 & 0 & 0  & 0 & 0 & 0 & 0 & 0  \\
            1 & 0 & 2 & 0 & 1  & 0 & 0 & 1 & 0 & 1  \\
            0 & 0 & 0 & 0 & 0  & 0 & 0 & 0 & 0 & 0  \\
            0 & 0 & 0 & 0 & 0  & 0 & 0 & 0 & 0 & 0  \\
            0 & 4 & 0 & 0 & 0  & 4 & 0 & 0 & 0 & 0  \\
            1 & 0 & 2 & 0 & -1 & 0 & 0 & 1 & 0 & -1 \\
            0 & 0 & 0 & 2 & 0  & 0 & 0 & 0 & 2 & 0  \\
            0 & 0 & 0 & 0 & 0  & 0 & 0 & 0 & 0 & 0  \\
            0 & 0 & 0 & 0 & 0  & 0 & 0 & 0 & 0 & 0  \\
            0 & 0 & 0 & 0 & 0  & 0 & 0 & 0 & 0 & 0
          \closebigpmatrix \\
        &+\gamma_2'
          \openbigpmatrix{2.7pt}
            -1 & 0 & -2 & 0 & -1 & 0 & 0 & -1 & 0 & -1 \\
            0  & 0 & 0  & 0 & 0  & 0 & 0 & 0  & 0 & 0  \\
            0  & 0 & 0  & 0 & 0  & 0 & 0 & 0  & 0 & 0  \\
            0  & 0 & 0  & 0 & 0  & 0 & 0 & 0  & 0 & 0  \\
            1  & 0 & 2  & 0 & 1  & 0 & 0 & 1  & 0 & 1  \\
            0  & 0 & 0  & 0 & 0  & 0 & 0 & 0  & 0 & 0  \\
            0  & 0 & 0  & 0 & 0  & 0 & 0 & 0  & 0 & 0  \\
            1  & 0 & 2  & 0 & 1  & 0 & 0 & 1  & 0 & 1  \\
            0  & 0 & 0  & 0 & 0  & 0 & 0 & 0  & 0 & 0  \\
            1  & 0 & 2  & 0 & 1  & 0 & 0 & 1  & 0 & 1 
          \closebigpmatrix
        +\gamma_2''
          \openbigpmatrix{2pt}
            0 & -2 & 0 & 0 & 0 & -2 & 0 & 0 & 0 & 0 \\
            0 & 0  & 0 & 0 & 0 & 0  & 0 & 0 & 0 & 0 \\
            0 & 0  & 0 & 0 & 0 & 0  & 0 & 0 & 0 & 0 \\
            0 & 0  & 0 & 0 & 0 & 0  & 0 & 0 & 0 & 0 \\
            0 & 2  & 0 & 0 & 0 & 2  & 0 & 0 & 0 & 0 \\
            0 & 0  & 0 & 0 & 0 & 0  & 0 & 0 & 0 & 0 \\
            0 & 0  & 0 & 0 & 0 & 0  & 0 & 0 & 0 & 0 \\
            0 & 2  & 0 & 0 & 0 & 2  & 0 & 0 & 0 & 0 \\
            0 & 0  & 0 & 0 & 0 & 0  & 0 & 0 & 0 & 0 \\
            0 & 2  & 0 & 0 & 0 & 2  & 0 & 0 & 0 & 0
          \closebigpmatrix.
\end{align*}
By direct computation,\footnote{This is not a complicated computation. Writing the basis vectors as $f_j$, $j=1,\ldots, 10$, $\la f_4,f_7,f_9\ra$ and $\la f_1,f_2,f_3,f_5,f_6,f_8,f_{10}\ra$ decouple. For the former space, the matrix $(ir^{-1}\sigma)^{-1}S$ is lower triangular in the basis $f_4,f_7,f_4-f_9$, with diagonal entries, thus eigenvalues, $\gamma_1',0,0$. For the second space, the matrix is lower triangular in the basis $f_1,f_2,f_5,2f_1-f_3,f_5-f_{10},f_2-f_6,f_1-f_3+f_8$, with diagonal entries $2\gamma_1',\gamma_1',2\gamma_2',0,0,0,0$.} one then verifies that the characteristic polynomial of $S:=S_{(2)}+\wt S$ is given by
\[
  (i r^{-1}\sigma)^{-10}\det(i r^{-1}\sigma\lambda-S) = \lambda^6(\lambda-\gamma_1')^2(\lambda-2\gamma_1')(\lambda-2\gamma_2')
\]
independently of $F'(r_P)$, hence the eigenvalues of $S$ are $0,i\gamma_1 q^{-2}\sigma,2 i\gamma_1 q^{-2}\sigma$, and $2 i\gamma_2 q^{-2}\sigma$. For $\gamma_1,\gamma_2\geq 0$---which is the case of interest due to our choice \eqref{EqSCPGamma12}---this implies that the size of the essential spectral gap of $L$ is positive, and more precisely, the operator $L$ satisfies high energy estimates \eqref{EqAsySmMeroEst} in a half space $\Im\sigma>-2\alpha$ for some $\alpha>0$, and \eqref{EqAsySmMeroEstPosIm} in any fixed half space $\Im\sigma\geq\eps>0$. (The high energy estimates on the real line that our arguments give are lossy as well, since the eigenvalues are merely non-negative, up to the factor $i\sigma$, rather than strictly positive.)

To see this, we proceed as in the discussion following \cite[Proposition~4.7]{HintzPsdoInner}: first, in the (non-microlocal!) bundle splitting \eqref{EqOpSpatBundlesFull}, the operator $S_\sub(2 L)$ is equal to $S$ minus $i$ times a diagonal matrix with $\nabla_{\ham_G}$ on the diagonal, where $\nabla$ is the pullback connection on the respective vector bundles in \eqref{EqOpSpatBundlesFull}; so by \eqref{EqKdSGeoTrappedHam}, $\nabla_{\ham_G}$ is equal to $-2\mu^{-1}\sigma\pa_t$ (which commutes with any $t$-independent operator) minus $r^{-2}\nabla_{\ham_{|\eta|^2}}$, the latter now being the pullback connection on the respective tensor bundles (i.e.\ zeroth, first or second tensor powers of $\pi_{\Sph^2}^*T^*\Sph^2$). We now note that in fact even in the `microlocal' splitting \eqref{EqESGS2TMMicrolocalSplit}, $\nabla_{\ham_{|\eta|^2}}$ is diagonal; indeed, we have:

\begin{lemma}
\label{LemmaESGHamMicro}
  Let $\pi=\pi_{\Sph^2}$. Away from the zero section $o\subset T^*\Sph^2$, consider the splitting
  \[
    \pi^*T^*\Sph^2 = E \oplus F, \quad E_{y,\eta}=\la\eta\ra,\quad F_{y,\eta}=\eta^\perp.
  \]
  Then $\nabla^{\pi^*T^*\Sph^2}_{\ham_{|\eta|^2}}$ is diagonal in this splitting, i.e.\ it preserves the space of sections of $E$ as well as the space of sections of $F$.
\end{lemma}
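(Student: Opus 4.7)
The plan is to reduce both claims to the single observation that the tautological section of $\pi^*T^*\Sph^2$---the section sending $(y,\eta)\in T^*\Sph^2$ to $\eta\in T^*_y\Sph^2$, which I continue to denote by $\eta$ by a slight abuse of notation---is annihilated by $\nabla^{\pi^*T^*\Sph^2}_{\ham_{|\eta|^2}}$. The identity
\[
  \nabla^{\pi^*T^*\Sph^2}_{\ham_{|\eta|^2}}\eta = 0
\]
is nothing but the Hamiltonian form of the geodesic equation on $(\Sph^2,\slg)$: an integral curve of $\ham_{|\eta|^2}$ projects, up to reparametrization, to a geodesic in $\Sph^2$, and the momentum covector $\eta$ is $\slnabla$-parallel along it. I would verify this by a short coordinate computation using $\ham_{|\eta|^2}=2g^{ij}\eta_j\pa_{y^i}-(\pa_{y^i}g^{jk})\eta_j\eta_k\pa_{\eta_i}$ and the definition of the pullback connection; the $\pa_{\eta_i}$ part of $\ham_{|\eta|^2}$ produces a $\pa_{y^l}g^{jk}$ contribution which, via $\pa_{y^l}g^{jk}=-g^{ja}g^{kb}\pa_{y^l}g_{ab}$ and the symmetry of $\Gamma^k_{ij}$ in its lower indices, exactly cancels the Christoffel term from $\eta_j\slnabla_{d\pi(\ham_{|\eta|^2})}dy^j$.

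Granted this, the $E$-invariance is immediate: any section of $E$ over $T^*\Sph^2\setminus o$ has the form $f\eta$ with $f\in\CI(T^*\Sph^2\setminus o)$, and
\[
  \nabla^{\pi^*T^*\Sph^2}_{\ham_{|\eta|^2}}(f\eta) = (\ham_{|\eta|^2}f)\,\eta + f\,\nabla^{\pi^*T^*\Sph^2}_{\ham_{|\eta|^2}}\eta = (\ham_{|\eta|^2}f)\,\eta,
\]
which is again a section of $E$.

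For $F$-invariance, I would use that the Levi-Civita connection $\slnabla$ is compatible with $\slG$, and hence that the pullback connection $\nabla^{\pi^*T^*\Sph^2}$ is compatible with the corresponding (fiberwise constant) pullback fiber inner product on $\pi^*T^*\Sph^2$. For $w$ a section of $F$, meaning $\slG(w,\eta)\equiv 0$ on $T^*\Sph^2\setminus o$, setting $V:=\ham_{|\eta|^2}$ and using metric compatibility,
\[
  0 = V(\slG(w,\eta)) = \slG\bigl(\nabla^{\pi^*T^*\Sph^2}_V w,\eta\bigr) + \slG\bigl(w,\nabla^{\pi^*T^*\Sph^2}_V\eta\bigr) = \slG\bigl(\nabla^{\pi^*T^*\Sph^2}_V w,\eta\bigr),
\]
since the second summand vanishes by the identity above. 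Hence $\nabla^{\pi^*T^*\Sph^2}_V w\in\eta^\perp=F$, as desired.

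There is no genuine obstacle: everything reduces to the geometric identity $\nabla^{\pi^*T^*\Sph^2}_{\ham_{|\eta|^2}}\eta=0$ plus metric compatibility, so the main point to be careful about is conceptual rather than computational---namely, that the tautological section is genuinely fiber-dependent, so the $\pa_{\eta_i}$-component of $\ham_{|\eta|^2}$ must be retained, and it is precisely this piece that supplies the cancellation making the geodesic equation hold.
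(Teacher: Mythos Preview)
Your proposal is correct and follows essentially the same route as the paper: both reduce to the identity $\nabla^{\pi^*T^*\Sph^2}_{\ham_{|\eta|^2}}\eta=0$ and then invoke metric compatibility for the $F$ part. The only difference is in how that identity is verified---the paper checks it at an arbitrary point using geodesic normal coordinates (so that both the $\pa_\eta$-component of $\ham_{|\eta|^2}$ and the Christoffel symbols vanish there), whereas you propose a direct general-coordinate computation; both are fine.
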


This holds for any Riemannian manifold $(S,\slg)$ if one replaces $|\eta|^2$ by the dual metric function of $\slg$.

\begin{proof}[Proof of Lemma~\ref{LemmaESGHamMicro}]
  Fix a point $p$ on $\Sph^2$, introduce geodesic normal coordinates $y^1,y^2$ vanishing at $p$, and denote the dual variables on the fibers of $T^*\Sph^2$ by $\eta_1,\eta_2$. Then $\ham_{|\eta|^2}=2\slg^{ij}\eta_i\pa_{y^j}$ at $p$, and therefore
  \[
    \nabla^{\pi^*T^*\Sph^2}_{\ham_{|\eta|^2}}(\eta_k\,dy^k) = 2\slg^{ij}\eta_i\eta_k \nabla_{\pa_{y^j}}dy^k = 0
  \]
  at $p$, proving the claim for $E$. On the other hand, the fact that the Levi-Civita connection $\nabla$ on $\Sph^2$ is a metric connection implies easily that for sections $\phi,\psi$ of $\pi^*T^*\Sph^2\to T^*\Sph^2$, we have
  \[
    \ham_{|\eta|^2}\bigl(\slG(\phi,\psi)\bigr) = \slG(\nabla^{\pi^*T^*\Sph^2}_{\ham_{|\eta|^2}}\phi,\psi) + \slG(\phi,\nabla^{\pi^*T^*\Sph^2}_{\ham_{|\eta|^2}}\psi)
  \]
  where by a slight abuse of notation we denote by $\slG$ the fiber inner product on $\pi^*T^*\Sph^2$ induced by the pullback via $\pi$. Specializing to the section $\phi=\eta$, we find that $\psi\perp\eta$ implies $\nabla^{\pi^*T^*\Sph^2}_{\ham_{|\eta|^2}}\psi\perp\eta$, which proves the claim for $F$. The proof is complete.
\end{proof}

Now, the matrix $S_0:=(ir^{-1}\sigma)^{-1}S$ has constant coefficients, hence we can choose a matrix $Q\in\R^{10\times 10}$ so that $Q S_0 Q^{-1}$ is in `Jordan block' form with small off-diagonal entries, i.e.\ so that it is upper triangular, with the eigenvalues of $S_0$ on the diagonal, with the entries immediately above the diagonal either equal to $0$ or equal to any small and fixed $\eps>0$, and all other entries $0$; see also \cite[\S3.4]{HintzPsdoInner}. Via \eqref{EqESGS2TMMicrolocalSplit}, $Q$ is the matrix of a bundle endomorphism of $\pi^*S^2T^*M^\circ$ where the splitting \eqref{EqESGS2TMMicrolocalSplit} is valid, so in particular near the trapped set $\Gamma$. By construction, if we equip $\pi^*S^2T^*M^\circ$ with the inner product which is given by the identity matrix in the splitting \eqref{EqESGS2TMMicrolocalSplit}, trivializing $\eta^\perp$ by means of the section $(\star\eta)\sigma^{-1}$ (which is homogeneous of degree $0$), then the symmetric part of $Q S_0 Q^{-1}$ (recall that we factored out an $i$ relative to $S$) relative to this is $\gtrsim-\eps$.

Now $Q$, having constant coefficients, commutes with the operator which, in the splitting \eqref{EqESGS2TMMicrolocalSplit}, is diagonal with diagonal entries $\nabla_{\ham_{|\eta|^2}}$; this implies that the skew-adjoint part of $Q S_\sub(L) Q^{-1}$ is $\gtrsim-\eps$. As discussed in Remark~\ref{RmkAsySmPsdoInner}, this guarantees that $L$ does satisfy a high energy estimate in a strip below the real axis. We conclude that $L$ has only finitely many resonances in this half space, and solutions of $L u=0$ have asymptotic expansions into resonant states, up to an exponentially decaying remainder term.

The perturbative results of \S\ref{SubsubsecAsySmPert} apply and show that perturbations of $L$, depending on a finite number of real parameters, with principal symbols given by the dual metric function of a Kerr--de~Sitter metric satisfy the same high energy estimates in a slightly smaller half space, say $\Im\sigma\geq -\frac{3}{2}\alpha$, with uniform constants; this uses the fact the trapping is normally hyperbolic with smooth (forward/backward) trapped set in a uniform manner for the Kerr--de~Sitter family, as explained in \cite[\S4.4]{HintzVasyQuasilinearKdS}. More generally, due to the $r$-normally hyperbolic (for every $r$) nature of the trapping, sufficiently small perturbations of the operator $L$, depending on a finite number of real parameters, within the class of principally scalar stationary second order operators with smooth coefficients satisfy the same high energy estimates (with uniform constants) in a half space $\Im\sigma\geq-\frac{3}{2}\alpha$; see the references given in \S\ref{SubsubsecAsySmPert}. We may assume, by changing $\alpha$ slightly if necessary, that $L$ has no resonances with imaginary part equal to $-\alpha$. By Proposition~\ref{PropAsySmPert}, the same then  holds for small perturbations of $L$.

This takes care of the part of the high energy estimate for $\wh L(\sigma)$ in \eqref{EqKeyESGHighEnergyEst} at the semiclassical trapped set.

\subsection{Threshold regularity at the radial set}
\label{SubsecESGRadial}

The regularity requirement $s>1/2$ in ESG is dictated by the threshold regularity at the radial sets. Thus, we need to compute
\[
  S_\sub(2 L) = -i\nabla_{\ham_G}^{\pi^*S^2T^*M^\circ}+\sigma_1\bigl(2(\tdel^*-\delta_g^*)\delta_g \sfG_g\bigr),
\]
at $\cL_\pm$, where $\pi\colon T^*M^\circ\to M^\circ$ is the projection. We computed the first term in \eqref{EqOpSdSSubprRadTensors}, so we only need to calculate the principal symbol of $2(\tdel^*-\delta_g^*)\delta_g \sfG_g$ at $\cL_\pm$ in the splitting \eqref{EqOpSdSSmoothS2TM}. We calculated the form of this operator in \eqref{EqESGAdditional} in the splitting \eqref{EqOpWarpedBundles}. Now $\pa_r$ in the coordinates $(t,r,\omega)$ is equal to $\pa_r\mp\mu^{-1}\pa_{t_0}$ in the coordinates $(t_0,r,\omega)$. Therefore, since at $\cL_\pm$, the principal symbols of $\pa_{t_0}$, $\sld$ and $\sldelta$ vanish, we can do our current calculation in two steps: first, we expand \eqref{EqESGAdditional} into the more refined splitting \eqref{EqOpSpatBundlesFull} and then discard all $e_0$-derivatives as well as differential operators on $\Sph^2$, while substituting $\sigma_1(e_1)=i q\xi$ for $e_1$, thus obtaining
\begin{align*}
  i&\gamma_1\xi
    \openbigpmatrix{2pt}
      0           & 2 & 0 & 0           & 0 & 0                       \\
      \frac{1}{2} & 0 & 0 & \frac{1}{2} & 0 & -\frac{1}{2}r^{-2}\sltr \\
      0           & 0 & 0 & 0           & 1 & 0                       \\
      0           & 0 & 0 & 0           & 0 & 0                       \\
      0           & 0 & 0 & 0           & 0 & 0                       \\
      0           & 0 & 0 & 0           & 0 & 0
    \closebigpmatrix
  -i\gamma_1\mu F'\xi
    \openbigpmatrix{2pt}
      0 & 0 & 0 & 0 & 0 & 0            \\
      0 & 1 & 0 & 0 & 0 & 0            \\
      0 & 0 & 0 & 0 & 0 & 0            \\
      1 & 0 & 0 & 1 & 0 & -r^{-2}\sltr \\
      0 & 0 & 0 & 0 & 1 & 0            \\
      0 & 0 & 0 & 0 & 0 & 0
    \closebigpmatrix \\
  &-i\gamma_2\xi
    \openbigpmatrix{2pt}
      0 & 2        & 0 & 0 & 0 & 0 \\
      0 & 0        & 0 & 0 & 0 & 0 \\
      0 & 0        & 0 & 0 & 0 & 0 \\
      0 & -2       & 0 & 0 & 0 & 0 \\
      0 & 0        & 0 & 0 & 0 & 0 \\
      0 & -2 r^2 \slg & 0 & 0 & 0 & 0
    \closebigpmatrix
   -i\gamma_2\mu F'\xi
    \openbigpmatrix{3pt}
      1         & 0 & 0 & 1         & 0 & -r^{-2}\sltr \\
      0         & 0 & 0 & 0         & 0 & 0            \\
      0         & 0 & 0 & 0         & 0 & 0            \\
      -1        & 0 & 0 & -1        & 0 & r^{-2}\sltr  \\
      0         & 0 & 0 & 0         & 0 & 0            \\
      -r^2 \slg & 0 & 0 & -r^2 \slg & 0 & \slg\sltr
    \closebigpmatrix.
\end{align*}
We split $S^2T^*\Sph^2=\la r^2 \slg\ra\oplus\slg^\perp$ (so $\slg^\perp=\ker\sltr$), and use the corresponding splitting of $\pi^*S^2T^*\Sph^2$; thus,
\[
  r^2\slg = \begin{pmatrix} 1 \\ 0 \end{pmatrix}, \quad r^{-2}\sltr=\begin{pmatrix} 2 & 0 \end{pmatrix}, \quad \slg\sltr=\begin{pmatrix}2&0\\0&0\end{pmatrix}.
\]
Second, we write $\mu F'=\pm(1+\mu c_\pm)$ as in \eqref{EqSdSTStar}, conjugate the above matrix by $(\sC^{(2)}_\pm)^{-1}$, see \eqref{EqOpSdSConjTensors}, and set $\mu=0$, which after a brief calculation gives, in the bundle decomposition \eqref{EqOpSdSSmoothS2TM} refined by the above splitting of $\pi^*S^2T^*\Sph^2$,
\[
  \mp i\gamma_1\xi
    \openbigpmatrix{2pt}
      2         & 0 & 0         & 0 & 0 & 0        & 0 \\
      \mp c_\pm & 0 & 0         & 0 & 0 & \pm 1    & 0 \\
      0         & 0 & 1         & 0 & 0 & 0        & 0 \\
      0         & 0 & 0         & 0 & 0 & -2 c_\pm & 0 \\
      0         & 0 & \mp c_\pm & 0 & 0 & 0        & 0 \\
      0         & 0 & 0         & 0 & 0 & 0        & 0 \\
      0         & 0 & 0         & 0 & 0 & 0        & 0
    \closebigpmatrix
  \mp i\gamma_2\xi
    \openbigpmatrix{2pt}
      0           & 0 & 0 & 0 & 0 & 0     & 0 \\
      \pm 2 c_\pm & 0 & 0 & 0 & 0 & \mp 2 & 0 \\
      0           & 0 & 0 & 0 & 0 & 0     & 0 \\
      0           & 0 & 0 & 0 & 0 & 0     & 0 \\
      0           & 0 & 0 & 0 & 0 & 0     & 0 \\
      -2 c_\pm    & 0 & 0 & 0 & 0 & 2     & 0 \\
      0           & 0 & 0 & 0 & 0 & 0     & 0
    \closebigpmatrix,
\]
where we factored out the `$\mp$' sign similarly to \eqref{EqOpSdSSubprRadTensors}; all singular terms cancel (as they should). This is thus equal to $\sigma_1(2(\tdel^*-\delta_g^*)\delta_g \sfG_g)$ at $\cL_\pm$, and we conclude that
\begin{equation}
\label{EqESGRadialSubprOp}
\begin{split}
  S_\sub(2&L) = \pm 2 \xi D_{t_0} \mp 2 \kappa_\pm \xi^2 D_\xi - 2\mu\xi D_r \\
     &\mp i\xi
       \openbigpmatrix{2pt}
         2\gamma_1-4\kappa_\pm        & 0 & 0                    & 0           & 0           & 0                       & 0 \\
         \mp(\gamma_1-2\gamma_2)c_\pm & 0 & 0                    & 0           & 0           & \pm(\gamma_1-2\gamma_2) & 0 \\
         0                            & 0 & \gamma_1-2\kappa_\pm & 0           & 0           & 0                       & 0 \\
         0                            & 0 & 0                    & 4\kappa_\pm & 0           & -2\gamma_1 c_\pm        & 0 \\
         0                            & 0 & \mp\gamma_1 c_\pm    & 0           & 2\kappa_\pm & 0                       & 0 \\
         -2\gamma_2 c_\pm             & 0 & 0                    & 0           & 0           & 2\gamma_2               & 0 \\
         0                            & 0 & 0                    & 0           & 0           & 0                       & 0 
       \closebigpmatrix
\end{split}
\end{equation}
at $\cL_\pm$. The first three terms are formally self-adjoint `at $\cL_\pm$` with respect to any $t_0$-invariant inner product on $\pi^*S^2T^*M^\circ$ which is homogeneous of degree $0$ with respect to dilations in the fibers of $T^*M^\circ$ (i.e.\ their skew-adjoint part, which is a \emph{function}, vanishes at $\cL_\pm$), while the eigenvalues of the last term---which is $\xi$ times a matrix which is constant along $\cL_\pm$---are equal to $\mp i \xi$ times\footnote{This is a simple calculation: writing the basis `vectors' as $f_j$, $j=1,\ldots,7$ (where really $f_3,f_5$ are two copies of the same basis of a fiber of $T^*\Sph^2$, and $f_7$ is a basis of a fiber of $\slg^\perp$---which is irrelevant here since all entries of the matrix in \eqref{EqESGRadialSubprOp} are scalar), the spaces $\la f_3,f_5\ra$ and $\la f_7\ra$ decouple, giving the eigenvalues $2\kappa_\pm+\gamma_1,6\kappa_\pm$ and $4\kappa_\pm$, respectively, and then restricted to $\la f_1,f_2,f_4,f_6\ra$, the matrix is lower triangular in the basis $f_1,f_6,f_2,f_4$, with diagonal entries $2\gamma_1,4\kappa_\pm+2\gamma_2,4\kappa_\pm,8\kappa_\pm$.}
\begin{equation}
\label{EqESGRadialSubprEigen}
  2\gamma_1-4\kappa_\pm,\ 
  0,\ 
  \gamma_1-2\kappa_\pm,\ 
  4\kappa_\pm,\ 
  2\kappa_\pm,\ 
  2\gamma_2,\ 
  0,
\end{equation}
in particular they are all non-negative when $\gamma_1\geq 2\kappa_\pm$ and $\gamma_2\geq 0$. Thus, we can find a stationary, homogeneous of degree $0$, positive definite inner product on $\pi^*S^2T^*M^\circ$ with respect to which $(\mp\xi)^{-1}\frac{1}{2i}(S_\sub(2 L)-S_\sub(2 L)^*)$ is positive semidefinite.

Let us first focus on the halves of the conormal bundles $\cL^+_\pm=\cL_\pm\cap\Sigma^+$ which lie in the future light cone, so the signs in \eqref{EqESGRadialSubprOp}, indicating the horizon $r_\pm$ we are working at, correspond to the subscript of $\cL^+_\pm$. (These sets were defined in \S\ref{SubsecKdSGeo} already.) In $\cL^+_\pm$, we have $\pm\xi>0$, and the quantity $\wh\beta_\pm$, see \eqref{EqAsySm2RadialSubpr}, is defined in terms of the quantity $\beta_{\pm,0}$, see \eqref{EqOpNoB0}, by
\[
  |\xi|^{-1}\frac{1}{2i}\bigl(S_\sub(2 L)-S_\sub(2 L)^*\bigr) = -\beta_{\pm,0}\wh\beta_\pm,
\]
so $\wh\beta_\pm$ is a (constant in $t_*$) self-adjoint endomorphism of the restriction of the bundle $\pi^*S^2T^*M^\circ$ to $T^*_{\{r=r_\pm\}}M^\circ$; hence $\wh\beta_\pm$ is bounded from below by $0$.

At $\cL_\pm^-=\cL_\pm\cap\Sigma^-$, where $\mp\xi>0$, we then have
\[
  -|\xi|^{-1}\frac{1}{2i}\bigl(S_\sub(2 L)-S_\sub(2 L)^*\bigr) = -\beta_{\pm,0}\wh\beta_\pm
\]
with the same $\wh\beta_\pm$, the overall sign switch being completely analogous to the one in \eqref{EqKdSGeoRadPoint}.

The criterion for the propagation of microlocal $\Hb^{s,r}$-regularity from $M^\circ$ into $\cR_\pm$ is then the inequality $s-1/2-\beta_\pm r>0$, see \cite[Proposition~2.1]{HintzVasySemilinear} (or Theorem~\ref{ThmAsySmMero} in a directly related context), where we dropped $\inf\wh\beta_\pm=\wh\beta\geq 0$ from the left hand side. For weights $r\geq-\alpha$, this condition holds if $s>1/2$, provided $\alpha>0$ is small enough; the largest possible weight $\alpha$ for the radial point propagation estimate is $\inf\beta_\pm^{-1}(s+1/2)-0$, hence gets larger as $s$ increases.

This proves Theorem~\ref{ThmKeyESG} for $\tdel^*$ defined in \eqref{EqSCPDelStarTilde}, in fact for any choice of parameters $\gamma_1,\gamma_2\geq 0$.

\section{Linear stability of the Kerr--de~Sitter family}
\label{SecKdSLStab}

We now use UEMS, SCP and ESG to establish the linear stability of slowly rotating Kerr--de~Sitter black holes. The proof of the linear stability of the linearized Kerr--de~Sitter family around Schwarz\-schild--de~Sitter space with parameters $b_0$ is straightforward; we remind the reader that the linearized initial value problem was discussed in~\S\ref{SubsecHypLin}. The form of the linearized gauged Einstein equation we consider uses the operator
\begin{equation}
\label{EqKdSLStabOp}
  L_b r := D_{g_b}(\Ric+\Lambda) - \tdel^*(D_{g_b}\Ups(r)),
\end{equation}
with the gauge 1-form $\Ups$ defined in \eqref{EqKdSInGauge}; recall the definition of $\tdel^*$ from \eqref{EqKeySCPDeltaTilde}.

\begin{thm}
\label{ThmKdSLStabNaive}
  Fix $s>1/2$, and let $\alpha>0$ be small. (See Remark~\ref{RmkKeyESGDecay}.) Let $(h_0',k_0')\in H^{s+1}(\Sigma_0;S^2 T^*\Sigma_0)\oplus H^s(\Sigma_0;S^2 T^*\Sigma_0)$, be solutions of the linearized constraint equations, linearized around the initial data $(h_{b_0},k_{b_0})$ of Schwarzschild--de~Sitter space $(\Omega,g_{b_0})$, and consider the initial value problem
  \begin{equation}
  \label{EqKdSLStabNaiveIVP}
    \begin{cases}
      L_{b_0}r = 0 & \tn{in }\Omega^\circ, \\
      \gamma_0(r) = D_{(h_{b_0},k_{b_0})}i_{b_0}(h'_0,k'_0) & \tn{on }\Sigma_0,
    \end{cases}
  \end{equation}
  with $i_b$ defined in \S\ref{SubsecKdSIn}. Then there exist $b'\in T_{b_0}B$ and a 1-form $\omega\in\CI(\Omega^\circ,T^*\Omega^\circ)$ such that
  \begin{equation}
  \label{EqKdSLStabNaiveIVPSol}
    r=g'_{b_0}(b') + \delta_{g_{b_0}}^*\omega + \wt r,
  \end{equation}
  with $\wt r\in\Hbext^{s,\alpha}(\Omega;S^2\,\Tb^*_\Omega M)$. In particular, for $s>2$, this implies the $L^\infty$ bound $|\wt r(t_*)|\lesssim e^{-\alpha t_*}$.
\end{thm}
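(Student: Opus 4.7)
The plan is to reduce the gauged equation $L_{b_0} r = 0$ to the ungauged linearized Einstein equation and then combine the essential spectral gap of $L_{b_0}$ from Theorem~\ref{ThmKeyESG} with UEMS, in the form of Lemma~\ref{LemmaKdSLStabLUEMS} and Theorem~\ref{ThmKeyUEMS}\itref{ItKeyUEMSZero}, to read off the decomposition \eqref{EqKdSLStabNaiveIVPSol}.

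First I would show that $\eta := D_{g_{b_0}} \Ups(r)$ vanishes identically in $\Omega^\circ$. Applying $\delta_{g_{b_0}} G_{g_{b_0}}$ to $L_{b_0} r = 0$ and using the linearized second Bianchi identity $\delta_{g_{b_0}} G_{g_{b_0}} D_{g_{b_0}}(\Ric+\Lambda) = 0$ gives $\wtBoxCP_{g_{b_0}} \eta = 0$. The Cauchy data of $\eta$ at $\Sigma_0$ vanishes: Corollary~\ref{CorKdSInLin} yields $\eta|_{\Sigma_0} = 0$, and contracting $G_{g_{b_0}}(L_{b_0} r) = 0$ at $\Sigma_0$ with the $g_{b_0}$-unit normal $N$ and then invoking the linearized constraint equations \eqref{EqHypLinConstraints1}--\eqref{EqHypLinConstraints2} on $(h'_0, k'_0)$ forces $\cL_{\pa_{t_*}}\eta|_{\Sigma_0} = 0$ by the same calculation as in \S\ref{SubsecHypLin}; the replacement of $\delta_{g_{b_0}}^*$ by $\tdel^*$ is harmless there, since the two differ only by a stationary zeroth-order bundle map, whose contribution at $\Sigma_0$ is proportional to $\eta|_{\Sigma_0} = 0$. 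Hyperbolic uniqueness for the wave operator $\wtBoxCP_{g_{b_0}}$ then gives $\eta \equiv 0$ in $\Omega^\circ$, so $D_{g_{b_0}}(\Ric + \Lambda)(r) = L_{b_0} r + \tdel^* \eta = 0$.

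Next I would invoke ESG. By Theorem~\ref{ThmKeyESG}, after reducing $\alpha > 0$ so that every resonance of $L_{b_0}$ in $\Im\sigma > -\alpha$ in fact satisfies $\Im\sigma \geq 0$, the solution $r$ of \eqref{EqKdSLStabNaiveIVP} admits a finite asymptotic expansion
\[
  r = \sum_j r_j + \wt r, \qquad \wt r \in \Hbext^{s,\alpha}(\Omega; S^2\,\Tb^*_\Omega M),
\]
where the $\sigma_j$ are distinct with $\Im\sigma_j \geq 0$ and each $r_j$ is a generalized mode at frequency $\sigma_j$. Since $D_{g_{b_0}}(\Ric + \Lambda)$ has stationary coefficients, applying it to this expansion and matching the (mutually linearly independent) generalized modes against the exponentially decaying remainder yields $D_{g_{b_0}}(\Ric+\Lambda)(r_j) = 0$ separately for each $j$. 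For each $\sigma_j \neq 0$, Lemma~\ref{LemmaKdSLStabLUEMS} produces a generalized mode 1-form $\omega_j$ on $\Omega^\circ$ with $r_j = \delta_{g_{b_0}}^* \omega_j$; for the (at most unique) index $j_0$ with $\sigma_{j_0} = 0$, Theorem~\ref{ThmKeyUEMS}\itref{ItKeyUEMSZero} produces $b' \in T_{b_0} B$ and a polynomial-in-$t_*$ 1-form $\omega_{j_0}$ with $r_{j_0} = g'_{b_0}(b') + \delta_{g_{b_0}}^* \omega_{j_0}$, and I set $b' = 0$ if no zero resonance appears. Summing, with $\omega := \sum_j \omega_j \in \CI(\Omega^\circ, T^*\Omega^\circ)$, produces the required decomposition $r = g'_{b_0}(b') + \delta_{g_{b_0}}^* \omega + \wt r$. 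The final $L^\infty$ bound for $s > 2$ follows from the Sobolev embedding $H^s(Y) \hookrightarrow L^\infty(Y)$ on the compact spatial slices, combined with the weight $e^{-\alpha t_*}$ encoded in $\Hbext^{s,\alpha}$.

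The main conceptual point is the reduction $\eta \equiv 0$: it turns the gauge condition into a consequence of the linearized constraints, so that the asymptotic analysis can be carried out against the ungauged equation, where UEMS directly applies. The only non-routine bookkeeping lies in verifying that the initial-data argument of \S\ref{SubsecHypLin}, originally written with the unmodified operator $\delta_{g_{b_0}}^*$, survives the replacement by $\tdel^*$; as noted, this is harmless because $\tdel^* - \delta_{g_{b_0}}^*$ is of zeroth order and is applied to a quantity vanishing at $\Sigma_0$. With that in hand, everything else is an assembly of the three key ingredients as black boxes.
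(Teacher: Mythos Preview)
Your proof is correct and follows essentially the same approach as the paper: reduce to the ungauged linearized Einstein equation by showing $D_{g_{b_0}}\Ups(r)\equiv 0$ via constraint propagation and the linearized constraints, then use ESG to obtain the resonance expansion and apply UEMS (Lemma~\ref{LemmaKdSLStabLUEMS} for $\sigma_j\neq 0$, Theorem~\ref{ThmKeyUEMS}\itref{ItKeyUEMSZero} for $\sigma_j=0$) to each term. The paper compresses your first paragraph into a single sentence, but your more explicit treatment---in particular the observation that $\tdel^*-\delta_{g_{b_0}}^*$ is zeroth order and hence harmless at $\Sigma_0$ where $\eta$ already vanishes---is a correct elaboration of what is implicit there; note that this step uses only hyperbolic uniqueness for $\wtBoxCP_{g_{b_0}}$, not SCP, consistent with the paper's remark that this proof uses only UEMS and ESG.
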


Recall here that the map taking the initial data $(h'_0,k'_0)$ of the linearized Einstein equation to Cauchy data puts the initial data into the linearized wave map gauge, giving initial data for the linearized gauged Einstein equation; see Corollary~\ref{CorKdSInLin}.

\begin{proof}[Proof of Theorem~\ref{ThmKdSLStabNaive}]
  Since the initial data $(h_0',k_0')$ satisfy the linearized constraint equations, the solution $r$ of the initial value problem also solves the linearized Einstein equation $D_{g_{b_0}}(\Ric+\Lambda)(r)=0$. Now, ESG implies that $r$ has an asymptotic expansion \eqref{EqKeyESGExpansion}, and all resonances $\sigma_j$ in the expansion satisfy $\Im\sigma_j\geq 0$; but then Lemma~\ref{LemmaUEMSWithLogs} shows that the part of the expansion of $r$ coming from a non-zero resonance $\sigma_j$ lies in the range of $\delta_{g_{b_0}}^*$, since such a part by itself is annihilated by $D_{g_{b_0}}(\Ric+\Lambda)$ due to the stationary nature of the operator $D_{g_{b_0}}(\Ric+\Lambda)$, as used in the proof of Lemma~\ref{LemmaUEMSWithLogs}. On the other hand, the part of the asymptotic expansion of $r$ coming from resonances at $0$ is covered by Theorem~\ref{ThmKeyUEMS} \eqref{ItKeyUEMSZero}, which states that this part is equal to $g'_{b_0}(b')$ for some $b'\in T_{b_0}B$, plus an element in the range of $\delta_{g_{b_0}}^*$. This proves the theorem.
\end{proof}

This proof, \emph{which only uses UEMS and ESG}, has a major shortcoming: it is not robust; changing the metric $g_{b_0}$ around which we linearize to any nearby Kerr--de~Sitter metric $g_b$, $b\neq b_0$, makes the argument collapse immediately, since it is then no longer clear why the parts of the asymptotic expansion corresponding to non-decaying resonances should be pure gauge modes, and why the zero resonance should behave as in part~\eqref{ItKeyUEMSZero} of UEMS; recall here that we are only assuming UEMS for \emph{Schwarzschild}--de~Sitter parameters $b_0$. \emph{If} we had proved UEMS for slowly rotating Kerr--de~Sitter spacetimes as well, the proof of Theorem~\ref{ThmKdSLStabNaive} would extend directly, giving the linear stability of slowly rotating Kerr--de~Sitter black holes. We choose a different, much more robust, conceptually cleaner and computationally much simpler path, which will lead to the proof of non-linear stability later.

The additional input that has not been used above, which however allows for a robust proof, is the existence of a stable constraint propagation equation (SCP). Using the notation of Theorem~\ref{ThmKdSLStabNaive} and its proof, SCP ensures that all non-decaying modes in the asymptotic expansion of $r$, apart from those coming from the linearized Kerr--de~Sitter family, are pure gauge modes, i.e.\ lie in the range of $\delta_{g_{b_0}}^*$, \emph{regardless} of whether the Cauchy data of $r$ satisfy the linearized constraint equations.

Let us fix a cutoff function $\chi$ as in \eqref{EqKdSInCutoff}. Then, combining SCP with UEMS yields the following result, which is a concrete instance of the general results proved in \S\ref{SubsecAsySm}:
\begin{prop}
\label{PropKdSLStabComplement}
  For $b'\in T_{b_0}B$, let $\omega^\Ups_{b_0}(b')$ denote the solution of the Cauchy problem
  \[
    \begin{cases}
      D_{g_{b_0}}\Ups\bigl(\delta_{g_{b_0}}^*\omega^\Ups_{b_0}(b')\bigr) = -D_{g_{b_0}}\Ups(g'_{b_0}(b')) & \tn{in }\Omega^\circ, \\
      \gamma_0(\omega^\Ups_{b_0}(b')) = (0,0) & \tn{on }\Sigma_0,
    \end{cases}
  \]
  with $\gamma_0$ defined in \eqref{EqKdSWaveInitialData}; recall from \eqref{EqHypLinGauge}--\eqref{EqHypLinBoxGauge} that this is a wave equation. Define
  \[
    g^{\prime\Ups}_{b_0}(b') := g'_{b_0}(b') + \delta_{g_{b_0}}^*\omega^\Ups_{b_0}(b'),
  \]
  which thus solves the linearized gauged Einstein equation $L_{b_0}(g^{\prime\Ups}_{b_0}(b'))=0$. Fix $s>1/2$, and let $\alpha>0$ be sufficiently small. Then there exists a finite-dimensional linear subspace
  \[
    \Theta \subset \CIc(\Omega^\circ,T^*\Omega^\circ)
  \]
  such that the following holds: for any $(f,u_0,u_1)\in D^{s,\alpha}(\Omega;\Tb^*_\Omega M)$, there exist unique $b'\in T_{b_0}B$ and $\theta\in\Theta$ such that the solution of the forward problem
  \[
    L_{b_0} \wt r = f - \tdel^*\theta - L_{b_0}(\chi g^{\prime\Ups}_{b_0}(b')),\quad \gamma_0(\wt r)=(u_0,u_1),
  \]
  satisfies $\wt r\in\Hbext^{s,\alpha}(\Omega;S^2\,\Tb^*_\Omega M)$, and the map $(f,u_0,u_1)\mapsto(b',\theta)$ is linear and continuous.
\end{prop}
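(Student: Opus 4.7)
The plan is to deduce the result from Corollary~\ref{CorAsySmResNoAsy} applied to $L = L_{b_0}$, with the finite-dimensional modification space $\cZ = T_{b_0}B \oplus \Theta$ injected into $D^{s,\alpha}(\Omega;S^2\,\Tb^*_\Omega M)$ by
\[
  (b',\theta) \longmapsto z^{(b',\theta)} := \bigl(-\tdel^*\theta - L_{b_0}(\chi g^{\prime\Ups}_{b_0}(b')),\,0,\,0\bigr),
\]
whose first component is compactly supported in $\Omega^\circ$ by construction of $\chi$ and because $L_{b_0} g^{\prime\Ups}_{b_0}(b')=0$ forces $L_{b_0}(\chi g^{\prime\Ups}_{b_0}(b')) = [L_{b_0},\chi]g^{\prime\Ups}_{b_0}(b')$ to be supported where $\chi'\ne 0$. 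If the pairing map $\lambda_\cZ$ of Corollary~\ref{CorAsySmResNoAsy} is \emph{bijective} onto $\cL(R^*,\ol\C)$, where $R^* = \Res^*(L_{b_0},\Xi)$ and $\Xi = \Res(L_{b_0}) \cap \{\Im\sigma > -\alpha\}$ (a subset of $\{\Im\sigma\ge 0\}$ for small $\alpha>0$ by ESG), then the corollary delivers a unique decaying $\wt r$ and a continuous linear dependence of $(b',\theta)$ on $(f,u_0,u_1)$, which is the assertion.

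To describe $R^*$ I would first use SCP together with UEMS to characterize the primal resonant states. Given a generalized mode $r$ of $L_{b_0}$ at frequency $\sigma\in\Xi$, applying $\delta_{g_{b_0}}G_{g_{b_0}}$ to $L_{b_0}r=0$ and invoking the linearized second Bianchi identity yields $\wtBoxCP_{g_{b_0}}\bigl(D_{g_{b_0}}\Ups(r)\bigr)=0$; since $D_{g_{b_0}}\Ups(r)$ is a generalized mode at the same frequency, SCP (Theorem~\ref{ThmKeySCP}) forces $D_{g_{b_0}}\Ups(r)=0$. Hence $r$ solves $D_{g_{b_0}}(\Ric+\Lambda)(r)=0$, and Theorem~\ref{ThmKeyUEMS} together with Lemma~\ref{LemmaKdSLStabLUEMS} yields $r = g'_{b_0}(b') + \delta_{g_{b_0}}^*\omega$ for some $b'\in T_{b_0}B$ (with $b'=0$ if $\sigma\neq 0$) and some generalized mode 1-form $\omega$. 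By Proposition~\ref{PropAsySmPert} this description transfers dually to $R^*$, and we obtain $\dim R^* = 4 + N$, where $N$ counts a basis of the purely-gauge primal resonant states; here the 4 comes from $d_{b_0}=4$ at Schwarzschild--de~Sitter parameters (Remark~\ref{RmkKdSSlowLinMetDegFreedom}).

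Then I would construct $\Theta$ explicitly: pick a basis $\delta_{g_{b_0}}^*\omega_1,\ldots,\delta_{g_{b_0}}^*\omega_N$ of the purely-gauge primal resonant states with each $\omega_j$ a generalized mode 1-form, and set $\theta_j := \chi\omega_j\in\CIc(\Omega^\circ,T^*\Omega^\circ)$, $\Theta := \mathspan\{\theta_1,\ldots,\theta_N\}$. The forward solution of $L_{b_0}\wt r = -\tdel^*\theta_j$ with zero Cauchy data has leading asymptotic profile $\delta_{g_{b_0}}^*\omega_j$ modulo $\Hbext^{s,\alpha}$: indeed $\delta_{g_{b_0}}^*(\chi\omega_j)$ solves $L_{b_0}\bigl(\delta_{g_{b_0}}^*(\chi\omega_j)\bigr) = -\tdel^*D_{g_{b_0}}\Ups\bigl(\delta_{g_{b_0}}^*(\chi\omega_j)\bigr)$, which equals $-\tdel^*\theta_j$ up to operators supported where $\chi'\ne 0$ (hence contributing only decaying asymptotics). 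Analogously, the forward solution of $L_{b_0}\wt r=-L_{b_0}(\chi g^{\prime\Ups}_{b_0}(b'))$ has leading asymptotic profile $g^{\prime\Ups}_{b_0}(b')$. This produces $4+N=\dim R^*$ non-decaying asymptotic profiles indexed by $(b',\theta)\in T_{b_0}B\oplus\Theta$.

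The main obstacle will be to establish bijectivity of $\lambda_\cZ$, i.e.\ linear independence of these $4+N$ profiles modulo $\Hbext^{s,\alpha}$, or equivalently in the dual pairing against a basis of $R^*$. Non-degeneracy of the primal/dual pairing gives the cleanest route: I would verify that the map $(b',\theta)\mapsto [\text{leading asymptotic of the associated forward solution}]$ from $T_{b_0}B\oplus\Theta$ into primal resonant states modulo decay is injective. Injectivity in the $b'$ factor modulo pure-gauge modes is precisely $d_{b_0}=4$. Injectivity on $\Theta$ reduces to the statement that if $\delta_{g_{b_0}}^*(\sum_j c_j \omega_j)$ is a decaying term (that is, vanishes as a resonant state), then all $c_j=0$; this follows by applying the SCP$+$UEMS argument to the mode 1-form $\sum_j c_j\omega_j$ itself and invoking the linear independence of the $\delta_{g_{b_0}}^*\omega_j$ by choice of basis. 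Finally, independence between the $b'$ and $\theta$ contributions follows because the $g^{\prime\Ups}_{b_0}(b')$ are chosen outside the range of $\delta_{g_{b_0}}^*$ (they satisfy $D\Ups(g^{\prime\Ups}_{b_0}(b'))=0$ by construction and represent the genuine physical degrees of freedom of linearized Kerr--de~Sitter). With $\lambda_\cZ$ bijective, Corollary~\ref{CorAsySmResNoAsy} completes the proof.
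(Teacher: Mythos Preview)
Your overall strategy---apply Corollary~\ref{CorAsySmResNoAsy} to $L_{b_0}$ with a modification space built from the linearized Kerr--de~Sitter family plus pure gauge contributions identified via SCP and UEMS---matches the paper's. However, there are two concrete errors in the execution.

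First, your definition $\theta_j := \chi\omega_j$ is wrong: since $\chi\equiv 1$ for $t_*\geq 2$ and $\omega_j$ is a generalized mode, $\chi\omega_j$ is \emph{not} compactly supported, so it does not lie in $\CIc(\Omega^\circ,T^*\Omega^\circ)$. The subsequent claim that $-\tdel^*D_{g_{b_0}}\Ups\bigl(\delta_{g_{b_0}}^*(\chi\omega_j)\bigr)$ equals $-\tdel^*\theta_j$ up to compactly supported terms is then meaningless, because $D_{g_{b_0}}\Ups\circ\delta_{g_{b_0}}^*$ applied to $\chi\omega_j$ bears no obvious relation to $\chi\omega_j$ itself. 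The correct object (which the paper uses) is $\theta_j := -D_{g_{b_0}}\Ups\bigl(\delta_{g_{b_0}}^*(\chi\omega_j)\bigr)$; this \emph{is} compactly supported precisely because $D_{g_{b_0}}\Ups(\delta_{g_{b_0}}^*\omega_j)=0$ (a consequence of SCP applied to the resonant state $\delta_{g_{b_0}}^*\omega_j$), so only the commutator with $\chi$ survives.

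Second, your treatment of $g^{\prime\Ups}_{b_0}(b')$ is too loose. This tensor solves $L_{b_0}g^{\prime\Ups}_{b_0}(b')=0$, but it is \emph{not} a priori a generalized mode at a single frequency: $\omega^\Ups_{b_0}(b')$ solves a wave equation with non-trivial forcing, so it has an asymptotic expansion involving potentially several resonances of $\BoxGauge_{g_{b_0}}$, and hence so does $g^{\prime\Ups}_{b_0}(b')$. The paper handles this by extracting the zero-frequency part $g^{\prime\Ups}_{b_0}(b')^{(0)}\in\Res(L_{b_0},0)$, defining $K$ as the span of these (with $\dim K=d_0\leq 4$), and then choosing a complement of $K$ inside $\Res(L_{b_0},0)$ consisting of pure gauge states $\delta_{g_{b_0}}^*\omega_\ell$. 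The non-zero-frequency parts of $g^{\prime\Ups}_{b_0}(b')$ are then automatically pure gauge (again by SCP and UEMS) and are absorbed into the $\tdel^*\Theta$ contributions. Your independence argument---that $g^{\prime\Ups}_{b_0}(b')$ lies ``outside the range of $\delta_{g_{b_0}}^*$''---is therefore not correct as stated: the part of $g^{\prime\Ups}_{b_0}(b')$ at non-zero frequencies \emph{is} pure gauge, and even at zero frequency only the quotient $g'_{b_0}(b')$ modulo $\ran\delta_{g_{b_0}}^*$ is physical. The paper's construction by explicit basis choices sidesteps this by building $\Theta$ to be exactly complementary to $K$ at each resonance, making bijectivity of $\lambda_\cZ$ immediate.
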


Thus, the equation $L_{b_0}r=f$, $\gamma_0(r)=(u_0,u_1)$, has an exponentially decaying solution if we modify the right hand side by an element in the finite-dimensional space $\tdel^*\Theta+L_{b_0}(\chi g^{\prime\Ups}_{b_0}(T_{b_0}B)) \subset \CIc(\Omega^\circ,S^2T^*\Omega^\circ)$; in other words, this space is a complement to the range of $(L_{b_0},\gamma_0)$ acting on symmetric 2-tensors in $\Hbext^{\infty,\alpha}$, i.e.\ which have decay least like $e^{-\alpha t_*}$.

\begin{rmk}
\label{RmkKdSLStabGaugedKdSMetrics}
  The pure gauge modification $\delta_{g_{b_0}}^*\omega^\Ups_{b_0}(b')$ of $g'_{b_0}(b')$ is necessary in view of the fact that the specific form of the Kerr--de~Sitter family of metrics given in \S\ref{SubsecKdSSlow} did not take any gauge considerations into account; hence $g'_{b_0}(b')$, while lying in the kernel of $D_{g_{b_0}}(\Ric+\Lambda)$, will in general not satisfy the linearized gauge condition, so $D_{g_{b_0}}\Ups(g'_{b_0}(b'))\neq 0$.
\end{rmk}

\begin{proof}[Proof of Proposition~\ref{PropKdSLStabComplement}]
  Let $\sigma_1=0$ and $\sigma_j\neq 0$, $j=2,\ldots,N_L$, denote the resonances of $L_{b_0}$ with non-negative imaginary part. For $j\geq 2$, fix a basis $\{r_{j1},\ldots,r_{j d_j}\}$ of $\Res(L_{b_0},\sigma_j)$. Then SCP and UEMS, in the sharper form given by Lemma~\ref{LemmaUEMSWithLogs}, imply (as explained in \S\ref{SubsecKeySCP}) the existence of $\omega_{j\ell}\in\CI(\Omega^\circ,T^*\Omega^\circ)$ such that $r_{j\ell}=\delta_{g_{b_0}}^*\omega_{j\ell}$. We define
  \[
    \theta_{j\ell} := -D_{g_{b_0}}\Ups\bigl(\delta_{g_{b_0}}^*(\chi\omega_{j\ell})\bigr),
  \]
  the point being that $L_{g_{b_0}}\bigl(\delta_{g_{b_0}}^*(\chi\omega_{j\ell})\bigr) = \tdel^*\theta_{j\ell}$, and then put
  \[
    \Theta_{\neq 0} := \mathspan\{ \theta_{j\ell} \colon j=2,\ldots,N_L,\ \ell=1,\ldots,d_j \}.
  \]
  We recall here that SCP implies that $D_{g_{b_0}}\Ups(r_{j\ell})=D_{g_{b_0}}\Ups(\delta_{g_{b_0}}^*\omega_{j\ell})\equiv 0$, hence the $\theta_{j\ell}$ are indeed compactly supported in $\Omega^\circ$.
  
  At the zero resonance, we first recall that $\omega^\Ups_{b_0}(b')$ has an asymptotic expansion up to an exponentially decaying remainder: indeed, $\BoxGauge_{g_{b_0}}=-2 D_{g_{b_0}}\Ups\circ\delta_{g_{b_0}}^*$ differs from $\Box_{g_{b_0}}$ by a term of order $0$, i.e.\ a sub-subprincipal term, by our definition of $\Ups$ (see also the discussion after \eqref{EqHypLinBoxGauge}), and hence the main theorem from \cite{HintzPsdoInner} applies. Thus, $g^{\prime\Ups}_{b_0}(b')$ has an asymptotic expansion up to an exponentially decaying remainder as well, hence its part $g^{\prime\Ups}_{b_0}(b')^{(0)}$ coming from the zero resonance is well-defined, and $g^{\prime\Ups}_{b_0}(b')^{(0)}\in\Res(L_{b_0},0)$. Define the linear subspace
  \[
    K := \{ g^{\prime\Ups}_{b_0}(b')^{(0)} \colon b'\in T_{b_0}B \} \subset \Res(L_{b_0},0),
  \]
  which has dimension $d_0\leq 4$. (One can in fact show that $d_0=4$, see Remark~\ref{RmkKdSSlowLinMetDegFreedom}, but this is irrelevant here.) Using Theorem~\ref{ThmKeyUEMS} \eqref{ItKeyUEMSZero}, we infer the existence of a complement of $K$ within $\Res(L_{b_0},0)$ which has a basis of the form $\{\delta_{g_{b_0}}^*\omega_\ell \colon 1\leq\ell\leq d_1\}$, $d_1=\dim\Res(L_{b_0},0)-d_0$, and we then define
  \[
    \theta_\ell := -D_{g_{b_0}}\Ups\bigl(\delta_{g_{b_0}}^*(\chi\omega_\ell)\bigr),\quad \Theta_0 := \mathspan\{\theta_\ell\colon 1\leq\ell\leq d_1\},
  \]
  and
  \[
    \Theta := \Theta_0 \oplus \Theta_{\neq 0};
  \]
  By construction, $g^{\prime\Ups}_{b_0}(b')-g^{\prime\Ups}_{b_0}(b')^{(0)}$ is a pure gauge 2-tensor annihilated by $L_{b_0}$ and hence a linear combination of $\delta_{g_{b_0}}^*\omega_{j\ell}$ and $\delta_{g_{b_0}}^*\omega_\ell$ up to an exponentially decaying remainder. Therefore, if we define the space
  \[
    \cZ := L_{b_0}\bigl(\chi g^{\prime\Ups}_{b_0}(T_{b_0}B)\bigr) + \tdel^*\Theta \subset \CIc(\Omega^\circ;S^2T^*\Omega^\circ) \hra D^{\infty,\alpha}(\Omega;S^2\,\Tb^*_\Omega M),
  \]
  the proposition follows from the bijective form of Corollary~\ref{CorAsySmResNoAsy}. Indeed, the bijectivity of the map $\lambda_\cZ$ in the statement of that corollary follows from dimension counting: on the one hand, $\lambda_\cZ$ is surjective by construction, as all non-decaying asymptotics can be eliminated by adding a solution of $L_{b_0}=z$ for some $z\in\cZ$. On the other hand, also by construction, the dimension of $\cZ$ is at most as large as the space of resonant states of $L_{b_0}$ which are not exponentially decaying; this proves the injectivity of $\lambda_\cZ$.
\end{proof}

The linear stability around $g_{b_0}$ in the initial value formulation \eqref{EqKdSLStabNaiveIVP} can now be re-proved as follows: there exist $b'\in T_{b_0}B$ and $\theta\in\Theta$ as in the proposition such that the solution of the Cauchy problem
\[
  L_{b_0}\wt r = -L_{b_0}(\chi g^{\prime\Ups}_{b_0}(b')) - \tdel^*\theta,
\]
with Cauchy data for $\wt r$ as in \eqref{EqKdSLStabNaiveIVP}, is exponentially decaying; rewriting this using the definition of $L_{b_0}$ shows that
\[
  r_1:=\chi g^{\prime\Ups}_{b_0}(b')+\wt r
\]
--- which has the same Cauchy data---solves
\begin{equation}
\label{EqKdSLStabBetterIVP}
  D_{g_{b_0}}(\Ric+\Lambda)(r_1) - \tdel^*(D_{g_{b_0}}\Ups(r_1) - \theta) = 0.
\end{equation}
In order to relate this to \eqref{EqKdSLStabNaiveIVP}, rewrite $r_1$ as
\[
  r_1 = g^{\prime\Ups}_{b_0}(b') + \wt r_1,
\]
where $\wt r_1=\wt r-(1-\chi)g^{\prime\Ups}_{b_0}(b')$ differs from $\wt r$ only near $\Sigma_0$; then, writing $\theta$ in terms of the 1-forms $\omega_{j\ell}$ and $\omega_\ell$ from the proof of Proposition~\ref{PropKdSLStabComplement}, recovers the solution $r$ of the unmodified equation \eqref{EqKdSLStabNaiveIVP} in the form \eqref{EqKdSLStabNaiveIVPSol}.

Finally, since the gauge condition $D_{g_{b_0}}\Ups(r_1)-\theta=0$ and the linearized constraints are satisfied at $\Sigma_0$ by construction of the problem \eqref{EqKdSLStabNaiveIVP} as $\theta$ is supported away from $\Sigma_0$, the constraint propagation equation implies that the gauge condition holds globally, and therefore $D_{g_{b_0}}(\Ric+\Lambda)(r_1)=0$ is indeed the solution of the equations of linearized gravity with the given initial data. This again proves linear stability. In this argument, $\theta$ is the change of gauge which ensures that the solution $r_1$ of the initial value problem for \eqref{EqKdSLStabBetterIVP} is equal to a gauged linearized Kerr--de~Sitter solution.

This argument eliminates the disadvantages of our earlier proof and allows the linear stability of $g_b$ with $b$ near $b_0$ to be proved by a perturbative argument. First, regarding the choice of gauge, we note that $g=g_b$ satisfies $(\Ric+\Lambda)(g)-\tdel^*(\Ups(g)-\Ups(g_b))=0$, which suggests the gauge condition $\Ups(g)-\Ups(g_b)=0$; the linearization of this equation in $g$ is precisely $L_b r=0$ with $L_b$ as in \eqref{EqKdSLStabOp}, and the linearized gauge condition at $\Sigma_0$ then reads $D_{g_b}\Ups(r)=0$. Returning to the perturbative argument then, and defining correctly gauged linearized Kerr--de~Sitter metrics $g^{\prime\Ups}_b(b')$ for $b\in\cU_B$ in a fashion similar to Proposition~\ref{PropKdSLStabComplement}, see also Lemma~\ref{LemmaKdSLStabComplementCont} below, we will show that the finite-dimensional vector space $L_b(\chi g^{\prime\Ups}_b(T_b B))+\tdel^*\Theta\subset\CIc(\Omega^\circ,S^2T^*\Omega^\circ)$---with $\Theta$ the \emph{same} space as the one constructed in Proposition~\ref{PropKdSLStabComplement}---can be arranged to depend continuously (even smoothly) on $b$. Then Corollary~\ref{CorAsySmPertNoAsy} applies, and therefore the above proof immediately carries over to show the linear stability of the Kerr--de~Sitter family linearized around $g_b$, $b\in\cU_B$. More precisely, in order to invoke Corollary~\ref{CorAsySmPertNoAsy}, one needs to parameterize the space $L_b(\chi g^{\prime\Ups}_b(T_b B))+\tdel^*\Theta$; this can be accomplished by choosing an isomorphism
\[
  \vartheta\colon\R^{N_\Theta}\to\Theta,\quad N_\Theta=\dim\Theta,
\]
which gives the parameterization $\R^{4+N_\Theta}\ni(b',\bfc)\mapsto L_b(\chi g^{\prime\Ups}_b(b'))+\tdel^*\vartheta(\bfc)$.

We proceed to establish the continuity claims involved in this argument.

\begin{lemma}
\label{LemmaKdSLStabComplementCont}
  For $b\in\cU_B$ and $b'\in T_b B$, let $\omega^\Ups_b(b')$ be the solution of the wave equation
  \begin{equation}
  \label{EqKdSLStabComplementContIVP}
    \begin{cases}
      D_{g_b}\Ups(\delta_{g_b}^*\omega^\Ups_b(b'))=-D_{g_b}\Ups(g'_b(b')) & \tn{in }\Omega^\circ, \\
      \gamma_0(\omega^\Ups_b(b')) = (0,0) & \tn{on }\Sigma_0,
    \end{cases}
  \end{equation}
  and define
  \begin{equation}
  \label{EqKdSLStabComplementContGBetaPrime}
    g^{\prime\Ups}_b(b') := g'_b(b') + \delta_{g_b}^*\omega^\Ups_b(b').
  \end{equation}
  Then the map
  \[
    \cU_B \times \R^{4+N_\Theta} \ni (b,b',\bfc) \mapsto L_b(\chi g^{\prime\Ups}_b(b'))+\tdel^*\vartheta(\bfc) \in \CIc(\Omega^\circ;S^2T^*\Omega^\circ)
  \]
  is continuous.
\end{lemma}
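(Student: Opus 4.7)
The plan is to reduce the problem to continuous dependence of the wave map gauge correction $\omega^\Ups_b(b')$ on $(b,b')$ in the $\CI$ topology on compact subsets of $\Omega^\circ$, and then to invoke standard local well-posedness for hyperbolic equations. The $\bfc$-dependent summand $\tdel^*\vartheta(\bfc)$ is a fixed linear map into the fixed finite-dimensional space $\tdel^*\Theta \subset \CIc(\Omega^\circ;S^2T^*\Omega^\circ)$, so its continuity is immediate; the entire content of the lemma concerns the first summand.

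The key first step is the observation that $g^{\prime\Ups}_b(b')$ is annihilated by $L_b$. Indeed, $D_{g_b}(\Ric+\Lambda)(g'_b(b'))=0$ by differentiating $(\Ric+\Lambda)(g_b)=0$ in $b$; $D_{g_b}(\Ric+\Lambda)(\delta^*_{g_b}\omega)=0$ for any $\omega$ by diffeomorphism invariance of $\Ric+\Lambda(\cdot)$; and the Cauchy problem \eqref{EqKdSLStabComplementContIVP} is precisely arranged so that $D_{g_b}\Ups(g^{\prime\Ups}_b(b'))\equiv 0$ on $\Omega^\circ$. Hence $L_b(g^{\prime\Ups}_b(b'))=0$, and therefore
\[
  L_b(\chi g^{\prime\Ups}_b(b')) = [L_b,\chi]\,g^{\prime\Ups}_b(b'),
\]
which is the result of applying a first-order differential operator with coefficients supported in $\supp d\chi \subset \{1\le t_*\le 2\}$, a compact subset of $\Omega^\circ$. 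In particular the output automatically lies in $\CIc(\Omega^\circ;S^2T^*\Omega^\circ)$, and we only need continuity of $(b,b')\mapsto g^{\prime\Ups}_b(b')|_U$ in the $\CI$ topology for a small open neighborhood $U$ of $\supp d\chi$.

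For the second step, the coefficients of the first-order operator $[L_b,\chi]$ depend smoothly on $b\in\cU_B$: this follows from Proposition~\ref{PropKdSSlowCompSmooth} combined with the fact that the maps $g\mapsto\Ric(g)$, $g\mapsto\Ups(g)$ and $g\mapsto \delta^*_g$ are smooth (polynomial in $g^{\pm1}$ and its derivatives). Likewise $(b,b')\mapsto g'_b(b')$ is smooth on $\Omega^\circ$ in the $\CI$ topology by its very definition as a parameter derivative of the smooth family $g_b$. Thus everything reduces to continuity of the map $(b,b')\mapsto \omega^\Ups_b(b')|_U$.

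The third step invokes local well-posedness. The Cauchy problem \eqref{EqKdSLStabComplementContIVP} is a principally scalar linear wave equation whose operator $\BoxGauge_{g_b} = -2 D_{g_b}\Ups\circ\delta^*_{g_b}$ and right-hand side $-D_{g_b}\Ups(g'_b(b'))$ depend smoothly on $(b,b')$ in the $\CI$ topology, with zero Cauchy data. Standard energy estimates on the finite slab $\{0\le t_*\le 3\}$ — for instance, the specialization $\wt w=0$ of Proposition~\ref{PropAsyExpLocal}, applied to $\BoxGauge_{g_b}$ (which fits the framework there since its principal part is $\Box_{g_b}$) and iterated to all Sobolev orders — yield existence of a unique solution in $H^s$ for each $s$, together with continuous (in fact smooth) dependence of the solution on the coefficients and the right-hand side. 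Bootstrapping through all $s$ upgrades this to continuous dependence in the $\CI$ topology on any compact subset of the slab; restricting to $U$ gives what is needed. There is no genuine obstacle here: the whole point is that Step~1 collapses the problem to a \emph{finite-time} hyperbolic question, entirely sidestepping any asymptotic or resonance issues at future infinity, after which local energy estimates with smoothly varying coefficients suffice.
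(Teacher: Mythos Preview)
Your proof is correct and follows essentially the same approach as the paper: the key observation that $L_b(\chi g^{\prime\Ups}_b(b'))=[L_b,\chi]\,g^{\prime\Ups}_b(b')$ reduces the question to continuous dependence of the solution of \eqref{EqKdSLStabComplementContIVP} on the coefficients and forcing in a fixed finite time interval, for which Proposition~\ref{PropAsyExpLocal} is invoked. Your write-up is more detailed in justifying $L_b(g^{\prime\Ups}_b(b'))=0$ and the smooth $b$-dependence of the various ingredients, but the argument is the same.
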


Proposition~\ref{PropKdSLStabComplement} then applies also for Kerr--de~Sitter parameters $b$ near $b_0$, i.e.\ we can replace $b_0$ by $b$ throughout its statement.

\begin{proof}[Proof of Lemma~\ref{LemmaKdSLStabComplementCont}]
  The only non-trivial part of this lemma is the continuous dependence of $L_b(\chi g^{\prime\Ups}_b(b'))$. However, note that
  \[
    L_b(\chi g^{\prime\Ups}_b(b')) = [L_b,\chi] g^{\prime\Ups}_b(b')
  \]
  by construction of $g^{\prime\Ups}_b(b')$, and the lemma follows from the continuous dependence of the solution of \eqref{EqKdSLStabComplementContIVP} on the initial data and the coefficients of the operator $\BoxGauge_{g_b}=-2 D_{g_b}\Ups\circ\delta_{g_b}^*$ \emph{in a fixed finite time interval}; see Proposition~\ref{PropAsyExpLocal} for such a result in a more general, non-smooth coefficient, setting.
\end{proof}

The linear stability result around a slowly rotating Kerr--de~Sitter metric can be formulated completely analogously to Theorem~\ref{ThmKdSLStabNaive}, but it is important to keep in mind that we obtain this by robust \emph{perturbative} methods, relying on the version of Proposition~\ref{PropKdSLStabComplement} for slowly rotating Kerr--de~Sitter spaces discussed above, and using the arguments presented around \eqref{EqKdSLStabBetterIVP} (which $b_0$ replaced by $b$). Thus:

\begin{thm}
\label{ThmKdSLStabKdS}
  Fix $s>1/2$, and let $\alpha>0$ be small. (See Remark~\ref{RmkKeyESGDecay}.) Let $(h_0',k_0')\in  H^{s+1}(\Sigma_0;S^2 T^*\Sigma_0)\oplus H^s(\Sigma_0;S^2 T^*\Sigma_0)$ be solutions of the linearized constraint equations, linearized around the initial data $(h_b,k_b)$ of a slowly rotating Kerr--de~Sitter space $(\Omega,g_b)$, and consider the initial value problem
  \[
    \begin{cases}
      L_b r = 0 & \tn{in }\Omega^\circ, \\
      \gamma_0(r) = D_{(h_b,k_b)}i_b(h'_0,k'_0) & \tn{on }\Sigma_0,
    \end{cases}
  \]
  with $i_{b_0}$ defined in \S\ref{SubsecKdSIn}. Then there exist $b'\in T_b B$ and a 1-form $\omega\in\CI(\Omega^\circ,T^*\Omega^\circ)$ such that
  \[
    r=g'_b(b') + \delta_{g_b}^*\omega + \wt r,
  \]
  with $\wt r\in\Hbext^{s,\alpha}(\Omega;S^2\,\Tb^*_\Omega M)$. In particular, for $s>2$, this implies the $L^\infty$ bound $|\wt r(t_*)|\lesssim e^{-\alpha t_*}$.
  
  More precisely, there exist $\theta\in\CIc(\Omega^\circ;T^*\Omega^\circ)$, lying in the fixed finite-dimensional space $\Theta$, and $\wt r'\in\Hbext^{s,\alpha}(\Omega;S^2\,\Tb^*_\Omega M)$ such that $r=\chi g^{\prime\Ups}_b(b') + \wt r'$ solves $D_{g_b}(\Ric+\Lambda)(r)=0$ (attaining the given initial data) in the gauge $D_{g_b}\Ups(r)-\theta=0$. The norms of $b',\theta$ and $\wt r'$ are bounded by the norm of the initial data.
\end{thm}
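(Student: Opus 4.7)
The plan is to carry out the perturbative argument already sketched between Proposition~\ref{PropKdSLStabComplement} and the statement of Theorem~\ref{ThmKdSLStabKdS}, with $b_0$ replaced throughout by $b\in\cU_B$. The central point is that the finite-dimensional modification space $\Theta\subset\CIc(\Omega^\circ;T^*\Omega^\circ)$, constructed at $b_0$ from UEMS together with SCP, continues to do its job for nearby Kerr--de~Sitter parameters, and the number of parameters $(b',\theta)\in T_b B\oplus\Theta$ remains just right to kill all non-decaying resonant contributions to the solution of $L_b r=0$.

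First I would upgrade Proposition~\ref{PropKdSLStabComplement} to $b\in\cU_B$. For this I use Lemma~\ref{LemmaKdSLStabComplementCont} to assert that $(b,b',\bfc)\mapsto L_b(\chi g^{\prime\Ups}_b(b'))+\tdel^*\vartheta(\bfc)$ is a continuous finite-dimensional family of compactly supported source terms depending linearly on $(b',\bfc)$. SCP persists for $g_b$ with $b$ close to $b_0$ (the construction \eqref{EqKeySCPDeltaTilde} was explicitly designed to be stable under such perturbations), and ESG holds uniformly in $\cU_B$ by Theorem~\ref{ThmKeyESG}. By Corollary~\ref{CorAsySmPertNoAsy} applied to the family $L_b$ with modification map $z(b,(b',\bfc)):=L_b(\chi g^{\prime\Ups}_b(b'))+\tdel^*\vartheta(\bfc)$, the bijectivity at $b_0$ established in Proposition~\ref{PropKdSLStabComplement} is preserved for $b$ near $b_0$. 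Hence for any Cauchy data, there exist unique $b'\in T_b B$ and $\theta\in\Theta$, depending continuously and linearly on the data, such that the forward solution $\wt r$ of
\[
  L_b\wt r = -L_b(\chi g^{\prime\Ups}_b(b'))-\tdel^*\theta,\qquad \gamma_0(\wt r)=D_{(h_b,k_b)}i_b(h'_0,k'_0),
\]
lies in $\Hbext^{s,\alpha}(\Omega;S^2\,\Tb^*_\Omega M)$.

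Next I would set $r_1:=\chi g^{\prime\Ups}_b(b')+\wt r$, which then satisfies the modified gauged Einstein equation
\[
  D_{g_b}(\Ric+\Lambda)(r_1)-\tdel^*\bigl(D_{g_b}\Ups(r_1)-\theta\bigr)=0.
\]
The standard constraint-propagation argument applies here in its modified form: applying $\delta_{g_b}G_{g_b}$ and using the linearized second Bianchi identity yields $\wtBoxCP_{g_b}(D_{g_b}\Ups(r_1)-\theta)=0$. By Corollary~\ref{CorKdSInLin} and our construction of $i_b$, the Cauchy data satisfy $D_{g_b}\Ups(r_1)=0$ on $\Sigma_0$, and $\theta\in\CIc(\Omega^\circ;T^*\Omega^\circ)$ vanishes near $\Sigma_0$; the derivative condition follows as in \S\ref{SubsecHypDT}--\S\ref{SubsecHypLin} from the assumption that $(h'_0,k'_0)$ satisfy the linearized constraint equations. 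Uniqueness for the forward problem of $\wtBoxCP_{g_b}$ then forces $D_{g_b}\Ups(r_1)=\theta$ globally on $\Omega^\circ$, and therefore $D_{g_b}(\Ric+\Lambda)(r_1)=0$. This proves the second, more precise, statement of the theorem.

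Finally, to extract the advertised decomposition of the original solution $r$ of the initial value problem \eqref{EqKdSLStabNaiveIVP} (with $b$ in place of $b_0$), I note that $r$ and $r_1$ have the same Cauchy data (since $\theta$ is supported away from $\Sigma_0$ and $(1-\chi)g^{\prime\Ups}_b(b')$ vanishes there), so by uniqueness $r=r_1=\chi g^{\prime\Ups}_b(b')+\wt r$. Writing $\theta$ as a linear combination of the generators $-D_{g_b}\Ups(\delta_{g_b}^*(\chi\omega_{j\ell}))$ of $\Theta$, absorbing the pure gauge terms $\delta_{g_b}^*(\chi\omega_{j\ell})$ and $\delta_{g_b}^*(\chi\omega^\Ups_b(b'))$ into a single $\delta_{g_b}^*\omega$, and incorporating $(1-\chi)g^{\prime\Ups}_b(b')$ into the $\Hbext^{s,\alpha}$ remainder (which it does not spoil since it is smooth and compactly supported in $t_*$), yields $r=g'_b(b')+\delta_{g_b}^*\omega+\wt r$ with $\wt r\in\Hbext^{s,\alpha}(\Omega;S^2\,\Tb^*_\Omega M)$. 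The main conceptual point, and the only real obstacle, is the inheritance of the bijectivity in Proposition~\ref{PropKdSLStabComplement} from $b=b_0$ to all $b\in\cU_B$; this is precisely what Corollary~\ref{CorAsySmPertNoAsy}, built on SCP and ESG, is designed to deliver, so no further analysis at $b\neq b_0$ (in particular no UEMS for $g_b$) is needed.
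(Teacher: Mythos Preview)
Your overall strategy is exactly the paper's, and the second (``more precise'') part of the theorem is correctly established: the perturbation argument via Corollary~\ref{CorAsySmPertNoAsy} and Lemma~\ref{LemmaKdSLStabComplementCont} gives the exponentially decaying $\wt r$, and the constraint propagation argument showing $D_{g_b}\Ups(r_1)=\theta$ is fine.

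The last paragraph, however, has a genuine gap. You claim $r=r_1$ ``by uniqueness,'' but $r$ solves $L_b r=0$ while $r_1$ solves $L_b r_1=-\tdel^*\theta$; these are \emph{different} Cauchy problems, so $r\neq r_1$ whenever $\theta\neq 0$. (Equivalently: $r$ lies in the gauge $D_{g_b}\Ups(r)=0$ by the same constraint propagation argument, while you just showed $D_{g_b}\Ups(r_1)=\theta$.) The fix is to insert an explicit gauge change: solve the forward problem $\BoxGauge_{g_b}\omega''=2\theta$ with zero Cauchy data as in \eqref{EqHypLinAdjustGauge}; then $L_b(\delta^*_{g_b}\omega'')=\tdel^*\theta$, hence $r_1+\delta^*_{g_b}\omega''$ solves $L_b(\cdot)=0$ with the same Cauchy data as $r$, and \emph{now} uniqueness gives $r=r_1+\delta^*_{g_b}\omega''$. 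From there the decomposition $r=g'_b(b')+\delta^*_{g_b}\omega+\wt r$ follows by the manipulations you sketched. Note also that the generators of $\Theta$ are defined using $g_{b_0}$, not $g_b$; at $b\neq b_0$ there is no identity $\tdel^*\theta_{j\ell}=L_b(\delta^*_{g_b}(\chi\omega_{j\ell}))$, which is precisely why one must solve the wave equation for $\omega''$ rather than read it off from the $\omega_{j\ell}$.
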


As explained in \S\ref{SubsecIntroCsq} and Remark~\ref{RmkKdSStabRingdown} below, one can in principle obtain a more precise asymptotic expansion of $r$; since no rigorous results on shallow resonances for linearized gravity are known, we do not state such results here.

\begin{rmk}
\label{RmkKdSLStabNoTheta}
  It is natural to ask whether the space $\Theta$ in Proposition~\ref{PropKdSLStabComplement} is in fact trivial for the chosen hyperbolic version (or a further modification) of the (linearized) Einstein equation; that is, whether in a suitable formulation of the gauged Einstein equation, linearized around Schwarzschild--de~Sitter, the only non-decaying resonances are precisely given by the linearized Kerr--de~Sitter family. (By a simple dimension counting and perturbation argument, this would continue to hold for slowly rotating Kerr--de~Sitter spaces, too.) If this were the case, one could easily prove linear and non-linear stability without any of the ingredients from \S\ref{SecKey} and \S\ref{SecAsy}, but only using the techniques of \cite{HintzVasyQuasilinearKdS}.
  
  On the static model of de~Sitter space, \emph{the answer to this question is negative} if one restricts to modifications of the gauge and the Einstein equation which are `natural' with respect to the conformal structure of global de~Sitter space, see Remark~\ref{RmkdSBAlwaysZero}.
  
  An additional obstacle is the incompatibility of the gauge with the given form of the Kerr--de~Sitter family, which necessitates the introduction of the correction terms $\omega^\Ups_b(b')$ above. While we cannot exclude the possibility that there is some formulation of Einstein's equations and the Kerr--de~Sitter family so that the answer to the above question is positive, this would presumably be rather delicate to arrange, and would very likely be difficult to generalize to other settings.

  On a related note, we point out that there is no need for the wave operator $\BoxGauge_{g_b}=-2 D_{g_b}\Ups\circ\delta_{g_b}^*$ to satisfy the analogue of SCP, i.e.\ to not have any resonances in the closed upper half plane. (Note that the $\delta_{g_b}^*$, i.e.\ Lie derivative, part of this operator is fixed, so this only depends on the choice of gauge $\Ups$.) This is closely related to the resonances of $L_b$, since the non-decaying resonances of $L_b$, other than the ones coming from the Kerr--de~Sitter family, are pure gauge resonances by SCP and UEMS, and thus they are resonances of $\BoxGauge_{g_b}$
\end{rmk}

We show that a forteriori, Theorem~\ref{ThmKdSLStabKdS} implies the mode stability for the linearized \emph{ungauged} Einstein equation around slowly rotating Kerr--de~Sitter black holes, albeit in a slightly weaker form:
\begin{thm}
\label{ThmKdSLStabUEMS}
  Let $b\in\cU_B$ be the parameters of a slowly rotating Kerr--de~Sitter black hole with parameters close to $b_0$.
  \begin{enumerate}
  \item Let $\sigma\in\C$, $\Im\sigma\geq 0$, $\sigma\neq 0$, and suppose $r(t_*,x)=e^{-i\sigma t_*}r_0(x)$, with $r_0\in\CI(Y,S^2T_Y^*\Omega^\circ)$, is a mode solution of the linearized Einstein equation $D_{g_b}(\Ric+\Lambda)(r) = 0$. Then there exists a 1-form $\omega\in\CI(\Omega^\circ,S^2T^*\Omega^\circ)$ such that $r = \delta_{g_b}^*\omega$.
  \item Suppose $k\in\N_0$, and
    \[
      r(t_*,x) = \sum_{j=0}^k t_*^j r_j(x),\quad r_j\in\CI(Y,S^2T_Y^*\Omega^\circ),\ j=0,\ldots,k,
    \]
    is a generalized mode solution of $D_{g_b}(\Ric+\Lambda)r=0$. Then there exist $b'\in T_b B$ and $\omega\in\CI(\Omega^\circ,S^2T^*\Omega^\circ)$ such that $r = g_b'(b') + \delta_{g_b}^*\omega.$
  \end{enumerate}
\end{thm}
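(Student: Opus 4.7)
\emph{Proof proposal.} Both parts follow by combining Theorem~\ref{ThmKdSLStabKdS} with the linearized diffeomorphism invariance of the Einstein equation. I focus on part (1); part (2) reduces to the same circle of ideas via an induction on the polynomial degree $k$ paralleling Lemma~\ref{LemmaKdSLStabLUEMS}.

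Let $r = e^{-i\sigma t_*}r_0(x)$ be a mode solution of $D_{g_b}(\Ric+\Lambda)r = 0$ at $\sigma\neq 0$, $\Im\sigma\geq 0$. The geometric data $(h',k')$ that $r$ induces on $\Sigma_0$ satisfy the linearized constraint equations (cf.\ \S\ref{SubsecHypLin}), so Theorem~\ref{ThmKdSLStabKdS} applied to $(h',k')$ produces a companion solution of the linearized Einstein equation, inducing the same data on $\Sigma_0$, of the form
\[
  r^\sharp = g'_b(b') + \delta_{g_b}^*\omega + \wt r,\qquad \wt r \in \Hbext^{s,\alpha}(\Omega;S^2\Tb^*_\Omega M).
\]
Tracking through Proposition~\ref{PropKdSLStabComplement} and its version for $b$ near $b_0$, $\omega$ is, modulo a compactly supported correction that may be absorbed into $\wt r$, equal to $\chi\,\omega^\Ups_b(b')$, and $\omega^\Ups_b(b')$ solves the wave equation \eqref{EqKdSLStabComplementContIVP} with stationary forcing; hence $\omega$ itself admits a finite asymptotic expansion into generalized modes plus an $\Hbext^{s,\alpha}$ tail. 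Since $r$ and $r^\sharp$ both solve the linearized ungauged Einstein equation with identical geometric initial data on $\Sigma_0$, linearized diffeomorphism invariance, realized by solving the wave-type equation \eqref{EqHypLinAdjustGauge}, yields a smooth 1-form $\omega_0 \in \CI(\Omega^\circ;T^*\Omega^\circ)$ with $r - r^\sharp = \delta_{g_b}^*\omega_0$. Consequently
\begin{equation}
\label{EqProposalKey}
  r = g'_b(b') + \delta_{g_b}^*(\omega+\omega_0) + \wt r.
\end{equation}

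The plan is then to isolate the mode-at-$\sigma$ component on both sides of \eqref{EqProposalKey}. On the left this is $r$ itself; on the right, $g'_b(b')$ lives at frequency $0 \ne \sigma$ and $\wt r$ decays, so both contribute nothing. This will yield $r = \delta_{g_b}^*\omega'$ with $\omega'(t_*,x) = e^{-i\sigma t_*}\omega'_0(x)$ for $\omega'_0 \in \CI(Y;T^*_Y\Omega^\circ)$ the extracted coefficient. Concretely, cutting off \eqref{EqProposalKey} by a function $\chi=\chi(t_*)$ vanishing near $\Sigma_0$ and equal to $1$ for large $t_*$, and Mellin transforming in $\tau=e^{-t_*}$, the right hand side is meromorphic in $\Im\zeta>-\alpha$ with a simple pole at $\zeta=\sigma$ arising solely from $r$, with residue (up to a factor of $i$) equal to $r_0$. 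The coefficient $\omega'_0$ is then obtained by solving the stationary equation $\wh{\delta_{g_b}^*}(\sigma)\omega'_0 = r_0$ on $Y$---solvable precisely because $\omega+\omega_0$ already realizes a solution globally on $\Omega^\circ$. Tracing back gives $\delta_{g_b}^*\omega' = r$. Part (2) runs analogously: extracting instead the stationary (resp.\ polynomially-growing) $\sigma=0$ component of \eqref{EqProposalKey} and then inducting on $k$ using that the leading $t_*^k$ coefficient of $r$ is itself a stationary solution of the linearized Einstein equation, the $k=0$ case yields $r_k = g'_b(b'_k) + \delta_{g_b}^*\omega_k$, so subtracting $t_*^k\bigl(g'_b(b'_k) + \delta_{g_b}^*\omega_k\bigr)$ reduces to a generalized mode of degree $k-1$.

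The main obstacle is the rigorous justification of the mode-at-$\sigma$ extraction, since $\omega_0$ is only defined up to a Killing 1-form of $g_b$ and has no a priori controlled growth. The clean resolution uses that all (asymptotic) Killing 1-forms of $g_b$ for $b$ close to $b_0$ are stationary (generated by $\pa_t$ and, when $a\neq 0$, $\pa_\phi$), so the kernel of $\wh{\delta_{g_b}^*}(\sigma)$ at a non-zero frequency $\sigma$ is trivial and the residue of the Mellin transform of the cutoff of $\omega+\omega_0$ at $\zeta=\sigma$ is unambiguously determined by \eqref{EqProposalKey}; that residue may equivalently be obtained by solving $\wh{\delta_{g_b}^*}(\sigma)\omega'_0 = r_0$ directly on $Y$, bypassing the need to analyze $\omega_0$ globally.
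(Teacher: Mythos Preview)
Your route differs from the paper's and carries a genuine gap. You invoke Theorem~\ref{ThmKdSLStabKdS} on the induced data $(h',k')$ to produce $r^\sharp$, then appeal to linearized geometric uniqueness to write $r - r^\sharp = \delta_{g_b}^*\omega_0$. As you correctly note, $\omega_0$ has no a priori growth control, so the Mellin transform of $\chi(\omega+\omega_0)$ need not be defined on a strip containing $\sigma$, and the mode-at-$\sigma$ extraction is unjustified. Your proposed bypass---solving $\wh{\delta_{g_b}^*}(\sigma)\omega'_0 = r_0$ directly on $Y$---is circular: this is an overdetermined first-order system (sections of a rank-$4$ bundle mapped to a rank-$10$ bundle), and it has a solution precisely when $r_0$ lies in the range of $\wh{\delta_{g_b}^*}(\sigma)$, which is the conclusion you seek; the global identity $\delta_{g_b}^*(\omega+\omega_0)=r-g'_b(b')-\wt r$ does not furnish this without first knowing that $\omega+\omega_0$ itself has a mode at $\sigma$. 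The triviality of $\ker\wh{\delta_{g_b}^*}(\sigma)$ for $\sigma\neq 0$ addresses only uniqueness, not existence. (The gap \emph{can} be closed: $\omega_0$ is concretely built by solving $\BoxGauge_{g_b}$-type wave equations as in \S\ref{SubsecHypLin}, and solutions of these admit resonance expansions by the argument in the proof of Proposition~\ref{PropKdSLStabComplement}; but you do not carry this out.) Your induction for part~(2) has a separate flaw: $t_*^k g'_b(b'_k)$ is not annihilated by $D_{g_b}(\Ric+\Lambda)$ for $k\geq 1$, so subtracting $t_*^k r_k$ from $r$ does not yield a solution of lower polynomial degree.

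The paper sidesteps all of this. It first gauges $r$: solving $\tfrac{1}{2}\BoxGauge_{g_b}\omega = D_{g_b}\Ups(r)$ and passing to the frequency-$\sigma$ part of $r+\delta_{g_b}^*\omega$ (legitimate since $\omega$ has a resonance expansion), one may assume $r$ is a generalized mode solving both the ungauged equation and $L_b r = 0$. Then $f := L_b(\chi r)\in\CIc$; Proposition~\ref{PropKdSLStabComplement} (in its version for $b$) produces $b',\theta$ and a decaying forward solution $\wt r$ of $L_b(\chi g_b^{\prime\Ups}(b')+\wt r) = f + \tdel^*\theta$; and solving $\tfrac{1}{2}\BoxGauge_{g_b}\varpi = \theta$ forward gives $L_b(\delta_{g_b}^*\varpi) = \tdel^*\theta$. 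Hence $L_b\bigl(\chi g_b^{\prime\Ups}(b')+\wt r-\chi r-\delta_{g_b}^*\varpi\bigr)=0$ with the argument vanishing near $\Sigma_0$, so identically zero by forward uniqueness for the hyperbolic operator $L_b$. Every $1$-form in sight solves a $\BoxGauge_{g_b}$-type equation and hence has a resonance expansion; the frequency-$\sigma$ extraction is then immediate and, for $\sigma=0$, automatically produces a single $b'$. No geometric-uniqueness step, and no uncontrolled $\omega_0$, enters.
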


The proof will produce a generalized mode $\omega$; however, for $\sigma\neq 0$, $\omega$ may not be a mode solution as in Theorem~\ref{ThmKeyUEMS} \eqref{ItKeyUEMSNonzero}, and in the context of Lemma~\ref{LemmaUEMSWithLogs} may not be a generalized mode solution with the same power of $t_*$ in its expansion, while for $\sigma=0$, the proof may produce a generalized mode $\omega$ which is more complicated than the one produced by our arguments in \S\ref{SecUEMS}.

\begin{proof}[Proof of Theorem~\ref{ThmKdSLStabUEMS}]
  We reduce this to the linear stability result for the linearized \emph{gauged} Einstein equation, i.e.\ the precise form stated at the end of Theorem~\ref{ThmKdSLStabKdS}. This may seem unnatural at first, but is a very robust way of obtaining the mode stability result we are after right now; see Remark~\ref{RmkKdSLStabWhyStable}.

  In both cases considered in the statement of the theorem, given a (generalized) mode solution $r$, we solve the equation $\frac{1}{2}\BoxGauge_{g_b}\omega=D_{g_b}\Ups(r)$ with arbitrary initial data for $\omega$. Then $\omega$ and thus $\delta_{g_b}^*\omega$ have an asymptotic expansion up to an exponentially decaying remainder term as explained in the proof of Proposition~\ref{PropKdSLStabComplement}, so upon replacing $r$ by the part of the asymptotic expansion of $r+\delta_{g_b}^*\omega$ which has frequency $\sigma$ in $t_*$, we may assume that $r$ is a generalized mode solution of both $D_{g_b}(\Ric+\Lambda)(r)=0$ and $L_b r=0$.

  We now put $f:=L_b(\chi r)\in\CIc(\Omega^\circ,S^2T^*\Omega^\circ)$. By the version of Proposition~\ref{PropKdSLStabComplement} for slowly rotating Kerr--de~Sitter black holes which we established above (see the discussion following the statement of Lemma~\ref{LemmaKdSLStabComplementCont}), we can find $b'\in T_b B$ and $\theta\in\Theta$ such that the forward solution $\wt r$ of
  \begin{equation}
  \label{EqKdSLStabUEMS}
    L_b(\chi g_b^{\prime\Ups}(b')+\wt r) = f + \tdel^*\theta
  \end{equation}
  satisfies $\wt r=\cO(e^{-\alpha t_*})$.

  In order to solve away the second term on the right hand side, we make the ansatz $L_b(\delta_{g_b}^*\varpi)=\tdel^*\theta$ and demand that $\varpi$ vanish near $\Sigma_0$. This equation is equivalent to $\tdel^*(D_{g_b}\Ups(\delta_{g_b}^*\varpi) + \theta)=0$ and hence is solved by solving the forward problem for $\frac{1}{2}\BoxGauge_{g_b}\varpi=\theta$. We can now rewrite \eqref{EqKdSLStabUEMS} as
  \[
    L_b\bigl(\chi g_b^{\prime\Ups}(b')+\wt r - \chi r - \delta_{g_b}^*\varpi\bigr)=0
  \]
  Since the argument of $L_b$ vanishes near $\Sigma_0$, it must vanish identically in $\Omega^\circ$. In its asymptotic expansion (up to exponentially decaying remainder terms), we can then take the part corresponding to the frequency $\sigma$ in $t_*$; upon doing so, $\wt r$ drops out, and we can take $\chi\equiv 1$ to conclude (using the definition \eqref{EqKdSLStabComplementContGBetaPrime} of $g_b^{\prime\Ups}(b')$) that $r$ is indeed a linearized Kerr--de~Sitter metric $g'_b(b')$ (only present if $\sigma=0$) plus a Lie derivative of $g_b$.
\end{proof}

\begin{rmk}
\label{RmkKdSLStabWhyStable}
  As alluded to in the discussion of the proof of Theorem~\ref{ThmKdSLStabNaive}, assuming merely UEMS does not provide any evidence for mode stability to hold for the linearized Einstein equation around Kerr--de~Sitter metrics $g_b$ with non-zero angular momentum. The reason is that $D_{g_{b_0}}(\Ric+\Lambda)$ by itself is a very ill-behaved partial differential operator. Making it hyperbolic by adding a linearized gauge term, i.e.\ considering the operator $L_{b_0}$, defined using any $0$-th order stationary modification $\tdel^*$ of $\delta_{g_{b_0}}^*$, already gives a much more well-behaved operator: for a large class of choices of $\tdel^*$, $L_{b_0}$ will have a positive essential spectral gap and satisfy high energy estimates in the closed upper half plane, i.e.\ ESG is valid, and thus the set of frequencies of those non-decaying modes for the linearized (around $g_{b_0}$) Einstein equation for which UEMS and Lemma~\ref{LemmaUEMSWithLogs} give non-trivial information is reduced from the entire half space $\{\Im\sigma\geq 0\}$ to the finite set $S_{b_0}=\Res(L_{b_0})\cap\{\Im\sigma\geq 0\}$; furthermore, since the set $S_b$ depends continuously on $b$, only frequencies lying in a neighborhood of $S_{b_0}$ can possibly be the frequencies of potential non-decaying non pure gauge modes of $D_{g_b}(\Ric+\Lambda)$. The final ingredient, SCP, then allows us to completely control the non-decaying modes of $L_{b_0}$ modulo pure gauge modes, which allowed us to finish the proof of linear stability of $g_b$ using rather simple perturbation arguments.

  Thus, the proof of Theorem~\ref{ThmKdSLStabNaive}, while very natural for the purpose of proving the linear stability of $g_{b_0}$, uses only a rather small amount of the structure of $(M,g_{b_0})$ available.
\end{rmk}

\section{Non-linear stability of the Kerr--de~Sitter family}
\label{SecKdSStab}

In \S\ref{SubsecKdSStab}, we will discuss the final ingredient of the non-linear stability argument---the `dynamic' change (in the sense that we update it at each step of our iteration scheme) of the asymptotic gauge condition---and conclude the proof of non-linear stability. In \S\ref{SubsecKdSStabIn} then, we discuss the construction of suitable initial data for Einstein's equations. We begin however by recalling the version of the Nash--Moser inverse function theorem which we will use later.

\subsection{Nash--Moser iteration}
\label{SubsecKdSStabNM}

We shall employ the Nash--Moser inverse function theorem given by Saint-Raymond \cite{SaintRaymondNashMoser}; we use his version because of its simplicity and immediate applicability, despite it being, according to the author, ``probably the worst that can be found in the literature [...]~with respect to the number of derivatives that are used.'' We refer to the introduction of that paper for references to more sophisticated versions, and to \cite{HamiltonNashMoser} for a detailed introduction.

\begin{thm}[Main theorem of \cite{SaintRaymondNashMoser}]
\label{ThmKdSStabNM}
  Let $(B^s,|\cdot|_s)$ and $(\bfB^s,\|\cdot\|_s)$ be Banach spaces for $s\geq 0$ with $B^s\subset B^t$ and indeed $|v|_t\leq|v|_s$ for $s\geq t$, likewise for $\bfB^*$ and $\|\cdot\|_*$; put $B^\infty=\bigcap_s B^s$ and similarly $\bfB^\infty=\bigcap_s\bfB^s$. Assume there are smoothing operators $(S_\theta)_{\theta>1}\colon B^\infty\to B^\infty$ satisfying for every $v\in B^\infty$, $\theta>1$ and $s,t\geq 0$:
  \begin{gather*}
    |S_\theta v|_s \leq C_{s,t}\theta^{s-t}|v|_t \tn{ if } s\geq t, \\
	|v-S_\theta v|_s \leq C_{s,t}\theta^{s-t}|v|_t \tn{ if } s\leq t.
  \end{gather*}

  Let $\phi\colon B^\infty\to\bfB^\infty$ be a $C^2$ map, and assume that there exist $u_0\in B^\infty$, $d\in\N$, $\delta>0$ and constants $C_1,C_2$ and $(C_s)_{s\geq d}$ such that for any $u,v,w\in B^\infty$,
  \begin{equation}
  \label{EqKdSStabNMMapCont}
    |u-u_0|_{3d}<\delta \Rightarrow
	  \begin{cases}
	    \forall s\geq d,\quad \|\phi(u)\|_s\leq C_s(1+|u|_{s+d}), \\
		\|\phi'(u)v\|_{2d}\leq C_1|v|_{3d}, \\
		\|\phi''(u)(v,w)\|_{2d}\leq C_2|v|_{3d}|w|_{3d}.
	  \end{cases}
  \end{equation}
  Moreover, assume that for every $u\in B^\infty$ with $|u-u_0|_{3d}<\delta$ there exists an operator $\psi(u)\colon\bfB^\infty\to B^\infty$ satisfying
  \[
    \phi'(u)\psi(u)h=h
  \]
  and the tame estimate
  \begin{equation}
  \label{EqKdSStabNMTameSol}
    |\psi(u)h|_s\leq C_s(\|h\|_{s+d}+|u|_{s+d}\|h\|_{2d}),\quad s\geq d,
  \end{equation}
  for all $h\in\bfB^\infty$. Then if $\|\phi(u_0)\|_{2d}$ is sufficiently small depending on $\delta,|u_0|_D$ and $(C_s)_{s\leq D}$, where $D=16d^2+43d+24$, there exists $u\in B^\infty$ such that $\phi(u)=0$.
\end{thm}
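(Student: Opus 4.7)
The plan is to construct a solution by a smoothed Newton iteration, which is the standard Nash--Moser strategy tailored so that the loss of derivatives in the approximate right inverse $\psi$ is absorbed at each step by the smoothing operators $S_\theta$. Concretely, I would set $u_{n+1} = u_n + v_n$ with $v_n = -\psi(u_n) S_{\theta_n}\phi(u_n)$, and choose $\theta_n = \theta_0^{\alpha^n}$ for some $\alpha \in (1,2)$ and $\theta_0$ large depending on the data $u_0,\delta,(C_s)_{s \leq D}$. The strict inequality $\alpha<2$ is what lets us trade the quadratic convergence of Newton's method against the polynomial loss from the tame estimate \eqref{EqKdSStabNMTameSol}.

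First, I would verify that the iterates stay in the domain $|u-u_0|_{3d}<\delta$ where \eqref{EqKdSStabNMMapCont} and $\psi$ are defined, and simultaneously track two families of inductive estimates: a low-norm decay bound of the form $\|\phi(u_n)\|_{2d} \leq \theta_n^{-\mu}$ for some $\mu>0$, and high-norm growth bounds $|u_n-u_0|_s \leq C\theta_n^{(s-2d)_+ + \eta}$ for $2d \leq s \leq D$. The key algebraic identity driving the induction is the Taylor expansion
\begin{equation*}
\phi(u_{n+1}) = (\mathrm{Id} - S_{\theta_n})\phi(u_n) + \phi'(u_n)(\psi(u_n)S_{\theta_n}\phi(u_n) - \psi(u_n)S_{\theta_n}\phi(u_n)) + Q_n,
\end{equation*}
which (after the cancellation $\phi'(u_n)\psi(u_n) = \mathrm{Id}$) reduces to a smoothing error $(\mathrm{Id}-S_{\theta_n})\phi(u_n)$ controlled by the high-norm bound on $\phi(u_n)$, plus a quadratic remainder $Q_n$ controlled by $|v_n|_{3d}^2$ via the second-derivative bound in \eqref{EqKdSStabNMMapCont}. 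Feeding the tame estimate for $\psi$ and the smoothing inequalities into these two error terms produces exactly the induction $\|\phi(u_{n+1})\|_{2d} \leq \theta_{n+1}^{-\mu}$ provided the exponents are chosen consistently.

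The main obstacle, and the reason for the large value $D = 16d^2 + 43d + 24$ in the theorem, is the bookkeeping required to close this induction for all relevant Sobolev indices simultaneously. One must propagate bounds in a range of high norms $|u_n|_s$ up to $s = D$ (not merely $s=3d$) in order to control the smoothing error $\|(\mathrm{Id}-S_{\theta_n})\phi(u_n)\|_{2d} \leq C\theta_n^{2d-s}\|\phi(u_n)\|_s$; but bounding $\|\phi(u_n)\|_s$ via \eqref{EqKdSStabNMMapCont} reintroduces $|u_n|_{s+d}$, which must itself be estimated. Interpolating between the low-norm decay and crude high-norm bounds using the convexity inequalities implicit in the smoothing assumptions lets one close the loop, but only if the exponents balance; this is a polynomial inequality in $d$ whose solution is the stated $D$. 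Once the induction closes, convergence $u_n \to u$ in $B^{3d}$ and $\phi(u_n) \to 0$ in $\bfB^{2d}$ is immediate, and uniform boundedness in all $B^s$ (again from the induction) yields $u \in B^\infty$ with $\phi(u) = 0$ by interpolation and continuity of $\phi$.
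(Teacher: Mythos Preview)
The paper does not prove this theorem: it is stated as the main theorem of Saint-Raymond's paper \cite{SaintRaymondNashMoser} and invoked as a black box, with no proof given in the present paper. Your sketch is a reasonable outline of the standard Nash--Moser smoothed Newton iteration, which is indeed the method Saint-Raymond uses; the choice of geometrically growing $\theta_n$, the splitting of $\phi(u_{n+1})$ into a smoothing remainder plus a quadratic Taylor error, and the simultaneous induction on low-norm decay and high-norm growth are all the correct ingredients.

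One small point: your displayed Taylor identity has a typo --- the middle term $\phi'(u_n)(\psi(u_n)S_{\theta_n}\phi(u_n) - \psi(u_n)S_{\theta_n}\phi(u_n))$ is identically zero as written. What you mean is simply $\phi(u_{n+1}) = \phi(u_n) + \phi'(u_n)v_n + Q_n$, and then $\phi'(u_n)v_n = -S_{\theta_n}\phi(u_n)$ by the right-inverse property, yielding $(\mathrm{Id}-S_{\theta_n})\phi(u_n) + Q_n$ as you correctly conclude in the following sentence. Also, in Saint-Raymond's actual scheme the smoothing is applied to the correction $v_n$ (one takes $v_n = -S_{\theta_n}\psi(u_n)\phi(u_n)$ or a variant thereof) rather than to $\phi(u_n)$; this changes the bookkeeping slightly since one then also picks up a term $\phi'(u_n)(\mathrm{Id}-S_{\theta_n})\psi(u_n)\phi(u_n)$, but the overall structure of the induction is the same.
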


For our main theorem, we will take $\bfB^s=D^{s,\alpha}(\Omega;S^2\,\Tb^*_\Omega M)$, capturing initial data and inhomogeneous forcing terms, while
\[
  B^s = \R^N \oplus \Hbext^{s,\alpha}(\Omega;S^2\,\Tb^*_\Omega M)
\]
will also take a finite number of additional parameters into account, corresponding to the final black hole parameters and gauge modifications. As in \cite{HintzVasyQuasilinearKdS}, we then define the smoothing operators $S_\theta$ to be the identity on $\R^N$; the construction of $S_\theta$ acting on extendible b-Sobolev spaces $\Hbext^{s,\alpha}$ on the other hand is straightforward. Since the presence of the vector bundle $S^2\,\Tb^*_\Omega M$ is inconsequential for this discussion, we drop it from the notation. Locally near any point on $\Omega\setminus(([0,1]_\tau\times\pa Y)\cup\Sigma_0)$ then, i.e.\ away from the artificial boundaries of $\Omega$, the space $\Hbext^{s,\alpha}$ can be identified either with $H^s(\R^4)$ (if we are away from the boundary at infinity $Y$) or with $\Hb^{s,\alpha}(\Rhalfc{4})$, which in turn is isomorphic to $H^s(\R^n)$ after dividing by $\tau^\alpha$ and using a logarithmic change of coordinates; on the latter space, suitable smoothing operators were constructed in the \cite[Appendix]{SaintRaymondNashMoser}: using the Fourier transform on $\R^n$, they take the form
\[
  \wh{S_\theta u}(\xi) = \chi(\theta^{-1}\xi)\wh u(\xi),
\]
where $\chi\in\CIc(\R^n_\xi)$ is identically $1$ near $0$. In the neighborhood of a point $p\in\{\tau=0\}\cap\pa Y$, we similarly have an identification of $\Hbext^{s,\alpha}$ with $\Hext^s(\Rhalf{4})$, and we can then use bounded extension operators $\Hext^s(\Rhalf{4})\to H^s(\R^4)$, apply smoothing operators on the latter space, restrict back to $\Rhalf{4}$ and use the local identification to get an element of $\Hbext^{s,\alpha}$ near $p$. Near a point in $(0,1)_\tau\times\pa Y$ or $\Sigma_0\setminus\pa\Sigma_0$, where $\Hbext^{s,\alpha}$ can be identified with $\Hext^s(\Rhalf{4})$ again, a similar construction works. Lastly, near points in the corner $\pa\Sigma_0$ of $\Omega$, we can identify $\Hbext^{s,\alpha}$ with $\Hext^s((0,\infty)\times(0,\infty)\times\R^2)$, which embeds into $H^s(\R^4)$, thus we can again use extension and restriction operators as before. Patching together these local constructions via a partition of unity on $\Omega$ gives a smoothing operator $S_\theta$ on $\Hbext^{s,\alpha}$ with the desired properties.

\subsection{Proof of non-linear stability}
\label{SubsecKdSStab}

The precise form of the linear stability statement proved in \S\ref{SecKdSLStab} is not quite what we need for the proof of non-linear stability. Concretely, in order to realize the linearized Kerr--de~Sitter metric $g_b(b')$ as a $0$-resonant state of the linearized gauged Einstein operator $L_b$, we needed to add to it a pure gauge term $\delta_{g_b}^*\omega^\Ups_b(b')$ which in general has a non-trivial asymptotic part at $t_*$-frequency $0$, since this is the case for the right hand side in \eqref{EqKdSLStabComplementContIVP}---indeed, since our construction of the metrics $g_b$ only ensures the smooth dependence on $b$, but does not guarantee any gauge condition, the term $D_{g_b}\Ups(g'_b(b'))$ is in general non-zero and stationary, i.e.\ has $t_*$-frequency $0$. (In fact, without a precise analysis of the resonances of the operator family $\BoxGauge_{g_b}$ it is not even clear if $\omega^\Ups_b(b')$ can be arranged to both depend smoothly on $(b,b')\in T B$ and not be exponentially growing.)

Since the term $\delta_{g_b}^*\omega^\Ups_b(b')$, while pure gauge and therefore harmless for linear stability considerations, cannot be discarded in a non-linear iteration scheme, we need to treat it differently. The idea is very simple: since this is a gauge term, we take care of it by changing the gauge; the point is that changing the final black hole parameters from $b$ to $b+b'$ is incompatible with the gauge $\Ups(g)-\Ups(g_b)$ (with $g\approx g_{b+b'}$ the current approximation of the non-linear solution), but it \emph{is} compatible with an updated gauge $\Ups(g)-\Ups(g_{b+b'})$; updating the gauge in this manner will (almost) exactly account for the term $\delta_{g_b}^*\omega^\Ups_b(b')$.

To motivate the precise formulation, we follow the strategy outlined in \S\ref{SubsecIntroIdeas}: using the notation of \S\ref{SubsecKdSIn}, let us consider the non-linear differential operator
\[
  P_0(b,\theta,\wt g) := (\Ric+\Lambda)(g_{b_0,b}+\wt g) - \tdel^*\bigl(\Ups(g_{b_0,b}+\wt g)-\Ups(g_{b_0,b}) - \theta\bigr),
\]
with $b\in\cU_B$, $\theta$ a modification in some finite-dimensional space which we will determine, and $\wt g$ exponentially decaying. (We reserve the letter `$P$' for the actual non-linear operator used in the proof of non-linear stability below.) Note that the linearization of $P_0$ in $\wt g$ is given by the second order differential operator
\begin{equation}
\label{EqKdSStabLinP}
  L_{b,\wt g}r := (D_{\wt g}P_0(b,\theta,\cdot))(r) = D_{g_{b_0,b}+\wt g}(\Ric+\Lambda)(r) - \tdel^*\bigl(D_{g_{b_0,b}+\wt g}\Ups(r)\bigr),
\end{equation}
while a change in the asymptotic Kerr--de~Sitter parameter $b$ is infinitesimally given by
\begin{align}
\begin{split}
  (D_b P_0(\cdot,\theta,\wt g))(b') &= D_{g_{b_0,b}+\wt g}(\Ric+\Lambda)(\chi g'_b(b')) - \tdel^*\bigl(D_{g_{b_0,b}+\wt g}\Ups(\chi g'_b(b'))\bigr) \\
   &\quad + \tdel^*\bigl(D_{g_{b_0,b}}\Ups(\chi g'_b(b'))\bigr)
\end{split} \nonumber\\
\label{EqKdSStabDPb}
  &=L_{b,\wt g}(\chi g'_b(b')) + \tdel^*\bigl(D_{g_{b_0,b}}\Ups(\chi g'_b(b'))\bigr)
\end{align}
where we use $\frac{d}{ds}g_{b_0,b+sb'}|_{s=0}=\chi g'_b(b')$, see \eqref{EqKdSInPatchedMetric}.

Let us now reconsider the solvability result for $L_{b_0}$ described in Proposition~\ref{PropKdSLStabComplement} and use the specific structure of $g^{\prime\Ups}_{b_0}(b')=g'_{b_0}(b')+\delta_{g_{b_0}}^*\omega^\Ups_{b_0}(b')$ to arrive at a modification of the range which displays the change of the asymptotic gauge advertised above more clearly: namely, instead of $L_{b_0}(\chi g^{\prime\Ups}_{b_0}(b'))$ as in Proposition~\ref{PropKdSLStabComplement}, we use $L_{b_0}(\chi g'_{b_0}(b')+\delta_{g_{b_0}}^*(\chi\omega^\Ups_{b_0}(b')))$ as the modification, which is still compactly supported and thus can be used equally well to eliminate the asymptotic part $g^{\prime\Ups}_{b_0}(b')$ of linear waves. To see the benefit of this, we calculate
\begin{align}
  L_{b_0}&\bigl(\chi g'_{b_0}(b')+\delta_{g_{b_0}}^*(\chi\omega^\Ups_{b_0}(b'))\bigr) = L_{b_0}(\chi g'_{b_0}(b')) - \tdel^*\bigl(D_{g_{b_0}}\Ups(\delta_{g_{b_0}}^*(\chi\omega^\Ups_{b_0}(b')))\bigr) \nonumber\\
    &=L_{b_0}(\chi g'_{b_0}(b')) - \tdel^*\bigl(\bigl[D_{g_{b_0}}\Ups\circ\delta_{g_{b_0}}^*,\chi\bigr] \omega^\Ups_{b_0}(b')\bigr) + \tdel^*\bigl(\chi D_{g_{b_0}}\Ups(g'_{b_0}(b'))\bigr) \nonumber\\
\label{EqKdSStabGaugeRewrite}
    &=L_{b_0}(\chi g'_{b_0}(b')) + \tdel^*\bigl(D_{g_{b_0}}\Ups(\chi g'_{b_0}(b'))\bigr) - \tdel^*\theta_\chi(b'),
\end{align}
where we introduce the notation
\begin{equation}
\label{EqKdSStabGaugeCp}
  \theta_\chi(b') := \bigl[D_{g_{b_0}}\Ups\circ\delta_{g_{b_0}}^*,\chi\bigr]\omega^\Ups_{b_0}(b') + [D_{g_{b_0}}\Ups,\chi](g'_{b_0}(b')).
\end{equation}
The interpretation of the terms in \eqref{EqKdSStabGaugeRewrite} is clear: the first gives rise to linearized Kerr--de~Sitter asymptotics, corresponding to the first term in \eqref{EqKdSStabDPb}, the second corrects the gauge accordingly, corresponding to the second term in \eqref{EqKdSStabDPb} (note that $g_{b_0,b}\equiv g_b$ for $b=b_0$), and the final term patches up the gauge change in the transition region $\supp d\chi$; notice that $\theta_\chi(b')$ is \emph{compactly supported in $t_*$}. We moreover point out that the sum of the first two terms on the right hand side vanishes for large $t_*$ due to $D_{g_{b_0}}(\Ric+\Lambda)(g'_{b_0}(b'))=0$; exponential decay will be the appropriate and stable description when we discuss perturbations.

In order to put the non-linear stability problem into the framework developed in \S\ref{SubsecAsyExp}, we define the space
\begin{equation}
\label{EqKdSStabWtG}
  \wt G^s = \{ \wt g\in\Hbext^{s,\alpha}(\Omega;S^2\,\Tb^*_\Omega M) \colon \|\wt g\|_{\Hbext^{14,\alpha}} < \eps \},
\end{equation}
with $\eps>0$ sufficiently small for all our subsequent arguments---which rely mostly on the results of \S\ref{SubsecAsyExp}---to apply; moreover, we choose a trivialization $T_{\cU_B}B\cong B\times\R^4$. Then, we define the continuous map
\begin{equation}
\label{EqKdSStabZGauge}
\begin{split}
  z^\Ups \colon &\cU_B\times\wt G^{s+2}\times\R^4 \ni (b,\wt g,b') \\
    &\quad \mapsto L_{b,\wt g}(\chi g'_b(b')) + \tdel^*\bigl(D_{g_{b_0,b}}\Ups(\chi g'_b(b'))\bigr) \in \Hb^{s,\alpha}(\Omega;S^2\,\Tb^*_\Omega M)^{\bullet,-}
\end{split}
\end{equation}
for $s\geq 14$, which is just the linearization of $P_0$ in $b$ as in \eqref{EqKdSStabDPb}; the range of $z^\Ups$ consists of modifications which take care of changes of the asymptotic gauge. (The map $z^\Ups$ is certainly linear in $b'$, as $b'\mapsto g'_b(b')$ is linear.) Furthermore, we parameterize the space of compactly supported gauge modifications necessitated by these asymptotic gauge changes by
\begin{equation}
\label{EqKdSStabGaugeCpParam}
  \R^4 \ni b' \mapsto \theta_\chi(b') \in \CIc(\Omega^\circ;\Tb^*_\Omega M),
\end{equation}
with $\theta_\chi$ defined in \eqref{EqKdSStabGaugeCp}. (We could make this map depend on $b$ and $\wt g$, which may be more natural, though it makes no difference since our setup is stable under perturbations.)

We are now prepared to prove the main result of this paper: the non-linear stability of slowly rotating Kerr--de~Sitter spacetimes.

\begin{thm}
\label{ThmKdSStab}
  Let $h,k\in\CI(\Sigma_0;S^2T^*\Sigma_0)$ be initial data satisfying the constraint equations \eqref{EqHypDTConstraints}, and suppose $(h,k)$ is close to the Schwarzschild--de~Sitter initial data $(h_{b_0},k_{b_0})$ (see \eqref{EqKdSInKdSData}) in the topology of $H^{21}(\Sigma_0;S^2T^*\Sigma_0)\oplus H^{20}(\Sigma_0;S^2T^*\Sigma_0)$. Then there exist Kerr--de~Sitter black hole parameters $b\in B$, a compactly supported gauge modification $\theta$, lying in a fixed finite-dimensional space $\ol\Theta\subset\CIc(\Omega^\circ;T^*\Omega^\circ)$, and a section $\wt g\in\Hbext^{\infty,\alpha}(\Omega;S^2\,\Tb^*_\Omega M)$ such that the 2-tensor $g = g_{b_0,b} + \wt g$ is a solution of the Einstein vacuum equations
  \[
    \Ric(g)+\Lambda g = 0,
  \]
  attaining the given initial data $(h,k)$ at $\Sigma_0$, in the gauge $\Ups(g)-\Ups(g_{b_0,b})-\theta=0$ (see \eqref{EqKdSInGauge} for the definition of $\Ups$). More precisely, we obtain $(b,\theta,\wt g)$ and thus $g=g_{b_0,b}+\wt g$ as the solution of
  \begin{equation}
  \label{EqKdSStabIVP}
    \begin{cases}
      \Ric(g) + \Lambda g - \tdel^*\bigl(\Ups(g) - \Ups(g_{b_0,b}) - \theta\bigr) = 0 & \tn{in }\Omega^\circ, \\
      \gamma_0(g) = i_{b_0}(h,k) & \tn{on }\Sigma_0,
    \end{cases}
  \end{equation}
  where $i_{b_0}$ was defined in Proposition~\ref{PropKdSIn}.

  Moreover, the map
  \begin{equation}
  \label{EqKdSStabSmoothSolMap}
    \CI(\Sigma_0;S^2 T^*\Sigma_0)^2\ni(h,k)\mapsto(b,\theta,\wt g)\in\cU_B\times\ol\Theta\times\Hbext^{\infty,\alpha}(\Omega;S^2\,\Tb^*_\Omega M)
  \end{equation}
  is a smooth map of Fr\'echet spaces (in fact, a smooth tame map of tame Fr\'echet spaces) for $(h,k)$ in a neighborhood of $(h_{b_0},k_{b_0})$ in the topology of $H^{21}\oplus H^{20}$.
\end{thm}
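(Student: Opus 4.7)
The plan is to recast the initial value problem \eqref{EqKdSStabIVP} as a zero-finding problem $\phi(u)=0$ where
\[
  u=(b',\bfc,\wt g)\in B^s:=\R^4\oplus\R^{N_\Theta}\oplus\Hbext^{s,\alpha}(\Omega;S^2\,\Tb^*_\Omega M),
\]
$b'$ parameterizes a perturbation $b=b_0+b'$ of the final Kerr--de~Sitter parameters, $\bfc$ parameterizes the gauge correction $\theta=\vartheta(\bfc)$ via a fixed linear isomorphism $\vartheta\colon\R^{N_\Theta}\to\Theta$ (with $\Theta$ as in Proposition~\ref{PropKdSLStabComplement}, now for slowly rotating Kerr--de~Sitter parameters), and $\wt g$ is the exponentially decaying tail. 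Writing $g_u=g_{b_0,b_0+b'}+\wt g$, I would let
\[
  \phi(u)=\bigl(\,(\Ric{+}\Lambda)(g_u)-\tdel^*(\Ups(g_u)-\Ups(g_{b_0,b_0+b'})-\vartheta(\bfc)),\;\gamma_0(g_u)-i_{b_0}(h,k)\,\bigr)\in\bfB^s:=D^{s,\alpha}(\Omega;S^2\,\Tb^*_\Omega M),
\]
and apply the Nash--Moser scheme of Theorem~\ref{ThmKdSStabNM}. The smoothing operators are obtained by acting trivially on the finite-dimensional factors and, on $\Hbext^{s,\alpha}$, by combining bounded extension operators to $H^s(\R^4)$ with the Saint-Raymond smoothers, patched via a partition of unity on $\Omega$ as sketched in \S\ref{SubsecKdSStabNM}. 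At the base point $u_0=(0,0,0)$, the first component of $\phi(u_0)$ vanishes identically since $g_{b_0,b_0}=g_{b_0}$ solves Einstein's equations in the wave-map gauge relative to $g_{b_0}$, while the second component is $i_{b_0}(h,k)-i_{b_0}(h_{b_0},k_{b_0})$, which is quantitatively small by Proposition~\ref{PropKdSIn} and the closeness of the initial data.

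The heart of the argument is the tame invertibility of $\phi'(u)$ near $u_0$. Linearizing in $\wt g$ yields the non-smooth-coefficient operator $L_{b,\wt g}$ from \eqref{EqKdSStabLinP}, while linearization in $b'$ produces precisely the map $z^\Ups$ of \eqref{EqKdSStabZGauge} by \eqref{EqKdSStabDPb}. Using the key algebraic identity \eqref{EqKdSStabGaugeRewrite}, the range of the combined map $(b',\bfc)\mapsto z^\Ups(b,\wt g,b')-\tdel^*\vartheta(\bfc)$ is, modulo exponentially decaying terms, the same finite-dimensional complement to the range of $(L_{b_0,0},\gamma_0)$ that was used in the proof of Proposition~\ref{PropKdSLStabComplement} and its Kerr--de~Sitter analogue in \S\ref{SecKdSLStab}. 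The surjectivity assumption \eqref{EqAsyExpMainMod} of Theorem~\ref{ThmAsyExpMain} at $u=u_0$ is therefore nothing but the linear solvability statement established there using SCP together with UEMS. Applying Theorem~\ref{ThmAsyExpMain} then delivers a tame right inverse for $\phi'(u)$ satisfying an estimate of the form \eqref{EqKdSStabNMTameSol} with a fixed loss of derivatives $d$; the standing smallness condition $\|\wt g\|_{\Hbext^{14,\alpha}}<\eps$ is preserved provided we stay in a Nash--Moser neighborhood $|u-u_0|_{3d}<\delta$ of $u_0$. Polynomial dependence of the Einstein operator on $g,g^{-1}$ and Moser estimates (Lemma~\ref{LemmaAsyExpLocalHs}) yield the tame bounds \eqref{EqKdSStabNMMapCont}, and Theorem~\ref{ThmKdSStabNM} produces a smooth zero $u\in B^\infty$.

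It remains to promote the solution of the modified equation to a genuine solution of $\Ric(g)+\Lambda g=0$ satisfying the claimed gauge condition. For this I would apply $\delta_g G_g$ to the first component of $\phi(u)=0$ and use the twice-contracted second Bianchi identity as in \S\ref{SubsecHypDT}, yielding $\wtBoxCP_g(\Ups(g)-\Ups(g_{b_0,b})-\theta)=0$ in $\Omega^\circ$. The Cauchy data of $\Ups(g)-\Ups(g_{b_0,b})-\theta$ vanish on $\Sigma_0$ by construction of $i_{b_0}$ (which enforces $\Ups(g)=\Ups(g_{b_0})$ at $\Sigma_0$ in Proposition~\ref{PropKdSIn}, combined with the fact that $\theta$ and $\Ups(g_{b_0,b})-\Ups(g_{b_0})$ vanish near $\Sigma_0$), and vanishing of the normal derivative at $\Sigma_0$ follows from $(h,k)$ satisfying the \emph{non-linear} constraint equations; uniqueness of the forward solution for $\wtBoxCP_g$, which is a small perturbation of $\wtBoxCP_{g_{b_0}}$ and hence still has SCP by the perturbative content of Theorem~\ref{ThmKeySCP} (see also Corollary~\ref{CorAsySmPertNoAsy}), then forces $\Ups(g)-\Ups(g_{b_0,b})-\theta\equiv 0$, hence $\Ric(g)+\Lambda g=0$ in the advertised gauge. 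The smoothness and tameness of the parametric solution map \eqref{EqKdSStabSmoothSolMap} follow from applying Nash--Moser in its smooth-tame form to the family parameterized by $(h,k)$, using the smoothness of $i_{b_0}$.

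The hard part will be the non-smooth-coefficient linear theory already encoded in Theorem~\ref{ThmAsyExpMain}: the contour-shifting analysis of \S\ref{SubsubsecAsyExpReg} must be carried out in the presence of an exponentially decaying but only $\Hbext^{s,\alpha}$-regular perturbation $\wt g$, and the interaction of this limited regularity with the dual-state pairing used to define the modification $z$ produces the derivative count $14$ in \eqref{EqAsyExpParamSpaceTilde}; combined with the order-$2$ nature of the Einstein operator and the derivative handling of $i_{b_0}$ in Proposition~\ref{PropKdSIn}, this is what dictates the $H^{21}\oplus H^{20}$ threshold for $(h,k)$ in Theorem~\ref{ThmKdSStab}. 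A secondary, mostly bookkeeping, difficulty is to check that the surjectivity of \eqref{EqAsyExpMainMod} is indeed preserved for all $w$ in a full neighborhood of $w_0=b_0$ including the non-smooth factor $\wt g\in\wt W^{14}$; this follows from the perturbative openness statement in Corollary~\ref{CorAsySmPertNoAsy} together with the structural stability of the trapping used in \S\ref{SubsubsecAsySmPert}.
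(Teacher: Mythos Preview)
Your overall architecture is right --- Nash--Moser with the tame right inverse supplied by Theorem~\ref{ThmAsyExpMain}, followed by the constraint propagation argument --- and is essentially the paper's route. But there is a genuine gap in how you parameterize the finite-dimensional modification space.

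You take $u=(b',\bfc,\wt g)\in\R^4\oplus\R^{N_\Theta}\oplus\Hbext^{s,\alpha}$, with gauge modification $\theta=\vartheta(\bfc)\in\Theta$. The paper instead uses $(b,b'_1,\bfc,\wt g)\in\R^4\times\R^{4+N_\Theta}\times\Hbext^{s,\alpha}$, with gauge modification $\theta=\theta_\chi(b'_1)+\vartheta(\bfc)$; the extra four parameters $b'_1$ feed the compactly supported 1-forms $\theta_\chi(\cdot)$ of \eqref{EqKdSStabGaugeCp}. This is not cosmetic. The surjectivity hypothesis \eqref{EqAsyExpMainMod} of Theorem~\ref{ThmAsyExpMain} at $(b_0,0)$ asks that the range of your $z$-map pair onto $\cL(\Res^*(L_{b_0},\Xi),\ol\C)$. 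In the paper one sets $b'_1=-b'$ and uses \eqref{EqKdSStabGaugeRewrite} to obtain
\[
  z(b_0,0,(b',-b',\bfc))=L_{b_0}\bigl(\chi g'_{b_0}(b')+\delta_{g_{b_0}}^*(\chi\omega^\Ups_{b_0}(b'))\bigr)+\tdel^*\vartheta(\bfc),
\]
whose forward solution has asymptotic part $g^{\prime\Ups}_{b_0}(b')$ plus a pure gauge piece controlled by $\bfc$; surjectivity then follows directly from Proposition~\ref{PropKdSLStabComplement}. Without the $b'_1$ parameter, your modification at $(b_0,0)$ is $z^\Ups(b_0,0,b')+\tdel^*\vartheta(\bfc)$, and \eqref{EqKdSStabGaugeRewrite} shows this equals the displayed expression \emph{plus} $\tdel^*\theta_\chi(b')$. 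The 1-form $\theta_\chi(b')$ does not lie in $\Theta$ in general, so this extra term is not absorbed by $\vartheta(\bfc)$, and the forward solution of $L_{b_0}v=\tdel^*\theta_\chi(b')$ can have a nonzero component in the linearized Kerr--de~Sitter space $K\subset\Res(L_{b_0},0)$. Consequently the $K$-component of your $z$-map is $b'\mapsto g^{\prime\Ups}_{b_0}((I+A)b')^{(0)}$ for some linear $A$, and surjectivity requires $I+A$ to be onto --- a condition you neither state nor verify. Your sentence ``using \eqref{EqKdSStabGaugeRewrite}, the range \ldots is \ldots the same finite-dimensional complement'' is therefore not justified.

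The fix is exactly what the paper does: enlarge the finite-dimensional parameter space by $\R^4\ni b'_1$ and include $\theta_\chi(b'_1)$ in the gauge modification (so $\ol\Theta=\theta_\chi(\R^4)+\Theta$). A minor aside: in the final step you invoke SCP for $\wtBoxCP_g$ to get the gauge condition, but all that is needed there is uniqueness for the forward Cauchy problem for the wave operator $\wtBoxCP_g$, which holds for any Lorentzian metric close to $g_{b_0}$; SCP is about the \emph{location of resonances} and is used earlier (via UEMS) to set up $\Theta$, not at this stage.
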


Here, recall that $\alpha>0$ is a small fixed number, only depending on the spacetime $(M,g_{b_0})$ we are perturbing. Furthermore, we use any fixed Riemannian fiber metric on $S^2T^*\Sigma_0$, for instance the one induced by $h_{b_0}$, to define the $H^s$ norm of the initial data.

\begin{proof}[Proof of Theorem~\ref{ThmKdSStab}]
  Once we have solved \eqref{EqKdSStabIVP}, the fact that $g$ solves Einstein's equations in the stated gauge follows from the general discussion in \S\ref{SubsecHypDT}; we briefly recall the argument in the present setting: by definition of the map $i_{b_0}$, we have $\Ups(g)|_{\Sigma_0}=0$ (note that $\Ups(g_{b_0,b})=0$ near $\Sigma_0$ for all $b\in\cU_B$), hence $\Ups(g)-\theta=0$ at $\Sigma_0$ due to $\supp\theta\cap\Sigma_0=\emptyset$, and the constraint equations for $(h,k)$ imply that $\cL_{\pa_{t_*}}(\Ups(g)-\theta)=0$ at $\Sigma_0$ as well once we have solved \eqref{EqKdSStabIVP}; but then applying $\delta_g \sfG_g$ to \eqref{EqKdSStabIVP} implies the linear wave equation $\wtBoxCP_g(\Ups(g)-\Ups(g_{b_0,b})-\theta)=0$, hence $\Ups(g)-\Ups(g_{b_0,b})-\theta\equiv 0$ in $\Omega^\circ$ and therefore indeed $\Ric(g)+\Lambda g=0$.

  In order to solve \eqref{EqKdSStabIVP}, let $\Theta$ denote the finite-dimensional space constructed in Proposition~\ref{PropKdSLStabComplement}, and fix an isomorphism $\vartheta\colon\R^{N_\Theta}\xra{\cong}\Theta$, where $N_\Theta:=\dim\Theta$. We parameterize the modification space for the linear equations we will encounter by
  \begin{equation}
  \label{EqKdSStabParamZ}
  \begin{gathered}
    z \colon \cU_B\times\wt G^{s+2}\times\R^{4+4+N_\Theta} \to \Hb^{s,\alpha}(\Omega;S^2\,\Tb^*_\Omega M)^{\bullet,-} \hra D^{s,\alpha}(\Omega;S^2\,\Tb^*_\Omega M), \\
    z(b,\wt g,(b',b'_1,\bfc))=z^\Ups(b,\wt g,b')+\tdel^*\bigl(\theta_\chi(b'_1)+\vartheta(\bfc)\bigr),
  \end{gathered}
  \end{equation}
  using the maps \eqref{EqKdSStabZGauge} and \eqref{EqKdSStabGaugeCpParam}. Tensors in the range of $z^\Ups$ will be subsumed in changes of the asymptotic gauge condition. The non-linear differential operator we will consider is thus
  \begin{align*}
    P(b,b'_1,\bfc,\wt g)&:= (\Ric+\Lambda)(g_{b_0,b}+\wt g) \\
      &\qquad - \tdel^*\bigl(\Ups(g_{b_0,b}+\wt g)-\Ups(g_{b_0,b}) - \theta_\chi(b'_1) - \vartheta(\bfc)\bigr),
  \end{align*}
  with $(b,b'_1,\bfc,\wt g)\in\cU_B\times\R^{4+N_\Theta}\times\wt G^\infty$, and the non-linear equation we shall solve is
  \[
    \phi(b,b'_1,\bfc,\wt g) := \Bigl( P(b,b'_1,\bfc,\wt g),\ \gamma_0(\wt g)-\big(i_{b_0}(h,k)-\gamma_0(g_{b_0})\bigr)\Bigr) = 0.
  \]
  To relate this to the abstract Nash--Moser result, Theorem~\ref{ThmKdSStabNM}, we define the Banach spaces
  \[
    B^s := \R^4\times\R^{4+N_\theta} \times \Hbext^{s,\alpha}(\Omega;S^2\,\Tb^*_\Omega M), \quad \bfB^s = D^{s,\alpha}(\Omega;S^2\,\Tb^*_\Omega M);
  \]
  we will look for a solution near $u_0:=(b_0,0,\bfzero,0)$, for which $\phi(u_0)=(0,\gamma_0(g_{b_0})-i_{b_0}(h,k))$ is small in a Sobolev norm which we shall determine momentarily.

  The typical linearized equation we need to study in the Nash--Moser iteration is of the form
  \begin{equation}
  \label{EqKdSStabTypical}
    D_{(b,b'_1,\bfc,\wt g)}\phi(b',b_1'',\bfc',\wt r) = d = (f,r_0,r_1)\in \bfB^\infty;
  \end{equation}
  with $L_{b,\wt g}$, the linearization of $P$ in $\wt g$ around $(b,b'_1,\bfc,\wt g)$ (thus $L_{b,\wt g}$ does not depend on $b'_1$ and $\bfc$), given by the expression \eqref{EqKdSStabLinP}, this is equivalent to
  \[
    L_{b,\wt g}(\wt r) = f - z(b,\wt g,(b',b_1'',\bfc')),\quad \gamma_0(\wt r)=(r_0,r_1).
  \]
  Now the map $z$ satisfies the (surjective) assumptions of Theorem~\ref{ThmAsyExpMain}, in particular \eqref{EqAsyExpMainMod} with $b_0$ taking the place of $w_0$; surjectivity holds because of the $\vartheta$ term in the definition \eqref{EqKdSStabParamZ} of the map $z$, taking care of pure gauge modes, and the terms involving $b'$ and $b_1'$ which take care of the linearized Kerr--de~Sitter family in view of the computation \eqref{EqKdSStabGaugeRewrite}. Thus, we do obtain a solution
  \[
    \psi(b,b'_1,\bfc,\wt g)(f,r_0,r_1) := (b',b''_1,\bfc',\wt r)\in B^\infty
  \]
  of \eqref{EqKdSStabTypical} together with the estimates
  \[
    |b'|+|b''_1|+|\bfc'| \lesssim \|d\|_{13}, \quad \|\wt r\|_s \leq C_s\bigl(\|d\|_{s+3} + (1+\|\wt g\|_{s+6})\|d\|_{13}\bigr),
  \]
  for $s\geq 10$; this regularity requirement is the reason we need $2\cdot 10=20$ derivatives, see below. Two remarks are in order: first, the norm on $\wt g$ comes from the fact that for $\wt g\in\Hb^{s+6,\alpha}$, the non-smooth coefficients of the linearization of $\phi$ lie in $\Hb^{s+4,\alpha}$, corresponding to the norm on $\wt w$ in \eqref{EqAsyExpMainMod}; see also Remark~\ref{RmkAsyExpDataNumerology}. Second, the assumption on the skew-adjoint part of the linear operator at the radial set, $\wh\beta\geq -1$, in the statement of Theorem~\ref{ThmAsyExpMain} does hold; indeed, we showed $\wh\beta\geq 0$ in \S\ref{SubsecESGRadial}.
  
  Thus, we obtain \eqref{EqKdSStabNMTameSol} with $d=10$. One easily verifies that for this choice of $d$, the estimates \eqref{EqKdSStabNMMapCont} hold as well. (In fact, $d=4$ would suffice for the latter, see \cite[Proof of Theorem~5.10]{HintzVasyQuasilinearKdS}.) Theorem~\ref{ThmKdSStabNM} now says that we can solve $\phi(b,b'_1,\bfc,\wt g)=0$ provided $\|i_{b_0}(h,k)-\gamma_0(g_{b_0})\|_{H^{21}\oplus H^{20}}$ is small (here, $20=2d$), proving the existence of a solution of \eqref{EqKdSStabIVP} as claimed; the space $\ol\Theta$ in the statement of the theorem is equal to the sum of the ranges $\ol\Theta=\theta_\chi(\R^4)+\vartheta(\R^{N_\Theta})$.
  
  The smoothness of the solution map \eqref{EqKdSStabSmoothSolMap} (in fact with tame estimates), or indeed of
  \[
    (h,k)\mapsto (b,b'_1,\bfc,\wt g),
  \]
  follows from a general argument using the joint continuous dependence of the solution map for the linearized problem on the coefficients and the data, together with the fact that $\phi$ itself is a smooth tame map; see e.g.\ \cite[\S III.1.7]{HamiltonNashMoser} for details.
  
  The proof of non-linear stability is complete.
\end{proof}

\begin{rmk}
\label{RmkKdSStabRingdown}
  We explain in what sense one can see ringdown for the non-linear solution, at least in principle (since no rigorous results on shallow resonances for the linearized gauged Einstein equation are known): assume for the sake of argument that there is exactly one further resonance $\sigma$ in the strip $-\alpha_2<\Im\sigma<-\alpha<0$, where we assume to have high energy estimates \eqref{EqAsySmMeroEst} still, with 1-dimensional resonant space spanned by a resonant state $\varphi$; we assume that $\sigma$ is purely imaginary and $\varphi$ is real. The asymptotic expansion of the solution of the first linear equation that one solves in the Nash--Moser iteration then schematically is of the form $g'_{b_0}(b')+\eps g_{(1)}$, where $\eps$ is the size of the initial data, and $g_{(1)}=c\varphi+\wt g_{(1)}$, with $c\in\R$ and $\wt g_{(1)}\in\Hbext^{\infty,\alpha_2}$ of size $1$ (in $L^\infty$, say). Proceeding in the iteration scheme, we simply view $c\varphi+\wt g_{(1)}\in\Hbext^{\infty,\alpha}$, so the non-linear solution will be $g=g_{b_0,b}+\eps g_{(1)}+\eps^2 g_{(2)}$, with $g_{(2)}\in\Hbext^{\infty,\alpha}$ of size $1$. At the timescale $t_*=C^{-1}\log(\eps^{-1})$, for $C>0$ large only depending on $\alpha,\alpha_2$ and $\sigma$, the three components of $g$ are thus of size
  \[
    |\eps\varphi| \sim \eps^{-(\Im\sigma)/C+1}, \quad |\eps\wt g_{(1)}|\sim \eps^{\alpha_2/C+1},\quad |\eps^2 g_{(2)}|\sim \eps^{\alpha/C+2},
  \]
  so the term $\varphi$ coming from the refined partial expansion dominates by a factor $\eps^{-\delta}$ for some small $\delta>0$; in this sense, one can see the ringdown, embodied by $\varphi$ here, even in the non-linear solution. It would be very interesting to understand the asymptotic behavior of the non-linear solution more precisely, possibly obtaining a partial expansion using shallow resonances.
\end{rmk}

\subsection{Construction of initial data}
\label{SubsecKdSStabIn}

We briefly discuss three approaches to the construction of initial data sets in the context of Kerr--de~Sitter spacetimes. \emph{First}, Cortier \cite{CortierKdSGluing} described a gluing construction producing data sets with exact Kerr--de~Sitter ends, following work by Chru\'sciel--Pollack \cite{ChruscielPollackKottler} in the time-sym\-met\-ric (i.e. with vanishing second fundamental form) Schwarzschild--de~Sitter case. Such localized gluing methods for the constraint equations were first introduced by Corvino \cite{CorvinoScalar} for time-symmetric data; this restriction was subsequently removed by Corvino--Schoen \cite{CorvinoSchoenAsymptotics}, and Chru\'sciel--Delay generalized their analysis in \cite{ChruscielDelayMapping}.

\emph{Second}, by definition, one obtains initial data sets by selecting a spacelike hypersurface in a spacetime satisfying Einstein's equations. The point is that one may construct such spacetimes by solving the \emph{characteristic} initial value problem for Einstein's equations, the well-posedness of which was first proved by Rendall \cite{RendallCharacteristic}; the solution was later shown to exist in a full neighborhood of the in- and outgoing null cones by Luk \cite{LukCharacteristic}. For the characteristic problem, the constraint equations simplify dramatically, becoming simple transport equations rather than a non-linear coupled system of PDE (of elliptic type), see \cite[\S2.3]{LukCharacteristic}. In the case of interest for Theorem~\ref{ThmKdSStab} and adopting the notation of \cite{LukCharacteristic}, we can fix a 2-sphere $S_{0,0}$ at $t=t_0$, $r=r_0$ within Schwarzschild--de~Sitter space $(M,g_{b_0})$, with $t_0$ chosen so that $\Sigma_0$ lies entirely in the timelike future of $S_{0,0}$, and with $r_{b_0,-}<r_0<r_{b_0,+}$, so $r=r_0$ lies in the black hole exterior. We then consider the outgoing, resp.\ ingoing, future null cones $H_0$, resp.\ $\ul H_0$, which are swept out by the null-geodesics with initial velocities outgoing (increasing $r$), resp.\ ingoing (decreasing $r$), future null vectors orthogonal to $S_{0,0}$. Then, fixing the data of a Riemannian metric $\gamma_{AB}$, a 1-form $\zeta_A$ and functions $\tr\chi$ and $\tr\ul\chi$ on $S_{0,0}$, the constraint equations \cite[Equations~(8)--(11)]{LukCharacteristic} can be solved, at least locally near $S_{0,0}$, by solving suitable transport equations. If the data are equal to those induced by the metric $g_{b_0}$, the constraint equations of course do have a semi-global solution (namely the one induced by $g_{b_0}$), i.e.\ a solution defined on a portion $\Sigma_0^0$ of $H_0\cup\ul H_0$ extending past the horizons. See Figure~\ref{FigKdSStabInChar}.

\begin{figure}[!ht]
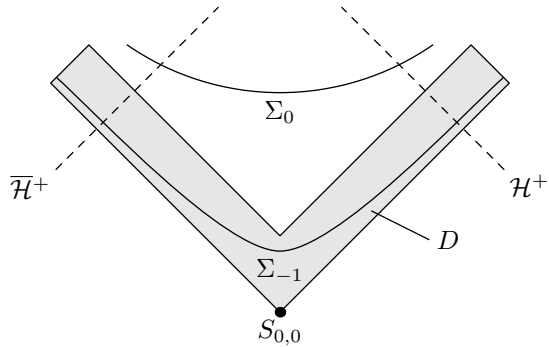

  \centering
  \inclfig{KdSStabInChar}
  \caption{Penrose diagram illustrating the construction of initial data for Theorem~\ref{ThmKdSStab} from characteristic initial data.}
  \label{FigKdSStabInChar}
\end{figure}

Thus, if one merely slightly perturbs the data, one still obtains a semi-global solution; one can then solve the characteristic initial value problem in a \emph{fixed} neighborhood $D$ of $\Sigma_0^0$ which contains a fixed spacelike hypersurface $\Sigma_{-1}$. If the characteristic data are close to those induced by $g_{b_0}$, the induced data on $\Sigma_{-1}$ are close to those induced by $g_{b_0}$. Given such data on $\Sigma_{-1}$, one can then either use a straightforward modification of Theorem~\ref{ThmKdSStab}, using $\Sigma_{-1}$ as the Cauchy hypersurface; alternatively, as depicted in Figure~\ref{FigKdSStabInChar}, one can solve the (non-characteristic) initial value problem with data on $\Sigma_{-1}$ in a domain which contains $\Sigma_0$ (provided the characteristic data were close to those induced by $g_{b_0}$), and the data on $\Sigma_0$ are close to $(h_{b_0},k_{b_0})$. (Theorem~\ref{ThmKdSStab} then applies directly.) These constructions can be performed for any desired level of regularity; recall here that Luk's result produces an $H^s$ solution for $H^{s+1}$ characteristic data for $s\geq 4$.

We finally discuss a \emph{third} approach, producing a sizeable set of solutions of the constraint equations directly, i.e.\ without using the above rather subtle tools. We will use a (slightly modified) conformal method, going back to Lichnerowicz \cite{LichnerowiczConstraints} and York \cite{YorkConstraints}; we refer the reader to the survey paper \cite{BartnikIsenbergConstraints} for further references. Our objective here is merely to construct initial data in the simplest manner possible. Thus, consider a compact hypersurface $\Sigma$, with smooth boundary, in the maximal analytic extension of a Schwarzschild--de~Sitter spacetime with parameters $b_0$, given by $t=0$ in static coordinates, which extends a bit past the bifurcation spheres of the future/past event horizon and the future/past cosmological horizon, see Figure~\ref{FigKdSStabInSurf}. Since $\Sigma$ is totally geodesic, the metric $g_{b_0}$ induces time-symmetric data $(h_0,0)$ on $\Sigma$. We shall construct initial data sets on $\Sigma$, which by a Cauchy stability argument as in the previous paragraph give rise to initial data sets on $\Sigma_0$.

\begin{figure}[!ht]
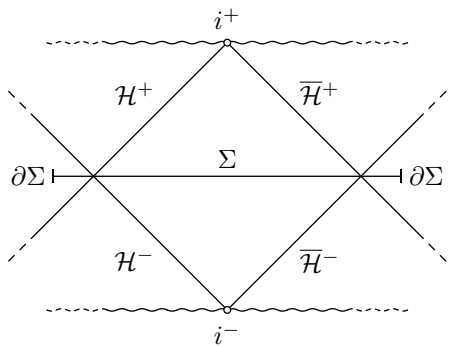

  \centering
  \inclfig{KdSStabInSurf}
  \caption{The totally geodesic hypersurface $\Sigma$, compact with $\CI$ boundary, within the maximal analytic extension of a Schwarzschild--de~Sitter spacetime.}
  \label{FigKdSStabInSurf}
\end{figure}

\begin{prop}
\label{PropKdSStabInConf}
  Let $s\geq s_0>3/2$. Then there exist $\eps=\eps(s_0)>0$ and $C=C(s)>0$ such that the following holds: for all constants $H\in\R$ and traceless, divergence-free (with respect to $h_0$) symmetric 2-tensors $\wt Q\in H^s(\Sigma;S^2T^*\Sigma)$ with $|H|+\|\wt Q\|_{H^{s_0}}<\eps$, there exists $\psi\in H^{s+2}(\Sigma)$ with the property that
  \[
    h = \varphi^4 h_0,\ k = \varphi^{-2}\wt Q + H\,h,\quad \varphi:=1+\psi,
  \]
  solve the constraint equations \eqref{EqHypDTConstraints} (with $n=3$), and $\|\psi\|_{H^{s+2}}\leq C(|H|+\|\wt Q\|_{H^s})$.
\end{prop}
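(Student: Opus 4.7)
My plan is to carry out the classical Lichnerowicz conformal method and close it via the implicit function theorem. A direct computation using $\tr_{h_0}\wt Q = 0$ gives $\tr_h k = 3H$ and $|k|_h^2 = \varphi^{-12}|\wt Q|_{h_0}^2 + 3H^2$; since $\tr_h k$ is constant and, by the classical conformal covariance of the divergence on $h_0$-trace-free tensors in dimension three, $\delta_h(\varphi^{-2}\wt Q) = \varphi^{-6}\delta_{h_0}\wt Q$, the momentum constraint is satisfied identically under the assumption $\delta_{h_0}\wt Q = 0$. Multiplying the Hamiltonian constraint through by $\varphi^5$ and inserting the standard conformal change formula $R_{\varphi^4 h_0} = \varphi^{-5}(-8\Delta_{h_0}\varphi + R_{h_0}\varphi)$ reduces everything to the Lichnerowicz equation
\[
  F(\psi, H, \wt Q) := -8\Delta_{h_0}\varphi + R_{h_0}\varphi + (6H^2 + 2\Lambda)\varphi^5 - |\wt Q|_{h_0}^2\varphi^{-7} = 0, \qquad \varphi = 1 + \psi.
\]

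Because $(h_0, 0)$ are the background data induced by $g_{b_0}$ on the totally geodesic $\Sigma$, the Hamiltonian constraint at $\psi = H = 0$, $\wt Q = 0$ forces $R_{h_0} = -2\Lambda$; thus $F(0,0,0) = 0$ and the $\psi$-linearization at the background is
\[
  D_\psi F(0,0,0) = -8\Delta_{h_0} + R_{h_0} + 10\Lambda = 8(-\Delta_{h_0} + \Lambda).
\]
I will impose Dirichlet boundary conditions $\psi|_{\partial\Sigma} = 0$---chosen for convenience, and immaterial for the subsequent Cauchy stability step, which only uses the restriction of $(h, k)$ to the strictly interior surface $\Sigma_0$. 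With these conditions, $-\Delta_{h_0} + \Lambda$ is coercive on $H^1_0(\Sigma)$ \emph{because} $\Lambda > 0$, hence an isomorphism $H^{s+2}(\Sigma) \cap H^1_0(\Sigma) \to H^s(\Sigma)$ for every $s \geq 0$ by standard elliptic theory on manifolds with boundary. This is the heart of the argument: the cosmological constant itself supplies the linear invertibility one needs, a feature absent when $\Lambda = 0$.

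For $s \geq s_0 > 3/2$, the Sobolev embedding $H^{s_0}(\Sigma) \hookrightarrow C^0(\Sigma)$ makes $H^s(\Sigma)$ a Banach algebra and ensures that $\varphi \mapsto \varphi^5, \varphi^{-7}$ act smoothly on an $H^{s_0}$-neighborhood of $\varphi = 1$. The implicit function theorem applied to $F$ then produces, for $|H| + \|\wt Q\|_{H^{s_0}} < \eps$, a unique small $\psi \in H^{s_0+2}(\Sigma) \cap H^1_0(\Sigma)$ solving $F(\psi, H, \wt Q) = 0$; an elliptic bootstrap, obtained by rewriting the equation as $8(-\Delta_{h_0}+\Lambda)\psi = D_\psi F(0,0,0)\psi - F(\psi, H, \wt Q)$, upgrades the regularity to $H^{s+2}$. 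For the linear-in-data estimate, I would note that $F(0, H, \wt Q) = 6H^2 - |\wt Q|_{h_0}^2$ is already quadratic in $(H, \wt Q)$, so Moser product estimates on $H^s$ bound $\|F(\psi, H, \wt Q) - D_\psi F(0,0,0)\psi\|_{H^s}$ by $C(s)\bigl[(|H|+\|\wt Q\|_{H^{s_0}})(|H|+\|\wt Q\|_{H^s}) + (|H|+\|\wt Q\|_{H^{s_0}})\|\psi\|_{H^{s+2}}\bigr]$. Applying the inverse of the linearization and absorbing the $\|\psi\|_{H^{s+2}}$ term by shrinking $\eps$ yields the claimed $\|\psi\|_{H^{s+2}} \leq C(s)(|H| + \|\wt Q\|_{H^s})$.

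I anticipate no genuine obstacle. The two mildly technical points are the conformal covariance of the momentum constraint (classical, cf.~\cite{BartnikIsenbergConstraints}) and the tame Moser-type estimates for polynomial and reciprocal nonlinearities on $H^s(\Sigma)$, which follow from the Banach algebra property for $s > 3/2$. The only modelling choice is the boundary condition on $\partial\Sigma$; this is inconsequential for Theorem~\ref{ThmKdSStab}, which invokes Cauchy stability of the Einstein equation to pass from data on the larger surface $\Sigma$ to data on $\Sigma_0 \Subset \Sigma$ in the $H^{21} \oplus H^{20}$ topology required.
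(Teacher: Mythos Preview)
Your reduction to the Lichnerowicz equation, the identification $R_{h_0}=-2\Lambda$, and the linearization $8(-\Delta_{h_0}+\Lambda)$ all match the paper's computation exactly. Where you diverge is in how you invert the linearization. You work directly on the compact manifold with boundary $\Sigma$, impose Dirichlet conditions $\psi|_{\partial\Sigma}=0$, and use that $-\Delta_{h_0}+\Lambda$ is coercive on $H^1_0(\Sigma)$ because $\Lambda>0$; this is correct and gives the cleanest possible argument. The paper instead extends $\Sigma$ to a closed manifold $\wt\Sigma$, extends $h_0$ and $\wt Q$ arbitrarily, and then solves the modified equation $P(1+\psi;\wt Q,H)=f+z$ with $z$ lying in a fixed finite-dimensional space $\cZ\subset\CIdot(\ol{\Sigma^c})$ chosen so that $(\psi,z)\mapsto L\psi-z$ is an isomorphism on $\wt\Sigma$; the key observation there is that unique continuation for $L=\Delta+\tfrac18 R+\tfrac54\Lambda$ forces any basis of $\ran(L)^\perp$ to remain linearly independent when restricted to $\wt\Sigma\setminus\Sigma$, so one can find such $\cZ$ supported away from $\Sigma$. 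Since $f$ and $z$ both vanish on $\Sigma$, one recovers a genuine solution there.

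Your route is more elementary and exploits the specific positivity $\Lambda>0$ directly; it also produces a solution with a definite (zero) boundary value. The paper's route avoids boundary conditions entirely, does not rely on the sign of the zeroth-order term (it would work equally well if $L$ had nontrivial kernel on $\wt\Sigma$), and is a deliberate echo of the finite-codimension solvability philosophy that runs through the whole paper. Both are valid; yours is the natural textbook proof, theirs is the one that fits the surrounding narrative.
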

\begin{proof}
  For general Riemannian metrics $h$ and symmetric 2-tensors $k=Q+H\,h$, with $Q$ tracefree, the constraint equations \eqref{EqHypDTConstraints} with $n=3$ read
  \[
    R_h-|Q|_h^2+6H^2+2\Lambda=0,\quad \delta_h Q+2\,dH = 0.
  \]
  Now, given $H\in\R$ and $\wt Q$ as in the statement of the proposition, define $h=\varphi^4 h_0$ and $Q=\varphi^{-2}\wt Q$, for $\varphi$ to be determined. Then we have $\delta_h Q=\varphi^{-6}\delta_{h_0}\wt Q=0=-2\,dH$, so the second constraint is always verified, while the first becomes
  \[
    P(\varphi;\wt Q,H):=\Delta\varphi+\frac{1}{8}R\varphi-\frac{1}{8}|\wt Q|^2\varphi^{-7}+\frac{1}{4}(3H^2+\Lambda)\varphi^5=0,
  \]
  where $\Delta\geq 0$ and $R$ are the Laplacian and the scalar curvature of $h_0$, respectively, and norms are taken with respect to $h_0$. This equation holds for $\wt Q=0$, $H=0$ and $\varphi=1$.
  
  Let us now extend $\Sigma$ to a closed 3-manifold $\wt\Sigma$ without boundary, and extend $h_0$ arbitrarily to a Riemannian metric on $\wt\Sigma$; we denote the extension by $h_0$ still. Extending $P$ by the same formula, we then have $P(1;0,0)=\frac{1}{8}R+\frac{1}{4}\Lambda=:f\in\CIdot(\ol{\Sigma^c})$, with the dot indicating infinite order of vanishing at $\pa\Sigma\subset\wt\Sigma$. Let us also extend $\wt Q$ to a symmetric 2-tensor on $\wt\Sigma$; we require neither the traceless nor the divergence-free condition to hold for the thus extended $\wt Q$ away from $\Sigma$. Applying the finite-codimension solvability idea in the present, elliptic, context, we now aim to solve the equation
  \begin{equation}
  \label{EqKdSStabInConfEqn}
    P(1+\psi;\wt Q,H)=f+z
  \end{equation}
  for $\psi$, where $z\in\CIdot(\ol{\Sigma^c})$ lies in a suitable fixed finite-dimensional space. Note that having solved \eqref{EqKdSStabInConfEqn}, we obtain a solution of the constraint equations in $\Sigma$ as in the statement of the proposition; what happens in $\ol{\Sigma^c}$ is irrelevant! In order to solve \eqref{EqKdSStabInConfEqn}, we rewrite the equation as
  \begin{equation}
  \label{EqKdSStabInConfEqn2}
  \begin{split}
    \wt L\psi+q(\psi)-z&=d,\quad d=\frac{1}{8}|\wt Q|^2-\frac{3}{4}H^2, \\
    &\quad \tn{where }\wt L=L+\frac{7}{8}|\wt Q|^2+\frac{15}{4}H^2,\ L=\Delta+\frac{1}{8}R+\frac{5}{4}\Lambda,
  \end{split}
  \end{equation}
  and $q(\psi)=\psi^2 q_0(\psi)$, with $q_0\colon H^s\to H^s$ continuous for all $s\geq s_0>3/2$: indeed, $q_0(\psi)$ is a rational function of $\psi$, with coefficients involving (powers of) $H$ and $\wt Q$; thus, $q$ depends on $H$ and $\wt Q$, though we drop this from the notation. The key observation is then we can use a unique continuation principle to determine a suitable space of $z$. Indeed, choosing a basis of the $L^2$-orthocomplement $\ran(L)^\perp=\mathspan\{v_1,\ldots,v_N\}\subset\CI(\wt\Sigma)$, unique continuation implies that $\{v_1,\ldots,v_N\}$ is linearly independent as a subset of $\CI(\wt\Sigma\setminus\Sigma)$, and we can therefore pick $\{z_1,\ldots,z_N\}\in\CIdot(\ol{\Sigma^c})$ such that the matrix $(\la v_i,z_j\ra)_{i,j=1,\ldots,N}$ is non-degenerate; letting $\cZ:=\mathspan\{z_1,\ldots,z_N\}$, this says that
  \[
    L' \colon H^{s+2}\oplus\cZ \to H^s,\quad (\psi,z)\mapsto L\psi-z,
  \]
  is an isomorphism for all $s\in\R$. Similarly defining $\wt L'(\psi,z)=L'\psi-z$, $\psi\in H^{s+2}$, $z\in\cZ$, for $s\geq s_0$, it follows that $\wt L'\colon H^{s_0+2}\oplus\cZ\to H^{s_0}$ is invertible if $|H|$ and $\|\wt Q\|_{H^{s_0}}$ are sufficiently small. But then, a contraction mapping argument using the map
  \[
    H^{s_0+2}\oplus\cZ \ni (\psi,z) \mapsto \bigl(\wt L'\bigr)^{-1}(d-q(\psi)) \in H^{s_0+2}\oplus\cZ,
  \]
  starting with  $(\psi,z)=(0,0)$, produces a solution of \eqref{EqKdSStabInConfEqn2}. A simple inductive argument using elliptic regularity for $L$ gives $\psi\in H^{s+2}$ if $\wt Q\in H^s$, $s\geq s_0$.
\end{proof}

\appendix

\section{b-geometry and b-analysis}
\label{SecB}

\subsection{b-geometry and b-differential operators}
\label{SubsecB}

In this appendix we recall the basics of b-geometry and b-analysis. As a general reference, we refer the reader to \cite{MelroseAPS}. Geometrically, b-analysis originates from the study of the Laplacian on manifolds with cylindrical ends (and this is the context of \cite{MelroseAPS}), but in fact analytically it arose in earlier work of Melrose on boundary problems for the wave equation, using b-, or totally characteristic, pseudodifferential operators to capture boundary regularity \cite{MelroseTransformation}. Recall that a (product) cylindrical metric on $M_\infty=\R_t\times X$ is one of the form $g_0=dt^2+h_0$, $h_0$ a Riemannian metric on $X$. In terms of the coordinate $\tau=e^{-t}$, which we consider for $t>0$ large (so $\tau$ is near $0$ and positive), thus $\frac{d\tau}{\tau}=-dt$, the cylindrical metric is of the form
\[
  g_0=\frac{d\tau^2}{\tau^2}+h_0.
\]
One then considers the compactification $M$ of $M_\infty$ by adding $\tau=0$ (similarly, at the end $t\to-\infty$, one would work with $\tau=e^{-|t|}$). Thus, locally, in the region where $\tau$ is small, the new manifold $M$ has a product structure $[0,\eps)_\tau\times X$. One advantage of this compactification is that working on compact spaces automatically ensures uniformity of many objects, such as estimates, though of course the latter can alternatively be encoded `by hand.' Smoothness of a function on $[0,\eps)_\tau\times X$ implies a Taylor series expansion at $\{0\}\times X$ in powers of $\tau$, i.e.\ $e^{-t}$. For instance, a metric of the form $g=a\frac{d\tau^2}{\tau^2}+h$, where $h$ is a smooth symmetric 2-cotensor and $a$ a smooth function on $M$ with $h|_{\tau=0}=h_0$ and $a|_{\tau=0}=1$, approaches $g_0$ exponentially fast in $t$.

In general then, we consider an $n$-dimensional manifold $M$ with boundary $X$, and denote by $\Vb(M)$ the space of \emph{b-vector fields}, which consists of all vector fields on $M$ which are tangent to $X$. In local coordinates $(\tau,x)\in[0,\infty)\times\R^{n-1}$ near the boundary, elements of $\Vb(M)$ are linear combinations, with $\CI(M)$ coefficients, of
\[
  \tau\pa_\tau,\pa_{x_1},\ldots,\pa_{x_{n-1}}.
\] 
(In terms of $t=-\log\tau$ as above, these are thus vector fields which are asymptotic to stationary vector fields at an exponential rate, and indeed they have an expansion in $e^{-t}$.) Correspondingly, elements of $\Vb(M)$ are sections of a natural vector bundle over $M$, the \emph{b-tangent bundle} $\Tb M$, the fibers of $\Tb M$ being spanned by $\tau\pa_\tau,\pa_{x_1},\ldots,\pa_{x_{n-1}}$, with $\tau\pa_\tau$ being a non-trivial b-vector field up to and including $\tau=0$ (even though it degenerates as an ordinary vector field).  The dual bundle, the \emph{b-cotangent bundle}, is denoted $\Tb^*M$. In local coordinates $(\tau,x)$ near the boundary as above, the fibers of $\Tb^*M$ are spanned by $\frac{d\tau}{\tau},dx_1,\ldots,dx_{n-1}$. A \emph{b-metric} $g$ on $M$ is then simply a non-degenerate section of the second symmetric tensor power of $\Tb^*M$, i.e.\ of the form
\[
  g = g_{00}(\tau,x)\frac{d\tau^2}{\tau^2} + \sum_{i=1}^{n-1} g_{0i}(\tau,x)\Bigl(\frac{d\tau}{\tau}\otimes dx_i + dx_i\otimes\frac{d\tau}{\tau}\Bigr) + \sum_{i,j=1}^{n-1} g_{ij}(\tau,x)dx_i\otimes dx_j,
\]
$g_{ij}=g_{ji}$, with smooth coefficients $g_{k\ell}$ such that the matrix $(g_{k\ell})_{k,\ell=0}^{n-1}$ is invertible. In terms of the coordinate $t=-\log\tau\in\R$, thus $\frac{d\tau}{\tau}=-dt$, the b-metric $g$ therefore approaches a stationary ($t$-independent in the local coordinate system) metric exponentially fast, as $\tau=e^{-t}$. A b-metric can have arbitrary signature, which corresponds to the signature of the matrix $(g_{k\ell})_{k,\ell=0}^{n-1}$; positive definite metrics (i.e.\ of signature $(n,0)$) are Riemannian, while those of signature $(1,n-1)$ (or $(n-1,1)$) are Lorentzian.

All natural tensorial constructions work equally well in the b-setting, such as the form bundles $\Lambdab^p M=\Lambda^p\,\Tb^*M$ and the symmetric tensor bundles $S^p\,\Tb^*M=\bigotimes^p_s\Tb^*M$; in particular, a b-metric is a smooth section of $S^2\,\Tb^*M$. Another important bundle is the b-density bundle $\Omegab M$, sections of which are smooth multiples of $|\frac{d\tau}{\tau}\wedge dx_1\ldots\wedge dx_{n-1}|$ in local coordinates; any b-metric of any signature gives rise to such a density via
\[
  |dg|=|\det (g_{k\ell})_{k,\ell=0}^{n-1}|^{1/2}\Bigl|\frac{d\tau}{\tau}\wedge dx_1\ldots\wedge dx_{n-1}\Bigr|.
\]
In particular, this gives rise to a positive definite inner product on $\CIc(M^\circ)$, or indeed $\CIdotc(M)$, the space of functions in $\CIc(M)$ which vanish at $X$ with all derivatives (i.e.\ to infinite order). The completion of $\CIdotc(M)$ in this inner product is $L^2_\bl(M)=L^2(M;|dg|)$.

The \emph{b-conormal bundle} $\Nb^*Y$ of a boundary submanifold $Y\subset X$ of $M$ is the subbundle of $\Tb^*_Y M$ whose fiber over $p\in Y$ is the annihilator of vector fields on $M$ tangent to $Y$ and $X$. In local coordinates $(\tau,x',x'')$, where $Y$ is defined by $x'=0$ in $X$, these vector fields are smooth linear combinations of $\tau\pa_\tau$, $\pa_{x''_j}$, $x'_i\pa_{x'_j}$, $\tau\pa_{x'_k}$, whose span in $\Tb_p M$ is that of $\tau\pa_\tau$ and $\pa_{x''_j}$, and thus the fiber of the b-conormal bundle is spanned by the $dx'_j$, i.e.\ has the same dimension as the codimension of $Y$ in $X$ (and {\em not} that in $M$, corresponding to $\frac{d\tau}{\tau}$ not annihilating $\tau\pa_\tau$).

We define the \emph{b-cosphere bundle} $\Sb^*M$ to be the quotient of $\Tb^*M\setminus o$ by the $\R^+$-action; here $o$ is the zero section. Likewise, we define the spherical b-conormal bundle of a boundary submanifold $Y\subset X$ as the quotient of $\Nb^*Y\setminus o$ by the $\R^+$-action; it is a submanifold of $\Sb^*M$. A better way to view $\Sb^*M$ is as the boundary at fiber infinity of the fiber-radial compactification $\ol{\Tb^*}M$ of $\Tb^*M$, where the fibers $\Tb^*_p M$ are replaced by their radial compactification
\begin{equation}
\label{EqBCompact}
  \ol{\Tb^*_p}M=\bigl(\Tb^*_p M\cup ([0,\infty)_x\times\Sph^{n-1})\bigr)/\sim,
\end{equation}
where the equivalence relation $\sim$ identifies $(x,\omega)$, $x>0$, with $x^{-1}\omega\in\Tb^*_p M$, upon choosing polar coordinates on $\Tb^*_p M\cong\R^n$; see also \cite[\S2]{VasyMicroKerrdS}. The b-cosphere bundle $\Sb^*M\subset\ol{\Tb^*}M$ still contains the boundary of the compactification of the `old' boundary $\ol{\Tb^*_X}M$, see Figure~\ref{FigBTbM}.

 \begin{figure}[!ht]
  \centering
  \inclfig{Bctgt}
  \caption{The radially compactified cotangent bundle $\ol{\Tb^*}M$ near $\ol{\Tb^*_X}M$. The horizontal axis is the base $M$, with its boundary $X$ on the right; the vertical axis is the typical fiber of $\ol{\Tb^*}M$ over a point. The cosphere bundle $\Sb^*M$, viewed as the boundary at fiber infinity of $\ol{\Tb^*}M$, is also shown, as well as the zero section $o_M\subset\ol{\Tb^*}M$ and the zero section over the boundary $o_X\subset\ol{\Tb^*_X}M$.}
  \label{FigBTbM}
\end{figure}

Next, the algebra $\Diffb(M)$ of \emph{b-differential operators} generated by $\Vb(M)$ consists of operators of the form
\[
  \cP = \sum_{|\alpha|+j\leq m} a_\alpha(\tau,x)(\tau D_\tau)^j D_x^\alpha,
\]
with $a_\alpha\in\CI(M)$, writing $D=\frac{1}{i}\pa$ as usual. (With $t=-\log\tau$ as above, the coefficients of $\cP$ are thus constant up to exponentially decaying remainders as $t\to\infty$.) Writing elements of $\Tb^*M$ as
\begin{equation}
\label{EqBDualCoords}
  \sigma\,\frac{d\tau}{\tau}+\sum_j\xi_j\,dx_j,
\end{equation}
we have the principal symbol
\[
  \sigma_{\bl,m}(\cP) = \sum_{|\alpha|+j=m} a_\alpha(\tau,x) \sigma^j\xi^\alpha,
\]
which is a homogeneous degree $m$ function in $\Tb^*M\setminus o$. (The subscripts on the notation $\sigma_{\bl,m}$ of the principal symbol distinguish it from the dual variable $\sigma$.) Principal symbols are multiplicative, i.e.\ $\sigma_{\bl,m+m'}(\cP\circ\cP')=\sigma_{\bl,m}(\cP)\sigma_{\bl,m'}(\cP')$, and one has a connection between operator commutators and Poisson brackets, to wit
\[
  \sigma_{\bl,m+m'-1}(i[\cP,\cP']) = \ham_p p', \quad p=\sigma_{\bl,m}(\cP),\ p'=\sigma_{\bl,m'}(\cP'),
\]
where $\ham_p$ is the extension of the Hamilton vector field from $T^*M^\circ\setminus o$ to $\Tb^*M\setminus o$, which is thus a homogeneous degree $m-1$ vector field on $\Tb^*M\setminus o$ tangent to the boundary $\Tb^*_X M$. In local coordinates $(\tau,x)$ on $M$ near $X$, with b-dual coordinates $(\sigma,\xi)$ as in \eqref{EqBDualCoords}, this has the form
\[
  \ham_p=(\pa_\sigma p)(\tau\pa_\tau)-(\tau\pa_\tau p)\pa_\sigma+\sum_j \big((\pa_{\xi_j}p)\pa_{x_j}-(\pa_{x_j}p)\pa_{\xi_j}\big),
\]
see \cite[Equation~(3.20)]{BaskinVasyWunschRadMink}, where a somewhat different notation is used, given by \cite[Equation~(3.19)]{BaskinVasyWunschRadMink}. 

We are also interested in b-differential operators acting on sections of vector bundles on $M$. If $E$, $F$ are vector bundles over $M$ of rank $N_E,N_F$, respectively, then in coordinate charts over which $E,F$ are trivialized, such operators $\cP\in\Diffb^m(M;E,F)$, so $P\colon\CI(M;E)\to\CI(M;F)$, are simply $N_F\times N_E$ matrices of (scalar) b-differential operators $\cP_{ij}\in\Diffb^m(M)$. An example is the b-version of the exterior differential $\bdiff\colon\CI(M;\Lambdab^pM)\to\CI(M;\Lambdab^{p+1}M)$, $\bdiff\in\Diffb^1(M;\Lambdab^p M,\Lambdab^{p+1}M)$, given for $p=0$ by
\[
  \bdiff u=(\tau\pa_\tau u)\,\frac{d\tau}{\tau}+\sum_{j=1}^{n-1} (\pa_{x_j}u)\,dx_j,
\]
and extended to the higher degree differential forms in the usual manner, so
\[
  \bdiff (u\,dx_{i_1}\wedge\ldots\wedge dx_{i_p})=(\bdiff u)\wedge dx_{i_1}\wedge\ldots\wedge dx_{i_p}
\]
and (note that $\frac{d\tau}{\tau}=d\log\tau$)
\[
  \bdiff \Big(u\,\frac{d\tau}{\tau}\wedge dx_{i_1}\wedge\ldots\wedge dx_{i_{p-1}}\Big)=(\bdiff u)\wedge \frac{d\tau}{\tau}\wedge dx_{i_1}\wedge\ldots\wedge dx_{i_{p-1}}.
\]
Thus, $\bdiff=d$ is the usual exterior differential away from $X=\pa M$ if one uses the natural identification of $\Tb M$ with $T M$ away from $X$, likewise for the associated bundles.

If $E,F$ are real vector bundles and $h_E,h_F$ are inner products {\em of any signature} (i.e.\ bilinear symmetric non-degenerate maps to the reals) on the fibers of $E,F$ respectively, and $\nu$ is a non-degenerate b-density (e.g.\ the density $|dg|$ of a b-metric) then $\cP\in\Diffb^m(M;E,F)$ has an adjoint $\cP^*\in\Diffb^m(M;F,E)$ characterized by
\[
  \la\cP u,v\ra_F=\la u,\cP^* v\ra_E,\quad u\in\CIdot(M;E),\ v\in\CIdot(M;F),
\]
where
\[
  \la u_1,u_2\ra_E=\int h_E(u_1,u_2)\,\nu,\quad u_1,u_2\in\CIdot(M;E),
\]
and similarly for $F$. We maintain the same notation for the complexified bundles to which $h_E,h_F$ extend as sesquilinear fiber inner products.  In particular, any non-degenerate b-metric $g$ induces inner products (of various signature!) on $\Lambdab M$ and $S^p\,\Tb^*M$; an example of an adjoint is $\bdiff^*=\delta\in\Diffb^1(M;\Lambdab^{p+1}M,\Lambdab^p M)$. Other important geometric operators include the covariant derivative with respect to a b-metric $g$, $\nabla\in\Diffb^1(M;S^p\,\Tb^*M,\Tb^*M\otimes S^p\,\Tb^*M)$, the symmetric gradient $\delta_g^*\in\Diffb^1(M;\Tb^*M, S^2\,\Tb^*M)$, and the divergence $\delta_g\in\Diffb^1(M,S^2\,\Tb^*M,\Tb^*M)$, besides bundle endomorphisms such as the Ricci curvature of a fixed b-metric $g$, $\Ric(g)\in\Diffb^0(M;\Tb^*M,\Tb^*M)$. For most analytic purposes the bundles are irrelevant, and thus we suppress them in the notation below.

While elements of $\Diffb(M)$ commute to leading order in the symbolic sense, they do not commute in the sense of the order of decay of their coefficients. (This is in contrast to the scattering algebra, see \cite{MelroseEuclideanSpectralTheory}.) The \emph{normal operator} captures the leading order part of $\cP\in\Diffb^m(M)$ in the latter sense, namely
\[
  N(\cP) = \sum_{j+|\alpha|\leq m} a_\alpha(0,x)(\tau D_\tau)^j D_x^\alpha.
\]
One can define $N(\cP)$ invariantly as an operator on the model space $M_I:=[0,\infty)_\tau\times X$ by fixing a boundary defining function of $M$, see \cite[\S3]{VasyMicroKerrdS}. Identifying a collar neighborhood of $X\subset M$ with a neighborhood of $\{0\}\times X$ in $M_I$, we then have $\cP-N(\cP)\in\tau\Diffb^m(M)$ (near $\pa M$). Since $N(\cP)$ is dilation-invariant (equivalently: translation-invariant in $t=-\log\tau$), it is naturally studied via the Mellin transform in $\tau$ (equivalently: Fourier transform in $-t$), which leads to the \emph{(Mellin transformed) normal operator family}
\[
  \wh N(\cP)(\sigma) \equiv \wh\cP(\sigma) = \sum_{j+|\alpha|\leq m} a_\alpha(0,x)\sigma^j D_x^\alpha,
\]
which is a holomorphic family of operators $\wh\cP(\sigma)\in\Diff^m(X)$.  Here the Mellin transform is the map
\begin{equation}
\label{EqBMellinTrafo}
  \cM\colon u\mapsto \wh u(\sigma,.)=\int_0^\infty \tau^{-\imath\sigma} u(\tau,.)\,\frac{d\tau}{\tau},
\end{equation}
with inverse transform
\[
  \cM^{-1}\colon v\mapsto \check v(\tau,.)=\frac{1}{2\pi}\int_{\R+\imath\alpha} \tau^{\imath\sigma} v(\sigma,.)\,d\sigma,
\]
with $\alpha$ chosen in the region of holomorphy of $v$. Note that for $u$ which are supported near $\tau=0$ and are polynomially bounded as $\tau\to 0$, with values in a space such as $\C$, $\CI(X)$, $L^2(X)$ or $\CmI(X)$, the Mellin transform $\cM u$ is holomorphic in $\Im\sigma>C$, $C>0$ sufficiently large, with values in the same space. The Mellin transform is described in detail in \cite[\S5]{MelroseAPS}, but the reader should keep in mind that it is a renormalized Fourier transform, corresponding to the exponential change of variables $\tau=e^{-t}$ mentioned above, so results for it are equivalent to related results for the Fourier transform.  The $L^2$-based result, Plancherel's theorem, states that if $\nu$ is a smooth non-degenerate density on $X$ and $r_c$ denotes restriction to the line $\Im\sigma=c$, then
\begin{equation}
\label{EqBMellinWeightL2}
  r_{-\alpha}\circ\cM:\tau^\alpha L^2\Bigl(X\times[0,\infty);\frac{|d\tau|}{\tau}\nu\Bigr)\to L^2(\R;L^2(X;\nu))
\end{equation}
is an isomorphism. We are interested in functions $u$ supported near $\tau=0$, in which case, with $r_{(c_1,c_2)}$ denoting restriction to the strip $c_1<\Im\sigma<c_2$, for $N>0$,
\begin{equation}
\begin{split}
\label{EqBMellinRangeL2}
  &r_{-\alpha,-\alpha+N}\circ\cM:\tau^\alpha(1+\tau)^{-N} L^2\Bigl(X\times[0,\infty);\frac{|d\tau|}{\tau}\nu\Bigr) \\
  &\qquad\to \Big\{v:\R\times\imath(-\alpha,-\alpha+N)\ni\sigma \to v(\sigma)\in L^2(X;\nu); \\
  &\qquad\qquad v\ \text{is holomorphic in}\ \sigma\ \text{and}\ \sup_{-\alpha<r<-\alpha+N}\|v(.+\imath r,.)\|_{L^2(\R;L^2(X;\nu))}<\infty\Big\},
\end{split}
\end{equation}
see \cite[Lemma~5.18]{MelroseAPS}. Note that in accordance with \eqref{EqBMellinWeightL2}, $v$ in \eqref{EqBMellinRangeL2} extends continuously to the boundary values, $r=-\alpha$ and $r=-\alpha-N$, with values in the same space as for holomorphy. Moreover, for functions supported in, say, $\tau<1$, one can take $N$ arbitrary.

\subsection{b-pseudodifferential operators and b-Sobolev spaces}
\label{SubsecBPsdo}

Passing from $\Diffb(M)$ to the algebra of \emph{b-pseudodifferential operators} $\Psib(M)$ amounts to allowing symbols to be more general functions than polynomials; apart from symbols being smooth functions on $\Tb^*M$ rather than on $T^*M$ if $M$ was boundaryless, this is entirely analogous to the way one passes from differential to pseudodifferential operators, with the technical details being a bit more involved. One can have a rather accurate picture of b-pseudodifferential operators, however, by considering the following: for $a\in\CI(\Tb^* M)$, we say $a\in S^m(\Tb^* M)$ if $a$ satisfies
\begin{equation}
\label{EqBSymbolEst}
  |\pa_z^\alpha\pa_\zeta^\beta a(z,\zeta)|\leq C_{\alpha\beta}\la\zeta\ra^{m-|\beta|}\tn{ for all multiindices }\alpha,\beta
\end{equation}
in any coordinate chart, where $z$ are coordinates in the base and $\zeta$ coordinates in the fiber; more precisely, in local coordinates $(\tau,x)$ near $X$, we take $\zeta=(\sigma,\xi)$ as above. We define the quantization $\Op(a)$ of $a$, acting on smooth functions $u$ supported in a coordinate chart, by
\begin{align}
\label{EqBOpMap}
  \Op(a)u(\tau,x)=(2\pi)^{-n}\int &e^{i(\tau-\tau')\wt\sigma+i(x-x')\xi}\phi\left(\frac{\tau-\tau'}{\tau}\right) \\
    &\quad\times a(\tau,x,\tau\wt\sigma,\xi)u(\tau',x')\,d\tau'\,dx'\,d\wt\sigma\,d\xi,
\end{align}
where the $\tau'$-integral is over $[0,\infty)$, and $\phi\in\CI_c((-1/2,1/2))$ is identically $1$ near $0$. The cutoff $\phi$ ensures that these operators lie in the `small b-calculus' of Melrose, in particular that such quantizations act on weighted b-Sobolev spaces, defined below; see also the explicit description of the Schwartz kernels below using blow-ups. For general $u$, we define $\Op(a)u$ using a partition of unity. We write $\Op(a)\in\Psib^m(M)$; every element of $\Psib^m(M)$ is of the form $\Op(a)$ for some $a\in S^m(\Tb^*M)$ modulo the set $\Psib^{-\infty}(M)$ of smoothing operators. We say that $a$ is a \emph{symbol} of $\Op(a)$. The equivalence class of $a$ in $S^m(\Tb^*M)/S^{m-1}(\Tb^*M)$ is invariantly defined on $\Tb^*M$ and is called the \emph{principal symbol} of $\Op(a)$.

A different way of looking at $\Psib(M)$ is in terms of H\"ormander's uniform algebra, namely pseudodifferential operators on $\R^n$ arising as, say, left quantizations of symbols $\wt a\in S^m_\infty(\R^n_{\wt z};\R^n_{\wt\zeta})$ satisfying estimates
\begin{equation}
\label{EqBNoBHormUnifSymbol}
  |\pa_{\wt z}^\alpha\pa_{\wt\zeta}^\beta \wt a(\wt z,\wt\zeta)|\leq C_{\alpha\beta}\la\wt\zeta\ra^{m-|\beta|}\tn{ for all multiindices }\alpha,\beta.
\end{equation}
To see the connection, consider local coordinates $(\tau,x)$ on $M$ near $\pa M$ as above, and write $\wt z=(t,x)$ with $t=-\log\tau$, with the region of interest being a cylindrical set $(C,\infty)_t\times\Omega_x$ corresponding to $(0,e^{-C})_\tau\times\Omega_x$. Then the uniform estimates \eqref{EqBNoBHormUnifSymbol} are equivalent to estimates (pulling back $\wt a$ via the map $\psi(\tau,x)=(-\log\tau,x)$)
\begin{equation}
\label{EqBHormUnifSymbol}
\begin{split}
  &|(\tau\pa_{\tau})^{\alpha_0}\pa_x^{\alpha'}\pa_{\sigma}^{\beta_0}\pa_{\xi}^{\beta'} \psi^*\wt a(\tau,x,\sigma,\xi)|\leq C_{\alpha\beta}\la(\sigma,\xi)\ra^{m-|\beta|} \\
  &\qquad\qquad\text{ for all multiindices }\alpha=(\alpha_0,\alpha'),\beta=(\beta_0,\beta'),
\end{split}
\end{equation}
and the quantization map becomes
\begin{align*}
  &\wt{\Op}(\wt a)u(\tau,x) \\
  &\qquad=(2\pi)^{-n}\int e^{i\log(\tau/\tau')\sigma+i(x-x')\xi}\psi^*\wt a(\tau,x,-\sigma,\xi)u(\tau',x')(\tau')^{-1}\,d\tau'\,dx'\,d\sigma\,d\xi,
\end{align*}
Letting $\wt\sigma=\tau^{-1}\sigma$, this reduces to an oscillatory integral of the form \eqref{EqBOpMap}, taking into account that in $\tau/\tau'\in (C^{-1},C)$, $C>0$, the function $\log(\tau/\tau')$ in the phase is equivalent to $\frac{\tau-\tau'}{\tau}$. (Notice that in terms of $t,t'$, the cutoff $\phi$ in \eqref{EqBOpMap} is a compactly supported function of $t-t'$, identically $1$ near $0$.) With $z=(\tau,x)$, $\zeta=(\sigma,\xi)$, these estimates \eqref{EqBHormUnifSymbol} would be exactly the estimates \eqref{EqBSymbolEst} if $(\tau\pa_{\tau})^{\alpha_0}$ were replaced by $\pa_\tau^{\alpha_0}$. Thus, \eqref{EqBHormUnifSymbol} gives rise to the space of b-ps.d.o's {\em conormal to the boundary}, i.e.\ in terms of b-differential operators, the coefficients are allowed to be merely conormal to $X=\pa M$ rather than smooth up to it. As in the setting of classical (one-step polyhomogeneous) symbols, for distributions smoothness up to the boundary is equivalent to conormality (symbolic estimates in the symbol case) plus an asymptotic expansion; thus, apart from the fact that we need to be careful in discussing supports, the b-ps.d.o.\ algebra is essentially locally a subalgebra of H\"ormander's uniform algebra. Most properties of b-ps.d.o's are true even in this larger, `conormal coefficients' class, and indeed this perspective is very important when the coefficients are generalized to have merely finite Sobolev regularity as was done in \cite{HintzQuasilinearDS,HintzVasyQuasilinearKdS}; indeed the `only' significant difference concerns the normal operator, which does not make sense in the conormal setting. We also refer to \cite[Chapter~6]{VasyPropagationNotes} for a full discussion, including an introduction of localizers, far from diagonal terms, etc.

If $A\in\Psib^{m_1}(M)$ and $B\in\Psib^{m_2}(M)$, then $AB,BA\in\Psib^{m_1+m_2}(M)$, while $[A,B]\in\Psib^{m_1+m_2-1}(M)$, and its principal symbol is $\frac{1}{i}\ham_a b\equiv\frac{1}{i}\{a,b\}$, with $\ham_a$ as above.

We also recall the notion of \emph{b-Sobolev spaces}: fixing a volume b-density $\nu$ on $M$, which locally is a positive multiple of $|\frac{d\tau}{\tau}\,dx|$, we define, for $s\in\N_0$,
\[
  \Hb^s(M) = \bigl\{ u\in L^2(M,\nu) \colon V_1\cdots V_j u\in L^2(M,\nu), V_i\in\Vb(M), 1\leq i\leq j\leq s \bigr\},
\]
which one can extend to $s\in\R$ by duality and interpolation. \emph{Weighted b-Sobolev spaces} are denoted
\begin{equation}
\label{EqBSobolevWeighted}
  \Hb^{s,\alpha}(M)=\tau^\alpha\Hb^s(M),
\end{equation}
i.e.\ its elements are of the form $\tau^\alpha u$ with $u\in\Hb^s(M)$. Any b-pseudodifferential operator $\cP\in\Psib^m(M)$ defines a bounded linear map $\cP\colon\Hb^{s,\alpha}(M)\to\Hb^{s-m,\alpha}(M)$ for all $s,\alpha\in\R$. Correspondingly, there is a notion of wave front set $\WFb^{s,\alpha}(u)\subset\Sb^*M$ for a distribution $u\in\Hb^{-\infty,\alpha}(M)=\bigcup_{s\in\R}\Hb^{s,\alpha}(M)$, defined analogously to the wave front set of distributions on $\R^n$ or closed manifolds: a point $\varpi\in\Sb^*M$ is \emph{not} in $\WFb^{s,\alpha}(u)$ if and only if there exists $\cP\in\Psib^0(M)$, elliptic at $\varpi$ (i.e.\ with principal symbol non-vanishing on the ray corresponding to $\varpi$), such that $\cP u\in\Hb^{s,\alpha}(M)$. Notice however that we \emph{do} need to have a priori control on the weight $\alpha$ (we are {\em assuming} $u\in\Hb^{-\infty,\alpha}(M)$), which again reflects the lack of commutativity of $\Psib(M)$ to leading order in the sense of decay of coefficients at $\pa M$.

The Mellin transform is also well-behaved on the b-Sobolev spaces $\Hb^s(X\times[0,\infty))$, and indeed gives a direct way of defining non-integer order Sobolev spaces.  For $s\geq 0$, cf.\ \cite[Equation~(5.41)]{MelroseAPS},
\begin{equation}
\label{EqBSobolevIso}
\begin{split}
  &r_{-\alpha}\circ\cM:\tau^\alpha \Hb^s(X\times[0,\infty);\frac{|d\tau|}{\tau}\nu)\\
  &\to \Big\{v\in L^2(\R;H^s(X;\nu)):\  (1+|\sigma|^2)^{s/2}v\in L^2(\R;L^2(X;\nu))\Big\}
\end{split}
\end{equation}
is an isomorphism, with the analogue of \eqref{EqBMellinRangeL2} also holding. Note that the right hand side of \eqref{EqBSobolevIso} is equivalent to
\begin{equation}
\label{EqBSobolevIsoMod}
  \la|\sigma|\ra^s v\in L^2\bigl(\R;H^s_{\la|\sigma|\ra^{-1}}(X;\nu)\bigr),
\end{equation}
where the space on the right hand side is the standard semiclassical Sobolev space and $\la|\sigma|\ra=(1+|\sigma|^2)^{1/2}$; indeed, for $s\geq 0$ integer both are equivalent to the statement that for all $\beta$ with $|\beta|\leq s$, $\la|\sigma|\ra^{s-|\beta|} D_x^\beta v\in L^2(\R;L^2(X;\nu))$. Here by equivalence we mean not only the membership in a set, but also that of the standard norms, such as
\[
  \Big(\sum_{|\beta|\leq s}\int_{\Im\sigma=-\alpha}\la|\sigma|\ra^{2(s-|\beta|)}\|D_x^\beta v\|^2_{L^2(X;\nu)}\,d\sigma\Big)^{1/2},
\]
corresponding to these spaces. Note that by dualization, \eqref{EqBSobolevIsoMod} characterizes the Mellin transform of $\Hb^{s,\alpha}$ for all $s\in\R$.

The basic microlocal results, such as elliptic regularity, propagation of singularities and radial point estimates, have versions in the b-setting; these are purely symbolic (i.e.\ do not involve normal operators), and thus by themselves are insufficient for a Fredholm analysis, since the latter requires estimates with relatively compact errors. It is usually convenient to state these results in terms of wave front set containments, but by the closed graph theorem such statements are automatically equivalent to microlocalized Sobolev estimates, and are indeed often proved by such.

Let us first discuss microlocal elliptic regularity for a classical operator $\cP\in\Psib^m(M)$. We recall that $\cP$ elliptic at $\varpi\in\Sb^*M$ if $\sigma_{\bl}(\cP)$ is invertible at $\varpi$; here one renormalizes the principal symbol by using any non-degenerate homogeneous degree $m$ section of $\Tb^*M$ so that the restriction to $\Sb^*M$, considered as fiber infinity, makes sense.

\begin{prop}
  Suppose $u\in\Hb^{s',\alpha}$ for some $s',\alpha$, and $\varpi\notin\WFb^{s-m,\alpha}(\cP u)$. Then $\varpi\notin\WFb^{s,\alpha}(u)$. Quantitatively, the estimate
  \[
    \|B_2 u\|_{\Hb^{s,\alpha}}\leq C\bigl(\|B_1\cP u\|_{\Hb^{s-m,\alpha}}+\|u\|_{\Hb^{s',\alpha}}\bigr),
  \]
  is valid whenever $B_1,B_2\in\Psib^0(M)$, with $B_1$ elliptic on $\WFb'(B_2)$ and $\cP$ elliptic on $\WFb'(B_2)$.
\end{prop}

Next, to describe propagation of singularities for a classical operator $\cP\in\Psib^m(M)$ with real principal symbol $p$ (scalar if $\cP$ is acting on vector bundles), we recall that the \emph{characteristic set} $\Char(P)$ is the complement of its elliptic set (the set of points where $\cP$ is elliptic). Then:
\begin{prop}
  Let $u\in\Hb^{s',\alpha}$ for some $s',\alpha$. Then $\WFb^{s,\alpha}(u)\setminus\WFb^{s-m+1,\alpha}(\cP u)$ is the union of maximally extended (null) bicharacteristics, i.e.\ integral curves of $\ham_p$ inside the characteristic set $\Char(\cP)$ of $\cP$ inside $\Char(\cP)\setminus \WFb^{s-m+1,\alpha}(\cP u)$.
\end{prop}
This statement is vacuous at points $\varpi$ where $\ham_p$ is radial, i.e.\ tangent to the dilation orbits in the fibers of $\Tb^*M\setminus o$. Elsewhere, it again amounts to an estimate, which now is of the form
\begin{equation}
\label{EqBProp}
  \|B_2 u\|_{\Hb^{s,\alpha}}\leq C\bigl(\|B_1 u\|_{\Hb^{s,\alpha}}+\|S\cP u\|_{\Hb^{s-m+1,\alpha}}+\|u\|_{\Hb^{s',\alpha}}\bigr),
\end{equation}
which is valid whenever $B_1,B_2,S\in\Psib^0(M)$ with $S$ elliptic on $\WFb'(B_2)$, and every bicharacteristic from $\WFb'(B_2)$ reaching the elliptic set of $B_1$, say in the backward direction along $\ham_p$, while remaining in the elliptic set in $S$; this estimate gives propagation in the forward direction along $\ham_p$.

The estimate~\eqref{EqBProp} remains valid, for propagation in the forward direction, if $p$ is no longer real, but $\Im p\leq 0$, and in the backward direction if $\Im p\geq 0$. Such an operator is called \emph{complex absorbing}. Notice that one has a better, elliptic, estimate where $\Im p>0$; the point is that the propagation of singularities estimate works at the boundary of this region.

Radial points of $\ham_p$ come in many flavors depending on the linearization of $\ham_p$. In the present context, at the b-conormal bundle of the boundary of the event or cosmological horizon at infinity, the important type is saddle points, or more precisely submanifolds $L$ of normally saddle points, introduced in \cite[\S2.1.1]{HintzVasySemilinear} in the b-setting. (See \cite{BaskinVasyWunschRadMink} for a source/sink case, which is relevant to the wave equation on Minkowski type spaces.) More concretely, the type of saddle point is that (within the characteristic set of $\cP$) one of the stable/unstable manifolds lies in $\Sb^*_X M$, and the other, call it $\cL$, is transversal to $\Sb^*_XM$, with the full assumptions stated in \cite[\S2.1.1]{HintzVasySemilinear}; see also the discussion of the dynamics in \S\ref{SubsecKdSGeo}. In fact, one should really consider at least the infinitesimal behavior of the linearization towards the interior of the cotangent bundle as well, i.e.\ work on $\ol{\Tb^*}M$, with $\Sb^*M$ its boundary at fiber infinity; then with $L$ a submanifold of $\Sb^*_XM$ still, we are interested in the setting in which the statement about stable/unstable manifolds still holds in $\ol{\Tb^*}M$. Then there is a critical regularity, $s=\frac{m-1}{2}+\beta\alpha$, where $\alpha$ is the Sobolev weight order as above, and $\beta$ arises from the subprincipal symbol of $\cP$ at $L$; in the case of event horizons it is the reciprocal surface gravity. (If $\cP\neq\cP^*$, there is a correction term to the critical regularity, see \S\ref{SubsecAsySm}.) Namely, the theorem, \cite[Proposition~2.1]{HintzVasySemilinear}, states:
\begin{itemize}
\item for $s>\frac{m-1}{2}+\beta\alpha$ one can propagate estimates from a punctured neighborhood of $\cL\cap\Sb^*_X M$ in $\cL$ to $X$;
\item if $s<\frac{m-1}{2}+\beta\alpha$, the opposite direction of propagation is possible.
\end{itemize}
\begin{rmk}
  This is the redshift effect in the direction of propagation into the boundary since one has then a source/sink within the boundary; that is, in the direction in which the estimates are propagated, the linearization at $L$ infinitesimally shifts the frequency (where one is in the fibers of $\Tb^*M$) away from fiber infinity, i.e.\ to lower frequencies. Dually, this gives a blue shift effect when one propagates the estimates out of the boundary.
\end{rmk}

When the $\ham_p$-flow has an appropriate global structure, e.g.\ when one has complex absorption in some regions, and the $\ham_p$-flow starts from and ends in these, potentially after `going through' radial saddle points, see \cite[\S2.1]{HintzVasySemilinear}, one gets global estimates
\begin{equation}
\label{EqBcPGlobalEst}
  \|u\|_{\Hb^{s,\alpha}}\leq C\bigl(\|\cP u\|_{\Hb^{s-m+1,\alpha}}+\|u\|_{\Hb^{s',\alpha}}\bigr),
\end{equation}
with $s'<s$, provided of course the threshold conditions are satisfied when radial points are present (depending on the direction of propagation). One also has dual estimates for $\cP^*$, propagating in the opposite direction.

Due to the lack of gain in $\alpha$, these estimates do not directly give rise to a Fredholm theory even if $M$ is compact, since the inclusion map $\Hb^{s,\alpha}\to\Hb^{s',\alpha}$ is not compact even if $s>s'$. This can be done via the analysis of the (Mellin transformed) normal operator $\wh N(\cP)(\sigma) \equiv \wh\cP(\sigma)$. Namely, when $\wh\cP(\sigma)$ has no poles in the region $-\Im\sigma\in [r',r]$, and when large $|\Re\sigma|$ estimates hold for $\wh\cP$, which is automatic when $\cP$ has the global structure allowing for the {\em global} estimates \eqref{EqBcPGlobalEst}, then one can obtain estimates like
\[
  \|u\|_{\Hb^{s,r}}\leq C\bigl(\|N(\cP) u\|_{\Hb^{s-m+1,r}}+\|u\|_{\Hb^{s',r'}}\bigr),
\]
which, when applied to the error term of \eqref{EqBcPGlobalEst} via the use of cutoff functions, gives
\[
  \|u\|_{\Hb^{s,r}}\leq C\bigl(\|\cP u\|_{\Hb^{s-m+1,r}}+\|u\|_{\Hb^{s',r'}}\bigr),
\]
with $s>s'$, $r>r'$, and dual estimates for $\cP^*$, which {\em does} give rise to a Fredholm problem for $\cP$.

We are also interested in domains in $M$, more precisely `product' or `p' submanifolds with corners $\Omega$ in $M$. Thus, $\Omega$ is given by inequalities of the form $\frakt_j\geq 0$, $j=1,2\ldots,m$, such that at any point $p$ the differentials of those of the $\frakt_j$, as well as of the boundary defining function $\tau$ of $M$, which vanish at $p$ must be linearly independent (as vectors in $T^*_p M$). For instance, if $m=2$, and $p\in X=\pa M$ with $\frakt_1(p)\neq 0$, $\frakt_2(p)=0$, then $d\tau(p)$ and $d\frakt_2(p)$ must be linearly independent. The main example of interest is the domain $\Omega$ defined in \S\ref{SubsecKdSWave}, see Figure~\ref{FigKdSWave}, in which case we can take $\frakt_1=\prod_\pm (r-(r_{b_0,\pm}\pm\eps_M))$ and $\frakt_2=1-\tau$.

On a manifold with corners, such as $\Omega$, one can consider supported and extendible distributions; see \cite[Appendix~B.2]{HormanderAnalysisPDE3} for the smooth boundary setting, with simple changes needed only for the corners setting, which is discussed e.g.\ in \cite[\S3]{VasyPropagationCorners}. Here we consider $\Omega$ as a domain in $M$, and thus its boundary face $X\cap\Omega$ is regarded as having a different character from the $H_j\cap\Omega$, $H_j=\frakt_j^{-1}(0)$, i.e.\ the support/extendibility considerations do not arise at $X$---all distributions are regarded as acting on a subspace of $\CI$ functions on $\Omega$ vanishing at $X$ to infinite order, i.e.\ they are automatically extendible distributions at $X$. On the other hand, at the $H_j$ we consider both extendible distributions, acting on $\CI$ functions vanishing to infinite order at $H_j$, and supported distributions, which act on all $\CI$ functions (as far as conditions at $H_j$ are concerned). For example, the space of supported distributions at $H_1$ extendible at $H_2$ (and at $X$, as we always tacitly assume) is the dual space of the subspace of $\CI(\Omega)$ consisting of functions vanishing to infinite order at $H_2$ and $X$ (but not necessarily at $H_1$). An equivalent way of characterizing this space of distributions is that they are restrictions of elements of the dual $\CmI(M)$ of $\CIdotc(M)$ with support in $\frakt_1\geq 0$ to $\CI$ functions on $\Omega$ which vanish to infinite order at $X$ and $H_2$; thus in the terminology of \cite{HormanderAnalysisPDE3}, they are restrictions of elements of $\CmI(M)$ with support in $\frakt_1\geq 0$ to $\Omega\setminus (H_2\cup X)$.

The main interest is in spaces induced by the Sobolev spaces $\Hb^{s,r}(M)$. Notice that the Sobolev norm is of completely different nature at $X$ than at the $H_j$, namely the derivatives are based on complete, rather than incomplete, vector fields: $\Vb(M)$ is being restricted to $\Omega$, so one obtains vector fields tangent to $X$ but not to the $H_j$. As for supported and extendible distributions corresponding to $\Hb^{s,r}(M)$, we have, for instance,
\[
  \Hb^{s,r}(\Omega)^{\bullet,-},
\]
with the first superscript on the right denoting whether supported ($\bullet$) or extendible ($-$) distributions are discussed at $H_1$, and the second the analogous property at $H_2$; thus $\Hb^{s,r}(\Omega)^{\bullet,-}$ consists of restrictions of elements of $\Hb^{s,r}(M)$ with support in $\frakt_1\geq 0$ to $\Omega\setminus (H_2\cup X)$.  Then elements of $\CI(\Omega)$ with the analogous vanishing conditions, so in the example vanishing to infinite order at $H_1$ and $X$, are dense in $\Hb^{s,r}(\Omega)^{\bullet,-}$; further the dual of $\Hb^{s,r}(\Omega)^{\bullet,-}$ is $\Hb^{-s,-r}(\Omega)^{-,\bullet}$ with respect to the $L^2$ (sesquilinear) pairing. For distributions extendible, resp.\ supported, at all boundary hypersurface, we shall write
\begin{equation}
\label{EqBSuppExt}
  \Hbext^{s,r}(\Omega) \equiv \Hb^{s,r}(\Omega)^{-,-}, \quad
  \Hbsupp^{s,r}(\Omega) \equiv \Hb^{s,r}(\Omega)^{\bullet,\bullet}.
\end{equation}

The main use of these spaces for the wave equation is that due to energy estimates, one can obtain a Fredholm theory using these spaces, with the supported distributions corresponding to vanishing Cauchy data (where one propagates estimates from in the complex absorption setting discussed above), while extendible distributions correspond to no control of Cauchy data (corresponding to the final spacelike hypersurfaces, i.e.\ with future timelike outward-pointing normal vector, to which one propagates estimates); note that dualization reverses these, i.e.\ one starts propagating for $\cP^*$ from the spacelike hypersurfaces towards which one propagated for $\cP$. We refer to \cite[\S2.1]{HintzVasySemilinear} for further details.

\subsection{Semiclassical analysis}
\label{SubsecBSemi}

In one part of the paper, namely the proof of SCP, we work with semiclassical b-pseudodifferential operators. First recall that the uniform semiclassical operator algebra, $\Psih(\R^n)$, is given by
\begin{align*}
  A_\semi=\Oph(a);\ &\Oph(a)u(\wt z)=(2\pi \semi)^{-n}\int_{\R^n\times\R^n} e^{i(\wt z-\wt z')\cdot\wt\zeta/\semi} a(\wt z,\wt\zeta,\semi)\,u(\wt z')\,d\wt\zeta\,d\wt z', \\
  &\qquad u\in\cS(\R^n),\ a\in \CI([0,1)_\semi; S^m_\infty(\R^n_{\wt z};\R^n_{\wt z'}));
\end{align*}
its classical subalgebra, $\Psihcl(\R^n)$ corresponds to $a\in \CI([0,1)_\semi; S_{\infty,\cl}^m(\R^n;\R^n))$, where $S_{\infty,\cl}^m$ denotes the space of symbols which are classical (one-step polyhomogeneous) in the fibers. The semiclassical principal symbol of such an operator is $\sigma_{\semi,m}(A)=a|_{\semi=0}\in S^m_\infty(T^*\R^n)$; the `standard' principal symbol is still the equivalence class of $a$ in $\CI([0,1)_\semi;S^{m}_\infty/S^{m-1}_\infty)$, or an element of $\CI([0,1)_\semi;S^{m}_{\infty,\hom})$ in the classical setting. There are natural extensions to manifolds without boundary $X$, for which the behavior of the symbols at infinity in $\R^n_{\wt z}$ is irrelevant since, as one transfers the operators to manifolds, one uses coordinate charts only whose compact subsets play a role. On the other hand, the `conormal coefficient' semiclassical b-pseudodifferential algebra can be defined via the identifications discussed above, namely locally using, with $z=(t,x)$, the quantization
\begin{align*}
  &A_\semi=\Oph(a); \\
  &\Oph(a)u(\wt z)=(2\pi \semi)^{-n}\int_{\R^n\times\R^n} e^{i(\wt z-\wt z')\cdot\wt\zeta/\semi} \wt\phi(t-t') a(\wt z,\wt\zeta,\semi)\,u(\wt z')\,d\wt\zeta\,d\wt z', \\
  &\qquad u\in\cS(\R^n),\ a\in \CI([0,1)_\semi; S^m_\infty(\R^n_{\wt z};\R^n_{\wt\zeta}));
\end{align*}
with $\wt\phi$ compactly supported, identically $1$ near $0$, requiring an expansion of $a$ in powers of $\tau=e^{-t}$ as $t\to\infty$. The Schwartz kernel of such an operator vanishes to infinite order at $\semi=0$ away from the diagonal (in the uniform sense that $t-t'$ is bounded away from $0$), thus working in a manifold setting is in fact almost the same as working locally.

The fully intrinsic version of this operator algebra can be obtained using Melrose's approach via blow-ups, as in \cite{MelroseAPS}. First, recall that the standard b-double space $M^2_\bl$ is constructed by taking $M^2=M\times M$, and blowing up the corner $(\pa M)^2$ in it: $M^2_\bl=[M^2;(\pa M)^2]$. The diagonal then lifts to a product submanifold of this resolved space, and the b-ps.d.o's on this space are simply distributions conormal to the diagonal which vanish to infinite order at the lift of the left and right boundaries $\pa M\times M$ and $M\times\pa M$. Indeed, \eqref{EqBOpMap} is an explicit way of writing such a parametrization of conormal distributions via oscillatory integrals taking into account that in $\tau/\tau'\in (C^{-1},C)$, $C>0$, regarding $(\tau/\tau',\tau)$ as valid coordinates on the blown-up space, $\log(\tau/\tau')$ in the phase is equivalent to $\frac{\tau-\tau'}{\tau}$, which together with $x_j-x'_j$ defines the lifted diagonal (or b-diagonal) $\diag_\bl$. In the semiclassical setting, one considers $M^2\times[0,1)_\semi$, blows up $(\pa M)^2\times[0,1)_\semi$ first to obtain a family (parametrized by $\semi$) of double spaces, $[M^2\times[0,1);(\pa M)^2\times[0,1)]=M^2_\bl\times[0,1)_\semi$. Then the b-diagonal {\em at} $\semi=0$ is a p-submanifold, and one blows this up to obtain the semiclassical b-double space,
\[
  M^2_{\bl,\semi}=[M^2_\bl\times[0,1)_\semi;\diag_\bl\times\{0\}].
\]
Elements of $\Psibh^m(M)$ are then given by Schwartz kernels which are conormal, of order $m$, to the diagonal, smooth up to the front faces of the last two blow-ups (b- and semiclassical), and vanishing to infinite order at the lifts of the 3 original faces: left (i.e.\ $\pa M\times M\times [0,1)$), right ($M\times\pa M\times[0,1)$) and semiclassical ($M^2\times\{0\}$). This is completely analogous to the construction of the semiclassical 0-double space in \cite{MelroseSaBarretoVasyResolvent}, but in that paper much more delicate semiclassical Fourier integral operators had to be considered. The algebraic properties of $\Psibh^m(M)$ can be derived directly, but they are even more transparent from the above discussion on $\R^n$.

The b-ps.d.o.\ results such as elliptic estimates, propagation of singularities, etc., have semiclassical b-analogues. First, the semiclassical b-Sobolev norms are defined (up to equivalence of norms on compact manifolds) for $s\in\N$, $u_\semi\in \Hb^{s,\alpha}$, $\semi\in(0,1)$, by 
\[
  \|u_\semi\|_{\Hbh^{s,\alpha}(M)}^2 = \sum \|(\semi V_1)\cdots (\semi V_j) (\tau^{-\alpha}u)\|^2_{L^2_\bl},
\]
where the finite sum is over all collections of up to $s$ (including $0$) vector fields $V_i\in\wt\cV$, $\wt\cV$ a finite subset of $\Vb(M)$, such that at each point $p$ in $M$, elements of $\wt\cV$ span $\Tb_pM$. In local coordinates near a point $p\in\pa M$ this is equivalent to the squared norm
\[
  \|\tau^{-\alpha}u\|^2_{L^2_\bl}+\|\tau^{-\alpha}(\semi\tau D_\tau)^s u\|^2_{L^2_\bl}+\sum_{j=1}^{n-1}\|\tau^{-\alpha}(\semi D_{x_j})^s u\|^2_{L^2_\bl},
\]
which one can again extend to $s\in\R$ by duality and interpolation. 

One then has a notion of semiclassical b-wave front set $\WFbh^{s,\alpha}(u)$, defined for families $u=(u_\semi)_{\semi\in(0,1)}$ which are bounded by $C\semi^N$ in $\Hbh^{s',\alpha}$ for some $s',\alpha,N,C$. (One says that $u$ is polynomially bounded in $\Hbh^{s',\alpha}$.) This is then a subset of
\[
  \pa(\ol{\Tb^*}M\times[0,1))=\Sb^*M\times[0,1)\cup\ol{\Tb^*}M\times\{0\},
\]
where the corner $\Sb^*M\times\{0\}=\pa\ol{\Tb^*}M\times\{0\}$ is part of both sets on the right, and is defined by $\varpi\notin\WFbh^{s,\alpha}(u)$ if there exists $A\in\Psibh^0(M)$, elliptic at $\varpi$, such that $\|Au\|_{\Hbh^{s,\alpha}}=\cO(\semi^\infty)$, i.e.\ bounded by $C_N\semi^N$ for all $N$. Then, for instance, elliptic regularity is the statement that if $\cP\in\Psibh^m(M)$ is elliptic at $\varpi$, then for $u$ polynomially bounded in $\Hbh^{s',\alpha}$,
\[
  \varpi\notin\WFbh^{s-m,\alpha}(\cP u)\Rightarrow\varpi\notin\WFb^{s,\alpha}(u).
\]
This corresponds to an estimate (by the uniform boundedness principle)
\[
  \|B_2 u\|_{\Hbh^{s,\alpha}}\leq C\bigl(\|B_1\cP u\|_{\Hbh^{s-m,\alpha}}+\semi^N\|u\|_{\Hb^{s',\alpha}}\bigr),
\]
which is valid whenever $B_1,B_2\in\Psib^0(M)$ with $B_1$ elliptic on $\WFb'(B_2)$ and $\cP$ elliptic on $\WFb'(B_2)$. There are analogues of propagation of singularities and radial point estimates. Thus, under global conditions on the $\ham_p$-flow, as above, one has estimates
\[
  \|u\|_{\Hbh^{s,\alpha}}\leq C\bigl(\semi^{-1}\|\cP u\|_{\Hbh^{s-m+1,\alpha}}+\semi^N\|u\|_{\Hb^{s',\alpha}}\bigr),
\]
with $s'<s$; the $\semi^{-1}$ corresponds to the loss of one derivative in the norm of $\cP u$ relative to the elliptic estimate due to the propagation of singularities estimate. Notice that these estimates give {\em small} remainders due to the factor $\semi^N$, which can thus be absorbed into the left hand side for $\semi$ sufficiently small. Therefore, one can obtain invertibility results for $\cP=\cP_\semi$ for sufficiently small $\semi$ directly, without having to analyze the normal operator.

\section{A general quasilinear existence theorem}
\label{SecQ}

Combining the results of \S\ref{SecAsy} with the Nash--Moser inverse function theorem described in \S\ref{SubsecKdSStabNM}, we now prove that one can solve rather general quasilinear wave equations with small data globally upon modifying the forcing or the initial data in a suitable finite-dimensional space, provided the linearization of the non-linear operator at $0$ fits into the framework of \S\ref{SubsecAsySm}. The purpose is to present a simple result that is powerful enough for interesting applications: we will be able to use it directly to prove the non-linear stability of the static model of de~Sitter space, see Theorem~\ref{ThmdSStab}. (Subsuming the black hole stability proof in \S\ref{SubsecKdSStab} into the general theorem below would complicate the setup only slightly.)

Thus, the simplest way (albeit not the most natural one geometrically) to describe our requirements for a non-linear differential operator $P$, acting on sections of a stationary vector bundle $E\to M$ of rank $k$ (see \S\ref{SubsecAsySm}), is to use coordinates $(t_*,x)=:(x_0,\ldots,x_3)$, where $x=(x_1,x_2,x_3)$ is a local coordinate system on $X$ in which $E$ is trivialized with fibers $\C^k$; we then require that for some \emph{fixed} $b\in\cU_B$,\footnote{The restriction to small angular momenta here is only due to the fact that we did not define the Kerr--de~Sitter family for larger angular momenta in \S\ref{SubsecKdSSlow}.} we have
\[
  P(u) = (g_b^{\mu\nu}+q^{\mu\nu}(x,u,D u))D_\mu D_\nu u + q^\mu(x,u,D u)D_\mu u + q(x,u,D u)u
\]
for $u$ with small $\cC^1$ norm, where the $q^{\mu\nu}\colon\R^3\times\C^k\times\C^{3k}\to\R$ are smooth with $q^{\mu\nu}(x,0,0)\equiv 0$, and $q,q^\mu\colon\R^3\times\C^k\times\C^{3k}\to\C^{k\times k}$ are smooth, valued in endomorphisms of $E$. For example, the non-linear operator $u\mapsto(\Ric+\Lambda)(g_b+u)-\tdel^*(\Ups(g_b+u)-\Ups(g_b))$ is of this form.

The main feature of such operators is that the linearization $L_u(r) := D_u P(r)$ is a principally scalar wave operator, and if $\wt u\in\Hb^{s+2,\alpha}(M;E)$, $\alpha>0$, $s>2$, is exponentially decaying, then $D_{\wt u} P$ is \emph{stationary} up to an operator in $\Hb^{s,\alpha}\Diffb^2(M;E)$.

We furthermore assume that the linearization $L_0=D_0 P$ satisfies the assumptions \eqref{ItAsySm1Sym}--\eqref{ItAsySm3Trapped}; for simplicity, we assume $\wh\beta\geq-1$ in \eqref{EqAsySm2RadialSubprInf} as in Theorem~\ref{ThmAsyExpMain}. Due to Theorem~\ref{ThmAsySmMero}, there exists $\alpha>0$ such that the operator $L_0$ has only finitely many resonances in a half space $\Im\sigma>-\alpha$ and satisfies high energy estimates in this half space; by shrinking $\alpha>0$ if necessary, we can assume $L_0$ has no resonances with $-\alpha\leq\Im\sigma<0$. (The latter assumption is unnecessary; we only make it for convenience.) Denote by $R=\Res(L_0,\{\Im\sigma>-\alpha\})$ the \emph{finite-dimensional} space of resonant states corresponding to the non-decaying resonances of $L_0$. We introduce a space of modifications of forcing terms removing the asymptotic behavior of elements of $R$ when solving linear initial value problems for $L_0$: fix a basis $\{\phi_1,\ldots,\phi_N\}$ of $R$, and a cutoff $\chi$, identically $0$ near $\Sigma_0$ and identically $1$ for large $t_*$; then, adopting the notation of Theorem~\ref{ThmAsyExpMain}, we define
\begin{gather*}
  z \colon \C^N \to \Hb^{\infty,\alpha}(\Omega;E)^{\bullet,-} \hra D^{\infty,\alpha}(\Omega;E), \\
  \bfc=(c_1,\ldots,c_N)\mapsto \sum_j L_0(\chi c_j\phi_j).
\end{gather*}
By construction, the assumptions of Theorem~\ref{ThmAsyExpMain} are satisfied if we take $W=\{0\}$, corresponding to the fact that we eliminate all non-decaying asymptotic behavior, thus the stationary parts of the linearized operators $L_u$, $u\in\Hb^{\infty,\alpha}$, we need to consider are fixed, i.e.\ do not depend on any parameters.

The general Nash--Moser iteration scheme, Theorem~\ref{ThmKdSStabNM}, then implies the following theorem:

\begin{thm}
\label{ThmQ}
  Suppose $P$ satisfies the above assumptions. Then there exist constants $\eps>0$ and $C$ such that the following holds: for data $d=(f,u_0,u_1)\in D^{\infty,\alpha}(\Omega;E)$ (recall Definition~\ref{DefAsySmDataSpace}) with $\|d\|_{20,\alpha}<\eps$, there exist $\bfc\in\C^N$ and $u\in\Hbext^{\infty,\alpha}(\Omega;E)$ solving the quasilinear wave equation
  \[
    \begin{cases}
      P(u) = f + z(\bfc) & \tn{in }\Omega^\circ, \\
      \gamma_0(u) = (u_0,u_1) & \tn{in }\Sigma_0,
    \end{cases}
  \]
  and $|\bfc|\leq C\|d\|_{13,\alpha}$.
\end{thm}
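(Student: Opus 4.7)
The plan is to apply the Nash--Moser inverse function theorem, Theorem~\ref{ThmKdSStabNM}, using Theorem~\ref{ThmAsyExpMain} as the black box that inverts the linearization at each step. I will set up the Banach scales
\[
  B^s := \C^N \oplus \Hbext^{s,\alpha}(\Omega;E), \qquad \bfB^s := D^{s,\alpha}(\Omega;E),
\]
and define the nonlinear map
\[
  \phi(\bfc,u) := \bigl( P(u) - z(\bfc),\ \gamma_0(u)\bigr) - (f,u_0,u_1).
\]
Since $P(0)=0$ (the assumption $q^{\mu\nu}(x,0,0)\equiv 0$ is not used here, only that $P$ has no constant term), we have $\phi(0,0) = -(f,u_0,u_1)$, whose $\bfB^{2d}$-norm is controlled by $\|d\|_{20,\alpha}$ for $d=10$, hence can be made small by shrinking $\eps$. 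The smoothing operators on $B^s$ are constructed as in \S\ref{SubsecKdSStabNM}: the identity on the finite-dimensional factor $\C^N$, and standard smoothing on the extendible b-Sobolev factor.

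The linearization of $\phi$ at $(\bfc,u)$ acting on $(\bfc',u')$ is
\[
  \bigl( L_u u' - z(\bfc'),\ \gamma_0(u') \bigr),
\]
where $L_u = D_u P$ is a principally scalar wave operator. For $u\in\wt W^s$ (small in $\Hbext^{14,\alpha}$), the structural assumption on $P$ yields a decomposition $L_u = L_0 + \wt L_{0,u}$, with $\wt L_{0,u}$ principally scalar, having coefficients in $\Hbext^{s,\alpha}$ and depending tamely on $u$ in the sense of \eqref{EqAsyExpDecTame}, because the coefficients are smooth functions of $(u,Du)$ vanishing at $(0,0)$. This is exactly the setup of \S\ref{SubsecAsyExp} with trivial parameter space $W=\{w_0\}$, so the stationary part $L_0$ is fixed. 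To invoke Theorem~\ref{ThmAsyExpMain}, I need to verify that the modification map $\C^N\ni\bfc\mapsto \lambda_\IVP(z(\bfc))\in\cL(R^*,\ol\C)$ is bijective. But by construction, if $v=\sum_j c_j\chi\phi_j$, then $(L_0,\gamma_0)v = (z(\bfc),0)$ with $v$ having asymptotic part $\sum_j c_j\phi_j$; hence by Proposition~\ref{PropAsySmResNoAsy}, $\lambda_\IVP(z(\bfc))$ vanishes iff $\sum_j c_j\phi_j=0$, iff $\bfc=0$. Dimension counting ($\dim R = \dim R^* = N$) then gives bijectivity. Theorem~\ref{ThmAsyExpMain} therefore produces a solution operator $\psi(\bfc,u)$ satisfying the tame estimates \eqref{EqAsyExpMainSTameC}--\eqref{EqAsyExpMainSTameU}, which translate directly into the hypothesis \eqref{EqKdSStabNMTameSol} with loss $d=10$.

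The tame estimates \eqref{EqKdSStabNMMapCont} on $\phi$, $\phi'$, and $\phi''$ follow from the Moser-type estimates of Lemma~\ref{LemmaAsyExpLocalHs}, using that $P$, $P'$, $P''$ are nonlinear expressions of the form $q(x,u,Du)\cdot(\text{derivatives of }u)$ with smooth $q$, together with the fact that the trace maps $\gamma_0$ are tame. The same value $d=10$ works for both \eqref{EqKdSStabNMMapCont} and \eqref{EqKdSStabNMTameSol}. With all hypotheses of Theorem~\ref{ThmKdSStabNM} verified, we obtain a zero of $\phi$ provided $\|\phi(0,0)\|_{2d}=\|d\|_{20,\alpha}$ is sufficiently small, which is exactly the statement of the theorem. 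The estimate $|\bfc|\le C\|d\|_{13,\alpha}$ follows from the tame bound \eqref{EqAsyExpMainSTameC} applied at the final solution.

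The main conceptual point, and the only step requiring real care, is verifying that the \emph{fixed} finite-dimensional modification space $z(\C^N)$ constructed from $L_0$ alone continues to correct the asymptotics of the perturbed operators $L_u$; this is precisely the perturbative content of Theorem~\ref{ThmAsyExpMain}, whose proof (cf.\ Lemma~\ref{LemmaAsyExpRegCk}) proceeds stratum by stratum through the resonance strips, using the bijectivity at the base point $w_0=0$ together with lower semicontinuity of rank. The remaining work is bookkeeping of regularity: the number $20$ in the hypothesis is $2d$ with $d=10$, where the value $10$ is dictated by the regularity thresholds appearing in Theorem~\ref{ThmAsyExpMain}, and is not expected to be optimal.
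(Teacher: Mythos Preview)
Your proposal is correct and takes essentially the same approach as the paper, which simply states that Theorem~\ref{ThmKdSStabNM} (Nash--Moser) implies the result once the setup preceding the theorem (the map $z$ and the verification that Theorem~\ref{ThmAsyExpMain} applies with $W=\{0\}$) is in place. You have in fact supplied more detail than the paper does---in particular the explicit verification of bijectivity of $\bfc\mapsto\lambda_\IVP(z(\bfc))$ via the forward solution $\chi\sum_j c_j\phi_j$---and your bookkeeping of the loss $d=10$ and the threshold $2d=20$ matches the paper's conventions exactly.
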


One also obtains an estimate for $\Hbext^{s,\alpha}$ norms of $u$, as follows from the proof of Theorem~\ref{ThmKdSStabNM} given in \cite{SaintRaymondNashMoser}.

Furthermore, one can show that the map $Z\colon D^{\infty,\alpha}(\Omega;E)\ni d\mapsto\bfc\in\C^N$ has surjective differential, as follows from the construction of the map $z$ and its relation to the linear operator $L_0$ (see \cite[\S III]{HamiltonNashMoser} for details), and then $Z_0:=Z^{-1}(0)\subset D^{\infty,\alpha}(\Omega;E)$ is, locally near $0$, an $N$-codimensional smooth Fr\'echet submanifold. Therefore, we can solve the quasilinear initial value problem $P(u)=f$, $\gamma_0(u)=(u_0,u_1)$, \emph{exactly} for $(f,u_0,u_1)\in Z_0$; that is, we have \emph{global existence} (and automatically uniqueness) \emph{in a space of decaying solutions for an $N$-codimensional submanifold of the space of data}.

\section{Non-linear stability of the static model of de~Sitter space}
\label{SecdS}

In this section, we prove the non-linear stability of the static model of de~Sitter space using the methods outlined in \S\ref{SubsecIntroIdeas}. We recall that the stability of \emph{global} de~Sitter space in $(3+1)$ dimensions was proved by Friedrich \cite{FriedrichStability} (with generalizations due to Anderson \cite{AndersonStabilityEvenDS} and Ringstr\"om \cite{RingstromEinsteinScalarStability}), which is thus a much stronger result because it shows stability on a larger spacetime; the point is thus only to illustrate the main ideas of the paper in a simpler context which however is very illuminating.

We recall that Graham--Lee \cite{GrahamLeeConformalEinstein} proved the existence of Poincar\'e--Einstein metrics on the ball, with prescribed conformal class of the metric induced on the conformal boundary, close to the hyperbolic metric. Growing indicial roots in the elliptic setting do not present a problem as they do in the hyperbolic setting; one solves an analogue of a boundary value problem (see in particular \cite[Theorem~3.10]{GrahamLeeConformalEinstein}) in which these are excluded from the considerations (somewhat analogously to scattering constructions from infinity in the hyperbolic setting). Our computations in the DeTurck gauge below parallel those of \cite{GrahamLeeConformalEinstein}; the difference in the signature affects the calculations only in a minor way.

Here, we will introduce de~Sitter space simply by using a local coordinate expression for its metric; we refer to \cite[\S8.1]{HintzQuasilinearDS} for a detailed discussion of de~Sitter space and the static model. We work in $(n+1)$ dimensions, use Greek letters for indices between $0$ and $n$, and Latin letters for indices between $1$ and $n$. Locally near a point of the future conformal boundary of (global) de~Sitter space $\sfM$, the de~Sitter metric $g_0$ takes the form
\[
  g_0 = \tau^{-2}\ol g_0,\quad \ol g_0 = d\tau^2 - \sum_i dw_i^2
\]
in a suitable coordinate system $\tau\geq 0$, $w_1,\ldots,w_n\in\R$, where $\tau=0$ defines the future conformal boundary $\sfX$ of $\sfM$ within the coordinate patch; see Figure~\ref{FigdS}. Thus, $g$ is a $0$-metric in the sense of Mazzeo--Melrose \cite{MazzeoMelroseHyp}, albeit with Lorentzian rather than Riemannian signature; more general Lorentzian manifolds, with a similar structure at infinity as de~Sitter space, were introduced and studied by Vasy \cite{VasyWaveOndS}, and we will make use of the results of that paper freely.

\begin{figure}[!ht]
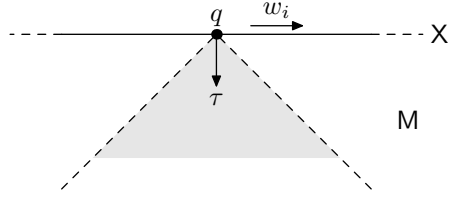

  \centering
  \inclfig{dS}
  \caption{A neighborhood of a point $q$ on the conformal boundary of de~Sitter space; also indicated (shaded) is the static model at $q$, which near $q$ is the interior of the backward light cone from $q$ with respect to $\ol g$ (and thus $g$).}
  \label{FigdS}
\end{figure}

It is natural to work with the frames
\[
  e_\mu := \tau\pa_\mu,\quad e^\mu := \frac{dw_\mu}{\tau}
\]
of the 0-tangent bundle $\Tzero\sfM$ and the 0-cotangent bundle $\Tzerodual\sfM$, respectively. \emph{From now on, indices refer to these frames, rather than the coordinate frame.} Thus, for instance, for a 1-form $\omega$, we write $\omega_\mu=\omega(e_\mu)$, so $\omega=\omega_\mu e^\mu$, and raising the index gives $\omega^0=\omega_0$ and $\omega^i=-\omega_i$, i.e.\ $G_0(\omega,-)=\omega^\mu e_\mu$ if $G_0$ denotes the dual metric. Similarly to \eqref{EqOpWarpedBundles}, we consider natural splittings of the vector bundles
\[
  \Tzerodual\sfM=W_N\oplus W_T,\quad S^2\,\Tzerodual\sfM=V_{NN}\oplus V_{NT}\oplus V_{TT},
\]
where
\begin{equation}
\label{EqdSSplit}
\begin{gathered}
  W_N = \la e^0 \ra,\quad W_T = \la e^i \ra, \\
  V_{NN}=\la e^0 e^0\ra,\quad V_{NT}=\la 2e^0e^i \ra,\quad V_{TT}=\la e^i e^j\ra,
\end{gathered}
\end{equation}
where we recall the notation $\xi\eta=\xi\otimes_s\eta=\frac{1}{2}(\xi\otimes\eta+\eta\otimes\xi)$. It will be useful to further split up $V_{TT}$ into its tracefree (`$0$') and pure trace (`$p$') parts,
\begin{equation}
\label{EqdSSplitVT}
  V_{TT} = V_{TT0} \oplus V_{TTp},\quad V_{TT0}=\{a_{ij}e^ie^j\colon a_i{}^i=0\},\quad V_{TTp}=\la h\ra,
\end{equation}
where we defined $h=\sum_i e^i e^i$ to be the restriction of $-g$ to $\tau=const.$ hypersurfaces. For a section $u$ of $V_{TT}$, we note that $\tr_g u=-\tr_h u$.

\subsection{Computation of the explicit form of geometric operators}
\label{SubsecdSOp}

One computes the connection coefficients
\[
  \nabla_0 e^0 = 0,\quad \nabla_0 e^i = 0, \quad \nabla_i e^0=e_i,\quad \nabla_i e^j=\delta_i^j e^0;
\]
this easily gives $R_{\mu\nu\kappa\lambda}=(g_0)_{\mu\lambda}(g_0)_{\nu\kappa}-(g_0)_{\nu\lambda}(g_0)_{\mu\kappa}$, so $\Ric(g_0)_{\nu\lambda}+n (g_0)_{\nu\lambda}=0$. Furthermore, the operator $\sR_{g_0}$ defined in \eqref{EqHypDTCurvatureTerm} is equal to $\sR_{g_0}(r) = \tr_{g_0}(r)g_0 - (n+1)r$, which in the splitting \eqref{EqdSSplit} is equal to the (block) matrix
\[
  \sR_{g_0}
        = \begin{pmatrix}
            -n & 0      & -\tr_h \\
            0  & -(n+1) & 0     \\
            -h & 0      & h\tr_h - (n+1)
          \end{pmatrix}.
\]
We next compute the wave operator on sections of the subbundles in \eqref{EqdSSplit} using the formula
\[
  -\tr\nabla^2 T = -\nabla_0\nabla_0 T + n\nabla_0 T + \sum_i \nabla_i\nabla_i T,
\]
valid for every tensor $T$ of any rank; thus, for an $NN$ tensor,
\[
  \Box_{g_0}(u e^0 e^0) = \bigl(-e_0^2+n e_0+\sum e_i^2+2n\bigr)u e^0 e^0 + 4(e_i u)e^0 e^i + 2 u e^i e^i,
\]
while for a $TN$ tensor,
\begin{align*}
  \Box_{g_0}&(2 u_k e^0 e^k) \\
    &= 4\sum_i(e_i u_i)e^0 e^0  + 2\bigl(-e_0^2+n e_0+\sum e_i^2 + n+3\bigr)u_k e^0 e^k + 4(e_i u_j)e^i e^j,
\end{align*}
and for a $T$ tensor,
\begin{align*}
  \Box_{g_0}&(u_{jk}e^j e^k) \\
   &=2 \sum_i u_{ii} e^0 e^0 + 4\sum_i(e_i u_{ij})e^0 e^j + \bigl(-e_0^2+n e_0+\sum e_i^2 + 2\bigr)u_{jk}e^j e^k.
\end{align*}

Let us reformulate these expressions in a more geometric manner: if $u$ is a function on $\sfX$, we have $\sum_i (e_i u)e^i = d_\sfX u$. Furthermore, for a tangential 1-form $u_k e^k$, we calculate its codifferential with respect to the metric $h$ to be $\delta_h(u_k e^k)=-\sum_k e_k u_k$; note that this is equal to $-\delta_{g_0}(u_k e^k)$. On symmetric 2-tensors, the divergence $(\delta_{g_0}(u_{\mu\nu}e^\mu e^\nu))_\lambda = -g_0^{\mu\kappa}\nabla_\mu u_{\kappa\lambda}$ acts via
\[
  \delta_h(u_{ij}e^i e^j)=-\sum_j(e_j u_{ij})e^i,
\]
which equals $-\delta_{g_0}(u_{ij}e^ie^j)$. On the other hand, the adjoint of $\delta_h$ acting on symmetric 2-tensors (relative to the inner products induced by $h$) is $(e_i u_j)e^i e^j=\delta_h^*(u_k e^k)=-\delta_{g_0}^*(u_k e^k)$. Therefore, the wave operator $\Box_{g_0}$ on symmetric 2-tensors, in the decomposition \eqref{EqdSSplit} and the trivializations described there, is given by
\begin{equation}
\label{EqdSOpWave}
  \Box_{g_0} = \bigl(-e_0^2+n e_0+\sum e_i^2\bigr)
    + \begin{pmatrix}
        2 n      & -4\delta_h  & 2\tr_h     \\
        2 d_\sfX & n+3         & -2\delta_h \\
        2 h      & 4\delta_h^* & 2
      \end{pmatrix}
\end{equation}

We furthermore compute
\[
  \delta_{g_0}^*
    =\begin{pmatrix}
       e_0               & 0                  \\
       \frac{1}{2}d_\sfX & \frac{1}{2}(e_0+1) \\
       h                 & \delta_h^*
     \end{pmatrix},
  \quad
  \delta_{g_0} =
    \begin{pmatrix}
      -e_0+n & -\delta_h  & \tr_h   \\
      0      & -e_0+(n+1) & -\delta_h
    \end{pmatrix},
\]
and, recalling \eqref{EqHypDTEinsteinify},
\[
  \sfG_{g_0} = \begin{pmatrix}
          \frac{1}{2}  & 0 & \frac{1}{2}\tr_h    \\
          0            & 1 & 0                   \\
          \frac{1}{2}h & 0 & 1-\frac{1}{2}h\tr_h
        \end{pmatrix}.
\]

\subsection{Unmodified DeTurck gauge}
\label{SubsecdSUnmod}

Let us now consider the hyperbolic formulation
\begin{equation}
\label{EqdSUnmodEin}
  \Ric(g) + n g - \delta_g^*\Ups(g) = 0,\quad \Ups(g)=g g_0^{-1}\delta_g \sfG_g g_0.
\end{equation}
of the Einstein equation. The linearized operator $L$ around $g=g_0$ is given by
\begin{equation}
\label{EqdSUnmodLin}
\begin{split}
  2 L r &= 2(D_{g_0}(\Ric+n) + \delta_{g_0}^*\delta_{g_0} \sfG_{g_0}) = \Box_{g_0} + 2 n + 2\sR_{g_0} \\
    &=-e_0^2 + ne_0 + \sum_i e_i^2
     +\begin{pmatrix}
        2n      & -4\delta_h  & 0          \\
        2d_\sfX & n+1         & -2\delta_h \\
        0       & 4\delta_h^* & 2h\tr_h
      \end{pmatrix}
\end{split}
\end{equation}
in the splitting \eqref{EqdSSplit}; see \cite[Equation~(2.4)]{GrahamLeeConformalEinstein}.

We note the exact commutation relation
\[
  [ L, \Delta ] = 0,\quad
  \Delta=
  \begin{pmatrix}\tau^{-2}\Delta_h&0&0\\0&\tau^{-2}\Delta_{h,(1)}&0\\0&0&\tau^{-2}\Delta_{h,(2)}\end{pmatrix},
\]
where the number in the subscript indicates the degree of the tensors the corresponding Laplace operator acts on. Indeed, $\tau^{-2}\Delta_h = \sum D_{w_j}^2$, with the same formula holding component-wise for $\Delta_{h,(1)}$ and $\Delta_{h,(2)}$ (trivializing the respective bundles via the frame $\{e_1,\ldots,e_n\}$ of $T\sfX$), clearly commutes with all summands of $L$ separately. (In fact, commutation up to leading order in $\tau$ suffices for present purposes.) Since $\Delta\in\Diffb^2$, the arguments of \cite[\S4]{VasyWaveOndS} apply to show that the asymptotic behavior of solutions of $L u=0$ is dictated by the indicial roots of $L$, and in fact the general form of all possible asymptotics can be deduced by purely formal calculations, which we proceed to discuss. In fact, we are only interested in indicial roots $\sigma$ with $\Im\sigma\geq 0$; roots with $\Im\sigma<0$ correspond to exponentially decaying (in $-\log\tau$) asymptotic behavior, hence we do not study them further here.

The formal calculations use the properties of the \emph{indicial operator} $I(L,\sigma)$; recall here that for a second order 0-differential operator such as $L$, the 2-tensor $\tau^{-i\sigma}L \tau^{i\sigma}r$, with $r\in\CI(\sfX;S^2\,\Tzerodual_\sfX\sfM)$ only depending on the spatial variables $w_i$, is equal to a quadratic polynomial in $\sigma$, valued in endomorphisms of $S^2\,\Tzero\sfM$, applied to $r$, plus terms in $\tau\CI(\sfM;S^2\,\Tzero\sfM)$, i.e.\ which vanish at the boundary. Thus, computing $I(L,\sigma)$ amounts to replacing $e_0$ by $i\sigma$ and dropping spatial derivatives (due to $\tau\pa_i$ acting on smooth functions gives a vanishing factor $\tau$), to wit
\[
  I(2 L,\sigma) = \sigma^2 + i n\sigma + \begin{pmatrix} 2n & 0 & 0 \\ 0 & n+1 & 0 \\ 0 & 0 & 2h\tr_h \end{pmatrix}.
\]
The indicial roots are those $\sigma\in\C$ for which $I(L,\sigma)$ is not invertible; they are the indicial roots for the regular-singular ODE obtained from $L$ by dropping spatial derivatives. (For the related b-problem which we discuss in \S\ref{SubsecdSB}, an indicial root $\sigma$ gives rise to resonances at $\sigma-i\N_0$.) Using the refined splitting \eqref{EqdSSplitVT}, we note that $2h\tr_h=0$ on $V_{TT0}$, while $2h\tr_h=2n$ on $V_{TTp}$. Thus, the indicial roots of $L$ are
\begin{gather*}
  \sigma_{NN}^\pm = \frac{i}{2}(-n\pm\sqrt{n^2+8n}), \\
  \sigma_{TN}^+ = i,\quad \sigma_{TN}^- = -i(n+1), \\
  \sigma_{TT0}^+ = 0,\quad \sigma_{TT0}^- = -i n, \\
  \sigma_{TTp}^\pm = \sigma_{NN}^\pm,
\end{gather*}
corresponding to $I(L,\sigma_*^\pm)$ having kernel $V_*$ for $*\in\{NN,TN,TT0,TTp\}$. This is completely analogous to the result of \cite[Lemma~2.9]{GrahamLeeConformalEinstein} in Riemannian signature; the differences of the expressions come from Graham and Lee using a different rescaling of the vector bundle $S^2\,\Tzerodual\sfM$. These indicial roots correspond to the fact that one can prescribe the coefficient $a_*^\pm(0)$ of $\tau^{i\sigma_*^\pm}$ at $\tau=0$ freely as a section of $\ker I(L,\sigma_*^\pm)=V_*$, and there exists a unique solution on $L r=0$ attaining this desired asymptotic behavior; conversely, any solution of $L r=0$ has an asymptotic expansion $\sum_* \tau^{i\sigma_*^\pm}a_*^\pm$, with $a_*^\pm\in\CI(\sfM;S^2\,\Tzerodual\sfM)$ and with $a_*^\pm(0)$ a section of $V_*$. (There may be terms $|\log \tau|^k$ present as well.)

To proceed, we note the indicial operators of $\delta_{g_0}$ and $\delta_{g_0}^*$,
\[
  I(\delta_{g_0}^*,\sigma) = \begin{pmatrix} i\sigma & 0 \\ 0 & \frac{1}{2}(i\sigma+1) \\ h & 0 \end{pmatrix},
  \quad
  I(\delta_{g_0},\sigma) = \begin{pmatrix} -i\sigma+n & 0 & \tr_h \\ 0 & -i\sigma+n+1 & 0 \end{pmatrix}.
\]
For brevity, we write
\begin{equation}
\label{EqdSUnmodPesky}
  \sigma_+ := \sigma_{NN}^+ = \frac{i}{2}(-n+\sqrt{n^2+8n}).
\end{equation}
Concretely then, for instance,
\begin{equation}
\label{EqdSUnmodPeskyPureGauge}
  r_1 = \tau^{i\sigma_+}\begin{pmatrix} i\sigma_+ \\ 0 \\ h \end{pmatrix}
\end{equation}
solves $L r_1 = \cO(\tau^{i\sigma_+ +1})$, i.e.\ $r_1$ solves the linearized gauged Einstein equation up to terms decaying one order better; and in fact $r_1$ is a pure gauge solution (up to faster decaying terms) in the sense that
\[
  r_1 = \delta_{g_0}^*\biggl[\tau^{i\sigma_+}\begin{pmatrix} 1 \\ 0 \end{pmatrix}\biggr] + \cO(\tau^{i\sigma_+ +1}).
\]
However, $r_2=\tau^{i\sigma_+}h$, say, which also solves $L r_2=0$ up to less growing error terms, is \emph{not} in the range of $\delta_{g_0}^*$ acting on $\tau^{i\sigma_+}$ times smooth sections of $\Tzerodual\sfM$. Taking the linear stability of de~Sitter space for granted, $r_2$ cannot appear as the asymptotic behavior of a gravitational wave on de~Sitter space; put differently, the asymptotic behavior $r_2$ is ruled out by the \emph{linearized} constraint equations.

As explained in \S\ref{SubsecIntroIdeas}, this argument, ruling out non-pure gauge growing asymptotics, is insufficient for the purpose of understanding the \emph{non-linear} stability problem, where one is given initial data satisfying the \emph{non-linear} constraint equations; we are therefore led to consider modifications of \eqref{EqdSUnmodLin} for which \emph{all} non-decaying modes are pure gauge modes. The way to arrange this is to study (modifications of) the constraint propagation equation, to which we turn next.

\subsection{Stable constraint propagation}
\label{SubsecdSSCP}

We recall that for the Einstein equation \eqref{EqdSUnmodEin} in the unmodified wave map/DeTurck gauge, the constraint propagation operator is
\[
  \BoxCP_{g_0} = 2\delta_{g_0} \sfG_{g_0}\delta_{g_0}^*.
\]
Again, we can compute the asymptotic behavior of solutions of $\BoxCP_{g_0} u=0$ (and thus resonances of $\BoxCP_{g_0}$ on the static patch) by finding indicial roots; we calculate
\[
  I(\BoxCP_{g_0},\sigma)
   = \begin{pmatrix}
       \sigma^2+i n\sigma+2n & 0 \\
       0                     & (\sigma-i)(\sigma+i(n+1))
     \end{pmatrix},
\]
and therefore find that $\BoxCP_{g_0}$ has the indicial roots $\sigma_{NN}^\pm$ and $\sigma_{TN}^\pm$, so in particular solutions of $\BoxCP_{g_0} u=0$ are generically exponentially growing (in $-\log \tau$).

As in \S\ref{SecSCP}, we therefore consider modifications of $\delta_{g_0}^*$; concretely, we consider
\begin{equation}
\label{EqdSSCPDelStarTilde}
  \tdel^*u := \delta_{g_0}^*u - \gamma_1\,e^0\cdot u + \gamma_2 u_0 g_0,\quad \gamma_1,\gamma_2\in\R,
\end{equation}
for $u$ a 1-form; this is the expression analogous to \eqref{EqSCPDelStarTilde} in the current setting. Defining
\[
  \wtBoxCP_{g_0}=2\delta_{g_0} \sfG_{g_0}\tdel^*,
\]
this gives an extra (first order) term $2\delta_{g_0} \sfG_{g_0}(\tdel^*-\delta_{g_0}^*)$ relative to $\BoxCP_{g_0}$. Using
\[
  \tdel^*-\delta_{g_0}^*=
    -\gamma_1
      \begin{pmatrix} 1 & 0 \\ 0 & \frac{1}{2} \\ 0 & 0 \end{pmatrix}
    +\gamma_2
      \begin{pmatrix} 1 & 0 \\ 0 & 0 \\ -h & 0 \end{pmatrix}
\]
and
\begin{equation}
\label{EqdSSCPDelgGg}
  2\delta_{g_0} \sfG_{g_0}=
    \begin{pmatrix}
      -e_0+2n & -2\delta_h  & (-e_0+2)\tr_h          \\
      d_\sfX  & 2(-e_0+n+1) & -2\delta_h-d_\sfX\tr_h
    \end{pmatrix},
\end{equation}
one computes
\[
  I(\wtBoxCP_{g_0},\sigma)
   = \begin{pmatrix}
       p_1 & 0 \\
       0   & p_2
     \end{pmatrix},
\]
where
\begin{gather*}
  p_1 = \sigma^2 + i(n+\gamma_1+(n-1)\gamma_2)\sigma - 2n(\gamma_1-1), \\
  p_2 = \sigma^2 + i(n+\gamma_1)\sigma - (n+1)(\gamma_1-1).
\end{gather*}
Therefore, if $n+\gamma_1>0$, $n+\gamma_1+(n-1)\gamma_2>0$ and $\gamma_1>1$, the roots of $p_1$ and $p_2$ have negative imaginary parts, giving SCP. For the sake of comparison with Theorem~\ref{ThmSCP}, if we take $\gamma_1=\gamma$, $\gamma_2=\frac{1}{2}\gamma$, i.e.\ taking $e=1$ in \eqref{EqSCPGamma12}, we obtain SCP for de~Sitter space (in any dimension) for all $\gamma>1$, or $h<1$ for $h=\gamma^{-1}$ in the notation of \eqref{EqSCPSemiRescaledOp}.

\subsection{Asymptotics for the linearized gauged Einstein equation}
\label{SubsecdSAsy}

For simplicity, we now fix $\gamma_1=2$ and $\gamma_2=1$ and the resulting operator $\tdel^*$ in \eqref{EqdSSCPDelStarTilde}; for these values, we do have SCP. We then consider the modified gauged Einstein equation $(\Ric+n)(g)-\tdel^*\Ups(g)=0$; we denote the linearized operator again by $L$ and calculate, using \eqref{EqdSSCPDelgGg},
\[
  2 L=-e_0^2 + n e_0 + \sum_i e_i^2
    +\begin{pmatrix}
       e_0       & -2\delta_h             & (e_0-2)\tr_h \\
       d_\sfX    & 2e_0-n-1               & d_\sfX\tr_h  \\
       h(e_0-2n) & 4\delta_h^*+2h\delta_h & h e_0\tr_h
     \end{pmatrix}.
\]
Since we have arranged SCP, a mode stability statement parallel to UEMS in the de~Sitter setting would now imply that all non-decaying modes of $L$ are pure gauge solutions. (This is the main difference to the black hole setting, in which one also has modes with frequency $0$ corresponding to the Kerr--de~Sitter family. Perturbations of de~Sitter space on the other hand decay exponentially fast to de~Sitter space, up to diffeomorphisms.) We prove this directly. Using the bundle splitting \eqref{EqdSSplit} and further splitting $V_{TT}$ according to \eqref{EqdSSplitVT}, we have
\[
  I(2 L,\sigma)
    = \sigma^2+i n\sigma
      + \begin{pmatrix}
          i\sigma       & 0            & 0 & (i\sigma-2)n \\
          0             & 2i\sigma-n-1 & 0 & 0            \\
          0             & 0            & 0 & 0            \\
          i\sigma-2n    & 0            & 0 & i n\sigma
        \end{pmatrix}.
\]
First, we note that $I(2 L,\sigma)$ preserves sections of the bundle $V_{TN}$ and equals scalar multiplication by $\sigma^2+i(n+2)\sigma-(n+1)$, which has roots $-i$ and $-i(n+1)$, both of which lie in the lower half plane.

Next, on $V_{TT0}$, the operator $I(2 L,\sigma)$ is scalar multiplication by $\sigma(\sigma+i n)$, whose only root in the closed upper half plane is $\sigma=0$; this corresponds to $L r=\cO(\tau)$ for any section $r\in\CI(\sfM,V_{TT0})$.

Lastly, on $V_{NN}\oplus V_{TTp}$, one finds
\[
  I(2 L,\sigma)\begin{pmatrix}1\\0\\h\end{pmatrix} = (\sigma^2+i(2n+1)\sigma-2n)\begin{pmatrix}1\\0\\h\end{pmatrix},
\]
which vanishes only for $\sigma=-i$ and $\sigma=-2 i n$ which are both in the lower half plane; on the other hand,
\[
  I(2 L,\sigma)\begin{pmatrix}i\sigma\\0\\h\end{pmatrix} = (\sigma^2+in\sigma+2n)\begin{pmatrix}i\sigma\\0\\h\end{pmatrix}.
\]
(Note that for $\Im\sigma\geq 0$, the two vectors above are linearly independent, hence span $V_{TT0}\oplus V_{TTp}$.) Corresponding to the unique zero of the quadratic polynomial appearing here with non-negative imaginary part $\sigma=\sigma_+$, see \eqref{EqdSUnmodPesky}, we have for any function $f\in\CI(\sfX)$
\[
  2 L \tau^{i\sigma_+}\begin{pmatrix}i\sigma_+ f \\ 0 \\ h f \end{pmatrix} = \tau^{i\sigma_+ +1}\begin{pmatrix} 0 \\ (i\sigma_++n)\sum_j \pa_{w_j}f e^j \\ 0 \end{pmatrix} + \cO(\tau^{i\sigma_+ +2}).
\]
Since $\sigma_+-i$ is not an indicial root of $L$, we can solve away the $\tau^{i\sigma_++1}$ error term, as we proceed to do; note that for all $n\geq 2$, one has $2<-n+\sqrt{n^2+8n}<4$, thus $\Im\sigma_+\in(1,2)$, and we ultimately find
\begin{equation}
\label{EqdSAsyNNTTp}
  2 L\begin{bmatrix} \tau^{i\sigma_+}\begin{pmatrix}i\sigma_+ f\\0\\h f\end{pmatrix} + \frac{n+i\sigma_+}{2n}\tau^{i\sigma_+ +1} \begin{pmatrix} 0 \\ \sum_j \pa_{w_j}f e^j \\ 0 \end{pmatrix} \end{bmatrix} = o(1).
\end{equation}
Note that the leading part is equal to $f r_1$, with $r_1$ given in \eqref{EqdSUnmodPeskyPureGauge}; this was shown to be a pure gauge solution up to lower order terms. In order to proceed, we now restrict to a static patch, where we can explicitly exhibit these non-decaying modes as pure gauge modes.

\subsection{Restriction to a static patch}
\label{SubsecdSB}

We fix a static patch of de~Sitter space by choosing a point $q\in\sfX$ as the origin of our coordinate system $(\tau,w_1,\ldots,w_n)$, and homogeneously blowing up $q$; coordinates on the static patch are then
\[
  \tau,\quad x_i := \frac{w_i}{\tau},
\]
with the front face given by $\tau=0$. Correspondingly, our frame takes the form
\[
  e_0 = \tau\pa_\tau - \sum x_j\pa_{x_j},\quad e_i = \pa_{x_i},
\]
and the coframe
\[
  e^0 = \frac{d\tau}{\tau},\quad e^i = dx_i + x_i\frac{d\tau}{\tau}.
\]
We work on a neighborhood $\Omega$ of the causal past of $q$; concretely, let us take
\[
  \Omega = \{ 0\leq\tau\leq 1,\ \sum x_j^2 \leq 1+\eps_M \}
\]
for any fixed $\eps_M>0$, so $\Omega$ is a domain with corners within
\[
  M := \{ 0\leq\tau<\infty,\ \sum x_j^2 < 1+3\eps_M \}.
\]
Suppose now $r\in\CI(\sfM)$ is a smooth function on global de~Sitter space $\sfM$. Then the pullback $\wt r$ of $r$ to $\Omega$ is
\begin{equation}
\label{EqdSB0ToB}
  \wt r(\tau,x_i) = r(\tau x_i) = r(0) + \tau\sum x_i\pa_{w_i}r(0) + \cO(\tau^2);
\end{equation}
continuing the Taylor expansion further, one finds that $\wt r$ is (asymptotically as $\tau\to 0$) a sum of terms of the form $\tau^j$ times a homogeneous polynomial of degree $j$ in the $x_i$. Thus, one can deduce the resonance expansion in the static patch from the calculations in \S\ref{SubsecdSAsy} by taking $r$ to be a homogeneous polynomial in the coordinates $w_i$ of degree $\leq 1$ (since higher order terms will give $o(1)$ contributions, the imaginary part of all resonances being $<2$) and reading off the terms in the resulting asymptotic expansions in $\tau$. Indeed, every resonant state on static de~Sitter space arises as a term in the asymptotic expansion of the solution of a wave equation with smooth forcing, compactly supported and with support disjoint from $\sfX$; and conversely, solutions of such equations on static de~Sitter spaces admit an expansion into resonances up to terms of any fixed, prescribed rate of decay $\tau^C$, $C\in\R$; but the asymptotic behavior of waves on static de~Sitter space, which are in this way equivalent to knowledge of resonances and resonant states, can simply be read off by restriction from the asymptotics on \emph{global} de~Sitter space.

We thus obtain:

\begin{prop}
\label{PropdSBEinStaticRes}
  For the operator $L\in\Diffb^2(\Omega;S^2\,\Tb^*_\Omega M)$, the following is a complete list of the resonances $\sigma$ of $L$ which satisfy $\Im\sigma>\frac{1}{2}(-n+\sqrt{n^2+8n})-2$, and the corresponding resonant states:
  \begin{enumerate}
  \item $\sigma=\sigma_+=\frac{i}{2}(-n+\sqrt{n^2+8n})$: resonance of order $1$ and rank $1$; basis of resonant states
    \[
      \tau^{i\sigma_+}\begin{pmatrix}i\sigma_+\\0\\h\end{pmatrix}.
    \]
  \item $\sigma=\sigma_+-i$: resonance of order $1$ and rank $n$; basis of resonant states
    \[
      \tau^{i\sigma_+ +1}\begin{pmatrix}-\sigma_+^2 x_j \\ (i\sigma_+ +1)e^j \\ i\sigma_+ x_j h \end{pmatrix}, \quad j=1,\ldots,n.
    \]
  \item \label{ItBEinStaticResZero} $\sigma=0$: resonance of order $1$ and rank $n(n+1)/2-1$; basis of resonant states
    \[
       \begin{pmatrix} 0 \\ 0 \\ a_{ij}e^i e^j\end{pmatrix},\quad \sum_i a_{ii}=0,\qquad
     \]
  \end{enumerate}
\end{prop}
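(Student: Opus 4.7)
The plan is to transfer the indicial root analysis of $L$ on global de~Sitter space carried out in \S\ref{SubsecdSAsy} to the static patch via the Taylor pullback formula \eqref{EqdSB0ToB}. Concretely, any asymptotic solution of $Lr = 0$ of the form $\tau^{i\sigma_0}v(w) + \cO(\tau^{i\sigma_0 + 1})$ on global de~Sitter restricts under the substitution $w_i = \tau x_i$ to a static-patch series $\sum_{k\geq 0}\tau^{i\sigma_0 + k}$ times a degree-$k$ polynomial in $x$, each term of which is a candidate resonant state on the static patch at frequency $\sigma_0 - ik$. Since $\Im\sigma_+ \in (1, 2)$ for $n \geq 2$, only the orders $k = 0$ and $k = 1$ yield frequencies in the stated strip $\Im\sigma > \Im\sigma_+ - 2$, which is what limits the list to the three entries.

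For the existence and explicit form of the resonant states, I would carry out the pullback as follows. For $\sigma = \sigma_+$, substitute $f \equiv 1$ in formula \eqref{EqdSAsyNNTP}; the gradient correction term vanishes, leaving the stated rank-one state directly. For $\sigma = \sigma_+ - i$, substitute $f = w_j$, $j = 1,\ldots,n$, and extract the coefficient of $\tau^{i\sigma_+ + 1}$; the indicial relation $\sigma_+^2 + in\sigma_+ + 2n = 0$ then reduces this coefficient, up to the nonzero overall scalar $i\sigma_+$, to the $n$-tuple of vectors displayed in the proposition. For $\sigma = 0$, the indicial operator $I(2L, 0)$ annihilates all of $V_{TT}$, so each tracefree $a_{ij}e^ie^j$ with $\sum_i a_{ii} = 0$ extends to a full formal solution of $Lr = 0$ by the usual indicial ODE procedure, giving the claimed $n(n+1)/2 - 1$-dimensional resonant space.

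For completeness and the pole order, the decisive input is stable constraint propagation proved in \S\ref{SubsecdSSCP}: the indicial roots of $\wtBoxCP_{g_0}$ for $\gamma_1 = 2$, $\gamma_2 = 1$ are $-i$, $-i(n+1)$ and $-2in$, all with $\Im\sigma \leq -1 < \Im\sigma_+ - 2$ for $n \geq 2$. Therefore, for any resonant state $r$ of $L$ at a frequency $\sigma$ in the strip, $D_{g_0}\Ups(r)$ decays strictly faster than $\tau^{\Im\sigma}$, and so $r$ solves $D_{g_0}(\Ric + n)(r) = 0$ modulo faster-decaying error. A bundle-by-bundle read of the indicial operator of $L$ then shows that the only indicial roots of $L$ in the strip are $\sigma_+$ on $V_{NN} \oplus V_{TP}$ (one-dimensional eigenspace spanned by $(i\sigma_+, 0, h)^T$) and $0$ on $V_{TT}$; the remaining indicial roots $\sigma_-$, $-i$, $-i(n+1)$, $-2in$ all lie strictly below $\Im\sigma_+ - 2$. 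Since these indicial roots are simple and no two indicial roots contributing to the strip differ by a positive integer multiple of $i$, no logarithmic factors in $\tau$ can intervene, forcing each resonance to have order exactly one.

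The main technical obstacle will be the rank-matching at $\sigma = \sigma_+ - i$: one has to verify that every resonant state at this frequency arises from the $n$ linear Taylor coefficients $f = w_j$ at the parent indicial root $\sigma_+$ on global de~Sitter, and not from additional static-patch-specific modes. This amounts to checking that $\ker\wh L(\sigma_+ - i)$ is $n$-dimensional and equal to the pullback span produced by \eqref{EqdSAsyNNTP}; the combination of SCP (which restricts the possible leading-order behavior of such kernel elements to pure-gauge or trace-tensor contributions) with the simplicity and disjointness modulo $i\Z$ of the relevant indicial roots makes this matching possible, but the bookkeeping between the 0-structure at conformal infinity and the b-structure of the static patch must be carried out with care.
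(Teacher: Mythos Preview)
Your proposal is correct and follows essentially the same route as the paper: pull back the global de~Sitter indicial analysis of \S\ref{SubsecdSAsy} to the static patch via the Taylor expansion \eqref{EqdSB0ToB}, noting that an indicial root $\sigma_0$ of $L$ produces static-patch resonances at $\sigma_0-ik$ with multiplicity governed by degree-$k$ homogeneous polynomials in $x$. Your substitutions $f\equiv 1$ and $f=w_j$ (equivalently $f=i\sigma_+ w_j$, matching the paper's normalization exactly after the indicial identity $i\sigma_+(n+i\sigma_+)=2n(i\sigma_+ +1)$) reproduce the stated resonant states, and your bundle-by-bundle check that the only indicial roots of $L$ in the strip are $\sigma_+$ and $0$ is exactly how the paper establishes completeness.

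One comment: the invocation of SCP in your third paragraph is a detour. The proposition is purely about the resonances of $L$, and these are pinned down by the indicial operator of $L$ alone (computed in \S\ref{SubsecdSAsy}) together with the $0$-to-b dictionary; SCP does not enter. What SCP buys is the identification of the resonant states as \emph{pure gauge}, which is the content of the next proposition (Proposition~\ref{PropdSEinGaugeStates}), not this one. Your subsequent direct indicial analysis of $L$ is the actual argument and is sufficient; the SCP sentences can simply be dropped. The ``rank-matching'' concern you flag at $\sigma_+-i$ is legitimate in principle, but since $\sigma_+-i$ is not itself an indicial root of $L$ (as you can check from the list $\sigma_\pm$, $0$, $-i$, $-i(n+1)$, $-in$, $-2in$), the only contribution at that frequency comes from the degree-one Taylor modes of the $\sigma_+$ root, giving rank exactly $n$.
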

\begin{proof}
  We prove the result for $\sigma=\sigma_+$ and $\sigma_+-i$: setting $f\equiv 1$ in \eqref{EqdSAsyNNTTp} yields the resonant state at $\sigma_+$, while setting $f=i\sigma_+ w_j$, $j=1,\ldots,n$, yields the resonant states at $\sigma_+-i$ due to \eqref{EqdSB0ToB}.
\end{proof}

We can now complete the proof that all non-decaying resonant states are pure gauge solutions:

\begin{prop}
\label{PropdSEinGaugeStates}
  All resonant states corresponding to resonances in $\Im\sigma\geq 0$, viewed as mode solutions of $L$ on the spacetime, lie in the range of $\delta_{g_0}^*$ acting on 1-forms on the spacetime.
\end{prop}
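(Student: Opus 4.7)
The plan is to verify the statement by exhibiting, case by case for the three families of non-decaying resonant states listed in Proposition~\ref{PropdSBEinStaticRes}, an explicit $1$-form $\omega$ on the static patch such that $\delta_{g_0}^*\omega$ equals the given resonant state. The key computational tool is the matrix form of $\delta_{g_0}^*$ derived in Section~\ref{SubsecdSOp}: writing a 1-form as $\omega = f e^0 + \omega_k e^k$ in the splitting $\Tzerodual M = W_N \oplus W_T$, we have
\[
  \delta_{g_0}^*\omega = \begin{pmatrix} e_0 f \\ \tfrac{1}{2}d_\sfX f + \tfrac{1}{2}(e_0+1)(\omega_k e^k) \\ h f + \delta_h^*(\omega_k e^k) \end{pmatrix}
\]
in the splitting $S^2\,\Tzerodual M = V_{NN}\oplus V_{NT}\oplus V_T$. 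In the static chart I will use that $e_0 = \tau\partial_\tau - \sum_k x_k\partial_{x_k}$, so that $e_0 \tau^{i\sigma} = i\sigma\,\tau^{i\sigma}$, $e_0 x_k = -x_k$, and $e_0(x_k\tau^{i\sigma}) = (i\sigma-1)x_k\tau^{i\sigma}$.

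For the resonance at $\sigma = \sigma_+$, I would take the purely $W_N$-valued 1-form $\omega = \tau^{i\sigma_+}e^0$; a one-line check using the matrix above produces exactly the listed resonant state $\tau^{i\sigma_+}(i\sigma_+,0,h)^T$, since $d_\sfX\tau^{i\sigma_+}=0$. For the resonance at $\sigma = \sigma_+-i$, whose $n$ basis states have nonzero components in all three blocks, I would take the coupled ansatz
\[
  \omega^{(j)} = \tau^{i\sigma_+ + 1}\bigl(A\,x_j\,e^0 + B\,e^j\bigr),
\]
with $A,B$ to be determined: matching the $V_{NN}$ block to $-\sigma_+^2 x_j\tau^{i\sigma_++1}$ forces $A = i\sigma_+$ (using $e_0(x_j\tau^{i\sigma_++1})=i\sigma_+ x_j\tau^{i\sigma_++1}$), and matching the $V_{NT}$ block to $(i\sigma_++1)\tau^{i\sigma_++1}e^j$ then forces $B = 1$; the $V_T$ block is automatic since $\delta_h^*(\tau^{i\sigma_++1}e^j) = 0$ because $\tau^{i\sigma_++1}$ is spatially constant in the chosen frame. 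For the zero resonance, whose basis is the traceless symmetric spatial tensors $a_{ij}e^ie^j$, I would take the purely $W_T$-valued 1-form
\[
  \omega = \sum_j\Bigl(\sum_k a_{jk} x_k\Bigr)e^j,
\]
homogeneous of degree one in $x$; homogeneity makes $(e_0+1)\omega_j = 0$ so the $V_{NT}$ block vanishes, while $\delta_h^*\omega = \tfrac{1}{2}(a_{ji}+a_{ij})e^ie^j = a_{ij}e^ie^j$ by symmetry of $a$, producing the required $V_T$ component, and the $V_{NN}$ block vanishes because $f=0$.

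The only nontrivial step is identifying the correct ansatz in the middle case, where a naive guess of $\omega$ purely in $W_T$ (or purely in $W_N$) produces only one nonzero block and hence cannot match a resonant state with three nonzero components; once the coupling is recognized, the coefficients $(A,B)$ are forced by a $2\times 2$ linear system and the verification is mechanical. For the conceptual reader I would remark that this statement is the direct de~Sitter analogue of the SCP-plus-UEMS mechanism used in \S\ref{SecKdSLStab} for Kerr--de~Sitter: by the linearized second Bianchi identity applied to $Lr=0$, SCP (Section~\ref{SubsecdSSCP}) forces $D_{g_0}\Ups(r) = 0$ for any resonant state of $L$ at $\sigma$ with $\Im\sigma\geq 0$, so $r$ solves the ungauged linearized Einstein equation, and on de~Sitter the structure of its zero modes is explicit enough that we can simply exhibit the gauge 1-form $\omega$ directly rather than invoke a separate UEMS result.
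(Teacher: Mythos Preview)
Your proof is correct and takes essentially the same approach as the paper: you exhibit, for each of the three families of non-decaying resonant states from Proposition~\ref{PropdSBEinStaticRes}, exactly the same $1$-forms the paper uses (namely $\tau^{i\sigma_+}e^0$, $\tau^{i\sigma_++1}(i\sigma_+ x_j\,e^0+e^j)$, and $\sum_{j,k}a_{jk}x_k\,e^j$), and verify $\delta_{g_0}^*\omega$ matches. The only cosmetic difference is that the paper first writes a general formula in the global de~Sitter coordinates $w_j$ and then specializes via $w_j=\tau x_j$, whereas you compute directly in the static patch; the resulting $1$-forms and verifications coincide.
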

\begin{proof}
  At $\sigma_+$, we compute in the splittings \eqref{EqdSSplit}
  \[
    \delta_{g_0}^* \tau^{i\sigma_+}\begin{pmatrix}1\\0\end{pmatrix} = \tau^{i\sigma_+}\begin{pmatrix}i\sigma_+\\0\\h\end{pmatrix}.
  \]
  More generally, we may compute
  \[
    \delta_{g_0}^* \tau^{i\sigma_+}\begin{pmatrix}f\\0\end{pmatrix} = \tau^{i\sigma_+}\begin{pmatrix}i\sigma f\\0\\h f\end{pmatrix} + \tau^{i\sigma_+ +1}\begin{pmatrix}0\\ \frac{1}{2}\sum \pa_{w_j}f e^j \\0\end{pmatrix},
  \]
  and one can solve away the second term, with the result
  \[
    \delta_{g_0}^*\begin{bmatrix} \tau^{i\sigma_+}\begin{pmatrix}f\\0\end{pmatrix} + \frac{1}{i\sigma_+}\tau^{i\sigma_+ +1}\begin{pmatrix}0\\ \sum\pa_{w_j}f e^j \end{pmatrix} \end{bmatrix} = o(1).
  \]
  Putting $f=1$ gives the resonant state at $\sigma_+$, thus proving the result for the resonance $\sigma_+$, while putting $f=i\sigma_+ w_j$, $j=1,\ldots,n$, we find
  \[
    \delta_{g_0}^* \tau^{i\sigma_+ +1}\begin{pmatrix}i\sigma_+ x_j \\ e^j\end{pmatrix} = \tau^{i\sigma_+ +1}\begin{pmatrix}-\sigma_+^2 x_j\\(i\sigma_++1)e^j\\i\sigma_+ x_j h\end{pmatrix},
  \]
  proving the result for the resonance $\sigma_+-i$. Finally, for $\sigma=0$, we observe
  \[
    \delta_{g_0}^* \begin{pmatrix}0\\ \sum_i a_{ij}x_i e^j \end{pmatrix} = \delta_{g_0}^* \tau^{-1}\begin{pmatrix}0\\ \sum_i a_{ij}w_i e^j\end{pmatrix} = \begin{pmatrix} 0 \\ 0 \\ a_{ij}e^ie^j \end{pmatrix},
  \]
  finishing the proof.
\end{proof}

\begin{rmk}
\label{RmkdSBAlwaysZero}
  For any choice of parameters $\gamma_1,\gamma_2$, the space of resonances at $0$ is always non-trivial, and contains the resonant states given in Proposition~\ref{PropdSBEinStaticRes} \eqref{ItBEinStaticResZero}. There are further modifications one can consider, for instance using a conformally rescaled background metric $g^0=\tau^{\gamma_3}g_0$ and considering the gauge $\Ups(g)-\Ups(g_0)$, with $\Ups(g)=g(g^0)^{-1}\delta_g \sfG_g g^0$, but this does not affect the previous statement regarding the zero resonance. Thus, if we are restricting ourselves to modifications of Einstein's equations which are well-behaved from the perspective of global de~Sitter space, there seems to be no way to eliminate \emph{all} non-decaying resonances! Choosing $\gamma_1,\gamma_2,\gamma_3$ appropriately, one can remove all non-decaying resonances apart from $0$, but this is quite delicate.
\end{rmk}

Denote by $N_\Theta:=n+1+n(n+1)/2-1$ (so $N_\Theta=9$ for $n=3$) the total dimension of the space of non-decaying resonant states; then, paralleling the proof of Proposition~\ref{PropKdSLStabComplement}, we let $\Theta$ be the $N_\Theta$-dimensional space of 1-forms $\theta$ of the form
\[
  \theta=-D_{g_0}\Ups(\delta_{g_0}^*(\chi\omega))=\delta_{g_0} \sfG_{g_0}\delta_{g_0}^*(\chi\omega),
\]
where $\chi(\tau)$ is a fixed cutoff, identically $1$ near $\tau=0$ and identically $0$ for $\tau\geq 1/2$, say, and $\omega$ is one of the 1-forms used in the proof of Proposition~\ref{PropdSEinGaugeStates} exhibiting the non-decaying modes as pure gauge modes. Thus, we have
\[
  L(\delta_{g_0}^*(\chi\omega)) = \tdel^*\theta;
\]
and furthermore $\theta$ is compactly supported in $(0,1)_\tau$ due to
\[
  0=\delta_{g_0} \sfG_{g_0} L(\delta_{g_0}^*\omega)=-\wtBoxCP_{g_0}\bigl(D_{g_0}\Ups(\delta_{g_0}^*\omega)\bigr)
\]
and SCP, which gives $D_{g_0}\Ups(\delta_{g_0}^*\omega)=0$ as $\delta_{g_0}^*\omega$ is a non-decaying mode. One can of course also check directly that the resonant states described in Proposition~\ref{PropdSBEinStaticRes} are annihilated by $\delta_{g_0} \sfG_{g_0}$; for the zero resonant states, this is straightforward to check, while for the resonant states at $\sigma_+$ and $\sigma_+-i$, this follows from the fact that $\delta_{g_0} \sfG_{g_0}$ applied to the expression in square brackets in \eqref{EqdSAsyNNTTp} gives a result of order $\tau^{i\sigma_+ +2}$. The upshot is that
\[
  \Theta \subset \CIc(\Omega^\circ;T^*\Omega^\circ)
\]
can be used as the fixed, $N_\Theta$-dimensional space of gauge modifications, using which we can prove the non-linear stability of the static model. That is, modifying the forcing terms of the linearized equations which we need to solve in the course of a non-linear iteration scheme by $\tdel^*\theta$ for suitable $\theta\in\Theta$ (which are found at each step by the linear solution operators), we can solve the linear equations---and thus the non-linear gauged Einstein equation---in spaces of exponentially decaying 2-tensors:

\begin{thm}
\label{ThmdSStab}
  Let $\Sigma_0=\Omega\cap\{\tau=1\}$ be the Cauchy surface of $\Omega$. Let $h,k\in\CI(\Sigma_0;S^2 T^*\Sigma_0)$ be initial data satisfying the constraint equations \eqref{EqHypDTConstraints}, and suppose $(h,k)$ is close to the data induced by the static de~Sitter metric $g_0$ in the topology of $H^{21}(\Sigma_0;S^2 T^*\Sigma_0)\oplus H^{20}(\Sigma_0;S^2 T^*\Sigma_0)$. Then there exist a compactly supported gauge modification $\theta\in\Theta$ and a section $\wt g\in\Hb^{\infty,\alpha}(\Omega;S^2\,\Tb^*_\Omega M)$, with $\alpha>0$ small and fixed, such that $g=g_0+\wt g$ solves the Einstein equation
  \[
    \Ric(g) + n g = 0,
  \]
  attaining the given initial data at $\Sigma_0$, in the gauge $\Ups(g)-\theta=0$ (see \eqref{EqdSUnmodEin} for the definition of $\Ups$). More precisely, $g$ solves the initial value problem
  \[
    \begin{cases}
      \Ric(g)+n g-\tdel^*(\Ups(g)-\theta) = 0 & \tn{in }\Omega^\circ, \\
      \gamma_0(g) = i_0(h,k)                  & \tn{on }\Sigma_0,
    \end{cases}
  \]
  where $i_0$ constructs correctly gauged (relative to $\Ups(g)=0$) Cauchy data from the given initial data $(h,k)$, analogously to Proposition~\ref{PropKdSIn}.
\end{thm}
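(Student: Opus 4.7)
The plan is to apply the general framework developed in \S\ref{SubsecAsyExp} and \S\ref{SubsecKdSStab}, taking advantage of the fact that the de~Sitter setting is simpler than the Kerr--de~Sitter case in one crucial respect: there is no analogue of the Kerr--de~Sitter family parameterizing stationary solutions to which our spacetime could asymptote, so the finite-dimensional parameter space we need to track consists only of the gauge modification $\theta \in \Theta$. Concretely, I would introduce
\[
  P(\wt g, \bfc) := (\Ric+n)(g_0+\wt g) - \tdel^*\bigl(\Ups(g_0+\wt g) - \vartheta(\bfc)\bigr),
\]
where $\vartheta\colon\R^{N_\Theta}\to\Theta$ is a fixed linear isomorphism, and pose the non-linear problem $\phi(\wt g,\bfc) := (P(\wt g,\bfc),\,\gamma_0(\wt g) - (i_0(h,k)-\gamma_0(g_0))) = 0$ in the Banach scale $B^s = \R^{N_\Theta} \oplus \Hbext^{s,\alpha}(\Omega;S^2\,\Tb^*_\Omega M)$ and $\bfB^s = D^{s,\alpha}(\Omega;S^2\,\Tb^*_\Omega M)$.

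The linearization of $P$ at $(0,0)$ in the $\wt g$ variable is the operator $L$ studied in \S\ref{SubsecdSAsy}, and the linearization in $\bfc$ is $-\tdel^*\vartheta$. The next step is to verify the hypotheses of Theorem~\ref{ThmAsyExpMain}: (i) $L$ has scalar principal symbol of wave type, (ii) the radial point threshold condition $\wh\beta \geq -1$ holds at the b-radial sets at $\tau=0$ (which can be checked directly from the indicial operator computed in \S\ref{SubsecdSUnmod}), and (iii) the semiclassical trapping condition is vacuous as there is no trapping on the static patch, so high energy estimates for $\wh L(\sigma)$ hold by the analysis of \cite{VasyWaveOndS} together with the radial point estimate. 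The surjectivity condition \eqref{EqAsyExpMainMod} for the modification map $z(\bfc) := \tdel^*\vartheta(\bfc)$ amounts to saying that $\bfc \mapsto \lambda_\IVP(\tdel^*\vartheta(\bfc))$ is surjective onto $\cL(\Res^*(L,\Xi),\ol\C)$ where $\Xi = \Res(L)\cap\{\Im\sigma > -\alpha\}$; this is exactly what is guaranteed by the construction of $\Theta$ via Propositions~\ref{PropdSBEinStaticRes} and \ref{PropdSEinGaugeStates}, combined with SCP established in \S\ref{SubsecdSSCP}, which ensures that no additional non-pure-gauge non-decaying resonances appear.

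Granted these verifications, Theorem~\ref{ThmAsyExpMain} supplies a tame solution operator for the linearized initial value problem with coefficients perturbed by small non-smooth $\wt g \in \wt W^{s}$, producing the estimate \eqref{EqAsyExpMainSTameU} and its analogue for $\bfc$. Feeding this into the Nash--Moser scheme of Theorem~\ref{ThmKdSStabNM}, with $d = 10$ and the same smoothing operators constructed in \S\ref{SubsecKdSStabNM}, one obtains a solution $(\wt g,\bfc) \in B^\infty$ of $\phi(\wt g,\bfc) = 0$ provided $\|i_0(h,k) - \gamma_0(g_0)\|_{H^{21}\oplus H^{20}}$ is sufficiently small. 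Setting $\theta := \vartheta(\bfc)$ yields the desired $(\wt g, \theta)$.

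Finally, one must upgrade the solution of the gauged equation to a solution of the Einstein equation. Since $(h,k)$ satisfies the constraint equations and $i_0$ is constructed (as in Proposition~\ref{PropKdSIn}) so that $\Ups(g)|_{\Sigma_0} = 0$ and hence $\Ups(g) - \theta$ vanishes at $\Sigma_0$ together with its first normal derivative, applying $\delta_g G_g$ to the gauged equation yields the linear wave equation $\wtBoxCP_g(\Ups(g)-\theta) = 0$ with vanishing Cauchy data; uniqueness of forward solutions then forces $\Ups(g) = \theta$ throughout $\Omega^\circ$, whence $\Ric(g) + ng = 0$. The main conceptual obstacle—identifying a finite-dimensional modification space that captures \emph{all} non-decaying asymptotic behavior of the linearized gauged equation in a manner stable under perturbations—has already been overcome by combining SCP with the explicit resonance expansion and Proposition~\ref{PropdSEinGaugeStates}; the remainder is routine bookkeeping within the established non-linear framework.
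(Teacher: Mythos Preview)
Your proposal is correct and follows essentially the same route as the paper. The paper compresses your argument by invoking the packaged Theorem~\ref{ThmQ} (which itself is precisely the combination of Theorem~\ref{ThmAsyExpMain} with the Nash--Moser scheme you spell out), taking $z\colon\R^{N_\Theta}\cong\Theta\to\CIc(\Omega^\circ;S^2 T^*\Omega^\circ)$ to be $\theta\mapsto-\tdel^*\theta$; your write-up simply unrolls this and also makes explicit the constraint-propagation step upgrading the gauged solution to a genuine Einstein metric, which the paper leaves implicit from the discussion in \S\ref{SubsecHypDT}.
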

\begin{proof}
  Given what we have arranged above, this follows directly from Theorem~\ref{ThmQ} if we take $z\colon\R^{N_\Theta}\cong\Theta\to\CIc(\Omega^\circ;T^*\Omega^\circ)$ to be the map $\theta\mapsto -\tdel^*\theta$.
\end{proof}

The number of derivatives here is rather excessive: in fact, due to the lack of trapping, one does not lose derivatives beyond the usual loss of $1$ derivative for hyperbolic equations; thus one can prove this theorem using a Newton-type iteration method as in \cite[\S8]{HintzQuasilinearDS}. But since we state this result in order to present a simple analogue of Theorem~\ref{ThmKdSStab}, we refrain from optimizing it.

While the above arguments prove the stability of the static model of de~Sitter space, there is absolutely \emph{no} direct implication for the initial value problem near a Schwarzschild--de~Sitter spacetime: the limit $\bhm\to 0$ in which Schwarzschild--de~Sitter space becomes de~Sitter space is very singular.

\newcommand{\etalchar}[1]{$^{#1}$}

\end{document}